\newtheorem{theorem}[equation]{Theorem}
\newtheorem{lemma}[equation]{Lemma}
\newtheorem{claim}[equation]{Claim}
\newtheorem{proposition}[equation]{Proposition}
\newtheorem{corollary}[equation]{Corollary}
\newtheorem{conjecture}[equation]{Conjecture}
\newtheorem{definition-lemma}[equation]{Definition-Lemma}
\theoremstyle{definition}
\newtheorem{definition}[equation]{Definition}
\newtheorem{question}[equation]{Question}
\newtheorem{construction}[equation]{Construction}
\newtheorem{example}[equation]{Example}
\theoremstyle{remark}
\newtheorem{remark}[equation]{Remark}
\newtheorem{remarks}[equation]{Remarks}
\numberwithin{equation}{section}
\numberwithin{figure}{section}
\renewcommand{\theta}{\fop}
\newcommand{\NN} {\mathbb{N}}
\newcommand{\ZZ} {\mathbb{Z}}
\newcommand{\QQ} {\mathbb{Q}}
\newcommand{\RR} {\mathbb{R}}
\newcommand{\bR} {\RR}
\newcommand{\Conf}{\operatorname{Conf}}
\newcommand{\CC} {\mathbb{C}}
\newcommand{\PP} {\mathbb{P}}
\newcommand{\VV} {\mathbb{V}}
\renewcommand{\AA} {\mathbb{A}}
\newcommand{\invlim}{\varprojlim}
\newcommand {\s}  {{\bf s}}
\newcommand {\mts} {\tilde{\s}}
\newcommand {\shA}  {\mathcal{A}}
\newcommand {\oshA} {\overline{\shA}}
\newcommand {\shC}  {\mathcal{C}}
\newcommand {\shH}  {\mathcal{H}}
\newcommand {\shM}  {\mathcal{M}}
\newcommand {\shO}  {\mathcal{O}}
\newcommand {\shX}  {\mathcal{X}}
\newcommand\scat{\mathrm{scat}}
\newcommand {\foD}  {\mathfrak{D}}
\newcommand {\foT}  {\mathfrak{T}}
\newcommand {\up} {\operatorname{up}}
\newcommand {\midd} {\operatorname{mid}}
\newcommand {\low} {\operatorname{ord}}
\newcommand {\fod}  {\mathfrak{d}}
\newcommand {\fog}  {\mathfrak{g}}
\newcommand {\foj}  {\mathfrak{j}}
\newcommand {\fom}  {\mathfrak{m}}
\newcommand {\fop}  {\mathfrak{p}}
\newcommand {\fov}  {\mathfrak{v}}
\newcommand {\Aut}  {\operatorname{Aut}}
\newcommand {\can}  {\mathrm{can}}
\newcommand {\Ext}  {\operatorname{Ext}}
\newcommand {\GL}  {\operatorname{GL}}
\newcommand {\Gr}  {\operatorname{Gr}}
\newcommand {\CG} {\operatorname{CGr}}
\newcommand {\g} {\mathbf{g}}
\newcommand {\Hom}  {\operatorname{Hom}}
\newcommand {\id}  {\operatorname{id}}
\newcommand {\inc}  {\mathrm{in}}
\newcommand {\Int}  {\operatorname{Int}}
\newcommand {\kk} {\Bbbk}
\newcommand {\liminv} {\varprojlim}
\newcommand {\loc} {\mathrm{loc}}
\newcommand {\M} {\mathcal{M}}
\newcommand {\Mono} {\operatorname{Mono}}
\newcommand {\mch}{\vee} 
\newcommand {\mth} {\operatorname{th}}
\newcommand {\ord}  {\operatorname{ord}}
\newcommand {\PGL}  {\operatorname{PGL}}
\newcommand {\uf} {\mathrm{uf}}
\newcommand {\semf} {\mathrm{sf}}
\newcommand {\groups}{\mathrm{groups}}
\newcommand {\Proj} {\operatorname{Proj}}
\newcommand {\rank} {\operatorname{rank}}
\newcommand {\Scatter} {\operatorname{Scatter}}
\newcommand {\Sing} {\operatorname{Sing}}
\newcommand {\SL}  {\operatorname{SL}}
\newcommand {\Spec} {\operatorname{Spec}}
\newcommand {\Span} {\operatorname{Span}}
\newcommand {\TV}{\operatorname{TV}}
\newcommand {\Supp} {\operatorname{Supp}}
\newcommand  {\todo}[1]{{\marginpar{\tiny #1}}}
\newcommand {\trop} {\mathrm{trop}}
\newcommand {\cA} {\mathcal A}
\newcommand {\cXt} {\cA^{\mch}_{\prin}(\bZ^T)}
\newcommand {\prin} {\operatorname{prin}}
\newcommand {\cAp} {\cA_{\prin}}
\newcommand {\cAg} {\ocA_{\prin}^{\s}}
\newcommand {\cAgs} {\cA_{\prin}}
\newcommand {\cXrt} {\cA^{\mch}_{\prin}(\bR^T)}
\newcommand {\cApdr} {\cA^{\mch}_{\prin}(\bR^T)}
\newcommand {\cAdr} {\cA^{\mch}(\bR^T)}
\newcommand {\LIM}{\overline{\up(\cA_{\prin})}}
\DeclareMathOperator{\cHom}{\mathcal{H}\mathnormal{om}}
\DeclareMathOperator{\lcm}{lcm}
\def\mapright#1{\smash{
  \mathop{\longrightarrow}\limits^{#1}}}
\def\bP{\Bbb P}
\def\SL{\operatorname{SL}}
\def\Aut{\operatorname{Aut}}
\def\Supp{\operatorname{Supp}}
\def\Gr{\operatorname{Gr}}
\def\GL{\operatorname{GL}}
\def\Conv{\operatorname{Conv}}
\def\cO{\Cal O}
\def\rank{\operatorname{rank}}
\def\bZ{\Bbb Z}
\def\bC{\Bbb C}
\def\bQ{\Bbb Q}
\def\bG{\Bbb G}
\def\bR{\Bbb R}
\def\bA{\Bbb A}
\def\ofoD{\overline{\foD}}
\def\oP{\bar{P}}
\def\oD{\bar{D}}
\def\oSigma{\bar{\Sigma}}
\def\tS{\tilde S}
\def\oS{\overline{S}}
\def\tS{\tilde{S}}
\def\oN{\overline{N}}
\def\oA{\overline{A}}
\def\cX{\Cal X}
\def\oV{\overline{V}}
\def\cH{\Cal H}
\def\cM{\Cal M}
\def\Spec{\operatorname{Spec}}
\def\Proj{\operatorname{Proj}}
\def\PGL{\operatorname{PGL}}
\def\tM{\widetilde{M}}
\def\ord{\operatorname{ord}}
\def\cO{\Cal O}
\def\Sing{\operatorname{Sing}}
\def\Spec{\operatorname{Spec}}
\def\Im{\operatorname{Im}}
\def\Hom{\operatorname{Hom}}
\def\Cal{\mathcal}
\def\Efi#1#2#3#4#5{\displaystyle
#1\!\!-\!\!#2
\!\!-\!\!#3
\!\!-\!\!#4
\hskip-24.2pt\lower4.5pt\hbox{${\scriptstyle|}
\hskip-3.35pt\lower6pt\hbox{$#5$}$}}
\def\Evia#1#2#3#4#5{\displaystyle
#1\!\!-\!\!#2
\!\!-\!\!#3
\hskip-24.2pt\lower4.5pt\hbox{${\scriptstyle|}
\hskip-3.35pt\lower6pt\hbox{$#4\!\!-\!\!\!-\!\!\!-\!\!$}$\hskip2.3pt${\scriptstyle|}
\hskip-3.35pt\lower6pt\hbox{$#5$}$}}
\def\Ezia#1#2#3#4{\displaystyle
#1\!\!-\!\!#2
\hskip-14.8pt\lower4.5pt\hbox{${\scriptstyle|}
\hskip-3.35pt\lower6pt\hbox{$#3\!\!-\!\!$}$\hskip2.3pt${\scriptstyle|}
\hskip-3.35pt\lower6pt\hbox{$#4$}$}}
\def\Efia#1#2#3#4#5#6{\displaystyle
#1\!\!-\!\!#2
\!\!-\!\!#3
\!\!-\!\!#4
\hskip-24.2pt\lower4.5pt\hbox{${\scriptstyle|}
\hskip-3.35pt\lower6pt\hbox{$#5$}$\hskip5.7pt${\scriptstyle|}
\hskip-3.35pt\lower6pt\hbox{$#6$}$}}
\def\Esi#1#2#3#4#5#6{\displaystyle
#1\!\!-\!\!#2
\!\!-\!\!#3
\!\!-\!\!#4\!\!-\!\!#5
\hskip-24.2pt\lower4.5pt\hbox{${\scriptstyle|}
\hskip-3.35pt\lower6pt\hbox{$#6$
\lower3pt\hbox{\ }}$}}
\def\Esia#1#2#3#4#5#6#7{\displaystyle

#1\!\!-\!\!#2
\!\!-\!\!#3
\!\!-\!\!#4\!\!-\!\!#5
\hskip-24.2pt\lower4.5pt\hbox{${\scriptstyle|}
\hskip-3.35pt\lower6pt\hbox{$#6$\hskip-3.8pt\lower4.5pt\hbox{${\scriptstyle|}
\hskip-3.35pt\lower6pt\hbox{$#7$}$}}
\lower3pt\hbox{\ }$}}
\def\Ese#1#2#3#4#5#6#7{\displaystyle
#1\!\!-\!\!#2
\!\!-\!\!#3
\!\!-\!\!#4\!\!-\!\!#5\!\!-\!\!#6
\hskip-33.6pt\lower4.5pt\hbox{${\scriptstyle|}
\hskip-3.35pt\lower6pt\hbox{$#7$
\lower3pt\hbox{\ }
}$}}
\def\Esea#1#2#3#4#5#6#7#8{\displaystyle
#1\!\!-\!\!#2
\!\!-\!\!#3
\!\!-\!\!#4\!\!-\!\!#5\!\!-\!\!#6\!\!-\!\!#7
\hskip-33.6pt\lower4.5pt\hbox{${\scriptstyle|}
\hskip-3.35pt\lower6pt\hbox{$#8$
\lower3pt\hbox{\ }
}$}}
\def\Eei#1#2#3#4#5#6#7#8{\displaystyle
#1\!\!-\!\!#2
\!\!-\!\!#3
\!\!-\!\!#4\!\!-\!\!#5\!\!-\!\!#6\!\!-\!\!#7
\hskip-43.2pt\lower4.5pt\hbox{${\scriptstyle|}
\hskip-3.35pt\lower6pt\hbox{$#8$
\lower3pt\hbox{\ }
}$}}
\def\Eeia#1#2#3#4#5#6#7#8#9{{\displaystyle
#1\!\!-\!\!#2
\!\!-\!\!#3
\!\!-\!\!#4\!\!-\!\!#5\!\!-\!\!#6\!\!-\!\!#7\!\!-\!\!#8
\hskip-52.2pt\lower4.5pt\hbox{${\scriptstyle|}
\hskip-3.35pt\lower6pt\hbox{$#9$
\lower3pt\hbox{\ }
}$}}}
\def\trop{\operatorname{trop}}
\def\prin{\operatorname{prin}}
\def\os{\overline{S}}
\def\oXi{\overline{\Xi}}
\def\ocA{\overline{\Cal A}}
\def\tN{\widetilde{N}}
\def\tn{\tilde{n}}
\def\tK{\widetilde{K}}
\def\tS{\tilde{S}}
\def\ts{\tS}
\def\ts7{\tilde{S}_7}
\def\mth{\operatorname{th}}
\def\tv{\tilde{v}}
\def\bP{\Bbb P}
\def\cC{\Cal C}
\def\Aut{\operatorname{Aut}}
\def\Supp{\operatorname{Supp}}
\def\Gr{\operatorname{Gr}}
\def\Hom{\operatorname{Hom}}
\def\cO{\Cal O}
\def\rank{\operatorname{rank}}
\def\bZ{\Bbb Z}
\def\bC{\Bbb C}
\def\bQ{\Bbb Q}
\def\bG{\Bbb G}
\def\bR{\Bbb R}
\def\bA{\Bbb A}
\def\tS{\tilde S}
\def\oS{\overline{S}}
\def\tS{\tilde{S}}
\def\oN{\overline{N}}
\def\cX{\Cal X}
\def\cH{\Cal H}
\def\cM{\Cal M}
\def\Spec{\operatorname{Spec}}
\def\Proj{\operatorname{Proj}}
\def\PGL{\operatorname{PGL}}
\def\Im{\operatorname{Im}}
\def\ord{\operatorname{ord}}
\def\Sing{\operatorname{Sing}}
\def\Cal{\mathcal}
\def\oN{\overline{N}}
\def\os7p{\oS_7'}
\def\os7{\oS_7}
\def\on6{\oN_6}
\def\n6{\oN_6}
\def\Mono{\operatorname{Mono}}
\def\brg0{(\bR_{\geq 0})}
\def\mydate{\ifcase\month \or January\or February\or March\or
April\or May\or June\or July\or August\or September\or October\or 
November\or December\fi \space\number\day,\space\number\year}
\begin{document}

%===========================================================
\title[Canonical bases for cluster algebras]
{Canonical bases for cluster algebras}
\author{Mark Gross} 
\address{DPMMS, Centre for Mathematical Sciences,
Wilberforce Road, Cambridge, CB3 0WB, United Kingdom}
%\curraddr{}
\email{mgross@dpmms.cam.ac.uk}
\author{Paul Hacking}
\address{Department of Mathematics and Statistics, Lederle Graduate
Research Tower, University of Massachusetts, Amherst, MA 01003-9305}
\email{hacking@math.umass.edu}

\author{Sean Keel}
\address{Department of Mathematics, 1 University Station C1200, Austin,
TX 78712-0257}
\email{keel@math.utexas.edu}
\author{Maxim Kontsevich}
\address{IH\'ES, Le Bois-Marie 35, route de Chartres, 91440 Bures-sur-Yvette,
France} 
\email{maxim@ihes.fr}
\date{October 3, 2016}

\begin{abstract} In \cite{GHK11}, Conjecture 0.6, the first three authors conjectured 
that the ring of regular 
functions on a natural class of affine log Calabi-Yau varieties 
(those with maximal
boundary) has a canonical vector space basis parameterized by the integral tropical
points of the mirror. Further, the structure constants for the multiplication rule in this basis should be
given by counting broken lines (certain combinatorial objects, morally the tropicalisations of
holomorphic discs). 

Here we prove the conjecture 
in the case of cluster varieties, where the statement is
a more precise form of the Fock-Goncharov dual basis conjecture, \cite{FG06},
Conjecture 4.3. In particular, 
under suitable hypotheses, for each $Y$ the partial compactification
of an affine cluster variety $U$ given by allowing some frozen 
variables to vanish, 
we obtain canonical bases for $H^0(Y,\cO_Y)$ extending to a basis
of $H^0(U,\cO_U)$.
Each choice of seed canonically identifies the parameterizing sets of these
bases 
with integral points in a polyhedral cone. These results specialize
to basis results
of combinatorial representation theory. For example, 
by considering the open double
Bruhat cell $U$ in the basic affine space $Y$ we obtain a canonical basis of
each irreducible representation of $\SL_r$, parameterized by a set which each
choice of seed identifies with integral points of a lattice polytope.
These bases and polytopes are
all constructed essentially without representation theoretic considerations.

Along the way, our methods prove a number of conjectures in cluster theory,
including positivity of the Laurent phenomenon for cluster algebras of
geometric type.
\end{abstract}

\maketitle
\tableofcontents
\bigskip

%===========================================================
%===========================================================

\section*{Introduction} 
\subsection{Statement of the main results}

Fock and Goncharov conjectured that the algebra of
functions on a cluster variety has a canonical vector space basis parameterized by the tropical
points of the mirror cluster variety. Unfortunately, as shown in 
\cite{P1} by the first three authors of
this paper, this conjecture is usually false: in general 
the cluster variety may have far too few global functions. One can only expect a power
series version of the conjecture, holding in the ``large complex structure 
limit,'' and
honest global functions parameterized by a subset of the mirror tropical points. For the
conjecture to hold as stated, one needs further affineness assumptions. 
Here we apply methods developed in the study of mirror symmetry,
in particular {\it scattering diagrams}, introduced by Kontsevich and Soibelman
in \cite{KS06} for 
two dimensions and by Gross and Siebert in \cite{GSAnnals} for all dimensions,
{\it broken lines}, introduced by Gross in \cite{G09} and developed further 
by Carl, Pumperla and Siebert in
\cite{CPS}, and {\it theta 
functions}, introduced by Gross, Hacking,  Keel and Siebert, see 
\cite{GHK11}, \cite{CPS}, \cite{GS12}, and \cite{GHKS}, to prove the
conjecture in this corrected form. We give in addition a formula for the
structure constants in this basis, non-negative integers given by counts of 
broken lines. Definitions of all these objects, essentially combinatorial
in nature, in the context of cluster algebras will be given in later sections. 
Here are more precise statements of our results. 

For basic cluster variety notions we follow
the notation of \cite{P1}, \S 2, for convenience, as we have collected
there a number of definitions across the literature; nothing there is
original. We recall some of this notation in Appendices 
\ref{LDsec} and \ref{rdsec}. 
The various flavors of cluster varieties are all varieties of the form
$V=\bigcup_{\s} T_{L,\s}$, where $T_{L,\s}$ is a copy of the algebraic torus
\[
T_L := L \otimes_{\bZ} \bG_m = \Hom(L^*,\bG_m)=\Spec \kk[L^*]
\]
over a field $\kk$ of characteristic zero, and $L = \bZ^n$ is a lattice,
indexed by $\s$ running over a set of \emph{seeds} (a seed being
roughly an ordered basis for $L$). The birational 
transformations induced by the inclusions of two different copies of
the torus are compositions of \emph{mutations}.
Fock and Goncharov introduced a simple way
to dualize the mutations, and using this define the 
\emph{Fock-Goncharov dual}\footnote{Roughly one can view the Fock-Goncharov
dual as the mirror variety, but this is not always precisely the case.
With some additional effort, one can make this precise ``at the boundary,''
but we shall not do so here.},
$V^{\vee}=\bigcup_{\s} T_{L^*,\s}$. We write $\bZ^T$ for the
tropical semi-field of
integers under $\max,+$. There is a notion of the set of
$\bZ^T$-valued points of $V$, written as 
$V(\bZ^T)$. This can also be viewed as being canonically in bijection with 
$V^{\trop}(\ZZ)$,
the set of divisorial discrete valuations on the field of rational functions
of $V$ where the canonical volume form has a pole, see \S \ref{tropsec}. 
Each choice of seed $\s$ determines an identification $V(\bZ^T) = L$. 

Our main object of study
is the \emph{$\cA$ cluster variety with principal coefficients}, 
$\cA_{\prin}=\bigcup_{\s} T_{\tN^{\circ},\s}$. (See Appendices \ref{LDsec}
and \ref{rdsec} for notation.)
This comes with a canonical fibration over a torus $\pi: \cA_{\prin} \to T_M$,
and a canonical free action by a torus $T_{N^\circ}$. We let $\cA_t := 
\pi^{-1}(t)$. The fibre $\cA_e \subset \cA_{\prin}$
($e \in T_M$ the identity) 
is the Fock-Goncharov $\cA$ variety (whose algebra of regular functions is the Fomin-Zelevinsky
upper cluster algebra). 
The quotient $\cA_{\prin}/T_{N^\circ}$ is the Fock-Goncharov $\cX$ variety. 

\begin{definition}
\label{globalmonomialdef}
A  \emph{global monomial} on a cluster variety $V = \bigcup_{\s \in S}
T_{L,\s}$  
is a regular function on $V$ which restricts to a character 
on some torus $T_{L,\s}$ in the atlas. For $V$ an 
$\cA$-type cluster variety a global monomial
is the same as a cluster monomial. One defines the \emph{upper
cluster algebra} $\up(V)$ associated to $V$ by
$\up(V) := \Gamma(V,\cO_V)$, and the \emph{ordinary cluster algebra}
$\ord(V)$
to be the subalgebra of $\up(V)$ generated by global monomials. 
\end{definition}

For example,
$\ord(\cA)$ is the original cluster algebra defined by Fomin and Zelevinsky
in \cite{FZ02a}, and $\up(\cA)$ is the corresponding upper cluster algebra
as defined in \cite{BFZ05}.

Given a global monomial $f$ on $V$, there is a seed $\s$
such that $f|_{T_{L,\s}}$ is a character $z^m$, $m\in L^*$. Because 
the seed $\s$ gives an identification of $V^{\vee}(\ZZ^T)$
with $L^*$, we obtain an element $\g(m)\in V^{\vee}(\ZZ^T)$, which we show 
is well-defined (independent of the open set $T_{L,\s}$), see 
Lemma \ref{fgccthg2}.  This is the 
\emph{$g$-vector} of the global
monomial $f$. We show this notion of $g$-vector coincides with
the notion of $g$-vector from \cite{FZ07} in the $\cA$ case, see Corollary
\ref{gvecthm}.
Let $\Delta^+(\ZZ) \subset V^{\mch}(\bZ^T)$ be the set of $g$-vectors of
all global monomials on $V$.
Finally, we write $\can(V)$ for the $\kk$-vector space with basis 
$V^{\mch}(\bZ^T)$, i.e., 
\begin{equation}
\label{canVdef}
\can(V) := \bigoplus_{q \in V^{\mch}(\bZ^T)} \kk \cdot \vartheta_q
\end{equation}
(where $\vartheta_q$ for the moment indicates the abstract
basis element corresponding 
to $q \in V^{\mch}(\bZ^T)$).

Fock and Goncharov's dual basis conjecture says that $\can(V)$ 
is canonically identified with the vector space 
$\up(V)$, and so in particular $\can(V)$ should have a canonical 
$\kk$-algebra structure. 
Note that such an algebra structure is determined by its structure constants,
a function
\[
\alpha: V^{\mch}(\bZ^T) \times V^{\mch}(\bZ^T) \times V^{\mch}(\bZ^T) \to \kk
\]
such that for fixed $p,q$, $\alpha(p,q,r) = 0$ for all but finitely many $r$ 
and
\[
\vartheta_p \cdot \vartheta_q = \sum_{r} \alpha(p,q,r) \vartheta_r.
\]

With this in mind, we have:

\begin{theorem} \label{mainth} Let $V$ be one of $\cA,\cX,\cA_{\prin}$. 
The following hold:
\begin{enumerate}
\item There are canonically defined
non-negative {\it structure constants} 
\[
\alpha: V^{\mch}(\bZ^T) \times V^{\mch}(\bZ^T) \times V^{\mch}(\bZ^T) 
\to \bZ_{\geq 0}\cup \{\infty\}.
\]
These are given by counts of \emph{broken lines}, certain combinatorial 
objects which we will define. The value $\infty$ is not taken in the
$\cX$ or $\cA_{\prin}$ case.
\item There is a canonically defined subset 
$\Theta \subset V^{\mch}(\bZ^T)$ with $\alpha(\Theta\times\Theta\times\Theta)
\subseteq \ZZ_{\ge 0}$
such that the restriction of $\alpha$ 
gives the vector subspace $\midd(V) \subset \can(V)$ with basis indexed by 
$\Theta$ the structure of an associative commutative $\kk$-algebra.
\item $\Delta^+(\ZZ) \subset \Theta$, i.e., 
$\Theta$ contains the $g$-vector of each global monomial.
\item For the lattice structure on $V^{\mch}(\bZ^T)$ determined by any choice of seed, 
$\Theta \subset V^{\mch}(\bZ^T)$ is closed under addition. 
Furthermore, $\Theta \subset V^{\mch}(\bZ^T)$
is saturated: for $k >0$ and $x \in V^{\mch}(\bZ^T)$,  $k \cdot x \in \Theta$ if
and only if $x \in \Theta$. 
\item There is a canonical $\kk$-algebra map $\nu: \midd(V) \to \up(V)$ 
which sends $\vartheta_q$ for 
$q \in \Delta^+(\ZZ)$ to the corresponding global monomial. 
\item The image $\nu(\vartheta_q) \in \up(V)$ is a universal positive Laurent 
polynomial (i.e., a Laurent
polynomial with non-negative integral coefficients in the cluster variables for each seed).
\item $\nu$ is injective for $V= \cA_{\prin}$ or $V=\cX$. Furthermore,
$\nu$ is injective for $V=\cA$ under the 
additional assumption that there 
is a seed $\s = (e_1,\dots,e_n)$ for which all the covectors $\{e_i,\cdot\}$, $i \in I_{\uf}$, 
lie in a strictly convex cone. 
When $\nu$ is injective we have canonical inclusions
\[
\ord(V) \subset \midd(V) \subset \up(V). 
\]
\end{enumerate}
\end{theorem}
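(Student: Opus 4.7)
The plan is to construct a \emph{cluster scattering diagram} $\foD$ in the real vector space $V^{\mch}(\bR^T)$ that encodes the mutation wall-crossing birational maps, and then to build theta functions as generating functions for \emph{broken lines}, piecewise-linear paths in $V^{\mch}(\bR^T)$ that bend according to the walls of $\foD$. The structure constants $\alpha(p,q,r)$ will then be defined by counting pairs of broken lines with prescribed asymptotic monomials $z^p,z^q$ and common endpoint in a fixed chamber, whose monomial product matches $z^r$. I would carry this out first for $\cA_{\prin}$, where the machinery of \cite{GHKS} adapts most cleanly because principal coefficients provide the grading needed to make all broken-line sums finite order by order, and then transfer the results to $\cX = \cA_{\prin}/T_{N^\circ}$ by taking invariants, and to $\cA$ by restriction to the fibre $\cA_e$.

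The first hard step is to produce a consistent cluster scattering diagram: starting from the ``incoming walls'' associated to each simple root of the seed, one applies the Kontsevich--Soibelman/Gross--Siebert order-by-order construction to add outgoing walls making the path-ordered product around any loop equal to the identity. Granting consistency, it is a formal consequence that broken-line counts are independent of the chamber in which the endpoint is chosen, and hence that $\vartheta_p \cdot \vartheta_q = \sum_r \alpha(p,q,r)\, \vartheta_r$ holds associatively wherever the sums are finite. Positivity of each $\alpha(p,q,r)$ follows because the wall-crossing functions are positive binomials (this is where the choice of principal coefficients is essential), so each broken line contributes a monomial with a non-negative integer coefficient.

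Next I would define $\Theta \subset V^{\mch}(\bZ^T)$ to be the set of $q$ for which $\vartheta_q$ has only finitely many broken-line contributions with endpoint in any chamber; this automatically gives parts (1) and (2) of the theorem. To prove $\Delta^+(\bZ) \subset \Theta$, identify the ``cluster chambers'' of $\foD$ (one per seed) and observe that if $q \in V^{\mch}(\bZ^T)$ corresponds under the identification from a seed $\s$ to the exponent vector of a cluster monomial $f$ on $T_{L,\s}$, then the only broken line contributing to $\vartheta_q$ with endpoint in the $\s$-chamber is the straight line, so $\vartheta_q|_{T_{L,\s}} = z^m$; consistency then forces $\vartheta_q$ to agree with $f$ on every chart, and hence $\vartheta_q$ is everywhere a Laurent polynomial. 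Additive closure and saturation of $\Theta$ reduce, via the broken line description, to a convexity statement about the supports of families of broken lines, which I would derive by rescaling and the finiteness criterion.

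With these pieces in hand, $\nu:\midd(V)\to\up(V)$ is defined on basis elements by evaluating the broken-line sum on any chart, and part (6) is immediate from the positivity in the scattering diagram: the Laurent expansion in any seed is visibly a sum of positive monomials. For (7), the main obstacle is injectivity. For $\cA_{\prin}$ and $\cX$ the $T_{N^\circ}$-action on $\cA_{\prin}$ gives theta functions distinct torus weights (their $g$-vectors), so they are linearly independent in $\up(V)$; for $\cA$ the convex-cone hypothesis on $\{e_i,\cdot\}_{i \in I_{\uf}}$ ensures that the natural grading on $\cA_{\prin}$ by $T_M$ restricts non-degenerately to $\cA = \cA_e$, and linear independence descends. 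The principal technical obstacle across the whole proof is simultaneously controlling the combinatorics of broken lines through arbitrarily many walls (to get finiteness, additivity of $\Theta$, and matching of cluster monomials with their $g$-vector theta functions) while preserving the positivity that makes the scattering-diagram machinery work; essentially every part of the theorem reduces to a statement about the chamber structure of $\foD$ and the broken lines that traverse it.
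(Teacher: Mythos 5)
Your proposal follows the paper's overall strategy: build a consistent scattering diagram for $\cA_{\prin}$, define theta functions by broken-line sums, prove positivity from the positivity of wall functions, define $\Theta$ as the locus of finiteness, and transfer to $\cX$ by $T_{N^\circ}$-invariants and to $\cA$ by restriction to a fibre of $\pi: \cA_{\prin}\to T_M$. Parts (1)--(6) of your sketch match the paper's route through Theorems \ref{scatdiagpositive}, \ref{thetaequalsmonomial}, \ref{goodalgebra} and Propositions \ref{oneblprop}, \ref{posprop}; closure and saturation of $\Theta$ via straight lines and rescaling are correct.

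The genuine gap is in your argument for (7). You claim that theta functions have ``distinct torus weights (their $g$-vectors)'' under the $T_{N^\circ}$-action, hence are linearly independent. This does not work: the $T_{N^\circ}$-weight of $\vartheta_q$ is $w(q) = m - p^*(n)$ for $q = (m,n) \in \tM^\circ$ (Proposition \ref{thetaeigenfunctions}), and $w$ is very far from injective, so many theta functions share a weight. Even the larger torus $T_{\tK^\circ}$ gives a weight map $\tM^\circ \to (\tK^\circ)^*$ with nontrivial kernel, so no equivariance argument by itself separates the $\vartheta_q$. The paper's actual mechanism is the degeneration $\pi: \ocA^{\s}_{\prin}\to \AA^n_{X_1,\dots,X_n}$ whose central fibre is the torus $T_{N^\circ}$ (Corollary \ref{mutcor}); modulo the ideal $(X_1,\dots,X_n)$ every bent broken line dies, so $\vartheta_q$ restricts to the single character $z^q$ on the central fibre, and the theta functions form a topological basis of $\widehat{\up(\ocA_{\prin}^{\s})}\otimes_{\kk[N_\s^+]}\kk[N]$ (Proposition \ref{thetabasislemma}). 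Injectivity of $\nu$ then follows from the canonical inclusion $\up(\cA_{\prin})\hookrightarrow\widehat{\up(\ocA_{\prin}^{\s})}\otimes_{\kk[N^+_\s]}\kk[N]$. This is a step your plan cannot avoid; the fact that the central fibre is a torus (equivalently, sign-coherence of $c$-vectors, Corollary \ref{sccveccor}) is itself a substantive consequence of the scattering-diagram description and is not visible from the raw cluster atlas. For $\cA$ one cannot simply ``descend'' linear independence along $T_M$: the paper proves it holds for very general $t\in T_M$ by openness from the central fibre, and then uses the convexity hypothesis to produce a one-parameter subgroup of $T_{\tK^\circ}$ pushing arbitrary $t$ toward $0$, combined with Proposition \ref{thetaeigenfunctions} equivariance, to cover $t=e$ (proof of Theorem \ref{mainthax}, (7)). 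You should make these degeneration and openness arguments explicit.
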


There is an analog
to Theorem \ref{mainth} for $\cA_t$ (the main difference is that the 
\emph{theta functions}, i.e., the
canonical basis for $\midd(\cA_t)$, are only defined up to scaling each individual element, and the 
structure constants will not in general be integers). Injectivity in (7) holds for very general $\cA_t$.
See Theorem \ref{mainthax}.

Note that (5-6) immediately imply: 

\begin{corollary}[Positivity of the Laurent Phenomenon]
\label{posLP}
Each cluster variable of an $\cA$-cluster
algebra is a Laurent polynomial with non-negative integer coefficients in the cluster variables of any given seed.
\end{corollary}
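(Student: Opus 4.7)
The plan is to obtain the corollary as an essentially immediate consequence of parts (5) and (6) of Theorem \ref{mainth}. First, I would recall from Definition \ref{globalmonomialdef} that for an $\cA$-type cluster variety, a global monomial is precisely a cluster monomial; in particular every cluster variable $f$ of the ordinary cluster algebra $\ord(\cA)$ is a global monomial on $\cA$. Therefore $f$ has a well-defined $g$-vector $\g(f) \in \Delta^+(\ZZ)$ (by Lemma \ref{fgccthg2}), and by part (3) of the theorem $\Delta^+(\ZZ) \subset \Theta$, so the theta function $\vartheta_{\g(f)} \in \midd(\cA)$ is defined.

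Next I would invoke part (5), which supplies a canonical algebra map $\nu: \midd(\cA) \to \up(\cA)$ sending $\vartheta_q$ (for $q \in \Delta^+(\ZZ)$) to the global monomial whose $g$-vector is $q$. Applied to $q = \g(f)$, this gives $\nu(\vartheta_{\g(f)}) = f$. Part (6) then asserts that $\nu(\vartheta_{\g(f)})$ is a universal positive Laurent polynomial, i.e., a Laurent polynomial with non-negative integer coefficients in the cluster variables of \emph{every} seed. Combining these two facts yields exactly the conclusion of Corollary \ref{posLP}.

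No genuine obstacle arises at this step: all of the nontrivial content is packaged inside Theorem \ref{mainth} itself, whose proof proceeds via the scattering diagram / broken line machinery and the combinatorial positivity of broken line counts. In particular one does not need the strict convexity hypothesis from part (7), because parts (5) and (6) are stated unconditionally for $V = \cA$; the map $\nu$ need not be injective for the conclusion, since we only require that the images $\nu(\vartheta_{\g(f)})$ be positive Laurent polynomials, not that they faithfully reconstruct $\midd(\cA)$ inside $\up(\cA)$.
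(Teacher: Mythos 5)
Your proposal is correct and matches the paper's own argument: the paper explicitly states that parts (5) and (6) of Theorem \ref{mainth} immediately imply Corollary \ref{posLP}, which is exactly the reasoning you spell out. Your added note that $\g(f) \in \Delta^+(\ZZ) \subset \Theta$ by part (3), ensuring $\vartheta_{\g(f)} \in \midd(\cA)$ is defined, is a correct and useful unpacking of what the paper leaves implicit.
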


This was conjectured by Fomin and Zelevinsky in their original paper
\cite{FZ02a}.
Positivity was obtained independently in the skew-symmetric case
by [LS13], by an entirely different argument. In our proof the 
positivity in (1) and (6) both come from positivity in the scattering diagram, a powerful
tool fundamental to the entire paper.
See Theorem \ref{scatdiagpositive}.

We conjecture that injectivity in (7) holds for all $\cA_t$ (without the convexity assumption). Note (7)
includes the linear independence of cluster monomials, which has already been established (without 
convexity assumptions) for skew-symmetric cluster algebras in \cite{CKLP}, 
by a very
different argument. The linear independence of cluster monomials in the
principal case
also follows easily from our scattering diagram technology, as pointed out to
us by Greg Muller. See Theorem \ref{properlaurentproperty}.

When there are frozen variables, one obtains
a partial compactification $V \subset \oV$ (where the frozen variables are allowed to
take the value $0$) for $V = \cA$, $\cA_{\prin}$ or $\cA_t$. 
The notions of $\ord$, $\up$, $\can$, and
$\midd$ extend naturally to $\oV$.  See Construction \ref{fvss}.

Of course if $\ord(V) = \up(V)$, and we have injectivity in (7), $\ord(V) = \midd(V) = \up(V)$
has a canonical basis $\Theta$ with the given properties. Also,
$\ord(V) = \up(V)$ implies, under certain hypotheses, $\ord(\oV) = \up(\oV)$,
see Lemma \ref{midocalem}. Such partial compactifications are
essential for representation-theoretic applications:

\begin{example}
\label{SLrexample}
Let $G=\SL_r$. Choose a Borel subgroup $B$ of $G$, $H\subset B$ a maximal
torus, and let $N=[B,B]$ be the unipotent radical of $B$. These choices determine a cluster
variety structure (with frozen variables) on $\ocA = G/N$, with
$\up(\ocA) = \ord(\ocA) = \cO(G/N)$, the ring of regular functions on $G/N$, 
see \cite{GLS}, \S 10.4.2. 

Theorem \ref{mainth} implies that 
these choices canonically determine a vector space basis  
$\Theta \subset \cO(G/N)$. Each
basis element is an $H$-eigenfunction for the natural (right) action of $H$ 
on $G/N$.
For each character $\lambda \in \chi^*(H)$, $\Theta \cap \cO(G/N)^{\lambda}$ is 
a basis of the weight space $\cO(G/N)^{\lambda} =: V_{\lambda}$. The 
$V_{\lambda}$ are the collection
of irreducible representations of $G$, each of which thus inherits a basis, 
canonically determined by the choice of $H \subset B \subset G$.

We give, combining our results
with results of T.\ Magee, much more precise results, 
see Corollary \ref{slrcor} below. 

Canonical bases for $\cO(G/N) $ have been constructed by Lusztig.
Here we will obtain bases by a procedure very
different from Lusztig's, as a special case of the more general
\cite{GHK11}, Conjecture 0.6, which applies in theory to
any variety with the right sort of volume form. See Remark
\ref{clusterindependenceremark} for further commentary on this.
\qed
\end{example}

The tools necessary for the proof of Theorem \ref{mainth} are
developed in the first six sections of the paper, with the proof given
in \S\ref{midclustersection}. This material is summarized in more detail
in \S\ref{intro1.2}.

The second part of the paper turns to criteria for the full Fock-Goncharov
conjecture to hold. Precisely:

\begin{definition}
\label{fgconjdef}
We say the \emph{full Fock-Goncharov conjecture holds} for a cluster variety
$V$ if the map $\nu:\midd(V)\rightarrow\up(V)$ of Theorem \ref{mainth}
is injective, 
\[
\hbox{$\up(V)=\can(V)$, and $\Theta=V^{\vee}(\ZZ^T)$}.
\]
Note this implies $\midd(V)=\up(V) = \can(V)$.
\end{definition}

We prove a number of criteria which guarantee the full Fock-Goncharov conjecture
holds. One such condition, which seems to be very natural in our setup and
is implied, say, by the existence of a maximal green sequence, is:

\begin{proposition}
[Proposition \ref{egmscprop}] 
If the set $\Delta^+(\ZZ)$ of all $g$-vectors
of global monomials of $\shA$ in $\shA^{\vee}(\ZZ^T)$ is not
contained in a half-space under the identification of $\shA^{\vee}(\ZZ^T)$
with $M^{\circ}$ induced by some choice of seed, then the full 
Fock-Goncharov conjecture 
holds for $\cA_{\prin}$, $\cX$, very general $\cA_t$ and, if the convexity condition (7) of Theorem \ref{mainth} holds, for $\cA$. 
\end{proposition}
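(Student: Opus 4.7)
The plan is to decompose the full Fock--Goncharov conjecture into its three defining pieces---injectivity of $\nu$, the identity $\Theta = V^{\vee}(\ZZ^T)$, and the surjection $\up(V) = \nu(\midd(V))$---and dispatch them one at a time. Injectivity comes for free from Theorem~\ref{mainth}(7) in each of the four listed cases (automatic for $\cA_{\prin}$ and $\cX$, generically for $\cA_t$, and under the stated convexity condition for $\cA$). The hypothesis on $\Delta^+(\ZZ)$ is tailored to deliver the second, and once one has $\Theta = V^{\vee}(\ZZ^T)$ the third should follow by a leading-term argument in a single seed chart.

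The crux is to show that the hypothesis forces $\Theta_{\shA} = \shA^{\vee}(\ZZ^T)$. Identify $\shA^{\vee}(\ZZ^T)$ with $M^{\circ}$ via the given seed. The statement that $\Delta^+(\ZZ)$ lies in no closed half-space says, by Hahn--Banach, that the closed convex cone it spans in $M^{\circ}_{\RR}$ admits no nontrivial supporting hyperplane, and hence equals $M^{\circ}_{\RR}$. Given $m \in M^{\circ}$, I can therefore write $m = \sum \lambda_i m_i$ with $m_i \in \Delta^+(\ZZ)$ and $\lambda_i \in \QQ_{\geq 0}$, and clearing denominators obtain $Nm = \sum n_i m_i$ for some $N \in \ZZ_{>0}$ and $n_i \in \ZZ_{\geq 0}$. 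By Theorem~\ref{mainth}(3) each $m_i \in \Theta$; by (4) $\Theta$ is closed under addition, so $Nm \in \Theta$; and the saturation clause of (4) yields $m \in \Theta$. Thus $\Theta_{\shA} = \shA^{\vee}(\ZZ^T)$.

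Next I would propagate this equality to the other varieties. For $\cA_{\prin}$, the chosen seed identifies $\cA_{\prin}^{\vee}(\ZZ^T)$ with $M^{\circ} \oplus N$; the global monomials on $\cA_{\prin}$ include lifts of cluster monomials from $\shA$, whose $g$-vectors already fill the $M^{\circ}$ factor in the cone sense, together with characters of $T_M$ pulled back through $\pi : \cA_{\prin} \to T_M$, whose $g$-vectors fill the $N$ factor. So the $\cA_{\prin}$-analog of $\Delta^+(\ZZ)$ is again not contained in any half-space, and the cone-plus-saturation argument of the previous paragraph gives $\Theta_{\cA_{\prin}} = \cA_{\prin}^{\vee}(\ZZ^T)$. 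For $\cX = \cA_{\prin}/T_{N^{\circ}}$ the result descends via the compatibility of the two scattering diagrams: the $T_{N^{\circ}}$-invariant theta functions on $\cA_{\prin}$ descend to theta functions on $\cX$, and the corresponding subset of $\Theta_{\cA_{\prin}}$ maps onto $\cX^{\vee}(\ZZ^T)$. For very general $\cA_t$ one specializes inside the flat family $\cA_{\prin} \to T_M$, using that the broken-line structure on each fibre is governed by the ambient scattering diagram away from a countable union of proper subvarieties of $T_M$.

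The step I expect to be the main obstacle is upgrading $\Theta = V^{\vee}(\ZZ^T)$ to the surjection $\up(V) = \nu(\midd(V))$. Given $f \in \up(V)$, restrict to the seed chart to get a Laurent polynomial $f = \sum_{m \in S} c_m z^m$ with $S$ a finite subset of $V^{\vee}(\ZZ^T)$. Each $m \in S$ is now in $\Theta$, and by the broken-line description of theta functions $\nu(\vartheta_m)$ restricted to the seed chamber has the shape $z^m + (\text{corrections})$ coming from broken lines passing through walls on the far side of the chamber. The delicate point is to choose a linear order on $V^{\vee}(\ZZ^T)$ so that the corrections lie strictly below $z^m$; then subtracting a maximal theta function $c_{m_*} \nu(\vartheta_{m_*})$ shrinks the support of $f$ and a finite induction terminates. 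For $\cA_{\prin}$ this is cleanest because of the $T_M$-grading, which splits $\up(\cA_{\prin})$ into finite-dimensional weight spaces each containing only finitely many theta functions; positivity of the scattering diagram (Theorem~\ref{scatdiagpositive}) and finiteness of broken-line counts in any chamber are the technical inputs. The $\cX$ and $\cA_t$ cases follow by descent and specialization; for $\cA$, the convexity hypothesis of Theorem~\ref{mainth}(7) supplies both the required injectivity of $\nu$ and a strictly convex ordering on $M^{\circ}$ that makes the same leading-term induction terminate.
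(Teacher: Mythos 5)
Your reduction to the three pieces (injectivity, $\Theta = V^{\vee}(\ZZ^T)$, and $\up(V) = \nu(\midd(V))$) is reasonable, and your argument for $\Theta_{\shA} = \shA^{\vee}(\ZZ^T)$---write a general lattice point as a non-negative rational combination of elements of $\Delta^+(\ZZ)$, clear denominators, use closure under addition and saturation of $\Theta$---is exactly the paper's argument. Likewise, the claim that $\Delta^+(\ZZ) \subset \cA^{\vee}_{\prin}(\ZZ^T)$ is again not contained in a half-space is correct, though the paper gets there more cleanly: $\Delta^+(\ZZ) \subset \cA^{\vee}_{\prin}(\ZZ^T)$ is simply $(\rho^T)^{-1}(\Delta^+_{\shA}(\ZZ))$, and the preimage of a set not contained in a half-space under the projection $\tM^{\circ}\to M^{\circ}$ is again not contained in a half-space.

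The genuine gap is in the last step. You never extract the EGM condition from the hypothesis, and EGM is precisely the engine that drives the inclusion $\up(\cA_{\prin}) \subset \can(\cA_{\prin})$ in the paper (via Theorem~\ref{ffgth}, which gives a \emph{formal} theta expansion of any universal Laurent polynomial, combined with Proposition~\ref{markscor}, which uses EGM to show this formal expansion has finite support). Once $\up \subset \can$ and $\Theta = \cA^{\vee}_{\prin}(\ZZ^T)$ are both in hand, $\midd = \can$ and hence $\midd = \up$ is immediate. The paper derives EGM from the large-cluster-complex hypothesis by contradiction: if $\cA_{\prin}$ failed EGM there would be a nonzero $x$ with $\vartheta^T_p(x) \geq 0$ for all $p \in \Delta^+(\ZZ)$, but since $\vartheta^T_{Q,p}(x) \leq \langle p, -x\rangle$ (the straight broken line contributes the monomial $z^p$), this would force $\Delta^+(\ZZ)$ into the half-space $\{\langle \cdot, -x\rangle \geq 0\}$.

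Your proposed leading-term induction for $\up(V) = \nu(\midd(V))$ does not close this gap. Subtracting $c_{m_*}\nu(\vartheta_{m_*})$ from $f$ need not shrink support: the remainder generally acquires new monomials $z^{m+p}$ with $p \in P_{\s}\setminus\{0\}$, so the procedure produces, in general, a \emph{formal} expansion of $f$ (precisely the content of Theorem~\ref{ffgth}), not a finite one. The finiteness is exactly what EGM buys. Moreover, the assertion that the $T_M$-grading splits $\up(\cA_{\prin})$ into finite-dimensional weight spaces each containing only finitely many theta functions is false: the theta functions $\vartheta_{(m,n)}$ are $T_{N^{\circ}}$-eigenfunctions of weight $m - p^*(n)$, and for fixed weight there are infinitely many such $(m,n)$ as $n$ ranges over $N$. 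So the intended finiteness mechanism is not there, and the argument as written does not terminate.
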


Many of the results in the second part of the paper are proved using 
a generalized notion of convex function or convex polytope, see 
\S\S\ref{intro1.3} and \ref{intro1.4} for more details.

In \S\ref{candsec}, we turn to results on partial compactifications. 
We first explain how convex polytopes in our sense give rise, under
suitable hypotheses, to compactifications of $\cA$-type cluster varieties
and toric degenerations of such. This connects our constructions to
the mirror symmetry picture described in \cite{GHK11}, and in particular
describes a partial compactification of $\cA_{\prin}$ as giving a degeneration
of a family of log Calabi-Yau varieties to a toric variety. Partial
compactifications via frozen variables are also important in representation
theoretic applications, as already indicated in Example \ref{SLrexample}.
We prove results for such partial compactifications which,
combined with recent results of T.\ Magee \cite{Magee}, \cite{TimThesis},
yield strong representation-theoretic results, see \S\ref{intro1.4} for
more details.

We now turn to a more detailed summary of the contents of the paper.

\subsection{Towards the main theorem.}
\label{intro1.2}

\S\ref{scatdiagsection} is devoted to the construction of the fundamental
tool of the paper, \emph{scattering diagrams}. While \cite{GSAnnals}
defined these in much greater generality, here they are 
collections of walls living in a vector space
with attached functions constructed canonically from a choice of seed data.
A precise definition can be found in \S\ref{defconstsection}.
Here we simply highlight the main new result 
Theorem \ref{scatdiagpositive}, whose proof, being fairly technical,
is deferred to Appendix \ref{scatappendix}. This says that the functions
attached to walls of a scattering diagram associated to seed data have
positive coefficients. All positivity results in this paper flow from this
fundamental observation, and indeed many of our arguments use this in an essential way. 
For the reader's convenience, we give in 
\S\ref{consistentscatdiagramsection} an elementary construction of
the relevant scattering diagrams, drawing on the method given in 
\cite{KS13}. Since a scattering diagram depends on a choice of
seed, \S\ref{mutinvsec} shows how scattering diagrams associated to
mutation equivalent seeds are related. This shows that a scattering
diagram has a chamber structure indexed by seeds mutation equivalent
to the initial choice of seed. 

In \S\ref{tropsec}, we review some notions of tropicalizations of cluster
varieties, showing that scattering diagrams naturally live in such
tropicalizations. Indeed, the scattering diagram which is associated to
a cluster variety $V$ lives naturally in the tropical space of the
Fock-Goncharov dual $V^{\vee}(\RR^T)$.
These tropicalizations, crucially, can only be viewed
as piecewise linear, rather than linear, spaces, with a choice of seed
giving an identification of the tropicalization with a linear space. Already
the mutation combinatorics becomes apparent:

\begin{theorem}[Lemma \ref{fgcclem} and Theorem \ref{fgccth}] 
\label{fgcclemth}
For each seed $\s = (e_1,\dots,e_n)$ of a $\cA$-cluster 
variety, the \emph{(Fock-Goncharov) cluster chamber} associated to $\s$ is
\[
\shC^+_{\s} := \{ x \in \cA^{\vee}(\bR^T)\,|\, (z^{e_i})^T(x) \le 0 \text{ for all } 
1 \leq i \leq n\},
\]
where $(z^{e_i})^T$ denotes the tropicalization of the monomial $z^{e_i}$,
see \S\ref{tropsec}.
The collection $\Delta^+$ of such subsets of $\cA^{\vee}(\RR^T)$ 
over all mutation equivalent seeds form the maximal cones of a 
simplicial fan, the \emph{(Fock-Goncharov) cluster complex}. 
The Fomin-Zelevinsky exchange graph is the dual graph of this fan.
\end{theorem}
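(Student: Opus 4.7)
My plan is to realize the cluster chambers as a distinguished family of chambers of the \emph{cluster scattering diagram} $\foD^{\s}$ in $\cA^{\vee}(\bR^T)$ constructed in Section \ref{consistentscatdiagramsection}, and then to transport them to one another along mutations using the mutation-invariance established in Section \ref{mutinvsec}. Along the way I would verify that each $\shC^+_{\s}$ is actually a simplicial cone and that the combinatorics of mutation matches the chamber adjacency combinatorics of $\foD^{\s}$.

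First, fix $\s = (e_1,\dots,e_n)$, which provides a canonical identification of $\cA^{\vee}(\bR^T)$ with $M^{\circ}_{\bR}$. Under this identification, $\shC^+_{\s}$ is the simplicial cone cut out by the inequalities $(z^{e_i})^T(\cdot) \le 0$, i.e., the negative orthant with respect to the dual basis of $(e_1,\dots,e_n)$. By the construction of $\foD^{\s}$, every wall is supported in a hyperplane of the form $e_i^{\perp}$ for some mutable index $i$, and lies on the non-negative side $\{(z^{e_i})^T(\cdot) \ge 0\}$. Consequently no wall meets the interior of $\shC^+_{\s}$, which is therefore a chamber of $\foD^{\s}$ --- the ``initial'' cluster chamber.

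Next, for a single mutation $\mu_k(\s)$, the mutation-invariance result of Section \ref{mutinvsec} supplies a piecewise linear transformation $T_k\colon\cA^{\vee}(\bR^T)\to\cA^{\vee}(\bR^T)$ that is linear on each half-space $\{\pm(z^{e_k})^T(\cdot)\ge 0\}$ and that intertwines $\foD^{\mu_k(\s)}$ with $\foD^{\s}$. Since $\shC^+_{\mu_k(\s)}$ is the negative chamber in the $\mu_k(\s)$-identification, its image under $T_k$ is a simplicial chamber of $\foD^{\s}$ sharing the facet $\{(z^{e_k})^T = 0\}\cap\shC^+_{\s}$ with $\shC^+_{\s}$. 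Iterating this construction along an arbitrary finite mutation sequence, I obtain a simplicial chamber $\shC^+_{\s'}$ of $\foD^{\s}$ for every seed $\s'$ mutation-equivalent to $\s$. The fan property then follows automatically: chambers of a single scattering diagram either agree or are separated by walls, and the closures of two cluster chambers intersect in the face cut out by those separating walls. The identification of the dual graph of $\Delta^+$ with the Fomin-Zelevinsky exchange graph is transparent from the construction, since by the previous step two cluster chambers share a codimension-one face precisely when the corresponding seeds differ by a single mutation.

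The main obstacle is proving that the map from (equivalence classes of) mutation sequences starting at $\s$ to chambers of $\foD^{\s}$ is injective: \emph{a priori} two unrelated mutation sequences could carry the initial negative chamber to a common chamber, collapsing distinct abstract seeds to a single cone in $\cA^{\vee}(\bR^T)$. I would rule this out using the positivity theorem (Theorem \ref{scatdiagpositive}): any spurious coincidence would force the scattering functions attached to walls crossed by a corresponding loop in chamber-space to multiply to $1$, contradicting the strict positivity of their nontrivial coefficients. This positivity input is precisely what upgrades the construction from a ``collection of simplicial cones indexed by mutation sequences'' to a genuine simplicial fan whose dual graph is the exchange graph.
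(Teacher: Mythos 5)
Your first three paragraphs follow the paper's approach faithfully: identify $\shC^+_{\s}$ as a chamber of $\foD_{\s}$ (Remark~\ref{noekwalls} guarantees that the walls of $\foD_{\s}$ cover exactly $\bigcup_{k\in I_{\uf}}e_k^\perp$, so the negative orthant is indeed a chamber), transport chambers by the piecewise-linear maps $T_k$ of Theorem~\ref{Adiagrammutations}, and conclude that the collection $\Delta^+$ of cluster chambers is a fan because it sits inside the chamber decomposition of a single consistent scattering diagram. This is exactly how Lemma~\ref{fgcclem} and Theorem~\ref{fgccth} go, including the step of passing to $\cA_{\prin}$ when the Injectivity Assumption fails and descending via the $N$-translation invariance.

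The last paragraph, on the exchange-graph statement, contains a genuine gap. You set out to prove that the map from mutation sequences (modulo some equivalence) to chambers of $\foD_{\s}$ is \emph{injective}, and you propose to get a contradiction from positivity if two ``unrelated'' sequences yield the same chamber. But the naive map from vertices of $\foT_{\s}$ to chambers is explicitly \emph{not} injective — Construction~\ref{chamberstructureremark} points out that applying $\mu_k$ twice returns to $\shC^+_{\s}$ even though the formal seed has changed. Moreover, your proposed contradiction does not materialize: if two mutation paths land on the same chamber, the composite loop gives the identity automorphism simply because $\foD_{\s}$ is \emph{consistent}; positivity of the wall functions is not violated. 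Positivity rules out a wall with trivial attached function, i.e.\ it shows two chambers \emph{adjacent} by a single mutation are genuinely distinct, but that is far weaker than what you need. The actual content of ``the exchange graph is the dual graph'' is that a chamber, together with its adjacent chambers, determines the exchange matrix of the corresponding seed; the relevant inputs are the identification of the supporting hyperplanes of $\shC^+_{w\in\s}$ with $c$-vectors (Lemma~\ref{facelem}) and sign-coherence (Corollary~\ref{sccveccor}), not the positivity of the scattering functions. You would need to supply an argument of that shape to close the gap.
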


The collection of cones $\Delta^+$ was introduced by Fock and Goncharov, 
who conjectured
they formed a fan. It is not at all obvious from the definition that the interiors of 
the cones cannot overlap. Our description of the chamber structure induced
by a scattering diagram in fact shows that part of the chamber
structure coincides with the collection of cones $\Delta^+$. This shows
the fact they form a fan directly. In addition,
the set $\Delta^+(\ZZ)$ of Theorem \ref{mainth} consists of the integral points
of the union of cones in $\Delta^+$.

\S\ref{blsec} gives the definition of broken line, the second principal
combinatorial tool of the paper. These were originally introduced in
\cite{G09} and developed further in \cite{CPS} as tropical replacements for
Maslov index two disks. In \cite{GHK11}, they were used to define 
\emph{theta functions}, which are, in principle, formal sums over
all broken lines with fixed boundary conditions. 
The relevance of theta functions for us comes in \S\ref{basection}. Here
we show the direct relationship between scattering diagrams and the
$\cA$ cluster algebra. We show that if we associate a suitable torus
$T_L$ to each
chamber of the scattering diagram associated to a mutation of the initial
seed, then the walls separating the chambers can be interpreted as giving
birational
maps between these tori. Gluing together these copies of $T_L$
gives the $\cA$ cluster variety, see Theorem \ref{rpth}.
Further, a theta function $\vartheta_p$ depends on a point
$p\in\cA^{\vee}(\ZZ^T)$. If for a given choice of $p$, $\vartheta_p$
is in fact a finite sum, then $\vartheta_p$ is a global function on
$\cA$. We show that this holds in particular when $p$ lies in the cluster
complex $\Delta^+$, and in this case $\vartheta_p$ agrees with the cluster
monomial with $g$-vector given by $p$.
Because of the positivity result Theorem \ref{scatdiagpositive},
$\vartheta_p$ is in any event always a power series with positive coefficients.
Thus we get positivity of the Laurent phenomenon, Theorem \ref{pilpth}, as
an easy consequence of our formalism.

In \S\ref{cgvectorsec} we begin with what is another essential observation
for our approach.
A choice of initial seed $\s$ provides a partial compactification
$\ocA_{\prin}^{\s}$ of $\cA_{\prin}$ by allowing the variables $X_1,\ldots,
X_n$ (the principal coefficients) to be zero. These variables induce a flat map
$\pi: \ocA_{\prin}^{\s}\rightarrow \AA^n_{X_1,\ldots,X_n}$, with $\cA$ being
the fibre over $(1,\ldots,1)$. Our methods easily show:

\begin{theorem} \label{cftth} (Corollary \ref{mutcor}, (1)) 
The central fibre $\pi^{-1}(0) \subset \ocA_{\prin}^{\s}$ is the algebraic
torus $T_{N^{\circ},\s}$. 
\end{theorem}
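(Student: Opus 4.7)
The plan is to reduce the theorem to an explicit computation with the $\cA_{\prin}$ exchange relations, relying on sign coherence of $c$-vectors. In the framework of this paper, sign coherence is a by-product of the scattering-diagram and cluster-complex theory developed in Sections \ref{scatdiagsection}--\ref{basection}.

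First I would unpack the construction: $\cA_{\prin}$ is glued from charts $T_{\tN^{\circ},\s'}$ indexed by seeds $\s'$ mutation-equivalent to $\s$, and $\ocA_{\prin}^{\s}$ is the partial compactification which replaces the initial chart $T_{\tN^{\circ},\s}\cong T_{N^{\circ},\s}\times T_{M,\s}$ by $T_{N^{\circ},\s}\times\AA^{n}_{X_1,\dots,X_n}$ and then reglues the remaining charts along their $\cA_{\prin}$ mutation maps. The fibration $\pi$ extends to $\ocA_{\prin}^{\s}\to\AA^{n}$, and on the initial chart $\pi^{-1}(0)$ is exactly $T_{N^{\circ},\s}\times\{0\}$. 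What remains is to verify that no other chart contributes a further component.

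The crucial step is the $\cA_{\prin}$ exchange relation at a seed $\s'$,
$$A_k^{\s'}\cdot A_k^{\mu_k\s'}\;=\;\prod_i(A_i^{\s'})^{[b_{ik}]_+}\prod_j X_j^{[c_{jk}]_+}\;+\;\prod_i(A_i^{\s'})^{[-b_{ik}]_+}\prod_j X_j^{[-c_{jk}]_+},$$
where $(c_{jk})_j$ is the $k$-th $c$-vector of $\s'$ with respect to $\s$. The essential input is sign coherence: for each $k$, either $c_{jk}\ge 0$ for all $j$ or $c_{jk}\le 0$ for all $j$. In the present paper this should come from Theorem \ref{fgcclemth} identifying cluster chambers and their primitive normals, together with positivity of wall functions, Theorem \ref{scatdiagpositive}. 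Given sign coherence, one of the two $X$-monomials is identically $1$ and the other carries a strictly positive power of some $X_j$; imposing $X_1=\cdots=X_n=0$ therefore annihilates the latter and reduces the exchange to a monomial identity between $A_k^{\s'}$, $A_k^{\mu_k\s'}$ and the remaining $A_i^{\s'}$, which defines an isomorphism between the central fibres of the two adjacent torus charts, each a copy of $T_{N^{\circ}}$.

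An induction on the length of a mutation sequence from $\s$ to $\s'$ then shows that the (partial compactification of the) chart at $\s'$ contributes to $\pi^{-1}(0)$ only a single torus $T_{N^{\circ}}$, identified with $T_{N^{\circ},\s}$ via the composition of monomial gluings above. Hence $\pi^{-1}(0)=T_{N^{\circ},\s}$. The main obstacle is sign coherence: it is the one non-formal ingredient, and in this paper's framework must be extracted cleanly from the earlier scattering-diagram and cluster-complex results, rather than quoted from the literature. Once it is in hand, the conclusion follows from the direct exchange-relation calculation above.
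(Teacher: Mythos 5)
Your plan is in the wrong logical direction relative to this paper, and it has a real gap. You propose to derive the statement from sign coherence of $c$-vectors, and then to extract sign coherence from the scattering-diagram and cluster-complex material in Sections \ref{scatdiagsection}--\ref{basection}. But in this paper sign coherence (Corollary \ref{sccveccor}) is a \emph{consequence} of Corollary \ref{mutcor}, the statement you are trying to prove; the paper even remarks right after the introduction's Theorem \ref{cftth} that the central-fibre claim is ``equivalent to'' sign coherence, and then proves the central-fibre claim first. The tools you gesture at do not independently supply sign coherence: Theorem \ref{scatdiagpositive} is positivity of the scattering functions' coefficients, not a sign statement about $c$-vectors, and the translation of the cluster chamber structure into a statement about $c$-vectors (Lemma \ref{facelem}) goes through [NZ], whose hypothesis (1.8) is precisely the sign coherence that Corollary \ref{sccveccor} establishes. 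So as written your argument is circular inside this paper's framework, or at best incomplete until you supply an independent derivation of sign coherence.

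The paper sidesteps sign coherence entirely. Its proof identifies $\cA_{\prin}$ with $\cA_{\scat,\s}$ (Theorem \ref{rpth}), so the charts of $\ocA^{\s}_{\prin}$ are indexed by cluster chambers and glued by wall-crossing automorphisms of the scattering diagram $\foD^{\cA_{\prin}}_{\s}$. Lemma \ref{clem} shows every chart has the same toric partial compactification, with fan the cone spanned by $(0,e_1^*),\dots,(0,e_n^*)$, because the geometric tropicalization of each wall-crossing fixes these rays. Then the essential observation in Corollary \ref{mutcor}, (3) is that the initial wall functions are $1+z^{(v_i,e_i)}$, hence every wall function of $\foD^{\cA_{\prin}}_{\s}$ is $\equiv 1$ modulo the ideal $(X_1,\dots,X_n)$; consequently the gluing maps are isomorphisms near the locus $X_1=\cdots=X_n=0$ and restrict to the identity on it. This gives $\pi^{-1}(0) = T_{N^{\circ}}$ directly, and sign coherence is then read off from the exchange relations \emph{afterwards} in Corollary \ref{sccveccor}, essentially by the very computation you wrote down, but run in the opposite logical direction. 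If you want to salvage your route, you would need to prove sign coherence directly from the scattering diagram (for example, from the fact that the walls all lie in hyperplanes $(n,0)^{\perp}$ with $n \in N^+$, combined with a direct identification of facet normals of cluster chambers with $c$-vectors that does not pass through [NZ]); as it stands, that is exactly the missing step.
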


Though immediate from our scattering diagram methods, the result is not obvious from the
original definitions: indeed, it is equivalent to the sign-coherence of $c$-vectors, see
Corollary \ref{sccveccor}.  

The last major ingredient in the proof of Theorem \ref{mainth} is a formal
version of the Fock-Goncharov conjecture. As mentioned above, 
this conjecture does not hold in general, 
but in \S\ref{formalFGcsec}, we show that the Fock-Goncharov conjecture holds 
in a formal neighbourhood of the torus fibre of $\ocA^{\s}_{\prin}\rightarrow
\AA^n$. 
We show the structure constants given in Theorem \ref{mainth}, (1),
have a tropical interpretation and
determine an associative product on $\can(\cA_{\prin})$, except that 
$\vartheta_p \cdot \vartheta_q$ 
will in general be an infinite sum of theta functions. Further,
canonically associated to each universal Laurent polynomial $g \in 
\up(\cA_{\prin})$ is a formal power series  
$\sum_{q \in \cA_{\prin}^{\vee}(\bZ^T)} \alpha_q \vartheta_q$ which converges
to $g$ in a formal neighbourhood of the central fibre. 
For the precise statement 
see Theorem \ref{ffgth}, which we interpret as saying that the 
Fock-Goncharov dual basis conjecture  always 
holds in the {\it large complex structure limit}.
This is all one should expect from log Calabi-Yau mirror symmetry, 
in the absence of further affineness assumptions.
A crucial
point, shown in the proof of Theorem \ref{ffgth}, is that the expansion
of $g\in \up(\cA_{\prin})$ is independent of the choice of seed $\s$
determining the compactification $\ocA_{\prin}^{\s}$, i.e., is independent
of which degeneration is used to perform the expansion.

In \S\ref{midclustersection}, we introduce the middle cluster
algebra $\midd(\cA_{\prin})$. The idea is that while we don't know that every 
regular function on $\cA_{\prin}$ can be written as a linear
combination of theta functions, there is a set $\Theta\subset
\cA_{\prin}^{\vee}(\ZZ^T)$ indexing those $p$ for which $\vartheta_p$ is a 
regular function on $\cA_{\prin}$. These in fact yield a vector space basis
for a subalgebra of $\up(\cA_{\prin})$ which necessarily includes all
cluster monomials, hence includes the ordinary cluster algebra. With this
in hand, Theorem \ref{mainth} becomes a summary of the results proved up to
this point. We then deduce the result for $\cX$ and $\cA$-type cluster
varieties from the $\cA_{\prin}$ case.

\subsection{Convexity conditions}
\label{intro1.3}

We now turn to the use of convexity conditions to prove the Fock-Goncharov
conjecture in a number of different situations, as covered in
\S\ref{ppsec}. To motivate the concepts,
let us define a \emph{partial minimal model} of a log Calabi-Yau variety
$V$.
This is an inclusion $V\subset Y$ as an open subset 
such that the canonical volume form on $V$ has a simple pole along each
irreducible divisor of the
boundary $Y \setminus V$. For example, 
a partial minimal model for an algebraic torus 
is the same as a toric compactification. We wish to extend
elementary constructions
of toric geometry to the cluster case. For example,
the partial compactification $\cA \subset \ocA$ determined
by frozen variables is a partial minimal model. 

The generalisation of the cocharacter lattice $N \subset N_{\bR}$ of the 
algebraic torus $T_N := N \otimes \bG_m$
is the tropical set $V(\bZ^T) \subset V(\bR^T)$ of $V$. 
The main difference between the torus 
and the general case is that $V(\bR^T)$ is not in general a vector space. Indeed,
the identification of $V(\ZZ^T)$ with the cocharacter lattices
of various charts of $V$ induce piecewise linear (but not linear) identifications between
the cocharacter lattices. As a result, a piecewise straight path in $V(\bR^T)$ 
which is straight under 
one identification $V(\bR^T) = N_{\bR}$ will be bent under another. 
Thus the usual notions of straight lines, convex functions or convex sets do  
not make sense on $V(\RR^T)$. 

The idea for generalizing the notion of convexity is to instead make
use of broken lines, which are piecewise linear paths in $V(\RR^T)$.
Using broken lines in place of straight lines we can say which piecewise linear
functions, and thus which polytopes, are convex, see Definition 
\ref{imcdef}. 
Each regular function $W: V \to \bA^1$ has a canonical piecewise linear 
tropicalisation
$w:=W^T: V(\bR^T) \to \bR$, which we conjecture is convex in the sense
of Definition \ref{imcdef}, see Conjecture \ref{dcon}.  The conjecture is easy 
for $W \in \ord(V) \subset \up(V)$, see Proposition \ref{convcor}. 
%(the main reason we use global monomials, rather than arbitrary global functions, in the definition EGM is that we can prove this convexity).
Each {\it convex} piecewise linear $w$ gives a \emph{convex}
polytope $\Xi_w = \{x\,|\, w(x) \geq -1 \}$ and a \emph{convex}
cone $\{x \in V^{\mch}(\bR^T)\,|\, w(x) \geq 0 \}$, where italics 
indicates convexity in our broken line sense. 
We believe the existence of a 
bounded polytope is equivalent to the full Fock-Goncharov conjecture:

\begin{conjecture} \label{ffgconj} The full Fock-Goncharov conjecture holds for $\cA_{\prin}$ 
if and only if
the tropical space $\cA^{\vee}_{\prin}(\bR^T)$ contains a full dimensional bounded polytope, 
convex in our sense.
\end{conjecture}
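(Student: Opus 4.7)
The plan is to address the two implications separately, deriving the easier direction from the partial-compactification formalism of Section \ref{candsec} and locating the principal difficulty in the forward direction.

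For the ($\Leftarrow$) implication, suppose $\Xi \subset \cA^{\vee}_{\prin}(\bR^T)$ is a bounded full-dimensional polytope, convex in the broken-line sense. The results of Section \ref{candsec} attach to $\Xi$ a projective partial minimal model $Y \supset \cA_{\prin}$ together with an ample line bundle $L$, whose sections of $L^{\otimes k}$ are spanned by the theta functions indexed by $(k\Xi) \cap \cA^{\vee}_{\prin}(\bZ^T)$. Since $Y$ is projective, every $f \in \up(\cA_{\prin}) = \Gamma(\cA_{\prin}, \cO)$ extends, after clearing its poles along the boundary, to a rational section of some $L^{\otimes k}$, and hence decomposes as a finite $\kk$-linear combination of theta functions, placing $f \in \midd(\cA_{\prin})$. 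Combined with the formal Fock-Goncharov theorem (Theorem \ref{ffgth}), this identifies the formal theta expansion of $f$ with the finite sum just produced, so every $q$ appearing lies in $\Theta$, giving $\midd(\cA_{\prin}) = \up(\cA_{\prin})$. The free $T_{N^\circ}$-action on $\cA_{\prin}$ provides invertible theta functions whose $g$-vectors translate the portion of $\Theta$ covered by dilates of $\Xi$ across the lattice, and the saturation property (Theorem \ref{mainth}(4)) then forces $\Theta = \cA^{\vee}_{\prin}(\bZ^T)$.

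For the harder ($\Rightarrow$) direction, assume the full Fock-Goncharov conjecture for $\cA_{\prin}$, so in particular every $q \in \cA^{\vee}_{\prin}(\bZ^T)$ gives a global theta function $\vartheta_q \in \up(\cA_{\prin})$. The strategy is to construct a coercive convex PL function $w: \cA^{\vee}_{\prin}(\bR^T) \to \bR$, namely one which tends to $+\infty$ along every ray at infinity, so that the polytope $\Xi_w := \{ x \mid w(x) \geq -1 \}$ is bounded, full-dimensional, and convex in our sense. The natural candidates are $w = W^T$ for $W = \sum_i c_i \vartheta_{q_i}$ a finite positive combination of theta functions lying in $\ord(\cA_{\prin}) \subset \up(\cA_{\prin})$: positivity of the scattering diagram (Theorem \ref{scatdiagpositive}) prevents cancellation in the tropicalization, and Proposition \ref{convcor} yields convexity.

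The main obstacle is the selection of the $q_i$: the piecewise linear geometry of $\cA^{\vee}_{\prin}(\bR^T)$ at infinity, as bent by broken lines, is intricate, and the cluster complex $\Delta^+$ need not exhaust $\cA^{\vee}_{\prin}(\bR^T)$. Certifying that a finite collection of theta functions dominates every asymptotic direction seems to require a structural understanding of how walls of the scattering diagram accumulate outside $\Delta^+$, analogous to the compactness arguments in the Gross-Siebert program for compact log Calabi-Yau mirror families. It is the absence of such an input that leaves this direction conjectural, and I would expect that any eventual proof must combine the positivity and saturation already available with a genuinely new global control of broken lines at infinity in $\cA^{\vee}_{\prin}(\bR^T)$.
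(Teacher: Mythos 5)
The statement you are attempting to prove is explicitly labelled a \emph{Conjecture} in the paper; the authors offer no proof, so there is no argument in the text to compare yours against. That already strongly suggests neither direction is routine, and indeed the ($\Leftarrow$) direction you treat as "easier" has a gap that appears essential and is, in fact, the same gap that prevents the authors from upgrading their many partial results to the full conjecture.

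The gap is in the step "Since $Y$ is projective, every $f \in \up(\cA_{\prin})$ extends, after clearing its poles along the boundary, to a rational section of some $L^{\otimes k}$, and hence decomposes as a finite linear combination of theta functions." For this you need $\cA_{\prin}$ to sit as a dense open subset of $Y = \Proj(\tS_\Xi)$, but the paper only establishes the open immersion $U := \Spec(\can(\cA_{\prin})) \subset Y$. There is no natural morphism $\cA_{\prin} \to U$ in general: the formal theta functions $\vartheta_q$ with $q \notin \Theta$ are not regular functions on $\cA_{\prin}$ but only formal series in $\widehat{\up(\ocA_{\prin}^{\s})}\otimes_{\kk[N^+_{\s}]}\kk[N]$, so $\can(\cA_{\prin})$ is not a ring of functions on $\cA_{\prin}$, and one cannot "evaluate" elements of $\can$ on $\cA_{\prin}$. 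What the paper's convexity machinery actually delivers from a bounded positive polytope is finite generation of $\can$ (Theorem \ref{fgprop}, Corollary \ref{fgcor}), and, under the additional hypothesis EGM (which asks the polytope be cut out by tropicalizations of \emph{global monomials}, not merely any min-convex function), the chain $\ord \subset \midd \subset \up \subset \can$ (Proposition \ref{markscor}). Closing the chain — showing $\midd = \up = \can$, equivalently $\Theta = \cA^{\vee}_{\prin}(\bZ^T)$ — is precisely the content of the conjecture, and your projectivity argument is silently assuming it. A secondary issue: the conjecture's hypothesis is a priori weaker than EGM, so even $\up \subset \can$ is not immediate from what you assume. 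Your assessment of the ($\Rightarrow$) direction as genuinely open, requiring new global control of broken lines at infinity, is accurate and matches the reason the paper leaves this as a conjecture rather than a theorem.
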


The examples of \cite{P1}, \S 7, show
that for the full Fock-Goncharov conjecture to hold, we need to assume $V$ has enough global functions. 
In that case tropicalizing a general function gives (conjecturally) a bounded {\it convex} polytope.
As we are unable to prove Conjecture \ref{dcon} except in the monomial case we use a restricted
version (which happily still has wide application): 

\begin{definition} \label{gmdef} A cluster variety $V$ 
has \emph{Enough Global Monomials} (EGM)
if for each valuation $0 \neq v \in V^{\trop}(\bZ)$ there is a global monomial 
$f$ with $v(f) < 0$. 
\end{definition}

The condition that $V$ has EGM is equivalent to the existence of 
$W \in \ord(V)$ whose associated convex polytope $\Xi_{W^T}$ is bounded,
see Lemma \ref{egmlem}.

The following theorem demonstrates the value of the EGM condition:

\begin{theorem} \label{egmth} Let $V$ be a cluster variety. 
Then:
\begin{enumerate}
\item (Proposition \ref{cafin}, Corollary \ref{fgcor})
If $V^{\vee}$ satisfies the EGM condition, then
the multiplication rule on $\can(V)$ is polynomial, i.e., for given
$p,q \in V^{\mch}(\bZ^T)$, $\alpha(p,q,r) =0$ for all but
finitely many $r \in V^{\mch}(\bZ^T)$. This gives 
$\can(V)$ the structure of a finitely generated commutative 
associative $\kk$-algebra.
\item (Proposition \ref{markscor})
If $V=\cA_{\prin}$ and $V$ satisfies the EGM condition, then there 
are canonical inclusions
\[
\ord(V) \subset \midd(V) \subset \up(V) \subset \can(V).
\]
\iffalse
\item (Theorem \ref{fgscth}, (2)) For $V=\cAp$, if $\up(\cA_{\prin})$ 
is generated by finitely many cluster variables, then the full Fock-Goncharov
conjecture holds for $V$ (Definition \ref{fgconjdef}).
\item (Proposition \ref{egmscprop}) 
If the set $\Delta^+(\ZZ)$ of all $g$-vectors
of global monomials of $\shA$ in $\shA^{\vee}(\ZZ^T)$ is not
contained in a half-space under the identification of $\shA^{\vee}(\ZZ^T)$
with $M^{\circ}$ induced by some choice of seed, then the full 
Fock-Goncharov conjecture (see Definition \ref{fgconjdef})
holds for $\cA_{\prin}$, $\cX$, very general $\cA_t$ and, if the convexity condition (7) of
Theorem \ref{mainth} holds, for $\cA$. 
\fi
\end{enumerate}
\end{theorem}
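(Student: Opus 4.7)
The theorem has two essentially independent parts; my plan handles each separately, with the common mechanism being convex piecewise linear tropicalisations of global monomials controlling broken lines.

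For part (1), each global monomial $W$ on $V^{\vee}$ has tropicalisation $w := W^T \colon V^{\vee}(\RR^T) \to \RR$, which is convex in the broken-line sense (the monomial case of Conjecture \ref{dcon}, established in Proposition \ref{convcor}). The key consequence is a \emph{bending estimate}: for any broken line $\gamma$ in $V^{\vee}(\RR^T)$ with asymptotic exponent $p$ and final monomial $c z^{m_{\gamma}}$ at endpoint $Q$, the value $w(m_{\gamma})$ is controlled by $w(p)$ and $w$ evaluated near $Q$. Applied to the pair of broken lines meeting at a common basepoint which compute $\alpha(p,q,r)$, every $r$ with $\alpha(p,q,r) > 0$ satisfies an inequality $w(r) \leq w(p) + w(q) + C(Q)$ for a constant depending only on the basepoint. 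By EGM of $V^{\vee}$ combined with Lemma \ref{egmlem}, there exists $W \in \ord(V^{\vee})$ whose convex polytope $\Xi_{W^T}$ is bounded; intersecting the constraints coming from a finite family of such $w$'s confines $r$ to a bounded subset of $V^{\vee}(\RR^T)$, whose intersection with the lattice $V^{\vee}(\ZZ^T)$ is therefore finite. Commutativity of the product is immediate from the symmetry of the broken-line count, associativity is inherited from the formal associative product of Theorem \ref{ffgth} (now defined unambiguously on $\can(V)$ because sums are finite), and finite generation as a $\kk$-algebra follows from the saturation of $\Theta$ (Theorem \ref{mainth}(4)) together with a Gordan-type argument applied to the rational polyhedral cone generated by this bounded region.

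For part (2), the inclusion $\ord(V) \subset \midd(V)$ is immediate from Theorem \ref{mainth}(3), which identifies each cluster monomial with $\vartheta_q$ for some $q \in \Delta^+(\ZZ) \subset \Theta$. The inclusion $\midd(V) \subset \up(V)$ is the injective algebra homomorphism $\nu$ of Theorem \ref{mainth}(5), whose injectivity for $V = \cA_{\prin}$ is part of (7). The new content is $\up(V) \subset \can(V)$. Given $g \in \up(V)$, the formal Fock-Goncharov Theorem \ref{ffgth} produces a canonical formal expansion $g = \sum_q \alpha_q \vartheta_q$ in a formal neighbourhood of the central fibre of $\ocA_{\prin}^{\s} \to \AA^n$, and we must show the support $S(g) := \{q : \alpha_q \neq 0\}$ is finite. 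In each cluster chart $T_L$ corresponding to a seed $\s$, $g|_{T_L}$ is a genuine Laurent polynomial with bounded Newton polytope, while $\vartheta_q|_{T_L}$ has leading monomial $z^q$ whenever $q \in \shC^+_{\s}$ (via sign-coherence of $c$-vectors, Corollary \ref{sccveccor}); this bounds the portion of $S(g)$ in the cluster complex. The EGM hypothesis on $V$ then provides the additional convex-polyhedral control over the complement, forcing $S(g)$ into a bounded region and hence a finite set of lattice points.

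The principal technical obstacle throughout is the bending estimate itself: controlling precisely how the evaluation of a convex PL function along a broken line varies when the line bends at a wall of the scattering diagram. Establishing it requires the broken-line analysis of Section \ref{blsec}, the convexity framework of Definition \ref{imcdef}, and in an essential way the positivity of scattering diagrams (Theorem \ref{scatdiagpositive}), which forces bendings to occur in a controlled monotone direction. For part (2) there is the additional subtlety of converting EGM of $V$---which most naturally supplies convex functions on $V(\RR^T)$---into constraints on $V^{\vee}(\RR^T)$ where theta function indices live; this conversion, together with the structural features of $\cA_{\prin}$ and the $g$-vector duality of Lemma \ref{fgccthg2}, is the most delicate step in the argument.
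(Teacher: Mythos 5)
Your overall strategy---tropicalize global monomials, use convexity to control broken lines, hence bound the support of the multiplication---matches the paper's. But two steps as written would fail.

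First, the bending estimate has the wrong sign. Min-convexity (Definition \ref{imcdef}) implies the \emph{decreasing} property (Definition \ref{decdef}, Lemma \ref{diplem1}): if $\alpha(p,q,r) \neq 0$ then $w(r) \geq w(p) + w(q)$, with no basepoint-dependent constant $C(Q)$. It is this \emph{lower} bound that confines $r$: the positive polytope $\Xi_w = \{w \geq -1\}$ is bounded under EGM (Lemma \ref{sminlem}, Proposition \ref{fmprop}), and $p \in d_1\Xi_w$, $q \in d_2\Xi_w$ force $r \in (d_1+d_2)\Xi_w$. An upper bound $w(r) \leq w(p)+w(q)+C(Q)$ does not confine $r$, since a min-convex $w$ with bounded $\Xi_w$ tends to $-\infty$ in every direction.

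Second, and more seriously, the finite-generation claim is not proved by the route you propose. You invoke ``saturation of $\Theta$ together with a Gordan-type argument,'' but $\can(V)$ has basis the full lattice $V^{\mch}(\ZZ^T)$, not $\Theta$, and the product is \emph{not} a monoid multiplication---$\alpha(p,q,r)$ is not $\delta_{p+q,r}$---so Gordan's lemma does not apply directly. The paper's Theorem \ref{fgprop} is a substantial inductive degeneration argument: one introduces Rees-type algebras $\tS_j$ and $\oS_j$ attached to a finite family of linear tropicalizations $\vartheta^T_{p_j}$ with each $p_j$ chosen in the interior of a maximal cluster cone, so that by Lemma \ref{blblem} $\vartheta^T_{p_j}$ is \emph{strictly} increasing at every non-trivial bend; the degeneration $U=0$ then eliminates all bending and reduces to a toric monoid algebra one filtration step at a time. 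That is where convexity actually produces finite generation, and it cannot be shortcut as you suggest. For part (2), your two easy inclusions are fine, but the crucial estimate for $\up \subset \can$ is glossed over: Proposition \ref{markscor} shows $S_g \subset \overline{S}_{g,\s} + P_\s$ and that a global monomial $\vartheta_p$ which is a character on $T_{\tM^\circ,\s}$ tropicalizes to a functional linear on $\tM^\circ_\s$ and non-negative on $P_\s$, giving a lower bound $\vartheta^T_p \geq c_p$ on $S_g$; EGM then supplies finitely many $p_i$ whose non-negativity loci intersect in $\{0\}$. Sign-coherence of $c$-vectors and $g$-vector duality, which you cite, are not what is used; the leading-term behaviour of $\vartheta_q$ is just Proposition \ref{formalthfun}.
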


\begin{remark} \label{markovremark}
We believe, based on calculations in \cite{M13}, 
\S7.1, that the conditions of the theorem ($\cA_{\prin}$ has EGM, and 
$\Theta = \cA^{\mch}_{\prin}(\bZ^T)$)
hold for the cluster variety associated with the once punctured torus,
see some details in Examples \ref{standardexample} and \ref{examples2}.
However, the equality 
$\up(\cA) = \can(\cA)$ is expected to
fail, and in particular in this case we expect the full Fock-Goncharov 
holds for $\cA_{\prin},\cX,$ and very general $\cA_t,$ but not for $\cA$. 
\end{remark} 

We note that $\cA_{\prin}$ has Enough Global Monomials in many cases:

\begin{proposition} \label{egmprop} Consider the following conditions on a cluster algebra $\cA$:
\begin{enumerate}
\item The exchange matrix has full rank, $\up(\cA)$ is generated by finitely
many cluster variables, and $\Spec(\up(\cA))$ is a smooth affine variety. 
\item $\cA$ has an acyclic seed.
\item $\cA$ has a seed with a maximal green sequence.
\item For some seed, the cluster complex $\Delta^+(\ZZ) \subset 
\cA^{\mch}(\bR^T)$ is not
contained in a half-space.
\item $\cA_{\prin}$ has Enough Global Monomials.
\end{enumerate}
Then (1) implies (5) (Proposition \ref{egmprop1}). Furthermore,
(2) implies (3) implies (4) implies (5) (Propositions \ref{mgsrem}, 
\ref{gsprop}, and \ref{ldegmprop}). 
Finally (4) implies the full Fock-Goncharov conjecture, 
for $V= \cA_{\prin}$, $\cX$, or very general $\cA_t$, or, under the convexity assumption
(7) of Theorem \ref{mainth}, for $\cA$ (Proposition \ref{egmscprop}). 
\end{proposition}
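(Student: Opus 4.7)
The plan is to prove each implication separately, assembling the scattering-diagram, broken-line and tropical machinery developed in the earlier sections. For the chain (2) implies (3) implies (4) implies (5): an acyclic seed admits a canonical maximal green sequence obtained by iterated mutation at sources of the associated quiver, giving (2) implies (3). For (3) implies (4), along a maximal green sequence the $c$-matrix transforms from the identity at the initial seed to minus the identity at the terminal seed; via the $c$-to-$g$-vector duality, the $g$-vectors of the initial and terminal clusters are $\pm e_1,\ldots,\pm e_n$, so the cluster complex $\Delta^+(\ZZ)$ contains both families and cannot lie in any closed half-space. For (4) implies (5), under a seed identification of $\cA^{\vee}(\RR^T)$ with $M^{\circ}_{\RR}$, a global monomial with $g$-vector $m$ pairs with a valuation $v$ on $\cA$ so that its value on $v$ is $-\langle m, v\rangle$; when the $g$-vectors are not contained in any half-space, every nonzero $v$ is paired negatively with some $m$, giving EGM for $\cA$ and, together with the principal-coefficient characters $X_1,\ldots,X_n$ (which detect the remaining directions), EGM for $\cA_{\prin}$.

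For (1) implies (5): finite generation of $\up(\cA)$ by cluster variables together with smoothness of $\Spec\up(\cA)$ forces each boundary divisor of any log minimal model compactification to be witnessed by the vanishing of some generator. Lifting to $\cA_{\prin}$, the generators and the principal-coefficient characters together detect every divisorial valuation in $\cA_{\prin}^{\trop}(\ZZ)$, yielding EGM.

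Finally, for (4) implies the full Fock-Goncharov conjecture, the starting point is Theorem \ref{egmth}, which under EGM already gives polynomial multiplication on $\can(\cA_{\prin})$ and the chain of inclusions $\ord\subset\midd\subset\up\subset\can$. What remains is the equality $\Theta = V^{\vee}(\ZZ^T)$ and the corresponding collapse $\midd = \up = \can$. The non-half-space hypothesis is used, via the broken-line convex polytope framework of Section \ref{ppsec}, to produce for each integer point $q$ a bounded convex polytope containing $q$ whose associated theta function is regular on $\cA_{\prin}$, placing $q\in\Theta$. The main obstacle, and the conceptual heart of the proposition, is precisely this last step: upgrading EGM together with the formal-neighbourhood result of Theorem \ref{ffgth} to a global statement requires the non-half-space condition of (4) to construct enough broken-line-convex functions whose sub-level sets are simultaneously bounded and arranged to encompass any prescribed integer point.
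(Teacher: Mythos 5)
There is a genuine gap in the final implication, and a subtler misstep in the route to (5). For ``(4) implies the full Fock--Goncharov conjecture,'' the paper's argument is not via producing bounded polytopes containing a prescribed lattice point. Rather, the essential lemma (Theorem~\ref{goodalgebra}, parts~(2)--(3)) is that $\Theta\subset\cA_{\prin}^{\vee}(\ZZ^T)$ is closed under addition and saturated. Combined with $\Delta^+(\ZZ)\subset\Theta$ (Theorem~\ref{goodalgebra}(1)) and the hypothesis that $\Delta^+(\ZZ)$ is not contained in a half-space, the positive rational cone spanned by $\Delta^+(\ZZ)$ is the whole of $\tM^\circ_{\RR}$; for any $q$, some positive integer multiple $kq$ is a non-negative integer combination of elements of $\Delta^+(\ZZ)$, so $kq\in\Theta$ by additivity and then $q\in\Theta$ by saturation. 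Your proposed mechanism (a ``bounded convex polytope containing $q$ whose associated theta function is regular'') conflates points with polytopes and does not engage with this monoid structure of $\Theta$; as stated it would not close the argument. The EGM and finiteness statements then give $\midd=\up=\can$ via Corollary~\ref{cafin} and Proposition~\ref{markscor}, but the identification $\Theta=\cA_{\prin}^{\vee}(\ZZ^T)$ is the one step where the non-half-space hypothesis is decisive, and your sketch misses the saturated-submonoid argument that makes it work.

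On ``(4) implies (5)'': the paper argues directly for $\cA_{\prin}$ by observing that $\Delta^+(\ZZ)\subset\cA_{\prin}^{\vee}(\ZZ^T)$ is the preimage under $\rho^T$ of the cluster complex in $\cA^{\vee}(\ZZ^T)$ (Construction~\ref{sdprin}), so any half-space containing it must be $N$-translation-invariant and hence descends to a half-space in $M^\circ_\RR$. This makes the contrapositive immediate for $\cA_{\prin}$, with no intermediate EGM statement for $\cA$ and no need for the principal characters $X_i$ to ``detect remaining directions.'' Your route through EGM for $\cA$ plus the $X_i$ is not obviously salvageable, since the tropicalization of a lifted cluster variable at a point $(v',m)\in\tN^\circ$ is not simply the tropical value of the original at $v'$, and the $X_i$ alone only detect directions with $\langle e_i,m\rangle<0$; the mixed case $v'\neq 0$, $m\neq 0$ is not clearly handled. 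Also note the sign: $v(f)\le\langle m,r(v)\rangle$ is an inequality (the $g$-vector exponent is one of possibly many monomials), not an equality; the inequality happens to go the right way, but the claim as written is false. Finally, for ``(1) implies (5)'' the smoothness-plus-finite-generation argument is roughly Proposition~\ref{mcexprop}, but you do not explain how the full-rank hypothesis is used: it is needed to trivialize $\pi:\cA_{\prin}\to T_M$ as $\cA\times T_M$ (Lemma~\ref{taprinlem}), which is what lets EGM for $\cA$ propagate to $\cA_{\prin}$; without that step the implication is not established.
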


\begin{example} \label{nex} 
A recent paper \cite{GY13} of Goodearl and Yakimov announces the equality 
$\up = \ord$ for all double Bruhat cells in semi-simple groups. In this
case, Yakimov has furthermore announced the existence 
of a maximal green sequence.
Many cluster varieties $\cA$ associated to a marked bordered surface with
at least two punctures also have a maximal green sequence, see \cite{CLS},
\S 1.3 for a summary of known results on this. The recent \cite{GS16},
Theorem 1.12,1.17,
shows that (4) holds for the Fock-Goncharov cluster varieties of $\PGL_m$ local systems on
{\it most} decorated surfaces. Together with Proposition \ref{egmprop} these results 
imply the full Fock-Goncharov theorem in any of these cases. 

We note that for the cluster algebra associated to a marked bordered surface,
a canonical basis of $\up(\cX)$ parameterized by $\cA(\bZ^T)$ has been previously obtained by Fock-Goncharov, 
\cite{FG06}, Theorem 12.3. They show that
the $\cA$ and $\cX$ varieties have natural modular meaning as moduli spaces of local systems. They identify
$\cA(\bZ^T)$ with a space of integer laminations (isotopy classes of disjoint loops with
integer weights) and their associated basis element is a natural 
function given by trace of monodromy around a loop. 
We checked, together with 
A.\ Neitzke, that our basis agrees with the Fock-Goncharov basis of trace
functions in the case of a sphere with four punctures, for primitive elements of the tropical set. 
Our theta function basis 
comes canonically from the
cluster structure (it does not depend on any modular interpretation). 
\end{example}

\begin{remark}
\label{clusterindependenceremark}
In general, we conjecture the bases we construct for rings of global functions
on cluster varieties $V$ or partial compactifications
$\oV$ are intrinsic to the 
underlying log Calabi-Yau variety $V$, and do not depend on the particular
cluster structure on $V$. This is a non-trivial statement: there exist
varieties with multiple cluster structures (in particular different atlases of
tori for the same variety). Yan Zhou will show in her Ph.D.\ thesis that the 
(principal coefficient version of) 
the cluster variety associated
to the once-punctured torus is an example. 

This conjecture is suggested by \cite{GHK11}, Conjecture 0.6, and
the results of \cite{GHK11}, \cite{GHK12}
and \cite{GHKII} prove this in the case of the $\cX$ cluster varieties 
where the skew-symmetric form has rank two,
which includes the case of the sphere with four punctures. 
Thus we have the (at least to us) 
remarkable conclusion that in many cases where bases occur because of
some extrinsic interpretation of the spaces, in fact this extrinsic
interpretation is irrelevant. For example, the theta functions
given by trace functions above, which would appear to depend on the 
realization of the cluster variety as a moduli space of local systems, 
are actually intrinsic to the underlying variety. 
In the case of Example \ref{SLrexample}, where bases may arise
from representation
theory, our basis does not use the group-theoretic aspects of the
spaces. The suggestion that the canonical basis is independent of the
cluster structure may surprise some, as understanding the canonical basis
was the initial motivation for the Fomin-Zelevinsky definition of
cluster algebras.
\end{remark} 

Returning to the role of convexity notions, we note that
our formula for the structure constants $\alpha$ of Theorem \ref{mainth},
(1) is given by counting broken lines. As a result, our notion of convexity
interacts nicely with the multiplication rule. 
This allows us to generalize basic polyhedral constructions from toric geometry 
in a straight-forward way. 

A polytope $\Xi \subset V^{\mch}(\bR^T)$ convex in our sense determines 
(by familiar Rees-type constructions for graded rings) 
a compactification of $V$. Furthermore, for any choice of seed, 
$V^{\vee}(\RR^T)$ is identified with a linear space $\RR^n$ and 
$\Xi$ with an ordinary convex polytope. Our construction also gives
a flat degeneration of this compactification of $V$ to the ordinary
polarized toric variety for $\Xi \subset \bR^n$. See \S \ref{candsec}. We expect
this specializes to  a uniform construction of many degenerations of representation theoretic
objects to toric varieties,  see e.g., \cite{C02}, \cite{AB}, and \cite{KM05}. 
Applied to the Fock-Goncharov moduli spaces of $G$-local systems, this will
give for the first time compactifications of character varieties with nice 
(e.g., toroidal anti-canonical)
boundary. See Remark \ref{crems}.  The polytope can
be chosen so that the boundary of the compactification is very simple, a union
of toric varieties. For example, let $\Gr^o(k,n) 
\subset \Gr(k,n)$ be the open subset where
the frozen variables for the standard cluster structure are non-vanishing. 
Then the boundary $\Gr(k,n)\setminus \Gr^o(k,n)$ consists of a union of
certain Schubert cells. We obtain 
using a polytope an 
alternative compactification
where the Schubert cells (which are highly non-toric) are replaced by 
toric varieties. See Theorem \ref{toricbth}.  

The Fock-Goncharov conjecture is the cluster special case of 
\cite{GHK11}, Conjecture 0.6, which says (roughly) that affine log CYs with maximal
boundary come in canonical dual pairs with the tropical set of one parameterizing
a canonical basis of functions on the other. 
We can view the conjecture as having two parts: First, that the vector space, $\can$, 
with this basis $V^{\trop}(\bZ)$ is naturally an algebra in a such a way that 
$V^{\vee} := \Spec(\can)$ is an affine log CY. And then furthermore, that this log CY is
the mirror -- in the cluster case the Fock-Goncharov dual (it is natural to further
ask if this is the mirror in the sense of HMS but we do not consider this question here). 
Our deepest mirror theoretic result is the following weakening of the first 
part: 

\begin{theorem} Assume $\cA_{\prin}^{\vee}$ has EGM. Then for $V = \cX,\cA_{\prin}$ or $\cA_t$ for very general $t$,
or, under the convexity assumption (7) of Theorem \ref{mainth}, $\cA$, $\can(V)$ (with
structure constants as in \ref{mainth}) is a finitely generated algebra
and $\Spec(\can(V))$ is a log canonical Gorenstein $K$-trivial affine variety of
dimension $\dim(V)$.
\end{theorem}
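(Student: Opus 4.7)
The plan is to reduce to $V = \cA_{\prin}$ and leverage the polyhedral compactification machinery of \S\ref{candsec}: the EGM hypothesis on $\cA_{\prin}^{\vee}$ produces a bounded convex polytope in $\cA_{\prin}^{\vee}(\bR^T)$, which yields a toric degeneration through which the geometric properties transfer from the central fibre.

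First I would establish the finite generation for $V = \cA_{\prin}$. The assumption that $\cA_{\prin}^{\vee}$ has EGM is precisely the input required by Theorem \ref{egmth}(1), which gives $\alpha(p,q,r) = 0$ for all but finitely many $r$ once $p,q$ are fixed and hence makes $\can(\cA_{\prin})$ a finitely generated commutative associative $\kk$-algebra. For the remaining cases I would propagate EGM along the structural maps: $\cX$ is the quotient $\cA_{\prin}/T_{N^\circ}$, $\cA_t$ is a fibre of $\pi\colon \cA_{\prin} \to T_M$, and $\cA$ is the fibre over the identity; under Fock-Goncharov duality, free torus actions and torus fibrations get interchanged, so each of $\cX^{\vee}$, $\cA_t^{\vee}$, $\cA^{\vee}$ arises naturally as a fibre or torus quotient of $\cA_{\prin}^{\vee}$. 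A global monomial on $\cA_{\prin}^{\vee}$ negative on a nonzero divisorial valuation of $V^{\vee}$ then restricts or invariantly descends to a global monomial on $V^{\vee}$ with the same negative value, giving EGM on $V^{\vee}$. In the $\cA$ case the convexity hypothesis from Theorem \ref{mainth}(7) is invoked to guarantee that $\up(\cA)$ contains enough such monomials for this descent. A second application of Theorem \ref{egmth}(1) then yields finite generation of $\can(V)$ in each case.

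Both the dimension count and the geometric assertions will come from a toric degeneration. By Lemma \ref{egmlem}, EGM on $V^{\vee}$ produces a full-dimensional bounded polytope $\Xi \subset V^{\vee}(\bR^T)$, convex in the broken-line sense; the Rees-type construction of \S\ref{candsec} then yields a projective compactification $\overline{\Spec \can(V)}$ together with a flat degeneration to the ordinary polarized toric variety $(X_\Xi, L_\Xi)$ associated to $\Xi$ viewed, via any seed, as a lattice polytope in $\bR^n$. Flatness combined with full-dimensionality of $\Xi$ forces $\dim \Spec \can(V) = \dim X_\Xi = \dim V$. The central fibre $X_\Xi$ is a Gorenstein toric variety with $K$-trivial interior and reduced toric boundary $D_\Xi$ satisfying $K_{X_\Xi}+D_\Xi = 0$; each of the properties Gorenstein, $K$-trivial, and log canonical is preserved on the generic fibre of a flat family with a reduced compatible boundary.

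The main obstacle, and the crux of the theorem, is verifying that the compactification boundary of $\overline{\Spec \can(V)}$ really specializes to a reduced divisor compatible with $D_\Xi$, and that the canonical volume form $\Omega_V$ on the log Calabi-Yau $V$ extends through $V \subset \Spec \can(V)$ with at worst a simple pole along every boundary component, giving a nowhere-vanishing section of $K_{\Spec \can(V)}$. By construction, the boundary divisors of $\overline{\Spec \can(V)}$ are vanishing loci of theta functions $\vartheta_p$ for primitive divisorial $p \in V^{\vee}(\bZ^T)$ indexing the facets of $\Xi$; positivity of the broken-line structure constants (Theorem \ref{scatdiagpositive}) then controls the pole order of $\Omega_V$ along them, and log canonicity is inherited from the toric central fibre via inversion of adjunction. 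This pole-order verification is the essential ``mirror symmetry at the boundary'' portion of the argument and is where the full strength of the broken-line formalism is really used.
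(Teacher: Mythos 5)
Your overall plan — build the Rees-type compactification from \S\ref{candsec} using the bounded polytope supplied by EGM (via Lemma \ref{egmlem}), degenerate it to a polarized toric variety (Theorem \ref{polylem}), and transfer the geometric properties from the toric central fibre to the general fibre — is exactly the paper's strategy for the dimension count and the geometric assertions, and the finite-generation piece via Theorem \ref{egmth} is also on target.

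However, there is a real gap at the heart of the transfer step. You write that ``each of the properties Gorenstein, $K$-trivial, and log canonical is preserved on the generic fibre of a flat family with a reduced compatible boundary,'' and then propose to establish this by a pole-order computation for $\Omega_V$ using positivity of broken-line structure constants together with inversion of adjunction from the toric central fibre. This is not an argument: the assertion that these conditions \emph{are} open in a flat family is exactly the non-trivial point, and you are assuming it rather than proving it. Inversion of adjunction is also the wrong tool here — it relates log canonicity of a pair to log canonicity of its restriction to a divisor, not to openness of the lc condition under deformation in the base direction. The paper supplies the missing step via Lemma \ref{kollarslemma} (due to Koll\'ar): if the central fibre of a proper flat family satisfies (normal Cohen--Macaulay) + (reduced boundary divisor) + (lc pair) + ($\omega_{X_s}(B_s)\simeq\cO_{X_s}$) + ($H^1(\cO_{X_s})=0$), then these conditions hold on a Zariski open neighbourhood of $0$ in the base, and moreover $\omega_{X/S}(B)$ is invertible so its triviality propagates. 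That lemma, proved by a careful Cohen--Macaulay/dualizing-sheaf argument plus the openness of lc from \cite{K13}, Corollary 4.10, is the load-bearing input you lack. The ``mirror symmetry at the boundary via broken lines'' you propose is not used at all in the paper's proof.

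Two smaller issues. First, the openness argument only gives the conclusion for \emph{very general} $t$; to get all $t$ under the convexity assumption (7) and to get $\cA_{\prin}$ itself, the paper uses a one-parameter subgroup of $T_{\tK^\circ}$ that flows a general point of the base $\AA^n$ to $0$, together with $T_{\tK^\circ}$-equivariance of the construction (see the proof of Theorem \ref{mainthax}(7)). Second, the $\cX$ case is not handled by a direct EGM argument on $\cX^\vee$ as you suggest; the paper instead observes that $T_{N^\circ}$ acts on the $\cA_{\prin}$-degeneration trivially over the base, so the quotient gives an isotrivial degeneration of a compactification of $\Spec\can(\cX)$ to a toric compactification of $T_M$, and then the same openness argument applies.
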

For the proof see Theorem \ref{cfcor}.

\subsection{Representation-theoretic applications}
\label{intro1.4}
We turn to \S\ref{pcrtsec}. Here we study features of
partial compactifications coming from frozen variables. As explained
in Example \ref{SLrexample}, these partial compactifications are often
the relevant ones in representation-theoretic examples. In particular,
for a partial minimal model $\cA \subset \ocA$, often 
the vector subspace 
$\up(\ocA) \subset \up(\cA)$ is more important than 
$\up(\cA)$ itself. For example there
is a cluster structure with frozen variables for the open double Bruhat
cell $U$ in a semi-simple group $G$. Then $\up(\cA)$ is the ring of functions
on the open double Bruhat cell and $\up(\ocA)=H^0(G,\shO_G)$.
Of course $H^0(G,\cO_G)$ is the most important
representation of $G$. However, one cannot expect a 
canonical basis of $\up(\ocA)$, i.e., one determined by the 
intrinsic geometry of $\ocA$. For example, 
$G$ has
no non-constant global functions which are eigenfunctions for the action of $G$ on itself.
But we expect, and in the myriad cases 
above can prove, that the affine log Calabi-Yau open subset 
$\cA \subset \ocA$ has a canonical basis, $\Theta$, and we believe 
that $\Theta \cap \up(\ocA)$, the set of theta functions on
$\cA$ 
that extends regularly to all of $\ocA$, is a basis for $\up(\ocA)$, 
canonically associated to the choice of
log Calabi-Yau open subset $\cA \subset \ocA$, see \cite{P1}, Remark 1.10.
This is not a basis of $G$-eigenfunctions, but
they are eigenfunctions
for the associated maximal torus, which is the subgroup of $G$ that
preserves $U$. This is
exactly what one should expect: the basis is not intrinsic to $G$,
instead it is (we conjecture) intrinsic to the pair $U \subset G$, see
Remark \ref{clusterindependenceremark}.

We shall now describe in more detail
what can be proved for partial compactifications 
of cluster varieties
coming from frozen variables. A key point is a technical but
combinatorial hypothesis, that 
\emph{each variable has an \it optimized seed}, see Definition \ref{osdef}
and Lemmas \ref{quiveroptimized} and \ref{opslem}. The main need for
this hypothesis is Proposition \ref{boundaryprop}, which states that if
a linear combination of theta functions extends across a boundary divisor
then each theta function in the sum extends across the divisor. Thus
the middle cluster algebra, in this case, behaves well with respect to
boundary divisors.
Happily, this condition holds for the cluster structures on the Grassmannian, 
and, for $\SL_r$, for the cluster structure
on a maximal unipotent subgroup $N \subset \SL_r$, the basic affine space $\cA = G/N$, and the Fock-Goncharov
cluster structure on $(\cA \times \cA \times \cA)/G$, see Remark \ref{osrem}. 

Let us now work with the principal cluster variety $\cA_{\prin}$.
Consider the partial compactification $\cA_{\prin}
\subset \ocA_{\prin}$ by allowing the frozen variables to be zero.
Each boundary divisor $E \subset \ocA_{\prin}$ 
gives a point $E \in \cA_{\prin}(\bZ^T)$ and thus (in general conjecturally), 
a canonical
theta function $\vartheta_E$ on $\cA_{\prin}^{\mch}$. We then define the 
\emph{potential} 
$W = \sum_{E \subset \partial \ocA_{\prin}} \vartheta_E \in 
\up(\cA_{\prin}^{\vee})$ as the sum of these theta functions. We have its 
piecewise linear tropicalisation $W^T: \cA^{\vee}_{\prin}(\bR^T) \to \bR$. 
This defines
a cone 
\begin{equation} \label{spcone}
\Xi := \{x \in \cA^{\mch}_{\prin}(\bR^T)\,|\, W^T(x) \geq 0\} 
\subset \cA^{\vee}_{\prin}(\bR^T).
\end{equation}

\begin{theorem}[Corollaries \ref{spcor} and \ref{ffgcor}]
\label{spth} 
Assume that each frozen index $i$ has an optimized seed. Then:
\begin{enumerate}
\item
$W^T$ and $\Xi$ are convex in our sense. 
\item The set 
$\Xi \cap \Theta$ 
parameterizes a canonical basis of an algebra $\midd(\ocA_{\prin})$, and
\[
\midd(\ocA_{\prin}) = \up(\ocA_{\prin}) \cap \midd(\cA_{\prin}) \subset 
\up(\cA_{\prin}).
\]
\item
Now assume further that we have Enough Global Monomials on 
$\cA_{\prin}^{\vee}$. If for some seed $\s$, $\Xi$
is contained in the convex hull of $\Theta$ 
(which itself contains the convex hull of $\Delta^+(\ZZ)$) then 
$\Theta = \cA_{\prin}^{\mch}(\bZ^T)$, $\midd(\ocA_{\prin}) = \up(\ocA_{\prin})$ is finitely generated, and the
integer points $\Xi\cap\shA_{\prin}^{\vee}(\ZZ^T) \subset \cA_{\prin}^{\mch}(\bZ^T)$ parameterize a canonical basis. 
\end{enumerate}
\end{theorem}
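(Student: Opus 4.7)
The plan is to prove the three claims in turn, building on the optimized-seed hypothesis together with Proposition \ref{convcor} (convexity of tropicalizations of cluster monomials), Proposition \ref{boundaryprop} (separation of theta functions across boundary divisors), Theorem \ref{mainth}(4) (saturation and additive closure of $\Theta$), and Theorem \ref{egmth}(1) (polynomial multiplication on $\can$ under EGM of the dual). For part (1), the first step is to show that each boundary theta function $\vartheta_E$ is a global monomial on $\cA_{\prin}^{\vee}$: the optimized seed for the frozen index of $E$ places $E \in \cA_{\prin}(\bZ^T)$ inside a cluster chamber in which $\vartheta_E$ restricts to a single character. Summing, $W \in \ord(\cA_{\prin}^{\vee})$, so Proposition \ref{convcor} yields convexity of $W^T$ on $\cA_{\prin}^{\vee}(\bR^T)$ in the broken-line sense, and hence of the cone $\Xi = \{W^T \geq 0\}$.

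For part (2), the pivotal step is the identity
\[
\ord_E(\vartheta_p) \;=\; \vartheta_E^T(p)
\]
for $p \in \Theta$ and $E$ a boundary divisor, the right-hand side being the tropicalization of $\vartheta_E$ evaluated at $p$. The optimized seed for the frozen index of $E$ reduces the verification of this identity to a one-chart computation. As a consequence $\vartheta_p$ extends to a regular function on $\ocA_{\prin}$ if and only if $\vartheta_E^T(p) \geq 0$ for every $E$, equivalently $p \in \Xi$. Applying Proposition \ref{boundaryprop} to any $f = \sum_p \alpha_p \vartheta_p \in \up(\ocA_{\prin}) \cap \midd(\cA_{\prin})$ forces each $\vartheta_p$ with $\alpha_p \neq 0$ to extend individually, yielding $\midd(\ocA_{\prin}) = \bigoplus_{p \in \Xi \cap \Theta} \kk\cdot\vartheta_p = \up(\ocA_{\prin}) \cap \midd(\cA_{\prin})$. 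Closure of this span under multiplication is the same argument applied to the expansion of $\vartheta_p \cdot \vartheta_q$ in the theta basis: the product lies in $\up(\ocA_{\prin}) \cap \midd(\cA_{\prin})$, so every nonzero structure constant $\alpha(p,q,r)$ has $r \in \Xi \cap \Theta$.

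For part (3), the EGM hypothesis on $\cA_{\prin}^{\vee}$ together with Theorem \ref{egmth}(1) makes $\can(\cA_{\prin})$ a finitely generated commutative associative $\kk$-algebra with polynomial multiplication. The assumed inclusion $\Xi \subseteq \conv(\Theta)$ lets one write any integer point of $\Xi$ as a rational convex combination of elements of $\Theta$, so some positive integer multiple lies in $\Theta$ by additive closure; saturation then places the point itself in $\Theta$. To promote this to $\Theta = \cA_{\prin}^{\vee}(\bZ^T)$, one invokes EGM on $\cA_{\prin}^{\vee}$ to produce, for any $p \in \cA_{\prin}^{\vee}(\bZ^T)$, a global monomial on $\cA_{\prin}^{\vee}$ whose $g$-vector $g \in \Theta$ translates $p$ into $\Xi$; then $p + g \in \Xi \cap \Theta$ by the first step, and an additive-closure argument recovers $p \in \Theta$. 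Finally $\midd(\ocA_{\prin}) = \up(\ocA_{\prin})$ is cut out of $\can(\cA_{\prin})$ by the finitely many linear inequalities defining $\Xi$, hence is finitely generated.

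The main obstacle is the final step in (3), namely upgrading $\Xi \cap \cA_{\prin}^{\vee}(\bZ^T) \subseteq \Theta$ to $\Theta = \cA_{\prin}^{\vee}(\bZ^T)$: the hypothesis $\Xi \subseteq \conv(\Theta)$ only controls points already inside $\Xi$, while to reach arbitrary tropical points one must represent $p$ as an additive combination of an element of $\Xi \cap \Theta$ and an already known element of $\Theta$. Because $\Theta$ and $\conv$ live in the broken-line world rather than an ordinary vector space, this translation step has to be executed using the global monomials supplied by EGM and the fact (Theorem \ref{mainth}(3)) that their $g$-vectors lie in $\Theta$, rather than by a purely polyhedral argument. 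Parts (1) and (2), in contrast, follow cleanly from the optimized-seed hypothesis once the cited propositions are in hand.
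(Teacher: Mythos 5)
Parts (1) and (2) of your proposal are essentially correct and follow the paper's own route: Lemma \ref{opslem} gives that each $\vartheta_E$ (with $E$ a boundary divisor of $\ocA_{\prin}$) is a global monomial on $\cA^{\vee}_{\prin}$, so $W\in\ord(\cA^{\vee}_{\prin})$ and $W^T=\min_j\vartheta_{i(d_jh_j,0)}^T$ is min-convex; and your ``pivotal identity'' $\ord_E(\vartheta_p)=\vartheta_E^T(p)$ is exactly Lemma \ref{midocalem}(3), after which Proposition \ref{boundaryprop} forces each term of a sum $\sum\alpha_p\vartheta_p$ that extends over $E$ to extend individually. That matches the paper.

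There is, however, a genuine gap in your treatment of part (3), precisely at the step you flag as the ``main obstacle.'' First, a type confusion: EGM on $\cA^{\vee}_{\prin}$ furnishes global monomials \emph{on} $\cA^{\vee}_{\prin}$, whose $g$-vectors live in $(\cA^{\vee}_{\prin})^{\vee}(\ZZ^T)=\cA_{\prin}(\ZZ^T)$, not in $\Theta\subset\cA^{\vee}_{\prin}(\ZZ^T)$; the set of $g$-vectors that Theorem \ref{mainth}(3) places inside $\Theta$ is $\Delta^+(\ZZ)$, the $g$-vectors of global monomials on $\cA_{\prin}$, a different object. Second, and more seriously, the ``additive-closure argument'' that is meant to recover $p\in\Theta$ from $p+g\in\Theta$ and $g\in\Theta$ does not exist: $\Theta$ is a saturated submonoid of $\cA^{\vee}_{\prin}(\ZZ^T)$, closed under addition but not under subtraction, so knowing $p+g$ and $g$ lie in $\Theta$ says nothing about $p$. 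The paper's actual mechanism is Proposition \ref{scprop}, which is analytic rather than polyhedral: fix $q$ with $W^T(q)>0$; since $W^T$ is decreasing (Lemma \ref{diplem1}), for $m$ large every $r$ with $\alpha(p,mq,r)\neq0$ has $W^T(r)>0$, hence $r\in\Theta$ and $\vartheta_{Q,r}$ is a finite positive Laurent polynomial; EGM gives a bounded positive polytope, making the expansion of $\vartheta_p\cdot\vartheta_{mq}$ finite; thus $\vartheta_p\cdot\vartheta_{mq}$ is a finite positive Laurent polynomial, and since $\vartheta_{mq}$ is one too, the positivity of the scattering diagram (Theorem \ref{scatdiagpositive}) forces $\vartheta_{Q,p}$ itself to be finite, i.e. $p\in\Theta$. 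This division-by-positivity argument is what does the work; it cannot be replaced by the purely monoid-theoretic translation you sketch.
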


Each choice of seed identifies $\cA_{\prin}^{\mch}(\bZ^T)$ with a lattice, and 
the cone
$\Xi \subset \cA_{\prin}^{\vee}(\bR^T)$ with a rational polyhedral
cone, described by canonical linear inequalities given by the tropicalisation of
the potential. Note that $\Xi$ is convex in our generalized sense.

We show, making use of recent results of Magee and Goncharov-Shen, \cite{Magee}, \cite{TimThesis},
\cite{GS16}, that 
in the representation theoretic examples
which were the original motivation for the definition of cluster algebras our 
polyhedral
cones $\Xi$ specialize to the piecewise linear 
parameterizations of canonical bases of Berenstein and
Zelevinsky \cite{BZ01}, Knutson and Tao \cite{KT99}, and Goncharov and
Shen \cite{GS13}:

\begin{corollary} \label{slrcor} 
Let $G = \SL_{r+1}$ and let 
$\cA \subset \ocA$ be the Fomin-Zelevinsky cluster variety for the basic
affine space $G/N$. 
\begin{enumerate}
\item
All the hypotheses, and thus the conclusions, of Theorem \ref{spth} hold.
In particular
$\Xi\cap\shA^{\vee}(\ZZ^T) \subset \cA^{\mch}(\bZ^T)$ parameterizes a canonical theta function
basis of $\cO(G/N)$. 
\item 
Our potential $W$ agrees with the (representation theoretically defined) 
potential function of Berenstein-Kazhdan, \cite{BK07}. 
\item
The maximal torus $H$ acts canonically on $\ocA$, preserving the open set $\cA \subset \ocA$. 
\item
Each theta function is an $H$-eigenfunction, and there is a canonical map
$$
w: \cA^{\mch}(\bZ^T) \to \chi^*(H)
$$
(the target is the character lattice of $H$), linear for the linear structure given by any seed,
which sends an integer point to the $H$-weight of the corresponding theta function. The slice 
$$
\Xi(\ZZ^T) \cap w^{-1}(\lambda)
$$
parameterizes a canonical theta function basis of the eigenspace $\cO(G/N)^{\lambda} = V_{\lambda}$, the
corresponding irreducible representation of $G$. 
\item
For a natural choice of seed the {\it cone} $\Xi$ is canonically identified with the Gelfand-Tsetlin cone.
\end{enumerate}
\end{corollary}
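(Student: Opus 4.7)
\medskip

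\noindent\emph{Proof proposal.} The strategy is to invoke Theorem \ref{spth} for the pair $\cA\subset\ocA=G/N$ and then to identify the resulting combinatorial package (potential, weight map, cone) with the classical representation-theoretic one. I would prove (1)--(5) in the order listed, using (2)--(5) as the tools needed to verify the last (and hardest) hypothesis of (1).

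For (1), the first hypothesis of Theorem \ref{spth} is that every frozen index admit an optimized seed; this is recorded as a known feature of the $G/N$ cluster structure in Remark \ref{osrem}, and it is verified seed-by-seed using the quiver criterion of Lemma \ref{quiveroptimized}. Next, the Enough Global Monomials condition for $\cA_{\prin}$ follows from Proposition \ref{egmprop}: the Fomin--Zelevinsky seed for $G/N$ attached to a reduced expression of $w_0$ is acyclic (this is classical, see \cite{GLS}), so condition (2) of \ref{egmprop} holds, yielding (5). Applying the same argument to $\cA_{\prin}^{\vee}$ (which for $\SL_{r+1}$ is Langlands self-dual up to combinatorial equivalence) gives EGM on the mirror. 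The remaining and most delicate hypothesis for part (3) of Theorem \ref{spth} is the containment $\Xi\subset\conv(\Theta)$. I will return to this after establishing (2)--(5), since the explicit identification of $\Xi$ with a classical cone is what makes the containment visible.

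For (2), I would appeal directly to the computation of T.\ Magee \cite{Magee},\cite{TimThesis}, where each boundary divisor $E\subset\partial\ocA$ is matched, via its tropical point $E\in\cA_{\prin}(\ZZ^T)$, with a generalized minor in the Berenstein--Kazhdan potential; the broken-line definition of $\vartheta_E$ then reproduces the corresponding summand of the BK potential. Part (3) is immediate from the definition of $\ocA=G/N$ and the right $H$-action. For (4), the key observation is that the cluster structure is $H$-equivariant: each cluster variable is an $H$-eigenfunction (the weight being determined combinatorially by the $g$-vector read off in an initial seed), and since the birational wall-crossings preserve the $H$-grading, the broken-line construction produces $H$-eigenfunctions $\vartheta_q$ with weight depending linearly on $q$ for the linear structure from any fixed seed. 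This defines $w:\cA^{\vee}(\ZZ^T)\to\chi^*(H)$, and the decomposition of $\cO(G/N)=\bigoplus_\lambda V_\lambda$ into $H$-weight spaces pulls the theta basis apart into bases of the $V_\lambda$.

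For (5), I would start from the seed attached to the reduced expression $s_1\,s_2s_1\,s_3s_2s_1\,\cdots\,s_rs_{r-1}\cdots s_1$ of $w_0$. In this seed, the inequalities defining $\Xi$ are, by (2), exactly the tropicalizations of the generalized minors appearing in the BK potential, and the direct tropical computation---which is the content of Magee's explicit matching---identifies these inequalities with the standard Gelfand--Tsetlin inequalities. This identification is what finally closes the argument for (1): the Gelfand--Tsetlin cone sliced at weight $\lambda$ has exactly $\dim V_\lambda$ lattice points, while by (4) each lattice point of $\Xi\cap w^{-1}(\lambda)\cap\Theta$ contributes an independent element of $V_\lambda$. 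Combined with injectivity of $\nu$ from Theorem \ref{mainth}(7), this forces $\Xi\cap\ZZ^n\subset\Theta$, and a Hilbert series comparison (Gelfand--Tsetlin counts the dimensions of every $V_\lambda$) gives the containment $\Xi\subset\conv(\Theta)$ needed in (1). The main obstacle, as flagged, is precisely this last step: without the classical identification of $\Xi$ with the Gelfand--Tsetlin cone one has no a priori control over which lattice points of $\cA^{\vee}(\ZZ^T)$ lie in $\Xi$, and the whole argument depends on Magee's matching of the cluster-theoretic potential with the Berenstein--Kazhdan potential to make this check effective.
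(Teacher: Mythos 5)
Your proposal follows the right high-level strategy — invoke Theorem \ref{spth} and match the resulting combinatorics with the classical ones — but the way you verify the containment $\Xi\subset\Conv(\Theta)$ has a genuine gap, and you omit a step the paper's argument needs.

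The gap is in the last paragraph, where you try to deduce $\Xi\cap\ZZ^n\subset\Theta$ from the identification of $\Xi$ with the Gelfand--Tsetlin cone. You argue: the GT cone at weight $\lambda$ has $\dim V_\lambda$ lattice points, while each point of $\Xi\cap w^{-1}(\lambda)\cap\Theta$ yields an independent vector in $V_\lambda$; hence these sets coincide. But the inequality you actually get is $|\Xi\cap w^{-1}(\lambda)\cap\Theta|\le\dim V_\lambda=|\Xi\cap w^{-1}(\lambda)|$, which is entirely consistent with strict containment. To close it you would need the \emph{spanning} statement that $\{\vartheta_q \mid q\in\Xi\cap w^{-1}(\lambda)\cap\Theta\}$ spans $V_\lambda$. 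That is exactly the conclusion $\midd(\ocA_{\prin})=\up(\ocA_{\prin})$ of Theorem \ref{spth}(3), which is what the containment hypothesis is supposed to deliver; Theorem \ref{spth}(2) by itself only gives $\midd(\ocA_{\prin})=\up(\ocA_{\prin})\cap\midd(\cA_{\prin})$, which can be strictly smaller. Your argument is circular. The missing tool is Proposition \ref{scprop}: Magee's verification proceeds by applying it to the tropicalized potential $W^T$ (checking that $W^T(p)>0$ forces $p\in\Theta$, together with EGM), which directly yields $\Theta=\cA^{\vee}_{\prin}(\ZZ^T)$ and hence $\Xi\subset\Conv(\Theta)$ trivially. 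You never invoke Proposition \ref{scprop}, which is the mechanism the paper cites.

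Two further concerns. First, your claim that a seed of the $G/N$ cluster structure attached to a reduced word for $w_0$ is acyclic is dubious already for $\SL_4$ (the mutable part of the standard quiver contains a $3$-cycle); the route to EGM that the paper points at for these examples is via a maximal green sequence (condition (3) of Proposition \ref{egmprop}, cf.\ Example \ref{nex}), or simply via Magee's direct verification. Second, Theorem \ref{spth} is a statement about $\cA_{\prin}\subset\ocA_{\prin}$, while the corollary asserts results about $\cA\subset\ocA=G/N$. The paper's proof descends from $\cA_{\prin}$ to $\cA$ by noting (again from Magee) that the exchange matrix has full rank, so the equivalent conditions of Lemma \ref{taprinlem} hold, and then using the $T_{\tK^{\circ}}$-equivariance as in the proof of Theorem \ref{mainthax}(7). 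Your proposal does not address this descent at all, and it is not automatic: one needs the fibration $\cA_{\prin}\to T_M$ to trivialize, which is precisely the full-rank condition. Your treatment of the $H$-action and weights via Proposition \ref{thetaeigenfunctions} is in line with the paper, but it should be phrased as an identification of $H$ with $T_{N_{\uf}^{\perp}}$ rather than just ``immediate.''
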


\begin{corollary} \label{conf3cor} 
Let $\cA \subset \ocA$ be the Fock-Goncharov cluster variety for  
\[
\Conf_3(G/N) := ((G/N)^{\times 3})/G.
\]
\begin{enumerate}
\item
All the hypotheses, and thus the conclusions, of Theorem \ref{spth} hold.
In particular the cone 
$\Xi\cap\shA^{\vee}(\ZZ^T) \subset \cA^{\mch}(\bZ^T)$ parameterizes a canonical theta function
basis of $\cO(\Conf_3(G/N))$. 
\item 
Our potential function $W$ agrees with the (representation theoretically defined) 
potential function of Goncharov-Shen, \cite{GS13}. 
\item
$H^{\times 3}$ acts canonically on $\ocA$, preserving the open subset 
$\cA \subset \ocA$. 
\item
Each theta function is an $H$-eigenfunction, and there is a canonical map
\[
w: \cA^{\mch}(\bZ^T) \to \chi(H^{\times 3})
\]
linear for the linear structure given by any seed,
which sends an integer point to the $H^{\times 3}$-weight of the corresponding theta function. The slice
\[
\Xi(\ZZ^T) \cap w^{-1}(\alpha,\beta,\gamma)
\]
parameterizes a canonical theta function basis of the eigenspace 
\[
\cO(G/N)^{(\alpha,\beta,\gamma)} = (V_{\alpha} \otimes V_{\beta} \otimes V_{\gamma})^G.
\]
In particular the number of integral points in 
$\Xi(\ZZ^T) \cap w^{-1}(\alpha,\beta,\gamma)$ is the corresponding 
Littlewood-Richardson coefficient.
\item
For a natural choice of seed, the cone $\Xi$ is canonically identified with the Knutson-Tao Hive.
\end{enumerate} 
\end{corollary}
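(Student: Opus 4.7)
The plan mirrors the argument for Corollary \ref{slrcor}: verify the hypotheses of Theorem \ref{spth}, identify the combinatorially defined potential with the representation-theoretic one of Goncharov--Shen, and then exploit the $H^{\times 3}$-action to organize the basis by weight. First, I would establish (1). The Fock--Goncharov cluster structure on $\Conf_3(G/N)$ is explicit, and as noted in Remark \ref{osrem} each frozen index admits an optimized seed. To invoke Theorem \ref{spth}(3) we need Enough Global Monomials on $\cA_{\prin}^{\vee}$, which follows from Proposition \ref{egmprop} once one exhibits either an acyclic seed or a maximal green sequence; such structure is available for $\Conf_3(G/N)$ (and in any case follows from the results cited in Example \ref{nex} for decorated surfaces, specifically for the $\PGL_m$ local systems on a triangle). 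The containment $\Xi \subset \conv(\Theta)$ is then immediate from tropical convexity of $W^T$ (Theorem \ref{spth}(1)) together with the fact that $\Delta^+(\ZZ)$ is not contained in a half-space.

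For (2), I would exploit the optimized seed property. Fix a seed $\s$ optimized for each frozen index; by Proposition \ref{boundaryprop} every boundary theta function $\vartheta_E$ is a cluster monomial in the optimized chart, with exponent read off from the $g$-vector of the divisor $E$. Summing these monomials produces an explicit Laurent polynomial for $W$ in the chosen cluster. The Goncharov--Shen potential is likewise defined as a sum of natural matrix coefficients indexed by the same set of boundary divisors of $\ocA$ (one per simple root of each of the three factors), and Goncharov--Shen \cite{GS13} compute it explicitly in the same seed. Term-by-term comparison identifies the two potentials; this is the main technical obstacle, and I expect the match reduces to a direct calculation in the cluster of triangulation type.

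Statements (3) and (4) follow from general principles once (1) and (2) are known. The action of $H^{\times 3}$ on $(G/N)^{\times 3}$ descends to $\cA$, preserves the cluster variety $\ocA$ since $H^{\times 3}$ scales the frozen variables, and thus lifts to an action on the integral tropical set commuting with the scattering diagram. Because the basis $\Theta$ is canonically associated to the cluster structure, by uniqueness each $\vartheta_q$ is an $H^{\times 3}$-eigenfunction; its weight defines a function $w\colon \cA^{\vee}(\ZZ^T) \to \chi(H^{\times 3})$ that is piecewise linear globally and linear in any single cluster chart. Decomposing $\cO(\Conf_3(G/N)) = \bigoplus (V_\alpha \otimes V_\beta \otimes V_\gamma)^G$ into $H^{\times 3}$-weight spaces, the slice $\Xi(\ZZ^T) \cap w^{-1}(\alpha,\beta,\gamma)$ parameterizes the theta function basis of the corresponding invariant subspace, and counting recovers the Littlewood--Richardson coefficient.

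Finally for (5), once (2) is established, the tropicalization $W^T$ in the chosen seed is literally the sum over boundary-divisor linear forms that Goncharov--Shen and Knutson--Tao \cite{KT99} use to cut out the hive cone. Therefore $\Xi = \{W^T \geq 0\}$ is identified with the Knutson--Tao hive cone under the lattice isomorphism given by the seed. The anticipated hard step throughout is (2): producing the explicit match between the theta functions $\vartheta_E$ and the Goncharov--Shen summands. Once that identification is in hand, the remaining conclusions follow from Theorem \ref{spth}, Proposition \ref{boundaryprop}, and Magee's computation \cite{Magee, TimThesis}, which in the analogous $G/N$ situation handles precisely the combinatorial comparison underlying Corollary \ref{slrcor}.
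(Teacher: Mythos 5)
Your outline tracks the paper's route fairly closely at the high level: verify the hypotheses of Theorem \ref{spth} via the optimized seed condition (Remark \ref{osrem}) and a large cluster complex, identify $W$ with the Goncharov--Shen potential by an explicit cluster computation, and then read off (3)--(5) from equivariance. The paper cites \cite{GS16}, Theorems 1.12 and 1.17 directly for the large cluster complex rather than going through acyclicity or a maximal green sequence, but that is a difference of citation path, not of substance; and the paper, like you, outsources the potential-matching step (2) to Magee's thesis \cite{TimThesis} rather than reproducing the calculation.

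However, there is a real gap you do not address. Theorem \ref{spth}, which you invoke throughout, is stated for the principal cluster variety $\cA_{\prin}$, whereas Corollary \ref{conf3cor} asserts a basis of $\cO(\Conf_3(G/N)) = \up(\ocA)$ for the ordinary $\cA$-variety. To descend the conclusions from $\cA_{\prin}$ to $\cA$ one needs a nontrivial input: that the exchange matrix has full rank in the sense of Lemma \ref{taprinlem}, so that $\cA_{\prin} \cong \cA \times T_M$ and the relevant torus action ($T_{\tK^\circ}$-equivariance, as in the proof of Theorem \ref{mainthax}(7)) transports the conclusions to $\cA$. The paper explicitly records this step, citing \cite{TimThesis} for the full-rank verification and pointing back to the analogous descent in the proof of Corollary \ref{slrcor}. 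Without this, you have only established the statements for $\cA_{\prin}$, not the stated corollary. The identification of the $H^{\times 3}$-action with the $T_{N_{\uf}^\perp}$-action on $\cA^{\vee}$, which underlies your item (4), also runs through this same equivariance machinery; your phrasing ``follows from general principles'' does not pin down the key point that eigenfunction behavior comes from Proposition \ref{thetaeigenfunctions} applied to the $T_{\tK^\circ}$-action.
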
 

These corollaries are proven at the end of \S \ref{fullFGsection}. 

We stress here that the above representation theoretic results 
come \emph{for free} from general properties of our 
mirror symmetry construction: any
partial minimal model $V \subset Y$ of an affine log Calabi-Yau variety with maximal boundary determines 
(in general conjecturally) 
a cone $\Xi \subset V^{\vee}(\bR^T)$ with the analogous meaning. We
are getting these basic representation theoretic results without representation theory! 

We recover the remarkable Gelfand-Tsetlin and Hive polytopes for a particular
choice of seed. Different (among the infinitely many possible) choices of seed give in general combinatorially
different cones, whose integer points parameterize the same theta function basis. The canonical
object is the {\it convex cone} 
$\Xi \subset \cA^{\vee}(\bR^T)$ cut out by $W^T$, different (by piecewise linear mutation) 
identifications
of $\cA^{\vee}(\bR^T)$ with a vector space give different incarnations of $\Xi$ as convex cones in the usual sense. 
\medskip

Potentials were considered in the work of Goncharov and Shen, 
\cite{GS13}, which
in turn built on work of Berenstein and Zelevinsky, \cite{BZ01} and
Berenstein and Kazhdan, \cite{BK00}, \cite{BK07}. The potential constructed
by Goncharov and Shen 
has a beautiful representation theoretic
definition, and was found in many situations to coincide with known constructions
of Landau-Ginzburg potentials. 
On the other hand, the construction of the potential in terms of theta functions
coincides precisely with the construction of the mirror Landau-Ginzburg 
potential as carried out in \cite{G09},\cite{CPS}. The latter work can
be viewed as a tropicalization of the descriptions of the potential in terms
of holomorphic disks in \cite{CO06},\cite{A07}.
Thus our construction explains the emergence of the 
Landau-Ginzburg
potentials in \cite{GS13}. Our potentials are determined by the 
cluster structure (and
conjecturally,
just the underlying log Calabi-Yau variety), and in particular are independent of
any modular or
representation theoretic interpretation of the cluster variety. This gives, as in 
Remark \ref{clusterindependenceremark}, 
the remarkable suggestion that e.g., the representation
theoretically defined Goncharov-Shen potential, 
which would seem to depend heavily on the modular interpretation
of $\ocA = \Conf_3(G/N)$, is actually intrinsic to the partial minimal model $\cA \subset \ocA$. 

\emph{Acknowledgments}: 
We received considerable
inspiration from conversations with A. Berenstein, V.\ Fock, S.\ Fomin, A.\ Goncharov, 
B.\ Keller, B.\ Leclerc, J.~Koll\'ar, G.\ Muller, G.\ Musiker, A.\ Neitzke, D.\ Rupel,
M.\ Shapiro, B.\ Siebert, Y.\ Soibelman, and D.\ Speyer. 

The first author
was partially supported by NSF grant DMS-1262531 and a Royal Society
Wolfson Research Merit Award, the second by
NSF grants DMS-1201439 and DMS-1601065, and the third by NSF grant DMS-0854747. 
Some of the research was conducted when the first and third authors visited the
fourth at I.H.E.S, during the summers of 2012 and 2013. 

\section{Scattering diagrams and chamber structures}
\label{scatdiagsection}

\subsection{Definition and constructions}
\label{defconstsection}

Here we recall the basic properties of scattering diagrams, the main technical
tool in this paper. Scattering diagrams appeared first in \cite{KS06} in two
dimensions, and then in all dimensions in \cite{GSAnnals}, with another 
approach in a more specific case in \cite{KS13}.
Here we give a self-contained treatment restricted to the specific case
needed in this paper.

We start with a choice of \emph{fixed data} $\Gamma$ as defined in \cite{P1},
which for the reader's convenience is described at the beginning of Appendix 
\ref{LDsec}. In brief, this entails a 
lattice $N$ with dual lattice $M=\Hom(N,\ZZ)$, a skew-symmetric form
\[
\{\cdot,\cdot\}:N\times N\rightarrow \QQ,
\]
sublattices $N_{\uf}, N^{\circ}\subseteq N$ with $N_{\uf}$ a saturated
sublattice and $N^{\circ}$ a sublattice of finite index with dual
lattice $M^{\circ}=\Hom(N^{\circ},\ZZ)$, an index set
$I=\{1,\ldots,n\}$ with $|I|=\rank N$ and a subset $I_{\uf}\subseteq I$ with
$|I_{\uf}|=\rank N_{\uf}$, as well as positive integers $d_i$, $i\in I$.
Finally we also choose an initial seed $\s$, i.e., a basis $e_1,\ldots,
e_n$ of $N$. See Appendix \ref{LDsec} for the precise properties that all this
data must satisfy. 

For the construction of the scattering diagram associated to this data, 
we will require

\medskip

{\bf The Injectivity Assumption}. The map $p_1^*:N_{\uf}\rightarrow M^{\circ}$ 
given by $n\mapsto \{n,\cdot\}$ is injective.

\medskip

While this does not hold for a general choice of fixed data, it does hold
in the principal coefficient case (see Appendix \ref{rdsec}), and results
in this paper
about arbitrary cluster varieties and algebras will be proved via the
principal case.

Set 
\[
N^+ :=N^+_{\s} := \left\{\sum_{i\in I_{\uf}} a_i e_i \,\bigg|\, a_i \geq 0, 
\sum a_i > 0 \right\}.
\]
Choose a linear function $d: N  \to \bZ$ such 
$d(n) > 0$ for $n \in N^+$.

Under the Injectivity Assumption, one can choose a strictly
convex top-dimensional cone
$\sigma\subseteq M_{\RR}$, with associated monoid $P:=\sigma \cap M^{\circ}$,
such that $p_1^*(e_i)\in J:=P\setminus P^{\times}$ for all $i\in I_{\uf}$.
Here $P^{\times}=\{0\}$ is the group of units of the monoid $P$.
This gives the monomial ideal $J\subseteq\kk[P]$ in the monoid
ring $\kk[P]$ over a field $\kk$ of characteristic zero, and we write 
$\widehat{\kk[P]}$ for the completion with respect to $J$.

We define the \emph{module of log derivations} of $\kk[P]$ as
\[
\Theta(\kk[P]):=\kk[P]\otimes_{\ZZ} N^{\circ},
\]
with the action of $f\otimes n$ on $\kk[P]$ being given by
\[
(f\otimes n)(z^m)=f\langle n,m\rangle z^m,
\]
so we write $f\otimes n$ as $f\partial_n$. Let $\widehat{\Theta(\kk[P])}$
denote the completion of $\Theta(\kk[P])$ with respect to the ideal $J$.

Using this action, if $\xi\in J\widehat{\Theta(\kk[P])}$, then
\[
\exp(\xi)\in \Aut(\widehat{\kk[P]})
\]
makes sense using the Taylor series for the exponential. We have the Lie bracket
\[
[z^m\partial_n,z^{m'}\partial_{n'}]=z^{m+m'}
\partial_{\langle n,m'\rangle n'-\langle n',m\rangle n}.
\]
Then $\exp(J\Theta(\kk[P]))$ can be viewed as a subgroup of the
group of continuous automorphisms of $\widehat{\kk[P]}$ which are the
identity modulo $J$, with
the group law of composition coinciding with the group law coming from
the Baker-Campbell-Hausdorff formula.

Define the sub-Lie algebra of $\Theta(\kk[P])=\kk[P]\otimes_{\ZZ} N^{\circ}$
\[
\fog:= \bigoplus_{n\in N^+}\fog_n
\]
where $\fog_n$ is the one-dimensional subspace of 
$\Theta(\kk[P])$ spanned by $z^{p_1^*(n)} \partial_{n}$.
We calculate that $\fog$ is in fact closed under Lie bracket:
\begin{align}
\label{Liebracketform}
\begin{split}
[z^{p_1^*(n)}\partial_n,z^{p_1^*(n')}\partial_{n'}]
= {} & z^{p_1^*(n+n')}\left( \langle p_1^*(n'),n\rangle\partial_{n'}
-\langle p_1^*(n),n'\rangle\partial_{n}\right)\\
= {} & z^{p_1^*(n+n')}\left( \{n', n\}\partial_{n'}-\{n, n'\}\partial_n\right)\\
= {} &\{n',n\} z^{p_1^*(n+n')}\partial_{n+n'}.
\end{split}
\end{align}

We have
\[
\fog^{> k} := \bigoplus_{d(n) > k} \fog_n \subset \fog
\]
a Lie subalgebra,
and $\fog^{\leq k} := \fog/\fog^{>k}$ a nilpotent Lie algebra. We let
$G^{\leq k} := \exp(\fog^{\leq k})$ be the corresponding nilpotent group.
This group, as a set, is just $\fog^{\leq k}$, but multiplication is
given by the Baker-Campbell-Hausdorff formula. We set 
\[
G := \exp(\fog):= \lim_{\longleftarrow} G^{\leq k}
\]
the corresponding pro-nilpotent group. We have the canonical set bijections
\[
\exp: \fog^{\leq k} \to G^{\le k} \quad \hbox{and} \quad \exp: \lim_{\longleftarrow} 
\fog^{\le k}  \to G.
\]

For $n_0 \in N^+$ we define
\begin{align*}
\fog_{n_0}^{\|} &=\bigoplus_{k>0} \fog_{k \cdot n_0} \subset \fog \quad \text{(note this is a
Lie subalgebra}) \\
G_{n_0}^{\|}    &= \exp(\fog_{n_0}^{\|}) \subset G.
\end{align*}
Note that by the commutator formula \eqref{Liebracketform},
$\fog^{\|}_{n_0}$, hence $G^{\|}_{n_0}$, is abelian.

In what follows, noting that $G$ is a subgroup of 
$\Aut_{\kk}(\widehat{\kk[P]})$, we will often describe elements of
$G_{n_0}^{\|}$ as follows. 

\begin{definition} \label{WCdef} Let $n_0 \in N^+$, $m_0 := p_1^*(n_0)$ and 
$f = 1+\sum_{k=1}^{\infty} c_k z^{km_0} \in \widehat{\kk[P]}$.
Define $\theta_{f}$ to be the automorphism
of $\widehat{\kk[P]}$ given by
\[
\theta_{f}(z^m)=f^{\langle n_0',m\rangle} z^m,
\]
where $n_0'$ is the generator of the monoid $\RR_{\ge 0} n_0 \cap N^{\circ}$.
\end{definition}

\begin{lemma}
\label{thetaflemma}
For $n_0\in N^+$,
$G_{n_0}^{\|} \subset \Aut(\widehat{\kk[P]})$ is the subgroup
of automorphisms of the form $\theta_f$
for $f$ as in Definition \ref{WCdef} with the given $n_0 \in N^+$. 
More specifically, $\exp(\sum_{k>0} c_k z^{kp_1^*(n_0)}\partial_{kn_0})
\in G^{\|}_{n_0}$
acts as the automorphism $\theta_f$ with $f=\exp(\sum_{k>0} d^{-1}k
c_kz^{kp_1^*(n_0)})$, where $d\in\QQ$ is the smallest positive rational number
such that $dn_0\in N^{\circ}$.
\end{lemma}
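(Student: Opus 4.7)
The plan is to compute directly the action of a general element of $\widehat{\fog^{\|}_{n_0}}$ on $\widehat{\kk[P]}$, exploiting two facts: first, that $\fog^{\|}_{n_0}$ is abelian (already noted from the commutator formula \eqref{Liebracketform}, since any two generators $z^{p_1^*(kn_0)}\partial_{kn_0}$ and $z^{p_1^*(ln_0)}\partial_{ln_0}$ have bracket proportional to $\{ln_0,kn_0\}=0$), and second, the crucial vanishing
\[
\partial_{kn_0}(z^{lp_1^*(n_0)})=\langle kn_0,\,lp_1^*(n_0)\rangle\,z^{lp_1^*(n_0)}=kl\{n_0,n_0\}\,z^{lp_1^*(n_0)}=0,
\]
which follows from skew-symmetry of $\{\cdot,\cdot\}$. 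Thus any element $D=\sum_{k>0}c_k z^{kp_1^*(n_0)}\partial_{kn_0}\in\widehat{\fog^{\|}_{n_0}}$ annihilates every monomial $z^{lp_1^*(n_0)}$, in particular its own coefficient function.

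Writing $h(z):=\sum_{k>0}kc_k z^{kp_1^*(n_0)}$, a direct computation gives $D(z^m)=h(z)\,\langle n_0,m\rangle\,z^m$. Because $D$ is a derivation and $D(h(z))=0$, iteration yields $D^j(z^m)=\langle n_0,m\rangle^j h(z)^j z^m$ for every $j\ge0$. Summing the exponential series (this is legitimate in the $J$-adic completion, since $h\in J\widehat{\kk[P]}$) produces
\[
\exp(D)(z^m)=\exp\bigl(\langle n_0,m\rangle\,h(z)\bigr)\,z^m.
\]
Now substitute $n_0=d^{-1}n_0'$ to get $\langle n_0,m\rangle=d^{-1}\langle n_0',m\rangle$, and set $f:=\exp(d^{-1}h(z))=\exp\bigl(\sum_{k>0}d^{-1}k c_k z^{kp_1^*(n_0)}\bigr)$; then $\exp(D)(z^m)=f^{\langle n_0',m\rangle}z^m=\theta_f(z^m)$, as required.

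For the reverse inclusion, I would argue that every $f=1+\sum_{k\ge1}c_k z^{kp_1^*(n_0)}$ as in Definition \ref{WCdef} arises this way. Since $f\equiv 1 \pmod J$, its formal logarithm $\log f\in J\widehat{\kk[P]}$ is well-defined and is again a power series in $z^{p_1^*(n_0)}$; write $\log f=\sum_{k>0}b_k z^{kp_1^*(n_0)}$. Then solving $d^{-1}k c_k'=b_k$ for $c_k'$ and setting $D:=\sum_{k>0}c_k' z^{kp_1^*(n_0)}\partial_{kn_0}\in\widehat{\fog^{\|}_{n_0}}$, the calculation above gives $\exp(D)=\theta_f$. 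Uniqueness of the preimage follows from the fact that $\exp$ is a bijection from $\widehat{\fog^{\|}_{n_0}}$ to $G^{\|}_{n_0}$ (it is on each nilpotent quotient, hence in the inverse limit).

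The only mild obstacle is bookkeeping around the completion and the rescaling between $n_0$ and the primitive generator $n_0'\in N^{\circ}$; once the key vanishing $\{n_0,n_0\}=0$ is noted, the derivation $D$ behaves as if it were a scalar multiple of $\langle n_0,\cdot\rangle$, which is what makes the explicit exponentiation possible.
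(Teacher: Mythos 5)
Your proof is correct and follows essentially the same route as the paper's: both hinge on the skew-symmetry fact $\{n_0,n_0\}=0$, which makes the derivation annihilate its own coefficient series, so the exponential collapses to $\exp(\langle n_0,m\rangle h(z))z^m$; and both obtain the reverse inclusion by taking a formal logarithm of $f$. The only cosmetic difference is that the paper first rewrites $\sum_k c_k z^{kp_1^*(n_0)}\partial_{kn_0}$ as a single term $h(z)\partial_{dn_0}$ before exponentiating, whereas you compute $D(z^m)$ directly; the paper also labels the full exponentiation step as "easily seen," which you have usefully spelled out.
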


\begin{proof} 
Let $H \subset \Aut(\widehat{\kk[P]})$ be the set of 
$\theta_f$ of the given form. Then $H$ is a subgroup as $\theta_{f_1}\circ
\theta_{f_2}=\theta_{f_1f_2}$. Note that
$\sum_{k>0} c_k z^{kp_1^*(n_0)}\partial_{kn_0}=\left(\sum_{k>0}d^{-1}kc_k z^{kp_1^*
(n_0)}\right) \partial_{dn_0}$, where $d\in \QQ$ is as described in the
statement. The exponential of this vector field is
easily seen to act as $\theta_f$ with 
$f=\exp(\sum_{k>0}d^{-1}kc_kz^{kp_1^*(n_0)})$.
Hence
$G_{n_0}^{\|} \subset H$. From this, we see also that if $\log(f)=
\sum_{k>0} c_kz^{kp_1^*(n_0)}$, then $\theta_f=\exp(\sum_{k>0} dk^{-1}c_k
z^{kp_1^*(n_0)}\partial_{kn_0})$, and the latter lies in $G^{\|}_{n_0}$.
\end{proof}

\begin{definition} \label{walldef} A \emph{wall} in $M_{\RR}$ (for $N^+$ and $\fog$)
is a pair $(\fod,g_{\fod})$ such that
\begin{enumerate}
\item $g_{\fod}\in G_{n_0}^{\|}$ for some primitive $n_0 \in N^+$.
\item $\fod \subset n_0^{\perp} \subset M_{\RR}$ is a $(\rank N-1)$-dimensional 
convex (but not necessarily strictly convex) rational polyhedral cone. 
\end{enumerate}

The set $\fod \subset M_{\bR}$ is called the \emph{support} of the wall 
$(\fod,g_{\fod})$.
\end{definition}

\begin{remark}
Using Lemma \ref{thetaflemma}, we often write a wall as $(\fod,f_{\fod})$
for $f_{\fod}\in \widehat{\kk[P]}$, necessarily of the form
$f_{\fod}=1+\sum_{k\ge 1} c_kz^{km_0}$. We shall use this notation 
interchangeably without comment.
\end{remark}

\begin{definition}
\label{KSscatdiagdef}
A \emph{scattering diagram $\foD$ for $N^+$ and $\fog$} 
is a set
of walls such that for every degree $k>0$, there are only a finite
number of $(\fod,g_{\fod})\in\foD$ with the image of $g_{\fod}$ in $G^{\le k}$
not the identity.

If $\foD$ is a scattering diagram, we write
\[
\Supp(\foD)=\bigcup_{\fod\in\foD} \fod, \quad\quad
\Sing(\foD)=\bigcup_{\fod\in\foD} \partial\fod\quad\cup
\bigcup_{\fod_1,\fod_2\in\foD\atop
\dim \fod_1\cap\fod_2=n-2} \fod_1\cap\fod_2
\]
for the support and singular locus of the scattering diagram.
If $\foD$ is a finite scattering diagram, then its support is a finite
polyhedral cone complex.
A \emph{joint} is an $(n-2)$-dimensional cell of this complex, so that
$\Sing(\foD)$ is the union of all joints of $\foD$.
\end{definition}

\begin{remark}
We will often (especially in Appendix \ref{scatappendix}) want to use
a slightly more general notion of scattering diagram, where the elements
attached to walls lie in some other choice of group $G'$ arising from
an $N^+$-graded Lie algebra $\fog'$. In this case we talk about a scattering
diagram for $\fog'$. For example, any scattering diagram for $\fog$ induces
a finite scattering diagram for $\fog^{\le k}$ by taking the image of
the attached group elements under the projection $G\rightarrow G^{\le k}$.
\end{remark}

Given a scattering diagram $\foD$, we obtain the 
\emph{path-ordered product}. Assume given
a smooth immersion
\[
\gamma:[0,1]\rightarrow M_{\RR}\setminus\Sing(\foD)
\]
with endpoints not contained in the support of $\foD$. Assume
$\gamma$ is transversal to each wall of $\foD$ that it crosses.
For each degree $k>0$, we can find numbers
\[
0<t_1\le t_2\le\cdots\le t_s<1
\]
and elements $\fod_i\in\foD$ with the image of $g_{\fod_i}$
in $G^{\le k}$ non-trivial such that
\[
\gamma(t_i)\in\fod_i,
\]
$\fod_i\not=\fod_j$ if $t_i=t_j$, and $s$ taken as large as
possible. (The $t_i$ are the 
times at which the path $\gamma$ hits a wall. We allow $t_i = t_{i+1}$ because
we may have two different walls $\fod_i,\fod_{i+1}$ which span the same hyperplane.)

For each $i$, define 
\[
\epsilon_{i} = \begin{cases} +1& \langle n_0,\gamma'(t_i)\rangle<0,\\
-1& \langle n_0,\gamma'(t_i)\rangle>0,
\end{cases}
\]
where $n_0\in N^+$ with $\fod\subseteq n_0^{\perp}$.
We then define
\[
\theta^k_{\gamma,\foD}=g^{\epsilon_s}_{\fod_s}\cdots
g^{\epsilon_1}_{\fod_1}.
\]
If $t_i=t_{i+1}$, then $\fod_i, \fod_{i+1}$ span the same hyperplane 
$n_0^{\perp}$, hence $g_{\fod_i},g_{\fod_{i+1}}\in G^{\|}_{n_0}$. Thus,
since this latter group is abelian, $g_{\fod_i}$
and $g_{\fod_{i+1}}$ commute, so this product is well-defined. 
We then take
\[
\theta_{\gamma,\foD}=\lim_{k\rightarrow\infty} \theta^k_{\gamma,\foD} \in G.
\]

We note that $\theta_{\gamma,\foD}$ depends only on its homotopy class (with
fixed endpoints) in $M_{\bR} \setminus \Sing(\foD)$. We also note that the
definition can easily be extended to piecewise smooth paths
$\gamma$, provided that the path always crosses a wall if it intersects it.

\begin{definition} Two scattering diagrams $\foD$, $\foD'$ are 
\emph{equivalent} if $\theta_{\gamma,\foD}=\theta_{\gamma,\foD'}$
for all paths $\gamma$ for which both are defined.
\end{definition}

Call $x \in M_{\bR}$ {\it general} if
there is at most one rational hyperplane $n_0^{\perp}$ with $x \in 
n_0^{\perp}$. 
For $x$ general and $\foD$ a scattering diagram, 
let $g_x(\foD) := \prod_{\fod \ni x} g_{\fod} \in G_{n_0}^{\|}$.
One checks easily:

\begin{lemma}
\label{easyequivlemma}
Two scattering diagrams $\foD,\foD'$ are equivalent if and only if
$g_x(\foD) = g_x(\foD')$ for all general $x$.
\end{lemma}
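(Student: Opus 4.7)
The plan is to reduce to the truncated setting $G^{\leq k}$ for each $k > 0$ (where both diagrams are finite) and then compute the path-ordered product crossing by crossing, using the crucial fact that at a general point $x \in M_{\RR}$ every wall of $\foD$ through $x$ lies in the unique rational hyperplane $n_0^{\perp}$ containing $x$, so all attached wall elements commute inside the abelian group $G^{\|}_{n_0}$. Consequently $g_x(\foD)$ is an unambiguously defined element of $G^{\|}_{n_0}$, and a transversal crossing at $x$ contributes exactly $g_x(\foD)^{\epsilon}$ to the path-ordered product, where $\epsilon = \pm 1$ records the crossing direction.

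For the forward direction I would fix a general point $x$ and a degree $k$, and construct a short smooth transversal path $\gamma$ whose only intersection with the $k$-truncated supports of $\foD$ and $\foD'$ is a single crossing of $n_0^{\perp}$ at $x$, with endpoints off both supports. Both diagrams being finite mod $\fog^{>k}$ makes this constructible. The collapsed path-ordered products are $\theta^k_{\gamma,\foD} = g_x(\foD)^{\epsilon}$ and $\theta^k_{\gamma,\foD'} = g_x(\foD')^{\epsilon}$; equivalence forces these to agree mod $\fog^{>k}$, and letting $k \to \infty$ gives $g_x(\foD) = g_x(\foD')$ in $G^{\|}_{n_0}$.

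For the reverse direction, given any admissible $\gamma$ and a degree $k$, the plan is to perturb $\gamma$ within its homotopy class in $M_{\RR} \setminus \Sing(\foD \cup \foD')$ so that at every crossing time the image point is general in the sense of the lemma. Since $\Sing(\foD) \subseteq \Sing(\foD \cup \foD')$ and $\theta^k_{\gamma,\foD}$ depends only on the homotopy class of $\gamma$ in $M_{\RR}\setminus\Sing(\foD)$ (and likewise for $\foD'$), this perturbation preserves both products. Interleaving the crossing times of the two diagrams into one sequence $u_1 \leq \cdots \leq u_N$, commutativity in the relevant $G^{\|}_{n_0}$ consolidates the contribution at $u_i$ into $g_{\gamma(u_i)}(\foD)^{\epsilon_i}$ for $\foD$ and $g_{\gamma(u_i)}(\foD')^{\epsilon_i}$ for $\foD'$ (either factor possibly trivial, when no wall of the respective diagram passes through $\gamma(u_i)$). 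By hypothesis these pointwise factors agree, so $\theta^k_{\gamma,\foD} = \theta^k_{\gamma,\foD'}$ for every $k$, giving equivalence in the limit.

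The main obstacle, minor but requiring some care, is the simultaneous perturbation: one has to avoid $\Sing(\foD \cup \foD')$ rather than each singular locus separately, so that at every interleaved crossing time $\gamma(u_i)$ lies on a single rational hyperplane carrying walls of the union, and then the wall elements of either individual diagram at that time still commute. Once this bookkeeping is arranged, the consolidation of commuting wall contributions into a single local factor $g_x^{\epsilon}$ is immediate from the definition of $g_x$ and the abelianness of $G^{\|}_{n_0}$, and the rest of the argument is mechanical.
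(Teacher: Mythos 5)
The paper gives no proof, stating only that one checks this easily, so there is nothing to compare against line by line; your argument is the natural one and it is correct. Working modulo $\fog^{>k}$ so that $\foD_k$ and $\foD'_k$ are finite, you handle the forward direction by a short transversal segment through a general $x$ and the reverse direction by perturbing within a homotopy class in $M_{\RR}\setminus\Sing(\foD\cup\foD')$, interleaving crossing times and matching factors. The key observations you use — that general points lie on at most one rational hyperplane so that all walls met at a given crossing carry commuting elements of $G^{\|}_{n_0}$, that $\Sing(\foD\cup\foD')$ is codimension two so a small generic perturbation avoids it while preserving both $\theta_{\gamma,\foD}$ and $\theta_{\gamma,\foD'}$, and that $G=\varprojlim G^{\le k}$ so order-by-order equality suffices — are exactly the ones needed, and you state them. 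This is the argument the authors had in mind.
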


\begin{definition} A scattering diagram $\foD$ is \emph{consistent}
if $\theta_{\gamma,\foD}$ only depends on the endpoints of $\gamma$
for any path $\gamma$ for which $\theta_{\gamma,\foD}$ is defined.
\end{definition}

\begin{definition} 
We say a wall $\fod \subset n_0^{\perp}$ is \emph{incoming} if
\[
p_1^*(n_0)\in\fod.
\]
Otherwise, we say the wall is \emph{outgoing} (note in any case 
$p_1^*(n_0)$ lies in the span of the wall $n_0^{\perp}$).

We call $-p_1^*(n_0)$ the {\it direction} of the wall. 
(This terminology comes from the case $N = \bZ^2$, where an outgoing wall is
then a ray containing its direction vector, thus one that points outward.)
\end{definition}

We need one particular scattering diagram, determined by the fixed data and
seed data. Setting $v_i=p_1^*(e_i)$, $i\in I_{\uf}$, we start with the 
scattering diagram
\[
\foD_{\inc,\s}:=\{(e_i^{\perp},1+z^{v_i})\,|\, i\in I_{\uf}\}.
\]

The main result on scattering diagrams, which follows easily from 
Theorem \ref{KSlemma}, is the following. A more general version of this
was proved in two dimensions in \cite{KS06}, and in a much more general
context in all dimensions in \cite{GSAnnals}. A simpler argument which applies
to the case at hand was given in \cite{KS13}, which shall be reviewed  
in \S\ref{consistentscatdiagramsection}.

\begin{theorem} \label{KSlemmaGS} 
There is a scattering diagram $\foD_{\s}$ satisfying:
\begin{enumerate}
\item $\foD_{\s}$ is consistent,
\item $\foD_{\s} \supset \foD_{\inc,\s}$,
\item $\foD_{\s} \setminus \foD_{\inc,\s}$ consists only of outgoing walls. 
\end{enumerate}

Moreover, $\foD_{\s}$ satisfying these three properties is unique up to 
equivalence. 
\end{theorem}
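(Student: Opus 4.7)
The plan is to build $\foD_{\s}$ order by order in the $N^+$-grading, using the nilpotent quotients $\fog^{\le k}$, and to verify uniqueness by the same induction. Since $d(n) > 0$ on $N^+$, a scattering diagram is determined by its images modulo $\fog^{>k}$ for all $k$, so it suffices to produce a compatible family of finite scattering diagrams $\foD_{\s}^{\le k}$ for $\fog^{\le k}$, each satisfying conditions (1)--(3). The base case is vacuous: for $k < \min_i d(e_i)$, the image of $\foD_{\inc,\s}$ in $G^{\le k}$ is trivial, so the empty diagram works.

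For the inductive step, suppose $\foD_{\s}^{\le k-1}$ has the required properties, and lift each attached group element from $G^{\le k-1}$ to $G^{\le k}$ arbitrarily to obtain a provisional diagram $\foD$. The commutator formula \eqref{Liebracketform} gives $[\fog_n, \fog_{n'}] \subseteq \fog_{n+n'}$, so $\bigoplus_{d(n)=k} \fog_n$ is a central ideal in $\fog^{\le k}$, equal to the kernel of the surjection $\fog^{\le k} \twoheadrightarrow \fog^{\le k-1}$. Consequently, for any loop $\gamma$ at a general base point of $M_{\RR} \setminus \Sing(\foD)$, the monodromy $\theta_{\gamma, \foD}$, trivial modulo $\fog^{>k-1}$ by induction, equals $\exp(\xi_\gamma)$ for a unique $\xi_\gamma \in \bigoplus_{d(n)=k} \fog_n$ depending only on the homotopy class of $\gamma$.

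The central step is to cancel this order-$k$ obstruction by adding finitely many outgoing walls, each supported in a hyperplane $n^{\perp}$ with $d(n) = k$ and carrying an element of $\fog_n$. Localizing near each joint $\foj$ of $\foD$ and slicing by a generic affine $2$-plane transverse to $\foj$ reduces the local problem to a finite two-dimensional scattering problem, for which the Kontsevich-Soibelman algorithm recalled in \S\ref{consistentscatdiagramsection} produces finitely many outgoing rays restoring consistency. These rays extend uniquely to $(\rank N - 1)$-dimensional outgoing walls through $\foj$. Centrality of $\bigoplus_{d(n)=k} \fog_n$ ensures that insertions near distinct joints do not interfere, so the union of the new walls with $\foD$ gives the required $\foD_{\s}^{\le k}$; the appendix-level Theorem \ref{KSlemma} provides the precise local-to-global statement.

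For uniqueness, suppose $\foD$ and $\foD'$ both satisfy (1)--(3) and agree modulo $\fog^{>k-1}$. Their order-$k$ walls are outgoing by (2)--(3), and by the same centrality argument the discrepancy $g_x(\foD) \cdot g_x(\foD')^{-1} \in G^{\le k}$ at any general $x$ is measured by a small loop around $x$; consistency of both diagrams forces this discrepancy to vanish, hence equivalence modulo $\fog^{>k}$ follows from Lemma \ref{easyequivlemma}. The principal obstacle is the local-to-global step in existence: the two-dimensional corrections at different joints must assemble coherently into a global set of outgoing walls in $M_{\RR}$, and this is precisely what the careful analysis of Appendix \ref{scatappendix} (culminating in Theorem \ref{KSlemma}) is designed to establish.
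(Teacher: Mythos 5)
Your proof takes the classical order-by-order Kontsevich--Soibelman algorithm approach, which is indeed present in the paper (Construction \ref{KSlemalg} in Appendix \ref{scatappendix}), but is \emph{not} the route the paper uses to prove Theorem \ref{KSlemmaGS}. The paper instead derives the theorem from Theorem \ref{KSlemma}, whose proof in \S\ref{consistentscatdiagramsection} is structural rather than inductive: it establishes a bijection between consistent scattering diagrams and elements $\theta_{+,-} \in G$ (Theorem \ref{maximscorrespondence}), shows the map $\Psi : G \to \prod_{n_0} G^{\|}_{n_0}$ recording incoming-wall data is a set-theoretic bijection (Proposition \ref{Gparam}), and then deduces existence and uniqueness simultaneously. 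The advantage of that route is that uniqueness falls out of the bijectivity of $\Psi$ with no further geometric analysis; the order-by-order approach has to re-prove uniqueness at each step.

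This is where your argument has a genuine gap. You assert for uniqueness that ``consistency of both diagrams forces this discrepancy to vanish'' and appeal to a ``small loop around $x$'' for a general point $x$. But a small loop about a general point in $M_{\RR}$ (dimension $\ge 3$) is contractible in $M_{\RR} \setminus \Sing(\foD)$ and carries no information; the monodromy constraints come only from loops around codimension-$2$ joints. Moreover, even at a joint, consistency of the difference diagram $\foD''$ does not \emph{directly} force $\foD''$ to be trivial: one must first reduce to minimal support, then show that vanishing monodromy at every perpendicular joint forces each wall of $\foD''$ to have all facets be \emph{parallel} joints (Definition-Lemma \ref{pjdef}), and finally observe that a wall whose facets are all parallel to its direction $-p^*(n_0)$ contains $p^*(n_0)$ in its lineality space and is therefore incoming, contradicting (3). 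This chain is precisely Lemma \ref{equivalencetonextorder} and is the real content of uniqueness in this approach; it cannot be replaced by the centrality remark alone.

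Two smaller inaccuracies: \S\ref{consistentscatdiagramsection} does not recall the order-by-order algorithm (it contains the $\Psi$-parametrization argument following \cite{KS13}); the algorithm you want is Construction \ref{KSlemalg}. And that construction adds walls directly as $\foj - \RR_{\ge 0} p^*(\alpha)$ from the decomposition $\theta_{\gamma_{\foj}} = \exp(\sum g_\alpha)$ in $G_{\foj}$, rather than by slicing with a generic $2$-plane; the $2$-dimensional reduction you describe is the technique used for the positivity statement (Theorem \ref{scatdiagpositive}, Step III of the proof of Proposition \ref{basicpositivity}), not for consistency itself, though it can be made to work here with some care about extending rays back to cones through $\foj$.
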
 

The crucial positivity result satisfied by $\foD_{\s}$ is now easily stated:

\begin{theorem}
\label{scatdiagpositive}
The scattering diagram $\foD_{\s}$ is equivalent to a scattering diagram
all of whose walls $(\fod,f_{\fod})$ satisfy $f_{\fod}=(1+z^m)^c$ for
some $m = p^*(n), n \in N^+$ and $c$ a positive integer. In particular,
all nonzero coefficients of $f_{\fod}$ are positive integers.
\end{theorem}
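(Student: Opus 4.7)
\medskip

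\noindent\textbf{Proof proposal.} The plan is to build $\foD_{\s}$ order by order in the $N^+$-grading and verify that the positivity property ``every wall-function is a product of factors $(1+z^m)^c$ with $c\in\ZZ_{>0}$'' is preserved at each stage. The initial incoming walls $\foD_{\inc,\s}$ already have this form, each with $c=1$ and $m = v_i = p_1^*(e_i)$, so the base case is immediate.

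For the induction, fix $k$ and assume that the image $\foD_{\s}^{\le k}$ of $\foD_{\s}$ in $G^{\le k}$ has been shown to be equivalent to a scattering diagram in which every wall-function factors as $\prod_j (1+z^{m_j})^{c_j}$ with positive integer $c_j$. To extend $\foD_{\s}^{\le k}$ to a consistent diagram modulo $\fog^{>k+1}$, one must add further outgoing walls to correct for failure of consistency. By a standard local-to-global principle (a consistent diagram is consistent if and only if it is consistent around each joint), it suffices to work one joint at a time: for each codimension-two cell $\fok \subset \Sing(\foD_{\s}^{\le k})$, restrict attention to the walls of $\foD_{\s}^{\le k}$ containing $\fok$ and slice transversally. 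This transforms the local consistency problem into a two-dimensional scattering problem for the sub-Lie algebra of $\fog$ generated by the $\fog_n$ with $n$ perpendicular to $\fok$, where the input walls inherit the positive factorization from the inductive hypothesis.

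The crux is then the two-dimensional positivity statement: given finitely many initial rays in a rank-$2$ lattice with wall-functions of the form $(1+z^{m_i})^{c_i}$, $c_i\in\ZZ_{>0}$, the unique consistent completion consists of rays with wall-functions of the same form. I would import this statement from the theory of rational tropical curve counts and quiver moduli: the Gross-Pandharipande-Siebert formula (see \cite{GSAnnals}, and also Reineke's Euler characteristic interpretation for quiver scattering) expresses each new scattering function as a product $\prod (1+z^m)^{N_m}$ where $N_m$ is either a count of rational tropical curves through prescribed points or a Donaldson-Thomas/Euler characteristic invariant of a moduli space of semistable quiver representations, hence a non-negative integer. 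Uniqueness of consistent completion (Theorem \ref{KSlemmaGS}) then identifies this with the 2D scattering diagram produced by the induction.

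The main obstacle is this two-dimensional positivity input. The abstract Kontsevich-Soibelman construction gives wall-functions that are \emph{a priori} only formal power series with rational coefficients; proving that they have the refined multiplicative form $\prod(1+z^m)^{c}$ with $c\in\ZZ_{>0}$ really requires the geometric interpretation (tropical curves or quiver moduli), and this is where all the work lies. Once it is in hand, gluing back from joints to the global scattering diagram is straightforward, because the incoming walls to distinct joints at order $k+1$ do not interact, and passing to the limit $k\to\infty$ preserves the factorization property coefficient-by-coefficient. The full technical argument is carried out in Appendix \ref{scatappendix}.
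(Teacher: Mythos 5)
Your high-level structure — reduce to a two-dimensional scattering problem at each joint, invoke a 2D positivity statement, then glue back — is exactly the right decomposition, and it matches the skeleton of the paper's argument (the slicing at joints appears verbatim in the proof, via the splitting $M=\Lambda_{\foj}\oplus M'$). But the place where all the difficulty sits is the 2D positivity input, and the source you propose for it does not actually deliver the statement in the required generality. Reineke's quiver-moduli interpretation of scattering functions applies to acyclic \emph{skew-symmetric} seeds (it is Proposition \ref{reinekeprop} in this paper, which explicitly assumes skew-symmetric fixed data with no frozen variables); the GPS tropical-curve formula expresses the exponents as relative Gromov--Witten/tropical counts whose non-negativity is not automatic from that paper and is tied to Reineke's Euler-characteristic non-negativity. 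Theorem \ref{scatdiagpositive} must hold for arbitrary skew-symmetrizable fixed data $\Gamma$ with arbitrary multipliers $d_i$ and possibly frozen variables, well beyond what those geometric interpretations cover. So as stated your proposal has a genuine gap: the crucial two-dimensional lemma is asserted by citation but the citations do not prove it.

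The paper closes this gap with a completely elementary and self-contained proof of the 2D statement (Proposition \ref{basicpositivity}), using no tropical or quiver geometry. The key devices are the \emph{change of monoid} trick (pass to $P=\NN^s$ so different initial walls become algebraically independent), the \emph{perturbation} trick (translate the initial lines so all collisions become pairwise, then induct on order), and the \emph{change of lattice} trick (pass to the sublattice generated by the two slopes, which converts $(1+z^{e_i})^{d_i}$ into a product of simple factors), ultimately reducing everything to the single computation $\{1+z^{e_1},1+z^{e_2}\}\rightsquigarrow 1+z^{e_1+e_2}$. If you want to follow your strategy you need to either reproduce this elementary argument or find a genuinely general geometric positivity source. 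One further point you elide: when you slice along a joint $\foj$, a wall function $(1+z^m)^c$ decomposes as $(1+z^{m_{\foj}}z^{m'})^c$ with $m_{\foj}\in\Lambda_{\foj}$, so the induced 2D problem is not over $\kk$ but over $\kk(\Lambda_{\foj})$ with scalar coefficients $\alpha_i=z^{m_{\foj,i}}$. The 2D positivity statement must therefore be extended to allow such coefficients; the paper handles this with the easy rescaling $z^{e_i}\mapsto\alpha_i z^{e_i}$ (Corollary \ref{positivitywithcoefs}), but it is a step that needs to be there.
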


The proof is given in Appendix \ref{scatappendix}. The basic idea
is that the construction of the scattering diagram $\foD_{\s}$ can
be reduced to repeated applications of the following example:

\begin{example}
\label{basicscatteringexample}
Take $N=N^{\circ}=N_{\uf}=\ZZ^2$,
$d_1,d_2=1$ and the skew-symmetric form $\{\cdot,\cdot\}:N\times N
\rightarrow \QQ$ given by the matrix $\epsilon=\begin{pmatrix} 0&1\\ -1&0
\end{pmatrix}$, where $\epsilon_{ij}=\{e_i,e_j\}$. Let $f_1,f_2$ be the dual
basis of $e_1,e_2$, and 
write $A_1=z^{f_1}$, $A_2=z^{f_2}$. We get
\[
\foD_{\inc,\s}=\{(e_1^{\perp},1+A_2),(e_2^{\perp},1+A_1^{-1})\}.
\]
Then one checks easily that
\[
\foD_{\s}=\foD_{\inc,\s}\cup\{(\RR_{\ge 0}(1,-1), 1+A_1^{-1}A_2)\}.
\]
See Figure \ref{scat11}.
(See for example \cite{GPS}, Example 1.6.)
\end{example}

\begin{figure}
\input{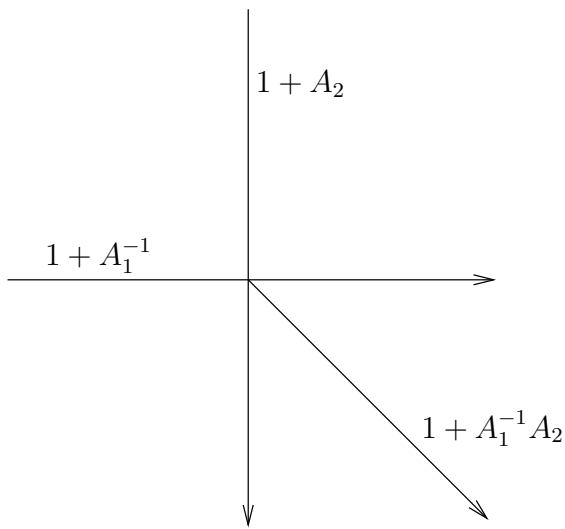}
\caption{Scattering diagram for Example \ref{basicscatteringexample}.}
\label{scat11}
\end{figure}

\begin{example}
\label{bcexample}
Take $N=N_{\uf}=\ZZ^2$, with basis $e_1,e_2$, and take
$N^{\circ}$ to be the sublattice generated by $b e_1, c e_2$.
Further take $d_1=b$, $d_2=c$, where $b,c$ are two positive integers, and
take the skew-symmetric form to be the same as in the previous example.
Then $f_1=e_1^*/b$, $f_2=e_2^*/c$. Taking as before $A_1=z^{f_1}$, $A_2=z^{f_2}$,
we get
\[
\foD_{\inc,\s}=\{(e_1^{\perp},1+A_2^c),(e_2^{\perp},1+A_1^{-b})\}.
\]
For most choices of $b$ and $c$, this is a very complicated scattering
diagram. A very similar scattering diagram, with functions $(1+A_2)^b$ and
$(1+A_1)^c$, has been analyzed in \cite{GP10}, but it is easy to translate
this latter diagram to the one considered here by replacing $A_1$ by $A_1^{-1}$
and using the change of lattice trick, which is given in Step IV of the proof
of Proposition \ref{basicpositivity}. All rays of $\foD_{\s}\setminus
\foD_{\inc,\s}$ are contained strictly in the fourth quadrant (i.e.,
in particular are not contained in an axis). Without giving the
details, we summarize the results. 
There are two
linear operators $S_1, S_2$ given by the matrices in the basis $f_1, f_2$
as
\[
S_1 = \begin{pmatrix} -1 & -b\\ 0 & 1\end{pmatrix}, \quad
S_2 = \begin{pmatrix} 1 & 0\\ -c & -1\end{pmatrix}.
\]
Then $\foD_{\s}\setminus \foD_{\inc,\s}$ is invariant under $S_1$ and
$S_2$, in the sense that if $(\fod, f_{\fod}(z^m))
\in \foD_{\s}\setminus \foD_{\inc,\s}$, we have 
$(S_i(\fod), f_{\fod}(z^{S_i(m)}))\in \foD_{\s}\setminus \foD_{\inc,\s}$
provided $S_i(\fod)$ is contained strictly in the fourth quadrant.
It is also the case that
applying $S_2$ to $(\RR_{\ge 0}(1,0), 1+A_1^{-b})$ or
$S_1$ to $(\RR_{\ge 0}(0,-1), 1+A_2^c)$ gives an element of 
$\foD_{\s}\setminus \foD_{\inc,\s}$. Further, $\foD_{\s}$ contains a
discrete series of rays consisting of those rays in the fourth quadrant
obtained by applying $S_1$ and $S_2$ alternately
to the above rays supported on $\RR_{\ge 0}(1,0)$ and $\RR_{\ge 0}(0,-1)$.
These rays necessarily have functions of the form $1+A_1^{-b\alpha}
A_2^{-b\beta}$
or $1+A_1^{c\alpha}A_2^{c\beta}$ for various choices of $\alpha$ and $\beta$.
If $bc < 4$, we obtain a finite diagram. 
(Moreover, the corresponding $\cA$ cluster variety is the cluster variety of finite type  \cite{FZ03a} associated to the root system $A_2$, $B_2$, or $G_2$ for $bc=1$, $2$, or $3$ respectively.)
If $bc \ge 4$, these rays converge to the rays contained in the two eigenspaces of
$S_1\circ S_2$ and $S_2\circ S_1$. These are rays of slope
$-(bc\pm\sqrt{bc(bc-4)})/2b$. This gives a complete description of the rays
outside of the cone spanned by these two rays. The expectation is that
every ray of rational slope appears in the interior of this cone, and
the attached functions are in general unknown. However, in the $b=c$
case, it is known \cite{R12} that the function attached 
to the ray of slope $-1$ is
\[
\left(\sum_{k=0}^{\infty} {1\over (b^2-2b)k+1} 
\begin{pmatrix}
(b-1)^2k\\ k
\end{pmatrix}
A_1^{-bk} A_2^{bk}\right)^b.
\]
The chamber structure one sees outside the quadratic irrational cone is very
well-behaved and familiar in cluster algebra theory. In particular,
the interiors of the first, second and third quadrants are all connected
components of $M^{\circ}_{\RR}\setminus\Supp(\foD)$, and there are for $bc\ge 4$
an infinite number of connected components in the fourth quadrant.
We will see in \S\ref{tropsec} that this chamber structure is precisely
the Fock-Goncharov cluster complex.

On the other hand, it is precisely the rich
structure inside the quadratic irrational
cone which scattering diagram technology brings
into the cluster algebra picture.
\end{example}

\begin{figure}
\input{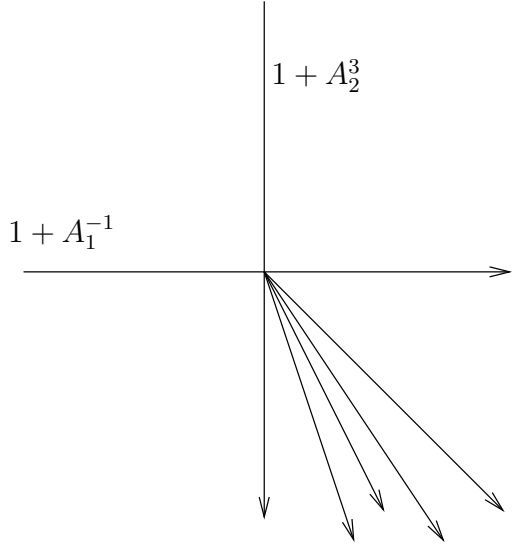}
\caption{Scattering diagram for Example \ref{bcexample}, $b=1, c=3$. The
unlabelled rays intersecting the interior of the 
fourth quadrant have attached functions
$1+A_1^{-3}A_2^3$, $1+A_1^{-2}A_2^3$, $1+A_1^{-3}A_2^6$, and $1+A_1^{-1}A_2^3$
in clockwise order.}
\label{scat13}
\end{figure}

\begin{figure}
\input{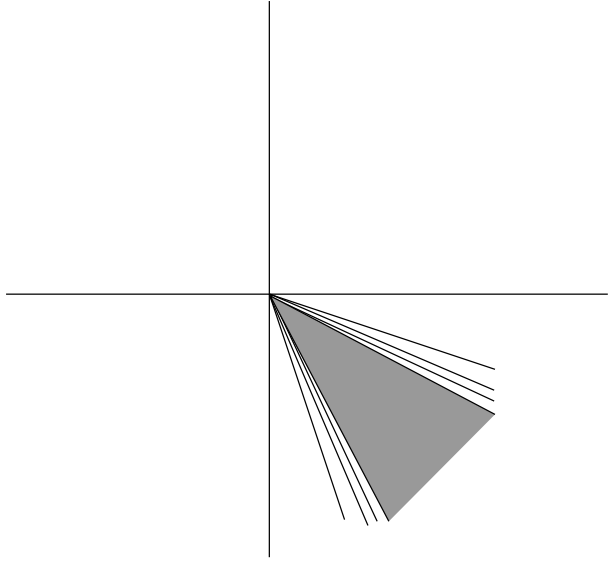}
\caption{The general appearance of the scattering diagram of
Example \ref{bcexample} for $bc>4$.}
\label{diagram33}
\end{figure}

\subsection{Construction of consistent scattering diagrams}
\label{consistentscatdiagramsection}

In this subsection we give more details about the construction of scattering
diagrams, and in particular give results leading to the proof of Theorem
\ref{KSlemmaGS}. This material can be skipped on first reading, but is
recommended before reading the more difficult material on
scattering diagrams in Appendix \ref{scatappendix}.

Let $\foD$ be a scattering diagram. If we set
\begin{align}
\label{chamberdef}
\begin{split}
\cC^+:= {} & \{m\in M_{\RR} \,|\, m|_{N^+} \ge 0 \}, \\
\cC^-:= {} & \{m\in M_{\RR}\,|\, m|_{N^+} \le 0 \}
\end{split}
\end{align}
then since any wall spans a hyperplane $n_0^{\perp}$, for some $n_0 \in N^+$,
\[
\Supp(\foD)\cap \Int(\cC^{\pm}) =\emptyset.
\]

In particular, if $\foD$ is a consistent scattering diagram, then
$\theta_{\gamma,\foD}$ for $\gamma$ a path with initial point in $\cC^+$
and final point in $\cC^-$ is independent of the particular choice
of path (or endpoints in $\cC^{\pm}$). 
Thus we obtain a well-defined element $\theta_{+,-}\in G$ which
only depends on the scattering diagram $\foD$. 

\begin{theorem}[Kontsevich-Soibelman]
\label{maximscorrespondence} The assignment of $\theta_{+,-}$ to $\foD$ 
gives a one-to-one correspondence between equivalence classes of consistent
scattering diagrams and elements $\theta_{+,-}\in G$.
\end{theorem}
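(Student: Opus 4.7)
First, the map $\foD \mapsto \theta_{+,-}$ is well defined and descends to equivalence classes. By \eqref{chamberdef}, neither $\Int(\cC^+)$ nor $\Int(\cC^-)$ meets $\Supp(\foD)$, so any two admissible paths between these interiors are homotopic in $M_\RR \setminus \Sing(\foD)$; consistency then forces them to have a common value. That equivalent diagrams give the same $\theta_{+,-}$ is immediate from Lemma \ref{easyequivlemma}. What remains is injectivity and surjectivity.

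My plan is to prove both by induction on the order $k$, working through the tower of nilpotent quotients $G^{\le k} = \exp(\fog^{\le k})$, i.e.\ in terms of consistent scattering diagrams for $\fog^{\le k}$ modulo equivalence. The base case $k=0$ is trivial. The key algebraic input, immediate from \eqref{Liebracketform}, is that the graded piece $\bigoplus_{d(n)=k+1}\fog_n$ governing the lift from order $k$ to order $k+1$ is \emph{abelian} in $\fog^{\le k+1}$: any bracket of two of its elements has degree $\ge 2(k+1) > k+1$ and so vanishes there.

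For surjectivity, given $g\in G$ I build $\foD$ inductively. Suppose a consistent $\foD_k$ with $\theta_{+,-}^{\foD_k}\equiv g\pmod{\fog^{>k}}$ has already been built. The discrepancy $g\cdot(\theta_{+,-}^{\foD_k})^{-1}$ lies in the abelian piece above; writing its logarithm as $\sum_{d(n)=k+1}\xi_n$ with $\xi_n\in\fog_n$, I want to append one wall per such $n$, supported on a cone in $n^\perp$ crossed by the $\cC^+$-to-$\cC^-$ path, with attached element $\exp(\pm\xi_n)$ chosen to cancel the discrepancy. Here is the \textbf{main obstacle}: inserting a new wall in $n^\perp$ generically produces new joints with pre-existing walls, around which the path-ordered product on a small loop can now fail to be trivial at order $\ge k+2$. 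These failures are resolved by the standard Kontsevich--Soibelman / Gross--Siebert procedure (whose precise form for the case at hand is the later Theorem \ref{KSlemmaGS}): iteratively, each residual inconsistency lives in a graded piece that is again abelian modulo the next stage of the filtration, so it can be canceled by outgoing walls supported on hyperplanes through the joint. Because every such correction strictly raises the order, the process converges, and outgoing walls added at orders $>k+1$ do not affect $\theta_{+,-}$ modulo $\fog^{>k+1}$; inducting on $k$ yields the desired consistent $\foD$ with $\theta_{+,-}=g$.

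For injectivity, suppose $\foD_1,\foD_2$ are consistent with $\theta_{+,-}^{\foD_1}=\theta_{+,-}^{\foD_2}$; by Lemma \ref{easyequivlemma} it suffices to show $g_x(\foD_1)=g_x(\foD_2)$ at every general $x$. Assume this inductively modulo $\fog^{>k}$. At a general $x$ lying in a hyperplane $n^\perp$ with $d(n)=k+1$, the quotient $g_x(\foD_1)\,g_x(\foD_2)^{-1}$ has order $\ge k+1$. Choose a path from $\Int(\cC^+)$ to $\Int(\cC^-)$ through $x$; since degree-$(k+1)$ contributions from distinct hyperplanes commute modulo $\fog^{>k+1}$, the equality $\theta_{+,-}^{\foD_1}=\theta_{+,-}^{\foD_2}$ forces a hyperplane-by-hyperplane cancellation at order $k+1$. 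Thus $g_x(\foD_1)\equiv g_x(\foD_2)\pmod{\fog^{>k+1}}$ at every general $x$, which advances the induction and completes the proof.
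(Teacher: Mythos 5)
Your proof takes a genuinely different route from the paper's. The paper (following \cite{KS13}) constructs $\foD$ from $\theta_{+,-}$ in one stroke: for each general $x\in n_0^{\perp}$ it factors $\theta_{+,-}=g^x_+\cdot g^x_0\cdot g^x_-$ via the splitting \eqref{xsplitting} and declares $g_x(\foD):=g_0^x$; it then verifies this is a scattering diagram, computes $\theta_{\gamma,\foD}$ along a semicircular path to show $\theta_{+,-}^{\foD}=\theta_{+,-}$, and checks consistency at joints by comparing neighbouring semicircles. Your approach instead climbs the tower $G^{\le k}$ order by order, in the spirit of the original Kontsevich--Soibelman algorithm (Construction \ref{KSlemalg} of Appendix~C). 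Both are legitimate, but the paper's is shorter and avoids the bookkeeping of inserting and correcting walls degree by degree.

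That said, there is a real gap in your surjectivity step. You append a new wall ``supported on a cone in $n^{\perp}$'' with attached element $\exp(\pm\xi_n)$, and you assert that the only new inconsistencies occur ``at order $\ge k+2$,'' arising from new joints with pre-existing walls. This is correct for joints where the new wall crosses an old one (the order-$(k+1)$ contribution of the new wall appears twice with opposite signs and cancels), but if the new wall is a \emph{proper} cone $\fod\subsetneq n^{\perp}$, then each boundary facet $\foj\subset\partial\fod$ is a new joint at which a small loop $\gamma_{\foj}$ crosses the new wall exactly once. Modulo $\fog^{>k+1}$ all brackets with pre-existing walls vanish, so $\theta_{\gamma_{\foj}}\equiv\exp(\pm\xi_n)\ne\id$ \emph{already at order $k+1$}. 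The fix is to add the new wall as the full hyperplane $n^{\perp}$ (which is permitted by Definition~\ref{walldef}): then there are no boundary joints, the order-$(k+1)$ consistency is automatic since $\exp(\xi_n)$ is central in $G^{\le k+1}$ and appears an even number of times on any small loop, and your remaining analysis (outgoing corrections at orders $>k+1$ do not perturb $\theta_{+,-}$ mod $\fog^{>k+1}$) goes through. You should also not lean on ``the later Theorem \ref{KSlemmaGS}'' here: that theorem is proved via Theorem~\ref{KSlemma}, whose proof in turn cites the present Theorem~\ref{maximscorrespondence}, so invoking it is circular. What you actually want to cite (or re-derive in place) is the order-by-order construction of Construction~\ref{KSlemalg} / Lemma~\ref{equivalencetonextorder}, which is logically independent.

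Your injectivity sketch is essentially sound: the key fact you use, that distinct primitive $n,n'$ produce elements in distinct graded summands (so the degree-$(k+1)$ discrepancies along a path cancel only termwise), is correct because $p_1^*$ is injective under the Injectivity Assumption, so $p^*(n)$ and $p^*(n')$ span different rays. It would help to state explicitly that after writing $g_x(\foD_2)=g_x(\foD_1)(1+\epsilon_x)$ with $\epsilon_x\in\fog^{>k}$ central mod $\fog^{>k+1}$, the relation $\theta_{+,-}^{\foD_1}=\theta_{+,-}^{\foD_2}$ along any $\cC^+$-to-$\cC^-$ path through $x$ gives $\sum_j\epsilon_{x_j}\equiv 0$, and then invoke the grading; and to vary the path to reach every general $x$.
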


This is a special case of \cite{KS13}, 2.1.6. For the reader's convenience we 
include the short proof: 

\begin{proof}
We need to show how to construct $\foD$ given $\theta_{+,-}\in G$.
To do so, choose any $n_0\in N^+$ primitive and a point 
$x\in n_0^{\perp}$ general. 
Then we can determine $g_x(\foD)$ as follows, 
noting by Lemma \ref{easyequivlemma} that this information for all such
$n_0$ and general $x$ determines $\foD$ up to equivalence.
We can write
\begin{equation}
\label{xsplitting}
\fog=\fog^x_{+}\oplus\fog^x_0\oplus\fog^x_-
\end{equation}
with
\[
\fog^x_+=\bigoplus_{n\in N^{+} \atop \langle n,x\rangle >0} \fog_n ,\quad
\fog^x_-=\bigoplus_{n\in N^+\atop \langle n,x\rangle <0} \fog_n,\quad
\fog^x_0=\bigoplus_{n\in N^+\atop \langle n,x\rangle =0}
\fog_n.
\]
Each of these subspaces of $\fog$ are closed under Lie bracket, thus
defining subgroups $G_{\pm}^x, G_0^x$ of $G$. Note by the generality
assumption on $x$, we in fact have $\fog_0^x=\fog_{n_0}^{\|}$.
This splitting induces a unique
factorization $g=g^x_+ \cdot g^x_0\cdot  g^x_-$ for any element $g\in G$.
Applying this to $\theta_{+,-}$ gives a
well-defined element $g_0^x\in G_0^x$. We need to show that the set of
data $g_0^x$ determines a scattering diagram $\foD$ such that
$g_x(\foD)=g_0^x$ for all general $x \in M_{\bR}$.
To do this, one needs to know that to any finite order $k$, the hyperplane 
$n_0^{\perp}$ is subdivided into a finite number of
polyhedral cones $\fod_1,\ldots,\fod_p$ such that the image of $g_0^x$
in $G^{\le k}$ is constant for $x\in \fod_i$. This is clear 
because
the number of $n\in N^+$ with $d(n)\le k$ is finite, as then the decomposition
\eqref{xsplitting} varies discretely with $x$ to order $k$. 

%I DON'T THINK WE NEED THE FOLLOWING --- MARK.
%On the other
%hand, if the number of such $n$ is infinite, one can find a subset
%$N'\subset N^+$ defining a sub-algebra $\fog'=\bigoplus_{n\in N'} \fog_n$
%of $\fog$ and such that (1) the image of the given element $\theta_{+,-}$
%in $G^{\le k}$ in fact lies in $\exp(\fog'/(\fog')^{>k})$ and (2) the
%number of $n\in N'$ with $d(n)\le k$ is finite. This then reduces to the
%finite case. 

We need to show that $\foD$ satisfies the condition that
$\theta_{\gamma,\foD}=\theta_{+,-}$ for any path $\gamma$ 
from the positive to the negative chamber and that
$\theta_{\gamma,\foD}$ only depends on endpoints of $\gamma$. 
To do so, we work modulo $\fog^{>k}$ for any $k$, so we can
assume $\foD$ has a finite number of walls. Choose a general point
$x_0\in \shC^+$. Take a general two-dimensional subspace of $M_{\RR}$
containing $x_0$, and after choosing a metric, let $\gamma$ be a semi-circle
in the two-dimensional subspace with endpoints $x_0$ and $-x_0$ and center
$0$. Then
$\theta_{\gamma,\foD}=g_0^{x_n}\cdots g_0^{x_1}$ for points $x_1,
\ldots,x_n$ contained in walls crossed by $\gamma$ and $g_0^{x_i}$ the
element of $G_0^{x_i}$ determined by the factorization of $\theta_{+,-}$
above. Note that if $x_i$ lies in the hyperplane $n_i^{\perp}$, 
all the wall-crossing automorphisms of walls
traversed by $\gamma$ \emph{before} crossing $n_i^{\perp}$ lie in $G^{x_i}_-$
and all those from walls traversed by $\gamma$ \emph{after} crossing
$n_i^{\perp}$ lie in $G^{x_i}_+$.
It then follows inductively that the factorization of
$\theta_{+,-}$ given by $x_i$ takes the form $(g_+)g_0^{x_i}(g^{x_{i-1}}_0
\cdots g^{x_1}_0)$ for some $g_+\in G^{x_i}_+$. Indeed, for $i=1$,
this just follows from the definition of $g_0^{x_1}$, while if true for
$i-1$, then we have $\theta_{+,-}=g'\cdot  (g_0^{x_{i-1}}\cdots g_0^{x_1})$ is
a decomposition of $\theta_{+,-}$ induced by the splitting
$\fog=(\fog_+^{x_i}\oplus \fog_0^{x_i}) \oplus\fog_-^{x_i}$, and the
claim then follows by the definition of $g_0^{x_i}$.
In particular, for $i=n+1$,
taking $x_{n+1}=-x_0$ and noting that $G^{x_{n+1}}_{-}= G$, one sees that
$\theta_{+,-}=g^{x_n}_0\cdots g_0^{x_1}=\theta_{\gamma,\foD}$.

Next we show the independence of path for the $\foD$ we have constructed, 
again modulo $\fog^{>k}$. It
is sufficient to check $\theta_{\gamma_{\foj},\foD}=\id$ as an element
of $G^{\le k}$ 
for any small loop $\gamma_{\foj}$ around any joint $\foj$ of $\foD$.
Take $x'$ a general point in $\foj$, $n\in N^+$ such that 
$n^{\perp}\supseteq \foj$, and choose $x, x''$ to be points in $n^{\perp}$
near $x'$ on either side of the joint $\foj$. Let $\gamma,\gamma''$
be two semi-circular paths with endpoints $x_0$ and $-x_0$ and passing
through $x, x''$ respectively. Then
up to orientation $\gamma(\gamma'')^{-1}$ is freely homotopic to 
$\gamma_{\foj}$ in $M_{\RR}\setminus\Sing(\foD)$. 
Thus $\theta_{\gamma_{\foj},\foD}=\theta_{\gamma'',\foD}^{-1}
\theta_{\gamma,\foD}=\theta_{+,-}^{-1}\theta_{+,-}=\id$.

Thus we have established the one-to-one correspondence between
consistent scattering diagrams $\foD$
and elements of $G$.
\end{proof}

Following \cite{KS13}, we give an alternative 
parameterization of $G$, as follows.
For any $n_0\in N^+$ primitive, we get the splitting
\begin{equation}
\label{n0splitting}
\fog=\fog^{n_0}_+\oplus\fog^{n_0}_0\oplus \fog^{n_0}_-
\end{equation}
where 
\[
\fog^{n_0}_+ := \bigoplus_{\{n_0,n\}>0} \fog_n,
\quad
\fog^{n_0}_- := \bigoplus_{\{n_0,n\}<0} \fog_n,
\quad
\fog^{n_0}_0 := \bigoplus_{\{n_0,n\}=0} \fog_n.
\]
These give rise to subgroups
$G^{n_0}_{\pm}, G^{n_0}_0$ of $G$. We drop the $n_0$ when it is clear from 
context.
Again, this allows us to factor any $g\in G$ as
$g=g_+\circ g_0\circ g_-$ with $g_{\pm}\in G_{\pm}$, $g_0\in G_0$.
We can further decompose $\fog_0=\fog_0^{\|}\oplus\fog_0^{\perp}$,
where $\fog_0^{\|} := \fog_{n_0}^{\|}$,  while
$\fog_0^{\perp}$ involves those summands of $\fog_0$ coming from $n$ not
proportional to $n_0$. Note that $[\fog_0,\fog_0^{\perp}]\subseteq
\fog_0^{\perp}$. Indeed, if $n_1+n_2=kn_0$ with $\{n_i,n_0\}=0$ for $i=1,2$,
we then have $\{n_1,n_2\}=0$ so that $[\fog_{n_1},\fog_{n_2}] = 0$ by (\ref{Liebracketform}).
Thus we have a projection homomorphism
$G_0\rightarrow G_0^{\|}$ with kernel $G_0^{\perp}$. In particular,
the factorization $g=g_+\circ g_0\circ g_-$ yields an element
$g_{0}^{\|}\in G_{0}^{\|}$ via this projection. We then have a map (of sets) 
\[
\Psi:G\rightarrow \prod_{\scriptstyle\hbox{$n_0\in N^+$
primitive}} G_{n_0}^{\|}.
\]
\begin{proposition}
\label{Gparam}
$\Psi$ is a set bijection
\end{proposition}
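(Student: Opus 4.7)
The plan is to construct the inverse of $\Psi$ by successive approximation, exploiting the pro-nilpotent filtration $G = \varprojlim G^{\le k}$, with $G^{>k} := \exp(\fog^{>k})$. The starting observation is that every $n \in N^+$ is uniquely of the form $\ell n_0$ with $n_0 \in N^+$ primitive and $\ell \ge 1$, so that after truncation at any degree $k$ one has $\fog^{\le k} = \bigoplus_{n_0\ \text{primitive}} \fog^{\|}_{n_0}/\fog^{>k}$ as graded vector spaces. This already furnishes an obvious set-bijection $\Phi_k: G^{\le k}\to\prod_{n_0} G^{\|}_{n_0}/G^{>k}$, obtained by decomposing $\log(g)$ according to the direct sum and exponentiating each summand separately. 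The content of the proposition is that $\Psi$ is also such a bijection, despite being defined via the more intricate factorization $g = g_+ g_0 g_-$ followed by projection of $g_0$ onto $G^{\|}_{n_0}$.

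I would proceed by induction on $k$, showing $\Psi$ induces a bijection $\Psi_k$ at each finite order. The base case is immediate: at the minimal degree of $N^+$ everything commutes, so $\Psi$ and $\Phi$ coincide. For the inductive step, assume $\Psi_{k-1}$ is a bijection. Given $g' \in G$ and $\eta \in \bigoplus_{d(n)=k}\fog_n$, I compute $\Psi(g'\cdot \exp\eta)$ modulo $\fog^{>k}$. Since $\eta$ has degree exactly $k$ and every element of $\fog$ has positive degree, every commutator $[\xi,\eta]$ has degree at least $k+1$; the Baker--Campbell--Hausdorff formula therefore gives $\log(g'\cdot\exp\eta)\equiv \log(g')+\eta$ mod $\fog^{>k}$. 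Decompose $\eta = \sum_n \eta_n$ with $\eta_n \in \fog_n$. Under the $n_0$-splitting \eqref{n0splitting}, terms with $\{n_0, n\}\ne 0$ lie in $\fog^{n_0}_\pm$ and are absorbed into $g_\pm$; terms with $\{n_0, n\}=0$ but $n$ not proportional to $n_0$ lie in $\fog_0^{\perp}$ and project to zero under $G_0\to G_0^{\|}$; only the terms $\eta_{\ell n_0}$ lie in $\fog^{\|}_{n_0}$ and contribute directly to $\Psi(g'\cdot\exp\eta)_{n_0}$. Once again, the BCH corrections that appear in recovering $g_\pm, g_0$ from $\log(g'\cdot\exp\eta)$ and in the projection $g_0\mapsto g_0^{\|}$ all lie in $\fog^{>k}$ by the same degree-counting principle.

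Consequently, modifying $g'$ by $\exp\eta$ adjusts each $\Psi(g')_{n_0}$ at degree $k$ independently, by the $\fog^{\|}_{n_0}$-component of $\eta$, with no coupling between distinct primitive directions $n_0$. Since $\bigoplus_{d(n)=k}\fog_n$ decomposes uniquely across primitive $n_0$'s, this supplies both existence and uniqueness of the correction needed to match any prescribed target in $\prod_{n_0} G^{\|}_{n_0}/G^{>k}$, completing the inductive step; passing to the inverse limit as $k\to\infty$ yields the bijection $\Psi$. The main technical obstacle is the careful bookkeeping of BCH corrections both in the multiplication $g'\cdot\exp(\eta)$ and in the recovery of the factorization $g_+g_0g_-$; these are handled uniformly by the observation that every nontrivial commutator in $\fog$ strictly increases total degree, so any corrections introduced at degree $k$ always lie in $\fog^{>k}$ and are invisible modulo this ideal.
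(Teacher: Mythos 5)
Your proof is correct and takes essentially the same approach as the paper, which simply observes that $\Psi$ is induced by the maps $\Psi_k: G^{\le k}\to \prod_{n_0}\exp(\fog^{\|}_{n_0}/\fog^{\|}_{n_0}\cap\fog^{>k})$ and says "one checks easily that this is a bijection order by order" — your argument is precisely that check, carried out carefully via the degree-counting and BCH bookkeeping the paper leaves implicit.
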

\begin{proof}
$\Psi$ is induced by an analogous map to order $k$, 
\[
\Psi_k:G^{\leq k} \to \prod \exp(\fog^{\|}_{n_0}/\fog^{\|}_{n_0}\cap\fog^{>k}).
\] 
One checks easily that this is a bijection order by order.
\end{proof}

\begin{theorem}
\label{KSlemma}  Let $\foD$ be a consistent scattering diagram
corresponding to $\theta_{+,-} \in G$. 
The following hold:
\begin{enumerate}
\item For each $n_0\in N^+$, to any fixed finite order, 
there is an open neighbourhood 
$U \subset n_0^{\perp}$ of $p^*(n_0)$
such that $g_x(\foD) = \Psi(\theta_{+,-})_{n_0} \in G_0^x = G_{n_0}^{||}$ for all
general $x \in U$.
Here $\Psi(g)_{n_0}$ denotes the component of $\Psi(g)$ indexed
by $n_0$.
\item
$\foD$ is equivalent to a diagram with only
one wall in $n_0^{\perp}$ containing $p^*(n_0)$ for each $n_0\in N^+$, and the group element
attached to this wall is 
$\Psi(\theta_{+,-})_{n_0}$.
\item Set
\[
\foD_{\inc}:=\{(n_0^{\perp}, \Psi(\theta_{+,-})_{n_0})\,|\,
\hbox{$n_0\in N^+$ primitive}\}.
\]
Then $\foD$ is equivalent to a consistent scattering diagram $\foD'$
such that $\foD' \supseteq \foD_{\inc}$
and $\foD' \setminus\foD_{\inc}$ consists only of outgoing walls.
Furthermore, up to equivalence $\foD'$ is the unique consistent
scattering diagram with this property.
\item The equivalence class of a consistent scattering diagram is determined by
its set of incoming walls.
\end{enumerate}
\end{theorem}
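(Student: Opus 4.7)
The strategy is to reduce all four parts to the correspondences provided by Theorem~\ref{maximscorrespondence} (equivalence classes of consistent scattering diagrams $\leftrightarrow$ elements of $G$) and Proposition~\ref{Gparam} (the set bijection $\Psi\colon G \to \prod_{n_0} G_{n_0}^{\|}$). Part~(1) is the technical core, and Parts~(2)--(4) follow essentially by bookkeeping.

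For Part~(1), I would first reduce to a finite-order statement by passing to $G^{\le k}$, so that only finitely many graded pieces $\fog_n$ with $d(n) \le k$ contribute. The crucial geometric point is that $p^*(n_0) \in n_0^\perp$, since skew-symmetry gives $\langle n_0, p_1^*(n_0)\rangle = \{n_0, n_0\} = 0$. The plan is then to compare, for $x$ general in $n_0^\perp$ near $p^*(n_0)$, the $x$-splitting $\fog = \fog_+^x \oplus \fog_0^x \oplus \fog_-^x$ used in the proof of Theorem~\ref{maximscorrespondence} with the $n_0$-splitting $\fog = \fog_+^{n_0} \oplus \fog_0^{n_0} \oplus \fog_-^{n_0}$ used to define $\Psi$. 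Since $\langle n, x\rangle = \{n_0, n\} + O(|x - p^*(n_0)|)$, choosing $x$ close enough to $p^*(n_0)$ forces the two splittings to agree on every $\fog_n$ with $\{n_0, n\} \ne 0$ and $d(n) \le k$. The remaining pieces have $\{n_0, n\} = 0$: those with $n$ proportional to $n_0$ form $\fog_{n_0}^{\|}$ and lie in both middle factors, while those with $n$ not proportional to $n_0$ form $\fog_0^\perp$ and may be distributed differently. By formula~\eqref{Liebracketform}, $[\fog_{kn_0}, \fog_n] = 0$ whenever $\{n_0, n\} = 0$, so $\fog_{n_0}^{\|}$ is central in $\fog_0^{n_0}$, and I would conclude that re-factoring $\theta_{+,-}$ through either splitting produces the same $G_{n_0}^{\|}$-component: this is $g_0^x = g_x(\foD)$ in the $x$-splitting (by the construction in the proof of Theorem~\ref{maximscorrespondence}) and $\Psi(\theta_{+,-})_{n_0}$ in the $n_0$-splitting.

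For Part~(2), the walls of $\foD$ lying in $n_0^\perp$ and containing $p^*(n_0)$ carry elements of the abelian group $G_{n_0}^{\|}$, so Part~(1) tells us that their product at any general $x$ near $p^*(n_0)$ equals $\Psi(\theta_{+,-})_{n_0}$. Using Lemma~\ref{easyequivlemma}, consistency of $\foD$ at joints, and the abelian structure of $G_{n_0}^{\|}$, one verifies that $\foD$ is equivalent to the diagram obtained by removing all such walls and inserting a single replacement wall, supported on any convex codimension-one rational polyhedral cone inside $n_0^\perp$ that contains $p^*(n_0)$ together with the original supports, and with attached element $\Psi(\theta_{+,-})_{n_0}$.

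Parts~(3) and~(4) then follow. For~(3), the prescribed incoming walls specify $\Psi(\theta_{+,-})_{n_0}$ for every primitive $n_0 \in N^+$; Proposition~\ref{Gparam} supplies a unique $\theta \in G$, Theorem~\ref{maximscorrespondence} yields the unique consistent scattering diagram (up to equivalence) realizing $\theta$, and Part~(2) normalizes it so that the only walls through each $p^*(n_0)$ in $n_0^\perp$ are those of $\foD_{\inc}$, making the remainder outgoing by definition. For~(4), the incoming walls of any consistent $\foD$ determine $\Psi(\theta_{+,-}(\foD))$ via Parts~(1) and~(2), hence $\theta_{+,-}(\foD)$ by Proposition~\ref{Gparam}, hence $\foD$ up to equivalence by Theorem~\ref{maximscorrespondence}. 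The main obstacle is Part~(1): controlling the $\fog_n$ pieces with $\{n_0, n\} = 0$ but $n$ not proportional to $n_0$, which are distributed differently by the two splittings. The argument that they do not affect the $G_{n_0}^{\|}$-component relies on combining the centrality of $\fog_{n_0}^{\|}$ inside $\fog_0^{n_0}$ with the fact that commutators arising when moving such pieces past elements of $\fog_\pm^{n_0}$ remain in $\fog_\pm^{n_0}$ and are therefore absorbed into $g_\pm^x$.
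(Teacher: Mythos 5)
Your proposal is essentially the same approach as the paper's. For Part~(1), you compare the $x$-splitting and $n_0$-splitting for $x$ near $p^*(n_0)$, observe that they agree on the graded pieces with $\{n_0,n\} \ne 0$ to finite order, and handle the residual ambiguity in $\fog_0^\perp$ using the Lie-algebra structure; the paper packages this last step as the observation that $G_0 \to G_0^{\|}$ is a group homomorphism with kernel $G_0^\perp$, so that in the refactoring $g_0^{n_0} = h_+^x h_0^x h_-^x$ one reads off $h_0^x = \Psi(\theta_{+,-})_{n_0}$ directly, but this is just a cleaner phrasing of what you describe via centrality of $\fog_{n_0}^{\|}$ in $\fog_0^{n_0}$. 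Parts~(2) and~(4) match the paper's derivation from (1), Theorem~\ref{maximscorrespondence}, and Proposition~\ref{Gparam}.

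One small imprecision: for the existence claim in Part~(3), you say that Part~(2) ``normalizes it so that the only walls through each $p^*(n_0)$ in $n_0^\perp$ are those of $\foD_{\inc}$.'' That is not quite what Part~(2) gives you. Part~(2) produces, for each $n_0$, a single wall through $p^*(n_0)$ carrying $\Psi(\theta_{+,-})_{n_0}$, but that wall is generally a proper cone in $n_0^\perp$, not the full hyperplane $(n_0^\perp, \Psi(\theta_{+,-})_{n_0}) \in \foD_{\inc}$. The missing step, which the paper spells out, is to replace the single wall by an equivalent collection consisting of the full hyperplane plus outgoing walls in $n_0^\perp$ carrying $\Psi(\theta_{+,-})_{n_0}^{-1}$. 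This is a minor omission and does not affect the validity of the argument.
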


We note first that (3) of Theorem \ref{KSlemma} implies Theorem \ref{KSlemmaGS}.
Indeed, let $g_i\in G$ for $i\in I_{\uf}$ be the group element corresponding
to $1+z^{v_i}$, so that the initial scattering can be written as
$\foD_{\inc,\s}=\{(e_i^{\perp}, g_i)\,|\, i\in I_{\uf} \}$.
By Proposition \ref{Gparam}
there is a unique element $g \in G$ with 
\[
\Psi(g)_{n} = \begin{cases} g_i & n=e_i, i\in I_{\uf},\\
1 & \hbox{otherwise}.
\end{cases}
\]
Now apply Theorem \ref{KSlemma} with $\theta_{+,-}=g$.

\begin{proof}[Proof of Theorem \ref{KSlemma}] First note that statement
(1) implies (2). Further, (1), along with Theorem
\ref{maximscorrespondence} and Proposition \ref{Gparam}, implies (4), which
in turn gives the uniqueness in (3).
Note (1) implies that, to the given finite order, $\foD$ is equivalent
to a diagram having only one incoming wall contained
in $n_0^{\perp}$, and the attached group element is $\Psi(\theta_{+,-})_{n_0}$.
Now we can replace this single wall by an
equivalent collection of walls consisting of $(n_0^{\perp},\Psi(\theta_{+,-})_{n_0})$
and a number
of outgoing walls contained in $n_0^{\perp}$ with attached group element
$\Psi(\theta_{+,-})^{-1}_{n_0}$. This gives the existence in (3).

Thus it suffices to prove (1). We work modulo
$\fog^{>k}$, so we may assume $\foD$ is finite, and compare the splittings
\eqref{xsplitting} coming from a choice of $x\in n_0^{\perp}$ near
$p^*(n_0)$ and \eqref{n0splitting}, after replacing $\fog$ with
$\fog/\fog^{>k}$. For each $n\in N^+$ there
exists an open neighbourhood
$U_n \subset n_0^{\perp}$ of $p^*(n_0)$ such that $\langle p^*(n_0),n \rangle >0$ (resp.\ $< 0$)
implies $\langle x,n\rangle >0$ (resp.\ $< 0$) for all $x \in U_n$. 
Since $\fog^{n_0}_{\pm}$ is now a finite sum of $\fog_n$'s,
we can find a single $U$ so that
$\fog^{n_0}_{\pm}\subseteq \fog_{\pm}^x$ for all $x \in U$. If $x$ is general 
inside $n_0^{\perp}$ we also have $\fog_0^x=\fog^{\|}_{n_0}$. 

Now write
\[
\theta_{+,-} = g^{n_0}_+ \cdot g^{n_0}_0 \cdot g^{n_0}_-
\]
as in \eqref{n0splitting}. Then we can further
factor
\[
g^{n_0}_0 = h^{x}_+ \cdot h^{x}_0 \cdot h^{x}_-
\]
as in \eqref{xsplitting}. Note $h^x_{\pm} \in G_{0}^{\perp}$,
$h^x_0 \in G_{n_0}^{\|} = G^x_0$. Since the projection $G_0\rightarrow
G_0^{\|}$ is a group homomorphism with kernel
$G_0^{\perp}$, 
the image of $g^{n_0}_0$ in $G_{n_0}^{\|}$ is $h^x_0$, which thus
coincides with $\Psi(\theta_{+,-})_{n_0}$ by definition of the latter.
We have
\[
\theta_{+,-} = (g^{n_0}_+ \cdot h^x_+)\cdot h^x_0 \cdot (h^x_- \cdot g^{n_0}_-)
\]
which is then the (unique) factorisation from \eqref{xsplitting}.
Thus
\[
g_x(\theta_{+,-}) = h^x_0 = \Psi(\theta_{+,-})_{n_0}
\]
for any general $x \in U$.
\end{proof}

\subsection{Mutation invariance of the scattering diagram} 
\label{mutinvsec}

We now study how the scattering diagram $\foD_{\s}$ constructed from
seed data defined in the previous subsection
changes under mutation. This is crucial for uncovering the
chamber structure of these diagrams and giving the connection with the
exchange graph and cluster complex.

Thus let $k\in I_{\uf}$ and $\s'=\mu_k(\s)$ be the mutated seed (see e.g.,
\cite{P1}, (2.3)). 
To distinguish the two Lie algebras involved, we write $\fog_{\s}$ and
$\fog_{\s'}$ for the Lie algebras arising from these two different
seeds. We recall that the Injectivity Assumption is independent of the
choice of seed.

\begin{definition}
\label{halfspacedef}
\label{tkdef}
We set
\[
\cH_{k,+}:=\{m\in M_{\RR}\,|\, \langle e_k,m\rangle \ge 0\}, 
\quad \cH_{k,-}:=\{m\in M_{\RR}\,|\, \langle e_k,m\rangle \le 0\}.
\]
For $k\in I_{\uf}$, 
define the piecewise linear transformation
$T_k:M^{\circ}\rightarrow M^{\circ}$ by, for $m\in M^{\circ}$,
\begin{equation}
\label{Tkdefinition}
T_k(m):=\begin{cases}
m+v_k\langle d_ke_k,m\rangle &m\in \cH_{k,+}\\
m& m\in \cH_{k,-}.
\end{cases}
\end{equation}
As we will explain in \S \ref{tropsec}, $T_k$ is the tropicalisation of $\mu_k$
viewed as a birational map between tori.
We will write $T_{k,-}$ and $T_{k,+}$ to be the linear transformations
used to define $T_k$ in the regions $\cH_{k,-}$ and $\cH_{k,+}$
respectively.

Define the scattering diagram $T_k(\foD_{\s})$ to be the scattering diagram
obtained by: 
\begin{enumerate}
\item for each wall $(\fod,f_{\fod})\in \foD_{\s}\setminus
\{\fod_k\}$, where $\fod_k:=(e_k^{\perp}, 1+z^{v_k})$, 
we have one or two walls in $T_k(\foD_{\s})$ given as
\[
\big(T_k(\fod\cap\cH_{k,-}), T_{k,-}(f_{\fod})\big),
\quad
\big(T_k(\fod\cap\cH_{k,+}), T_{k,+}(f_{\fod})\big),
\]
throwing out the first or second of these 
if $\dim \fod\cap\cH_{k,-}<\rank M-1$ or $\dim\fod\cap\cH_{k,+}<\rank M-1$,
respectively. Here for $T:M^{\circ}\rightarrow M^{\circ}$ linear, we write
$T(f_{\fod})$ for the formal power series obtained by applying $T$ to
each exponent in $f_{\fod}$.
\item $T_k(\foD_{\s})$ also contains the wall 
$\fod_k':=(e_k^{\perp}, 1+ z^{-v_k})$.
\end{enumerate} 
\end{definition}

The main result of this subsection is:

\begin{theorem}
\label{Adiagrammutations}
Suppose the Injectivity Assumption is satisfied.
Then $T_k(\foD_{\s})$ is a consistent 
scattering diagram for $\fog_{\mu_k(\s)}$ and $N^+_{\mu_k(\s)}$. 
Furthermore, $\foD_{\mu_k(\s)}$ and $T_k(\foD_{\s})$ are equivalent.
\end{theorem}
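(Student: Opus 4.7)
The plan is to verify three things: (i) $T_k(\foD_{\s})$ is a well-defined scattering diagram for the mutated seed data, (ii) it is consistent, and (iii) it is equivalent to $\foD_{\mu_k(\s)}$ via the uniqueness clause of Theorem \ref{KSlemmaGS}.

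For (i), one uses the mutation formulas expressing $e_i' := \mu_k(e_i)$ and $v_i' := p_1^*(e_i')$ in terms of $e_i$ and $v_i$. For a wall $(\fod, f_{\fod}) \in \foD_{\s} \setminus \{\fod_k\}$ supported in $n_0^{\perp}$ with $n_0 \in N^+_{\s}$, a short computation shows that each piece $T_k(\fod \cap \cH_{k,\pm})$ lies in a hyperplane $(n_0^{\pm})^{\perp}$ for elements $n_0^{\pm} \in N^+_{\mu_k(\s)}$, and that the transformed function $T_{k,\pm}(f_{\fod})$ is an admissible wall function for the mutated Lie algebra $\fog_{\mu_k(\s)}$. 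The added wall $\fod_k' = (e_k^{\perp}, 1+z^{-v_k})$ is manifestly admissible since $e_k^{\perp} = (e_k')^{\perp}$ and $-v_k = v_k'$ under mutation.

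The main obstacle is (ii). Working modulo any fixed power of the maximal ideal (so the diagrams are finite), by Lemma \ref{easyequivlemma} it suffices to show that for each joint $\foj$ of $T_k(\foD_{\s})$ the path-ordered product around a small loop is trivial. If $\foj \not\subset e_k^{\perp}$, then a neighbourhood of $\foj$ lies entirely in one open half-space $\Int \cH_{k,\pm}$, on which $T_k$ is a single linear isomorphism $T_{k,\pm}$. Pulling back by $T_{k,\pm}^{-1}$ identifies the loop with a small loop around a joint of $\foD_{\s}$, and consistency of $\foD_{\s}$ transports through, since $T_{k,\pm}$ intertwines the two Lie-algebraic structures compatibly with wall-crossing automorphisms. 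The delicate case is $\foj \subset e_k^{\perp}$: here a small loop crosses $\fod_k'$ twice (once on each side of $\foj$) and also crosses pairs of bent walls $T_k(\fod \cap \cH_{k,\pm})$ for walls $\fod$ of $\foD_{\s}$ passing through the corresponding joint. Triviality reduces to the central identity
\[
\theta_{1+z^{-v_k}} \circ T_{k,+} = T_{k,-} \circ \theta_{1+z^{v_k}},
\]
where each $T_{k,\pm}$ acts on monomials by $z^m \mapsto z^{T_{k,\pm}(m)}$. This identity follows from the elementary calculation $(1+z^{-v_k})^N z^{Nv_k} = (1+z^{v_k})^N$ combined with the fact that $v_k \in e_k^{\perp}$ (a consequence of skew-symmetry of $\{\cdot,\cdot\}$, which yields $\langle d_k e_k, v_k \rangle = 0$, so $T_{k,+}$ fixes the exponent $\langle d_k e_k, m\rangle$). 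Using this identity, the path-ordered product around $\foj$ in $T_k(\foD_{\s})$ is rewritten as a conjugate of the path-ordered product around the corresponding joint of $\foD_{\s}$, which is trivial by consistency of $\foD_{\s}$.

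For (iii), Theorem \ref{KSlemmaGS} (uniqueness part) requires $T_k(\foD_{\s}) \supseteq \foD_{\inc, \mu_k(\s)}$ together with the outgoingness of every wall in $T_k(\foD_{\s}) \setminus \foD_{\inc, \mu_k(\s)}$. The $k$-th incoming wall of $\foD_{\inc, \mu_k(\s)}$ is $\fod_k'$, included by construction; for $i \neq k$ the incoming wall $(e_i^{\perp}, 1+z^{v_i})$ of $\foD_{\s}$ is carried by $T_k$ onto a wall equivalent to $((e_i')^{\perp}, 1+z^{v_i'})$ via the mutation relations. The outgoing condition $p_1^*(n_0) \notin \fod$ is preserved under $T_k$ since $T_k$ transports direction vectors compatibly with the change of seed. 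Combined with the consistency established in (ii), uniqueness in Theorem \ref{KSlemmaGS} yields the equivalence $T_k(\foD_{\s}) \sim \foD_{\mu_k(\s)}$.
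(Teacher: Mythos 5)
Your overall strategy — (i) check $T_k(\foD_{\s})$ is a well-defined scattering diagram, (ii) verify consistency by reducing to joints in $e_k^{\perp}$, (iii) invoke uniqueness — is exactly the skeleton of the paper's proof, and your key algebraic identity correctly captures the one computation that matters. (It is written with the opposite sign convention for the wall-crossing automorphism from the one fixed in Definition~\ref{KSdef} and \eqref{thetafodkdef}; with the paper's sign the identity reads $T_{k,+} \circ \theta_{1+z^{v_k}} = \theta_{1+z^{-v_k}}$, i.e.\ $\alpha^{-1}\circ\theta_{\fod_k'}=\theta_{\fod_k}$, rather than $\theta_{1+z^{-v_k}}\circ T_{k,+}=\theta_{1+z^{v_k}}$. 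Your elementary identity $(1+z^{-v_k})^N z^{Nv_k}=(1+z^{v_k})^N$ is the right one in either convention, so this is a bookkeeping slip rather than a conceptual error, but you should state which crossing direction and which sign you are using.)

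The genuine gap is that you never address the question: in which ring are the wall-crossing automorphisms acting when you compare $\foD_{\s}$ and $T_k(\foD_{\s})$ (or $\foD_{\mu_k(\s)}$)? The scattering diagram $\foD_{\s}$ is defined with functions in $\widehat{\kk[P]}$ for a monoid $P$ with $P^{\times}=\{0\}$ and $p_1^*(e_i)\in P\setminus P^{\times}$, while $\foD_{\mu_k(\s)}$ lives in $\widehat{\kk[P']}$ for a different monoid $P'$. The wall you add, $\fod_k'=(e_k^{\perp},1+z^{-v_k})$, has $-v_k\notin P$, so it cannot even be a wall over $\widehat{\kk[P]}$. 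To make sense of the comparison you must pass to a common monoid $\oP$ containing both $v_k$ and $-v_k$; but then $v_k\in\oP^{\times}$, so the initial wall $(e_k^{\perp},1+z^{v_k})$ is no longer trivial modulo the filtration ideal $J$ and cannot be treated as an ordinary wall. This is precisely why the paper introduces the notion of a scattering diagram with a slab (Definitions~\ref{Pbardef} and~\ref{escdef}) and proves Theorem~\ref{KSlemma2}, which says that $\foD_{\s}$ has a representative over $\oP$ with a single slab, that this representative is consistent and unique up to equivalence, and that it agrees with the earlier $\foD_{\s}$. Both the assertion that "consistency of $\foD_{\s}$ transports through'' and the uniqueness argument in your step~(iii) must be run in this extended framework. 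In particular, the uniqueness you invoke should be that of Theorem~\ref{KSlemma2}, not Theorem~\ref{KSlemmaGS}: the latter applies only to ordinary scattering diagrams, and before the argument is complete you do not yet know that $T_k(\foD_{\s})$ can be realized as an ordinary scattering diagram over $\widehat{\kk[P']}$. Without this machinery, the argument does not close.

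Secondarily, in step~(iii) the matching of incoming walls requires a case split on the sign of $\langle v_i,e_k\rangle$ for $i\neq k$: when it is nonzero, $T_k$ carries only one of the two halves $\fod\cap\cH_{k,\pm}$ of $(e_i^{\perp},1+z^{v_i})$ onto a half of $((e_i')^{\perp},1+z^{v_i'})$, and one must check the resulting wall is still incoming and record that walls of $\foD_{\inc,\s'}$ may need to be split in two to match. Your one-sentence summary "carried by $T_k$ onto a wall equivalent to $((e_i')^{\perp}, 1+z^{v_i'})$ via the mutation relations'' elides this; it is correct but needs the computation $\langle e_i',T_{k,+}(m)\rangle=0$ for $m\in\cH_{k,+}\cap e_i^{\perp}$ and $T_{k,+}(v_i)=v_i'$ when $\langle v_i,e_k\rangle>0$, as in the paper's Step~I.
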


The main point in the proof, which is not at all obvious from the
definition, is that $T_k(\foD_{\s})$ is a scattering diagram for $\fog_{\s'}$,
$N^+_{\s'}$, where $\s'=\mu_k(\s)$. Formally,
consistency will be easy to check using consistency of 
$\foD_{\s}$. It will follow easily that by construction
$\foD_{\s'}$ and $T_k(\foD_{\s})$ have the same incoming walls, 
so the theorem will then follow from the uniqueness in Theorem \ref{KSlemmaGS}. 

The main problem to overcome is that the functions attached to walls
of $\foD_{\s}$ and $\foD_{\s'}$ live in two different completed monoid
rings, $\widehat{\kk[P]}$ and $\widehat{\kk[P']}$, for $P$ a monoid chosen
to contain $v_i, i\in I_{\uf}$, and $P'$ a monoid chosen to contain
$v_i', i\in I_{\uf}$. 
We need first a common monoid $\oP$ containing both $P$ and $P'$.

\begin{definition}
\label{Pbardef}
Let $\sigma\subseteq M^{\circ}_{\RR}$ be a top-dimensional cone
containing $v_i$, $i\in I_{\uf}$, and $-v_k$, and such
that $\sigma\cap (-\sigma)=\RR v_k$. Set
$\oP=\sigma\cap M^{\circ}$, 
and $J=\oP\setminus (\oP \cap \RR v_k)=\oP \setminus \oP^{\times}$.
\end{definition}

Given such a choice of $\oP$, we can find $P$, $P'$ contained in $\oP$.
However, we have an additional problem that $\foD_{\s}$
is not trivial modulo $J$. Indeed, $v_k \not \in J$, 
while one of the initial walls of 
$\foD_{\inc,\s}$ is $(e_k^{\perp},1 + z^{v_k})$.
In particular, the wall-crossing automorphism associated to 
\[
\fod_k:=(e_k^{\perp},1+z^{v_k})
\]
is not an automorphism of the ring
$\widehat{\kk[\oP]}$, but rather of the localized ring
$\widehat{\kk[\oP]}_{1+z^{v_k}}$. (Here the hats denote completion with respect to $J$.)
This kind of situation is dealt with in \cite{GSAnnals}, see especially
\S4.3. However the current situation is quite a bit simpler, so we will
give the complete necessary arguments here and in Appendix \ref{scatappendix}.

We will use the notation $\theta_{\fod_k}$ for the automorphism
of $\widehat{\kk[\oP]}_{1+z^{v_k}}$ associated to crossing the wall
$\fod_k$ from $\cH_{k,-}$ to $\cH_{k,+}$. Explicitly,
\begin{equation}
\label{thetafodkdef}
\theta_{\fod_k}(z^m)=z^m(1+z^{v_k})^{-\langle d_ke_k, m\rangle}.
\end{equation}

In this situation, define 
\[
N_{\s}^{+,k}:=
\left\{\sum_{i\in I_{\uf}}a_ie_i \,\bigg |\, \hbox{$a_i\in\ZZ_{\ge 0}$ for 
$i\not=k$, $a_k\in\ZZ$, and $\sum_{i \in I_{\uf} \setminus \{ k \}} a_i > 0$} \right\}.
\]
We note that by the definition of the mutated seed $\s'$, $N_{\s}^{+,k} = N_{\s'}^{+,k}$,
so we indicate it by $N^{+,k}$. 

We now extend the definition of scattering diagram. 

\begin{definition} 
\label{escdef} 
A \emph{wall} for $\oP$ and 
ideal $J$ is a pair $(\fod, f_{\fod})$ with $\fod$ as in Definition 
\ref{walldef}, but with $n_0 \in N^{+,k}$, and 
$f_{\fod}=1+\sum_{k=1}^{\infty} c_kz^{kp^*(n_0)} \in \widehat{\kk[\oP]}$ 
congruent to $1$ mod $J$. The {\it slab} for the seed $\s$ means the pair 
$\fod_k=(e_k^{\perp}, 1 + z^{v_k})$.  Note since $v_k \in \oP^{\times}$ this 
does not qualify as a wall. Now a scattering diagram $\foD$ is
a collection of walls and possibly this single slab, with the condition that
for each $k>0$,
$f_{\fod}\equiv 1 \mod J^k$ for all but finitely many walls in $\foD$.
\end{definition}

Note that crossing a wall or slab $(\fod,f_{\fod})$ now induces
an automorphism of $\widehat{\kk[\oP]}_{1+z^{v_k}}$ of the form 
$\theta^{\pm 1}_{f_{\fod}}$ 
(with the localization only needed when a slab is crossed).

The following is proved in Appendix \ref{scatappendix}:

\begin{theorem}
\label{KSlemma2} 
There exists a scattering diagram $\overline{\foD}_{\s}$ in the sense of
Definition \ref{escdef} such that 
\begin{enumerate}
\item $\overline\foD_{\s}\supseteq \foD_{\inc,\s}$,
\item $\overline\foD_{\s}\setminus \foD_{\inc,\s}$ consisting 
only of outgoing walls, and 
\item $\theta_{\gamma,\foD}$ as an automorphism
of $\widehat{\kk[\oP]}_{1+z^{v_k}}$ only depends on the endpoints of
$\gamma$.
\end{enumerate}
Furthermore, $\overline\foD_{\s}$ with these properties is unique up to equivalence. 

Finally, $\overline\foD_{\s}$ is also a scattering diagram
for the data $\fog_{\s}, N^+_{\s}$, and as such 
is equivalent to $\foD_{\s}$. 
\end{theorem}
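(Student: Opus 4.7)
The plan is to adapt the Kontsevich-Soibelman / Gross-Siebert inductive construction of scattering diagrams, order by order in the ideal $J$, with particular care taken to accommodate the slab $\fod_k = (e_k^{\perp}, 1+z^{v_k})$. Working modulo $J^{\ell}$ for successive $\ell$, the scattering-diagram finiteness condition (Definition \ref{escdef}) ensures that only finitely many walls play a role at each order. I would start with $\overline{\foD}_{\s,1}:=\foD_{\inc,\s}$, which already contains the slab together with the other initial walls, and inductively construct $\overline{\foD}_{\s,\ell}$ satisfying (1)-(3) modulo $J^{\ell+1}$. At each step, for every joint $\foj$ of $\overline{\foD}_{\s,\ell-1}$, the path-ordered product around a small loop about $\foj$ is, by the induction hypothesis, an automorphism congruent to $\id$ modulo $J^{\ell}$; the discrepancy modulo $J^{\ell+1}$ lies in an abelian Lie subalgebra of log derivations $z^{p^*(n)}\partial_n$ determined by the joint. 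As in the proof of Theorem \ref{KSlemma}, this discrepancy can be cancelled uniquely up to equivalence by inserting outgoing walls emanating from $\foj$.

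The genuinely new feature is the treatment of joints meeting the slab $\fod_k$: crossing $\fod_k$ induces the automorphism $\theta_{\fod_k}$ of the \emph{localized} ring $\widehat{\kk[\oP]}_{1+z^{v_k}}$ defined in \eqref{thetafodkdef}, which is no longer unipotent. I would argue that the inductive step nevertheless goes through: conjugation by $\theta_{\fod_k}^{\pm 1}$ preserves the pro-nilpotent subgroup generated by $\exp(z^{p^*(n)}\partial_n)$, $n \in N^{+,k}$, since this subgroup is stable under the piecewise linear map $T_k$ of Definition \ref{tkdef}. Hence the discrepancy around a joint meeting $\fod_k$ still lies in the relevant nilpotent Lie algebra graded by $N^{+,k}$, and can be cancelled by inserting outgoing walls of the ordinary (non-slab) type. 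Uniqueness of $\overline{\foD}_{\s}$ then follows from the standard principle that the outgoing walls at each order are forced by the incoming walls together with the consistency requirement.

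For the final assertion, I would reverse the roles: the scattering diagram $\foD_{\s}$ produced by Theorem \ref{KSlemmaGS} can itself be reinterpreted as an instance of $\overline{\foD}_{\s}$, by transporting the wall functions along the ring map $\widehat{\kk[P]}\rightarrow \widehat{\kk[\oP]}_{1+z^{v_k}}$ induced by the inclusion $P \hookrightarrow \oP$ and designating the initial wall $(e_k^{\perp},1+z^{v_k})\in \foD_{\inc,\s}$ as the slab. Consistency in the smaller ring evidently passes to consistency in the localization, so this transported diagram satisfies (1)-(3) of Theorem \ref{KSlemma2}; by the uniqueness just proved, it must be equivalent to $\overline{\foD}_{\s}$, yielding the claimed equivalence. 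The main technical obstacle is precisely the slab: verifying that localizing by $1+z^{v_k}$ introduces no obstruction to the joint-by-joint correction, so that only ordinary walls (graded by $N^{+,k}$) are needed. This amounts to the computation underlying the mutation invariance Theorem \ref{Adiagrammutations}.
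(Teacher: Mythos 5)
Your overall architecture (order-by-order Kontsevich-Soibelman induction, adapted for the slab) is the right idea, but there is a genuine gap at precisely the point you identify as "the main technical obstacle," and your dismissal of it is not correct.

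You claim that conjugation by $\theta_{\fod_k}^{\pm 1}$ preserves the pro-nilpotent subgroup generated by $\exp(z^{p^*(n)}\partial_n)$ for $n\in N^{+,k}$. This is false in the form you state it. The paper's Lemma \ref{conjugationlemma} computes this conjugation explicitly, and the key finding is that it is \emph{one-sided}: if $\{n,e_k\}>0$ then $\theta_{\fod_k}^{-1}\circ\exp(z^{p^*(n)}\partial_n)\circ\theta_{\fod_k}$ lands in $\bar G_{\foj}$ (the formula \eqref{conjugationeq} is a finite sum since $\alpha=\langle d_ke_k,p^*(n)\rangle>0$), but conjugation in the opposite direction would involve $(1+z^{v_k})^{-\alpha}$, which is an infinite series that does \emph{not} converge $J$-adically in $\widehat{\kk[\oP]}$ since $z^{v_k}$ is a unit there. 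So the subgroup is not stable under conjugation by $\theta_{\fod_k}^{\pm 1}$ — it is stable under conjugation in one direction, with which direction depending on the sign of $\{n,e_k\}$. Your appeal to "stability under $T_k$" does not fix this: $T_k$ is a piecewise linear map, and the two linear pieces $T_{k,\pm}$ correspond exactly to the two conjugation directions, so the problem simply recurs. The paper's proof handles this by distinguishing positive and negative perpendicular joints on $e_k^{\perp}$ and choosing the basepoint (and hence orientation) of the loop $\gamma_{\foj}$ according to the sign, so that the conjugation always goes the convergent way. Without this, the claim that the discrepancy around a joint on the slab "lies in the relevant nilpotent Lie algebra" is unsupported.

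There is a second, related gap: even granting that you can cancel the monodromy around every joint to order $\ell+1$, showing that the resulting diagram is actually consistent to order $\ell+1$ (i.e., that $\theta_\gamma$ is trivial on \emph{all} loops, not just those around perpendicular joints) is nontrivial in the presence of the slab, because the relevant ambient group acting on $\widehat{\kk[\oP]}_{1+z^{v_k}}$ is not a single pro-nilpotent group where the standard Kontsevich–Soibelman uniqueness immediately applies. The paper's Step V addresses this by introducing the interpolating groups $\hat G^{\le j}$ and bootstrapping consistency off the already-known consistent diagram $\foD_{\s}$ (as a scattering diagram for the original $\fog_{\s},N^+_{\s}$) via Lemma \ref{equivalencetonextorder}. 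Your proposal does not mention any such comparison and simply asserts consistency by analogy with Theorem \ref{KSlemma}; that analogy breaks exactly because the slab crossing is not a wall crossing in the $\fog$-graded group. Finally, your proof of the last assertion — "transport $\foD_{\s}$ and reinterpret it as an instance of $\overline{\foD}_{\s}$" — is circular as stated: for $\foD_{\s}$ to qualify as a scattering diagram in the sense of Definition \ref{escdef}, one needs to know it has no walls in $e_k^{\perp}$ beyond $\fod_k$, but this is precisely Remark \ref{noekwalls}, which the paper derives as a \emph{consequence} of Theorem \ref{KSlemma2}. The paper instead shows $\overline{\foD}_{\s}$ is consistent for $G$ (via the interpolating groups) and then invokes uniqueness of $\foD_{\s}$ to conclude the equivalence.
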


\begin{remark}
\label{noekwalls}
Note in particular that the theorem implies 
$\foD_{\s} \setminus \foD_{\inc,\s}$ does not contain any
walls contained in $e_k^{\perp}$ besides $\fod_k$. Indeed, no wall
of $\ofoD_{\s}$ is contained in $e_k^{\perp}$: only the slab $\fod_k$
is contained in $e_k^{\perp}$.
\end{remark}

\begin{proof}[Proof of Theorem \ref{Adiagrammutations}]
We write $\s'=\mu_k(\s)$, $\s'=(e_i'\,|\,i\in I)$.

We first note that we can choose representatives for $\foD_{\s}$,
$\foD_{\s'}$ which are scattering diagrams in the sense of Definition
\ref{escdef}, by Theorem \ref{KSlemma2}. Furthermore, $T_k(\foD_{\s})$
is also a scattering diagram in the sense of Definition \ref{escdef} for the 
seed $\s'$:
this follows since if $z^m\in J^i$ for some $i$, we also have
$z^{T_{k,\pm}(m)}\in J^i$. Thus by the uniqueness statement of
Theorem \ref{KSlemma2}, $T_k(\foD_{\s})$ and $\foD_{\s'}$ are equivalent
if (1) these diagrams are equivalent to diagrams which have the same set of
slabs and 
incoming walls; (2) $T_k(\foD_{\s})$ is consistent. We carry out these two 
steps.

\emph{Step I. Up to equivalence, $T_k(\foD_{\s})$ and $\foD_{\s'}$
has the same set of slabs and incoming walls.}

If $\fod\in\foD_{\s}$ is outgoing, the wall $\fod$ contributes 
to $T_k(\foD_{\s})$ and is
also outgoing, so let us consider the incoming walls of $T_k(\foD_{\inc,\s})$.
Setting $v_i'=p^*(e_i')$, already
$\foD_{\inc,\s'}$ contains the slab for $\s'$
\[
((e_k')^{\perp},1+z^{v_k'})=
(e_k^{\perp},1+z^{-v_k})=\fod_k',
\]
which lies in $T_k(\foD_{\inc,\s})$ by construction. Next consider
the wall $(e_i^{\perp},1+z^{v_i})$, for $i\not=k$. 
We have three cases to consider, based on whether $\langle v_i,e_k\rangle$
is zero, positive or negative.

First if $\langle v_i,e_k\rangle =0$, then $T_k$ takes the plane
$e_i^{\perp}$ to itself (in a piecewise linear way), and $T_{k,+}(v_i)
=T_{k,-}(v_i)=v_i$. Thus the wall $(e_i^{\perp}, 1+z^{v_i})$
contributes two walls $(e_i^{\perp}\cap\cH_{k,\pm},1+z^{v_i})$
whose union is the wall $((e_i')^{\perp},1+z^{v_i})$, as
$e_i'=e_i$ and $v_i'=v_i$ in this case. Up to equivalence, we can
replace these two walls with the single wall $((e_i')^{\perp},
1+z^{v_i})$.

If $\langle v_i, e_k\rangle >0$, then consider the wall
\[
\fod_{i,+}:=\big(T_k(\cH_{k,+}\cap e_i^{\perp}), 1+z^{T_{k,+}(v_i)}\big)
\in T_k(\foD_\s).
\] 
This wall contains the ray $\RR_{\ge 0}T_{k,+}(v_i)$,
so this is an incoming wall. Note that if 
$m\in \cH_{k,+}\cap e_i^{\perp}$, we have, with $\epsilon$ as given
in \eqref{epsilondef}, 
\begin{align*}
\langle e_i',T_k(m)\rangle = {} & \langle e_i+[\epsilon_{ik}]_+e_k,
m+v_k\langle d_ke_k,m\rangle\rangle\\
= {} & \{e_k,e_i\}\langle d_ke_k,m\rangle 
+d_k\{e_i,e_k\}\langle e_k,m\rangle\\
= {} & 0.
\end{align*}
Thus $T_k(\cH_{k,+}\cap e_i^{\perp})$ is a half-space contained in 
$(e_i')^{\perp}$, and furthermore $1+z^{T_{k,+}(v_i)}=1+z^{v_i'}$
since
\[
T_k(v_i)=v_i+v_k\epsilon_{ik}=v_i'.
\]
Thus we see that the wall $\fod_{i,+}$ of $T_k(\foD_{\inc,\s})$ is
half of the wall $((e_i')^{\perp},1+z^{v_i'})$ of $\foD_{\inc,\s'}$.

If $\langle v_i,e_k\rangle<0$, then the wall
$\fod_{i,-}:=\big(T_k(\cH_{k,-}\cap e_i^{\perp}),1+z^{T_{k,-}(v_i)}\big)
\in T_k(\foD_\s)$
coincides with $(\cH_{k,-}\cap e_i^{\perp}, 1+z^{v_i})$, 
and $\cH_{k,-}$ also contains $\RR_{\ge 0}v_i$, so $\fod_{i,-}$ 
is an incoming wall.
But also $v_i'=v_i$, $e_i'=e_i$ in this case. Thus $\fod_{i,-}$ is
again half of the wall $((e_i')^{\perp},1+z^{v_i'})$.

In summary, we find that after splitting some of the walls of $\foD_{\inc,\s'}$
in two, $T_k(\foD_{\inc,\s})$ and $\foD_{\inc,\s'}$ have the same set of
incoming walls, and thus, making a similar change to $\foD_{\s'}$, we
see that $T_k(\foD_{\s})$ and $\foD_{\s'}$ have the same set of incoming
walls.

\smallskip

\emph{Step II.}
$\theta_{\gamma,T_k(\foD_{\s})}=\id$ for any loop
$\gamma$ for which this automorphism is defined. 

\smallskip

Indeed, the only place a problem can occur is for $\gamma$ a loop around a 
joint of $\foD_{\s}$
contained in the slab $\fod_k$, as this is where $T_k$ fails to be
linear. To test this, consider a loop $\gamma$ around a joint contained
in $\fod_k$. Assume that it has basepoint in the half-space $\cH_{k,-}$
and is split up as $\gamma=\gamma_1\gamma_2\gamma_3
\gamma_4$, where $\gamma_1$ immediately crosses $\fod_k$, $\gamma_2$
is contained entirely in $\cH_{k,+}$, crossing all
walls of $\foD_{\s}$ which contain $\foj$ and intersect the interior
of $\cH_{k,+}$, $\gamma_3$
crosses $\fod_k$ again, and $\gamma_4$ then crosses all relevant walls
in the half-space $\cH_{k,-}$. 

Let $\theta_{\fod_k}$, $\theta_{\fod_k'}$ be the wall-crossing automorphisms
for crossing $\fod_k$ or $\fod_k'$ passing from $\shH_{k,-}$ to
$\shH_{k,+}$, as in \eqref{thetafodkdef}. Then by Remark \ref{noekwalls},
$\theta_{\gamma_1,\foD_{\s}}=\theta_{\fod_k}$
and $\theta_{\gamma_3,\foD_{\s}} =\theta_{\fod_k}^{-1}$. 

Let $\alpha:\kk[M^{\circ}]\rightarrow\kk[M^{\circ}]$
be the automorphism induced by $T_{k,+}$, i.e.,
\[
\alpha(z^m)=z^{m+v_k\langle d_ke_k,m\rangle}.
\]
Then note that
\begin{align*}
\theta_{\gamma_1,T_k(\foD_{\s})}= {} & \theta_{\fod_k'}\\
\theta_{\gamma_2,T_k(\foD_{\s})} = {} & \alpha\circ \theta_{\gamma_2,\foD_{\s}}\circ
\alpha^{-1}\\
\theta_{\gamma_3,T_k(\foD_{\s})} = {} &\theta_{\fod_k'}^{-1}\\
\theta_{\gamma_4,T_k(\foD_{\s})}= {} & \theta_{\gamma_4,\foD_{\s}}.
\end{align*}
Thus to show $\theta_{\gamma,\foD_{\s}}=\theta_{\gamma,T_k(\foD_{\s})}$,
it is enough to show that
\[
\alpha^{-1}\circ \theta_{\fod_k'}=\theta_{\fod_k}.
\]
But
\begin{align*}
\alpha^{-1}(\theta_{\fod_k'}(z^m)) = {} &
\alpha^{-1}((1+z^{-v_k})^{-\langle d_ke_k,m\rangle}z^m)\\
= {} & (1+z^{-v_k})^{-\langle d_ke_k,m\rangle}z^{m-v_k\langle d_ke_k,m\rangle}\\
= {} & z^m(z^{v_k}+1)^{-\langle d_ke_k,m\rangle}\\
= {} & \theta_{\fod_k}(z^m),
\end{align*}
as desired.
\end{proof}

\begin{construction}[The chamber structure]
\label{chamberstructureremark}
Suppose given fixed data $\Gamma$ satisfying the Injectivity
Assumption and seed data $\s$. We then obtain for every seed
$\s'$ obtained from $\s$ via mutation a scattering diagram $\foD_{\s'}$.
In each case we will choose a representative for the scattering diagram
with minimal support.

Note by construction and Remark \ref{noekwalls}, irrespective
of the representative of $\foD_{\s}$ used, $\foD_{\s}$ contains
walls whose union of supports is $\bigcup_{k\in I_{\uf}}e_k^{\perp}$.
Furthermore, we have $\shC^{\pm}\subseteq M_{\RR}$ given by 
\eqref{chamberdef}, which can be written more explicitly as
\begin{align*}
\shC^+_{\s}:=\shC^+=&\{m\in M_{\RR}\,|\, \langle e_i,m\rangle \ge 0\quad \forall
i\in I_{\uf}\},\\
\shC^-_{\s}:=\shC^-=&\{m\in M_{\RR}\,|\, \langle e_i,m\rangle \le 0\quad \forall
i\in I_{\uf}\},
\end{align*}
Then $\shC^{\pm}_{\s}$ are the closures of connected components of 
$M_{\RR}\setminus\Supp(\foD_{\s})$. Similarly, we see that
taking $\shC^{\pm}_{\mu_k(\s)}$ to be the chambers
where all $e_i'$ are positive (or negative), we
have that $\shC^{\pm}_{\mu_k(\s)}$ is the closure of 
a connected component of $M_{\RR}
\setminus\Supp(\foD_{\mu_k(\s)})$, so that $T_k^{-1}(\shC^{\pm}_{\mu_k(\s)})$ 
is the closure of a connected component of $M_{\RR}\setminus\Supp(\foD_{\s})$.
Note that the closures of $T_k^{-1}(\shC^+_{\mu_k(\s)})$ and 
$\shC^+_{\s}$ have a common
codimension one face given by the intersection with $e_k^{\perp}$. This
gives rise to the following chamber structure for a subset of
$M_{\RR}\setminus \Supp(\foD_{\s})$.

Recall from Appendix \ref{LDsec} the infinite oriented tree $\foT$ (or
$\foT_{\s}$) used for parameterizing seeds obtained via mutation of $\s$. 
In particular, for any vertex $v$ of $\foT$, 
there is a simple path from the root
vertex to $v$, indicating a sequence of mutations $\mu_{k_1},\ldots,\mu_{k_p}$ 
and hence a piecewise linear transformation
\[
T_{v}=T_{k_p}\circ\cdots\circ T_{k_1}:M_{\RR}\rightarrow M_{\RR}.
\]
Note that $T_{k_i}$ is defined using the basis vector $e_{k_i}$
of the seed $\mu_{k_{i-1}}\circ\cdots \circ\mu_{k_1}(\s)$, not the
basis vector $e_{k_i}$ of the original seed $\s$. By applying Theorem
\ref{Adiagrammutations} repeatedly, we see that 
\begin{equation}
\label{Tvequation}
T_{v}(\foD_{\s})=\foD_{\s_v}
\end{equation}
(where $T_{v}$ applied
to the scattering diagram $\foD_{\s}$ is interpreted as the composition
of the actions of each $T_{k_i}$) and 
\[
\shC^{\pm}_{v}:=T_{v}^{-1}(\shC^{\pm}_{\s_v})
\]
is the closure of a connected component of $M_{\RR}\setminus\Supp(\foD_{\s})$. 

Note that the map from vertices of $\foT$ to chambers of $\Supp(\foD_{\s})$
is never one-to-one. Indeed, if $v$ is the vertex obtained by following
the edge labelled $k$ twice starting at the root vertex, one checks
that $\shC^{\pm}_v=\shC^{\pm}_{\s}$, even though
$\mu_k(\mu_k(\s))\not=\s$ (see \cite{P1}, Remark 2.5).

Thus we have a chamber structure on a subset of $M_{\RR}$; in general,
the union of the cones $\shC^{\pm}_{v}$ do not form a dense subset
of $M_{\RR}$. 

Since we will often want to compare various aspects of this geometry 
for different seeds, we will write the short-hand $v\in \s$ for
an object parameterized by a vertex $v$ where the root of the tree
is labelled with the seed $\s$. In particular:

\begin{definition}
\label{FGchamberdef}
We write
$\shC^{\pm}_{v\in\s}$ for the chamber of $\Supp(\foD_{\s})$ corresponding
to the vertex $v$.
We write $\Delta_{\s}^{\pm}$ for the set of chambers
$\shC^{\pm}_{v\in\s}$ for $v$ running over all vertices of $\foT_{\s}$.
We call elements of $\Delta_{\s}^+$ \emph{cluster chambers}.
\end{definition}
\end{construction}

\section{Basics on tropicalisation and the Fock-Goncharov cluster complex} 
\label{tropsec}
We now explain that the chamber structure of Construction 
\ref{chamberstructureremark}
coincides with the Fock-Goncharov cluster complex. To do so, we first 
recall the basics of tropicalisation.

For a lattice $N$ with $M=\Hom(N,\ZZ)$, let
$Q_{\semf}(N)$ be the subset of elements of the field of fractions of 
$\kk[M] = H^0(T_N,\cO_{T_N})$ which can be expressed as
a ratio of Laurent polynomials with non-negative integer coefficients.
Then $Q_{\semf}$ is a semi-field under ordinary multiplication
and addition. For any semi-field $P$,
restriction to the monomials $M\subset Q_{\semf}(N)$ gives a canonical bijection
\[
\Hom_{\semf}(Q_{\semf}(N),P) \to \Hom_{\groups}(M,P^{\times}) = N 
\otimes_{\bZ} P^{\times}
\]
where the first $\Hom$ is maps of semi-fields,
$P^{\times}$ means the multiplicative group of $P$, and in the last
tensor product we mean $P^{\times}$ viewed as $\bZ$-module.
Following \cite{FG09}  we define
the $P$-valued points of $T_N$ to be $T_N(P) :=\Hom_{\semf}(Q_{\semf}(N),P)$. A {\it positive} birational map
$\mu: T_N \dasharrow T_N$ means a birational map for which the pullback $\mu^*$ induces an isomorphism
on $Q_{\semf}(N)$. Obviously it gives an isomorphism on $P$-valued points. Thus it makes sense to
talk about $X(P)$ for any variety $X$ with a positive atlas of tori, for example many
of the various flavors of cluster variety.

There are two equally good semi-field structures on $\ZZ$,
the max-plus and the min-plus structures. Here addition is either maximum
or minimum, and multiplication is addition. We notate these as
$\ZZ^T$ and $\ZZ^t$ respectively, thinking of capital $T$ for the max-plus
tropicalization and little $t$ for the min-plus tropicalization.
We similarly define $\RR^T$ and $\RR^t$.
Thus taking $P=\ZZ^T$ or $\ZZ^t$, we obtain the sets of tropical points
$X(\ZZ^T)$ or $X(\ZZ^t)$. The former is the convention used by Fock and
Goncharov in \cite{FG09}, so we refer to this as the Fock-Goncharov
tropicalization. The latter choice in fact coincides with $X^{\trop}(\ZZ)$
as defined in \cite{P1}, Def.\ 1.7, defined as a subset of the set of discrete
valuations. We refer to this as the geometric tropicalization. It will
turn out both are useful. There is the obvious isomorphism of semi-fields
$x \mapsto -x$ from $\bZ^T \to \bZ^t$.
This induces a canonical sign-change identification
$i:X(\bZ^T) \to X(\bZ^t)$.

Given a positive birational map $\mu:T_N\dasharrow T_N$, we use
${\mu}^T:N\rightarrow N$ and ${\mu}^t:N\rightarrow N$
to indicate the induced maps $T_N(\ZZ^T)\rightarrow T_N(\ZZ^T)$ and
$T_N(\ZZ^t)\rightarrow T_N(\ZZ^t)$ respectively. For the geometric 
tropicalization, this coincides with the map on discrete valuations
induced by pullback of functions, see \cite{P1}, \S 1. 
For cluster varieties the two types
of tropicalisation are obviously equivalent. The geometric tropicalisation has 
the advantage
that it makes sense for any log Calabi-Yau variety, while the Fock-Goncharov tropicalisation is restricted
to (Fock-Goncharov) {\it positive spaces}, i.e., spaces obtained by gluing
together algebraic tori via positive birational maps.
We will use both notions, $X(\RR^t)$ because in many cases it is more
natural to think in terms
of valuations/boundary divisors, and $X(\RR^T)$
because, as we indicate below, the scattering
diagram for building $\cA_{\prin}$ lives naturally in $\cA^{\mch}_{\prin}(\bR^T)$ (because of
already established cluster sign conventions).

One computes easily that for the basic mutation
\begin{equation}
\label{mutFG}
{\mu}_{(n,m)}: T_N \dasharrow T_N, \quad \mu_{(n,m)}^*(z^{m'}) = z^{m'}
(1 + z^m)^{\langle m',n\rangle}
\end{equation}
the Fock-Goncharov tropicalisation is
\begin{equation}
{\mu}_{(n,m)}^T: N= T_N(\ZZ^T) \to T_N(\ZZ^T)=N, \quad
x \mapsto x + [\langle m,x\rangle]_{+} n
\end{equation}
while the geometric tropicalisation (see \cite{P1}, (1.4)) is
\begin{equation}
\label{mutgeometric}
{\mu}_{(n,m)}^t: N = T_N(\ZZ^t) \to T_N(\ZZ^t)=N,\quad
 x \mapsto x + [\langle m,x\rangle]_- n.
\end{equation}

Thus:
\begin{proposition} \label{fgtprop}
$T_k: M^\circ \to M^\circ$ defined in \eqref{Tkdefinition}
is the Fock-Goncharov tropicalisation of
\[
{\mu}_{(v_k,d_k e_k)}: T_{M^\circ} \dasharrow T_{M^\circ}.
\]
\end{proposition}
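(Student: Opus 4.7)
The plan is a direct verification: substitute $n = v_k$ and $m = d_k e_k$ into the general formula for the Fock--Goncharov tropicalisation of a basic mutation and check that the resulting piecewise linear map agrees with $T_k$ case-by-case on $\mathcal{H}_{k,+}$ and $\mathcal{H}_{k,-}$.

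First, I would confirm that the data $(n,m) = (v_k, d_k e_k)$ is an admissible input for the basic mutation $\mu_{(n,m)}\colon T_{M^\circ}\dashrightarrow T_{M^\circ}$. Since $\mu_{(n,m)}$ acts on $T_L$ for a lattice $L$, requires $n \in L$ and $m \in L^*$, and here $L = M^\circ$ with $L^* = N^\circ$, one must check $v_k = p_1^*(e_k) \in M^\circ$ and $d_k e_k \in N^\circ$. Both are part of the standing fixed data assumptions recorded at the start of \S\ref{defconstsection} (and reviewed in Appendix \ref{LDsec}), so the pullback formula $\mu^*(z^{m'}) = z^{m'}(1 + z^{v_k})^{\langle m', d_k e_k\rangle}$ makes sense and is positive.

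Next, applying the general formula \eqref{mutFG} for the Fock--Goncharov tropicalisation, we get
\[
\mu_{(v_k, d_k e_k)}^T(x) \;=\; x + [\langle d_k e_k,\, x\rangle]_+\, v_k
\]
for $x \in M^\circ = T_{M^\circ}(\ZZ^T)$. Since $d_k > 0$, the coefficient $[\langle d_k e_k, x\rangle]_+ = \max(d_k\langle e_k,x\rangle,\,0)$ equals $d_k \langle e_k, x\rangle$ on $\mathcal{H}_{k,+}$ and vanishes on $\mathcal{H}_{k,-}$. Comparing directly with the definition \eqref{Tkdefinition} of $T_k$ case by case gives $\mu_{(v_k, d_k e_k)}^T = T_k$ on each half-space, and the two formulas agree on the common boundary $e_k^\perp$ where $\langle e_k,x\rangle = 0$, so the resulting piecewise linear map is well-defined and coincides with $T_k$ globally.

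There is no substantive obstacle here: the statement is essentially a recognition lemma identifying a formula that appeared in Definition \ref{tkdef} (motivated by the scattering diagram mutation) with the standard tropicalisation of a Fock--Goncharov cluster mutation. The only mild subtlety is bookkeeping about the lattice conventions (ensuring $v_k$ and $d_k e_k$ sit in the correct lattices so that $\mu_{(v_k, d_k e_k)}$ is actually a mutation of $T_{M^\circ}$), which is immediate from the definitions.
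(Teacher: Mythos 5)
Your proof is correct and follows the same route the paper implicitly takes: the proposition is stated right after the paper records the general formula \eqref{mutFG} for $\mu_{(n,m)}^T$ with the remark that ``one computes easily,'' and your direct substitution of $(n,m) = (v_k, d_k e_k)$ followed by a case split on $\mathcal{H}_{k,\pm}$ is exactly that computation. The bookkeeping on lattice conventions ($v_k \in M^\circ$, $d_k e_k \in N^\circ$) is a sensible check and matches the fixed-data conventions in Appendix \ref{LDsec}.
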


A rational function $f$ on a cluster variety $V$ is called {\it positive} if its
restriction to each seed torus is positive, i.e., can be expressed as a ratio of sums
of characters with positive integer coefficients. We can then define its
Fock-Goncharov tropicalisation
$f^T: V(\bR^T) \to \bR$ by $f^T(p) = -p(f)$. Similarly, for $f$ positive,
we have its
geometric tropicalisation
$f^t: V(\bR^t) \to \bR$ which
for each $v \in V(\RR^t)$ has value $f^t(v) = v(f)$. Using the identification
of $V(\ZZ^t)$ with $V^{\trop}(\ZZ)$, $v$ is interpreted as a valuation and
$f^t(v)$ coincides with $v(f)$, the value of $v$ on $f$. In particular,
this value is defined regardless of whether $f$ is positive.
We have a commutative diagram
\begin{equation} \label{ftcd}
\begin{CD} 
V(\bR^T) @>i>> V(\bR^t) \\
@V{f^T}VV      @V{f^t}VV \\
\bR     @=    \bR
\end{CD}
\end{equation}
where $i$ is the canonical isomorphism determined by the sign change 
isomorphism.
The definition of $f^t$ in
terms of valuations extends the definition of $f^t$, and hence $f^T$ via
this diagram, to any non-zero rational function.  We note that
\begin{equation} \label{pairings}
\begin{aligned} 
(z^m)^T(a) = {} & \langle m,-r(a)\rangle,\quad m \in M, a \in T_N(\bZ^T) \\
(z^m)^t(a) = {} & \langle m,r(a)\rangle, \quad m \in M, a \in T_N(\bZ^t) \\
(z^m)^T(a) = {} & (z^m)^t(i(a)) 
\end{aligned}
\end{equation}
where
\begin{equation}
\label{rrestriction}
r: T_N(P) = \Hom_{\semf}(Q_{\semf}(N),P) \to \Hom_{\groups}(M,P^{\times}) = N \otimes P^{\times}
\end{equation}
is the canonical restriction isomorphism.
We will almost always leave $r,i$ out from the notation.

\begin{lemma} \label{lplemma} 
\begin{enumerate}
\item For a positive Laurent polynomial $g: = \sum_{m \in M} c_m z^m \in 
Q_{\semf}(N)$ (i.e., $c_m \in \bZ_{\geq 0}$),
and $x \in T_N(\bR^T)$
\[
g^T(x) = \min_{m, c_m \neq 0} 
\langle m,-r(x)\rangle
\]
where $r$ is the canonical isomorphism \eqref{rrestriction}.
\item
If $v \in T_N^{\trop}(\bZ)$ is a divisorial discrete valuation, and $g = \sum c_m z^m$ is
any Laurent polynomial (so now $c_m \in \kk$), then
\[
v(g) =: g^t(v) = \min_{m, c_m \neq 0} v(z^m) = 
\min_{m,c_m \neq 0} \langle m,r(v)\rangle.
\]
\end{enumerate}
\end{lemma}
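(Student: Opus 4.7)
The plan is to prove both parts by directly unpacking the definitions of the two tropicalisations, with the main analytic content being that positive integer constants tropicalize to the unit of the semifield and that leading terms of a Laurent polynomial never cancel under a divisorial valuation.

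For part (1), I would start from the defining formula $g^T(x) = -x(g)$, where $x : Q_{\semf}(N) \to \RR^T$ is a semifield homomorphism into the max-plus semifield. Since $\RR^T$ has multiplicative identity $0 \in \RR$, we have $x(1) = 0$; for any positive integer $c = 1 + 1 + \cdots + 1 \in Q_{\semf}(N)$, the homomorphism property gives $x(c) = \max(x(1),\ldots,x(1)) = 0$. Applying this coefficient-wise together with additivity,
\[
x(g) \;=\; \max_{m,\, c_m \neq 0}\bigl(x(c_m) + x(z^m)\bigr) \;=\; \max_{m,\, c_m \neq 0} x(z^m),
\]
and combining with the identity $x(z^m) = \langle m, r(x)\rangle$ (which is the content of the second and third lines of \eqref{pairings}) gives the claimed formula after negation.

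For part (2), the restriction of $v$ to $M \subset \kk(M)^\times$ is a group homomorphism to $\ZZ$ and so corresponds to some $n = r(v) \in N$ with $v(z^m) = \langle m, n\rangle$. The inequality $v(g) \geq \min_{m,\, c_m \neq 0} \langle m, n\rangle$ is then the standard non-Archimedean property, since $v(c_m) = 0$ for nonzero scalars $c_m \in \kk$. For the reverse inequality, let $\ell$ denote the minimum and factor $g = z^{m_0}\cdot g'$ for some $m_0$ achieving $\ell$, so that $g' = \sum_m c_m z^{m-m_0}$ has all exponents in the half-space $\{m' : \langle m', n\rangle \geq 0\}$ and therefore $v(g') \geq 0$. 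To show equality, we may reduce to the case $n$ primitive (if $n = kn'$ then $v = k v_{n'}$ since both are discrete valuations agreeing on $M$) and realize $v$ as the divisorial valuation along a toric prime divisor $D$ in a suitable toric compactification of $T_N$. The leading terms of $g'$ are precisely the $c_m z^{m-m_0}$ with $m - m_0 \in n^\perp \cap M$, which is the character lattice of the torus orbit $D^\circ$; distinct exponents in this sublattice restrict to distinct (hence linearly independent) characters on $D^\circ$, so their sum cannot vanish identically on $D$. Hence $v(g') = 0$ and $v(g) = \ell$, as required.

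The main obstacle is the non-cancellation step in (2), which requires a small amount of toric-geometric input (or equivalently, the linear independence of characters of a torus) to justify cleanly; the rest of the argument is essentially bookkeeping with the semifield axioms and the two sign conventions $\bZ^T$ versus $\bZ^t$ recorded in \eqref{pairings}.
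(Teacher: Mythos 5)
Your argument is correct and follows essentially the same route as the paper: part (1) is unpacked directly from the definitions (the paper's proof is even terser, leaving the observation that positive integer constants tropicalize to the semifield unit implicit), and part (2) reduces to the primitive case and then observes that the leading terms cannot cancel — the paper phrases this as an "obvious statement about the $X_1$ degree" after choosing a basis with $e_1 = r(v)$, which is precisely your linear-independence-of-characters-on-$D^\circ$ argument in different clothing. One small quibble: your parenthetical justification that $v = kv_{n'}$ "since both are discrete valuations agreeing on $M$" is not a valid inference as stated (two valuations of $\kk(T_N)$ agreeing on monomials need not be equal); the reduction is fine but should be justified differently, or sidestepped by noting that for a divisorial valuation with poles of the volume form, $r(v)$ is already primitive.
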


\begin{proof} By definition
\[
g^T(x) =
-\max_{m,c_m\neq 0} \langle m,r(x)\rangle  = \min_{m, c_m \neq 0} 
\langle m,-r(x)\rangle.
\]
This gives the first statement. For the second, we can assume 
$v\in T_N^{\trop}(\ZZ)=N$ is primitive, so part of a basis. Then
the statement reduces to an obvious statement about the $X_1$ degree 
of a linear combination of monomials in $\kk[X_1^{\pm 1},\dots,X_n^{\pm 1}]$.
\end{proof} 

Note the mutations ${\mu}_{(v_k,d_k e_k)}$
are precisely the mutations between the tori in the atlas for
$\cA^{\mch}$ (see Appendix \ref{LDsec} for the definition of the Fock-Goncharov dual $\cA^{\mch}$, and \cite{P1}, (2.5) for the mutations between $\cX$ tori
in our notation).
Thus by Theorem \ref{Adiagrammutations} and Proposition \ref{fgtprop}, 
the support of $\foD_{\s}$ viewed 
as a subset of $\cA^{\mch}(\bR^T)$ under the identification
$M^\circ_{\bR,\s} = \cA^{\mch}(\bR^T)$ (induced canonically
from the open set $T_{M^\circ,\s} \subset \cA^{\vee}$)
is independent of seed. In particular it makes
sense to talk about $\cA^{\mch}(\bR^T) \setminus \Supp(\foD_{\s})$ as being completely
canonically defined without choosing any seed.
For any seed the chambers $\shC^{\pm}_{\s} \subset M^{\circ}_{\bR,\s} = \cA^{\mch}(\bR^T)$ are
connected components of $\cA^{\mch}(\bR^T)\setminus \Supp(\foD_{\s})$.

We recall from \cite{FG11}:

\begin{definition} 
Suppose given fixed data $\Gamma$ and suppose given an initial seed. For
a seed $\s=(e_1,\ldots,e_n)$ obtained by mutation from the initial seed,
the \emph{Fock-Goncharov cluster chamber associated to $\s$} is the
subset 
\[
\{x \in \cA^{\mch}(\bR^T)\,|\,
(z^{e_i})^T(x) \leq 0 \text{ for all } i \in I_{\uf} \},
\]
identified with
\[
\{x \in \cA^{\mch}(\bR^t)\,|\,
(z^{e_i})^t(x) \leq 0 \text{ for all } i \in I_{\uf} \}
\]
via $i$.
The \emph{(Fock-Goncharov) cluster complex} $\Delta^+$ is the set of all such 
chambers.
\end{definition}

\begin{lemma} \label{fgcclem} Suppose given fixed data $\Gamma$ satisfying
the Injectivity Assumption and suppose given an initial seed. For a seed
$\s = (e_1,\dots,e_n)$ obtained by mutation from the initial seed, the chamber
$\shC^{+}_{\s} \subset M^\circ_{\bR,\s} = \cA^{\mch}(\bR^T)$ (also
identified with $\cA^{\mch}(\RR^t)$ via $i$) 
is the Fock-Goncharov cluster chamber
associated to $\s$. 
Hence the Fock-Goncharov cluster chambers are the maximal cones of a 
simplicial fan (of not necessarily strictly convex cones).
In particular
$\Delta^+$ is identified with $\Delta^+_{\s}$ for any choice of seed $\s$ giving
an identification of $\shA^{\vee}(\RR^T)$ with $M^{\circ}_{\RR,\s}$.
\end{lemma}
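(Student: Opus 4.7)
My plan is to verify the identification of $\shC^+_\s$ with the Fock-Goncharov cluster chamber for the seed $\s$ by a direct computation, and then to assemble these identifications across all mutation-equivalent seeds into the simplicial fan. The key tool is that the piecewise linear maps $T_v$ of Construction \ref{chamberstructureremark} serve simultaneously as change-of-chart maps between the various identifications of $\cA^\vee(\bR^T)$ with $M^\circ_\bR$ (by iterating Proposition \ref{fgtprop}) and as transformations relating the corresponding scattering diagrams (by iterating Theorem \ref{Adiagrammutations}).

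For the first assertion I apply \eqref{pairings} directly: writing $\phi_\s\colon\cA^\vee(\bR^T)\to M^\circ_{\bR,\s}$ for the canonical identification coming from the $\s$-chart, one has $(z^{e_i})^T(x) = -\langle e_i, \phi_\s(x)\rangle$, so the Fock-Goncharov cluster chamber for $\s$ pulls back under $\phi_\s$ to $\{m\in M^\circ_\bR : \langle e_i,m\rangle \ge 0 \ \forall\, i\in I_{\uf}\} = \shC^+_\s$. For the identification $\Delta^+=\Delta^+_\s$, I apply this observation for each mutation-equivalent seed $\s_v$ (in its own chart) and transport the result back to the initial chart $M^\circ_{\bR,\s}$: the resulting chamber is $(\phi_{\s_v}\circ \phi_\s^{-1})^{-1}(\shC^+_{\s_v}) = T_v^{-1}(\shC^+_{\s_v}) = \shC^+_{v\in\s}$, using that $\phi_{\s_v}\circ\phi_\s^{-1} = T_v$ by iterated application of Proposition \ref{fgtprop}, and that the right-hand side is by definition the chamber of $\foD_\s$ indexed by $v$.

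For the simplicial fan structure, note that each $\shC^+_{\s_v}$ is simplicial, being cut out in $M^\circ_{\bR,\s_v}$ by the $|I_{\uf}|$ linearly independent inequalities $\langle e^{(v)}_i, \cdot\rangle\ge 0$. The chamber $\shC^+_{v\in\s}$ lies in the closure of a single connected component of $M_\bR\setminus\Supp(\foD_\s)$ by \eqref{Tvequation}, and the non-linearity locus of $T_v$ is contained in $\Supp(\foD_\s)$: each intermediate factor $T_{k_i}$ bends only across $e_{k_i}^\perp$, which supports the slab of the corresponding intermediate scattering diagram produced by Theorem \ref{Adiagrammutations}. Hence $T_v$ restricts to an invertible linear map on $\shC^+_{v\in\s}$, so $\shC^+_{v\in\s}$ is itself simplicial. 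Two chambers from different vertices either coincide or have disjoint interiors (being closures of distinct connected components of the complement of $\Supp(\foD_\s)$), so taking them together with all their faces gives the required simplicial fan.

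The main subtlety is the linearity of $T_v$ on $\shC^+_{v\in\s}$: although each factor $T_{k_i}$ is only piecewise linear, one must track that its bending locus, after pulling back by the previously applied mutations, remains contained in the support of some intermediate scattering diagram and thus in $\Supp(\foD_\s)$ after the full sequence. Everything else reduces to the definitions and the tropicalisation formulas in \eqref{pairings}.
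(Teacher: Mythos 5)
Your proof is correct and takes essentially the same route as the paper: identify $\shC^+_\s$ with the Fock--Goncharov cluster chamber via \eqref{pairings}, transport across seeds using the fact (Proposition \ref{fgtprop}) that $T_k$ tropicalizes $\mu_k$ for $\cA^\vee$, and appeal to the chamber structure of Construction \ref{chamberstructureremark}. You merely spell out the "obvious" step of the paper (simpliciality of each $\shC^+_{v\in\s}$, disjoint interiors) by tracking the bending locus of $T_v$ inside $\Supp(\foD_\s)$, which is a correct elaboration of what the paper leaves implicit.
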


\begin{proof} The identification of the chamber is immediate from the 
definition. 
The result then follows from the chamber structure of
Remark \ref{chamberstructureremark} and the fact that the $T_k$ are
the Fock-Goncharov tropicalizations of the mutations $\mu_k$ for $\cA^{\vee}$.
It's obvious each maximal cone is simplicial, and each adjacent pair
of maximal cones meets along a codimension one face of each. Hence we obtain
a simplicial fan. 
\end{proof} 

\begin{construction} \label{sdprin} See Appendix \ref{rdsec} for a review of the
cluster variety with principal coefficients, $\cA_{\prin}$.
Any seed $\s$ gives rise to a scattering diagram $\foD_{\s}^{\shA_{\prin}}$
living in
\[
\widetilde M^{\circ}_{\bR,\s}=(M^{\circ}\oplus N)_{\RR,\s}
= (\tN^{\mch})^*_{\bR,\s}, 
\]
the second equality by Proposition \ref{ldpprop}, (3). 
Indeed in this situation, the Injectivity Assumption is satisfied, 
since the form
$\{\cdot,\cdot\}$ on $\widetilde N=N \oplus M^{\circ}$ 
is non-degenerate (which is the reason we use
$\cA_{\prin}$ instead of $\cA$ or $\cX$). Indeed, the vectors
$\tv_i := \{(e_i,0),\cdot\} = (v_i,e_i)$ are linearly independent.
Note by Theorem \ref{KSlemma},
$\foD^{\shA_{\prin}}_{\s}$ contains the scattering diagram
\begin{equation} \label{ageomsd}
\foD^{\shA_{\prin}}_{\inc,\s}:= {} 
\{\big((e_i,0)^{\perp},1+z^{(v_i,e_i)}\big)\,|\, i\in I_{\uf}\}.
\end{equation}

Recall from Proposition \ref{ldpprop} that we have a canonical map
$\rho:\shA_{\prin}^{\vee}\rightarrow \shA^{\vee}$ which is defined
on cocharacter lattices by the canonical projection $M^{\circ}\oplus N
\rightarrow M^{\circ}$, see \eqref{basicmapsdual}. Thus the tropicalization
\[
\rho^T: \cA_{\prin}^{\mch}(\bR^T) \to \cA^{\mch}(\bR^T)
\]
coincides with this projection, which can be viewed as the quotient
of an action of translation by $N$.
By Definition \ref{walldef}, walls of $\foD^{\cA_{\prin}}_{\s}$ are of the
form $(n,0)^{\perp}$
for $n \in N^+$. Thus all walls are invariant under translation by $N$, and
thus are inverse images of walls under $\rho^T$.
So even though
$\cA$ may not satisfy the Injectivity Assumption 
necessary to build a scattering diagram,
we see that $\Supp(\foD^{\cA_{\prin}}_{\s})$ is the inverse image of
a subset of $\shA^{\vee}(\RR^T)$ canonically defined independently of
the seed. In particular, note that the Fock-Goncharov cluster
chamber in $\shA^{\vee}(\RR^T)$ associated to the seed $\s$ (where
$(z^{e_i})^T\le 0$ for all $i\in I_{\uf}$) pulls back to the corresponding
Fock-Goncharov cluster chamber in $\shA_{\prin}^\vee(\RR^T)$. 
\end{construction}

The following was conjectured by Fock and Goncharov, \cite{FG11}, \S 1.5:
 
\begin{theorem} \label{fgccth} For any initial data the Fock-Goncharov cluster chambers in $\cA^{\mch}(\bR^T)$ are
the maximal cones of a simplicial fan.
\end{theorem}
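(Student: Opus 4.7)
The plan is to deduce Theorem \ref{fgccth} from Lemma \ref{fgcclem} by passing to the cluster variety with principal coefficients. The difficulty that Lemma \ref{fgcclem} cannot be applied directly to an arbitrary $\cA$ is precisely that the Injectivity Assumption may fail. Happily, as recorded in Construction \ref{sdprin}, the Injectivity Assumption is automatic for $\cA_{\prin}$ because the form $\{\cdot,\cdot\}$ on $\widetilde N = N \oplus M^{\circ}$ is non-degenerate; in particular the vectors $\tilde v_i = (v_i, e_i)$, $i \in I_{\uf}$, are linearly independent. Thus Lemma \ref{fgcclem}, applied to the initial data $\widetilde\Gamma$ for $\cA_{\prin}$, gives a simplicial fan $\widetilde\Delta^+$ of Fock-Goncharov cluster chambers $\widetilde{\shC}^+_{\s'}$ in $\cA_{\prin}^{\vee}(\RR^T) = (M^{\circ}\oplus N)_{\RR,\s}$, indexed by seeds $\s'$ mutation-equivalent to the chosen initial seed $\s$.

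Next I would invoke the key compatibility recorded at the end of Construction \ref{sdprin}: the projection $\rho^T: \cA_{\prin}^{\vee}(\RR^T) \to \cA^{\vee}(\RR^T)$, which in coordinates is the quotient by translation in the $N_{\RR}$-factor, satisfies $\widetilde{\shC}^+_{\s'} = (\rho^T)^{-1}(\shC^+_{\s'})$ for every mutation-equivalent seed $\s'$. Equivalently, writing the defining inequalities for $\widetilde{\shC}^+_{\s'}$ as $\langle (e_i',0), (m,n)\rangle \ge 0$ for $i \in I_{\uf}$, these are independent of the $N$-coordinate, so each $\widetilde{\shC}^+_{\s'}$ is a product $\shC^+_{\s'} \times N_{\RR}$ with the corresponding cluster chamber in $\cA^{\vee}(\RR^T) = M^{\circ}_{\RR,\s}$.

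With this product structure, the simplicial fan $\widetilde\Delta^+$ descends tautologically to a simplicial fan $\Delta^+$ on $\cA^{\vee}(\RR^T)$. Concretely, if two distinct chambers $\shC^+_{\s'}$ and $\shC^+_{\s''}$ had overlapping interiors in $\cA^{\vee}(\RR^T)$, then $\widetilde{\shC}^+_{\s'}$ and $\widetilde{\shC}^+_{\s''}$ would have overlapping interiors in $\cA_{\prin}^{\vee}(\RR^T)$, contradicting the fan property upstairs; similarly the intersection of any two cones in $\Delta^+$ is a face of each, because $(\rho^T)^{-1}$ of this intersection is such a face upstairs and $\rho^T$ is a projection. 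Simpliciality of each individual cone $\shC^+_{\s'}$ descends for the same reason: its facet-defining inequalities $\langle e_i', m\rangle \ge 0$ (for $i \in I_{\uf}$) are the restrictions of the facet-defining inequalities of $\widetilde{\shC}^+_{\s'}$ to the slice $N_{\RR} = 0$, and linear independence of the functionals $\{e_i'\}_{i \in I_{\uf}}$ on $M^{\circ}_{\RR}$ is inherited from their linear independence on $\widetilde M^{\circ}_{\RR}$, which in turn follows from the fact that $\{e_i'\}_{i \in I_{\uf}}$ extends to a basis of $N$.

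I do not expect any serious obstacle; the content of the theorem is entirely absorbed by the reduction to the principal case, which is pure bookkeeping once the compatibility $\widetilde{\shC}^+_{\s'} = (\rho^T)^{-1}(\shC^+_{\s'})$ from Construction \ref{sdprin} is in hand. The only point requiring care is that one must use the \emph{same} tropical projection $\rho^T$ compatibly across all seeds mutation-equivalent to $\s$, which is automatic since $\rho$ is a morphism of cluster varieties and hence commutes with mutation.
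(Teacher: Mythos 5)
Your proposal is correct and follows exactly the same strategy as the paper: reduce to $\cA_{\prin}$ where the Injectivity Assumption automatically holds (so Lemma \ref{fgcclem} applies directly), then descend via the $N$-translation invariance of $\foD^{\cA_{\prin}}_{\s}$, i.e., the compatibility $\widetilde{\shC}^+_{\s'} = (\rho^T)^{-1}(\shC^+_{\s'})$ recorded at the end of Construction \ref{sdprin}. The paper's own proof is a terse three sentences; yours just spells out the bookkeeping that makes the descent work.
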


\begin{proof} When the Injectivity Assumption holds,
this follows from Lemma \ref{fgcclem}. In
particular it holds for $\cA_{\prin}$. Now the general case follows by the above invariance
of $\foD_{\s}^{\cA_{\prin}}$ under the translation by $N$.
\end{proof} 

\begin{example}
\label{standardexample}
Consider the rank three skew-symmetric cluster algebra given by the matrix
\[
\epsilon=\begin{pmatrix} 0 & 2 & -2\\ -2 & 0 & 2\\ 2 & -2 & 0\end{pmatrix}.
\]
Then projecting the walls of $\foD_{\s}^{\cA_{\prin}}$ to $M^{\circ}_{\RR}$
via $\rho^T$, one obtains a collection of walls in a three-dimensional
vector space. One can visualize this by intersecting the walls with
the affine hyperplane $\langle e_1+e_2+e_3, \cdot\rangle = 1$. The collection
of resulting rays and lines appears on the first page of \cite{FG11}.
While Fock and Goncharov were not aware of scattering diagrams in this
context, in fact there the picture represents the same slice of the
cluster complex, and hence coincides with the scattering diagram. 

The cluster complex in fact fills up the half-space $\langle
e_1+e_2+e_3,\cdot\rangle \ge 0$. There is no path through chambers connecting
$\shC^-_{\s}$ and $\shC^+_{\s}$.

This example is particularly well-known in cluster theory, and gives the
cluster algebra associated with triangulations of the once-punctured torus.
\end{example}

\section{Broken lines} \label{blsec}

We will explain how a scattering diagram
determines a class of piecewise straight paths which will allow for the
construction of theta functions. The notion of broken line was introduced
in \cite{G09}, and developed from the point of view of defining canonical
functions in \cite{CPS} and \cite{GHK11}.

We choose fixed data $\Gamma$ and a seed $\s$ as described in Appendix
\ref{LDsec}, and assume it satisfies the Injectivity Assumption. 
This gives rise to the group $G$ described in \S\ref{defconstsection}
which acts by automorphisms of $\widehat{\kk[P]}$
for a choice of monoid $P$ containing $p_1^*(N^+)$ and with $P^{\times}=
\{0\}$. The group $G$ also acts on the rank one free
$\widehat{\kk[P]}$-module $z^{m_0}\widehat{\kk[P]}$ for any $m_0\in M^{\circ}$,
with a log derivation $f\partial_n$ acting on $z^{m_0}$ as usual to give
$f\langle n, m_0\rangle z^{m_0}$.

We then have:

\begin{definition} \label{blde}
Let $\foD$ be a scattering diagram in the sense of Definition 
\ref{KSscatdiagdef}, 
$m_0\in M^{\circ}\setminus \{0\}$ and
$Q\in M_{\RR}\setminus \Supp(\foD)$. A \emph{broken line} for $m_0$
with endpoint $Q$ is a piecewise linear continuous proper path
$\gamma:(-\infty,0]\rightarrow M_{\RR}\setminus\Sing(\foD)$ 
with a finite number
of domains of linearity. This path comes along with the data of, for each domain
of linearity $L\subseteq (-\infty,0]$ of $\gamma$, 
a monomial $c_Lz^{m_L}\in \kk[M^{\circ}]$. This data satisfies the following
properties:
\begin{enumerate}
\item $\gamma(0)=Q$.
\item If $L$ is the first (and therefore unbounded) domain of linearity
of $\gamma$, then $c_Lz^{m_L}=z^{m_0}$.
\item For $t$ in a domain of linearity $L$, $\gamma'(t)=-m_L$.
\item Let $t\in (-\infty,0)$ be a point at which $\gamma$ is not linear,
passing from domain of linearity $L$ to $L'$. Let 
\[
\foD_t=\{(\fod,f_{\fod})\in \foD\,|\, \gamma(t)\in\fod\}.
\]
Then $c_{L'}z^{m_{L'}}$ is a term in the formal power series
$\theta_{\gamma|_{(t-\epsilon,t+\epsilon)},\foD_t}(c_Lz^{m_L})$.
\end{enumerate}
\end{definition}

\begin{remark} Note that since a broken line does not pass through
a singular point of $\foD$, we can write
\[
\theta_{\gamma|_{(t-\epsilon,t+\epsilon)},\foD_t}(c_Lz^{m_L})
=c_Lz^{m_L}\prod_{(\fod,f_{\fod})\in \foD_t} f_{\fod}^{\langle n_0, m_L\rangle},
\]
where $n_0\in N^{\circ}$ is primitive, vanishes on each $\fod\in \foD_t$, 
and $\langle n_0, m_L\rangle$ is positive
by item (3) of the definition of broken line. 
It is an important feature of broken lines that we never need to invert.
\end{remark}

\begin{definition}
\label{IFdef}
Let $\foD$ be a scattering diagram, $m_0\in M^{\circ}\setminus \{0\}$, 
$Q\in M_{\RR}\setminus \Supp(\foD)$. For a broken line $\gamma$ for $m_0$ with
endpoint $Q$, define 
\[
I(\gamma) = m_0,
\]
($I$ is for initial),
\[ 
b(\gamma) = Q,
\]
and
\[
\Mono(\gamma) = c(\gamma) z^{F(\gamma)}
\]
to be the monomial attached to the
final ($F$ is for final) domain of linearity of $\gamma$. Define
\[
\vartheta_{Q,m_0}=\sum_{\gamma}\Mono(\gamma),
\]
where the sum is over all broken lines for $m_0$ with endpoint $Q$.

For $m_0=0$, we define for any endpoint $Q$
\[
\vartheta_{Q,0}=1.
\]
\end{definition}

In general, $\vartheta_{Q,m_0}$ is an infinite sum, but makes sense formally:

\begin{proposition}
\label{formalthfun}
$\vartheta_{Q,m_0}\in z^{m_0}\widehat{\kk[P]}$.
\end{proposition}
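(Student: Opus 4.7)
My plan is to prove the statement in two stages: first showing that each summand $\Mono(\gamma)$ lies in $z^{m_0}\widehat{\kk[P]}$, and then that the formal sum $\sum_\gamma\Mono(\gamma)$ converges in the $J$-adic topology on $\widehat{\kk[P]}$.

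For the first stage, I would prove by induction on the number $b(\gamma)$ of bends of $\gamma$ that $F(\gamma)-m_0\in P$ and moreover $z^{F(\gamma)-m_0}\in J^{b(\gamma)}$. The base case $b(\gamma)=0$ is immediate since then $\gamma$ has only one domain of linearity, forcing $F(\gamma)=m_0$. For the inductive step, consider the last bend of $\gamma$ at a wall $\fod\subseteq n_0^\perp$ with $n_0\in N^+$ primitive, going from monomial $c_Lz^{m_L}$ to $c_{L'}z^{m_{L'}}$. By Definition~\ref{blde}(4), $c_{L'}z^{m_{L'}}$ is a term of $c_Lz^{m_L}\cdot f_\fod^{\langle n_0,m_L\rangle}$ distinct from $c_Lz^{m_L}$, so $m_{L'}=m_L+k\,p_1^*(n_0)$ for some integer $k\ge 1$. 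Since $p_1^*(e_i)\in J$ for each $i\in I_{\uf}$ by construction of $P$ and $N^+$ lies in the non-negative integral cone on the $e_i$, we have $p_1^*(N^+)\subseteq J$, so $k\,p_1^*(n_0)\in J$, advancing the induction.

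For the second stage, fix $k\ge 1$ and let $\foD_k\subseteq\foD$ be the set of walls with $f_\fod\not\equiv 1\pmod{J^k}$; by Definition~\ref{KSscatdiagdef} this set is finite. I claim only finitely many broken lines contribute modulo $J^k$. First, by the first stage, if $b(\gamma)\ge k$ then $\Mono(\gamma)\in z^{m_0}J^k$. Second, if any bend of $\gamma$ occurs at a wall outside $\foD_k$, then $f_\fod^{\langle n_0,m_L\rangle}-1\in J^k$ (since $f_\fod-1\in J^k$ and $\langle n_0,m_L\rangle\ge 1$), so the increment introduced at that bend comes from a term in $J^k$ and again $\Mono(\gamma)\in z^{m_0}J^k$. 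Hence modulo $J^k$, only $\gamma$ with $b(\gamma)<k$ and every bend at a wall of $\foD_k$ survive.

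The remaining finiteness is combinatorial: such a $\gamma$ is determined by the ordered list $(\fod_1,\dots,\fod_b)\in\foD_k^b$ of walls bent at, together with the multiples $(k_1,\dots,k_b)$ where $k_i\in\ZZ_{\ge 1}$ satisfies $m_{L_{i+1}}-m_{L_i}=k_ip_1^*(n_{0,i})$. Given such data, the final monomial $m_F=m_0+\sum_{i=1}^b k_ip_1^*(n_{0,i})$ is determined, so the final domain of linearity is the segment ending at $Q$ in direction $-m_F$; its backward extension meets $\fod_b$ at a unique point (if at all), from which the preceding exponent and segment are recovered, and iteration reconstructs $\gamma$. Since $\foD_k$ is finite, $b<k$, and at each bend only finitely many multiples $k_i$ keep $k_i\,p_1^*(n_{0,i})\notin J^k$, only finitely many combinatorial data arise, hence finitely many $\gamma$. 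The main technical subtlety is this reconstruction argument, which hinges on the fact that each linear domain of $\gamma$ has its direction rigidly determined by its attached monomial, so once the final segment is pinned down the whole path is forced.
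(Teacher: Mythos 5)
Your proof is correct and follows essentially the same strategy as the paper's: each $\Mono(\gamma)$ lies in $z^{m_0}\kk[P]$ with order at least the number of bends, and modulo $J^k$ one may restrict to the finite subdiagram $\foD_k$, bound the number of bends by $k$, and recover each broken line by tracing backwards from $Q$ once the combinatorial data (walls and multiples) are fixed. The paper is a bit terser — it fixes $F(\gamma)=m$ directly and notes finitely many choices at each backward step — but the content is the same, and your spelling out of the multiples $(k_i)$ and the induction $z^{F(\gamma)-m_0}\in J^{\#\text{bends}}$ just makes explicit what the paper leaves to the reader.
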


\begin{proof} 
It is clear by construction that for any broken line $\gamma$ with $I(\gamma)=m_0$,
we have $\Mono(\gamma)\in z^{m_0}\kk[P]$. So it is enough to show that for
any $k>0$, there are only a finite number of broken lines
$\gamma$ such that $I(\gamma)=m_0$, $b(\gamma)=Q$, and $\Mono(\gamma)\not
\in z^{m_0}J^k$. 

First note by the assumption that $J=P\setminus \{0\}$, there
are only a finite number of choices for $F(\gamma)$ such that $\Mono(\gamma)
\not\in z^{m_0} J^k$.
Fix a choice $m$ for $F(\gamma)$. Second, to test that there are finitely many 
broken lines with $I(\gamma)=m_0$, $b(\gamma)=Q$ and $F(\gamma)=m$, 
we can throw out any wall
$\fod\in \foD$ with $f_{\fod}\equiv 1\mod J^k$, so we can assume $\foD$ is
finite. Third, no broken line $\gamma$ with $\Mono(\gamma)\not\in z^{m_0}
J^k$ can bend more than $k$ times. Thus there are only a finite number
of possible ordered sequences of walls $\fod_1,\ldots,\fod_s$ at which 
$\gamma$ can bend. Fix one such sequence. One then sees there are at most
a finite number of broken lines with $b(\gamma)=Q$, $F(\gamma)=m$ bending
at $\fod_1,\ldots,\fod_s$. Indeed, one can start at $Q$ and trace a broken
line backwards, using that the final direction is $-m$. Crossing a wall 
$\fod_i$ and passing from domain of linearity $L$ (for smaller $t$)
to domain of linearity $L'$ (for larger $t$), one sees that
knowing the monomial attached to $L'$ restricts the choices of monomial on $L$
to a finite number of possibilities. This shows the desired finiteness.
\end{proof}

The most important general feature of broken lines is the following:

\begin{theorem} \label{blinvth}
Let $\foD$ be a consistent scattering diagram,
$m_0\in M^{\circ}\setminus\{0\}$, $Q,Q'\in M_{\RR}\setminus \Supp(\foD)$ 
two points with all coordinates irrational. Then for any path
$\gamma$ with endpoints $Q$ and $Q'$ for which $\theta_{\gamma,\foD}$
is defined, we have
\[
\vartheta_{Q',m_0}=\theta_{\gamma,\foD}(\vartheta_{Q,m_0}).
\]
\end{theorem}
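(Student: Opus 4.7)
The plan is to prove the theorem by a deformation argument combined with a local analysis at individual wall crossings.

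\textbf{Step 1 (reductions).} I would fix an order $k$ and work modulo $J^{k+1}$; by Proposition \ref{formalthfun} this reduces the claim to a finite-order statement. After discarding walls with $f_{\fod}\equiv 1\mod J^{k+1}$, we may assume $\foD$ is a finite scattering diagram and that any relevant broken line has at most $k$ bends. Since $\foD$ is consistent, $\theta_{\gamma,\foD}$ depends only on the endpoints $Q,Q'$, so we are free to replace $\gamma$ with any convenient piecewise-linear path with the same endpoints. Using finiteness of $\foD$ and irrationality of the coordinates of $Q,Q'$, I would choose $\gamma$ to cross walls one at a time, transversally, at points away from $\Sing(\foD)$. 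Both sides of the identity compose correctly under concatenation of subpaths, so it suffices to treat two basic cases: $\gamma$ stays within a single connected component of $M_{\RR}\setminus\Supp(\foD)$, or $\gamma$ transversally crosses a single wall $\fod$.

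\textbf{Step 2 (same chamber).} Here $\theta_{\gamma,\foD}=\id$, and one produces a bijection between broken lines ending at $Q$ and broken lines ending at $Q'$ preserving monomials. Given a broken line $\gamma_1$ ending at $Q$, the bend sequence on walls $\fod_1,\dots,\fod_s$ with the attached monomials is combinatorially rigid; tracing back from $Q'$ with the same initial monomial and the same sequence of walls uniquely determines a broken line $\gamma_1'$ ending at $Q'$ whose bend points are perturbed from those of $\gamma_1$. No bend point leaves its wall so long as no bend point encounters a joint of $\foD$ under the deformation, and the monomial data of $\gamma_1'$ matches that of $\gamma_1$ verbatim.

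\textbf{Step 3 (single wall crossing).} Here $\theta_{\gamma,\foD}=\theta_{f_{\fod}}^{\epsilon}$ with $\epsilon=\pm 1$ determined by the sign of $\langle n_0,\gamma'\rangle$. I would exhibit a bijection between broken lines $\gamma_1'$ ending at $Q'$ and pairs $(\gamma_1,T)$ where $\gamma_1$ is a broken line ending at $Q$ and $Tz^{m_T}$ is a monomial term in $\theta_{f_{\fod}}^{\epsilon}\bigl(c(\gamma_1)z^{F(\gamma_1)}\bigr)$. Explicitly, from $(\gamma_1,T)$ one extends $\gamma_1$ past $Q$ in direction $-F(\gamma_1)$ until it meets $\fod$; then either continue without bending (when $Tz^{m_T}$ is the leading term $c(\gamma_1)z^{F(\gamma_1)}$) or bend with new monomial $Tz^{m_T}$ according to Definition \ref{blde}(4), and finally run to $Q'$ in direction $-m_T$. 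Summing gives
\[
\vartheta_{Q',m_0}=\sum_{\gamma_1}\theta_{f_{\fod}}^{\epsilon}\bigl(\Mono(\gamma_1)\bigr)=\theta_{f_{\fod}}^{\epsilon}(\vartheta_{Q,m_0})=\theta_{\gamma,\foD}(\vartheta_{Q,m_0}).
\]

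\textbf{Main obstacle.} The delicate point is that under the deformations of Step 2, a bend point of some broken line may migrate towards a joint of $\foD$, at which moment broken lines can appear or disappear from the list; similarly in Step 3 the extension of $\gamma_1$ past $Q$ might not strike $\fod$ at a generic point. To control these, I would invoke the consistency of $\foD$ locally at each joint: applying the trivial monodromy of $\theta_{\gamma_{\foj},\foD}$ around a small loop at a joint $\foj$ yields a cancellation identity among the monomials attached to those broken lines whose bend points lie near $\foj$, so that the sum $\vartheta_{Q,m_0}$ is preserved across such events. The irrationality hypothesis on $Q,Q'$ ensures we avoid accidental coincidences of bends and joints. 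Formalizing this local-to-global bookkeeping at joints is the hard part of the argument, and it is precisely where the consistency of $\foD$ is used beyond merely guaranteeing well-definedness of $\theta_{\gamma,\foD}$.
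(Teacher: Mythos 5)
The paper's proof of this theorem is a one-line citation to \cite{CPS}, \S 4, so the substantive comparison is between your outline and that source. Your strategy — reduce modulo $J^{k+1}$ to a finite scattering diagram, exploit consistency to choose a convenient path, decompose into ``same-chamber'' and ``single wall-crossing'' moves, and exhibit bijections between broken lines perturbed across the deformation — is exactly the approach taken in \cite{CPS}. You have also correctly put your finger on where the work lies: tracking what happens when a bend point of a broken line passes through a joint during the deformation of the endpoint, and showing that the trivial monodromy at joints yields the needed cancellation identity.

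The gap is that you state the obstacle but do not discharge it. The ``Main obstacle'' paragraph is not a proof of the required cancellation; it is a statement that such a cancellation should exist because $\theta_{\gamma_{\foj},\foD}=\id$. Turning that into a precise local statement — that the contributions of the broken lines whose bend at $\fod_i$ is about to slide onto $\foj$ are exactly reproduced, after the slide, by broken lines bending along the other walls through $\foj$ — is the content of the technical lemma in \cite{CPS} (and of the analogous lemma in two dimensions in \cite{GHK11}). The argument requires care because as a bend point approaches a joint, the list of broken lines can change discontinuously, with several old broken lines merging or splitting into new ones, and one must match monomials term by term using the explicit form of the wall-crossing automorphisms at the joint. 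Your Step 3 has a second, smaller soft spot: when you ``extend $\gamma_1$ past $Q$ in direction $-F(\gamma_1)$'', this extension need not strike $\fod$ at all (the final segment may be receding from the wall); the cleaner formulation is a continuous deformation of the endpoint along $\gamma$ together with the above joint analysis, rather than a one-shot bijection. So: right approach, correctly identified key lemma, but the key lemma is asserted rather than proved.
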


\begin{proof} 
This is a special case of results of \S 4 of \cite{CPS}. 
The condition that $Q$ and $Q'$ both have irrational coordinates guarantees
that we don't have to worry about broken lines which pass through joints
(which we aren't allowing).
\end{proof}

Let us next consider how broken lines change under mutation. So let $\s$ be
a seed, $\bar P$ as in Definition \ref{Pbardef}.

\begin{proposition}
\label{brlineinvmut}
$T_k$ defines a one-to-one correspondence between broken lines for $m_0$ with
endpoint $Q$ for $\foD_{\s}$ and broken lines for $T_k(m_0)$ with endpoint
$T_k(Q)$ for $\foD_{\mu_k(\s)}$. 
In particular, depending on whether $Q\in \shH_{k,+}$ or $\shH_{k,-}$, we
have 
\[
\vartheta^{\mu_k(\s)}_{T_k(Q),T_k(m_0)}= T_{k,\pm}(\vartheta^{\s}_{Q,m_0}),
\]
where the superscript indicates which scattering diagram is used to define
the theta function, and $T_{k,\pm}$ acts linearly on the exponents
in $\vartheta^{\s}_{Q,m_0}$.
\end{proposition}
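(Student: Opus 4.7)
My plan is to construct the bijection explicitly by transporting broken lines via the piecewise linear map $T_k$, while transforming the attached monomials by the appropriate linear piece of $T_k$ on each domain. Precisely, given a broken line $\gamma$ in $\foD_{\s}$ with endpoint $Q$ and initial exponent $m_0$, I will set $\tilde\gamma := T_k\circ\gamma$ as an underlying continuous piecewise linear path in $M_{\RR}$, and attach to each domain of linearity of $\tilde\gamma$ the monomial $c_L z^{T_{k,\pm}(m_L)}$, with sign matching the half-space $\cH_{k,\pm}$ containing the corresponding domain of $\gamma$. I then need to verify that $\tilde\gamma$ is a broken line for $\foD_{\mu_k(\s)}$ with endpoint $T_k(Q)$ and initial exponent $T_k(m_0)$ (both interpreted via the piecewise linear $T_k$), and that the assignment is bijective.

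The straight-line condition is automatic, since $T_{k,\pm}$ sends the velocity $-m_L$ to $-T_{k,\pm}(m_L)$. For the bending condition at a wall $(\fod,f)\in\foD_{\s}\setminus\{\fod_k\}$ intersected inside $\cH_{k,\pm}$, the corresponding wall of $T_k(\foD_{\s})$ is $(T_{k,\pm}(\fod),T_{k,\pm}(f))$ by Definition \ref{tkdef}, which is a wall of $\foD_{\mu_k(\s)}$ up to equivalence by Theorem \ref{Adiagrammutations}; since $T_{k,\pm}$ is a $\ZZ$-linear isomorphism of $M^{\circ}$, one has $\theta_{T_{k,\pm}(f)}(z^{T_{k,\pm}(m)})=T_{k,\pm}(\theta_f(z^m))$ coefficient by coefficient, so the bending condition transports. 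The essential new point is the bending at $e_k^{\perp}$, where $T_k$ itself introduces a change of monomial. Suppose $\gamma$ crosses $\fod_k$ from $\cH_{k,-}$ to $\cH_{k,+}$, with $z^{m_L}$ changing to a term $z^{m_{L'}}$ of $\theta_{\fod_k}(z^{m_L})$ with coefficient $c$ (including the identity term $m_{L'}=m_L$ when $\gamma$ does not bend). The monomials on $\tilde\gamma$ then change from $z^{m_L}$ (preserved, since $T_{k,-}=\id$) to $z^{T_{k,+}(m_{L'})}$, and I must check that the latter is a term of $\theta_{\fod_k'}(z^{m_L})$ with the same coefficient $c$. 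This follows directly from the identity $\theta_{\fod_k'}=\alpha\circ\theta_{\fod_k}$, with $\alpha$ the ring automorphism induced by $T_{k,+}$, established in Step II of the proof of Theorem \ref{Adiagrammutations}: applying $\alpha$ termwise to $\theta_{\fod_k}(z^{m_L})$ produces $\theta_{\fod_k'}(z^{m_L})$ with identical coefficients. Crossings in the opposite direction are handled by the inverse identity.

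The initial segment of $\gamma$ lies far from the origin in direction $m_0$, hence sits in the half-space containing $m_0$, so the attached monomial on the initial segment of $\tilde\gamma$ is $z^{T_k(m_0)}$ in the piecewise linear sense; similarly the final segment lies in the half-space of $Q$, giving $F(\tilde\gamma)=T_{k,\pm}(F(\gamma))$ and $\tilde\gamma(0)=T_k(Q)$ with the sign $\pm$ determined by $Q$. The assignment is reversible by applying the same procedure with the piecewise linear inverse of $T_k$ (whose linear pieces are $T_{k,\pm}^{-1}$ on $\cH_{k,\pm}$) in conjunction with the symmetric version of Theorem \ref{Adiagrammutations}, yielding the inverse bijection. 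Summing $c(\tilde\gamma)z^{F(\tilde\gamma)}=c(\gamma)z^{T_{k,\pm}(F(\gamma))}$ over all broken lines ending at $Q$ with initial exponent $m_0$ then yields the stated identity $\vartheta^{\mu_k(\s)}_{T_k(Q),T_k(m_0)}=T_{k,\pm}(\vartheta^{\s}_{Q,m_0})$. The hardest step is the consistency check at $e_k^{\perp}$, which becomes essentially free once the mutation identity from Theorem \ref{Adiagrammutations} is invoked.
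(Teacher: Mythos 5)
Your proposal is correct and takes essentially the same approach as the paper's own proof: transport the path by $T_k$, transport the decorations by $T_{k,\pm}$ per half-space, observe that linearity of $T_{k,\pm}$ makes the bending condition automatic away from $e_k^{\perp}$, and verify the bending condition at $e_k^{\perp}$ directly. The only difference is cosmetic — you invoke the identity $\theta_{\fod_k'}=\alpha\circ\theta_{\fod_k}$ already established in Step II of the proof of Theorem \ref{Adiagrammutations}, whereas the paper redoes the same short power-series manipulation inline.
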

\begin{remark} \label{blrem} By the proposition and Proposition \ref{fgtprop}, 
when the Injectivity Assumption holds,
broken lines make sense in $\cA^{\mch}(\bR^T)$ independent of a choice of seed.
\end{remark}

\begin{proof}
Given a broken line $\gamma$ for $\foD_{\s}$, we define $T_k(\gamma)$ to have
underlying map $T_k\circ\gamma:(-\infty,0]\rightarrow M_{\RR}$. Subdivide
domains of linearity of $\gamma$ so that we can assume any domain of
linearity $L$ satisfies $\gamma(L)\subseteq \shH_{k,+}$ or $\shH_{k,-}$.
In the two cases, the attached monomial $c_Lz^{m_L}$ becomes
$c_Lz^{T_{k,+}(m_L)}$ or $c_Lz^{T_{k,-}(m_L)}$ respectively. We show that
$T_k(\gamma)$ is a broken line for $T_k(m_0)$ with endpoint
$T_k(Q)$, with respect to the scattering diagram $\foD_{\mu_k(\s)}$, which
is equal to $T_k(\foD_{\s})$, by Theorem \ref{Adiagrammutations}. 
Indeed, the only thing
to do is to analyze what happens when $\gamma$ crosses $e_k^{\perp}$. 
So suppose in passing from a domain of linearity $L_1$ to a domain of
linearity $L_2$, $\gamma$ crosses $e_k^{\perp}$, so that 
$c_{L_2}z^{m_{L_2}}$ is a term in 
\[
c_{L_1}z^{m_{L_1}}(1+z^{v_k})^{|\langle d_ke_k,m_{L_1}\rangle|}.
\]
Assume first that $\gamma$ passes from $\shH_{k,-}$ to $\shH_{k,+}$. Then
$c_{L_2}z^{T_{k,+}(m_{L_2})}$ is a term in
\begin{align*}
c_{L_1}z^{T_{k,+}(m_{L_1})}(1+z^{v_k})^{-\langle d_ke_k,m_{L_1}\rangle}
= {} & c_{L_1}z^{m_{L_1}+\langle d_ke_k,m_{L_1}\rangle v_k}
(1+z^{v_k})^{-\langle d_ke_k,m_{L_1}\rangle}\\
= {} & c_{L_1}z^{m_{L_1}}(1+z^{-v_k})^{-\langle d_ke_k,m_{L_1}\rangle},
\end{align*}
showing that $T_k(\gamma)$ satisfies the correct rules for bending as it
crosses the slab $\fod_k'=(e_k^{\perp},1+z^{-v_k})$ of $T_k(\foD_{\s})$.

If instead $\gamma$ crosses from $\shH_{k,+}$ to $\shH_{k,-}$, then 
$c_{L_2}z^{T_{k,-}(m_{L_2})}=c_{L_2}z^{m_{L_2}}$ is a term in
\begin{align*}
c_{L_1}z^{T_{k,-}(m_{L_1})}(1+z^{v_k})^{\langle d_ke_k,m_{L_1}\rangle}
= {} & c_{L_1}z^{m_{L_1}+\langle d_ke_k,m_{L_1}\rangle v_k}
(1+z^{-v_k})^{\langle d_ke_k,m_{L_1}\rangle}\\
= {} & c_{L_1}z^{T_{k,+}(m_{L_1})}(1+z^{-v_k})^{\langle d_ke_k,m_{L_1}\rangle},
\end{align*}
so again $T_k(\gamma)$ satisfies the bending rule at the slab $\fod_k'$.

The map $T_k$ on broken lines is then shown to be a bijection by observing
$T_k^{-1}$, similarly defined, is the inverse to $T_k$ on the set of
broken lines.
\end{proof}

The following, which shows that cluster variables are theta functions,
is the key observation for proving positivity of the
Laurent phenomenon. 

\begin{proposition} \label{oneblprop}
Let $Q\in \Int(\cC_{\s}^+)$ be a base-point and let $m\in \cC_{\s}^+\cap
M^{\circ}$. Then $\vartheta_{Q,m}=z^m$.
\end{proposition}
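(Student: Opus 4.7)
My plan is to first exhibit the straight broken line as the obvious contribution of $z^m$, then show no other broken line exists.

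\emph{Existence.} Define $\gamma_0(t) := Q - tm$ on $(-\infty,0]$ with a single domain of linearity and attached monomial $z^m$. Since $\shC_\s^+$ is a convex cone, $Q\in\Int(\shC_\s^+)$, and $m\in\shC_\s^+$, the image $\gamma_0((-\infty,0])=Q+\RR_{\ge 0}\,m$ is contained in $\Int(\shC_\s^+)$. By Construction \ref{chamberstructureremark}, $\Int(\shC_\s^+)$ is an open connected component of $M_\RR \setminus \Supp(\foD_\s)$, so $\gamma_0$ meets no walls and Definition \ref{blde} is satisfied with no bending; thus $\gamma_0$ contributes exactly $z^m$ to $\vartheta_{Q,m}$.

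\emph{Uniqueness.} Suppose $\gamma$ is a broken line for $m$ ending at $Q$ with $r\ge 1$ bends; I will derive a contradiction. Let $t_r<0$ be the final bend. Since $L_r\setminus\{\gamma(t_r)\}$ is connected, lies in $M_\RR\setminus\Supp(\foD_\s)$, and contains $Q\in\Int(\shC_\s^+)$, we have $L_r\setminus\{\gamma(t_r)\}\subseteq\Int(\shC_\s^+)$ and so $\gamma(t_r)\in\partial\shC_\s^+\subseteq\bigcup_{i\in I_\uf}e_i^\perp$. Pick $i$ with $\gamma(t_r)\in e_i^\perp$. If the wall bent at $t_r$ had support in $n_0^\perp$ for primitive $n_0\ne e_i$ in $N^+$, then $\gamma(t_r) \in e_i^\perp \cap n_0^\perp$, a codimension-two set in $\Sing(\foD_\s)$, contradicting the definition of a broken line; so by Remark \ref{noekwalls} the wall is (up to equivalence) the incoming wall $(e_i^\perp,1+z^{v_i})$. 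Pairing $Q=\gamma(t_r)+t_r m_{L_r}$ with $e_i$ yields $\langle e_i, Q\rangle = t_r\langle e_i,m_{L_r}\rangle$; since $\langle e_i, Q\rangle>0$ and $t_r<0$, this gives $\langle e_i,m_{L_r}\rangle<0$. The bending rule $m_{L_r}=m_{L_{r-1}}+kv_i$ together with $\langle e_i,v_i\rangle=\{e_i,e_i\}=0$ then yields $\langle e_i,m_{L_{r-1}}\rangle<0$.

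In the case $r=1$ this is immediate: $m_{L_{r-1}}=I(\gamma)=m$, so $\langle e_i, m\rangle<0$, contradicting $m\in\shC_\s^+$. The main obstacle is handling $r \ge 2$. My plan there is to iterate the previous analysis by tracing the initial segment $L_0$ backward from $\gamma(t_1)$ in direction $+m$: the function $\langle e_i,\gamma(t)\rangle$ is linear on $L_0$, must not change sign (a sign change would require crossing the wall $e_i^\perp$ at an interior point of $L_0$, producing an additional bend), and the two possibilities $\langle e_i,m\rangle>0$ or $\langle e_i,m\rangle=0$ each impose strong constraints. The first is ruled out directly by the sign of $\langle e_i,\gamma(t_1)\rangle$ derived above; the second forces $m$ onto a proper face of $\shC_\s^+$ and allows one to apply the same last-bend argument to the penultimate bend $t_{r-1}$ with a new index $i'\ne i$, using that any bend point landing in $e_{j_1}^\perp\cap e_{j_2}^\perp$ lies in $\Sing(\foD_\s)$ and that any intermediate segment $L_k\subseteq e_j^\perp$ violates transversality to the wall. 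Finite iteration reduces to the $r=1$ base case. This inductive case analysis, sorting out the various patterns of bends at incoming versus outgoing walls, is the combinatorial heart of the argument.
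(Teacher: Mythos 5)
Your existence argument is correct, and so is the algebraic manipulation in the last-bend analysis, but there is a genuine gap at the very first step of the uniqueness argument, and it cannot be patched in an obvious way.

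The gap is the claim that $L_r\setminus\{\gamma(t_r)\}\subseteq M_{\RR}\setminus\Supp(\foD_\s)$, from which you conclude $\gamma(t_r)\in\partial\shC^+_\s$. Nothing in Definition \ref{blde} forbids a broken line from crossing walls without bending: condition (4) applies only at points where $\gamma$ fails to be linear, and the identity summand of $\theta_{\gamma,\foD_t}(c_Lz^{m_L})$ is always a legal choice. So the final segment may cross $\partial\shC^+_\s$ (and arbitrarily many other walls) transversally, with the actual bend $\gamma(t_r)$ occurring in a chamber far from $\shC^+_\s$ and at a wall contained in some $n_0^\perp$ with $n_0$ not equal to any $e_i$. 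Your identification of the bend wall with the incoming wall $(e_i^\perp,1+z^{v_i})$, and hence the substitution of $v_i$ in the bending rule and the computation $\langle e_i, v_i\rangle = 0$, all rest on this unjustified localisation. This defeats even the $r=1$ base case, and the inductive descent for $r\ge 2$ is in any event only a plan, not a proof; the ``iterate and sort out the cases'' step is precisely where a monotone quantity is needed, and you have not supplied one.

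The paper's argument sidesteps this entirely. Instead of trying to locate where bends happen, it tracks a half-space invariant: after normalising so that the $j$-th bend multiplies the decorating monomial by $c_j z^{p^*(n_j)}$ with $n_j\in N^+$, one shows inductively that $L_{i+1}\subseteq H_i := \{\langle\sum_{j\le i}n_j,\cdot\rangle\le 0\}$. The induction uses exactly two facts: $\langle n_j, m\rangle\ge 0$ because $n_j\in N^+$ and $m\in\shC^+_\s$ (dual cones), and the skew-symmetry identity $\{\sum n_j,\sum n_j\}=0$. Since $\sum n_j\in N^+$ forces $\langle\sum n_j, Q\rangle>0$ for $Q\in\Int(\shC^+_\s)$, the terminal point $Q$ can never lie in any $H_i$, so no bending broken line reaches $Q$. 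This works uniformly in the number of bends and is insensitive to the number of transversal wall crossings on any segment, because it constrains the entire segment $L_{i+1}$, not just a neighbourhood of the bend point. If you want to salvage your approach, you would effectively need to prove such a monotone invariant anyway; the paper's choice of $\sum n_j$ is the right one.
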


\begin{proof}
This says the only broken line with asymptotic direction $m$ and basepoint
$Q$ is $Q+\RR_{\ge 0}m$, with attached monomial $z^m$. To see this, suppose
we are given a broken line $\gamma:(-\infty,0]\rightarrow M_{\RR}$
with asymptotic direction $m$
which bends successively at walls $\fod_1,\ldots,\fod_q$.
For each $i$, there is an $n_i\in N^+$ such that 
$\fod_i\subseteq n_i^{\perp}$.
Multiplying $n_i$ by a positive integer
if necessary, we can assume that the monomial attached to $\gamma$ upon
crossing the wall $\fod_i$ changes by a factor $c_iz^{p^*(n_i)}$.
Now if $L_i\subseteq M_{\RR}$ is the image of the
$i^{th}$ linear segment of $\gamma$, we show inductively that
\[
L_{i+1}\subseteq H_i=\left\{m \,\bigg |\, \left\langle \sum_{j=1}^i n_j,m
\right\rangle \le 0 \right\}.
\]
Indeed, $L_1=q+\RR_{\ge 0}m$ for some $q$, so initially $L_1$ is
contained on the positive side of $n_1^{\perp}$, i.e., $n_1$ is positive
on $L_1$, and hence after bending at $n_1^{\perp}$, we see $L_2\subseteq
H_1$. Next, assume true for $i=k-1$. Then $L_k\subseteq H_{k-1}$, and
if $t_k$ is the time when $\gamma$ bends at the wall $\fod_k$, we have
$\langle n_k,\gamma(t_k)\rangle=0$ and $\langle \sum_{j=1}^{k-1} n_j,
\gamma(t_k)\rangle\le 0$ by the induction hypothesis. Thus
$\langle \sum_{j=1}^k n_j,\gamma(t_k)\rangle\le 0$. In addition,
the derivative $\gamma'$ of $\gamma$ along $L_{k+1}$ is
$-m-\sum_{j=1}^k p^*(n_j)$, and
\begin{align*}
\left\langle \sum_{j=1}^k n_j, -m-\sum_{j=1}^k p^*(n_j)
\right\rangle = {} & -\left\langle \sum_{j=1}^k n_j,m\right\rangle
-\left\{\sum_{j=1}^k n_j,\sum_{j=1}^k n_j\right\}
\\
= {} & -\left\langle \sum_{j=1}^k n_j, m\right\rangle \le 0
\end{align*}
by skew-symmetry of $\{\cdot,\cdot\}$ and $m\in\cC^+_{\s}$.
Thus $L_{k+1}\subseteq\gamma(t_k)-\RR_{\ge 0}(m+\sum_{j=1}^k p^*(n_j))
\subseteq H_k$. 

Since $\Int(\cC^+_{\s})\cap H_i=\emptyset$ for all $i$, any broken line with
asymptotic direction $m$ which bends cannot terminate at the basepoint 
$Q\in\Int(\cC^+_{\s})$. This shows that there is only one 
broken line for $m$ terminating at $Q\in \Int(\shC_{\s}^+)$.
\end{proof}

\begin{corollary}
\label{monocor}
Let $\sigma\in\Delta^+_{\s}$ be a cluster chamber, and let $Q\in\Int(\sigma)$,
$m\in\sigma\cap M^{\circ}$. Then $\vartheta_{Q,m}=z^m$.
\end{corollary}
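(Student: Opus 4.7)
The strategy is to reduce to Proposition \ref{oneblprop}, which handles the special case $\sigma = \shC^+_{\s}$, by using the mutation invariance of broken lines established in Proposition \ref{brlineinvmut}. By Construction \ref{chamberstructureremark}, every cluster chamber $\sigma \in \Delta^+_{\s}$ can be written as $\sigma = \shC^+_{v\in\s} = T_v^{-1}(\shC^+_{\s_v})$ for some vertex $v$ of $\foT_{\s}$, where $T_v = T_{k_p}\circ\cdots\circ T_{k_1}$ corresponds to the simple path from the root to $v$. Thus the plan is to transport the broken line count from $\sigma$ in $\foD_{\s}$ to the positive chamber in $\foD_{\s_v}$, where Proposition \ref{oneblprop} directly applies.

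Concretely, I would proceed by induction on the length $p$ of the mutation sequence. For the base case $p=0$ we have $\sigma = \shC^+_{\s}$ and the claim is exactly Proposition \ref{oneblprop}. For the inductive step, write $T_v = T_{k_p}\circ T_{v'}$ where $v'$ is the predecessor of $v$ on the path from the root. Since $\sigma = T_v^{-1}(\shC^+_{\s_v})$, the image $T_{v'}(\sigma)$ is a cluster chamber of $\foD_{\s_{v'}}$ which lies entirely on one side of the hyperplane $e_{k_p}^{\perp}$ (here $e_{k_p}$ is the $k_p$-th basis vector of the seed $\s_{v'}$); in particular the piecewise linear map $T_{k_p}$ is given by a single linear $T_{k_p,\pm}$ on $T_{v'}(\sigma)$. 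The same applies at each earlier step, so $T_v$ restricted to $\sigma$ is linear. Now apply Proposition \ref{brlineinvmut} iteratively: it yields a bijection between broken lines for $m$ with endpoint $Q$ in $\foD_{\s}$ and broken lines for $T_v(m)$ with endpoint $T_v(Q)$ in $\foD_{\s_v}$, and the formula
\[
\vartheta^{\s_v}_{T_v(Q),\, T_v(m)} \;=\; T_v\bigl(\vartheta^{\s}_{Q,m}\bigr),
\]
where $T_v$ acts linearly on exponents using the unique $T_{k_i,\pm}$ relevant at each step. Since $T_v(Q)\in\Int(\shC^+_{\s_v})$ and $T_v(m)\in\shC^+_{\s_v}\cap M^{\circ}$, Proposition \ref{oneblprop} applied to $\s_v$ gives $\vartheta^{\s_v}_{T_v(Q),T_v(m)}=z^{T_v(m)}$. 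Consequently $\vartheta^{\s}_{Q,m}$ is a single monomial whose exponent maps under $T_v$ to $T_v(m)$; as $T_v$ is a bijection on $M^{\circ}$, this forces $\vartheta^{\s}_{Q,m}=z^m$.

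The only real point to check carefully, and what I expect to be the main (minor) obstacle, is the verification that at each intermediate step $i$ the partial image $T_{k_{i-1}}\circ\cdots\circ T_{k_1}(\sigma)$ lies entirely in one of the half-spaces $\shH_{k_i,\pm}$ of the seed $\s_{i-1}$, so that Proposition \ref{brlineinvmut} applies unambiguously. This follows from the definition $\shC^+_{v\in\s} = T_v^{-1}(\shC^+_{\s_v})$ together with the fact that each $T_{k_i}$ is linear on each half-space; alternatively, this is exactly the statement that the chambers $T_{k_{i-1}}\circ\cdots\circ T_{k_1}(\shC^{+}_{v\in\s})$ form a chain of cluster chambers in the successive scattering diagrams, as recorded in the chamber structure of Construction \ref{chamberstructureremark}.
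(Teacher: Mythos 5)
Your proposal is correct and follows essentially the same route as the paper: reduce to Proposition \ref{oneblprop} by using the piecewise linear map $T_v$ and the mutation invariance of broken lines from Proposition \ref{brlineinvmut}. The paper states this in two sentences; you unpack the iteration and the half-space check explicitly, but the argument is the same.
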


\begin{proof}
Note $\sigma=\shC^+_{v\in\s}$ for some vertex $v$ of $\foT_{\s}$, with
associated seed $\s_v$. There is then a piecewise linear map $T_v:M_{\RR}
\rightarrow M_{\RR}$ with $T_v(\foD_{\s})=\foD_{\s_v}$, see
\eqref{Tvequation}.
Then the result follows by applying Proposition \ref{oneblprop} to
$T_v(m)$, $T_v(Q)$ and Proposition \ref{brlineinvmut}.
\end{proof}

In the next section, we will identify theta functions which are polynomials
with universal Laurent polynomials, i.e., elements of the cluster
algebra associated to the fixed and seed data. It will follow from
the above Corollary that cluster monomials are in fact theta functions.

\begin{figure}
\input{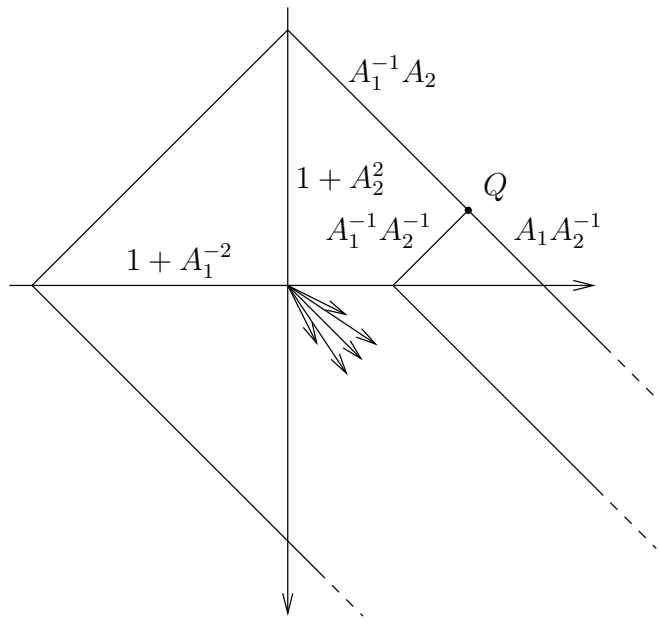}
\caption{Broken lines defining $\vartheta_{Q,(1,-1)}$.}
\label{brokenline11}
\end{figure}

\begin{figure}
\input{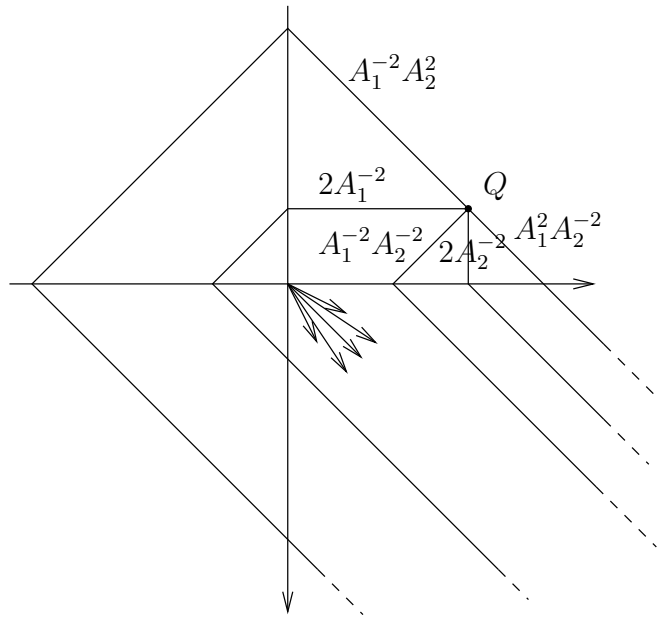}
\caption{Broken lines defining $\vartheta_{Q,(2,-2)}$.}
\label{brokenline22}
\end{figure}

\begin{example}
\label{brokenlineexamples}
Figures \ref{brokenline11} and \ref{brokenline22} show some examples
of broken lines in the case of Example \ref{bcexample} with $b=c=2$.
In the first figure, we take $m=(1,-1)$, and in the second, 
$m=(2,-2)$. Neither of these lie in the cluster complex: the union
of all cones in the cluster complex is $M_{\RR}\setminus \RR_{\ge 0}(1,-1)$.
In this case the only bends occur on the original lines of $\foD_{\inc}$,
as any bending along the additional rays of the scattering diagram will
result in the broken line shooting back out, unable to reach the first
quadrant containing the basepoint $Q$. In the figures, the final
line segment is labelled with its attached monomial, so that the
theta function is a sum of these labels. One finds
\begin{align*}
\vartheta_{Q,(1,-1)}= {} &
A_1A_2^{-1}+A_1^{-1}A_2^{-1}+A_1^{-1}A_2,\\
\vartheta_{Q,(2,-2)}= {} & 
A_1^2A_2^{-2}+2A_2^{-2}+A_1^{-2}A_2^{-2}+2A_1^{-2}+A_1^{-2}A_2^2.
\end{align*}

In \cite{snowbird}, it was shown that for any $b,c$, with $Q$ lying
in the first quadrant, the $\vartheta_{Q,m}$ with $m$ ranging over all
elements of $m$ coincides with the greedy basis \cite{LLZ13}.

\end{example}

\section{Building $\cA$ from the scattering diagram and positivity of the
Laurent phenomenon} 
\label{basection}

Throughout this section we work with initial data $\Gamma$ satisfying
the Injectivity Assumption, so we obtain the cluster chamber structure
$\Delta^+_{\s}$ from $\foD_{\s}$ described
in Construction \ref{chamberstructureremark}. 
In particular, this condition holds for
initial data $\Gamma_{\prin}$, see Appendix \ref{rdsec}. 

In what follows, we will often want to deal with multiple copies of
$N, M$ etc.\ indexed either by vertices $v$ of $\foT_{\s}$ or chambers
$\sigma\in\Delta_{\s}^+$.
To distinguish these (identical) copies, we will use subscripts
$v$ or $\sigma$,  
 e.g., 
the scattering diagram $\foD_{\s_v}$ lives in $M^\circ_{\bR,v}$, and
chambers in $\foD_{\s_v}$ give, under the identification 
$M^\circ_{\bR,\s_v} = \cA^{\mch}(\bR^T)$,
the Fock-Goncharov cluster complex $\Delta^+$, 
by Lemma \ref{fgcclem}. In particular the
cluster 
chambers of $\foD_{\s_v}$ and $\foD_{\s_{v'}}$ are in canonical bijection. 

\begin{construction} \label{capcon} Fix a seed $\s$. 
We use the cluster chambers to build a positive space. 
We attach a copy of the torus
$T_{N^\circ,\sigma} := T_{N^\circ}$ to each cluster 
chamber $\sigma\in\Delta^+_{\s}$.

Given any two cluster chambers $\sigma', \sigma$ of $\Delta^+_{\s}$,
we can choose a path $\gamma$ from $\sigma'$ to $\sigma$.
We then get an automorphism $\theta_{\gamma,\foD}:\widehat{\kk[P]}
\rightarrow\widehat{\kk[P]}$ which is independent of choice of path.
If we choose the path to lie in the support
of the cluster complex, then by Remark \ref{noekwalls} 
(which shows in particular that the scattering functions on walls of the 
cluster complex are polynomials, as opposed to formal power series),
the wall crossings give birational maps of the torus and hence we can
view $\theta_{\gamma,\foD}$ as giving a well-defined map of fields of
fractions
\[
\theta_{\gamma,\foD}:\kk(M^{\circ})\rightarrow\kk(M^{\circ}).
\]
This induces a birational map
\[
\theta_{\sigma,\sigma'}:T_{N^\circ,\sigma}\dasharrow T_{N^{\circ},\sigma'}
\]
which is in fact positive.

We can then construct a space $\shA_{\scat,\s}$ by gluing together all
the tori $T_{N^{\circ},\sigma}$, $\sigma\in\Delta^+_{\s}$  
via these birational maps, see Proposition 2.4 of \cite{P1}.
We call this space (with its atlas of tori) $\cA_{\scat,\s}$. 

We write
$T_{N^\circ,\sigma \in \s} := T_{N^\circ,\sigma}$ 
if we need to make clear which seed $\s$ is being used. 
\end{construction}

We check first that mutation equivalent seeds give canonically 
isomorphic spaces.

We recall first something of the construction of $\cA$. Fix a seed $\s$. Then
we have positive spaces 
\[
\cA_{\s} = \bigcup_{v} T_{N^\circ,v},\quad\cA^{\vee}_{\s} 
=\bigcup_{v} T_{M^\circ,v},
\]
where each atlas is parameterized by vertices $v$ of the infinite tree
$\foT_{\s}$.
We write e.g., $T_{M^\circ,v\in\s} \subset \cA^{\vee}_{\s}$
for the open subset parameterized by $v$. If we obtain a seed $\s'=\s_v$
by mutation from $\s$, then we can think of the tree $\foT_{\s'}$ as
a subtree of $\foT_{\s}$ rooted at $v$, and thus we obtain natural
open immersions
\begin{equation}
\label{changeofseedisomorphisms}
\cA_{\s'} \hookrightarrow \cA_{\s},\quad \cA^{\vee}_{\s'} 
\hookrightarrow \cA^{\vee}_{\s}.
\end{equation}
These are easily seen to be isomorphisms.
Under this immersion, the open cover of $\shA_{\s'}$ is identified
canonically with the subcover of $\shA_{\s}$ indexed by vertices
of $\foT_{\s'}$ (but in either atlas there are many tori identified
with the same open set of the union). 
Because of this we view $\cA$ as independent of the choice of seed in a 
given mutation equivalence class. 

Given vertices $v,v'$ of $\foT_{\s}$, we have birational maps
\[
\mu_{v,v'}:T_{N^{\circ},v}\dasharrow T_{N^{\circ},v'},\quad
\mu_{v,v'}:T_{M^{\circ},v}\dasharrow T_{M^{\circ},v'}
\]
induced by the inclusions 
$T_{N^{\circ},v},T_{N^{\circ},v'}\subseteq
\shA_\s$ and
$T_{M^{\circ},v},T_{M^{\circ},v'}\subseteq
\shA^{\vee}_\s$ respectively.

In what follows, we use 
the same notation for the restriction of a piecewise
linear map to a maximal cone on which it is linear 
and the unique linear extension
of this restriction to the ambient vector space. 

\begin{proposition} \label{rpprop} Let $\s$ be a seed. Let $v$ be the root
of $\foT_{\s}$, $v'$ any other vertex. 
Consider the Fock-Goncharov tropicalisation $\mu_{v',v}^T: 
M^\circ_{v'} \to M^\circ_{v}$ of $\mu_{v',v}:T_{M^{\circ},{v'}}
\dasharrow T_{M^{\circ},v}$. Its 
restriction $\mu^T_{v',v}|_{\sigma'}$ 
to each cluster chamber $\sigma'\in\Delta^+_{\s_{v'}}$ is a 
linear isomorphism onto the corresponding
chamber $\sigma:=\mu^T_{v',v}(\sigma') \in \Delta^+_{\s}$.
The linear map
\[
\mu^T_{v',v}|_{\sigma'}:M^{\circ}_{\sigma'\in\s_{v'}}
\rightarrow M^{\circ}_{\sigma\in\s}
\]
induces an isomorphism
\[
T_{v',\sigma}:T_{N^\circ,\sigma \in \s }  \to T_{N^\circ,\sigma' \in 
\s_{v'}}.
\]
These glue to give an isomorphism of positive spaces 
$\shA_{\scat,\s} \to \shA_{\scat,\s_{v'}}$. 
\end{proposition}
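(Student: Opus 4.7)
My plan is to reduce to a single mutation via induction on the length of the path from $v$ to $v'$ in $\foT_{\s}$, since $\mu_{v',v}$ is a composition of basic mutations and the Fock-Goncharov tropicalisation is functorial. So assume $v' = \mu_k(v)$ for some $k \in I_{\uf}$. Then $\mu_{v',v}: T_{M^\circ,v'} \dasharrow T_{M^\circ, v}$ is a basic mutation, and by Proposition \ref{fgtprop} its tropicalisation $\mu^T_{v',v}: M^{\circ}_{v'} \to M^{\circ}_v$ is the piecewise linear map $T_k$ of Definition \ref{tkdef} (using that mutation is an involution).

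Next I would establish linearity on cluster chambers. Every $\sigma' \in \Delta^+_{\s_{v'}}$ is the closure of a connected component of $M^{\circ}_{\bR, v'} \setminus \Supp(\foD_{\s_{v'}})$. Since $\foD_{\s_{v'}}$ contains the slab on $e_k^{\perp}$, $\sigma'$ lies entirely in one of the closed half-spaces $\shH_{k,+}$ or $\shH_{k,-}$, on which $T_k$ agrees with one of the linear maps $T_{k,+}$ or $T_{k,-}$. Thus $\mu^T_{v',v}|_{\sigma'}$ is linear. By Theorem \ref{Adiagrammutations}, $T_k(\foD_\s) \sim \foD_{\s_{v'}}$; unravelling the correspondence of chambers in Construction \ref{chamberstructureremark}, the image $\sigma := \mu^T_{v',v}(\sigma')$ is precisely the cluster chamber of $\Delta^+_{\s}$ corresponding to the same vertex of the mutation tree, and the restricted map is a linear isomorphism onto it. Taking the dual of the ambient linear map yields the desired torus isomorphism $T_{v',\sigma}: T_{N^\circ, \sigma \in \s} \to T_{N^\circ, \sigma' \in \s_{v'}}$.

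The main obstacle, and the final step, is verifying that these chamber-by-chamber isomorphisms are compatible with the gluing data, so that they assemble into a single isomorphism $\shA_{\scat,\s} \to \shA_{\scat,\s_{v'}}$ of positive spaces. For adjacent cluster chambers $\sigma_1, \sigma_2 \in \Delta^+_{\s}$ separated by a wall $(\fod, f_\fod) \in \foD_{\s}$, the gluing birational map $\theta_{\sigma_1,\sigma_2}$ in $\shA_{\scat,\s}$ is the wall-crossing automorphism determined by $f_\fod$. To intertwine with the gluing in $\shA_{\scat,\s_{v'}}$, I need the wall separating the corresponding chambers $\sigma_1' , \sigma_2' \in \Delta^+_{\s_{v'}}$ in $\foD_{\s_{v'}}$ to have function $T_{k,\pm}(f_\fod)$ (with the sign depending on which half-space $\fod$ lies in). This is exactly the content of item (1) of Definition \ref{tkdef} combined with the equivalence $\foD_{\s_{v'}} \sim T_k(\foD_\s)$ from Theorem \ref{Adiagrammutations}; translating exponents by $T_{k,\pm}$ on wall functions is precisely what is needed to conjugate the wall-crossing automorphism by the linear map $T_{v',\sigma_i}^*$ on monomials. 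Since this compatibility holds across every separating wall, the chamber-wise isomorphisms glue to the claimed isomorphism of positive spaces. The inductive step then amounts to composing such isomorphisms along the path from $v$ to $v'$, where at each stage the relevant cluster chambers lie in a single half-space for the mutation being performed.
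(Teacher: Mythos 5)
Your reduction to a single mutation and the observation that each cluster chamber of $\Delta^+_{\s_{v'}}$ lies in a closed half-space for $e_k^\perp$, so that $\mu^T_{v',v}$ is linear on it, are both correct and match the paper's strategy. However, there is a genuine gap in your gluing argument, and it occurs at exactly the point where the real content of the proposition lies: when the two adjacent chambers $\sigma_1,\sigma_2 \in \Delta^+_{\s}$ are separated by the slab $e_k^\perp$ itself. You write that the wall in $\foD_{\s_{v'}}$ separating the corresponding chambers has attached function $T_{k,\pm}(f_{\fod})$ ``by item (1) of Definition \ref{tkdef},'' and that this is exactly what is needed to conjugate the wall-crossing automorphism by the linear maps. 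But item (1) of Definition \ref{tkdef} explicitly excludes the slab $\fod_k=(e_k^\perp,1+z^{v_k})$: the slab is handled separately by item (2), which puts the function $1+z^{-v_k}$ on the new slab $\fod_k'$. Since $\langle e_k,v_k\rangle=0$, both $T_{k,+}$ and $T_{k,-}$ fix $v_k$, so $T_{k,\pm}(1+z^{v_k})=1+z^{v_k}\neq 1+z^{-v_k}$; the claimed transformation rule fails at the slab. Worse, in this case $T_{v',\sigma_1}$ and $T_{v',\sigma_2}$ are induced by \emph{different} linear maps ($T_{k,+}$ on one side, $T_{k,-}$ on the other), so the gluing compatibility is not a conjugation by a single linear map at all.

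What saves the day in the paper's proof is a short direct computation showing that the discrepancy between the old and new slab functions ($1+z^{v_k}$ versus $1+z^{-v_k}$) is exactly absorbed by the mismatch between $T_{k,+}$ and $T_{k,-}$: concretely, one verifies
\[
T_{v',\sigma}^*\bigl(\theta_{\sigma',\tilde\sigma'}^*(z^m)\bigr)
= z^m(1+z^{v_k})^{-\langle d_k e_k,m\rangle}
= \theta_{\sigma,\tilde\sigma}^*\bigl(T_{v',\tilde\sigma}^*(z^m)\bigr),
\]
where the middle identity uses $z^{m-v_k\langle d_k e_k,m\rangle}(1+z^{-v_k})^{-\langle d_k e_k,m\rangle}=z^m(1+z^{v_k})^{-\langle d_k e_k,m\rangle}$. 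Your argument as written does not supply this, and appealing to the general wall transformation rule is not a valid substitute there. (As a minor point, you identify $\mu^T_{v',v}$ with $T_k$ by ``involutivity of mutation''; the paper identifies it with $T_k^{-1}$. These agree as maps $M^\circ_{v'}\to M^\circ_v$ once one is careful about which seed the $T_k$ of Definition \ref{tkdef} is being computed in, but $T_k$ is not an involution on a fixed lattice and the phrasing is a little misleading. This is not the substantive issue.)
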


In view of the proposition, we can view $\cA_{\scat} = \cA_{\scat,\s}$ 
as independent of the seed in a given
mutation class.

\begin{proof} It is enough to treat the case where 
$v'$ is adjacent to $v$ via an edge labelled with $k\in I_{\uf}$,
so that $\s':=\s_{v'}=\mu_k(\s)$, as in general $\mu_{v',v}$
is the inverse of a composition of mutations 
$\mu_{k_p}\circ\cdots\circ\mu_{k_1}$.
Note in this special case $\mu_{v',v}^T=T_k^{-1}$ by Proposition \ref{fgtprop},
the definition of $\shA^{\vee}$ in Appendix \ref{LDsec}, and the formula
for the $\shX$-cluster mutation $\mu_k$ (see e.g., \cite{P1}, (2.5)).
So
\[
T_{v',\sigma}: T_{N^{\circ}, \sigma \in \s} \to 
T_{N^{\circ},T_k(\sigma) \in \s'}
\]
is the isomorphism determined
by the linear map 
$T_k^{-1}|_{T_k(\sigma)}$.
The proposition amounts to showing commutativity of the diagram, for
$\sigma,\tilde\sigma\in \Delta^+_{\s}$, $\sigma'=T_k(\sigma)$, $\tilde\sigma'
=T_k(\tilde\sigma)$,
\[
\xymatrix@C=30pt
{T_{N^{\circ}, \sigma \in \s}\ar[r]^{T_{v',\sigma}}\ar@![d]_{\theta_{\sigma,
\tilde\sigma}}
& T_{N^{\circ}, \sigma'\in \s'}
\ar@![d]^{\theta_{\sigma',\tilde\sigma'}}\\
T_{N^{\circ},\tilde \sigma \in \s}\ar[r]_{T_{v',\tilde\sigma}}&
T_{N^{\circ},\tilde\sigma'\in 
\s'}}
\]
where in the left column $\theta$ indicates wall crossings in $\foD_{\s}$ while
in the right column the wall crossings are in $\foD_{\s'}$. 

If $\sigma$ and $\tilde\sigma$ are on the same side of the wall $e_k^{\perp}$, then
commutativity follows immediately from Theorem \ref{Adiagrammutations}.
So we can assume that $\sigma$ and $\tilde\sigma$ are adjacent cluster 
chambers separated
by the wall $e_k^{\perp}$, and further without loss of generality that
$e_k$ is non-negative on $\sigma$. Now by Remark \ref{noekwalls} there is
only one wall of $\foD_{\s}$ ($\foD_{\s'}$) contained in $e_k^{\perp}$, 
with support $e_k^{\perp}$ itself and attached function $1 + z^{v_k}$ 
(resp.\ $1 + z^{-v_k}$). Now it is a simple calculation: 
\begin{align*}
T_{v',\sigma}^*(\theta_{\sigma',\tilde\sigma'}^*(z^m)) = {} & 
T_{v',\sigma}^*(z^m(1+z^{-v_k})^{-\langle d_ke_k,m\rangle})\\
= {} & z^{m-v_k\langle d_ke_k,m\rangle}
(1+z^{-v_k})^{-\langle d_ke_k,m\rangle}\\
= {} & z^m(1+z^{v_k})^{-\langle d_ke_k,m\rangle}\\
= {} & \theta_{\sigma,\tilde\sigma}^*(T_{v',\tilde\sigma}^*(z^m)),
\end{align*}
This gives the desired commutativity.
\end{proof}

Next we explain how to identify $\cA_{\scat}$ with $\cA$. 

Recall for each vertex $v$ of $\foT_{\s}$ there is an associated
cluster chamber $\shC^+_v\in\Delta^+_{\s}$ in the cluster complex.
While the atlas for $\cA_{\scat,\s}$ is parameterized
by chambers of $\Delta^+_{\s}$, we can use a more redundant atlas
indexed by vertices of $\foT_{\s}$, equating $T_{N^{\circ},v}$
with $T_{N^{\circ}, \shC^+_{v}}$.
The open sets and the gluing maps in
this redundant atlas are the same as in the original, 
but in the redundant atlas a given open set might be repeated many times.

\begin{theorem} \label{rpth} Fix a seed $\s$. Let $v$ be the root of
$\foT_{\s}$, $v'$ any other vertex. Let 
$\psi^*_{v,v'}: M^\circ_{v'} \to M^\circ_{v'}$ be the linear map 
$\mu^T_{v,v'}|_{\shC^+_{v'\in\s}}$. Let 
$\psi_{v,v'}: T_{N^\circ,v'} \to T_{N^\circ,v'}$ 
be the associated map of tori.
These glue to give an isomorphism of positive spaces 
\[
\cA_{\s} := \bigcup_{v'} T_{N^\circ,v'} \to \cA_{\scat,\s} := \bigcup_{v'} 
T_{N^\circ,v'}.
\]
Furthermore, the diagram 
\begin{equation}
\label{commdiagramAAprime}
\begin{CD}
\cA_{\s} @>>> \cA_{\scat,\s} \\
@VVV        @VVV \\
\cA_{\s_{v'}} @>>> \cA_{\scat,\s_{v'}}
\end{CD}
\end{equation}
is commutative, where the right-hand vertical map is the isomorphism
of Proposition \ref{rpprop},
the left-hand vertical map the isomorphism given in 
\eqref{changeofseedisomorphisms}, 
and the horizontal maps are the isomorphisms
just described.
\end{theorem}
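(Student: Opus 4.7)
The plan is to check that the family of linear torus automorphisms $\psi_{v,v'}: T_{N^\circ,v'} \to T_{N^\circ,v'}$ is compatible with the atlases of $\cA_\s$ and $\cA_{\scat,\s}$, so that they assemble into a morphism of positive spaces; since each $\psi_{v,v'}$ is a torus automorphism, the assembled morphism will automatically be an isomorphism. Both spaces are glued from tori indexed by vertices of $\foT_\s$, so compatibility reduces to verifying, for each edge $v' \to v''$ of $\foT_\s$ labelled by $k \in I_{\uf}$,
\[
\psi_{v,v''} \circ \mu_{v',v''} = \theta_{\shC^+_{v'\in\s}, \shC^+_{v''\in\s}} \circ \psi_{v,v'}
\]
as birational maps $T_{N^\circ,v'} \dasharrow T_{N^\circ,v''}$.

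The \emph{base case} is $v=v'$, where $\psi_{v,v'} = \id$. The chamber $\shC^+_{v''\in\s} = T_k^{-1}(\shC^+_{\s_{v''}})$ lies in $\shH_{k,-}$: in the mutated seed $\s_{v''} = \mu_k(\s)$ we have $e'_k = -e_k$, so $\shC^+_{\s_{v''}} \subseteq \shH_{k,-}$, and $T_k$ restricts to the identity on $\shH_{k,-}$. Consequently $\psi_{v,v''} = T_k^{-1}|_{\shC^+_{v''\in\s}} = \id$ as well, and it suffices to show $\theta_{\shC^+_v, \shC^+_{v''}} = \mu_{v,v''}$ as birational maps. By Remark \ref{noekwalls}, the only wall of $\foD_\s$ contained in $e_k^\perp$ is the slab $\fod_k = (e_k^\perp, 1+z^{v_k})$, which separates $\shC^+_v \subseteq \shH_{k,+}$ from $\shC^+_{v''} \subseteq \shH_{k,-}$; the corresponding wall-crossing pullback from \eqref{thetafodkdef} is $z^m \mapsto z^m(1+z^{v_k})^{-\langle d_k e_k, m\rangle}$, which is exactly the formula for the $\cA$-mutation $\mu_k = \mu_{v,v''}$ (compare \cite{P1}, (2.3)).

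For a general edge $v' \to v''$ I reduce to the base case using the piecewise linear transformation $T_{v'}$. By \eqref{Tvequation} $T_{v'}$ identifies $\foD_\s$ with $\foD_{\s_{v'}}$ cluster chamber by cluster chamber, and the identification $\cA_\s \cong \cA_{\s_{v'}}$ of \eqref{changeofseedisomorphisms} identifies $\mu_{v',v''}$ with the corresponding mutation in $\cA_{\s_{v'}}$ viewed from the root $v'$. Under these identifications the compatibility for $\s$ along the edge $v' \to v''$ reduces to the base-case identity for $\s_{v'}$ along its root-emanating edge labelled $k$. The second part of the theorem, commutativity of \eqref{commdiagramAAprime}, is then verified chart-by-chart on $T_{N^\circ,v''}$: both composites are torus automorphisms induced by the same linear endomorphism of $M^\circ$, arising by restricting the cocycle relation $\mu^T_{v,v''} = \mu^T_{v',v''} \circ \mu^T_{v,v'}$ to the chamber $\shC^+_{v''\in\s}$ and invoking the definitions of $\psi$ and of the map from Proposition \ref{rpprop}. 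The main obstacle is the careful bookkeeping of sign conventions—especially the direction of each wall crossing and the role of the factor $d_k$—in the base-case identification of the wall-crossing formula with the $\cA$-mutation; once these are aligned the reduction step is automatic from the $T_{v'}$-covariance of the scattering diagram.
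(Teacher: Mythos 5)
Your proposal is correct and follows essentially the same route as the paper: verify the gluing compatibility edge by edge, prove the base case by matching the wall-crossing across $e_k^\perp$ with the $\mu_k$ pullback, and use the functoriality of tropicalization (the cocycle relation $\mu^T_{v,v''} = \mu^T_{v',v''}\circ\mu^T_{v,v'}$) both for the reduction to the base case and for commutativity of \eqref{commdiagramAAprime}. The paper makes your ``the reduction step is automatic'' precise via an explicit cube of commuting diagrams (back face \eqref{fcd}, front face the base case, top and bottom face \eqref{scd}, side faces the identifications of Proposition \ref{rpprop} and \eqref{changeofseedisomorphisms}); you have all the ingredients but leave the assembly implicit, and your $T_k^{-1}$ should be $T_k$ (harmless, since both restrict to the identity on $\shH_{k,-}$).
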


\begin{proof} 
Let $v',v'' \in \foT_{\s}$. The desired
isomorphism is equivalent to commutativity of the diagram 
\begin{equation}
\label{fcd}
\xymatrix@C=30pt
{
T_{N^{\circ},v'}\subset\shA_{\s}\ar[r]^{\psi_{v,v'}}\ar@![d]_{\mu_{v',v''}}
&T_{N^{\circ},v'}\subset\shA_{\scat,\s}\ar@![d]^{\theta_{\shC^+_{v'},\shC^+_{v''}}}\\
T_{N^{\circ},v''}\subset\shA_{\s}\ar[r]_{\psi_{v,v''}}&
T_{N^{\circ},v''}\subset\shA_{\scat,\s}
}
\end{equation}
%
%\begin{equation} \label{fcd}
%\begin{CD}
%T_{N^0,v'} \subset \cA_{\s} @>{\psi_{v'}}>> T_{N^0,v'} \subset \cA'_{\s} \\
%@V{\mu_{v',v''}}VV                           @V{\theta_{\shC^+_{v'},\shC^+_{v''}}}VV \\
%T_{N^0,v''} \subset \cA_{\s} @>{\psi_{v''}}>> T_{N^0,v''} \subset \cA'_{\s}
%\end{CD}
%\end{equation}
where the right-hand vertical arrow 
is given by wall crossings in $\foD_{\s}$ between the
cluster chambers for $v',v''$. 
For this we may assume there is an oriented path from $v'$ to $v''$ in 
$\foT_{\s}$,
and thus that $v'' \in \foT_{\s_{v'}} \subset \foT_{\s}$.

The commutativity of \eqref{commdiagramAAprime} is
equivalent to the commutativity of
\begin{equation} \label{scd}
\begin{CD}
T_{N^\circ,v''} \subset \cA_{\s} @>{\psi_{v,v''}}>> T_{N^\circ,v''} \subset 
\cA_{\scat,\s} \\
@|                                        @VVV \\
T_{N^\circ,v''} \subset \cA_{\s_{v'}} @>{\psi_{v',v''}}>> T_{N^\circ,v''} \subset \cA_{\scat,\s_{v'}}
\end{CD}
\end{equation}
where the right-hand vertical map is the restriction of the isomorphism $\cA_{\scat,\s} \to \cA_{\scat,\s_{v'}}$
of Proposition \ref{rpprop}. 
We argue the commutativity of \eqref{scd} first, and then show that this
implies the commutativity of \eqref{fcd}.

Each map in \eqref{scd} is an isomorphism, induced by the restrictions
of tropicalizations of various $\mu_{w,w'}$ to various cluster 
chambers. Explicitly,
on character lattices, we have the corresponding diagram
$$
\begin{CD} 
M^\circ @<{\mu^T_{v,v''}|_{C^+_{v''} \in \s}}<<  M^\circ \\
@|                         @AA\mu^T_{v',v}|_{\shC^+_{v''\in\s_{v'}}}A \\
M^\circ    @<<{\mu^T_{v',v''}|_{C^+_{v''} \in \s_{v'}}}< M^\circ
\end{CD}
$$
which is obviously commutative as tropicalization is functorial and
$\mu_{v',v''}=\mu_{v,v''}\circ \mu_{v',v}$.

Now for the commutativity of \eqref{fcd}. 
It is enough to check the case when there is an oriented edge from $v'$ to $v''$ in $\foT_{\s}$
labelled by $k \in I_{\uf}$. We claim we may also assume $\s = \s_{v'}$. Indeed,
assume we have proven 
commutativity in this case. We draw a cube, whose
back vertical face is the diagram \eqref{fcd}, and whose front vertical face is the analogous
diagram for $\s_{v'}$, which is commutative by assumption. The top and bottom horizontal
faces are instances of \eqref{scd}, and the right-hand vertical face is the commutative diagram
of atlas tori giving the isomorphism $\cA_{\scat,\s} \to \cA_{\scat,\s_{v'}}$ of
Proposition \ref{rpprop}. Finally the left-hand vertical face consists of
equality of charts or birational maps coming from inclusions of these tori
in $\shA_{\s}$ or $\shA_{\s_v}$, thus commutative.
Now commutativity of the back vertical face, \eqref{fcd} follows. 

Finally, to show \eqref{fcd} when $\s=\s_{v'}$, i.e., $v=v'$, we note
$\psi_{v,v'}$ is automatically the identity, and
$\psi_{v,v''}$ is also the identity, by Definition
\ref{tkdef}, and the identification of $T_k$ as Fock-Goncharov tropicalisation
of the birational map of tori 
$\mu_{v',v''}=\mu_k:T_{M^\circ,v'} \dasharrow T_{M^\circ,v''}$. 
Thus the commutativity amounts to showing that the wall-crossing automorphism
$\kk(M^\circ) \to \kk(M^\circ)$ 
of fraction fields, given by 
crossing the wall $e_k^{\perp}$ from the negative to the positive side, 
is the pullback on rational functions of the birational mutation 
$\mu_{k}: T_{N^\circ} \dasharrow T_{N^\circ}$.
Note the only scattering function on the wall is $1 + z^{v_k}$, so this follows
from the coordinate free formula for the birational mutation, see e.g.,
\cite{P1}, (2.6).
\end{proof} 

We can now make precise the relationship between theta functions
and cluster monomials mentioned at the end of \S\ref{blsec}. First recall:

\begin{definition}
\label{clustermonodef}
Given fixed and initial data $\Gamma,\s$, if a seed $\s_w=(e_1',\ldots,e_n')$ is given, with
$(e_i')^*$ the dual basis and $f_i'=d_i^{-1}(e_i')^*$, 
a \emph{cluster monomial} in this seed is a monomial on $T_{N^{\circ},w}
\subset\shA$ of the form $z^m$ with $m=\sum_{i=1}^na_if_i'$ and the
$a_i$ non-negative for $i\in I_{\uf}$. By the Laurent phenomenon \cite{FZ02b}, 
such a monomial always
extends to a regular function on $\shA$. A \emph{cluster monomial on $\shA$}
is a regular function which is a cluster monomial in some seed.
\end{definition}

\begin{theorem}
\label{thetaequalsmonomial}
Let $\Gamma$ be fixed data satisfying the Injectivity 
Assumption and $\s$ an initial seed. Let $Q\in \shC^+_{\s}$ and
$m\in \sigma\cap M^{\circ}$ for some $\sigma \in \Delta^+_{\s}$. Then
$\vartheta_{Q,m}$ is a positive 
Laurent polynomial which expresses a cluster monomial
of $\cA$ in the initial seed $\s$. Further, all cluster monomials can be
expressed in this way.
\end{theorem}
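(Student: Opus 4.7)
The plan is to leverage the identification of $\cA_{\s}$ with $\cA_{\scat,\s}$ from Theorem \ref{rpth} together with Corollary \ref{monocor}, which already handles the case where the basepoint $Q$ lies in the cluster chamber $\sigma$ containing $m$. The passage from that case to an arbitrary basepoint $Q \in \shC^+_{\s}$ is governed by the invariance of theta functions under wall crossings (Theorem \ref{blinvth}), and the key point is that this invariance, when traced inside the cluster complex, corresponds exactly to the birational mutation maps between seed tori in $\cA$.

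More concretely, I would proceed as follows. Given $m \in \sigma \cap M^\circ$ with $\sigma = \shC^+_{v \in \s}$ for some vertex $v$ of $\foT_{\s}$, pick a basepoint $Q_\sigma \in \Int(\sigma)$. Corollary \ref{monocor} gives $\vartheta_{Q_\sigma, m} = z^m$, and under the identification $T_{N^\circ, v} \subset \cA_{\scat,\s}$ with the corresponding seed torus of $\cA_{\s_v}$ provided by Theorem \ref{rpth}, the function $z^m$ on $T_{N^\circ, v}$ is precisely a cluster monomial in the seed $\s_v$. Choose a path $\gamma$ from $Q_\sigma$ to $Q$ lying in the support of the cluster complex. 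By Theorem \ref{blinvth} we have $\vartheta_{Q,m} = \theta_{\gamma,\foD_\s}(z^m)$, so $\vartheta_{Q,m}$ is obtained by applying successive wall-crossing automorphisms along cluster chambers. Remark \ref{noekwalls} ensures that the scattering functions on these walls are polynomials (rather than formal power series), so $\theta_{\gamma,\foD_\s}$ acts as a genuine birational automorphism of $\kk(M^\circ)$, and by the commutative diagram \eqref{fcd} in the proof of Theorem \ref{rpth}, this composition of wall crossings is exactly the pullback to the initial seed torus $T_{N^\circ, \s}$ of the cluster monomial on $\cA$ determined by $z^m$ in the seed $\s_v$. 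This produces a Laurent polynomial expression for a genuine cluster monomial of $\cA$ in the initial seed.

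Positivity of the resulting Laurent polynomial is then immediate from Theorem \ref{scatdiagpositive}: each wall-crossing function can be taken of the form $(1+z^n)^c$ with $c \in \ZZ_{>0}$, so by Definition \ref{blde} every bending of a broken line multiplies the attached monomial by a polynomial with non-negative integer coefficients, hence each $\Mono(\gamma)$ is such a monomial, and $\vartheta_{Q,m}$ is a positive Laurent polynomial. For the converse assertion, any cluster monomial on $\cA$ is by Definition \ref{clustermonodef} of the form $z^m$ in some seed $\s_v$ with $m \in \shC^+_{v \in \s} \cap M^\circ$, so taking this $m$ and any $Q \in \shC^+_\s$ realizes it as $\vartheta_{Q,m}$.

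The main potential obstacle is verifying that the wall-crossing automorphism $\theta_{\gamma,\foD_\s}$, when $\gamma$ traverses the interior of the cluster complex, really coincides with the pullback of the cluster mutation composition $\mu_{\s \to \s_v}^*$ of $\cA$-birational maps; this is precisely the content of Theorem \ref{rpth} (specifically the commutativity of diagram \eqref{fcd}), but one must be careful that the choice of path $\gamma$ connects $Q_\sigma$ to $Q$ through a sequence of \emph{adjacent} cluster chambers and that the order of wall crossings matches the sequence of mutations encoding $v$. Since $\theta_{\gamma,\foD_\s}$ depends only on homotopy class of $\gamma$ in $M_{\RR} \setminus \Sing(\foD_\s)$ and the cluster complex is a simplicial fan by Lemma \ref{fgcclem}, such a path can always be found by walking from $\sigma$ to $\shC^+_\s$ through codimension-one faces, and the compatibility then reduces to the one-mutation case already checked in the proof of Theorem \ref{rpth}.
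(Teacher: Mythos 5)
Your proposal is correct and follows essentially the same route as the paper: use Theorem \ref{rpth} to identify $\cA$ with $\cA_{\scat,\s}$, apply Corollary \ref{monocor} to get $\vartheta_{Q_\sigma,m}=z^m$, transport the basepoint via Theorem \ref{blinvth}, recognize the result as a cluster monomial expressed in the initial seed, and obtain positivity from Theorem \ref{scatdiagpositive}. The only detail worth making explicit is that the claim that $\theta_{\gamma,\foD_\s}(z^m)$ is a Laurent \emph{polynomial} (and not merely a rational function, since wall crossings along the cluster complex do involve dividing by $(1+z^{v_k})$) rests on the classical Laurent phenomenon of \cite{FZ02b}, which the paper invokes directly in the last line of its proof; your phrasing slides over this, but the appeal is the same.
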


\begin{proof}
By Theorem \ref{rpth} and Proposition \ref{rpprop} we have a canonical 
isomorphism of positive spaces 
$\varphi: \shA \to \cA_{\scat}=\shA_{\scat,\s}$. Let $v$ be the root of 
$\foT_\s$ and
$v'$ any vertex of $\foT_{\s}$. Then
we have $T_{N^{\circ},v'}\subset \cA$, and the cluster monomials for the
seed $\s_{v'}$ are just the monomials $z^m$ on $T_{N^{\circ},v'}$ with
$m \in \cC_{\s_{v'}}^+\cap M^{\circ}_{v'}$. 
By Theorem \ref{rpth}, this is identified
with the monomial $(\psi_{v,v'}^{-1})^*(z^m)=z^{\mu^T_{v',v}(m)}$
on $T_{N^{\circ},v'}\subset \shA_{\scat,\s}$, as
$\mu^T_{v',v}$ takes $\shC^+_{\s_{v'}}\in \Delta^+_{\s_{v'}}$ to 
$\shC^+_{v'\in\s}\in \Delta^+_{\s}$ by Proposition \ref{rpprop}. So
the cluster monomials for the chart indexed by $v'$ in $\shA_{\scat}$ are of
the form $z^m$ with $m\in \shC^+_{v'\in\s}$. Furthermore, if for
each vertex $w$ of $\foT_{\s}$, $Q_w\in \shC^+_{w\in\s}$ is a general
basepoint, we have $\vartheta_{Q_{v'},m}=z^m$ for $m\in \shC^+_{v'\in\s}$
by Corollary \ref{monocor}.
By the definition of $\cA_{\scat,\s}$ in Construction \ref{capcon},
the corresponding rational function on the open set
$T_{N^\circ,v} \subset \cA_{\scat,\s}$ 
is $\theta_{\gamma,\foD}(\vartheta_{Q_{v'},m})$,
where $\gamma$ is a path from
$Q_{v'}$ to $Q\in \shC^+_{\s}$ lying in the support of $\Delta^+_{\s}$. 
But $\vartheta_{Q,m}=
\theta_{\gamma,\foD}(\vartheta_{Q_{v'},m})$ by Theorem 
\ref{blinvth}. 
Finally $\vartheta_{Q,m}$ is a positive Laurent series by Theorem 
\ref{scatdiagpositive} and the definition of broken lines. By the Laurent
phenomenon, it is also a polynomial.
\end{proof}

We can now remove the Injectivity Assumption to prove:

\begin{theorem}[Positivity of the Laurent Phenomenon] \label{pilpth} Each cluster variable of an $\cA$-cluster
algebra is a Laurent polynomial with non-negative integer coefficients in the cluster variables
of any given seed.
\end{theorem}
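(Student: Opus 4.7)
The plan is to bootstrap from Theorem \ref{thetaequalsmonomial}, which already establishes positivity in the setting where the Injectivity Assumption holds, by passing to principal coefficients. Indeed, by Construction \ref{sdprin}, the fixed data $\Gamma_{\prin}$ for the cluster variety $\cA_{\prin}$ with principal coefficients automatically satisfies the Injectivity Assumption (the extended form $\{\cdot,\cdot\}$ on $\tilde N = N \oplus M^\circ$ is non-degenerate, so the vectors $\tilde v_i = (v_i, e_i)$ are linearly independent). Therefore Theorem \ref{thetaequalsmonomial} applies to $\cA_{\prin}$, showing that every cluster monomial on $\cA_{\prin}$ is a Laurent polynomial with non-negative integer coefficients in the cluster variables of any fixed seed $\tilde\s$ of $\cA_{\prin}$.

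Next I would exploit the canonical fibration $\pi: \cA_{\prin} \to T_M$ described in the introduction, for which $\cA = \cA_e = \pi^{-1}(e)$. Lifting a seed $\s$ of $\cA$ to the associated seed $\tilde\s$ of $\cA_{\prin}$, the cluster variables for $\tilde\s$ split into two groups: those corresponding to the $\cA$-directions (which are lifts of the cluster variables of $\s$) and those corresponding to the principal coefficient directions (the $X_i$'s). Restricting to the fiber $\cA_e$, which amounts to specializing $X_i \mapsto 1$, sends the lifted $\cA$-cluster variables on $\cA_{\prin}$ to the original cluster variables on $\cA$. Under this procedure, a cluster variable of $\cA$ for any other seed $\s'$ equals the restriction to $\cA_e$ of the corresponding cluster variable of $\cA_{\prin}$ for $\tilde{\s'}$.

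Applying the positive Laurent expansion obtained in the principal case to the lifted cluster variable and then specializing $X_i = 1$, one obtains a Laurent polynomial in the cluster variables of $\s$ whose coefficients are obtained from non-negative integers by the substitution $X_i \mapsto 1$. Since such specialization can only preserve or add positive contributions (it cannot produce cancellation with negative signs, as there are no negative signs to begin with), the resulting expression has non-negative integer coefficients. This proves the theorem.

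The only technical point that needs care is the compatibility of the mutation combinatorics on $\cA$ and $\cA_{\prin}$: one must verify that the lifts $\tilde\s \leadsto \tilde{\s'}$ of a mutation sequence $\s \leadsto \s'$ genuinely produce, under the restriction to the fiber $\cA_e$, the cluster variables of $\s'$ on $\cA$. This is a standard feature of the principal coefficient construction reviewed in Appendix \ref{rdsec} (and matches the Fomin--Zelevinsky framework from \cite{FZ07}), so I expect no essential difficulty; it is the only place where one genuinely uses the structure of the map $\pi$ rather than just formal properties of scattering diagrams and broken lines.
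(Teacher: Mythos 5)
Your proposal is correct and takes essentially the same route as the paper: lift each cluster variable to $\cA_{\prin}$ via Proposition \ref{cvextrem}, apply Theorem \ref{thetaequalsmonomial} (valid because $\Gamma_{\prin}$ satisfies the Injectivity Assumption), and restrict back to the fibre $\cA = \pi^{-1}(e)$ by setting $X_i=1$. The paper's proof leaves the final specialization step implicit since Proposition \ref{cvextrem} already packages the compatibility of mutation sequences under lifting; your writing it out explicitly is harmless and the logic is the same.
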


\begin{proof} Since, as explained in Proposition \ref{cvextrem}, each cluster variable lifts canonically from $\cA$ to
$\cA_{\prin}$, we can replace the initial data $\Gamma$ with $\Gamma_{\prin}$, 
for which the Injectivity
Assumption holds. The result then immediately follows from 
Theorem \ref{thetaequalsmonomial}.
\end{proof} 

\begin{remark} \label{capconpc} 
When fixed and initial data $\Gamma$, $\s$ has frozen variables 
there is a partial compactification of cluster varieties $\cA \subset \ocA$, 
see Construction \ref{fvss}. 
We have an analogous partial compactification $\cA_{\scat,\s} \subset 
\ocA_{\scat,\s}$, 
given by an atlas of toric varieties 
$T_{N^\circ,{v \in \s}} \subset \TV(\Sigma_{v \in \s})$. 
The choice of fans is forced by the identifications of Proposition \ref{rpprop}:
for $v$ the root of $\foT_{\s}$,
$\Sigma_{v \in \s} := \Sigma^{\s}$ ($\Sigma^{\s}$ as in Construction \ref{fvss}) and then 
$\Sigma_{v' \in \s}:= \mu_{v,v'}^t(\Sigma_{v \in \s})$.
Now Proposition  \ref{rpprop} and 
Theorem \ref{rpth} (and their proofs) extend to the partial
compactifications without change. One checks easily that all mutations in the positive 
spaces $\cA,\cA_{\scat}$, and all the linear isomorphisms between corresponding tori in the atlases 
for $\cA,\cA_{\scat,\s},\cA_{\scat,\s_v}$ preserve the monomials 
$A_i=z^{f_i}$, $i \not \in I_{\uf}$ (these are
the frozen cluster variables), so that
all the spaces come with canonical projection to 
$\bA^{\#(I\setminus I_{\uf})}$, preserved by the isomorphisms 
between these positive spaces. We shall see in the next section that in the special
case of the partial compactification of $\cA_{\prin}$, the relevant fans are particularly
well-behaved. 
\end{remark}

\section{Sign coherence of $c$- and $g$-vectors} 
\label{cgvectorsec}
We begin with some philosophy concerning log Calabi-Yau varieties
following on from the discussion of \cite{P1}, \S 1.
Suppose $V\subset U$ are both log Calabi-Yau and $V$ is a Zariski open subset
of $U$, both having maximal boundary (\cite{P1}, Definition 1.5).
The tropical sets (which are expected to parameterize
the theta function basis of functions on the mirror) of $U$ and $V$ 
are canonically
equal and we expect the mirror $U^{\vee}$ to
degenerate to the mirror $V^{\vee}$. In particular when $V=T$ is an algebraic torus, we
expect a canonical degeneration of $U^{\vee}$ to the dual torus $T^{\vee}$, under which
the theta functions degenerate to monomials (i.e., characters). 
When $U = \cA$ is an
$\cA$-cluster variety, and $T=T_{N^{\circ},\s} \subset \cA$ is a cluster torus, it
turns out 
this degeneration has a purely cluster construction: the choice of seed $\s$
determines
a canonical partial compactification $\pi: \ocA_{\prin}^{\s} \to \bA^n_{X_1,\dots,X_n}$ of
$\pi: \cA_{\prin} \to T_M$, see Proposition \ref{ldpprop} and
Remark \ref{Aprinremark}. The main point of this 
section is to show that $\pi^{-1}(0) = T_{N^{\circ}}$, Corollary \ref{mutcor}. This
degeneration is central to what follows in this paper. 
For example, we prove linear independence of theta 
functions by showing they restrict to different characters on $T_{N^{\circ}}$, and
the Fock-Goncharov conjecture, false in general, is true in a formal neighborhood
of this fibre. There are analogous degenerations (identified with this one when 
the Fock-Goncharov conjecture holds) for e.g., $\can(\cA)$, and here they are even
more central, being the main tool we have for proving properties of this algebra
(e.g., that its spectrum gives a Gorenstein log Calabi-Yau of the right 
dimension), see 
Theorem \ref{cfcor}. The equality $\pi^{-1}(0) = T_{N^{\circ}}$, while not at all obvious 
from the cluster atlas, is immediate using the alternative description $\cA_{\scat,\s}$
of the 
previous section, as we now explain. Further, there are some immediate
benefits, such as sign coherence of $c$-vectors.  

For the remainder of the paper, the only scattering diagram we will ever 
consider 
is $\foD^{\cA_{\prin}}_{\s}$, see Construction \ref{sdprin}.
So we will often omit the superscript from the notation. 

\begin{construction} \label{atlascon}
Fix a seed $\s$ for fixed data $\Gamma$.
By Construction \ref{capcon}, 
the scattering diagram $\foD_{\s} = \foD^{\cA_{\prin}}_{\s}$ gives an atlas for 
the space $\cA_{\scat,\s}$. (Technically, we should write 
$\cA_{\prin,\scat,\s}$ to
indicate we are constructing something isomorphic to $\cA_{\prin}$, however
this will make the notation even less readable.) 
This was constructed by attaching a copy $T_{\tN^\circ,\sigma}$ 
of the torus
$T_{\tN^\circ}$ to each cluster chamber $\sigma\in\Delta^+_{\s}$,
and (compositions of) wall crossing automorphisms give the birational maps between them. By
Theorem \ref{rpth} this space is canonically identified with $\cA_{\prin}$: $\cA_{\prin}$
has an atlas of tori $T_{\tN^\circ,w}$ parameterized by vertices $w$ of
$\foT_{\s}$, and we have canonical isomorphisms
$\psi_{v,w}:T_{\tN^{\circ},w}\rightarrow T_{\tN^{\circ},\shC^+_w}$
for each vertex $w$ which induce the isomorphism $\cA_{\prin} \to \cA_{\scat,\s}$. 

In what follows, if $w$ is a vertex of $\foT_{\s}$, we write $\tilde\s_w$
for the seed obtained by mutating $\tilde\s$ (see \eqref{tildeseed})
via the sequence of mutations
dictated by the path from the root $v$ of $\foT_{\s}$ to $w$.
As described in Remark \ref{Aprinremark}, the initial seed $\s$
determines the partial compactification 
$\cA_{\prin} \subset \ocA_{\prin}^{\s}$, given by the atlas of toric varieties
\[
T_{\tN^\circ,w} \subset \TV(\Sigma^{\s}_{w})
\]
where $\Sigma^{\s}_{w}$ is the cone generated by the subset of 
basis vectors of $\tilde\s_{w}$ corresponding to the second copy of $I$.

By Remark \ref{capconpc}, the seed $\s$ 
also determines a partial compactification 
$\cA_{\scat,\s} \subset \ocA_{\scat,\s}^{\s}$ (the superscript, 
thus the seed {\it close} to the overline in the
notation, is responsible for the partial compactification), 
given by an atlas of toric varieties. Explicitly, if $w$ is a vertex
of $\foT_{\s}$, the fan $\Sigma^{\s}_{w}$ yields the partial
compactification of $T_{\tN^{\circ},w}$ in $\ocA_{\prin}^{\s}$, and
this is identified with $T_{\tN^{\circ},\shC^+_{w}\in\Delta^+_{\s}}$
via $\psi_{v,w}$ under the isomorphism $\shA_{\prin}\cong \shA_{\scat,\s}$
of Theorem \ref{rpth}. Thus the fan giving the partial compactifaction
of $T_{\tN^{\circ},\shC^+_{w}\in\Delta^+_{\s}}$ is
\[
\Sigma^{\s}_{\scat,w}:=\psi_{v,w}^t(\Sigma^{\s}_{w}).
\]
In fact, this fan is easily calculated:
\end{construction}

\begin{lemma} \label{clem} The cones $\Sigma^{\s}_{\scat,w}$, 
and thus the toric varieties
in the atlas for the partial compactification
$\cA_{\scat,\s} \subset \ocA_{\scat,\s}^{\s}$, 
are the same for all $w$.
Each is equal to the cone spanned by the vectors 
$(0,e_1^*),\dots,(0,e_n^*) \in \tN^\circ$, where 
$\s = (e_1,\dots,e_n)$ and $e_1^*,\ldots,e_n^*$ denotes the dual basis.
\end{lemma}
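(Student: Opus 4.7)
The plan is to reduce the lemma to the observation that the principal coefficients $X_1,\dots,X_n$ are frozen characters on $\cA_{\prin}$, hence are preserved by all cluster mutations and by the atlas isomorphism $\cA_{\prin}\cong\cA_{\scat,\s}$ of Theorem \ref{rpth}. To set this up, I first note that by the construction of $\Gamma_{\prin}$ in Appendix \ref{rdsec}, the basis vectors of $\tilde\s_w$ corresponding to the second copy of $I$ are exactly the cocharacters in $\tN^\circ_w$ dual to the characters $X_1,\dots,X_n$ of the torus $T_{\tN^\circ,w}$. Therefore $\Sigma^\s_w$ is intrinsically the cone in $\tN^\circ_w$ dual to the span of $\{X_1,\dots,X_n\}$, viewed as characters in the $w$-chart. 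For the base case $w=v$ this cone is generated by $(0,e_1^*),\dots,(0,e_n^*)$ by definition of $\tilde\s$, and $\psi_{v,v}=\id$, so the statement is immediate.

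The key step is then to show that the linear map $\psi_{v,w}^*$ on character lattices fixes each $X_i$. I would argue this by induction on the length of the path from $v$ to $w$ in $\foT_\s$, reducing to the case of a single mutation $\mu_k$ with $k\in I_{\uf}$. In that case $\psi_{v,w}^*$ is the linear map induced by $\mu_{v,w}^T$ restricted to the cluster chamber $\shC^+_{w\in\s}$, and a direct computation, parallel to the wall-crossing computation in the proof of Theorem \ref{rpth} and analogous to the frozen-variable invariance noted in Remark \ref{capconpc} for $\cA$, shows that each principal coefficient $X_i$ is fixed: mutations in unfrozen directions leave the second-copy characters of $\tilde\s$ unchanged, since the corresponding indices are frozen and the exchange relation for $k\in I_\uf$ only involves characters indexed by $I_\uf$ times a monomial in the $X_i$ which is itself a monomial invariant.

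Granting this, the tropicalization $\psi_{v,w}^t:\tN^\circ_w\to\tN^\circ$ sends the cocharacter dual to $X_i$ in the $w$-chart to the cocharacter dual to $X_i$ in the $\shC^+_w$-chart of $\cA_{\scat,\s}$. But the latter chart is by construction a copy of the initial torus $T_{\tN^\circ}$ whose characters are expressed in the initial seed $\tilde\s=\tilde\s_v$, so the cocharacter dual to $X_i$ there is precisely $(0,e_i^*)$. Applying this to the generators of $\Sigma^\s_w$ yields
\[
\Sigma^\s_{\scat,w}=\psi_{v,w}^t(\Sigma^\s_w)=\RR_{\ge 0}(0,e_1^*)+\dots+\RR_{\ge 0}(0,e_n^*)
\]
for every vertex $w$, which is exactly the claim. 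The only real obstacle is the inductive step verifying $\psi_{v,w}^*(X_i)=X_i$; everything else is bookkeeping and duality. This verification is a direct, if somewhat tedious, computation using the explicit mutation formulas for $\cA_{\prin}$ from Appendix \ref{rdsec}, together with the identification of $\psi_{v,w}^*$ as the piecewise-linear restriction of the mutation tropicalization to a cluster chamber.
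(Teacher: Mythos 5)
Your Step 1 is correct: the principal coefficients $X_i = z^{(0,e_i)}$ are indeed preserved by $\psi_{v,w}^*$, exactly as indicated in Remark \ref{capconpc}. The gap is in Step 2, where you conclude that $\psi_{v,w}^t$ must therefore send each second-copy basis vector $\tilde e^w_{n+j}$ of $\tilde\s_w$ to $(0,e_j^*)$.

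The issue is that ``the cocharacter dual to $X_i$'' is not determined by $X_i$ alone; it depends on the full seed basis. Concretely, write $\psi^t_{v,w}(\tilde e^w_{n+j}) = (m_j, n_j) \in N^\circ\oplus M = \tN^\circ$. Fixing all the $X_i$ under $\psi^*$ only gives $\langle (0,e_i), \psi^t(\tilde e^w_{n+j})\rangle = \delta_{ij}$, which pins down the $M$-component $n_j = e_j^*$ but says nothing about $m_j \in N^\circ$. To force $m_j = 0$, you would additionally need to know that $\psi^*((f_i,0))$ pairs to zero with $\tilde e^w_{n+j}$ — and $\psi^*$ does \emph{not} fix the unfrozen characters $(f_i,0)$. (In a rank-two example with $w$ obtained by mutating twice at $k=1$, one finds $\psi^*((f_1,0)) = (f_1+f_2, e_1) \neq (f_1,0)$; the correct vanishing holds, but not for the reason you cite.) Your characterization of $\Sigma^\s_w$ as ``dual to the span of $\{X_1,\dots,X_n\}$'' is the source of the error: $\Sigma^\s_w$ is cut out by the two conditions $\langle X_i,\cdot\rangle \geq 0$ \emph{and} $\langle A_j,\cdot\rangle = 0$, and the second condition is essential. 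One could in principle close the gap by showing that $g$-vectors of $\cA_{\prin}$ cluster variables lie in $M^\circ\times\{0\}$, but in the paper that fact (Corollary \ref{gvecthm}) is \emph{deduced} from this lemma, so that route would be circular.

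The paper's actual argument avoids $\psi^t$ entirely. By Construction \ref{fvss} the fans $\Sigma^\s_{\scat,w}$ are related to each other by applying the geometric tropicalisations of the gluing maps of the atlas for $\cA_{\scat,\s}$, i.e., of the wall crossings. Each such wall crossing is a mutation $\mu_{(\tilde n,\tilde m)}$ with $\tilde m = p^*(n,0)$ for some $n \in N^+$ (a non-negative combination of the $e_i$). The geometric tropicalisation \eqref{mutgeometric} then fixes $(0,e_i^*)$ because $\langle \tilde m, (0,e_i^*)\rangle = \{(n,0),(0,e_i^*)\} = \langle n, e_i^*\rangle \geq 0$, so $[\langle \tilde m, (0,e_i^*)\rangle]_- = 0$. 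Since $\Sigma^\s_{\scat,v}$ is the claimed cone by definition, all the $\Sigma^\s_{\scat,w}$ coincide. The positivity of the scattering exponents is the essential input, and it is exactly what your argument is missing.
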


\begin{proof} $\Sigma^{\s}_{\scat,v}$ is the given cone, by definition of the
seed $\tilde\s$. By Construction \ref{fvss}
the other fans are given by applying the geometric
tropicalisation of the birational gluing of the tori 
in the atlas for $\cA_{\scat,\s}$. 
These birational maps are given by wall crossings in $\foD_{\s}$. 
But for each wall between cluster chambers the
wall crossing is a standard mutation ${\mu}_{(\tilde n,\tilde m)}$, 
notation as in \S\ref{tropsec}, for some $\tilde n\in \tN^{\circ},
\tilde m\in \tM^{\circ}$.
The attached scattering function is $1 + z^{p^*(n,0)}$ for some 
$n$ in the convex hull of $\{e_i\,|\,i\in I\}$, and $\tilde m=p^*(n,0)$. 
But then $\langle \tilde m,(0,e_i^*)\rangle=\{(n,0),(0,e_i^*)\} \geq 0$.
Thus the geometric tropicalisation ${\mu}^t_{(\tilde n,\tilde m)}$ 
fixes all the $e_i^*$ by \eqref{mutgeometric}, and so the fan is constant. 
\end{proof}

\begin{corollary} \label{mutcor} Fix a seed $\s$, and let $v$ be the root
of $\foT_{\s}$. The following hold:
\begin{enumerate}
\item The fibre of $\pi: \ocA^{\s}_{\prin} \to \bA^n_{X_1,\dots,X_n}$ over $0$ is $T_{N^\circ}$.
(See Proposition \ref{ldpprop} for the definition of $\pi$).
\item The mutation maps  
\[
\TV(\Sigma^{\s}_w) \dasharrow \TV(\Sigma^{\s}_{w'})
\]
for the atlas of toric varieties defining $\ocA^{\s}_{\prin}$ 
are isomorphisms in a neighborhood of the fibre over $0 \in \bA^n_{X_1,\dots,X_n}$. 
\item For the
partial compactification $\shA_{\scat,\s} \subset \oshA_{\scat,\s}^{\s}$ 
with atlas corresponding to
cluster chambers of $\foD_{\s}$, the corresponding mutation map between
two charts (which by Lemma \ref{clem}
has the same domain and range) is an isomorphism in a neighborhood of the 
fibre $0 \in \bA^n_{X_1,\dots,X_n}$ and restricts to the identity on this fibre.
\end{enumerate}
\end{corollary}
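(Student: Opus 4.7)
The plan is to reduce everything to the alternative model $\ocA_{\scat,\s}^{\s}$ provided by Construction~\ref{atlascon}, where the toric charts describing the partial compactification have the uniform description given by Lemma~\ref{clem}. By Theorem~\ref{rpth}, together with its extension to partial compactifications recorded in Remark~\ref{capconpc}, there is a canonical isomorphism $\ocA_{\prin}^{\s}\cong\ocA_{\scat,\s}^{\s}$ compatible with the projections to $\AA^{n}_{X_{1},\dots,X_{n}}$. It therefore suffices to establish (1) and (3) in the scattering model; statement (2) then follows immediately by transport of structure.

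For (1), Lemma~\ref{clem} identifies each chart $\TV(\Sigma^{\s}_{\scat,w})$ with the toric variety $\AA^{n}\times T_{N^{\circ}}$, the $\AA^{n}$ factor corresponding to the coordinates $X_{i}=z^{(0,e_{i}^{*})}$. Under this identification $\pi$ is projection to $\AA^{n}$, and the fibre over $0$ in any single chart is the closed torus orbit $\{X_{1}=\cdots=X_{n}=0\}=T_{N^{\circ}}$. Once (3) is established, the gluing mutations restrict to the identity on this central fibre, so they glue into a single copy of $T_{N^{\circ}}$ inside $\ocA_{\scat,\s}^{\s}$, and pulling back through the isomorphism with $\ocA_{\prin}^{\s}$ gives (1).

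For (3), I would first treat a single wall crossing. Two adjacent cluster chambers $\shC^{+}_{w},\shC^{+}_{w'}\in\Delta^{+}_{\s}$ are separated by a wall $(\fod,1+z^{\tilde m})$ of $\foD_{\s}$ with $\tilde m=p_{1}^{*}(n,0)$ and $n\in N^{+}$ a (nonzero) convex combination of the $e_{i}$, $i\in I_{\uf}$. The wall-crossing automorphism is exactly the pullback of a standard mutation $\mu_{(\tilde n,\tilde m)}$, acting on monomials as $z^{\tilde m'}\mapsto z^{\tilde m'}(1+z^{\tilde m})^{\langle \tilde m',\tilde n\rangle}$. The computation already carried out in the proof of Lemma~\ref{clem} shows that $\langle \tilde m,(0,e_{i}^{*})\rangle=\{(n,0),(0,e_{i}^{*})\}\ge 0$ for every $i$, and because $n$ is a nonzero nonnegative combination of the $e_{i}$ the inequality is strict for at least one $i$. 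Hence $z^{\tilde m}$ is a regular function on the common chart $\TV(\Sigma^{\s}_{\scat,w})$ that vanishes identically along the central fibre $\pi^{-1}(0)$, so $1+z^{\tilde m}$ is a unit in a Zariski neighbourhood of that fibre and restricts to $1$ on the fibre itself. This forces the wall-crossing automorphism to be a regular isomorphism in such a neighbourhood and the identity on the central fibre. Composing along a finite chain of adjacent cluster chambers connecting any two chambers (such a chain exists because the cluster chambers give a chamber structure inside $\Supp(\foD_{\s})$) then yields (3) in general.

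The only real point of subtlety is the \emph{strict} positivity $\langle \tilde m,(0,e_{i}^{*})\rangle>0$ for some $i$: without this one would only know that $z^{\tilde m}$ is regular on a chart, not that it vanishes on the central fibre, and the wall-crossing might fail to restrict to the identity. But this strictness is automatic from the structure of $N^{+}$ and the explicit form $\tilde m=p_{1}^{*}(n,0)$, so beyond this observation the argument is essentially bookkeeping built on Lemma~\ref{clem} and Theorem~\ref{rpth}.
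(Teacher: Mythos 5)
Your argument is correct and follows essentially the same route as the paper's: you pass to the scattering model $\ocA_{\scat,\s}^{\s}$ via Theorem~\ref{rpth} and Remark~\ref{capconpc}, observe that $(3)$ implies $(2)$ implies $(1)$, and prove $(3)$ by noting that the wall-function monomials $z^{\tilde m}$ with $\tilde m=p^*(n,0)$, $n\in N^+$, vanish along $\{X_1=\cdots=X_n=0\}$. The paper phrases this last step as ``$\foD_{\s}$ is trivial modulo the $X_i$ because this holds for the initial walls,'' whereas you make the strict positivity of $\langle\tilde m,(0,e_i^*)\rangle$ for at least one $i$ explicit; both express the same fact. One small typo: the coordinate should be $X_i=z^{(0,e_i)}$ (an exponent in $\tM^\circ=M^\circ\oplus N$), not $z^{(0,e_i^*)}$ — the $(0,e_i^*)$ are the generators of the fan $\Sigma^{\s}_{\scat,w}\subset\tN^\circ_{\RR}$, dual to the $X_i$.
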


\begin{proof} It is clear that (3) implies (2) implies (1). 

For (3), the scattering diagram 
$\foD_{\s}$ is trivial  modulo the $X_i$ (which pulls back to $z^{(0,e_i)}$),
because this holds for the initial walls, with attached functions
$1+z^{(v_i,e_i)}$. Now for any adjacent vertices $w,w'\in \foT_{\s}$,
the birational gluing map $\TV(\Sigma^{\s}_{\scat,w})\dasharrow \TV(\Sigma^{\s}_{\scat,w'})$
is given on the level of monomials by $z^{\tilde m}\mapsto z^{\tilde m} 
f^{\langle\tilde n,\tilde m\rangle}$ for a regular function $f$ on $\TV(\Sigma^{\s}_{\scat,w'})$
and some $\tilde n\in \tN^{\circ}$ and any $\tilde m\in \tM^{\circ}$, and by the above
$f$ is identically $1$ when restricted to the torus where the $X_i$ are zero. 
On the other hand, this birational map gives an isomorphism between the open subsets
of $\TV(\Sigma^{\s}_{\scat,w})$ and $\TV(\Sigma^{\s}_{\scat,w'})$ where $f$ is non-zero.
In particular, the gluing maps are isomorphisms in the neighbourhood of the fibre where
all $X_i$ vanish and are the identity on that fibre.
\end{proof}

The proof of the corollary shows the utility of constructing $\cA_{\prin} \subset 
\ocA_{\prin}^{\s}$ as the 
positive space $\cA_{\scat,\s} \subset \ocA_{\scat,\s}^{\s}$ associated to the cluster chambers 
in the scattering diagram $\foD_{\s}$. Next we show sign coherence of $c$-vectors
follows easily from the corollary.

In what follows, given a seed $\tilde\s_w=(\tilde e_1,\ldots,\tilde e_{2n})$ 
obtained via mutation from
$\tilde\s$, we write $\tilde\epsilon^w$ for the $n\times 2n$ exchange matrix 
for this seed, with
\begin{equation}
\label{cvectordef}
\tilde\epsilon^w_{ij}=\begin{cases}
\{\tilde e_i,\tilde e_j\}d_j & 1\le j\le n\\
\{\tilde e_i,\tilde e_j\}d_{j-n} & n+1\le j\le 2n
\end{cases}
\end{equation}
The $c$-vectors of this seed are the rows of the right-hand $n\times n$
submatrix.

\begin{corollary}[Sign coherence of $c$-vectors] \label{sccveccor} 
For any vertex $w$ of $\foT_{\s}$ and 
fixed $k$ satisfying $1 \leq k \leq n$, either
the entries $\tilde\epsilon^{w}_{k,j}$, $n+1 \leq j \leq 2n$ are all 
non-positive, or these entries are all non-negative.
\end{corollary}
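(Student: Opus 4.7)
The strategy is to combine Corollary \ref{mutcor} with the exchange relation for $\cA_{\prin}$ in the seed $\tilde\s_w$. Under mutation at position $k$, the new cluster variable $A_k^{w'}$ of $\tilde\s_{w'}=\mu_k(\tilde\s_w)$ is related to $A_k^w$ by an exchange relation of the form
$$A_k^w \cdot A_k^{w'} = M_+ + M_-,$$
where $M_\pm$ are monomials in the remaining cluster variables $A_j^w$, $j\in\{1,\dots,n\}\setminus\{k\}$, and the principal coefficients $X_1,\dots,X_n$. The key combinatorial point is that the exponents of $X_j$ appearing in $M_+$ and $M_-$ are precisely $[\tilde\epsilon^w_{k,n+j}]_+$ and $[-\tilde\epsilon^w_{k,n+j}]_+$ respectively; that is, the positive and negative parts of the $c$-vector entries.

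Next, I would restrict both sides of this exchange relation to the central fibre $\pi^{-1}(0)\subset\ocA_{\prin}^{\s}$, setting all $X_j=0$. By Corollary \ref{mutcor}(1) this fibre is the algebraic torus $T_{N^{\circ}}$, and by parts (2) and (3) the functions $A_k^w$ and $A_k^{w'}$ both extend to a neighbourhood of this fibre and restrict to characters of $T_{N^{\circ}}$. In particular, $A_k^w\cdot A_k^{w'}|_{X=0}$ is a nowhere-vanishing character of the torus, hence not identically zero.

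Now suppose, for contradiction, that $c_k^w$ contained both a strictly positive entry $\tilde\epsilon^w_{k,n+j}>0$ and a strictly negative entry $\tilde\epsilon^w_{k,n+j'}<0$. Then $M_+$ would have a factor $X_j$ to a positive power and $M_-$ would have a factor $X_{j'}$ to a positive power, so both $M_+|_{X=0}$ and $M_-|_{X=0}$ would vanish. This contradicts the previous paragraph, so no such mixed-sign pair can exist; i.e., the entries of $c_k^w$ are either all non-negative or all non-positive. The main potential obstacle is purely bookkeeping: confirming that the $X_j$-exponents in the $\cA_{\prin}$ exchange relation are exactly the positive/negative parts of $\tilde\epsilon^w_{k,n+j}$, matching the $c$-vector convention of \eqref{cvectordef}. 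Once that identification is made, sign coherence is an immediate consequence of Corollary \ref{mutcor}.
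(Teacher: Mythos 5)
Your proof is correct and is essentially the paper's own argument: the paper also writes the $\cA_{\prin}$ exchange relation explicitly (with the $X_j$-exponents given by the positive and negative parts of $\tilde\epsilon^w_{k,n+j}$) and invokes Corollary~\ref{mutcor} to conclude that the vanishing locus of $M_++M_-$ is disjoint from the central fibre, forcing one of $p_k^{\pm}$ to be the empty product. Your phrasing in terms of the product $A_k^w\cdot A_k^{w'}$ restricting to a nonvanishing character on $\pi^{-1}(0)$ and the paper's phrasing in terms of $\mu_k$ being an isomorphism near $\pi^{-1}(0)$ are the same observation viewed from slightly different angles.
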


\begin{proof} The result follows directly from Corollary \ref{mutcor} by writing down the
mutation in cluster coordinates. 
Following the notation given in Appendix \ref{rdsec},
we have the fixed seed $\s =(e_1,\dots,e_n)$ which determines 
$\cA_{\prin} \subset \ocA_{\prin}^{\s}$
and the family $\pi: \ocA_{\prin}^{\s} \to \bA^n_{X_1,\dots,X_n}$. 
The corresponding initial seed
for $\ocA_{\prin}^{\s}$ is 
\[
\tilde\s=((e_1,0),\dots,(e_n,0),(0,f_1),\dots,(0,f_n)),
\]
and the coordinate $X_i$ on $\AA^n$ pulls back to $z^{(0,e_i)}$ on 
$\ocA_{\prin}^{\s}$. 
These are the frozen cluster variables for $\ocA_{\prin}^{\s}$. 
Note $X_i = z^{g_{n+i}}$ where $g_i$ is the dual basis to the basis
$(d_1 e_1,0),\ldots,(d_ne_n,0),(0,e_1^*),\dots,(0,e_n^*)$ of 
$\widetilde N^{\circ}$.

A vertex $w'$ corresponds to a seed
$\s_{w'}=(e_1',\dots,e'_n)$ for $N$ with corresponding seed 
$\tilde\s_{w'} =((e_1',0),\ldots,(e_n',0),h_1',\dots,h_n')$
for $\tN$, with $\tilde\s_{w'}$ obtained from $\tilde\s$
by a sequence of mutations.  The $h_i$ are no longer necessarily given 
by the $f_i'$. Write $\tilde f_i'$, $1\le i\le 2n$ for the corresponding basis
of $\tM^{\circ}$.
The cluster variables on the corresponding torus $T_{\tN^\circ,w'}$ 
are $A_i' := z^{\tilde f_i'}$.
Say $w''$ is a vertex of $\foT_{\s}$ 
adjacent to $w'$ along an edge labelled by $k$.
Then
\[
\tilde\s_{w''} = ((e_1'',0),\dots,(e_n'',0),h_1'',\dots,h_n''),
\]
and the cluster coordinates are 
$A_i'' = z^{\tilde f_i''}$. Since the last $n$ cluster variables are
frozen, $A_{n+i}' = A_{n+i}'' = X_i$, $ 1 \leq i \leq n$.

The fan $\Sigma^{\s}_{w'}$ determining a toric variety  
in the atlas for $\ocA_{\prin}^{\s}$ consists of a single
cone spanned by $h'_1,\dots,h'_n$, and
\[
\TV(\Sigma^{\s}_{w'}) = (\bG_m^n)_{A_1',\dots,A_n'} \times \bA^n_{X_1,\dots,X_n}.
\]
Similarly
\[
\TV(\Sigma^{\s}_{w''}) = (\bG_m^n)_{A_1'',\dots,A_n''} \times 
\bA^n_{X_1,\dots,X_n}.
\]
The mutation $\mu_k: \TV(\Sigma^{\s}_{w'}) \dasharrow \TV(\Sigma^{\s}_{w''})$ 
is given by the exchange relation \cite{FZ07} (2.15) (see \cite{P1}, (2.8)
in our notation) which is, with $\tilde\epsilon=\tilde\epsilon^{w'}$,
\begin{align*}
\mu_{k}^*(A_i'') = {} & A_i' \text{ for } i \neq k \\
\mu_{k}^*(A_k'') = {} & (A_k')^{-1}( \prod_{i =1}^{2n} 
(A_i')^{[\tilde\epsilon_{ki}]_+} + \prod_{i=1}^{2n} (A_i')^{-[\tilde
\epsilon_{ki}]_-}) \\
= {} & (A_k')^{-1}( p_k^+ \prod_{i=1}^{n} (A_i')^{[\tilde\epsilon_{ki}]_+} +
p_k^{-}  \prod_{i=1}^{n} (A_i')^{-[\tilde\epsilon_{ki}]_-}) \\
\mu_{k}^*(X_i)= {} & X_i
\end{align*}
where 
\[
p_k^+  := \prod_{\substack{1 \leq i \leq n\\ \tilde\epsilon_{k,n+i} \geq 0}} X_i^{\tilde\epsilon_{k,n+i}}, \quad
p_k^- := \prod_{\substack{ 1 \leq i \leq n \\ -\tilde\epsilon_{k,n+i} \geq 0}} X_i^{-\tilde\epsilon_{k,n+i}}.
\]
Now
$\mu_k$ fails to be an isomorphism exactly along the vanishing locus of
\[
p_k^+ \prod_{i=1}^{n} (A_i')^{[\tilde\epsilon_{ki}]_+} +
p_k^{-}  \prod_{i=1}^{n} (A_i')^{-[\tilde\epsilon_{ki}]_-}.
\]
This locus is disjoint from the central fibre $0 \in \bA^n_{X_1,\dots,X_n}$ by Corollary \ref{mutcor}. 
On the other hand
it is disjoint from the central fibre if and only if exactly one of 
$p_k^+,p_{k}^-$ is the empty product, i.e., the constant monomial $1$. 
Sign coherence is the 
statement that at least one of $p_k^+$, $p_k^-$ is the empty product. 
\end{proof} 

Recall from Definition \ref{clustermonodef} the notion of cluster monomial, and also
recall from Proposition \ref{ldpprop}, (2) the 
$T_{N^{\circ}}$-action on $\shA_{\prin}$.

\begin{definition} 
\label{gvecdef}
By Proposition \ref{cvextrem}, the choice of seed $\s$ provides 
a canonical extension of each cluster monomial on $\shA$ to a cluster
monomial on $\shA_{\prin}$.
Each cluster monomial on $\cA_{\prin}$
is a $T_{N^\circ}$-eigenfunction under the above $T_{N^{\circ}}$
action. The
\emph{$g$-vector} with respect to a seed $\s$
(see \cite{FZ07}, (6.4)) associated to a cluster
monomial of $\cA$ is the $T_{N^\circ}$-weight of its lift determined by $\s$.
\end{definition}
  
We now give an alternative description of $g$-vectors, which will lead
to a more intrinsic definition of $g$-vector (Definition \ref{intrinsicgA}).
This in turn generalises to all the different flavors of cluster
varieties (Definition \ref{fgccthg0}). 

\begin{proposition} \label{gprop} Fix a seed $\s$, giving the partial compactification
$\cA_{\prin} \subset \ocA_{\prin}^{\s}$ and $T_{N^\circ}$-equivariant
$\pi: \ocA_{\prin}^{\s} \to \bA^n_{X_1,\dots,X_n}$. The central fibre 
$\pi^{-1}(0)$ is a $T_{N^\circ}$-torsor. Let $\oA$
be a cluster monomial on $\cA = \pi^{-1}(1,1,\dots,1)$ and 
$A$ the corresponding lifted cluster monomial on $\ocA_{\prin}^{\s}$.
This restricts to a regular non-vanishing $T_{N^\circ}$-eigenfunction 
along $\pi^{-1}(0)$, and
so canonically determines an element of $M^\circ$ (its weight). 
This is the $g$-vector associated to $\oA$.
\end{proposition}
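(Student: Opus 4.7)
I begin with the torsor claim, which is essentially immediate from what has already been proved. By Corollary \ref{mutcor}(1), $\pi^{-1}(0) = T_{N^\circ}$ as a scheme. The $T_{N^\circ}$-action on $\cA_{\prin}$ provided by Proposition \ref{ldpprop}(2) is along fibres of $\pi$ and fixes the principal-coefficient coordinates $X_i$ pointwise, so it extends to $\ocA_{\prin}^{\s}$ and preserves $\pi^{-1}(0)$; on this $T_{N^\circ}$, the action must be the regular representation.

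Next I reduce the remaining content to a single computation. By Proposition \ref{cvextrem} the lift $A$ is a cluster monomial on $\cA_{\prin}$ and is a $T_{N^\circ}$-eigenfunction, whose weight $g \in M^\circ$ is, by Definition \ref{gvecdef}, the $g$-vector of $\oA$. So once I verify that $A$ extends regularly to $\ocA_{\prin}^{\s}$ and that its restriction to $\pi^{-1}(0)$ is non-vanishing, this restriction is automatically a $T_{N^\circ}$-character of weight $g$, finishing the proposition.

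The verification is carried out on the initial toric chart $\TV(\Sigma^{\s}_{v})$, in which $\pi^{-1}(0) = \{X_1 = \cdots = X_n = 0\} = T_{N^\circ}$. Since $\cA_{\prin}$ satisfies the Injectivity Assumption (Construction \ref{sdprin}), Theorem \ref{thetaequalsmonomial} expresses $A$ on this chart as a theta function $\vartheta_{Q, (g,0)}$ for some $Q \in \Int(\shC^+_{\s})$, with initial exponent $(g, 0) \in M^\circ \oplus N = \tM^\circ$; the second coordinate vanishes because Proposition \ref{cvextrem} characterises the canonical lift as the unique $T_{N^\circ}$-eigenfunction extension of $\oA$. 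The decisive observation, already used in the proof of Corollary \ref{mutcor}(3), is that every wall function of $\foD^{\cA_{\prin}}_{\s}$ is a polynomial in monomials $z^{p_1^*(n,0)}$ with $n \in N^+$, and in the initial cluster coordinates each such monomial involves a strictly positive power of some $X_i$. Hence every wall function restricts to $1$ on $\{X = 0\}$, no broken line contributing to $\vartheta_{Q,(g,0)}$ is allowed to bend on the central fibre, only the straight broken line survives, and $A|_{\pi^{-1}(0)} = z^{(g,0)}|_{\{X=0\}}$, which is the character of $T_{N^\circ}$ of weight $g$.

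The only real subtlety --- and the step I expect to be the main obstacle --- is identifying the initial exponent of the theta function representing $A$ precisely as $(g,0)$ with $g$ equal to the $T_{N^\circ}$-weight of $A$. This requires checking that the canonical lift of Proposition \ref{cvextrem} places $A$ in the cluster chamber of $\foD^{\cA_{\prin}}_{\s}$ corresponding to the $\cA_{\prin}$-seed containing $A$, and that this chamber meets $M^\circ \oplus \{0\} \subset \tM^\circ$ along the $T_{N^\circ}$-weight direction. Once this bookkeeping is in place, the collapse of the scattering diagram modulo the ideal $(X_1,\dots,X_n)$ does all the remaining work, and the positivity of Theorem \ref{scatdiagpositive} ensures that nothing in the formal expansion of $A$ can spoil the vanishing of the bending contributions on the central fibre.
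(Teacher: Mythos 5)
Your approach is genuinely different from the paper's. The paper's proof works on the chart $\TV(\Sigma^{\s}_w)$ of $\ocA_{\prin}^{\s}$ indexed by the vertex $w$ where $\oA$ is a cluster monomial: after invoking Corollary \ref{mutcor} to reduce to this chart, the lift $A=z^{\sum_{i\le n} a_i\tilde f_i}$ is visibly annihilated by the rays $\tilde e_{n+1},\dots,\tilde e_{2n}$ of $\Sigma^{\s}_w$, giving regularity and non-vanishing along $\pi^{-1}(0)$ with no scattering-diagram machinery at all. Your proof instead works on the initial chart and uses the theta-function presentation $A=\vartheta_{Q,q}$ together with the collapse of the scattering diagram modulo $(X_1,\dots,X_n)$, which is a valid idea and is close in spirit to the proof of Proposition \ref{thetabasislemma}, (4).

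However, there is a real gap precisely at the step you flag. You claim the exponent of the theta function is $(g,0)$ with vanishing $N$-component "because Proposition \ref{cvextrem} characterises the canonical lift as the unique $T_{N^\circ}$-eigenfunction extension of $\oA$." This is not a valid characterisation: for every $n\in N$, $\vartheta_{Q,(m_1,n_1+n)}=z^{(0,n)}\vartheta_{Q,(m_1,n_1)}$ (Lemma \ref{eqslem}), and since $z^{(0,n)}$ restricts to $1$ on $\cA=\pi^{-1}(1,\dots,1)$, all of these are $T_{N^\circ}$-eigenfunctions extending $\oA$. Uniqueness as an eigenfunction extension therefore fails to single out $n_1=0$. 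Worse, the vanishing of the $N$-component is exactly what your argument must prove: if $n_1$ has a negative coordinate then $z^q$, and hence $\vartheta_{Q,q}$, has a pole along some $\{X_i=0\}$; if $n_1\ge 0$ but $n_1\ne 0$ then $z^q|_{\{X=0\}}=0$ and there is nothing to be non-vanishing. So your proof of regularity and non-vanishing presupposes the very fact you are trying to establish. In the paper, this identification $(m',n')=(g,0)$ is the content of Corollary \ref{gvecthm}, whose proof explicitly uses Proposition \ref{gprop}; following your route risks circularity unless you give an independent argument. Such an argument does exist (use Lemma \ref{clem}: $\psi_{v,w}^t$ carries the rays $\tilde e_{n+1},\dots,\tilde e_{2n}$ to positive multiples of the $(0,e_i^*)$, and $\sum_{i\le n}a_i\tilde f_i$ pairs to zero with all $\tilde e_{n+j}$, hence its image annihilates all $(0,e_i^*)$ and lies in $M^\circ\oplus 0$), but your sketch of "the chamber meets $M^\circ\oplus\{0\}$ along the $T_{N^\circ}$-weight direction" does not supply it, and as written the proof does not close.
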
 

\begin{proof} Let $w\in\foT_{\s}$ determine the seed in which $\oA$ is
defined as a monomial.
By Corollary \ref{mutcor}, all mutations are isomorphisms near the central 
fibre of $\pi$, so it's enough to check that $\oA$
is regular on the toric variety 
$\TV(\Sigma^{\s}_{w})$, and restricts to a character on its central fibre. 
But this is true by construction: if the seed $\tilde s_w$ is 
$(\tilde e_1,\ldots,\tilde e_{2n})$, then the cluster
variables for the seed $\tilde\s_w$ on the torus 
$T_{\tN^\circ,w}$ are $z^{\tilde f_k}$ 
and $\Sigma^{\s}_{w}$ is the fan with rays spanned by the $\tilde e_{n+1},
\ldots,\tilde e_{2n}$. Thus the lift $A$ of $\oA$ is regular on
$TV(\Sigma^{\s}_w)$, and hence is regular in a neighbourhood of 
$\pi^{-1}(0)\subset \shA_{\prin,\s}$. Furthermore, it is non-zero on
$\pi^{-1}(0)$ since the canonical lift only involves monomials $z^{\tilde
f_1},\ldots,z^{\tilde f_n}$, which are non-vanishing on the strata
of $TV(\Sigma^{\s}_w)$.
The final statement follows since the restriction of the 
variable to the central fibre will have the same $T_{N^\circ}$-weight, 
as the map $\pi$ is $T_{N^\circ}$-equivariant, and $T_{N^\circ}$ fixes 
$0 \in \bA^n$. 
\end{proof} 

\begin{definition}
\label{intrinsicgA}
Writing $\shA=\bigcup_{\s} T_{N^{\circ},\s}$, let
$\oA$ be a cluster monomial of the form $z^m$
on a chart $T_{N^{\circ},\s'}$,
$\s'=(e_1',\ldots,e_n')$. Note that $(z^{e_i'})^T(m)\le 0$
for all $i$, so after identifying $\shA^{\vee}(\RR^T)$ with 
$M^{\circ}_{\RR,\s'}$,
$m$ yields a point in the Fock-Goncharov cluster chamber
$\shC_{\s'}^+\subseteq\shA^{\vee}(\RR^T)$, as defined in
Lemma \ref{fgcclem}. We define $\g(\oA)$ to be this point of
$\shC^+_{\s'}\subseteq \shA^{\vee}(\RR^T)$. 
\end{definition}

\begin{corollary} 
\label{gvecthm}
Let $\oA$ be a cluster monomial on $\cA$, and fix a seed $\s$ giving an
identification $\shA^{\vee}(\RR^T)=M^{\circ}_{\RR,\s}$. Then under this
identification, $\g(\oA)$ is the $g$-vector of the cluster monomial $\oA$ 
with respect to $\s$.\end{corollary}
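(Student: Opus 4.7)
The plan is to exploit the isomorphism $\cA_{\prin}\cong \cA_{\prin,\scat,\s}$ furnished by Theorem \ref{rpth} (applied to $\cA_{\prin}$, where the Injectivity Assumption holds), together with Corollary \ref{mutcor}, in order to reduce the computation of the $T_{N^\circ}$-weight in Proposition \ref{gprop} to a tropical calculation. Let $v$ be the root of $\foT_\s$ and $v'$ a vertex such that $\oA=z^m$ in the seed $\s_{v'}=\s'$. The canonical lift of $\oA$ to $\cA_{\prin}$ determined by the initial seed is a cluster monomial $A=z^{\tilde m}$ on $T_{\tilde N^\circ,v'}\subset \cA_{\prin}$, with $\tilde m$ in the cluster chamber $\shC^+_{\tilde\s_{v'}}$.

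The first ingredient is the compatibility $\rho(\tilde m)=m$, where $\rho:\tilde M^\circ=M^\circ\oplus N\to M^\circ$ is projection on the first factor. One proves by induction on the distance from $v$ in $\foT_\s$ that at every vertex $w$ the first $n$ basis vectors of $\tilde\s_w$ have the form $(e_i^w,0)\in\tilde N=N\oplus M^\circ$, since the mutation formulas at indices $k\le n$ preserve this property (the involved basis vectors $\tilde e_i$, $\tilde e_k$ and the exchange entries $\tilde\epsilon_{ik}=\epsilon_{ik}$ are all intrinsic to the $N$-summand). A direct dualisation then gives $\rho(\tilde f_j^w)=f_j^w$ for $j\le n$, and hence $\rho(\tilde m)=m$.

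The second ingredient is the transport to $\cA_{\prin,\scat,\s}$. By Theorem \ref{rpth} (extended to $\cA_{\prin}$), on the chart indexed by $v'$ the pullback identification is the linear map $\mu^T_{v,v'}|_{\shC^+_{v'\in\tilde\s}}$, so $A$ corresponds under $\cA_{\prin}\cong \cA_{\prin,\scat,\s}$ to the monomial $z^{\tilde m'}$ on $T_{\tilde N^\circ,v'}\subset \cA_{\prin,\scat,\s}$, where $\tilde m'=\mu^T_{v',v}|_{\shC^+_{\tilde\s'}}(\tilde m)$. Lemma \ref{clem} identifies the compactification fan on this chart as the cone spanned by the $(0,e_i^*)$, and Corollary \ref{mutcor} shows the central fibre of $\pi$ on $\ocA^{\s}_{\prin,\scat,\s}$ is canonically $T_{N^\circ}$ on every chart with gluings restricting to the identity. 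The restriction of $z^{\tilde m'}$ to this central fibre is the character of $T_{N^\circ}$ with weight $\rho(\tilde m')\in M^\circ$, and by Proposition \ref{gprop} this weight is the Fomin--Zelevinsky $g$-vector.

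To conclude, one observes that $\rho$ is the tropicalisation of the canonical morphism of cluster varieties $\cA_{\prin}^\vee\to \cA^\vee$ (see Proposition \ref{ldpprop} and Construction \ref{sdprin}), hence commutes with the tropicalised mutations $\mu^T_{v',v}$. Combined with $\rho(\tilde m)=m$, this yields
\[
\rho(\tilde m')=\rho\bigl(\mu^T_{v',v}(\tilde m)\bigr)=\mu^T_{v',v}\bigl(\rho(\tilde m)\bigr)=\mu^T_{v',v}(m).
\]
By Proposition \ref{rpprop}, $\mu^T_{v',v}|_{\shC^+_{\s_{v'}}}$ is exactly the piecewise linear change-of-seed identification $M^\circ_{\RR,\s_{v'}}\to M^\circ_{\RR,\s}$ of $\cA^\vee(\RR^T)$, so under the identification coming from $\s$ the point of $\cA^\vee(\RR^T)$ represented by $m\in \shC^+_{\s'}$ in Definition \ref{intrinsicgA} is precisely $\mu^T_{v',v}(m)$. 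Thus $\g(\oA)=\rho(\tilde m')$ agrees with the $g$-vector of Proposition \ref{gprop}. The principal technical point, and the only step needing care, is establishing the two compatibilities $\rho(\tilde m)=m$ and $\rho\circ\mu^T=\mu^T\circ\rho$; both are consequences of the structural form of principal coefficients, but require tracking the mutation formulas across the four lattices $N$, $\tilde N$, $M^\circ$, $\tilde M^\circ$.
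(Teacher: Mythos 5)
Your proposal follows essentially the same route as the paper's proof: lift $\oA$ to a cluster monomial $A$ on $\cA_{\prin}$, transport it to $\cA_{\scat,\s}$ via the isomorphism of Theorem \ref{rpth}, read off the $T_{N^\circ}$-weight along the central fibre using Lemma \ref{clem} and Proposition \ref{gprop}, and descend via the equivariance of $\rho^T$ with the tropicalized mutations. The only cosmetic difference is that you establish $\rho^T(\tilde m)=m$ by an induction on the mutation formulas, whereas the paper regards this as immediate (it is exactly the statement that the canonical lift $A$ restricts to $\oA$ on the fibre $\pi^{-1}(e)=T_{N^\circ}\subset T_{\tN^\circ}$, and restriction of characters is precisely the projection $\rho^T$); likewise the commutation $\rho^T\circ\mu^T=\mu^T\circ\rho^T$ needs no formula-chasing, being an instance of functoriality of tropicalization for the morphism $\rho:\cA_{\prin}^\vee\to\cA^\vee$ of Proposition \ref{ldpprop}.
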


\begin{proof} 
We first note that if $\oA$ is a monomial $z^m$ on the chart $T_{N^{\circ},\s'}$
with $\s'=\s_w$, $\s=\s_v$, then the image of $\g(\oA)$ under the identification
$\shA^{\vee}(\RR^T)=M^{\circ}_{\RR,v}$ is $\mu^T_{w,v}(m)$, where as usual
$\mu_{w,v}:T_{M^{\circ},w}
\dasharrow T_{M^{\circ},v}$ is the rational map induced by
the inclusions $T_{M^{\circ},w},T_{M^{\circ},v}\subset\shA^{\vee}$.

The choice of the seed $\s$ gives the lift of $\oA$ to a cluster monomial
$A$ on $\shA_{\prin}$. Using the identification of $\shA_{\prin}$ with
$\shA_{\scat,\s}$, $A$ is identified with a monomial of the form $z^{(m',n')}$
on the chart
$T_{\tN^{\circ},w}$ (or $T_{\tN^{\circ},\shC^+_{w\in\s}}$, depending on
how one chooses to parametrize charts of $\shA_{\scat,\s}$). 
Let $v$ be the root of $\foT_{\s}$.
By Lemma \ref{clem}, the
corresponding chart of $\ocA_{\scat,\s}^{\s}$ is the toric variety defined
by the fan $\Sigma^{\s}_{\scat,w}$. 
By Proposition \ref{gprop}, $A$ is a regular function on
$\TV(\Sigma^{\s}_{\scat,w})$ which is non-vanishing along $\pi^{-1}(0)$. 
The $T_{N^{\circ}}$ weight is the $g$-vector. Since $\Sigma^{\s}_{\scat,w}$
is the cone spanned by $(0,e_1^*),\dots,(0,e_n^*)$ in $\tN^{\circ}_{\RR}$,
where $\s=(e_1,\ldots,e_n)$, one sees that $(m',n')=(g,0)$.

Thus to show the corollary, it is enough to show that 
$m=\mu^T_{v,w}(g)\in M^{\circ}_w$. Note however a similar statement
is already true at the level of $\shA_{\prin}$. Indeed,
in the chart $T_{\tN^{\circ},w}$ of $\shA_{\prin}$, the
monomial $A$ takes the form $z^{(m,n'')}$ for some $n''\in N$, and 
$(m,n'')$ lies in the positive chamber of $\foD^{\shA_{\prin}}_{\s_w}$.
But $\shC^+_{w\in\s}$
is the image of this positive chamber under the map
$\mu^T_{w,v}$, where now $\mu_{w,v}:T_{\tM^{\circ},w}\dasharrow
T_{\tM^{\circ},v}$ is the map induced by the inclusions
$T_{\tM^{\circ},v},T_{\tM^{\circ},w}\subset \shA^{\vee}_{\prin}$. 
Now $(g,0)=(\psi_{v,w}^*)^{-1}(m,n'')$ by Theorem
\ref{rpth}, and $(\psi_{v,w}^*)^{-1}=(\mu^T_{v,w}|_{\shC^+_{w\in\s}})^{-1}$, so
we see that $(m,n'')=\mu^T_{v,w}(g,0)$.

Now because there is a well-defined map $\rho:\shA^{\vee}_{\prin}
\rightarrow\shA^{\vee}$ by Proposition \ref{ldpprop}, (4), with
$\rho^T$ given by projection onto $M^{\circ}$, this projection
$\rho^T$ is compatible with the tropicalizations
$\mu^T_{v,w}:\tM^{\circ}\rightarrow\tM^{\circ}$ and $\mu^T_{v,w}:M^{\circ}
\rightarrow M^{\circ}$, i.e., $\mu^T_{v,w}\circ\rho^T=\rho^T\circ\mu^T_{v,w}$.
Thus $\mu_{v,w}^T(m)=g$, as desired.
\end{proof}

This corollary shows us how to generalize the notion of $g$-vector
to any cluster variety:

\begin{definition}
\label{fgccthg0}
Let $V=\bigcup_{\s} T_{L,\s}$ be a cluster variety, suppose that
$f$ is a global monomial (see Definition \ref{globalmonomialdef}) 
on $V$, and let $\s$ be a seed such that
$f|_{T_{L,\s}\subset V}$ is the character $z^m$, $m\in\Hom(L,\ZZ)=L^*$.
Define the \emph{$g$-vector} of $f$ to
be the image of $m$ under the identifications of \S\ref{tropsec}:
\[
V^{\mch}(\bZ^T) = T_{L^*,\s}(\bZ^T) = L^*.
\]
We write the $g$-vector of $f$ as ${\bf g}(f)$.
\end{definition}

Note that the definition as given is not clearly independent of the choice
of seed $\s$, but for a cluster variety of $\cA$ type, the previous corollary 
shows this. This independence will be shown in general in Lemma
\ref{fgccthg2}.

By \cite{NZ}, the sign coherence for $c$-vectors (proved in
Corollary \ref{sccveccor} here),
implies a sign coherence for $g$-vectors. Here we give a much shorter proof
using the above description of $g$-vectors.

\begin{theorem}[Sign coherence of $g$-vectors]
\label{scgvec} 
Fix initial seed $\s=(e_1,
\ldots,e_n)$, with $f_i=d_i^{-1}e_i^*$ as usual.
Given any mutation equivalent seed $\s'$, 
the
$i^{\text{th}}$-coordinates of the $g$-vectors for the cluster variables of this seed, expressed
in the basis $(f_1,\dots,f_n)$, are either all
non-negative, or all non-positive. 
\end{theorem}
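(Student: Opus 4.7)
The plan is to exploit the characterization of $g$-vectors as points of Fock-Goncharov cluster chambers (Corollary \ref{gvecthm}) together with the scattering-diagram description of these chambers. The $g$-vectors $\g(A'_1), \dots, \g(A'_n)$ of the cluster variables of $\s'$ all lie in the cluster chamber $\shC^+_{v' \in \s} \subseteq \shA^{\vee}(\bR^T) = M^{\circ}_{\bR}$. Since the $i$-th coordinate of $m \in M^{\circ}_{\bR}$ in the basis $(f_1, \dots, f_n)$ equals the positive multiple $d_i \langle e_i, m \rangle$ of $\langle e_i, m \rangle$, the theorem reduces to the assertion that, for each fixed $i$, the linear form $\langle e_i, \cdot \rangle$ takes values of a single sign on the set $\{\g(A'_1), \dots, \g(A'_n)\}$.

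First I would lift to $\shA_{\prin}$, which satisfies the Injectivity Assumption, so that the scattering diagram $\foD_{\s}^{\shA_{\prin}}$ on $\widetilde M^{\circ}_{\bR}$ is defined. The cluster chamber $\widetilde\shC^+_{v' \in \widetilde\s}$ projects via $\rho^T$ onto $\shC^+_{v' \in \s}$, and since $\langle (e_i, 0), (m, n) \rangle = \langle e_i, m \rangle$, sign coherence on the downstairs chamber follows from sign coherence on the upstairs one. It therefore suffices to show that $\widetilde\shC^+_{v' \in \widetilde\s}$ lies entirely in one of the two closed half-spaces bounded by the hyperplane $(e_i, 0)^{\perp} \subset \widetilde M^{\circ}_{\bR}$.

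For $i \in I_{\uf}$ this is immediate from the construction of the initial scattering diagram: by \eqref{ageomsd}, $\foD_{\inc, \s}^{\shA_{\prin}}$ contains the slab $((e_i, 0)^{\perp}, 1 + z^{(v_i, e_i)})$ whose support is the entire hyperplane $(e_i, 0)^{\perp}$. Hence $(e_i, 0)^{\perp} \subseteq \Supp(\foD_{\s}^{\shA_{\prin}})$, and $\widetilde\shC^+_{v' \in \widetilde\s}$, being the closure of a connected component of $\widetilde M^{\circ}_{\bR} \setminus \Supp(\foD_{\s}^{\shA_{\prin}})$, must lie in one of the two closed half-spaces bounded by this hyperplane.

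The main obstacle is the frozen case $i \notin I_{\uf}$, where $(e_i, 0)^{\perp}$ is not contained in the scattering diagram support and the chamber argument breaks down. Here the frozen cluster variable $A'_i = A_i$ itself has $\g(A'_i) = f_i$ with $\langle e_i, f_i \rangle = d_i^{-1} > 0$, so sign coherence forces $\langle e_i, \g(A'_j) \rangle \geq 0$ for all $j$; for frozen $j \neq i$ this is trivial since $\g(A'_j) = f_j$. For mutable $j \in I_{\uf}$ I would combine the standard fact that mutation exchange relations produce only non-negative powers of frozen variables — so that $A'_j$ extends regularly across the boundary divisor $\{A_i = 0\}$ in the partial compactification $\ocA^{\s}$, giving $\val_{A_i}(A'_j) \geq 0$ — with the Fomin-Zelevinsky pointed-leading-term description of $g$-vectors (a consequence of $c$-vector sign coherence, Corollary \ref{sccveccor}) to conclude that $\val_{A_i}(A'_j) = d_i \langle e_i, \g(A'_j) \rangle$, yielding the required non-negativity.
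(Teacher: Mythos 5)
Your argument for $i\in I_{\uf}$ — lift to $\shA_{\prin}$, observe $(e_i,0)^{\perp}\subseteq\Supp(\foD^{\shA_{\prin}}_{\s})$, and conclude each cluster chamber lies in one of the two closed half-spaces bounded by $(e_i,0)^{\perp}$ — is exactly the paper's proof, just with the passage through $\shA_{\prin}$ written out rather than packed into the definition of the cluster complex via $\rho^T$ (Construction \ref{sdprin}, Lemma \ref{fgcclem}).

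Your extension to the frozen case $i\notin I_{\uf}$, however, contains a genuine error: the asserted equality $\val_{A_i}(A'_j)=d_i\langle e_i,\g(A'_j)\rangle$ is false. Writing $g=\g(A'_j)$, the lift of $A'_j$ to $\shA_{\prin}$ expands in the chart indexed by $\s$ (Theorem \ref{thetaequalsmonomial}, Proposition \ref{formalthfun}) as $\vartheta_{Q,(g,0)}=z^{(g,0)}\bigl(1+\sum_{m\ne 0}c_mz^m\bigr)$, with $m$ ranging over the monoid generated by the initial scattering exponents $(v_k,e_k)$, $k\in I_{\uf}$. The $A_i$-exponent of the term $z^{(g,0)+m}$ with $m=\sum_k a_k(v_k,e_k)$ is $d_i\langle e_i,g\rangle+d_i\sum_k a_k\{e_k,e_i\}$, and for $i$ frozen the numbers $\{e_k,e_i\}$ need not all be non-negative — requiring this is precisely the optimized-seed condition of Definition \ref{osdef}, which fails in general. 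Concretely, take $I_{\uf}=\{1\}$, $I=\{1,2\}$, $d_1=d_2=1$, $\{e_1,e_2\}=b<0$: then $A'_1=A_1^{-1}(X_1+A_2^{|b|})$ with $\g(A'_1)=-f_1+|b|f_2$, yet restricting to $\shA$ gives $A'_1=A_1^{-1}+A_1^{-1}A_2^{|b|}$, so $\val_{A_2}(A'_1)=0$ while $d_2\langle e_2,\g(A'_1)\rangle=|b|>0$.

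Fortunately the error is one of direction rather than substance. Since the monomial $z^{(g,0)}$ itself occurs in the expansion with coefficient $1$, one always has the one-sided inequality $\val_{A_i}(A'_j)\le d_i\langle e_i,\g(A'_j)\rangle$, and this together with $\val_{A_i}(A'_j)\ge 0$ already delivers $\langle e_i,\g(A'_j)\rangle\ge 0$, so the conclusion you want does follow once the equality is replaced by the inequality. It is also worth noting that the paper's own one-line proof — ``$e_i^{\perp}$ is a wall of the cluster complex'' — literally covers only $i\in I_{\uf}$, since for $i$ frozen $e_i^{\perp}$ is not a wall and the chambers contain $\pm f_i$ in their lineality; your recognition that the frozen directions call for a distinct argument is therefore well-founded.
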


\begin{proof} 
By Corollary \ref{gvecthm}, the $g$-vectors in question are the generators of a 
chamber
in the cluster complex of $\s$, defined as the images of
the cluster chambers of $\foD^{\cA_{\prin}}_{\s}$ under the projection
$\rho^T$, by Theorem \ref{fgccth}.
The hyperplanes $e_i^{\perp}$ are thus walls in the cluster complex.
In particular,
$e_i$ is either non-negative everywhere on a chamber, or 
non-positive everywhere on a chamber. The theorem follows.
\end{proof} 

For future reference, we record the relationship between $c$-vectors
and the cluster chambers
in the case of no frozen variables.
Fix a seed $\s$. By Lemma \ref{fgcclem}, each mutation equivalent
seed $\s'=\s_w$ has an associated
cluster chamber $\shC^+_{w\in\s} \subset M^{\circ}_{\bR,\s}$.
This is a full dimensional
strictly simplicial cone, generated by a basis of $M^{\circ}$ consisting of
$g$-vectors of the cluster variables $A_1',\ldots,A_n'$ of
$\s_w$. The facets of $\shC^+_{w\in\s}$ are thus in natural bijection with the
elements of $\s$ (or the indices in $I=I_{\uf}$).

\begin{lemma} \label{facelem} The facet of 
$\shC^+_{w\in\s}$ corresponding to $i \in I$ is
the intersection of $\shC^+_{w\in\s}$ with the orthogonal complement of
the $c$-vector for the corresponding element of $\s^{\vee}_w$ 
(the corresponding mutation of the
Langlands dual seed $\s^{\vee}$, see Appendix \ref{LDsec}). Furthermore,
each $c$-vector for $\s^{\vee}_w$ is non-negative on $\shC^+_{w\in\s}$.
\end{lemma} 

\begin{proof} 
This is the content of \cite{NZ}, Theorem 1.2, the condition 
(1.8) of \cite{NZ} holding by our Corollary \ref{sccveccor}. 
The $g$-vectors used in \cite{NZ} are precisely the $g$-vectors of the
cluster variables $A_i'$.
\end{proof}

\section{The formal Fock-Goncharov Conjecture}
\label{formalFGcsec}
In this section we associate in a canonical way 
to every universal Laurent polynomial $g$ on $\cA_{\prin}$ 
a formal sum $\sum_{q \in \cXt} \alpha(g)(q) \vartheta_q$, 
$\alpha(g)(q) \in \kk$, which,
roughly speaking, converges to $g$ at infinity in each partial compactification 
$\cA_{\prin} \subset \ocA_{\prin}^{\s}$. To give such an expression for a single
$\s$ is quite easy, see Propositions \ref{thetabasislemma} and \ref{hatinfun}.
The crucial point is that remarkably these coefficients are independent
of $\s$, see Theorem \ref{ffgth}, our alternative to the Fock-Goncharov
conjecture (which fails in general). This establishes the connection between
$\up(\cA_{\prin})$ and $\can(\cA_{\prin})$, and is key to one of our main technical
results, see the proof of Proposition \ref{markscor}. 

Choose a seed $\s=(e_1,\dots,e_n)$. We let 
$X_i := z^{e_i}$, $I_{\s} = (X_1,\dots,X_n) \subset \kk[X_1,\ldots,X_n]$,
set
\[
\AA^n_{(X_1,\ldots,X_n),k}=\Spec \kk[X_1,\ldots,X_n]/I_{\s}^{k+1},
\quad \ocA^{\s}_{\prin,k}=\ocA_{\prin}^{\s}\times_{\AA^n_{X_1,\ldots,X_n}}
\AA^n_{(X_1,\ldots,X_n),k},
\]
and write the map induced by $\pi:\ocA_{\prin}^{\s}\rightarrow\AA^n_{X_1,\ldots,
X_n}$ also as
\[
\pi: \ocA_{\prin,k}^{\s} \to \bA^n_{(X_1,\dots,X_n),k}.
\]
We use the notation $\up(Y):=H^0(Y,\shO_Y)$ for a variety $Y$, so that 
e.g., $\up(\shA)$ is the upper cluster algebra. We define
\[
\widehat{\up(\ocA_{\prin}^\s)}=\liminv \up(\ocA_{\prin,k}^{\s}).
\]
Note that for
any $g \in \up(\cA_{\prin})$, $z^n g \in \up(\ocA_{\prin}^{\s})$ for some monomial
$z^n$ in the $X_i$. This induces a canonical inclusion
\begin{equation} \label{upginhat}
\up(\cA_{\prin}) \subset \widehat{\up(\ocA_{\prin}^\s)} \otimes_{\kk[N^+_{\s}]} 
\kk[N]
\end{equation}
where $N^+_{\s}\subset N$ is the monoid generated by 
$e_1,\ldots,e_n$. Let $\pi_N:\tM^{\circ}\rightarrow N$ be the projection,
and set
\[
\tM^{\circ,+}_{\s}=\pi_N^{-1}(N^+_{\s}).
\]

Recall a choice of seed $\s = (e_1,\dots,e_n)$
determines a scattering diagram 
$\foD_{\s} = \foD^{\cA_{\prin}}_{\s} \subset \tM^\circ_{\bR,\s}$ 
with initial walls $(e_i^{\perp},1 + z^{(v_i,e_i)})$ for $i\in I_{\uf}$.
We let $P_{\s} \subset \tM^\circ_{\s}$ be the monoid
generated by $(v_1,e_1),\ldots,(v_n,e_n)$. We have the cluster complex
$\Delta^+_{\s}$ of cones in $\tM^{\circ}_{\RR,\s}$, with cones
$\shC^+_{v}\in\Delta^+_{\s}$ for each vertex $v$ of $\foT_{\s}$.

Similarly to the above discussion, 
$\pi:\ocA_{\scat,\s}^{\s}\rightarrow \AA^n_{X_1,\ldots,X_n}$
induces maps
\[
\oshA_{\scat,\s,k}^{\s}:=\oshA_{\scat,\s}^{\s}\times_{\AA^n} 
\AA^n_{(X_1,\ldots,X_n),k}
\rightarrow \AA^n_{(X_1,\ldots,X_n),k}.
\]
The isomorphism between $\oshA_{\prin}^{\s}$ and $\oshA_{\scat,\s}^{\s}$
discussed in Construction \ref{atlascon} restricts to give an 
isomorphism between $\oshA_{\prin,k}^{\s}$ and $\oshA_{\scat,\s,k}^{\s}$. 
Furthermore, as 
$\oshA_{\scat,\s}^{\s}$ is described by gluing charts isomorphic to 
$TV(\Sigma)$ with
$\Sigma$ the cone generated by $(0,e_1^*),\ldots,(0,e_n^*)$ for every chart
by Lemma \ref{clem}, in fact $\oshA_{\scat,\s,k}^{\s}$ is 
described by gluing charts
parameterized by $\sigma\in\Delta^+_{\s}$ isomorphic to 
\[
V_{\s,\sigma,k}:=TV(\Sigma)\times_{\AA^n}\AA^n_{(X_1,\ldots,X_n),k}.
\]
Note for $\sigma,\sigma'\in \Delta^+_{\s}$, the birational map
$\theta_{\sigma,\sigma'}:TV(\Sigma)\dasharrow TV(\Sigma)$ between
the charts of $\oshA_{\scat,\s}^{\s}$ indexed by $\sigma$ and $\sigma'$
restrict to isomorphisms $V_{\s,\sigma,k}\rightarrow V_{\s,\sigma',k}$: 
this is implied by Corollary \ref{mutcor}, (3).

We choose a generic basepoint $Q_\sigma \in \sigma$ for each $\sigma\in
\Delta^+_{\s}$. Then for any $q\in\tM^{\circ}_{\s}$, by Proposition
\ref{formalthfun}, we obtain as a sum over broken lines a well-defined series
\[
\vartheta_{Q_{\sigma},q}  \in z^{q} \widehat{\kk[P_{\s}]}
\]
satisfying by Theorem \ref{blinvth}
\[
\vartheta_{Q_{\sigma},q}=\theta_{\sigma,\sigma'}^*(\vartheta_{Q_{\sigma'},q}).
\]

The following definition will yield the structure constants for the
$\vartheta$:

\begin{definition-lemma} \label{structuredef} Let $p_1,p_2,q \in 
\tM^{\circ}_{\s}$. Let $z\in\tM^{\circ}_{\RR,\s}$ be chosen generally.
There are at most
finitely many pairs of broken lines $\gamma_1,\gamma_2$ with 
$I(\gamma_i) = p_i$, $b(\gamma_i) = z$ and $F(\gamma_1) + F(\gamma_2) = q$
(see Definition \ref{IFdef} for this notation). We can then define 
\[
\alpha_z(p_1,p_2,q)= \sum_{\substack{(\gamma_1,\gamma_2) \\
                 I(\gamma_i) = p_i, b(\gamma_i) =z \\
                 F(\gamma_1) + F(\gamma_2) = q}} 
c(\gamma_1)c(\gamma_2).
\]
The integers $\alpha_z(p_1,p_2,q)$ are non-negative.
\end{definition-lemma} 

\begin{proof} 
By definition of scattering diagram for $\foD_{\s}$, all walls
$(\fod,f_{\fod})\in \foD_{\s}$ have $f_{\fod}\in\widehat{\kk[P_{\s}]}$.
Note also that because $P_{\s}$ comes from a strictly convex cone, any
element of $P_{\s}$ can only be written as a finite sum of elements in
$P_{\s}$ in a finite number of ways. In particular, as
$F(\gamma_i) \in I(\gamma_i) + P_{\s}$, we can write 
$F(\gamma_i)=I(\gamma_i)+m_i$ for $m_i\in P_{\s}$. 
Thus we have
\[
I(\gamma_1) + I(\gamma_2) + m_1 + m_2 =q,
\]
and there are only a finite number of possible $m_1$, $m_2$.
So with $p_1,p_2,q$ fixed, there are only finitely 
many possible monomial decorations that can occur on either $\gamma_i$. 
 From this finiteness is clear, c.f.\ the proof of Proposition 
\ref{formalthfun}.
The non-negativity statement follows from Theorem \ref{scatdiagpositive}, 
which implies $c(\gamma) \in \bZ_{\geq 0}$ for any broken line $\gamma$. 
\end{proof}

\begin{definition} \label{ckdef} 
For a monoid $C \subset L$ a lattice, we write $C_k \subset C$ for the subset 
of elements which can be written as a sum of $k$ non-invertible elements of $C$.
\end{definition}

\begin{proposition} \label{thetabasislemma} 
Notation as above. The following hold:
\begin{enumerate}
\item For $q \in \tM^{\circ,+}_{\s}$, 
$\vartheta_{Q_{\sigma},q}$ is a regular function on
$V_{\s,\sigma,k}$, and the $\vartheta_{Q_{\sigma},q}$ as $\sigma$ varies
glue to give a canonically defined function
$\vartheta_{q,k} \in \up(\ocA_{\prin,k}^{\s})$. 
\item For each $q\in\cA^{\vee}_{\prin}(\ZZ^T)$ and $k'\ge k$, we have
$\vartheta_{q,k'}|_{\cA_{\prin,k}^{\s}} = \vartheta_{q,k}$,
and thus the $\vartheta_{q,k}$ for $k\ge 0$ canonically define
\[
\vartheta_q \in  \widehat{\up(\ocA_{\prin}^{\s})} \otimes_{\kk[N^+_{\s}]}\kk[N].
\]
The $\vartheta_q$ are
linearly independent, i.e.,
we have a canonical inclusion of $\kk$-vector spaces
\[
\can(\cA_{\prin}) := \bigoplus_{q \in \cXt} \kk \cdot \vartheta_q 
\subset \widehat{\up(\ocA_{\prin}^{\s})} \otimes_{\kk[N^+_{\s}]} \kk[N]. 
\]
\item 
\[
\vartheta_{p_1} \cdot \vartheta_{p_2} = \sum_{q \in \tM^{\circ}_{\s}} 
\alpha_{z(q)}(p_1,p_2,q) \vartheta_q \in 
\widehat{\up(\ocA_{\prin}^{\s})} \otimes_{\kk[N^+_{\s}]} \kk[N]
\]
for $z(q)$ chosen sufficiently close to $q$. In particular, $\alpha_z(p_1,p_2,q)$
is independent of the choice of $z$ sufficiently near $q$, and we define
\[
\alpha(p_1,p_2,q):=\alpha_z(p_1,p_2,q)
\]
for $z$ chosen sufficiently close to $q$.
\item 
\[
\hbox{$\{\vartheta_q\,|\,
q\in \tM_{\s}^{\circ,+}\setminus\tM_{\s,k+1}^{\circ,+}\}$ and
$\{\vartheta_q\,|\,
q\in \pi_N^{-1}(0)\}$ }
\]
restrict to bases of $\up(\ocA_{\prin,k}^{\s})$ as $\kk$-vector space and
$\kk[N^+_{\s}]/I_{\s}^{k+1}$-module respectively.
\end{enumerate}
\end{proposition}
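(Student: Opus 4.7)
The plan is to establish the four parts in sequence, with leading-term analysis under the partial order $q' \preceq q \iff q - q' \in P_{\s}$ as the recurring technical tool; strict convexity of $P_{\s}$ and the finiteness forced by truncation modulo $I_{\s}^{k+1}$ drive termination at each step.

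For (1), every monomial $cz^{q'}$ appearing in $\vartheta_{Q_{\sigma},q}$ satisfies $q' \in q + P_{\s}$, since broken lines for $q$ start with $z^q$ and each bend at a wall $(\fod,f_{\fod}) \in \foD_{\s}^{\cA_{\prin}}$ multiplies the attached monomial by a factor with exponents in $P_{\s}$ (Construction \ref{sdprin}). For $q \in \tM^{\circ,+}_{\s}$ we get $\pi_N(q') \in N^+_{\s} \cup \{0\}$, so $z^{q'}$ extends regularly to $V_{\s,\sigma,k}$; modulo $I_{\s}^{k+1}$ only finitely many such monomials are nonzero, so $\vartheta_{Q_{\sigma},q}|_{V_{\s,\sigma,k}}$ is a polynomial. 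Theorem \ref{blinvth} gives $\vartheta_{Q_{\sigma},q} = \theta_{\sigma,\sigma'}^*(\vartheta_{Q_{\sigma'},q})$ as formal series, and Corollary \ref{mutcor}(3) says $\theta_{\sigma,\sigma'}$ restricts to an isomorphism $V_{\s,\sigma,k} \to V_{\s,\sigma',k}$, so these local functions glue to $\vartheta_{q,k} \in \up(\ocA^{\s}_{\prin,k})$.

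For (2), compatibility under $k' \ge k$ is immediate; for $q \notin \tM^{\circ,+}_{\s}$, shift by multiplication by an $X^a$ to land in $\tM^{\circ,+}_{\s}$ and then divide in $\kk[N]$. Linear independence of $\sum_{q \in S} c_q \vartheta_q = 0$: choose $q_0 \in S$ minimal for $\preceq$; the monomial $z^{q_0}$ appears in $\vartheta_{Q_{\sigma},q'}$ only if $q' \preceq q_0$, which forces $q' = q_0$ and $c_{q_0} = 0$, and then iterate. For (3), fix $p_1,p_2,q$ and choose $z$ generic and sufficiently close to $q$; the formal product $\vartheta_{z,p_1} \vartheta_{z,p_2}$ has coefficient $\alpha_z(p_1,p_2,r)$ at $z^r$ by Definition-Lemma \ref{structuredef}. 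Applying the leading-term argument in reverse (peel off $\alpha_z(p_1,p_2,q') \vartheta_{z,q'}$ for successive minimal $q'$), this series expands uniquely as $\sum_{q'} \alpha_z(p_1,p_2,q') \vartheta_{z,q'}$; independence of $\alpha_z$ from $z$ near $q$, as well as the product formula in $\widehat{\up(\ocA^{\s}_{\prin})} \otimes_{\kk[N^+_{\s}]} \kk[N]$, then follow by uniqueness of this expansion.

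For (4), linear independence is (2); for spanning of the $\kk$-basis, restrict $f \in \up(\ocA^{\s}_{\prin,k})$ to a chart $V_{\s,\sigma,k}$ as a polynomial $\sum c_{q'} z^{q'}$ with $q' \in \tM^{\circ,+}_{\s} \setminus \tM^{\circ,+}_{\s,k+1}$, subtract $c_{q_0} \vartheta_{q_0,k}$ for minimal $q_0$, and iterate; Corollary \ref{mutcor}(3) guarantees that the global sections reduce in parallel on all charts. For the $\kk[N^+_{\s}]/I_{\s}^{k+1}$-module basis by $\{\vartheta_q : q \in \pi_N^{-1}(0)\}$, reduce modulo $I_{\s}$: on the central fibre $T_{N^{\circ}}$, $\vartheta_{(m,0),0} = z^m$, giving a $\kk$-basis of $\kk[M^{\circ}]$, and the module basis claim then follows by induction on the $I_{\s}$-adic filtration. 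The main obstacle is (3): the formal-series identity must globalize to a genuine equality in $\widehat{\up(\ocA^{\s}_{\prin})} \otimes_{\kk[N^+_{\s}]} \kk[N]$, with expansion coefficients agreeing across all cluster chambers simultaneously. Consistency of $\foD_{\s}^{\cA_{\prin}}$ and Theorem \ref{blinvth} furnish this at the level of formal series on each chart, but matching the local finiteness from Definition-Lemma \ref{structuredef} against the infinite formal product $\vartheta_{z,p_1} \cdot \vartheta_{z,p_2}$ must be carefully propagated through the leading-term induction, uniformly across charts.
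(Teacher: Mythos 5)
Parts (1), (2) and (4) of your proposal are correct and close in spirit to the paper's: (1) is essentially identical (gluing via Theorem \ref{blinvth} and the isomorphisms of Corollary \ref{mutcor}); your leading-term proof of linear independence in (2) and the Gaussian elimination in (4) are a reasonable unwinding of the paper's route, which goes the other way (prove the $\kk[N^+_{\s}]/I_{\s}^{k+1}$-module basis statement first, by restricting to the central fibre and invoking nilpotent Nakayama, then deduce the $\kk$-basis and (2) by $N$-linearity).

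There is, however, a genuine gap in your treatment of (3). You fix one $z$ near $q$ and assert that iteratively peeling off leading terms expresses $\vartheta_{z,p_1}\vartheta_{z,p_2}$ as $\sum_{q'}\alpha_z(p_1,p_2,q')\vartheta_{z,q'}$ \emph{with the same $z$ throughout}. That is not what the proposition asserts, and it is false as stated: the elimination procedure gives $c_{q_0}=\alpha_z(p_1,p_2,q_0)$ only for the minimal $q_0$, and for a later $q_1$ the coefficient $c_{q_1}$ obtained by elimination is $\alpha_z(p_1,p_2,q_1)$ minus the coefficient of $z^{q_1}$ in $c_{q_0}\vartheta_{z,q_0}$, which has no reason to vanish when $z$ is near $q_0$ but far from $q_1$. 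The proposition instead asserts that the coefficient of $\vartheta_{q'}$ equals $\alpha_{z(q')}(p_1,p_2,q')$ where $z(q')$ is chosen \emph{separately} near each $q'$. The missing ingredient is the observation, used in the paper, that for $z$ generic and sufficiently close to $q$ (so that $z$ and $q$ lie in the closure of the same component of $\tM^\circ_{\RR}\setminus\Supp(\foD_{\s,k})$), there is exactly one broken line $\gamma$ with $b(\gamma)=z$ and $F(\gamma)=q$, namely the unbent ray $z+\RR_{\ge 0}q$, forcing $I(\gamma)=q$. It follows that the coefficient of $z^q$ in $\vartheta_{z,q'}$ is $\delta_{q,q'}$ for such $z$, so the expansion coefficient of $\vartheta_q$ can be read off as the coefficient of $z^q$ in $\vartheta_{z,p_1}\vartheta_{z,p_2}$, which is $\alpha_z(p_1,p_2,q)$. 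Without this unique-broken-line lemma your leading-term induction does not produce the stated coefficients; your closing remark flags a subtlety here but does not supply the needed argument.
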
 

\begin{proof}
Using the isomorphism $\oshA_{\prin}^{\s}$ with $\ocA_{\scat,\s}^{\s}$, 
the basic compatibility Theorem \ref{blinvth} gives the gluing statement (1).  
To prove (4), using the $N$-linearity, it is enough to prove the given
$\vartheta_q$ restrict to  basis
as $\kk[N^+_\s]/(X_1,\dots,X_n)^{k+1}$-module.
By Corollary \ref{mutcor},
the central fibre of $\ocA_{\prin}^{\s} \to \bA^n_{X_1,\dots,X_n}$ 
is the torus $T_{N^\circ}$. If $q\in \pi_N^{-1}(0)$, the
only broken lines with $q=I(\gamma)$ and 
$\Mono(\gamma) \not \in (X_1,\dots,X_n)$ are straight lines.
Thus these $\vartheta_{q}$ restrict to the basis
of characters on the central fibre. Now the result follows from the 
nilpotent Nakayama lemma (see \cite{Ma89}, pg. 58, Theorem 8.4).

(2) follows immediately from (1) and (4). 

For (3), it is enough to prove the equality in $\ocA_{\prin,k}^{\s}$ for
each $k$. The argument
is the same as the proof of the multiplication rule in \cite{GHK11},
Theorem 2.38, which, as it is very short, we 
recall for the reader's convenience. We work with the scattering diagram
$\foD_{\s}$ modulo $I_{\s}^{k+1}$, which has only finitely
many walls with non-trivial attached function. Expressing $\vartheta_{p_1}
\cdot\vartheta_{p_2}$ in the basis $\{\vartheta_q\}$ of (4), we examine
the coefficient of $\vartheta_q$. 
We choose a general 
point $z \in \tM^{\circ}_{\RR}$ very close to $q$, so that 
$z,q$ lie in the closure
of the same connected component of $\tM^{\circ}_{\RR} \setminus 
\Supp(\foD_{\s,k})$
(where $\foD_{\s,k}$ denotes the finitely many walls
non-trivial modulo $I_{\s}^{k+1}$). By definition of $\alpha_z$,
\[
\vartheta_{z,p_1} \cdot \vartheta_{z,p_2} = 
\sum_{r} \alpha_z(p_1,p_2,r)z^r.
\]
Now observe first that there is only one broken line $\gamma$
with endpoint $z$ and $F(\gamma)=q$:
this is the broken line whose image is $z+\RR_{\ge 0} q$.
Indeed, the final segment of such a $\gamma$ is on this ray, and this ray
meets no walls, other than walls containing $q$, 
so the broken line cannot bend.  Thus the coefficient of $\vartheta_{z,q}$ 
can be read off by looking at the coefficient
of the monomial $z^q$ on the right-hand side of the above equation.
This gives the desired formula to order $k$. The finiteness argument of
Definition-Lemma \ref{structuredef} then shows that for any given $q$, $z$
chosen sufficiently close to $q$ will work for all $k$.
\end{proof}

By the proposition, each $g \in \widehat{\up(\ocA_{\prin}^{\s})}$ 
has a unique expression
as a convergent formal sum $\sum_{q \in \tM^{\circ,+}_{\s}} 
\alpha_{\s}(g)(q) \vartheta_{q}$, with
coefficients $\alpha_{\s}(g)(q) \in \kk$. This immediately implies:

\begin{proposition} \label{hatinfun} 
Notation as in Proposition \ref{thetabasislemma}.
There is a unique inclusion
\[
\alpha_{\s}: \widehat{\up(\ocA_{\prin}^{\s})} \otimes_{\kk[N^+_\s]} \kk[N] 
\hookrightarrow  \Hom_{\operatorname{sets}}(\cXt= \tM^\circ_{\s},\kk)
\]
given by 
\[
g\mapsto (q\mapsto \alpha_{\s}(g)(q)).
\]
We have $\alpha_{\s}(z^n \cdot g)(q +n) = \alpha_{\s}(g)(q)$ for all $n \in N$.
\end{proposition}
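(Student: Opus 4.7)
The plan is to leverage Proposition~\ref{thetabasislemma} order by order and then use a single new ingredient, namely the $N$-equivariance $z^n \cdot \vartheta_q = \vartheta_{q+n}$, to pass from $\widehat{\up(\ocA_{\prin}^{\s})}$ to the localisation $\widehat{\up(\ocA_{\prin}^{\s})} \otimes_{\kk[N^+_{\s}]} \kk[N]$.

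First I would take $g \in \widehat{\up(\ocA_{\prin}^{\s})} = \varprojlim_k \up(\ocA_{\prin,k}^{\s})$ and write $g = (g_k)$. By Proposition~\ref{thetabasislemma}~(4), each $g_k$ admits a unique expansion $g_k = \sum \alpha_k(q) \vartheta_{q,k}$ over $q \in \tM_{\s}^{\circ,+} \setminus \tM_{\s,k+1}^{\circ,+}$, and by part~(2) these coefficients stabilise in $k$: $\alpha_{k'}(q) = \alpha_k(q)$ for $k' \geq k$. This defines $\alpha_{\s}(g)(q) := \alpha_k(q) \in \kk$ for every $q \in \tM^{\circ,+}_{\s}$, with the stabilised value independent of $k$. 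Injectivity of this assignment on $\widehat{\up(\ocA_{\prin}^{\s})}$ and uniqueness of the expansion are immediate from the linear independence of the $\vartheta_q$ in Proposition~\ref{thetabasislemma}~(2).

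The key new input is the identity $z^n \cdot \vartheta_{q} = \vartheta_{q+n}$ for $n \in N$, with $n$ viewed as $(0,n) \in M^{\circ} \oplus N = \tM^{\circ}$. I would prove this directly from broken lines. Since every wall of $\foD_{\s}^{\cA_{\prin}}$ is contained in a hyperplane $(n_0, 0)^{\perp}$ for $n_0 \in N^+$ (Construction~\ref{sdprin}), walls are preserved by translations $x \mapsto x + t(0,n)$ in $\tM^{\circ}_{\RR}$. Given a broken line $\gamma$ for $q$ with endpoint $Q$, I would define $\gamma^{*}(t) := \gamma(t) - t(0,n)$ and decorate each linear segment carrying $z^{m_L}$ with $z^{m_L + (0,n)}$ instead. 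Then $\gamma^{*}$ meets the same walls at the same parameter values (because the displacement $-t(0,n)$ is parallel to every wall), its derivatives match $-(m_L + (0,n))$ on each segment, and since $\langle (n_0, 0), (0,n) \rangle = 0$ we have $\langle n_0, m_L + (0,n) \rangle = \langle n_0, m_L \rangle$, so the wall-crossing factor $f_{\fod}^{\pm \langle n_0, m_L \rangle}$ is unchanged and the bending rule in Definition~\ref{blde}~(4) is preserved. This yields a bijection between broken lines for $q$ and for $q+(0,n)$ with common endpoint $Q$, satisfying $c(\gamma^{*}) = c(\gamma)$ and $F(\gamma^{*}) = F(\gamma) + (0,n)$, which gives the claimed identity.

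Finally, every element of $\widehat{\up(\ocA_{\prin}^{\s})} \otimes_{\kk[N^+_{\s}]} \kk[N]$ has the form $z^{-n_0} g$ with $n_0 \in N^+_{\s}$ and $g \in \widehat{\up(\ocA_{\prin}^{\s})}$, so I would set $\alpha_{\s}(z^{-n_0} g)(q) := \alpha_{\s}(g)(q + n_0)$. Well-definedness---when $z^{-n_0}g = z^{-n_1}h$, equivalently $z^{n_1}g = z^{n_0}h$ in $\widehat{\up(\ocA_{\prin}^{\s})}$---reduces to $\alpha_{\s}(z^{n}g)(q+n) = \alpha_{\s}(g)(q)$ for $n \in N^+_{\s}$, which follows by applying the identity of paragraph two term by term to the expansion of $g$. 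Injectivity on the tensor product is inherited from the $\widehat{\up}$ case after clearing denominators, and the equivariance statement for general $n \in N$ is obtained by writing $n = n_{+} - n_{-}$ with $n_{\pm} \in N^+_{\s}$. The only non-bookkeeping step is the broken-line shift construction in paragraph two; once the identity $z^n \vartheta_q = \vartheta_{q+n}$ is in hand, the rest is routine, which is why the author presents the result as an immediate consequence of Proposition~\ref{thetabasislemma}.
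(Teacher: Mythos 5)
Your argument is correct and matches the paper's route: Proposition \ref{thetabasislemma}\,(4) and (2) give the unique, stabilising $\kk$-basis expansion defining $\alpha_{\s}$ on $\widehat{\up(\ocA_{\prin}^{\s})}$, and the $N$-equivariance $z^n \cdot \vartheta_q = \vartheta_{q+n}$ is exactly what extends it to the localisation $\otimes_{\kk[N^+_{\s}]}\kk[N]$. The broken-line shift $\gamma^*(t) = \gamma(t) - t(0,n)$ you use to prove that equivariance is the same mechanism as the bijection between the first two sets of broken lines in Lemma \ref{eqslem}; the paper treats this ``$N$-linearity'' as given at this point (it is invoked without proof already in the argument for Proposition \ref{thetabasislemma}\,(4)), so your write-up is simply making explicit what the authors regard as immediate.
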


\begin{definition}
\label{defS}
For $g\in\up(\cA_{\prin})$, write $g=\sum_{q\in \tM^{\circ}_{\s}} \beta_{s}(g)(q)
z^q$ on the torus chart $T_{\tN^{\circ},\s}$ of $\cA_{\prin}$
corresponding to a seed $\s$. We also have a formal expansion 
$g=\sum_{q\in \tM^\circ_{\s}} \alpha_{\s}(g)(q)\vartheta_q$
as $z^m g\in \widehat{\up(\ocA_{\prin}^{\s})}$ for some $m\in \tM^{\circ}_{\s}$.
Set 
\begin{align*}
\overline{S}_{g,\s}= {} & \{q\in \tM^{\circ}_{\s}\,|\, \beta_{\s}(g)(q)
\not=0\},\\
S_{g,\s}= {} & \{q \in \tM^{\circ}_{\s}\,|\, \alpha_{\s}(g)(q)\not=0\}.
\end{align*}
If $P_{\s}$ is the monoid generated by $\{(v_i,e_i)\,|\,i\in I_{\uf}\}$,
it is easy to check from the construction of theta functions that
\[
S_{g,\s}\subseteq \overline{S}_{g,\s}+P_{\s}.
\]
\end{definition}

\begin{remark}
Note that $\ocA_{\scat,\s,k}^{\s}$ is constructed by gluing
together various $V_{\s,\sigma,k}\cong T_{M^{\circ}}\times \Spec
\kk[X_1,\ldots,X_n]/(X_1,\ldots,X_n)^{k+1}$ via isomorphisms, and
hence $\ocA_{\scat,\s,k}^{\s}\cong T_{M^{\circ}}\times \Spec
\kk[X_1,\ldots,X_n]/(X_1,\ldots,X_n)^{k+1}$. Thus by
Proposition \ref{thetabasislemma}, (4), a collection of the $\vartheta_q$
yield a basis for regular functions on 
$T_{M^{\circ}}\times \Spec \kk[X_1,\ldots,X_n]/(X_1,\ldots,X_n)^{k+1}$
with the property that this basis restricts
to a monomial basis on the underlying
reduced space. It remains a mystery about theta functions in general
whether they satisfy some other interesting characterizing properties, such
as the heat equation satisfied by theta functions on abelian varieties.
\end{remark}

The main point of the following theorem is that on $\up(\shA_{\prin})$,
$\alpha_{\s}$ is independent of the seed $\s$.

\begin{theorem} \label{ffgth} There is a unique function
\[
\alpha: \up(\cA_{\prin}) \to \Hom_{\operatorname{sets}}(\cXt,\kk),
\]
with all the following properties:
\begin{enumerate}
\item $\alpha$ is compatible with the $\kk[N]$-module structure on $\up(\cA_{\prin})$ and the
$N$-translation action on $\cXt$ in the sense that
$$
\alpha(z^n \cdot g)(x+n) = \alpha(g)(x)
$$
for all $g \in \up(\cA_{\prin})$, $n \in N$, $x \in \cXt$.
\item For each choice of seed $\s$, the formal sum $\sum_{q \in \cXt} \alpha(g)(q) \vartheta_q$
converges to $g$ in $\widehat{\up(\ocA_{\prin}^{\s})} \otimes_{\kk[N^+_\s]}\kk[N]$.
\item If $z^n\cdot g \in \up(\ocA_{\prin}^{\s})$ then $\alpha(z^n \cdot g)(q) = 0$ unless
$\pi_{N}(q) \in N^+_\s$, and 
\[
z^n \cdot g = \sum_{\pi_{N,\s}(q) \in N^+_\s \setminus (N^+_\s)_{k+1}} 
\alpha(z^n \cdot g)(q) \vartheta_q
\mod I_{\s}^{k+1}
\]
and the coefficients $\alpha(z^n \cdot g)(q)$ are the coefficients for the expansion of 
\mbox{$z^n \cdot g$} viewed as an element of $\up(\ocA_{\prin,k}^{\s})$ 
in the basis of theta functions from Proposition \ref{thetabasislemma}.
\item For any seed $\s'$ obtained via mutations from $\s$, 
$\alpha$ is the composition of the inclusions 
\[
\up(\cA_{\prin}) \subset \widehat{\up(\ocA_{\prin}^{\s'})} \otimes_{\kk[N^+_{\s'}]} 
\kk[N] \subset 
\Hom_{\operatorname{sets}}(\cXt= \tM^\circ_{\s'},\kk)
\]
given by \eqref{upginhat} and Proposition \ref{hatinfun}. This sends a cluster monomial
$A \in \up(\cA_{\prin})$ to the delta function $\delta_{\g(A)}$ 
for its $g$-vector $\g(A) \in \cXt$. 
\end{enumerate}
In the notation of Definition \ref{defS}, $\alpha(g)(m) = \alpha_{\s'}(g)(m)$
for any seed $\s'$. In particular the sets $S_{g,\s'} \subset \cXt$ 
of Definition \ref{defS} are independent of the seed, depending only on $g$. 
\end{theorem}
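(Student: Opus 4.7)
The plan is to fix an initial seed $\s$, take $\alpha := \alpha_\s$ from Proposition \ref{hatinfun}, and then verify properties (1)--(4); the entire content of the theorem (and the ``final statement'') reduces to showing that $\alpha_\s$ is independent of the seed $\s$ under the canonical piecewise-linear identifications of $\cXt$ induced by the tropicalized mutations. Properties (1), (2) for this particular $\s$, (3), and the second half of (4) (the cluster monomial statement) follow essentially immediately: (1) from the $N$-linearity already in Proposition \ref{hatinfun}, (2) from Proposition \ref{thetabasislemma}(2)--(3), (3) from Proposition \ref{thetabasislemma}(4) applied to $z^n \cdot g \in \up(\ocA_{\prin,k}^\s)$, and the cluster monomial statement from Corollary \ref{monocor}/Theorem \ref{thetaequalsmonomial} (giving $\vartheta_{Q,\g(A)}=z^{\g(A)}$ in the appropriate chart), combined with the basis property.

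The real work is showing $\alpha_\s = \alpha_{\s'}$ (as maps $\cXt\to\kk$) for any mutation-equivalent seed $\s'$. By induction on the length of the mutation sequence it suffices to treat $\s' = \mu_k(\s)$ for a single $k\in I_{\uf}$. Here the key input is Proposition \ref{brlineinvmut}: the map $T_k$ gives a bijection between broken lines for $\foD_\s$ and broken lines for $\foD_{\mu_k(\s)}$, and under the tropical identification $\cXt = \tM^\circ_{\s} = \tM^\circ_{\mu_k(\s)}$ the associated theta functions correspond linearly via $T_{k,\pm}$ on exponents. Consequently the formal expansion $g = \sum \alpha_\s(g)(q)\vartheta_q^\s$, read inside an appropriate chart indexed by a cluster chamber $\sigma \in \Delta^+_\s$ with $T_k$ linear on $\sigma$, matches term-by-term with the expansion $g = \sum \alpha_{\mu_k(\s)}(g)(q)\vartheta_q^{\mu_k(\s)}$ in the chart $T_k(\sigma)$ after relabeling exponents by $T_k$. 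Since both expansions express the same rational function $g$ on the common open torus $T_{\tN^\circ,\sigma}\subseteq \cA_{\prin}$, and the $\vartheta_q^{\s}$ restrict to a basis of the coordinate ring of the central fibre in each partial compactification (by Corollary \ref{mutcor} together with Proposition \ref{thetabasislemma}(4)), the coefficients must agree.

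The main obstacle, and where care is needed, is that the two completions $\widehat{\up(\ocA_{\prin}^\s)}$ and $\widehat{\up(\ocA_{\prin}^{\mu_k(\s)})}$ are completions with respect to genuinely different ideals (the frozen variables $X_i$ versus $X_i'$), so the two formal sums for $g$ live in different topological rings and cannot be equated directly. The resolution is to pass to truncations: fix $k$ and work in $\up(\ocA_{\prin,k}^\s)$ (which is a finite-dimensional module over $\kk[N^+_\s]/I_\s^{k+1}\otimes\kk[N]$), where Proposition \ref{thetabasislemma}(4) gives an honest $\vartheta_q$-basis. The mutation $\mu_k$ identifies the formal neighborhood of the cluster chamber $\sigma$ in $\ocA_{\scat,\s}^\s$ with that of $T_k(\sigma)$ in $\ocA_{\scat,\mu_k(\s)}^{\mu_k(\s)}$ on a common Zariski open neighborhood of the cluster torus (here Corollary \ref{mutcor}(3) is crucial: the gluing is the identity on the central fibre), which permits a term-by-term comparison in this finite-dimensional quotient using the broken-line correspondence. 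Passing to the limit in $k$ concludes the mutation step.

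Finally, uniqueness of $\alpha$ is straightforward from the constructed existence: any $\alpha$ satisfying (2) for a single seed $\s$ must equal $\alpha_\s$ by Proposition \ref{hatinfun}, which forces it to equal the constructed $\alpha$ everywhere. The last assertion---that $S_{g,\s'}\subseteq \cXt$ is independent of $\s'$---is then immediate, since $S_{g,\s'} = \{q \mid \alpha_{\s'}(g)(q)\ne 0\} = \{q \mid \alpha(g)(q)\ne 0\}$ depends only on $\alpha(g)$.
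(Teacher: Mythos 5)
Your proof plan correctly identifies the crux of the theorem—showing $\alpha_\s$ is independent of $\s$, reduced to a single mutation—and properties (1)–(3) are indeed formal consequences of Proposition \ref{thetabasislemma} and Proposition \ref{hatinfun}, as you say. However, the central step, comparing the two expansions of $g$ coming from the two completions $\widehat{\up(\ocA_{\prin}^{\s})}$ and $\widehat{\up(\ocA_{\prin}^{\s'})}$, has a genuine gap.

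You correctly identify the obstacle (the two completions are with respect to different ideals $I_{\s}=(X_1,\dots,X_n)$ and $I_{\s'}=(X_1',\dots,X_n')$), but the proposed resolution does not actually resolve it. Passing to the truncated rings $\up(\ocA_{\prin,k}^{\s})$ and $\up(\ocA_{\prin,k}^{\s'})$ gives modules over \emph{different} quotient rings $\kk[N]/I_{\s}^{k+1}$ and $\kk[N]/I_{\s'}^{k+1}$, with no natural identification between them that would permit a term-by-term comparison of theta-function coefficients. The appeal to Corollary \ref{mutcor}(3) is misplaced: that result is about the mutation maps being the identity on the central fibre \emph{within a single} partial compactification $\ocA_{\prin}^{\s}$ (because its atlas uses the same fan for every chart, Lemma \ref{clem}); it says nothing about relating the two distinct partial compactifications $\ocA_{\prin}^{\s}$ and $\ocA_{\prin}^{\mu_k(\s)}$, whose boundary divisors correspond to distinct valuations (since $f_1' \neq \pm f_1$ in general, so $\Sigma^{\s'}\neq\mu_1^t(\Sigma^{\s})$, as the paper emphasizes). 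There is also no ``term-by-term comparison'' available from Proposition \ref{brlineinvmut} alone: the theta functions $\vartheta_q$ are already seed-independent, so no ``relabeling by $T_k$'' is involved, and expressing $g$ as an infinite linear combination of formal power series $\vartheta_{Q,q}$ requires a linear-independence result in some well-chosen ring before any coefficient comparison can be made.

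The paper's proof supplies precisely what is missing. It constructs an auxiliary scheme $U=\TV(\Sigma^{\s})\cup\TV(\Sigma^{\s'})$ by gluing the two seed-toric charts via $\mu_1$—explicitly noting that $U$ is \emph{not} part of either compactification's atlas—together with a toric base $V=\TV(\oSigma)$ whose boundary contains a $\bP^1$ with two fixed points $0_\s$, $0_{\s'}$. It then works modulo powers of the ideal $J=(X_i\mid i\neq 1)$, which is common to both seeds since $\mu_1$ fixes $X_i$ for $i\neq 1$, and proves (Claim \ref{basislem}) that theta functions indexed by the appropriate combined monoid $\widetilde{C}=\ZZ e_1+\sum_{i\geq 2}\NN e_i$ form a $\kk$-basis of $\up(U_{\bG_m,k})$. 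Only with this common ambient ring and linear-independence statement in hand can the two expansions of $g$ be compared and shown to have equal coefficients (via Claim \ref{claimtwo}). Without an analogue of this bridging construction and the basis claim, your argument does not close.
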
 

\begin{proof}
It is easy to see from Proposition \ref{thetabasislemma} 
that $\alpha_{\s}$ is the unique function which satisfies conditions 
(1-3) of the theorem for the given seed $\s$. Moreover
it satisfies (4) for $\s=\s'$. Thus it is
enough to show that $\alpha_{\s}$ is independent of the choice of seed.

The basic idea is that $\alpha_{\s}$ 
expresses $g$ as a sum of theta functions. As the theta functions are linearly
independent, the expression is unique. But as the sums can be infinite, we make
the argument in the appropriate formal neighborhood. 

For a seed $\s = (e_i\,|\,i\in I)$ we write
$\Sigma^{\s}$ for the fan in $\tN^\circ = N^\circ \oplus M$ with rays
spanned by the $(0,d_if_i)$. We write
$\oSigma^{\s}$ for the fan in $M$ with rays spanned by the
 $d_if_i$.

Clearly for the invariance it is enough to consider two adjacent 
seeds, say $\s = (e_i\,|\,i\in I)$ and $\s' = (e'_i\,|\,i\in I)$ obtained, 
without loss of generality, by mutation of the first basis vector $e_1$.

We consider the union of the two tori
$T_{\tN^\circ,\s},T_{\tN^\circ,\s'}$ in the atlas for $\cA_{\prin}$, 
glued by the mutation
$\mu_1$, which we recall is given by
\[
\mu_1^*:z^{(m,n)} \mapsto z^{(m,n)} \cdot (1 + z^{(v_1,e_1)})^{-\langle
(d_1e_1,0),(m,n)\rangle}, 
\]
$(m,n) \in \tM^{\circ}= M^\circ \oplus N$, see \cite{P1}, (2.6). We
will partially compactify this union by gluing 
the toric varieties 
\[
\mu_1:\TV(\Sigma^{\s}) \dasharrow \TV(\Sigma^{\s'}),
\]
writing
\[
U:=\TV(\Sigma^{\s})\cup \TV(\Sigma^{\s'})
\]
under this gluing. Note this union of toric varieties is \emph{not}
part of the atlas for either $\ocA_{\prin}^{\s}$ or
$\ocA_{\prin}^{\s'}$
(for either of these, the fans determining the atlases
for the toric compactifications are related by geometric tropicalisation of
the birational mutation, but here $\mu^t_1(f_1) \neq f_1'$, and thus 
$\Sigma^{\s'} \neq \mu^t_1(\Sigma^{\s})$).

Note $f_i' = f_i$ for $i \not=1$, while $f_1' = -f_1 + 
\sum_j [\{e_j,e_1\}]_+ d_jf_j$,
(see e.g., \cite{P1}, (2.3)). 
Thus the two cones $\oSigma^{\s},\oSigma^{\s'}$ share a codimension one face and
form a fan, $\oSigma$.
Let $V=\TV(\oSigma)$. By construction the rational map
$\TV(\Sigma^\s) \to \TV(\oSigma^\s)$ is regular, and the same holds for the seed
$\s'$.
Observe that $\mu_1$ commutes with the second projection $\pi: 
T_{\tN^\circ} \to T_{M}$. 
 From this it follows that 
$\pi: U \to V$ is regular. Note the toric boundary $\partial V$ has a unique
complete one-dimensional stratum $\bP^1$ and two zero strata $0_\s$, $0_{\s'}$, whose complements in the $\bP^1$ we 
write as $\bA^1_{\s'},\bA^1_{\s}$ respectively. 
We write e.g.\ $\bA^1_{\s,k} \subset V$ for the
$k^{\mth}$ order neighborhood of this curve, and e.g.\ 
$U_{\bA^1_{\s},k}$ for the scheme-theoretic
inverse image $\pi^{-1}(\bA^1_{\s,k}) \subset U$.
Finally, let 
\[
U_{\bG_m,k} = U_{\bA^1_{\s},k} \cap U_{\bA^1_{\s'},k} \subset U.
\]
We will show theta functions give a basis of functions on these formal neighborhoods. 
To make the computation transparent we introduce coordinates.

We let $X_i := z^{(0,e_i)}$, $X_i':= z^{(0,e_i')}$, observing that
$\mu_1^*(z^{(0,n)}) = z^{(0,n)}$ for all $n \in N$. 
In particular $\mu_1^*(X_i)=X_i$, $\mu_1^*(X_i')=X_i'$. Since there is
a map of fans from $\oSigma$ to the fan defining $\PP^1$ by dividing out
by the subspace spanned by $\{d_if_i\,|\,i\in I\setminus\{1\}\}$, 
there is a map
$V\rightarrow\PP^1$. We can pull back $\shO_{\PP^1}(1)$ to $V$, getting
a line bundle
with monomial sections $X,X'$ pulled back from $\PP^1$
with $X'/X = X_1'$ in the above notation. The open subset 
of $U$ where $X' \neq 0$ is given explicitly up to codimension two
by the hypersurface
\[
A_1 \cdot A_1' = X_1\prod_{j:\epsilon_{1j} \geq 0} A_i^{\epsilon_{1j}} 
+ \prod_{j:\epsilon_{1j} \leq 0} A_j^{-\epsilon_{1j}} \subset 
\bA^n_{X_1,\dots,X_n} 
\times \AA^2_{A_1,A_{1}'}\times
(\bG_m)^{n-1}_{A_2,\dots,A_{n}} 
\]
where $A_i = z^{(f_i,0)}$ and $A_1' = z^{(f_1',0)}$. 

Note the points 
\[
(f_i,0),(0,e_i) \in (M^\circ\oplus N)_{\s} = \cXt,\quad (f_1',0) \in 
(M^\circ \oplus N)_{\s'} = \cXt
\]
lie in the chambers of $\Delta^+_{\s}$ corresponding to $\s$ and $\s'$
respectively,
and thus by Proposition \ref{oneblprop} these points determine theta functions
in $\up(\cA_{\prin})$, which are of course the corresponding cluster
monomials
$A_i,X_i,A_1'$. 

We have the exactly analogous
description for the open subset $X \neq 0$.

Next observe that all but one of the functions attached to walls
in $\foD_{\s}$ is trivial modulo
the ideal  $J=(X_i\,|\,i\in I\setminus\{1\})$. 
Indeed, the unique non-trivial wall is
$((e_1,0)^{\perp}, 1 + z^{(v_1,e_1)})$. It follows from
Theorem \ref{KSlemma2} that modulo 
$J^k$ the
scattering diagram $\foD_{\s}$ has only finitely many non-trivial walls, 
and $\vartheta_{Q,m}$ is regular
on $U_{\bA^1_{\s},k}$, for $Q$ a basepoint in the distinguished chamber 
$\shC^+_{\s}$, so long as $\pi_N(m) \in \Span(e_1,\dots,e_n)$, 
$\pi_N:\tM^{\circ}\rightarrow N$ the projection. 

Let $C := \sum_{k=1}^n \NN e_i$, $C' := \sum_{k=1}^n \NN e_i'$. 
Noting $e_1' = -e_1$, we can set 
\[
\widetilde C:=\ZZ e_1 + \sum_{k=2}^{n}\NN e_k  = 
\ZZ e_1'+\sum_{k=2}^{n}\NN e_k'.
\]
Observe $U_{\bG_m,k}$ is the subscheme of $U$ defined by 
the ideal $J^k$ in the open subset $XX' \neq 0 \subset U$. 
Note that the open subset of $U$ defined by
$X X' \neq 0,\, \prod_{i\not=1} X_i\neq 0$ is the union of the two tori 
$T_{\tN^\circ,\s},T_{\tN^\circ,\s'}$.

\begin{claim} \label{basislem} The following hold:
\begin{enumerate}
\item The collection $\vartheta_{Q,m}$, $m\in \tM^{\circ}$,  
$\pi_N(m) \in C \setminus (\widetilde C_{k+1}\cap C)$ forms a $\kk$-basis of
the vector space $\up(U_{\bA^1_{\s,k}})$. 
\item The collection $\vartheta_{Q,(m,0)}$, $m \in M^\circ$, forms a basis
of $\up(U_{\bA^1_{\s,k}})$
as an $H^0({\bA^1_{\s,k}},\cO_{\bA^1_{\s,k}})$-module. 
\item The collection $\vartheta_{Q,m}$, $\pi_N(m) 
\in \widetilde{C}\setminus\widetilde{C}_k$, 
forms a $\kk$-basis of $\up(U_{\bG_m,k})$. 
\end{enumerate}
\end{claim}

\begin{proof} (2) implies (1) using the $N$-linearity of the
scattering diagram and multiplication rule with respect to the $N$-translation. Similarly, (2) implies (3) by inverting $X_1$.

For the second claim, 
by Lemma 2.30 of \cite{GHK11},
we need only prove the statement for $k=0$. To
prove linear independence it is enough to show linear independence modulo 
$(X_1^r,X_2,\ldots,X_n)$ for all $r$.
For this, again by Lemma 2.30 of \cite{GHK11}, it
is enough to check just over the fibre $X_1 = \cdots =X_n =0$. 
This is the torus 
$T_{N^{\circ}}$ and the theta functions restrict to the basis of characters. 

So it remains only to show the given theta functions generate modulo $J$. Here 
we
use the explicit description of the open subset of $U$ where $X' \neq 0$ above. 
This is an affine variety, and the ring of functions is clearly generated by the
$A_1,A_1',A_2^{\pm 1},\ldots,A_n^{\pm 1}$ as a 
$\kk[X_1,\ldots,X_n]$-algebra. On the other hand, by the explicit description
of $\foD_{\s}$ modulo the ideal $J$, for $m=\sum a_if_i=\sum a_i'f_i'
\in M^{\circ}$, 
\[
\vartheta_{Q,(m,0)}=\begin{cases} \prod_i A_i^{a_i} & a_1\ge 0\\
(A_1')^{a'_1} \prod_{i\not=1} A_i^{a'_i} & a_1 = -a_1' \le 0.
\end{cases}
\]
This shows theta functions generate $\up(U_{\AA^1_{\s,0}})$ as an
$H^0(\AA^1_{\s,0},\shO_{\AA^1_{\s,0}})$-module, hence the result.
\end{proof} 

Of course there is an analogous claim for $\s'$. 

Now we can prove $S_{g,\s} = S_{g,\s'}$ for $g\in \up(\shA_{\prin})$.

By the $N$-translation action on $\cXt$ (and the corresponding $N$-linearity of
the scattering diagrams), to prove the equality, we are free to multiply $g$ by
a monomial from the base of $\cA_{\prin} \to T_M$. Multiplying by a monomial
in the $X_i$, $i\not=1$, we can then assume 
$g$ is a regular function on the
open subset of $U$ where $XX' \neq 0$. 
Now in the notation of Definition \ref{defS},
$\pi_N(m) \in \widetilde{C}$ for $m \in P_{\s} + \overline{S}_{g,\s}$ 
or $m\in P_{\s'} + \overline{S}_{g,\s'}$.
It follows now from the fact that $\foD_{\s}$ is finite modulo $J^k$
for any $k$ that each $\vartheta_{Q,m},\vartheta_{Q',m}$
for $m \in S_{g,\s},S_{g,\s'}$ is a finite Laurent polynomial modulo $J^k$.
Here $Q, Q'$ are basepoints in the chambers indexed by $\s$ and $\s'$
respectively.

\begin{claim}
\label{claimtwo}
 Modulo $J^k$, the  sums 
$\sum_{m \in S_{g,\s}} \alpha_m \vartheta_{Q,m}$, 
$\sum_{m \in S_{g,\s'}} \alpha_m' \vartheta_{Q',m}$
are finite, and coincide with $g$ in the charts indexed by $\s$ and $\s'$
respectively.
\end{claim}

\begin{proof}
By symmetry it's enough to treat $\s$. We can multiply both
sides by a power of $X_1$, and so may assume $g$ is regular on $X' \neq 0$, and
$\pi_N(m) \in C$ for each $\alpha_m \neq 0$. Note $\pi_N(P_{\s} \setminus \{0\}) = C\setminus \{0\}$,
thus by construction modulo $(X_1,\ldots,X_n)^r$ for any $r$
the sum $\sum_{m\in S_{g,\s}}\alpha_m \vartheta_{Q,m}$ is finite and
equal to $g$. By Claim \ref{basislem},
(1), we have a (finite) expression modulo $J^k$
\[
g = \sum_{\pi_N(m) \in C \setminus (C\cap \widetilde C_{k+1})} 
\beta(m) \vartheta_{Q,m}. 
\]
Thus, for fixed $k$ and arbitrary $r \geq 1$ we have 
modulo $J^k+(X_1^r)$,
\begin{align*}
g = {}  & \sum_{\pi_N(m) \in C \setminus (C\cap \widetilde C_{k+1})} 
\beta(m) \vartheta_{Q,m} \\
  = {}  & \sum_{m \in S_{g,\s}} \alpha_m \vartheta_{Q,m}.
\end{align*}
By the linear independence these expressions are the same, for all $r$,
thus the expressions are the same modulo $J^k$. 
\end{proof}

Note that by Theorem \ref{blinvth}, for 
$m \in \pi_N^{-1}(\widetilde{C})$, $\vartheta_{Q,m}$ and $\vartheta_{Q',m}$ 
induce the same regular function $\vartheta_m$ on $U_{\bG_m,k}$.
Thus we have by Claim \ref{claimtwo} that
\[
g =\sum_{m \in S_{g,\s}} \alpha_m \vartheta_m = 
\sum_{m \in S_{g,\s'}} \alpha_m' \vartheta_m \mod J^k.
\]
Now by (3) of Claim \ref{basislem} (varying $k$) the coefficients in the sums are the same. 
\end{proof}

The theorem implies that the theta functions are a topological basis for a natural topological $\kk$-algebra
completion of $\up(\cA_{\prin})$:

\begin{corollary}
For $n\in N$, let $n^*:\Hom_{\operatorname{sets}}(\cXt,\kk)\rightarrow
\Hom_{\operatorname{sets}}(\cXt,\kk)$ denote precomposition by the
action of translation by $n$ on $\cXt$.
Let 
\[
\LIM \subset \Hom_{\operatorname{sets}}(\cXt,\kk)
\]
be the vector subspace of functions, $f$, such that for each seed $\s$, 
there exists $n\in N$
for which the restriction of $n^*(f)$ to $\cXt \setminus \pi_{N,\s}^{-1}((N^+_{\s})_k)$ has finite
support for all $k > 0$. Then we have
\[
\up(\cA_{\prin}) \subset  \LIM = \bigcap_{\s} \widehat{\up(\ocA_{\prin}^{\s})} \otimes_{\kk[N^+_{\s}]}
\kk[N] \subset \Hom_{\operatorname{sets}}(\cXt,\kk).
\]
$\LIM$ is a complete topological vector space under the weakest topology so that each inclusion
$\LIM \subset \widehat{\up(\ocA_{\prin}^{\s})} 
\otimes_{\kk[N^+_{\s}]} \kk[N]$ is continuous. 
Let $\vartheta_q = \delta_q \in \LIM$ be the delta function associated
to $q \in \cXt$. The $\vartheta_q$ are a topological basis for $\LIM$.
There is a unique topological $\kk$-algebra structure on $\LIM$
such that $\vartheta_p \cdot \vartheta_q = \sum_r \alpha(p,q,r) \vartheta_r$ with structure
constants given by Definition \ref{structuredef}.
\end{corollary}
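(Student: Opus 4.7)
The plan is to reduce the corollary to a formal consequence of Theorem \ref{ffgth} together with Propositions \ref{thetabasislemma} and \ref{hatinfun}, which already do most of the combinatorial work. I would first establish the chain of inclusions and the equality of $\LIM$ with the intersection. For a given seed $\s$, Proposition \ref{hatinfun} gives an injection of $\widehat{\up(\ocA_{\prin}^{\s})} \otimes_{\kk[N^+_{\s}]}\kk[N]$ into $\Hom_{\operatorname{sets}}(\cXt,\kk)$ with image characterised by the condition that some $N$-translate of $f$ is supported on $\pi_{N,\s}^{-1}(N^+_{\s})$ and has finite support on each $\pi_{N,\s}^{-1}(N^+_{\s} \setminus (N^+_{\s})_{k+1})$. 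Taking intersections over all seeds recovers exactly the defining condition of $\LIM$. The inclusion $\up(\cA_{\prin}) \subset \LIM$ is then immediate from Theorem \ref{ffgth}: the function $\alpha(g)$ lives in $\widehat{\up(\ocA_{\prin}^{\s})} \otimes \kk[N]$ simultaneously for every $\s$ by the seed-independence clause (4) of that theorem.

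Next I would install the topology. Each completion $\widehat{\up(\ocA_{\prin}^{\s})}$ is linearly topologised by the $J$-adic filtration, and extending scalars to $\kk[N]$ gives a topological vector space with a countable neighbourhood base of $0$. The weakest topology making every inclusion $\LIM \hookrightarrow \widehat{\up(\ocA_{\prin}^{\s})} \otimes_{\kk[N^+_{\s}]}\kk[N]$ continuous is then by construction the subspace topology from the product $\prod_{\s}(\widehat{\up(\ocA_{\prin}^{\s})}\otimes\kk[N])$, and completeness of $\LIM$ follows from completeness of each factor together with the fact that $\LIM$ is cut out as a closed subspace by the seed-compatibility relations (any net that is Cauchy in every $\widehat{\up(\ocA_{\prin}^{\s})}$ produces a limit which is still compatible across seeds, since the compatibility relations involve only finitely many coefficients modulo any given $J$-adic ideal).

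For the topological basis statement, the essential input is Proposition \ref{thetabasislemma}, (4): inside each $\widehat{\up(\ocA_{\prin}^{\s})}$, the theta functions $\vartheta_q$ indexed by $q$ with $\pi_{N,\s}(q) \in N^+_{\s}$ are a topological basis, and more generally after tensoring with $\kk[N]$ one recovers every $q \in \cXt$. Seed-independence of $\alpha$, proved in Theorem \ref{ffgth}, guarantees that the coefficients of this expansion do not depend on which $\s$ is used, so any element $f \in \LIM$ has a unique expansion $f = \sum_{q} f(q)\vartheta_q$ which converges in the topology of $\LIM$ because it converges in every $\widehat{\up(\ocA_{\prin}^{\s})}$ separately. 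Uniqueness is inherited from the linear independence of theta functions in each $\widehat{\up(\ocA_{\prin}^{\s})}$.

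Finally, the algebra structure. Each $\widehat{\up(\ocA_{\prin}^{\s})}\otimes\kk[N]$ is a topological $\kk$-algebra, and by Proposition \ref{thetabasislemma}, (3) the product of two theta functions in this ring equals $\sum_r \alpha(p,q,r)\vartheta_r$ with the structure constants of Definition-Lemma \ref{structuredef}; since these constants depend only on broken line counts in $\cA_{\prin}^{\vee}(\RR^T)$ and not on the seed (an observation already visible from the proof of Theorem \ref{ffgth}), the multiplication restricts to a $\kk$-bilinear operation on $\LIM$ that is continuous in each factor and satisfies the stated formula. The main obstacle I anticipate is verifying that the product of two elements of $\LIM$ lies in $\LIM$ rather than merely in one $\widehat{\up(\ocA_{\prin}^{\s})}\otimes\kk[N]$: this requires checking that for each seed $\s$, the product computed seed by seed produces compatible expansions. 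This however follows because in each completion the product of finite partial sums converges to the same element of $\Hom_{\operatorname{sets}}(\cXt,\kk)$ (the structure constants being seed-independent), so the limit automatically satisfies the seed-compatibility relations cutting $\LIM$ out of the product. Uniqueness of the algebra structure is then automatic from the density of finite $\kk$-linear combinations of $\vartheta_q$.
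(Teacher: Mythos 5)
The paper states this corollary with no explicit proof — the preamble ``The theorem implies that\dots'' signals that it is meant to be a formal unwinding of Theorem~\ref{ffgth} together with Propositions~\ref{thetabasislemma} and~\ref{hatinfun}, which is precisely what you do. Your plan is sound and hits all the right inputs: the characterization of the image of $\alpha_{\s}$ gives the equality $\LIM = \bigcap_{\s}\widehat{\up(\ocA_{\prin}^{\s})}\otimes_{\kk[N^+_{\s}]}\kk[N]$; Theorem~\ref{ffgth}(4) gives $\up(\cA_{\prin})\subset\LIM$; the diagonal embedding into $\prod_{\s}$ gives completeness; Proposition~\ref{thetabasislemma}(4) gives the topological basis in each completion and hence in $\LIM$; and seed-independence of the structure constants makes the multiplication close on $\LIM$.

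Two small points of imprecision worth flagging. First, when you invoke seed-independence of the structure constants $\alpha(p,q,r)$, you attribute it to ``an observation already visible from the proof of Theorem~\ref{ffgth}''; that theorem concerns $\up(\cA_{\prin})$, and $\vartheta_p\cdot\vartheta_q$ for general $p,q\in\cXt$ need not land there. The correct reference is Proposition~\ref{brlineinvmut}: the piecewise-linear mutation $T_k$ establishes a bijection between broken lines for $\foD_{\s}$ and $\foD_{\mu_k(\s)}$ preserving initial/final exponents and coefficients, and this (together with the fact that the identifications $\cXt\cong\tM^\circ_{\s}$ differ exactly by $T_k$) is what makes $\alpha(p,q,r)$ a function of elements of $\cXt$ rather than of a choice of seed. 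Second, in matching the definition of $\LIM$ with the image of $\alpha_{\s}$, there is a small wrinkle: the $\LIM$ condition only requires $n^*(f)$ to have finite support on $\cXt\setminus\pi_{N,\s}^{-1}((N^+_{\s})_k)$, while the image of $\alpha_{\s}$ is supported entirely in $\pi_{N,\s}^{-1}(N^+_{\s})$. These coincide because any finitely many exceptional points outside $\pi_{N,\s}^{-1}(N^+_{\s})$ can be absorbed by translating further by a deep element of $N^+_{\s}$, which is exactly what the ``there exists $n\in N$'' clause permits. Neither of these affects the validity of your argument, but both should be made explicit in a polished write-up.
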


\section{The middle cluster algebra}
\label{midclustersection}

In this section, we prove one of the main theorems of the paper, Theorem
\ref{mainth}. This is done in two steps. First, it follows from the
results of the previous section and properties of theta functions
in the $\cA_{\prin}$ case. This is easiest since the scattering diagram
technology works best for $\cA_{\prin}$. Second, we descend to the $\cA$ or
$\cA_t$
case and the $\cX$ case, with the $\cA$-type varieties appearing as
fibres of $\cA_{\prin}\rightarrow T_M$ and the $\cX$ variety as a quotient
of $\cA_{\prin}$ by $T_{N^{\circ}}$, using the $\cA_{\prin}$ case to deduce
the result for these other cases.

\subsection{The middle algebra for $\shA_{\prin}$}

Recall 
from Definition \ref{FGchamberdef} that $\Delta^+_{\s}$ is the collection of
chambers forming the cluster complex. Abstractly, by Lemma \ref{fgcclem},
this can be viewed as
giving a collection of chambers $\Delta^+$ in $\cXrt$. 

\begin{proposition} \label{posprop} Choose $m_0 \in \cXt$. If for some generic 
basepoint $Q \in \sigma \in \Delta^+$
there are only finitely many broken lines $\gamma$ with
$I(\gamma) = m_0$ and $b(\gamma) = Q$, then the same is true for any generic 
$Q' \in \sigma'\in \Delta^+$.
In particular, $\vartheta_{Q,m_0} \in \kk[\tM^\circ]$ is 
a positive universal Laurent polynomial. 
\end{proposition}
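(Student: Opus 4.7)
The plan is to combine positivity of $\foD^{\cA_{\prin}}_{\s}$ (Theorem \ref{scatdiagpositive}) with wall-crossing invariance of theta functions (Theorem \ref{blinvth}) to propagate Laurent polynomiality around the cluster complex, using the mutation bijection of Proposition \ref{brlineinvmut} as the key technical tool.

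By Theorem \ref{scatdiagpositive}, each broken line $\gamma$ contributes a monomial $c(\gamma)z^{F(\gamma)}$ with $c(\gamma)\in\bZ_{>0}$ to $\vartheta_{Q,m_0}$, so no cancellations among distinct broken lines can occur, and ``finitely many broken lines with $I(\gamma)=m_0$ and $b(\gamma)=Q$'' is equivalent to $\vartheta_{Q,m_0}\in\kk[\tM^{\circ}]$ being a (finite) Laurent polynomial on the cluster torus $T_{\tN^{\circ},\sigma}\subset \cA_{\prin}$ in the atlas $\cA_{\prin}\cong \cA_{\scat,\s}$ of Theorem \ref{rpth}. Reducing via a chain of adjacencies in $\Delta^+$ to the case of $\sigma,\sigma'$ sharing a face $\fod$, Theorem \ref{blinvth} gives $\vartheta_{Q',m_0}=\theta_{\fod}(\vartheta_{Q,m_0})$; by Remark \ref{noekwalls} combined with Lemma \ref{fgcclem}, the wall $\fod$ supports a polynomial scattering function $1+z^v$, and $\theta_{\fod}$ is the corresponding cluster mutation between the two adjacent cluster tori of the atlas. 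Writing $\sigma=\shC^+_{v\in\s}$ and $\sigma'=\shC^+_{v'\in\s}$, I would iterate Proposition \ref{brlineinvmut} along the mutation sequences $\s\leadsto\s_v$ and $\s\leadsto\s_{v'}$ to identify the broken line counts at $Q$ and $Q'$ with broken line counts in the respective positive chambers of $\foD^{\cA_{\prin}}_{\s_v}$ and $\foD^{\cA_{\prin}}_{\s_{v'}}$ via $\bZ$-linear maps $T_{v,\pm}$ and $T_{v',\pm}$ that manifestly preserve the number of terms in a Laurent polynomial; a final application of Proposition \ref{brlineinvmut} along the single mutation $\s_v\to\s_{v'}$ bijects the two resulting broken line counts, giving preservation of finiteness.

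Once Laurent polynomiality is established on every cluster chart, the wall-crossing identities of Theorem \ref{blinvth} ensure these local expressions agree on overlaps under the cluster mutations gluing the atlas, so they assemble into a regular function on the union of cluster tori, i.e., a universal Laurent polynomial $\vartheta_{Q,m_0}\in\up(\cA_{\prin})$; positivity in each seed's coordinates then follows from Theorem \ref{scatdiagpositive} applied to the scattering diagram for that seed. The hard part will be the combinatorial bookkeeping in chaining the mutation bijections of Proposition \ref{brlineinvmut}: each application requires the basepoints to lie in the correct half-spaces $\shH_{k,\pm}$, and successive mutations in a long mutation sequence may shuffle these conditions in nontrivial ways. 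An alternative but less clean route is to observe directly that $\theta_{\fod}(z^m)=z^m(1+z^v)^{\pm\langle n_0',m\rangle}$ can generate rational functions with alternating-sign formal expansions when the exponent is negative, and to invoke positivity of $\vartheta_{Q',m_0}$ (by Theorem \ref{scatdiagpositive} at $Q'$) to force these denominators to cancel in aggregate, as in the explicit computation of Example \ref{brokenlineexamples} --- but handling this cancellation term-by-term is precisely what Proposition \ref{brlineinvmut} lets us package combinatorially.
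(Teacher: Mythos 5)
Your main argument has a genuine gap at the ``final application of Proposition \ref{brlineinvmut}.'' That proposition changes the basepoint \emph{and} the scattering diagram simultaneously: it identifies broken lines for $\foD_{\s}$ ending at $Q$ with broken lines for $\foD_{\mu_k(\s)}$ ending at $T_k(Q)$. After your Step 1 and Step 2 you have the counts at $T_v(Q)\in\shC^+_{\s_v}$ (for $\foD_{\s_v}$) and at $T_{v'}(Q')\in\shC^+_{\s_{v'}}$ (for $\foD_{\s_{v'}}$). Applying the mutation bijection once more along $\s_v\to\s_{v'}$ to the first of these identifies the count at $T_v(Q)$ with the count at $T_k(T_v(Q))$ in $\foD_{\s_{v'}}$ — but $T_k(T_v(Q))$ lies in $T_k(\shC^+_{\s_v})$, the chamber of $\foD_{\s_{v'}}$ adjacent to $\shC^+_{\s_{v'}}$ across $e_k^\perp$, \emph{not} in $\shC^+_{\s_{v'}}$. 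So you have merely conjugated the original ``compare counts at basepoints in adjacent chambers of the same scattering diagram'' problem into the coordinates of $\foD_{\s_{v'}}$; it has not been solved. Proposition \ref{brlineinvmut} never compares two different basepoints of a \emph{single} diagram, which is what is needed here.

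Your alternative route is closer to what is actually required, but ``invoke positivity to force denominators to cancel in aggregate'' is not an argument: positivity of $\vartheta_{Q',m_0}$ rules out sign cancellations but does not by itself give finiteness, and that is precisely the content you are trying to prove. The paper fills this in with a short geometric step. Decompose monomials $cz^m$ appearing in $\vartheta_{Q',m_0}$ by the sign of $\langle n_0,m\rangle$, where $n_0$ is the normal to the separating wall and $\langle n_0,Q\rangle>0$. Since $\theta(z^m)=z^m f^{\langle n_0,m\rangle}$ preserves this sign, the $\ge 0$ part of $\vartheta_{Q',m_0}$ is obtained by applying $\theta$ to the corresponding part of the finite sum $\vartheta_{Q,m_0}$, and raising the polynomial $f$ to a nonnegative power keeps things polynomial. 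For the $<0$ part the argument is direct: the final segment of any contributing broken line has velocity $-m$ with $\langle n_0,-m\rangle>0$, i.e.\ points toward the wall, so it can be prolonged (without bending) across the wall to end at some $Q''\in\sigma$, producing a broken line for $m_0$ with endpoint $Q''$ and the same final monomial. Consistency (Theorem \ref{blinvth}) gives $\vartheta_{Q'',m_0}=\vartheta_{Q,m_0}$, and positivity guarantees no cancellation, so infinitely many $<0$ terms at $Q'$ would force $\vartheta_{Q,m_0}$ to be an infinite sum --- contradicting the hypothesis. This extension-across-the-wall step is the missing ingredient in your proposal.
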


\begin{proof} 
By positivity of the scattering diagram, Theorem \ref{scatdiagpositive}, 
for any basepoint $Q$, $\vartheta_{Q,m_0}$ has only non-negative coefficients 
(though
it may have infinitely many terms). Also, we know that for basepoints in different chambers,
the $\vartheta_{Q,m_0}$ are related by wall-crossings
by Theorem \ref{blinvth}, 
which in turn are identified with
the mutations of tori in the atlas for $\cA_{\prin}$. So
the $\vartheta_{Q,m_0}$ determine a universal positive Laurent
polynomial if and only if we have finiteness of broken lines ending at any $Q$
in any chamber of $\Delta^+_{\s}$. If we vary $Q$ in the
chamber, $\vartheta_{Q,m_0}$ does not change. 
So it's enough to check that if $\vartheta_{Q,m_0}$
is a polynomial, the same is true of $\vartheta_{Q',m_0}$ 
for $Q'$ in an adjacent 
chamber $\sigma'$ to $\sigma$ close to the wall $\sigma\cap\sigma'$. We can
work in some seed. Let 
the wall be contained in $n_0^{\perp}$, $n_0 \in \tN^{\circ}$,
with $\langle n_0,Q\rangle > 0$, and denote the wall-crossing automorphism
from $Q$ to $Q'$ as $\theta$. 
Recall that 
$\theta(z^m) = z^m f^{\langle n_0,m\rangle}$,
where for walls between cluster chambers, $f$ is
some positive Laurent polynomial (in fact it has the form $1 + z^{q}$ for
some $q \in n_0^{\perp}\subset \tM^{\circ}$). 

Monomials $m \in \tM^{\circ}$ are then divided
into three groups, according to the sign of $\langle n_0,m\rangle$. 
This sign is preserved by $\theta$, as
$n_0$ takes the same value on
each exponent of a monomial term in $\theta(z^m)$ 
as $n_0$ takes on $m$.

Monomials with $\langle n_0,m\rangle =0$ 
are then invariant under $\theta$, so 
these terms in $\vartheta_{Q',m_0}$ coincide with those in $\vartheta_{Q,m_0}$.
Hence there are only a finite number of such terms in $\vartheta_{Q',m_0}$.

The sum of  terms of the form $cz^m$
in $\vartheta_{Q,m_0}$ with $\langle n_0,m\rangle > 0$, 
which we know form a Laurent polynomial, is, by the explicit formula
for $\theta$, sent to a polynomial.
So it only remains to show that there are only finitely many terms $cz^m$ 
in $\vartheta_{Q',m_0}$ with $\langle n_0,m\rangle < 0$. Suppose the contrary
is true. 
The direction vector of each broken line contributing to such terms at 
$Q'$ is towards the wall $\sigma\cap\sigma'$, and so we can
extend the final segment of any such broken line to obtain a 
broken line terminating at some point $Q''$ (depending on $m$) in the same 
chamber as $Q$. As there are no cancellations because of
the positivity of all coefficients and $\vartheta_{Q,m_0}$ does not
depend on the location of $Q$ inside the chamber by Theorem \ref{blinvth},
we see that $\vartheta_{Q,m_0}$ has an infinite number of terms, a 
contradiction.
\end{proof} 

\begin{definition} \label{thetadef} Let $\Theta \subset \cXt$ be the collection of $m_0$ such that for some (or equivalently,
by Proposition \ref{posprop}, any) generic $Q \in \sigma\in\Delta^+$ 
there are only finitely many broken
lines $\gamma$ with $I(\gamma) = m_0$, $b(\gamma) = Q$. 
\end{definition}

\begin{definition} \label{icdef} We call a subset 
$S \subset \cXt$ \emph{intrinsically closed 
under addition} if 
$p,q \in S$ and $\alpha(p,q,r) \neq 0$ implies $r \in S$.
\end{definition}

\begin{lemma}
\label{iclemma}
Let $S\subset \cXt$ be intrinsically
closed under addition. The image of $S$ in $\tM^{\circ}_{\s}$ 
(under the identification $\cXt = \tM^{\circ}_{\s}$
induced by the seed $\s$) is closed under addition for any seed $\s$. 
If for some seed $S \subset \tM^{\circ}_{\s}$ is a toric monoid (i.e.,
the integral points of a convex rational polyhedral cone), then this holds
for any seed.
\end{lemma}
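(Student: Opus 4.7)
For the first statement, take $p, q \in S$ and a seed $\s$; by Definition \ref{icdef} it suffices to show $\alpha(p, q, p+q) \neq 0$, where $p+q$ is the lattice sum in $\tM^\circ_\s$. By Proposition \ref{thetabasislemma}(3), $\alpha(p,q,p+q) = \alpha_z(p,q,p+q)$ for any generic basepoint $z \in \tM^\circ_{\bR,\s}$ sufficiently close to $p+q$, computed as a sum over pairs of broken lines $(\gamma_1, \gamma_2)$ with $I(\gamma_i) \in \{p, q\}$, $b(\gamma_i) = z$, and $F(\gamma_1) + F(\gamma_2) = p + q$.

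The key observation is that straight lines are always admissible broken lines: for any $m \in \tM^\circ$ and any generic basepoint $z$, the path $\gamma(t) = z - tm$, $t \in (-\infty, 0]$, with the constant attached monomial $z^m$, is a valid broken line. Indeed, whenever $\gamma$ crosses a wall $(\fod, f_\fod)$, the monomial $z^m$ itself appears with coefficient one in $\theta_\fod(z^m) = z^m \cdot f_\fod^{\pm \langle n_0, m\rangle}$, so the non-bending option is always admissible. Applying this with $m = p$ and $m = q$ produces a pair of straight broken lines with $c(\gamma_1)c(\gamma_2) = 1$ and $F(\gamma_1) + F(\gamma_2) = p + q$, contributing $1$ to $\alpha_z(p, q, p+q)$. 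By Theorem \ref{scatdiagpositive}, every broken line contributes a non-negative integer, so there is no cancellation, giving $\alpha(p, q, p+q) \geq 1$ and hence $p + q \in S$.

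For the second statement, write $S = \tM^\circ \cap C_0$ with $C_0 \subset \tM^\circ_{\bR,\s_0}$ a rational polyhedral convex cone, and let $\s$ be another seed. The change-of-coordinates map $T: \tM^\circ_{\bR,\s} \to \tM^\circ_{\bR,\s_0}$ is piecewise linear, being the tropicalisation of a composition of finitely many cluster mutations, each of which is linear on two half-spaces; hence $T$ is linear on each of finitely many rational polyhedral chambers. Consequently $T^{-1}(C_0)$ is a finite union $\bigcup_{i=1}^r D_i$ of rational polyhedral convex cones, and in the seed $\s$ we have $S = (\bigcup_i D_i) \cap \tM^\circ$. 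By part one, $S$ is closed under addition, and I plan to upgrade this to addition-closure of $\bigcup_i D_i$ via a density and pigeonhole argument: given $x \in D_i, y \in D_j$, rescale to produce lattice points $\tilde x \in D_i \cap \tM^\circ, \tilde y \in D_j \cap \tM^\circ$ approximating positive multiples of $x, y$; then $\tilde x + \tilde y \in S$, and by finiteness of the $D_l$ infinitely many such sums lie in a common $D_{l_0}$, forcing $x + y \in D_{l_0}$ by closedness. A cone closed under addition is convex, so $\bigcup_i D_i$ is a rational polyhedral convex cone $C'$. Saturation of $S$ in $\s$ follows from saturation in $\s_0$ together with the intrinsic nature of positive-integer scaling on $\cXt$ (each $T_k$ is linear, hence homogeneous, on each half-space). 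Thus $S = \tM^\circ \cap C'$ is a toric monoid in $\s$. The main obstacle is carrying out the density/pigeonhole step rigorously to establish convexity of the finite union $\bigcup_i D_i$; everything else is formal.
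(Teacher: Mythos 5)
Your proposal follows essentially the same route as the paper: straight broken lines give $\alpha(p,q,p+q)\neq 0$ for part one, and for part two you transport the cone through the piecewise-linear mutation and argue that a finite union of rational polyhedral cones whose lattice points are closed under addition must be convex. The paper asserts this last step without comment, so your instinct to justify it is good; but you can dispense with the pigeonhole and make it cleaner as follows. If $x,y\in C'=\bigcup_i D_i$ are rational, pick $N$ with $Nx,Ny\in\tM^\circ$; since the $D_i$ are cones, $Nx,Ny\in S$, so $N(x+y)\in S\subset C'$, and hence $x+y\in C'$. For general $x\in D_i$, $y\in D_j$, approximate by rational points $x_n\in D_i$, $y_n\in D_j$ (dense, as the $D_i$ are rational polyhedral), apply the rational case to get $x_n+y_n\in C'$, and use that $C'$ is closed (a finite union of closed sets) to conclude $x+y\in C'$. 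Thus $C'$ is a convex cone and the remainder of your argument is complete.
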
 

\begin{proof} 
Choose a seed $\s$. Then straight lines in Definition-Lemma \ref{structuredef}
show $\alpha(p,q,p+q) \neq 0$. 
This gives closure under addition.
Now suppose $S \subset \tM^{\circ}_{\s}$ is a toric monoid, generating
the convex rational polyhedral cone $W \subset \tM^{\circ}_{\s,\bR}$. Then
$\mu_{\s,\s'}(W) \subset \tM^{\circ}_{\s',\bR}$ is a rational polyhedral cone 
with
integral points $S \subset \tM^{\circ}_{\s'}$. As this set of integral points 
is closed under addition,
$\mu_{\s,\s'}(W)$ is convex, and so its integral points are a toric
monoid.
\end{proof}

Recall from the introduction the definition of global monomial (Definition
\ref{globalmonomialdef}).

\begin{theorem} \label{goodalgebra} Let  
\[
\Delta^+(\bZ)=\bigcup_{\sigma\in\Delta^+} \sigma \cap \cXt
\]
be the set of integral points in chambers of the cluster complex. Then
\begin{enumerate}
\item 
$\Delta^+(\ZZ)\subset\Theta$.
\item
For $p_1,p_2 \in \Theta$ 
\[
\vartheta_{p_1} \cdot \vartheta_{p_2} = \sum_{r} \alpha(p_1,p_2,r) \vartheta_r
\]
is a finite sum (i.e., $\alpha(p_1,p_2,r) =0$ for all but finitely many $r$) 
with non-negative integer coefficients. If $\alpha(p_1,p_2,r) \neq 0$,
then $r \in \Theta$. 
\item
The set $\Theta$ is intrinsically closed under addition. For any seed $\s$, the 
image of $\Theta \subset \tM^{\circ}_{\s}$ is a saturated monoid. 
\item
The structure constants $\alpha(p,q,r)$ of Definition \ref{structuredef} make the 
$\kk$-vector space with basis indexed by $\Theta$, 
\[
\midd(\cA_{\prin}) := \bigoplus_{q \in \Theta} \kk \cdot \vartheta_q
\]
into an associative commutative $\kk[N]$-algebra. 
There are canonical inclusions of $\kk[N]$-algebras
\begin{align*}
\ord(\cA_{\prin}) \subset \midd(\cA_{\prin}) & \subset \up(\cA_{\prin}) \\
                                             & \subset 
\widehat{\up(\cA_{\prin,\s})} \otimes_{\kk[N_{\s}^+]} \kk[N].
\end{align*}
Under the first inclusion a cluster monomial $Z$ is identified with 
$\vartheta_{\g(Z)}$ for $\g(Z) \in \Delta^+(\bZ)$
its $g$-vector. Under the second inclusion each $\vartheta_q$ is
identified with a universal positive Laurent polynomial.
\end{enumerate}
\end{theorem}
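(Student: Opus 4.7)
The plan is to prove the four parts in sequence, with parts (1) and (4) essentially formal, part (2) controlled by positivity of the scattering diagram, and the main combinatorial obstacle being saturation in part (3).

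Parts (1) and (2) are short. For $p\in\Delta^+(\bZ)$, one has $p\in\sigma\cap \widetilde{M}^{\circ}$ for a cluster chamber $\sigma$; choosing a basepoint $Q\in\Int(\sigma)$, Corollary \ref{monocor} gives $\vartheta_{Q,p}=z^p$, which exhibits a unique broken line and places $p$ in $\Theta$. Proposition \ref{posprop} transports this finiteness to any basepoint in any cluster chamber, giving (1). For (2), if $p_1,p_2\in\Theta$ then each $\vartheta_{p_i}$ is a positive Laurent polynomial by Proposition \ref{posprop}, hence so is $\vartheta_{p_1}\cdot\vartheta_{p_2}$. Proposition \ref{thetabasislemma}(3) yields the a priori formal expansion $\vartheta_{p_1}\cdot\vartheta_{p_2}=\sum_q \alpha(p_1,p_2,q)\vartheta_q$ in the completion, with structure constants $\alpha(p_1,p_2,q)\in\bZ_{\ge 0}$ and each $\vartheta_q$ a formal power series with non-negative coefficients (Theorem \ref{scatdiagpositive}). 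Because positivity forbids cancellation, the Laurent-monomial support of the right-hand side is precisely the union of supports of those $\vartheta_q$ with $\alpha(p_1,p_2,q)\ne 0$; finiteness on the left therefore forces finitely many such $q$ to contribute, each with $\vartheta_q$ of finite support --- i.e., $q\in\Theta$.

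For (3), intrinsic closure is immediate from (2), and Lemma \ref{iclemma} propagates closure to $\widetilde{M}^{\circ}_{\s}$ for every seed $\s$. The saturation claim is the main obstacle, and I plan to prove it by a direct scaling operation on broken lines. Given $k\ge 1$ and $m$ with $km\in\Theta$, to each broken line $\gamma$ with $I(\gamma)=m$ and $b(\gamma)=Q$ I assign the reparametrization $k\cdot\gamma(t):=\gamma(kt)$, which has the same image in $M_{\bR}$, direction $-km_L$ on each segment, and decoration $c'_L z^{km_L}$ in place of $c_L z^{m_L}$. At each bend along a wall $(\fod,f_{\fod})\subset n^{\perp}$ with $m_{L'}-m_L=t_L\, p^*(n)$, the new coefficient $c'_{L'}$ is defined to be $c'_L$ times the coefficient of $z^{k t_L p^*(n)}$ in $f_{\fod}^{\langle n,km_L\rangle}=(f_{\fod}^{\langle n,m_L\rangle})^k$; this coefficient is strictly positive because the coefficient of $z^{t_L p^*(n)}$ in $f_{\fod}^{\langle n,m_L\rangle}$ is positive (as $\gamma$ bends this way) and by Theorem \ref{scatdiagpositive} there is no cancellation in the $k$-fold product. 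The assignment $\gamma\mapsto k\cdot\gamma$ is injective because the exponent sequence $(km_L)$ determines $(m_L)$ (as $k\ge 1$), and the bending rule for $\gamma$ then uniquely recovers the original coefficients $(c_L)$. Hence finitely many broken lines for $km$ forces finitely many for $m$, establishing saturation.

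Part (4) is then formal. Part (2) is precisely closure of $\midd(\cA_{\prin})$ under the product defined by the $\alpha$'s; associativity and commutativity are inherited from the genuine multiplication in $\widehat{\up(\ocA_{\prin}^{\s})}\otimes_{\kk[N^+_{\s}]}\kk[N]$ via linear independence of theta functions (Proposition \ref{thetabasislemma}(2),(4)), and the $\kk[N]$-module structure matches $N$-translation on $\Theta$ by Proposition \ref{hatinfun}. The inclusion $\midd(\cA_{\prin})\subset \up(\cA_{\prin})$ identifies each $\vartheta_q$, $q\in\Theta$, with its universal positive Laurent polynomial (Proposition \ref{posprop}); Theorem \ref{thetaequalsmonomial} together with Corollary \ref{gvecthm} identifies every cluster monomial $Z$ with $\vartheta_{\g(Z)}$ and $\g(Z)\in\Delta^+(\bZ)\subset\Theta$ by (1), yielding $\ord(\cA_{\prin})\subset\midd(\cA_{\prin})$. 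The final inclusion into the completion is Proposition \ref{thetabasislemma}(2).
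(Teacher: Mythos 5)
Your proof is correct and follows essentially the same route as the paper's: (1) via Corollary \ref{monocor} and the definition of $\Theta$; (2) by comparing Laurent-polynomial support of $\vartheta_{Q,p_1}\vartheta_{Q,p_2}$ against the positive expansion from Proposition \ref{thetabasislemma}(3); (3) by scaling the monomial decorations of broken lines for $m$ by $k$ to obtain broken lines for $km$; and (4) formally from Propositions \ref{thetabasislemma}, \ref{posprop} and the identification of cluster monomials with theta functions. The only difference is presentational — you spell out the positivity of the coefficients appearing on the $k$-scaled broken line and the injectivity of $\gamma\mapsto k\cdot\gamma$, whereas the paper phrases the same step as an injection of final monomials $S(q)\hookrightarrow S(kq)$ — but these are the same argument.
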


\begin{proof} (1) is immediate from Corollary \ref{monocor}.
For (2), first note that
the coefficients $\alpha(p_1,p_2,r)$ are non-negative by Definition-Lemma \ref{structuredef}.
Suppose $p_1,p_2 \in \Theta$. Take a generic basepoint $Q$ in some
cluster chamber. Then
$\vartheta_{Q,p_1} \cdot \vartheta_{Q,p_2}$ is the product of two 
Laurent polynomials, thus a Laurent polynomial.
It is equal to $\sum_{r} \alpha(p_1,p_2,r) \vartheta_{Q,r}$ 
by (3) of Proposition \ref{thetabasislemma}, and hence this sum must be finite,
as it involves a positive linear combination of series with positive
coefficients. Further, each $\vartheta_{Q,r}$ appearing with $\alpha(p_1,p_2,r)
\not=0$ must be a Laurent polynomial for the same reason.
Thus $r \in \Theta$ by Definition \ref{thetadef}.  
(2) then immediately implies $\Theta$ is intrinsically closed under addition.

For (4), note each $\vartheta_{Q,p}$, $p \in \Theta$ is a
universal positive Laurent polynomial by Proposition \ref{posprop}. 
For $p \in \Delta^+(\bZ)$, $\vartheta_p \in \up(\cA_{\prin})$ 
is the corresponding cluster monomial by (4) of Theorem
\ref{ffgth}. The inclusions of algebras, and the associativity of the multiplication
on $\midd$ follow from Proposition \ref{thetabasislemma}. 

Finally we complete the proof of (3) by checking that $\Theta$ is saturated. 
Assume $k \cdot q \in \Theta$ for some integer $k \geq 1$. 
Take $Q$ to be a generic basepoint in some cluster chamber. 
We show that the set of final monomials $S(q) := \{F(\gamma)\}$ 
for broken lines $\gamma$ with 
$I(\gamma) = q$, $b(\gamma) = Q$ is finite. By assumption (and the positivity of the scattering diagram), 
this holds with $q$ replaced by $kq$. So it is enough to show $m \in S(q)$ 
implies $km \in S(kq)$. 
Indeed, it is easy to see that for every broken line $\gamma$ for $q$ ending
at $Q$, there is a broken line $\gamma'$ for $kq$
with the same underlying path, such that for every domain of
linearity $L$ of $\gamma$, the exponents $m_L$ and $m'_L$
of the monomial decorations of $L$ for $\gamma$ and $\gamma'$ respectively
satisfy
$m'_L=km_L$. This completes the proof of (3), hence the theorem.
\end{proof}

The above theorem immediately implies:

\begin{corollary}
Theorem \ref{mainth} is true for $V=\shA_{\prin}$.
\end{corollary}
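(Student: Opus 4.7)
The plan is to verify each of the seven clauses of Theorem \ref{mainth} in the case $V = \shA_{\prin}$ by assembling the results already established in this section. No new argument is really needed beyond matching notation; everything has been arranged so that the corollary is a bookkeeping exercise.

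For (1), the structure constants $\alpha(p,q,r)$ are defined by counts of pairs of broken lines in Definition-Lemma \ref{structuredef}; that same lemma records both the non-negativity (which ultimately comes from positivity of $\foD_\s$, Theorem \ref{scatdiagpositive}) and the finiteness that prevents the value $\infty$ from being taken in the $\cA_{\prin}$ setting. Items (2)--(4) and (6) are then essentially restatements of Theorem \ref{goodalgebra}: the set $\Theta$ is introduced in Definition \ref{thetadef}, part (1) of Theorem \ref{goodalgebra} gives $\Delta^+(\ZZ) \subset \Theta$, part (2) gives finiteness of the product expansion together with the fact that $\Theta$ is preserved, part (3) gives intrinsic additive closure together with saturation of the image of $\Theta$ in $\tM^\circ_\s$ for every seed, and part (4) upgrades these to an associative commutative $\kk$-algebra structure on $\midd(\cA_{\prin})$ with a canonical inclusion $\midd(\cA_{\prin}) \subset \up(\cA_{\prin})$. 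Proposition \ref{posprop} then identifies each $\vartheta_q$ with a universal positive Laurent polynomial, which is precisely (6).

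For (5), the identification sending $\vartheta_{\g(Z)}$ to the cluster monomial $Z$ is part of Theorem \ref{goodalgebra}(4), and can also be read off from Theorem \ref{ffgth}(4). The only clause needing a short independent argument is the injectivity of $\nu$ in (7). For this I would use the canonical inclusion
\[
\up(\cA_{\prin}) \hookrightarrow \widehat{\up(\ocA_{\prin}^{\s})}\otimes_{\kk[N^+_\s]} \kk[N]
\]
from \eqref{upginhat}, combined with the linear independence of the $\vartheta_q$ as elements of this completion, which is exactly Proposition \ref{thetabasislemma}(2). Since any element of $\midd(\cA_{\prin})$ is by definition a \emph{finite} $\kk$-linear combination of theta functions, a relation $\sum_q c_q \nu(\vartheta_q) = 0$ in $\up(\cA_{\prin})$ maps to a finite relation in the completion, and the cited linear independence forces all $c_q = 0$.

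I expect no significant obstacle: the hardest technical work, namely the construction of the scattering diagram, positivity, the chamber structure, the identification of cluster monomials with theta functions for $g$-vectors in $\Delta^+$, and the linear independence of theta functions in the completion, has already been done in \S\ref{scatdiagsection}--\S\ref{formalFGcsec}. The corollary is simply the observation that all of those ingredients together are precisely what is asserted by Theorem \ref{mainth} in the principal-coefficient case.
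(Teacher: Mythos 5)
Your proposal is correct and follows essentially the same route as the paper, which simply remarks that the corollary is immediate from Theorem \ref{goodalgebra}. Your unpacking of clauses (1)--(6) against Definition-Lemma \ref{structuredef}, Theorem \ref{scatdiagpositive}, Proposition \ref{posprop}, and Theorem \ref{goodalgebra} is the bookkeeping the paper leaves implicit, and your injectivity argument for (7) via \eqref{upginhat} and Proposition \ref{thetabasislemma}(2) is exactly what underlies the second inclusion asserted in Theorem \ref{goodalgebra}(4).
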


The following shows our theta functions are well-behaved with respect
to the canonical torus action on $\cA_{\prin}$.

\begin{proposition}
\label{thetaeigenfunctions}
Let $q\in \Theta\subset \cA^{\vee}_{\prin}(\ZZ)$. Then $\vartheta_q\in 
\up(\cA_{\prin})$ is an eigenfunction
for the natural $T_{\tK^{\circ}}$ action on $\cA_{\prin}$ (see 
Proposition \ref{ldpprop}, (2)), with weight $w(q)$  given by the canonical
map $w:\tM^{\circ} = (\tN^\circ)^* \to (\tK^{\circ})^*$ (the map being
dual to the inclusion 
$\tK^{\circ} \subset \tN^{\circ}$). In particular $\vartheta_q$ is an eigenfunction for the 
subtorus $T_{N^{\circ}} \subset T_{\tK^{\circ}}$ with weight $w(q)$ where 
$w: \tM^{\circ} \rightarrow M^{\circ}$ is given by $(m,n)\mapsto m-p^*(n)$.
\end{proposition}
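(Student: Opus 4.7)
Since $q \in \Theta$, Proposition \ref{posprop} (applied to any basepoint $Q$ in a cluster chamber) guarantees that
\[
\vartheta_q = \sum_{\gamma} c(\gamma)\, z^{F(\gamma)}
\]
is a \emph{finite} sum of monomials on the torus $T_{\widetilde N^\circ,\s}$, the sum running over broken lines $\gamma$ for $q$ with endpoint $Q$. An eigenfunction statement for $\vartheta_q$ therefore reduces to the purely combinatorial claim that every exponent $F(\gamma)\in \widetilde M^\circ$ has the same image $w(q)$ under $w \colon \widetilde M^\circ \to (\widetilde K^\circ)^*$. I would prove this by tracking the exponent along $\gamma$ and showing that each bending event leaves $w$ unchanged.

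More precisely, between bending events the monomial $c_L z^{m_L}$ attached to a linear piece $L$ of $\gamma$ is constant, so it suffices to check invariance across a single bend. Recall from Construction \ref{sdprin} that every wall $\fod \in \foD^{\cA_{\prin}}_\s$ has the form $\fod \subset (n_0,0)^\perp$ for some $n_0 \in N^+$, with attached function in $\widehat{\kk[P_\s]}$ expressed in the single monomial $z^{(p_1^*(n_0),\, n_0)}$. Thus if $\gamma$ passes from a domain of linearity $L$ to $L'$ by crossing such a wall, then $c_{L'} z^{m_{L'}}$ is a term of $c_L z^{m_L} f_{\fod}^{\langle (n_0,0), m_L \rangle}$, so
\[
m_{L'} - m_L \;\in\; \ZZ_{\geq 0} \cdot (p_1^*(n_0),\, n_0).
\]
The key computation is that $w$ annihilates this increment. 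Indeed $(p_1^*(n_0),\, n_0)$ is precisely the image of $(n_0,0)\in \widetilde N$ under the map $\widetilde p^* = \{\cdot,\cdot\} \colon \widetilde N \to \widetilde M^\circ$. By the definition of $\widetilde K^\circ$ in Proposition \ref{ldpprop} (it is the kernel lattice for the action that makes the quotient the $\cX$-variety, and in particular $\widetilde K^\circ$ is annihilated by $\widetilde p^*(\widetilde N)$ under the evaluation pairing $\widetilde M^\circ \times \widetilde N^\circ \to \ZZ$), we get $w(\widetilde p^*(\widetilde n)) = 0$ for all $\widetilde n \in \widetilde N$. In particular $w(p_1^*(n_0), n_0) = 0$, so $w(m_{L'}) = w(m_L)$. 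Induction on the number of bends together with $I(\gamma) = q$ gives $w(F(\gamma)) = w(q)$ for every $\gamma$, hence $\vartheta_q$ is a $T_{\widetilde K^\circ}$-eigenfunction of weight $w(q)$.

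For the final statement about the subtorus $T_{N^\circ} \subset T_{\widetilde K^\circ}$, I would just trace the weight through the restriction map on characters. The inclusion $N^\circ \hookrightarrow \widetilde K^\circ$ (the one giving the action whose quotient is $\cX$, see Proposition \ref{ldpprop}(2)) dualizes to a map $(\widetilde K^\circ)^* \to (N^\circ)^* = M^\circ$ whose composite with $w \colon \widetilde M^\circ \to (\widetilde K^\circ)^*$ is exactly $(m,n) \mapsto m - p^*(n)$; one checks this on the standard generators by inserting the explicit embedding $n \mapsto (n, -p_1^*(n))$ (equivalently, by demanding that the cluster monomials $A_i = z^{(f_i,0)}$ and $X_i = z^{(0,e_i)}$ have $T_{N^\circ}$-weights $f_i$ and $-v_i$ respectively, as forced by the identification of $T_{N^\circ}$-weights with $g$-vectors in Proposition \ref{gprop}). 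Applying this composite to $w(q)$ yields the stated formula.

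I do not anticipate a real obstacle: everything reduces to the single identity $w \circ \widetilde p^* = 0$, which is a direct consequence of how $\widetilde K^\circ$ is set up in Proposition \ref{ldpprop}. The only point needing care is making sure the broken line sum is finite so that we get an honest eigenfunction rather than a formal one, and this is precisely the role of the hypothesis $q \in \Theta$.
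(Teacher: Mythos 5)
Your argument is correct and essentially the same as the paper's: fix a seed, track the exponent of the monomial decoration along a broken line, and observe that the increment at every bend is a nonnegative integer multiple of some $(p_1^*(n_0),n_0)$, which lies in $\ker w$ because $w(v_i,e_i)=0$ for $i\in I_{\uf}$ (a direct consequence of how $\tK^\circ$ is defined). One small correction: the cocharacter embedding $N^\circ\hookrightarrow\tK^\circ\subset\tN^\circ$ from Proposition \ref{ldpprop}(2) is $n\mapsto(n,p^*(n))$, not $n\mapsto(n,-p_1^*(n))$; with your sign the induced weight would come out as $m+p^*(n)$ rather than $m-p^*(n)$, contradicting the sanity check you yourself give via $A_i$ and $X_i$ (which is correct and does yield $w(m,n)=m-p^*(n)$).
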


\begin{proof}
Pick a seed $\s$, giving an identification $\shA^{\vee}_{\prin}(\ZZ)$ with
$\tM^{\circ}$. Pick also a general basepoint $Q\in \shC^+_{\s}$.
We need to show that for any broken line $\gamma$ in $\tM_{\RR}^{\circ}$
for $q$ with endpoint $Q$, $\Mono(\gamma)$ is a semi-invariant for
the $T_{\tK^{\circ}}$ action with weight $w(q)$. The $T_{\tK^{\circ}}$ action
on the seed torus $T_{\tN^{\circ},\s}\subset \shA_{\prin}$ is given on cocharacters by the
natural inclusion $\tK^{\circ} \subset \tN^{\circ}$. By definition of $\tK^{\circ}$
we have $w(v_i,e_i)=0$, $i \in I_{\uf}$, 
so every monomial appearing in
any function $f_{\fod}$ for $(\fod,f_{\fod})\in\foD^{\cA_{\prin}}_{\s}$
is in the kernel of $w$. The result for $T_{\tK^{\circ}}$ follows. The statement for $T_{N^{\circ}}$ now
follows from the definitions. 
\end{proof}

With more work, we will define the middle cluster algebra for $V=\cA_t$ or 
$\cX$. 

\subsection{From $\shA_{\prin}$ to $\cA_t$ and $\cX$.} 
\label{blgcss}
We now show how the various structures we have used to understand 
$\shA_{\prin}$ induce similar structures for $\cA_t$ and $\cX$.

By \cite{P1}, \S 3, each seed $\s$ (in the $\cX$, $\cA$ and $\cA_{\prin}$ cases)
gives a \emph{toric model} for $V$. The seed specifies the data of a fan
$\Sigma_{\s,V}$, consisting only of rays 
(so the boundary $\oD \subset \TV(\Sigma_{\s,V})$ of the
associated toric variety is a disjoint union of tori). The seed
also then specifies a blowup $Y \to \TV(\Sigma_{\s,V})$
with codimension two center, the disjoint union of divisors $Z_i \subset \oD_i$ 
in each of the disjoint irreducible components $\oD_i \subset \oD$. If
$D$ is the proper transform of $\oD$, then there is 
a birational map $Y \setminus D \dasharrow V$. This map
is an isomorphism outside of codimension two between 
$Y \setminus D$ and the upper bound
(see \cite{P1}, Remark 3.13, \cite{BFZ05}, Def.\ 1.1) $V_{\s} \subset V$, 
which we recall is the union of $T_{L,\s}$ with $T_{L,\s'}$ for the
{\it adjacent seeds}, $\s'=\mu_k(\s), k \in I_{\uf}$. 
In the case $V = \cA_{\prin},\cX$ or $\cA_t$ for
very general $t$, the inclusion $V_{\s} \subset V$ is an isomorphism
outside codimension two. We have
\begin{align*}
\Sigma_{\s,\cA}= {} & \{ \RR_{\ge 0} e_i \,|\, i\in I_{\uf}\},\\
\Sigma_{\s,\cX}= {} & \{ -\RR_{\ge 0} v_i \,|\, i\in I_{\uf}\}.
\end{align*}

From these toric models it is easy to determine the global monomials: 

\begin{lemma}[Global Monomials] \label{gmlemma} Notation as immediately above. 
For $m\in \Hom(L_{\s},\ZZ)$,
 the character $z^m$ on the torus $T_{L,\s} \subset V$ 
is a global monomial
if and only if $z^m$ is regular on the toric variety 
$\TV(\Sigma_{\s,V})$, which holds if and only if  $\langle m,n \rangle\geq 0$
for the primitive generator $n$ of each ray in the fan $\Sigma_{\s,V}$. For 
$\cA$-type cluster varieties
a global monomial is the same as a cluster monomial, i.e., a monomial in the 
variables of
a single cluster, where the non-frozen variables have non-negative exponent.
\end{lemma}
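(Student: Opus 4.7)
\textbf{Proof proposal for Lemma \ref{gmlemma}.} The plan is to use the toric model $p: Y \to \TV(\Sigma_{\s,V})$ recalled immediately before the lemma to transfer regularity of $z^m$ between the cluster variety and the toric variety. The second equivalence, between regularity of $z^m$ on $\TV(\Sigma_{\s,V})$ and the inequalities $\langle m, n\rangle \ge 0$ for primitive ray generators $n$, is standard toric geometry (a character extends across a toric stratum corresponding to a ray exactly when it is non-negative on that ray). So the content is the equivalence of the first two conditions.

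For the ``inequalities imply global monomial'' direction, suppose $z^m$ is regular on $\TV(\Sigma_{\s,V})$. Then $p^*(z^m)$ is regular on $Y$, hence restricts to a regular function on $Y \setminus D$. Under the birational map $Y \setminus D \dasharrow V$, which is an isomorphism in codimension one onto $V_\s \subset V$, we obtain a rational function on $V$ that restricts to $z^m$ on $T_{L,\s}$ and is regular on $V_\s$. For $V = \cA_{\prin}, \cX,$ or $\cA_t$ with $t$ very general, $V_\s \subset V$ is an isomorphism outside codimension two, so normality of $V$ (being smooth) gives extension to all of $V$. For $V = \cA$, we can lift the character $z^m$ to a character on $T_{\tN^{\circ},\s} \subset \cA_{\prin}$ (as in Proposition \ref{cvextrem}), apply the above to obtain a global function on $\cA_{\prin}$, and then restrict to the fibre $\cA \subset \cA_{\prin}$.

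For the converse, if $z^m$ is a global monomial, then it is regular on $V$, and in particular on the open subset coming from $Y \setminus D$. Its pushforward to $\TV(\Sigma_{\s,V})$ via the proper birational morphism $p$ is a rational function regular on the torus $T_L$ and on each interior of a boundary divisor of $\TV(\Sigma_{\s,V})$ away from the loci $Z_i$ (which have codimension two in $\TV(\Sigma_{\s,V})$). By normality of $\TV(\Sigma_{\s,V})$ and Hartogs, $z^m$ extends to a regular function on $\TV(\Sigma_{\s,V})$, which gives the required inequalities. The main subtlety here is checking that the relevant boundary divisors of $\TV(\Sigma_{\s,V})$ really do ``see'' $V$ as divisors under the toric model, so that regularity at them is forced by regularity on $V$; this follows from the structure of the toric model, where the rays of $\Sigma_{\s,V}$ correspond to divisors of $V$ not contracted by the birational map.

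Finally, for the identification with cluster monomials in the $\cA$ case: with $\s = (e_1,\ldots,e_n)$ and dual basis $f_i = d_i^{-1} e_i^*$, any character on $T_{N^\circ,\s}$ has the form $z^m$ with $m = \sum_i a_i f_i$, and $\langle e_i, m\rangle = a_i/d_i$. Thus $\langle m, e_i\rangle \ge 0$ for $i \in I_{\uf}$ holds precisely when $a_i \ge 0$ for $i \in I_{\uf}$, which is exactly the cluster monomial condition (Definition \ref{clustermonodef}). The main obstacle I expect is carefully justifying that a global monomial must extend across every boundary divisor of $\TV(\Sigma_{\s,V})$ coming from a ray of $\Sigma_{\s,V}$ — this requires tracking precisely how those divisors are created or destroyed by the blowup $p: Y \to \TV(\Sigma_{\s,V})$ and the birational map $Y \setminus D \dasharrow V$, and ensuring no relevant divisor is exceptional.
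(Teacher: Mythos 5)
Your ``inequalities imply global monomial'' direction is essentially the paper's argument (the paper dispatches the $\shA_t$ case by appealing to the Laurent phenomenon directly rather than lifting to $\cA_{\prin}$, but both routes work). The converse direction, however, has a genuine gap, and the ``subtlety'' you flag at the end is where it breaks.

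You propose to show that if $z^m$ is a global monomial then its pushforward to $\TV(\Sigma_{\s,V})$ is regular away from the centers $Z_i$ (codimension two) and then invoke Hartogs. But the image of $Y\setminus D$ under $p$ is only $T_L\cup\bigcup_i Z_i$: removing the proper transforms $D_i$ from $Y$ removes the entire dense stratum $\oD_i\setminus Z_i$ of each boundary divisor from the image, not a codimension-two subset. So regularity of $z^m$ on $V$ gives you no control along $\oD_i\setminus Z_i$, and the Hartogs extension does not apply. Likewise, your assertion that the rays of $\Sigma_{\s,V}$ ``correspond to divisors of $V$ not contracted by the birational map'' is incorrect: the boundary divisors $\oD_i$ of the toric model do \emph{not} survive to $V$ — their proper transforms $D_i$ are precisely what is removed in forming $Y\setminus D$. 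What does survive as a divisor in $V$ (up to codimension two) is the exceptional divisor $E_i$ of the blowup.

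The paper's argument for this step is different in a crucial way and avoids the issue. Suppose $z^m$ is \emph{not} regular on $\TV(\Sigma_{\s,V})$, so it has a pole along some $\oD_i$, $i\in I_{\uf}$. Since $z^m$ is a character, its divisor of zeros is supported on the toric boundary and in particular contains no points of the big torus; thus it cannot contain the center $Z_i$, which is a nonempty divisor \emph{inside} $\oD_i$ cut out by $1+z^{v_i}=0$ (resp.\ $1+z^{e_i}=0$), nonempty because $v_i\neq 0$ for $i\in I_{\uf}$. Consequently, pulling back under the blowup, the pole along $\oD_i$ is not cancelled and $z^m$ still has a pole along $E_i$. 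Since $E_i\cap(Y\setminus D)\neq\emptyset$, this shows $z^m$ is not regular on $Y\setminus D$, hence not a global monomial. The two geometric inputs you are missing — that $z^m$ vanishes nowhere on the torus, and that the pole along $\oD_i$ therefore propagates to $E_i$ — are exactly what makes the argument close, and they cannot be replaced by a Hartogs-type extension on $\TV(\Sigma_{\s,V})$.
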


\begin{proof} We have a surjection $Y\rightarrow \TV(\Sigma_{\s,V})$ by
construction of $Y$, and thus a monomial $z^m$ is regular on 
$\TV(\Sigma_{\s,V})$
if and only if its pull-back to $Y$ is regular. Certainly such a function
is also regular on $Y\setminus D$. Conversely, suppose $z^m$ is not regular
on $\TV(\Sigma_{\s,V})$. Then it has a pole on some toric boundary divisor
$\oD_i$. Now the support of $Z_i\subset \oD_i$ is given by 
$1+z^{v_i}=0$ (resp.\  $1+z^{e_i}=0$) in the $\cA$ (resp.\ $\cX$) case,
as explained in \cite{P1}, \S3.2. In particular for $i\in
I_{\uf}$, $Z_i$ is non-empty, in the $\cA$ case because of the assumption
that $v_i\not=0$ for $i\in I_{\uf}$ stated in Appendix \ref{LDsec}. 
As $z^m$ has no zeros on the big torus, the divisor of zeros of
$z^m$ will not contain the center $Z_i\subset \oD_i$. 
It follows that $z^m$ has a pole
along the exceptional divisor $E_i$ over $Z_i$. Since
$E_i \cap (Y \setminus D) \neq \emptyset$,  $z^m$ is not regular on 
$Y \setminus D$. Thus we conclude that $z^m$ is regular on $Y\setminus
D$ if and only if $z^m$ is regular on $\TV(\Sigma_{\s,V})$.
Of course, $z^m$ is regular on $\TV(\Sigma_{\s,V})$ if and only
if $\langle m,n\rangle \geq 0$ for all primitive generators $n$ of rays
of $\Sigma_{\s,V}$. 

Now the rational map
$Y \setminus D \dasharrow V_{\s}$ to the upper bound
is an isomorphism outside codimension two,
so the two varieties have the same global functions. In the $\cX$ 
(or $\cA_{\prin}$) case, the inclusion
$V_{\s} \subset V$ is an isomorphism outside codimension two as well. 
This gives
the theorem for $\cX$ or $\cA_{\prin}$, and the forward direction for $\cA_t$. 
The reverse direction
for $\cA_t$ follows from the Laurent phenomenon. Indeed, 
the final statement of the lemma simply describes
the monomials regular on $\TV(\Sigma_{\s,\cA})$, and  
a monomial of the given form is the same as a cluster
monomial and these are global regular functions by the Laurent phenomenon. 
\end{proof}

Recall from Proposition \ref{ldpprop}, (4), the canonical maps
$\rho:\shA^{\vee}_{\prin}\rightarrow \shA^{\vee}$ and $\xi:\shX^{\vee}
\rightarrow \shA^{\vee}_{\prin}$ whose tropicalizations are
\[
\rho^T:(m,n)\mapsto m\quad\quad\quad \xi^T: n\mapsto (-p^*(n),-n).
\]
Note $\rho^T$ identifies $\shA^{\vee}(\ZZ^T)$ with
the quotient of $\shA_{\prin}^{\vee}(\ZZ^T)$ by the natural $N$-action.
Since $\xi$ identifies $\shX^{\vee}$ with the fibre over $e$ of
$w:\shA_{\prin}^{\vee}\rightarrow T_{M^{\circ}}$, $\xi^T$
identifies $\shX^{\vee}(\ZZ^T)$ with $w^{-1}(0)$,
where $w:\shA^{\vee}_{\prin}(\ZZ^T)\rightarrow M^{\circ}$ is the weight
map given by $w(m,n)=m-p^*(n)$.

\begin{definition} 
\label{fgccthg1}
Let $V=\bigcup_{\s} T_{L,\s}$ be a cluster variety.
Define $\shC_{\s}^+(\ZZ) \subset V^{\mch}(\bZ^T)$ to be the set of 
$g$-vectors (see Definition \ref{fgccthg0}) for global monomials 
which are characters on the seed torus $T_{L,\s} \subset V$, and 
$\Delta^+_V(\ZZ) \subset V^{\mch}(\bZ^T)$ to be the union of all 
$\shC_{\s}^+(\ZZ)$. 
\end{definition}

\begin{lemma}
\label{fgccthg2}
\begin{enumerate}
\item
For $V$ of $\cA$-type 
$\shC_{\s}^+(\ZZ)$ is the set of integral points of the
cone $\shC_{\s}^+$ in the Fock-Goncharov cluster complex corresponding 
to the seed $\s$.
\item
In any case $\shC^+_{\s}(\ZZ)$ is the set of integral points of a 
rational convex cone $\shC^+_{\s}$, and the relative interiors of
$\shC^+_{\s}$ 
as $\s$ varies are disjoint. The $g$-vector $\g(f) \in V^{\mch}(\bZ^T)$ 
depends only on the function $f$ (i.e., 
if $f$ restricts to a character on two different seed tori, the $g$-vectors
they determine are the same). 
\item
For $m \in w^{-1}(0) \cap \Delta^+_{\cA_{\prin}}(\bZ)$, 
the global monomial $\vartheta_m$ on
$\cA_{\prin}$ is invariant under the $T_{N^{\circ}}$ action
and thus gives a global function
on $\cX = \cA_{\prin}/T_{N^{\circ}}$. 
This is a global monomial and all global monomials on $\cX$ occur
this way, and $m = {\bf g}(\vartheta_m)$.
\end{enumerate}
\end{lemma}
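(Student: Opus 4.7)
The plan is to handle the three parts in sequence, reducing the $\cX$-case of (2) to part (3). Part (1) and the convex-cone portion of (2) both fall out of Lemma \ref{gmlemma}: a character $z^m$ on the seed torus $T_{L,\s} \subset V$ is a global monomial iff $\langle m, n\rangle \ge 0$ for each primitive ray generator $n$ of $\Sigma_{\s,V}$. This exhibits $\shC^+_\s(\ZZ)$ as the integer points of the rational convex cone dual to $\Sigma_{\s,V}$, which is the first assertion of (2). For $V = \cA$, the defining inequalities $\langle m, e_i\rangle \ge 0$, $i \in I_\uf$, translate via \eqref{pairings} into the Fock-Goncharov cluster chamber condition $(z^{e_i})^T(m) \le 0$ from Lemma \ref{fgcclem}, giving (1).

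Part (3) is the substantive step. Since $m \in \Delta^+_{\cA_{\prin}}(\ZZ)$, Theorem \ref{thetaequalsmonomial} together with Corollary \ref{monocor} realizes $\vartheta_m$ as a global monomial on $\cA_{\prin}$ restricting to $z^m$ on the seed torus $T_{\tN^\circ,\s}$ for any $\s$ whose cluster chamber contains $m$. By Proposition \ref{thetaeigenfunctions}, $\vartheta_m$ is a $T_{\tK^\circ}$-eigenfunction whose $T_{N^\circ}$-weight equals $w(m)$, which vanishes by hypothesis; so $\vartheta_m$ is $T_{N^\circ}$-invariant and descends along $\cA_{\prin} \to \cX$. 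Under the quotient $T_{\tN^\circ,\s}/T_{N^\circ} = T_{M,\s} \subset \cX$, the character $z^m$ descends to a character on $T_{M,\s}$, yielding a global monomial on $\cX$. Conversely, a global monomial on $\cX$ pulls back along the quotient to a $T_{N^\circ}$-invariant global function on $\cA_{\prin}$ that is a character on every seed torus (pullback of a character under a surjective torus morphism is a character), hence is a global monomial on $\cA_{\prin}$ with $g$-vector forced into $w^{-1}(0)$. The identity $m = \g(\vartheta_m)$ is immediate from Definition \ref{fgccthg0}.

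For the well-definedness of $\g(f)$ and disjointness of the $\shC^+_\s$ in (2), we split by flavor. In the $\cA$ and $\cA_{\prin}$ cases, both of $\cA$-type, Corollary \ref{gvecthm} identifies $\g(f)$ with the Fomin--Zelevinsky $g$-vector of $f$ relative to any fixed initial seed, which is manifestly intrinsic; disjointness of relative interiors then follows from the simplicial fan property in Theorem \ref{fgccth}. For $\cX$, the bijection established in (3) between global monomials on $\cX$ and $T_{N^\circ}$-invariant global monomials on $\cA_{\prin}$ is compatible with the injection $\xi^T: \cX^\vee(\ZZ^T) \hookrightarrow \cA_{\prin}^\vee(\ZZ^T)$ identifying $\cX^\vee(\ZZ^T)$ with $w^{-1}(0)$. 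Consequently the well-definedness and disjointness statements on $\cX$ pull back from the corresponding (already-established) statements on $\cA_{\prin}$. The only real delicacy is keeping straight the piecewise-linear identifications of $V^\vee(\ZZ^T)$ with $L^*_\s$ under changes of seed and under $\xi$, but no techniques beyond Corollary \ref{gvecthm} and the reduction via $\xi$ are needed.
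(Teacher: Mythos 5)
Your proof is correct and follows essentially the same path as the paper's: Lemma \ref{gmlemma} plus Lemma \ref{fgcclem} for (1), Corollary \ref{gvecthm} and Theorem \ref{fgccth} for the $\cA$-type portion of (2), and the $T_{N^{\circ}}$-torsor $\tilde p: \cA_{\prin} \to \cX$ with $\tilde p^*(z^n) = z^{(p^*(n),n)}$ and Proposition \ref{thetaeigenfunctions} to handle (3) and the $\cX$ case of (2). The only difference is organizational — you extract (3) as a separate step before completing (2), whereas the paper establishes them simultaneously in the pullback argument — and one minor imprecision: in the converse of (3) you write that the pullback of a global monomial on $\cX$ is a character ``on every seed torus,'' when a global monomial is only required to restrict to a character on \emph{some} seed torus; it is a character on the seed torus $T_{\tN^\circ,\s}$ corresponding to the $\s$ where the original is a character, which is all that is needed.
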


\begin{proof} (1) In the $\cA$ case,
$\shC^+_{\s}$ is the Fock-Goncharov cone by Lemma \ref{fgcclem} and
Lemma \ref{gmlemma}.
These cones form a fan by Theorem \ref{fgccth}, and the fan statement implies
that $\g(f)$ depends only on $f$. 

The $\cA$ case of (2) immediately follows also from the discussion in 
\S\ref{cgvectorsec}.
The $\cX$ case follows from the $\cA$-case (applied to $\cA_{\prin}$)
by recalling that there is a map $\tilde p:\cA_{\prin}\rightarrow
\cX$ making $\cA_{\prin}$ into $T_{N^{\circ}}$-torsor over $\cX$, see
Proposition \ref{ldpprop}, (2). 
This map is defined on monomials by $\tilde p^*(z^n) =z^{(p^*(n),n)}$.
Pulling back a  monomial for $\cX$ under $\tilde p$ gives a 
$T_{N^{\circ}}$-invariant global monomial for $\cA_{\prin}$.
Thus there is an inclusion $\Delta^+_{\cX}(\ZZ)\subseteq w^{-1}(0)
\cap \Delta^+_{\cA_{\prin}}(\ZZ)$ by Proposition \ref{thetaeigenfunctions}. 
Conversely, if $m\in 
w^{-1}(0)$ and $m=\g(f)$ for a global monomial $f$ on $\shA_{\prin}$,
then there is some seed $\s=(e_1,\ldots,e_n)$ 
where $f$ is represented by a monomial
$z^m$ on $T_{\tN^{\circ},\s}$. Because $m\in \tM^{\circ}_{\s}$ 
lies in $w^{-1}(0)$ it is of the form $m=(p^*(n),n)$ for some $n\in N$.
By Lemma \ref{gmlemma}, $m$ is non-negative on the rays 
$\RR_{\ge 0}(e_i,0)$ of $\Sigma_{\s,\shA_{\prin}}$, 
hence $n$ is non-negative on the rays $-\RR_{\ge 0}v_i$ of $\Sigma_{\s,\shX}$.
Hence $z^n$ defines a global monomial on $\shX$. Thus
$\Delta^+_{\cX}(\ZZ)=w^{-1}(0) \cap \Delta^+_{\cA_{\prin}}(\ZZ)$.
Furthermore, one then sees that the Fock-Goncharov cones for
$\cA_{\prin}$ yield the cones for $\cX$ by intersecting with $w^{-1}(0)$.
This gives the remaining statements of (2) in the $\cX$ case, as well as
(3).
\end{proof} 

\begin{construction}[Broken lines for $\cX$ and $\cA$]
\label{blXA}
\emph{The $\cX$ case.}
Note that every function $f_{\fod}$ attached to a wall in 
$\foD^{\shA_{\prin}}_{\s}$ is a power series in 
$z^{(p^*(n),n)}$ for some $n$, thus $w$ is zero on
all exponents appearing in these functions.
Thus broken lines with both $I(\gamma)$ and initial 
infinite segment lying in $w^{-1}(0)$ remain in
$w^{-1}(0)$. In particular $b(\gamma) \in w^{-1}(0)$, and all their monomial 
decorations, e.g., $F(\gamma)$, are in $w^{-1}(0)$. 
We define these to be the broken lines in $\cX^{\mch}(\bR^T)$.\footnote{
In fact each $\foD^{\cA_{\prin}}_{\s}$ induces a collection of walls with attached functions, 
$\foD^{\cX}_{\s}$, living in $N_{\bR,\s}$, just by intersecting each wall 
with $w^{-1}(0)$ and taking the 
same scattering function. This is a consistent scattering diagram, 
and we are
getting exactly the broken lines for this diagram. We will not use this diagram,
as we can get whatever we need from $\foD^{\cA_{\prin}}$.} 

\emph{The $\cA$ case.} We define broken lines in $\cA^{\mch}(\bR^T)$ to be 
images 
of broken lines in $\cXrt$ under $\rho^T$
(applying the derivative $D\rho^T$ to the decorating monomials). 
\end{construction}

\begin{definition}
We define
\begin{enumerate}
\item
$\Theta(\cX) := \Theta(\cA_{\prin}) 
\cap w^{-1}(0) \subset \shX^{\vee}(\ZZ^T)=\cA^{\mch}_{\prin}(\bZ^T)
\cap w^{-1}(0)$. 
\item
\[
\midd(\cX) := \midd(\cA_{\prin})^{T_{N^{\circ}}} = 
\bigoplus_{q \in \Theta(\cX)} \kk\vartheta_q,
\]
where the superscript denotes the invariant part under the group action.
\end{enumerate}
\end{definition}

\begin{corollary}
\label{Xcorollary}
Theorem \ref{mainth} holds for $V=\shX$.
\end{corollary}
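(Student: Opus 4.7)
The plan is to bootstrap every assertion of Theorem \ref{mainth} for $\cX$ from the corresponding assertion for $\cA_{\prin}$, using the two crucial structural facts: (i) $\tilde p:\cA_{\prin}\to\cX$ is a $T_{N^{\circ}}$-torsor, so $\up(\cX)=\up(\cA_{\prin})^{T_{N^{\circ}}}$ (Proposition \ref{ldpprop}(2)), and (ii) the theta function $\vartheta_q\in\midd(\cA_{\prin})$ is a $T_{N^{\circ}}$-eigenfunction of weight $w(q)$ (Proposition \ref{thetaeigenfunctions}). Under the identification $\cX^{\vee}(\ZZ^T)=w^{-1}(0)\subset\cA_{\prin}^{\vee}(\ZZ^T)$ coming from $\xi^T$ (Lemma \ref{fgccthg2}), these combine to give $\midd(\cX)=\midd(\cA_{\prin})^{T_{N^{\circ}}}$ by construction.

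First, since every wall function of $\foD^{\cA_{\prin}}_{\s}$ is a series in monomials $z^{(p^*(n),n)}$ which lie in $w^{-1}(0)$, any broken line with initial direction in $w^{-1}(0)$ has all its decorating monomials in $w^{-1}(0)$ as well. Hence the broken lines of Construction \ref{blXA} for $\cX$ are literally the broken lines of $\cA_{\prin}$ that stay in this affine subspace, and the structure constants $\alpha$ of Definition-Lemma \ref{structuredef} restrict from $\cA_{\prin}$ to $\cX$. This gives part (1) of Theorem \ref{mainth}: the values $\alpha(p,q,r)$ for $p,q,r\in\Theta(\cX)$ are non-negative integers, finite because they are already finite in $\cA_{\prin}$ by Theorem \ref{goodalgebra}(2). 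For parts (2) and (4), note that $\Theta(\cX)=\Theta(\cA_{\prin})\cap w^{-1}(0)$ is intrinsically closed under addition: if $p,q\in\Theta(\cX)$ and $\alpha(p,q,r)\ne 0$ then $r\in\Theta(\cA_{\prin})$ (by Theorem \ref{goodalgebra}(2),(3)) and $r\in w^{-1}(0)$ (since $\vartheta_p\vartheta_q$ is $T_{N^{\circ}}$-invariant and the theta functions on $\cA_{\prin}$ are weight eigenfunctions, or equivalently because $F(\gamma_1)+F(\gamma_2)\in w^{-1}(0)$). Saturation in a given seed follows from saturation of $\Theta(\cA_{\prin})$ together with linearity of $w$. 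Part (3) is then immediate from Lemma \ref{fgccthg2}(3): $\Delta^+_{\cX}(\ZZ)=w^{-1}(0)\cap\Delta^+_{\cA_{\prin}}(\ZZ)\subset w^{-1}(0)\cap\Theta(\cA_{\prin})=\Theta(\cX)$.

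For parts (5), (6) and (7), I take $T_{N^{\circ}}$-invariants of the diagram $\ord(\cA_{\prin})\subset\midd(\cA_{\prin})\subset\up(\cA_{\prin})$ provided by Theorem \ref{goodalgebra}(4). The left inclusion restricts to $\ord(\cX)\subset\midd(\cX)$ because $T_{N^{\circ}}$-invariant global monomials of $\cA_{\prin}$ descend to global monomials on $\cX$ (Lemma \ref{fgccthg2}(3)). The right inclusion, together with $\up(\cA_{\prin})^{T_{N^{\circ}}}=\up(\cX)$, gives the canonical injection $\nu:\midd(\cX)\hookrightarrow\up(\cX)$, which proves (7) with no auxiliary convexity hypothesis, and identifies $\vartheta_{\g(Z)}$ with the global monomial $Z$ for cluster monomials, proving (5). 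For (6), the universal positive Laurent property of $\vartheta_q$ on $\cA_{\prin}$ translates to the corresponding statement on $\cX$ by passing through the pullback $\tilde p^*:z^n\mapsto z^{(p^*(n),n)}$, which is a monomial map; hence any $T_{N^{\circ}}$-invariant positive Laurent polynomial on a seed chart $T_{\tN^{\circ},\s}\subset\cA_{\prin}$ is the pullback of a positive Laurent polynomial on $T_{M,\s}\subset\cX$ with identical coefficients.

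The only step that requires care beyond bookkeeping is this last descent of the positivity statement from $\cA_{\prin}$ to $\cX$; the remaining arguments are formal consequences of the torsor structure and the fact that the scattering diagram and broken lines for $\cA_{\prin}$ visibly respect the weight grading. In particular no analog of the convexity hypothesis of Theorem \ref{mainth}(7) is needed for $\cX$, precisely because the Injectivity Assumption—and therefore the full $\cA_{\prin}$ theory—always holds, regardless of whether the $\{e_i,\cdot\}$ for the original $\cA$-data lie in a strictly convex cone.
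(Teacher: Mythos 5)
Your proposal is correct and takes essentially the same approach as the paper, which simply states that the corollary follows from the $\cA_{\prin}$ case by taking $T_{N^{\circ}}$-invariants; your write-up is a careful and accurate expansion of that one-line argument, part by part.
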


\begin{proof} This follows immediately from the $\shA_{\prin}$ case
by taking $T_{N^{\circ}}$-invariants.
\end{proof}

Moving on to the $\cA$ case, the following is easily checked:

\begin{definition-lemma} \label{thacasedef} 
\begin{enumerate}
\item
Define 
\[
\Theta(\cA_t) := \rho^T(\Theta(\cA_{\prin})) \subset \cA^{\mch}(\bZ^T).
\] 
Noting that
$\Theta(\cA_{\prin}) \subset \cA_{\prin}^{\mch}(\bZ^T)$ is invariant under
$N$-translation, we have
$\Theta(\cA_{\prin}) = (\rho^T)^{-1}(\Theta(\cA_t))$. Furthermore, any choice
of section $\Sigma: \cA^{\mch}(\bZ^T) \to \cXt$ of $\rho^T$
induces a bijection $\Theta(\cA_{\prin}) \rightarrow \Theta(\cA_t) \times N$. 
\item
Define $\midd(\cA_t) = \midd(\cA_{\prin}) \otimes_{\kk[N]} \kk$, where
$\kk[N] \twoheadrightarrow \kk$ is given by $t \in T_M$. Given a choice
of $\Sigma$, the collection
$\vartheta_m, m \in \Sigma(M^{\circ})$ gives a $\kk[N]$-module
basis for $\midd(\cA_{\prin})$ and thus a $\kk$-vector space basis for
$\midd(\cA_t)$. For $\midd(\cA)$ the basis is independent of the choice
of $\Sigma$, while
for $\midd(\cA_t)$ it is independent up to scaling each basis vector
(i.e., the decomposition of the vector space $\midd(\cA)$ into one 
dimensional subspaces is canonical). 
\end{enumerate}
\qed
\end{definition-lemma} 

The variety $\cA_t$ is a space $\cA_t := \bigcup_{\s} T_{N^{\circ},\s}$ with the
tori glued by birational maps which vary with $t$. It is then
not so clear how to dualize these birational maps to obtain $\cA_t^{\vee}$ as
it is not obvious how to deal with these parameters. However, 
the tropicalisations
of these birational maps are all the same (independent of $t$) and thus the 
tropical sets
$\cA_t^{\vee}(\bZ^T)$ should all be canonically identified with 
$\cA^{\vee}(\bZ^T)$. So we just take:

\begin{definition} 
$\cA_t^{\vee}(\bZ^T) := \cA^{\vee}(\bZ^T)$. 
\end{definition}

\begin{theorem} \label{mainthax} For $V = \cA_t$ the following modified 
statements of Theorem \ref{mainth} hold. 
\begin{enumerate}
\item There is a map
\[
\alpha_{\shA_t}: V^{\mch}(\bZ^T) \times V^{\mch}(\bZ^T) \times V^{\mch}(\bZ^T)
\to \kk\cup\{\infty\},
\]
depending on a choice of a section $\Sigma: \cA^{\mch}(\bZ^T) \to 
\cA_{\prin}^{\mch}(\bZ^T)$. This function is given by the formula
\[
\alpha_{\shA_t}(p,q,r) = \sum_{n \in N} 
\alpha_{\shA_{\prin}}(\Sigma(p),\Sigma(q),\Sigma(r)+n) z^n(t)
\]
if this sum is finite; otherwise we take $\alpha_{\shA_t}(p,q,r)=\infty$.
This sum is finite whenever $p,q,r\in \Theta(\shA_t)$.
\item 
There is a canonically defined subset
$\Theta \subset V^{\mch}(\bZ^T)$ given by $\Theta=\Theta(\cA_t)$
such that the restriction of the structure constants
give the vector subspace $\midd(V) \subset \can(V)$ with basis indexed by
$\Theta$ the structure of an associative commutative $\kk$-algebra.
\item $\Delta^+_V(\ZZ) \subset \Theta$, i.e., 
$\Theta$ contains the $g$-vector of each global monomial.
\item For the lattice structure on $V^{\mch}(\bZ^T)$ determined by any choice of seed, 
$\Theta \subset V^{\mch}(\bZ^T)$ is closed under addition. Furthermore
$\Theta$ is saturated.
\item There is a $\kk$-algebra map 
$\nu: \midd(V) \to \up(V)$ which sends $\vartheta_p$ for 
$p \in \Delta^+_V(\ZZ)$ to a multiple of the corresponding global monomial. 
\item There is no analogue of (6) of Theorem \ref{mainth} because the
coefficients of the $\vartheta_{Q,p}$ will generally not be integers.
\item $\nu$ is injective for very general $t$, and for all $t$ if
the vectors $v_i$, $i \in I_{\uf}$, lie in a strictly convex cone.
When $\nu$ is injective we have canonical inclusions
\[
\ord(V) \subset \midd(V) \subset \up(V). 
\]
\end{enumerate}
Taking $t=e$ gives Theorem \ref{mainth} for the $V=\cA$ case.
\end{theorem}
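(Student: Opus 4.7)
The plan is to deduce Theorem \ref{mainthax} from the $\cA_{\prin}$ case of Theorem \ref{mainth} by specializing along the fibre $\cA_t = \pi^{-1}(t) \subset \cA_{\prin}$ of $\pi: \cA_{\prin} \to T_M$. By Definition-Lemma \ref{thacasedef}, any section $\Sigma: \cA^{\vee}(\ZZ^T) \to \cA_{\prin}^{\vee}(\ZZ^T)$ of $\rho^T$ exhibits $\midd(\cA_{\prin})$ as a free $\kk[N]$-module with basis $\{\vartheta_{\Sigma(p)}\}_{p \in \Theta(\cA_t)}$, whence $\midd(\cA_t) = \midd(\cA_{\prin}) \otimes_{\kk[N]} \kk$ (with $\kk[N]\to\kk$ being evaluation at $t$) inherits both a $\kk$-basis and the structure of a commutative associative $\kk$-algebra, giving (2).

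For (1), I would expand $\vartheta_{\Sigma(p)} \cdot \vartheta_{\Sigma(q)}$ as a finite sum in $\midd(\cA_{\prin})$ using Theorem \ref{mainth}(1), uniquely decompose each index $r' \in \Theta(\cA_{\prin})$ appearing in the sum as $\Sigma(r) + n$ with $r \in \Theta(\cA_t)$ and $n \in N$, and apply $\kk[N]$-linearity from Theorem \ref{ffgth}(1) together with the specialization $z^n \mapsto z^n(t)$. Statement (3) follows from Proposition \ref{cvextrem}: every global monomial on $\cA_t$ lifts canonically to one on $\cA_{\prin}$, and its $g$-vector is the image under $\rho^T$ of the corresponding $g$-vector on $\cA_{\prin}$. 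For (4), the linearity of $\rho^T$ combined with the identity $\Theta(\cA_{\prin}) = (\rho^T)^{-1}(\Theta(\cA_t))$ immediately translates closure under addition and saturation of $\Theta(\cA_{\prin})$ (Theorem \ref{mainth}(4)) into the analogous properties for $\Theta(\cA_t)$. For (5), the universal property of tensor product applies to the composition of $\nu_{\cA_{\prin}}: \midd(\cA_{\prin}) \to \up(\cA_{\prin})$ with the restriction map $\up(\cA_{\prin}) \to \up(\cA_t)$.

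The real content lies in the injectivity statement (7). For very general $t$, my approach is to exploit that $\nu_{\cA_{\prin}}: \midd(\cA_{\prin}) \hookrightarrow \up(\cA_{\prin})$ is injective by Theorem \ref{mainth}(7) for $\cA_{\prin}$. For each finite subset $F$ of the basis $\{\vartheta_{\Sigma(p)}\}$, the rank of the evaluation map $\bigoplus_{v \in F} \kk\cdot v \to \up(\cA_t)$ is a lower-semicontinuous function of $t$; since the elements of $F$ are $\kk$-linearly independent inside $\up(\cA_{\prin})$, this rank equals $|F|$ at the generic point of $T_M$, hence on a dense Zariski-open subset $U_F \subset T_M$. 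Taking the intersection $\bigcap_F U_F$ over all finite $F$ (a countable intersection of dense open subsets) defines the very general locus on which $\nu: \midd(\cA_t) \to \up(\cA_t)$ is injective.

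For injectivity under the convexity hypothesis that the $v_i$, $i \in I_{\uf}$, lie in a strictly convex cone, the key observation is that this hypothesis is precisely what ensures the Injectivity Assumption of \S\ref{defconstsection} holds for fixed data $\Gamma$ without needing to pass to $\Gamma_{\prin}$. The entire scattering diagram apparatus of \S\S\ref{defconstsection}--\ref{basection} can therefore be run directly with $\Gamma$, together with the coefficients encoded by $t$, producing a scattering diagram, broken lines, theta functions, and a middle algebra with a direct inclusion $\midd(\cA_t) \hookrightarrow \up(\cA_t)$ as in Theorem \ref{goodalgebra}(4), uniformly in $t$. The main obstacle I anticipate is verifying that this direct construction on $\cA_t$ agrees with the specialization at $t$ of the $\cA_{\prin}$ construction used elsewhere in the theorem; I expect this compatibility to follow from the uniqueness clause of Theorem \ref{KSlemmaGS} applied to both scattering diagrams.
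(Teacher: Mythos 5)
Parts (1)--(5) and the ``very general $t$'' case of (7) are correct and follow the same route as the paper (lower semicontinuity on the base, with linear independence checked via injectivity of $\nu$ on $\cA_{\prin}$; the paper makes this concrete by observing the basis $\vartheta_p$, $p\in\Sigma(M^\circ)\cap\Theta(\cA_{\prin})$, restricts to the character basis on the central fibre $T_{N^\circ}$, but that is essentially the same openness argument).

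The gap is in your treatment of injectivity for all $t$ under the convexity hypothesis. You claim that ``the vectors $v_i$, $i\in I_{\uf}$, lie in a strictly convex cone'' is precisely the Injectivity Assumption for $\Gamma$, but it is strictly weaker: the Injectivity Assumption requires the $v_i=p_1^*(e_i)$ to be \emph{linearly independent}, while lying in a strictly convex cone allows, e.g., $v_2$ to be a positive multiple of $v_1$. So you cannot run the scattering apparatus of \S\ref{defconstsection} directly on $\Gamma$ under this hypothesis. Moreover, even if injectivity held for $\Gamma$, that would produce a scattering diagram for $\cA$, not for $\cA_t$ --- the dependence on the coefficient point $t$ is not built into that construction, and the paper never constructs per-$t$ scattering diagrams. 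The actual argument is a dynamical/equivariance one: the convexity hypothesis lets you choose $\tn\in\tK^{\circ}$ whose image under the second projection $\pi_M$ lies in the interior of the orthant, giving a one-parameter subgroup $T\subset T_{\tK^{\circ}}$ acting on $\bA^n_{X_1,\dots,X_n}$ with $0$ in the closure of every orbit. Since the injectivity locus $U_S\subset\bA^n$ is open and contains $0$, every $t\in T_M$ can be moved into $U_S$ by the torus action, and $T_{\tK^{\circ}}$-equivariance of the theta functions (Proposition \ref{thetaeigenfunctions}) then transports linear independence to every fibre. Your uniqueness-of-scattering-diagram idea does not enter at all; you need the equivariance argument.
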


\begin{proof} 
For (1), note that for  $p,q\in \Theta(\cA_t)$, we have
$\Sigma(p), \Sigma(q)\in \Theta(\cA_{\prin})$ and on $\cA_{\prin}$
\begin{align*}
\vartheta_{\Sigma(p)}\cdot\vartheta_{\Sigma(q)}= {} &
\sum_{r\in \Theta(\cA_{\prin})} \alpha_{\cA_{\prin}}(\Sigma(p),\Sigma(q),
r) \vartheta_r\\
={}&\sum_{r\in \Theta(\cA_t)}\sum_{n\in N} \alpha_{\cA_{\prin}}(\Sigma(p),
\Sigma(q),\Sigma(r)+n) \vartheta_{\Sigma(r)+n}\\
={}&\sum_{r\in \Theta(\cA_t)}\vartheta_{\Sigma(r)}\cdot\left(\sum_{n\in N} 
\alpha_{\cA_{\prin}}(\Sigma(p),\Sigma(q),\Sigma(r)+n)z^n\right)
\end{align*}
using $\vartheta_{\Sigma(r)+n}=\vartheta_{\Sigma(r)}z^n$.
Note that the sums are finite because 
$\Sigma(p),\Sigma(q)\in\Theta_{\cA_{\prin}}$.
Restricting to $\cA_t$ gives the formula of (1).

The remaining statements follow easily from the definitions except for
the injectivity of (7). To see this,
fix a seed $\s$, which gives the
second projection $\pi_{N,\s}: \cA^{\mch}(\bZ^T) = 
(M^\circ \oplus N)_{\s} \to N$. Choose the section $\Sigma$ of $\rho^T$
to be $\Sigma(m)=(m,0)$. 
Note the collection of
$\vartheta_p$, $p \in B:= \Sigma(M^{\circ}) \cap \Theta(\cA_{\prin})$
are a $\kk[N]$-basis for $\midd(\cA_{\prin})$. By the choice of
$\Sigma$, the $\vartheta_p$
restrict to the basis of monomials on the central fibre $T_{N^\circ}$ of 
$\pi: \cA_{\prin,\s} \to \bA^n_{X_1,\dots,X_n}$. It follows that for any 
finite subset $S \subset B$
there is a Zariski open set $0 \in U_S \subset \bA^n$ such that $\vartheta_p$, $p \in S$ restrict
to linearly independent elements of $\up(\cA_t)$, $t \in U_S$. This gives the injectivity of $\nu$ for
very general $t$. 

Now suppose the $v_i := \{e_i,\cdot\}$, $i \in I_{\uf}$ span a strictly convex cone. We can then
pick an $n\in N^{\circ}\setminus\{0\}$ such that $\{n,e_i\}=
-\langle v_i,n\rangle>0$ for all $i$. Now pick $m \in N_{\uf}^{\perp}$ such that
$\langle m,e_f\rangle + \langle p^*(n),e_f\rangle > 0$ for $f \in I \setminus I_{\uf}$, and set
$\tn := (n,p^*(n)) + (0,m) \in \tK^{\circ}$, notation as in (2) of Proposition \ref{ldpprop}.
By construction the second projection $\pi_M(\tn) = m + p^*(n) \in M$ lies in the
interior of the orthant generated by the dual basis $e_i^*$. 
Take the 
one-parameter subgroup $T = \tn\otimes \bG_m \subset T_{\tK^\circ}$. Now, by Proposition \ref{ldpprop},
the map $\shA_{\prin,\s}\rightarrow \AA^n$ is $T_{\tK^{\circ}}$-equivariant,
where the action on $\AA^n$ is given by the map of cocharacters $\pi_M$. 
Thus $T$ has a one-dimensional orbit whose closure contains
$0\in\AA^n$. So $0$ is in the closure of 
the orbit $T \cdot x \subset \bA^n$ for all $x \in T_M \subset \bA^n$. In particular for all
$x$ and all $S$ there is some $t_{S,x}$ with $t_{S,x}\cdot x \in U_S$. Now from the $T_{\tK^\circ}$-equivariance
of the construction, Proposition \ref{thetaeigenfunctions}, the linear independence holds for all $t$. 

Changing $\Sigma$ will change the $\kk[N]$-basis for $\midd(\cA_{\prin})$, multiplying
each $\vartheta_p$ by some character $z^n$, $n \in N$. 
The restrictions to $\midd(\cA_t)$ are
then multiplied by the values $z^n(t)$.

Theorem \ref{mainth} for $V=\cA$ now follows from setting $t=e$, where
$z^n(t)=1$ for all $n$.
\end{proof}

It is natural to wonder:

\begin{question} Does the equality $\midd(\cA_{\prin}) = \up(\cA_{\prin})$ always hold?
\end{question} 

Our guess is no, but we do not know a counterexample.

Certainly $\Theta \neq \cXt$ in general, for this implies $\Theta(\cX)$,
which is defined to be $\Theta \cap w^{-1}(0)$, coincides with 
$\cX^{\mch}(\bZ^T)$, while we know that in general $\cX$ has many 
fewer global functions, see \cite{P1}, \S 7. So we
look for conditions that guarantee $\Theta =\cXt$, and $\midd = \up$.  
We turn to this in the next section.

\begin{example}
\label{examples2}
In the cases of Example \ref{bcexample}, 
the convex hull
of the union of the cones of $\Delta^+$ in $\tM^{\circ}_{\RR}$ is all of 
$\tM^{\circ}_{\RR}$. Indeed, the first three quadrants already are part of
the cluster complex.
It then follows from the fact that $\Theta$ is closed
under addition and is saturated that $\Theta=\tM^{\circ}$. 

In the case of Example \ref{standardexample}, we know that 
\[
\Delta^+(\ZZ)
=\{(m,n) \in \tM^{\circ}\,|\, \langle e_1+e_2+e_3,m\rangle \ge 0\}.
\]
It then follows again from the fact that $\Theta$ is closed under addition
that either $\Theta=\Delta^+(\ZZ)$ or $\Theta=\tM^{\circ}$. We believe,
partly based on calculations in \cite{M13}, \S7.1, that in fact the latter
holds.
\qed
\end{example}

We show the analogue of Proposition \ref{thetaeigenfunctions} for the
$\cA$ variety:

\begin{proposition}
\label{thetaeigenfunctions2}
If $q\in \Theta(\cA)\subset \cA^{\vee}(\ZZ)$, then 
$\vartheta_q\in\up(\cA)$ is an eigenfunction for the natural $T_{K^{\circ}}$
action on $\cA$.
\end{proposition}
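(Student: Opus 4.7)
The plan is to reduce the statement to the analogue for $\cA_{\prin}$ already proved in Proposition \ref{thetaeigenfunctions}, by lifting the theta function to $\cA_{\prin}$, exploiting $\cA = \pi^{-1}(e)$ inside $\pi: \cA_{\prin}\to T_M$, and matching up the two torus actions. First I would lift $q\in \Theta(\cA)$ to an element $\tilde q\in \Theta(\cA_{\prin})$ with $\rho^T(\tilde q)=q$; such a lift exists by Definition-Lemma \ref{thacasedef}(1), and any two lifts differ by $N$-translation. By the very definition $\midd(\cA)=\midd(\cA_{\prin})\otimes_{\kk[N]}\kk$ (evaluation at $e\in T_M$), the function $\vartheta_{\tilde q}\in \up(\cA_{\prin})$ restricts along the inclusion $\cA=\cA_e\hookrightarrow \cA_{\prin}$ to a nonzero scalar multiple of $\vartheta_q\in\up(\cA)$.

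Next I would record the relationship between the two torus actions. The first-factor inclusion $N\hookrightarrow \tN=N\oplus M^\circ$ carries $K^\circ$ into $\tK^\circ$ (a direct check from the definition of the skew-form on $\tN$ and the fact that $K^\circ$ lies in the kernel of $p_1^*$), inducing an inclusion $T_{K^\circ}\hookrightarrow T_{\tK^\circ}$. Since $\pi:\cA_{\prin}\to T_M$ is $T_{\tK^\circ}$-equivariant via a map $\tK^\circ\to M$ whose kernel contains the image of $K^\circ$, the subtorus $T_{K^\circ}\subseteq T_{\tK^\circ}$ preserves the fiber $\cA_e$, and its action on that fiber coincides with the standard $T_{K^\circ}$-action on $\cA$.

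Having these pieces in place, I would apply Proposition \ref{thetaeigenfunctions} to conclude that $\vartheta_{\tilde q}$ is a $T_{\tK^\circ}$-eigenfunction of weight $w(\tilde q)\in (\tK^\circ)^*$; restricting to $\cA$, $\vartheta_q$ is then a $T_{K^\circ}$-eigenfunction whose weight is the image of $w(\tilde q)$ under restriction $(\tK^\circ)^*\to (K^\circ)^*$. A small verification shows this weight is independent of the choice of lift: passing from $\tilde q$ to $\tilde q+n$ for $n\in N$ alters $w(\tilde q)$ by the character of $\tK^\circ$ obtained by pairing with $(0,n)\in\tM^\circ$, and this character vanishes on the image of $K^\circ$.

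The main obstacle I anticipate is the bookkeeping in the middle step, namely pinning down the compatibility between the $T_{K^\circ}$-action on $\cA$ and the $T_{\tK^\circ}$-action on $\cA_{\prin}$ so that restriction to $\cA_e$ produces exactly the standard $T_{K^\circ}$-action. Once that is in hand, the rest is a direct application of Proposition \ref{thetaeigenfunctions} together with the well-definedness check above.
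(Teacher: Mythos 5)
Your proof is correct in substance, but it takes a genuinely different route from the paper's. The paper simply re-runs the argument of Proposition \ref{thetaeigenfunctions} directly on $\cA$: the scattering monomials on $\cA$ are $z^{v_i}=z^{(v_i,e_i)}|_{\cA}$, and these are $T_{K^\circ}$-invariant because $v_i|_{K^\circ}=0$ (since $K^\circ\subset K=\ker p_2^*$), so every bend of a broken line preserves the $T_{K^\circ}$-weight. You instead lift $q$ to $\tilde q\in\Theta(\cA_{\prin})$, invoke Proposition \ref{thetaeigenfunctions} for $\cA_{\prin}$, and restrict along $\cA=\cA_e\hookrightarrow\cA_{\prin}$ after matching the torus actions. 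The advantage of the paper's route is that it is shorter and avoids any compatibility bookkeeping; the advantage of yours is that it illustrates the general descent pattern from $\cA_{\prin}$ to $\cA$ used throughout \S\ref{blgcss}, and makes the well-definedness of the weight under change of lift explicit.

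Two small corrections. First, the compatibility $K^\circ\hookrightarrow\tK^\circ$ follows from $K^\circ\subseteq K=\ker p_2^*$ (not $p_1^*$, which is only defined on $N_{\uf}$): with $\tK^\circ=\ker\bigl((n,m)\mapsto p_2^*(n)-m\bigr)$, one has $(n,0)\in\tK^\circ$ precisely when $p_2^*(n)=0$. Second, the inclusion that induces the map of tori is on cocharacter lattices, $N^\circ\hookrightarrow\tN^\circ=N^\circ\oplus M$, $n\mapsto(n,0)$, not $N\hookrightarrow\tN=N\oplus M^\circ$. Neither slip affects the validity of the argument, and the verification that the induced $T_{K^\circ}$-action preserves $\cA_e$ and restricts to the standard action ($\langle(n,0),(0,e_i)\rangle=0$ and $\langle(n,0),(m,0)\rangle=\langle n,m\rangle$) is as you describe.
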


\begin{proof}
This is essentially the same as the proof of Proposition 
\ref{thetaeigenfunctions},
noting that the monomials $z^{v_i}=z^{(v_i,e_i)}|_{\cA}$ 
are invariant under the $T_{K^{\circ}}$ action, as $v_i|_{K^{\circ}}=0$
by definition of $K^{\circ}=\ker p_2^*$.
\end{proof}

We end this section by showing that linear independence of cluster monomials
follows easily from our techniques. This was pointed out to us by Gregory
Muller. In the skew-symmetric case, this was proved in \cite{CKLP}.

\begin{theorem}
\label{properlaurentproperty} For any $\cA$ cluster variety,
there are no linear relations between cluster
monomials and theta functions in $\nu(\midd(\cA)) \subset \up(\cA)$. 
More precisely, if there is a linear relation
\[
\sum_{q \in \Theta(\cA)} \alpha_q \vartheta_q = 0
\]
in $\up(\cA)$, then $\alpha_q=0$ for all $q\in \Delta^+(\ZZ)$.
In particular
the cluster monomials in $\ord(\cA_{\prin})$ are linearly independent.
\end{theorem}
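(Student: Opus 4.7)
The ``in particular'' statement about $\cA_{\prin}$ follows at once from Theorem \ref{mainth}(7) applied to $V=\cA_{\prin}$. That clause asserts unconditionally that $\nu\colon\midd(\cA_{\prin})\to\up(\cA_{\prin})$ is injective; the convexity hypothesis appearing in (7) is required only for $V=\cA$, whereas the Injectivity Assumption needed to build the scattering diagram holds automatically for $\Gamma_{\prin}$ (Construction \ref{sdprin}). Since $\{\vartheta_p:p\in\Theta(\cA_{\prin})\}$ is by construction a $\kk$-basis of $\midd(\cA_{\prin})$, and since Theorem \ref{thetaequalsmonomial} combined with Corollary \ref{gvecthm} identifies the cluster monomials of $\cA_{\prin}$ with the subfamily $\{\nu(\vartheta_q):q\in\Delta^+(\ZZ)\}$, the linear independence of cluster monomials in $\ord(\cA_{\prin})\subset\up(\cA_{\prin})$ is immediate.

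For the main assertion about a general $\cA$ cluster variety, my plan is to lift the putative relation to $\cA_{\prin}$ and exploit the unconditional injectivity just cited. Fix an initial seed $\s$ and the associated section $\Sigma(q)=(q,0)$ of $\rho^T\colon\tM^\circ\to M^\circ$. Given a finite relation $\sum_q\alpha_q\vartheta_q=0$ in $\up(\cA)$, set
\[
F\;:=\;\sum_q\alpha_q\vartheta_{(q,0)}.
\]
By Proposition \ref{thetabasislemma}(1), each summand $\vartheta_{(q,0)}$ lies in $\up(\ocA_{\prin}^\s)$ (note $(q,0)\in\tM^{\circ,+}_\s$), so $F\in\up(\ocA_{\prin}^\s)$. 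By Definition-Lemma \ref{thacasedef} the restriction $F|_\cA$ equals the original relation $\sum\alpha_q\vartheta_q$, hence vanishes, while the central-fibre restriction $F|_{\pi^{-1}(0)=T_{N^\circ}}$ equals $\sum\alpha_q z^q\in\kk[M^\circ]$. For the latter, observe that on the initial chart $T_{\tN^\circ,\s}$ the function $\vartheta_{(q,0)}$ is a positive Laurent polynomial all of whose exponents $(m,n)$ other than the leading $(q,0)$ satisfy $n\in N^+_\s\setminus\{0\}$ — each bending of a broken line strictly increases the $N$-coordinate by some $e_i$ — so setting $X_i=z^{(0,e_i)}$ to zero extracts precisely the $n=0$ contribution.

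If one could show that $F|_\cA=0$ implies $F|_{T_{N^\circ}}=0$, then by linear independence of characters one would conclude $\alpha_q=0$ for every $q$ (a priori stronger than claimed). The main obstacle is exactly this transfer of vanishing between the two fibres, which is not automatic: the $T_{\tK^\circ}$-action on the base $\AA^n$ (via $\pi_M$, Proposition \ref{ldpprop}(2)) covers only the open torus $T_M\subset\AA^n$ and so cannot move $\cA$ to $T_{N^\circ}$. The proposed resolution is to decompose $F$ into $T_{\tK^\circ}$-isotypic components using Proposition \ref{thetaeigenfunctions}, reducing to finitely many $\kk$-linear combinations of $\vartheta_{(q,0)}$ of fixed weight, and then within each weight to combine the injectivity of $\nu$ for $\cA_{\prin}$ with the sign coherence of $g$-vectors (Theorem \ref{scgvec}) to argue that the cluster-monomial terms — whose lifted $g$-vectors $(q,0)$ have distinguished second coordinate zero — cannot be absorbed into the kernel of the restriction $\up(\cA_{\prin})\to\up(\cA)$, which is the ideal generated by $\{z^{(0,e_i)}-1\}_{i\in I}$.
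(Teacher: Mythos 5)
Your handling of the ``in particular'' clause is fine: citing the unconditional injectivity of $\nu$ for $\cA_{\prin}$ (Theorem \ref{mainth}(7), established through Proposition \ref{thetabasislemma}(2) and Theorem \ref{goodalgebra}(4)) is a valid shortcut, though the paper instead derives it from the general argument.

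Your main argument has a genuine gap, and it is a structural one. You reduce the theorem to the implication ``$F|_\cA = 0 \Rightarrow F|_{T_{N^\circ}}=0$'' for $F = \sum_q \alpha_q\vartheta_{(q,0)}$, and you correctly flag this as the obstacle. But note that if this implication held, you would get $\alpha_q = 0$ for \emph{all} $q$, i.e. unconditional injectivity of $\nu\colon\midd(\cA)\to\up(\cA)$ — which Theorem \ref{mainth}(7) explicitly does not claim without the strict-convexity hypothesis on the $v_i$. The theorem you are proving is carefully stated to conclude only $\alpha_q=0$ for $q\in\Delta^+(\ZZ)$, precisely because coefficients outside the cluster complex cannot in general be controlled. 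So your strategy is attempting to prove a strictly stronger statement than is true, and no amount of isotypic decomposition or sign-coherence bookkeeping can fix that. Concretely, the sketch in your last paragraph does not connect: $\cA=\pi^{-1}(1,\dots,1)$ is not invariant under the $T_{\tK^\circ}$ (or $T_{N^\circ}$) action — the torus moves the fibre — so the relation $F|_\cA=0$ does not decompose into isotypic pieces that individually vanish on $\cA$, and sign coherence of $g$-vectors gives no handle on the kernel of restriction $\up(\cA_{\prin})\to\up(\cA)$.

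The paper's argument sidesteps all of this by never trying to transfer vanishing between fibres. Instead it works in a fixed chart: choose a seed $\s$ with base point $Q\in\Int(\shC^+_\s)$ and observe that for $q\notin\shC^+_\s$ every broken line contributing to $\vartheta_{Q,q}$ must bend, so $\vartheta_{Q,q}$ has the \emph{proper Laurent property} — every monomial has some strictly negative exponent in the cluster variables of $\s$. (The short broken-line argument in the paper shows that if $F(\gamma)$ has all nonnegative exponents, the whole line stays in $\shC^+_\s$ and hence does not bend, forcing $q=I(\gamma)\in\shC^+_\s$.) Rearranging the relation as $\sum_{q\in\shC^+_\s}\alpha_q\vartheta_{Q,q} = -\sum_{q\notin\shC^+_\s}\alpha_q\vartheta_{Q,q}$, the left side is a linear combination of distinct monomials with nonnegative exponents while the right side has the proper Laurent property; the only way these agree is that both vanish, forcing $\alpha_q=0$ for $q\in\shC^+_\s$. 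Varying $\s$ over all seeds covers $\Delta^+(\ZZ)$. You should look for an argument of this type — one that isolates the cluster-chamber contributions seed by seed rather than attempting a global degeneration.
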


\begin{proof}
Suppose given such a relation.
We choose a seed $\s$ and a generic base point $Q \in \shC^{+}_{\s}\in\Delta^+$.
The seed gives an identification $\cA^{\vee}(\ZZ^T)=M^{\circ}$. We first
show that if $q\in \Delta^+(\ZZ)$ with $q\not\in \shC^+_{\s}$, then
$\vartheta_{Q,q}$ satisfies the \emph{proper Laurent property}, i.e., 
every monomial $z^m=z^{\sum a_if_i}$ appearing in $\vartheta_{Q,q}$
has $a_i<0$ for some $i$.

Indeed, fix a section $\Sigma: \cA^{\mch}(\bZ^T) \to \cA_{\prin}^{\vee}(\ZZ^T)$
as in Definition-Lemma \ref{thacasedef}.
As restriction to $\cA \subset \cA_{\prin}$ gives a bijection between the
cluster variables for $\cA_{\prin}$ and
the cluster variables for $\cA$, between
the theta functions $\vartheta_{q}$, $q \in \Im(\Sigma)$ and the theta functions
for $\cA$, and between the corresponding local expressions $\vartheta_{Q,q}$,
it is enough to prove the claim in the $\cA_{\prin}$ case.
This follows immediately from the
definition of broken line. Indeed, if $\gamma$ is a broken line ending at $Q$
and $F(\gamma)=\sum a_i f_i$ with $a_i \ge 0$ for all $i$, then $\gamma$
must be wholly contained in $\shC^+_{\s}$. But the unbounded direction
of $\gamma$ is parallel to $\RR_{\ge 0} m$, so it follows that $q = I(\gamma) \in \shC^{+}_{\s}$.

We then have the relation
\[
\sum_{q \in \Theta(\cA)} \alpha_q \vartheta_{Q,q} = 0 \in \kk[M^{\circ}]
\]
which we rearrange as
\[
\sum_{q \in \shC^{+}_{\s}} \alpha_q \vartheta_{Q,q} =
-\sum_{q \not \in \shC^+_{\s}} \alpha_q \vartheta_{Q,q}.
\]
The collection of $\vartheta_{Q,q}$ for $q \in \shC^+_{\s}$ are exactly
the distinct cluster monomials for the seed $\s$. In particular all of their exponents are
non-negative. Thus both sides of the equation are zero. Since the cluster 
monomials for $\s$ are linearly independent, we conclude $\alpha_q =0$ 
for all $q \in \shC^{+}_{\s}$. Varying $\s$ the result follows.
\end{proof}

\section{Convexity in the tropical space}  \label{ppsec} 

As explained in the introduction, the 
Fock-Goncharov conjecure is in general false, as a cluster variety $V$ has 
in general too few functions. The conjectured
theta functions only exist formally, near infinity, in the sense of 
\S \ref{formalFGcsec}.
The failure of convergence in general manifests itself in the existence of infinitely many
broken lines with a given incoming direction, and fixed basepoint, and non-finiteness of
the multiplication rule (for fixed $p,q$ infinitely many $r$ with $\alpha(p,q,r) \neq 0$).
The remainder of the paper is devoted to the question of finding conditions
on cluster varieties which guarantee the conjecture holds as stated. One can
only expect a theta function basis for $\up(V)$ in cases when $V$ has enough functions; 
more precisely, when $\up(V)$ is finitely generated, and the natural map
$V \to \Spec(\up(V))$ is an open immersion -- note the second condition is automatic by
the Laurent phenomenon when $V$ is $\cA$-type. 
Our main (and simple) idea is that we can replace the assumption of
enough functions by the existence of a bounded {\it convex} polytope in $V^{\vee}(\bR^T)$, cut
out by the tropicalisation of a regular function. However, our notion
of convexity is delicate, as $V(\RR^T)$ a priori only has a piecewise
linear structure. The correct notion is explained in 
\S\ref{convexityconditionssection}.

Polytopes will play
several roles. The existence of bounded convex polytopes implies
various results on convergence of theta functions.
We get finiteness of the multiplication rule, and so an algebra
structure on $\can(V)$, see Proposition \ref{fmprop}. For technical reasons, see Remark 
\ref{mandelremark}, we often have to replace enough global functions by enough 
global monomials (EGM), and can make optimal use only of convex polytopes cut out by the
tropicalisations of global monomials. EGM implies 
$\up(V) \subset \can(V)$, see Proposition \ref{cafin}, and
Proposition \ref{markscor}. Convex polytopes  give partial 
(full in the bounded polytope case) compactifications of $\Spec(\can(V))$, by copying the familiar
toric construction, see \S \ref{candsec}. 
These compactifications then degenerate to toric compactifications
under (the analog) of the canonical degeneration $\cA_{\prin,\s} \to \bA^n$ of $\cA$ to $T_{N^{\circ},\s}$.
We use these degenerations to prove $\Spec(\can(V))$ is log CY, see Theorem
\ref{cfcor}. Convex cones 
in $V^{\vee}(\bR^T)$  are
intimately related with partial minimal models $V \subset \oV$, and potential functions,
see \S \ref{fullFGsection}, and Corollary \ref{spcor}. 
Finally our methods give several sufficient conditions to guarantee the Full
Fock-Goncharov conjecture, see Proposition \ref{egmscprop}
and Corollary \ref{ffgcor}. These  statements are weaker, and more technical, than one
would hope -- the reason is our inability to prove in full generality that EGM (or better,
the existence of a bounded convex polytope) implies
$\Theta(V) = V^{\vee}(\bZ^T)$. We have only optimal control over the subset 
$\Delta^+ \subset \Theta$ (the cluster complex), and the technical statements are various
ways of saying $\Delta^+$ is sufficiently big. 

The first issue is to make sense of the notion of convexity in $V(\bR^T)$.

\subsection{Convexity conditions}
\label{convexityconditionssection}
The following is elementary:

\begin{definition-lemma} \label{mcdef} By a piecewise linear function on a real vector space
$W$ we mean a continuous function $f: W \to \bR$ piecewise linear 
with respect to a finite fan of (not necessarily strictly) convex cones.
For a piecewise linear function $f:W \to \bR$ we say $f$ is \emph{min-convex}
if it satisfies one of the following three equivalent conditions:
\begin{enumerate}
\item There are finitely many linear 
functions $\ell_1,\dots,\ell_r \in W^*$ such that 
$f(x) = \min \{\ell_i(x)\}$ for all $x \in W$.
\item $f(\lambda_1 v_1 + \lambda_2 v_2) \geq  \lambda_1 f(v_1) + 
\lambda_2 f(v_2)$
for all $\lambda_i \in \bR_{\geq 0}$ and $v_i \in W$. 
\item The differential $df$ is decreasing on straight lines. In
other words, for a directed straight line $L$ with tangent
vector $v$, and $x \in L$ general, then 
\[
(df)_{x+rv}(v) \le (df)_{x}(v),
\]
where $r\in\RR_{\ge 0}$ is general
and the subscript denotes the point at which the differential is calculated.
\end{enumerate}
In the case that $W$ is defined over $\QQ$, then in condition (3) we
can restrict to lines of rational slope.
\end{definition-lemma} 

We now define convexity for functions on $V(\RR^T)$ for $V$ a cluster
variety by generalizing
the third condition above, using broken lines instead of straight lines: 

\begin{definition} \label{imcdef} 
\begin{enumerate}
\item A \emph{piecewise linear function} $f:V(\RR^T)\rightarrow\RR$
is a function which is piecewise linear after fixing a seed $\s$ to get an
identification $V(\RR^T)=L_{\RR,\s}$. If the function is piecewise
linear for one seed it is clearly piecewise linear for all seeds.
\item
Let $f: V(\bR^T) \to \bR$ be piecewise linear, and fix a seed $\s$,
to view $f:L_{\RR,\s}\rightarrow\RR$. 
We say $f$ is \emph{min-convex for $V$} (or just \emph{min-convex} if $V$
is clear from context) if for any broken line for $V$ 
in $L_{\RR,\s}$, $df$ is
increasing on exponents of the decoration monomials 
(and thus decreasing on their negatives, which are the velocity
vectors of the underlying directed path). 
We note that this notion is independent of mutation, by the invariance of
broken lines, Proposition \ref{brlineinvmut}, and thus an intrinsic
property of a piecewise linear function on $V(\bR^T)$. 
\end{enumerate}
\end{definition}

We have a closely related notion, instead defined using
the structure constants for multiplication of theta functions. 

\begin{definition} \label{decdef} 
We say that a piecewise linear 
$f:V(\bR^T) \to \bR$
is \emph{decreasing} if for $p_1,p_2,r \in V(\bR^T)$, 
with $\alpha(p_1,p_2,r) \neq 0$,
$f(r)\geq  f(p_1) + f(p_2)$. Here $\alpha(p,q,r)$ are the structure constants
of Theorem \ref{mainth}.
\end{definition}

These two notions are not quite equivalent:

\begin{lemma}
\label{diplem1}
\begin{enumerate}
\item
If $f:V(\RR^T)\rightarrow\RR$ is min-convex, then $f$ is decreasing. 
\item
If $f\colon V(\bR^T) \to \bR$  is
decreasing, then for any seed
$\s$, we have $f\colon L_{\bR,\s} = V(\bR^T) \to \bR$ min-convex in 
the sense of Definition-Lemma \ref{mcdef}.
\end{enumerate}
\end{lemma}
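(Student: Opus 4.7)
The plan is to relate the two conditions through the combinatorics of broken lines underlying Definition-Lemma \ref{structuredef}, exploiting the special role of non-bending (trivial) broken lines.

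For (1), assume $f$ is min-convex in the sense of Definition \ref{imcdef}. Two consequences will be used. First, applied to the trivial no-bend broken lines, i.e., ordinary straight rays in $L_{\bR,\s}$, the min-convex condition reduces to condition (3) of Definition-Lemma \ref{mcdef}, so $f\colon L_{\bR,\s}\to\bR$ is min-convex in the ordinary sense and in particular super-additive, $f(v_1+v_2)\geq f(v_1)+f(v_2)$. Second, along a general broken line $\gamma$ with segments $L_1,\dots,L_k$ and decorations $m_{L_1},\dots,m_{L_k}$, unpacking the condition that $df$ is non-decreasing on the decoration exponents along $\gamma$, and using positive homogeneity of $f$ to identify the asymptotic slope of $\phi(t):=f(\gamma(t))$ at $t\to-\infty$ as $-f(I(\gamma))$, yields the global inequality $f(F(\gamma))\geq f(I(\gamma))$. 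Given $p_1,p_2,r$ with $\alpha(p_1,p_2,r)\neq 0$, Definition-Lemma \ref{structuredef} furnishes broken lines $\gamma_1,\gamma_2$ with $I(\gamma_i)=p_i$ and $F(\gamma_1)+F(\gamma_2)=r$, and combining the two consequences gives
\[
f(r)=f(F(\gamma_1)+F(\gamma_2))\geq f(F(\gamma_1))+f(F(\gamma_2))\geq f(p_1)+f(p_2),
\]
which is exactly the decreasing condition.

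For (2), assume $f$ is decreasing. By Definition \ref{imcdef}(1), $f\colon L_{\bR,\s}\to\bR$ is piecewise linear with respect to a finite fan of cones, hence automatically positively homogeneous, so by the equivalence of conditions (1) and (2) of Definition-Lemma \ref{mcdef} it suffices to verify super-additivity $f(v_1+v_2)\geq f(v_1)+f(v_2)$. For a generic pair $v_1,v_2\in L_{\s}$ and generic endpoint $z$ near $v_1+v_2$, the straight ray from infinity toward $z$ in direction $v_i$ is transversal to every wall of $\foD_{\s}$ it meets, and since the leading term of each $f_{\fod}$ is $1$, this straight ray qualifies as a broken line $\gamma_i$ with a single domain of linearity, with $I(\gamma_i)=F(\gamma_i)=v_i$ and $c(\gamma_i)=1$. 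The pair $(\gamma_1,\gamma_2)$ thus contributes $1$ to $\alpha(v_1,v_2,v_1+v_2)$, so this structure constant is nonzero and the decreasing hypothesis yields $f(v_1+v_2)\geq f(v_1)+f(v_2)$. Super-additivity extends from generic integer pairs to $L_{\QQ,\s}$ by positive homogeneity and then to all of $L_{\bR,\s}$ by continuity of the piecewise linear function $f$.

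The principal technical point lies in (1): converting the local definition ``$df$ is increasing on the decoration exponents'' into the global inequality $f(F(\gamma))\geq f(I(\gamma))$ requires careful analysis of $\phi(t)=f(\gamma(t))$ using positive homogeneity and the asymptotic behavior on the unbounded initial segment. Part (2) is by contrast essentially formal, resting only on the fact that non-bending straight lines are always valid broken lines, and that the leading $1$ in every scattering function $f_{\fod}$ ensures their contribution is exactly $1$.
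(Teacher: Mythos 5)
Part (2) of your proof is essentially the paper's argument and is fine. For part (1), however, there is a gap in the key intermediate claim.

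You propose to prove $f(r)\geq f(p_1)+f(p_2)$ by composing two inequalities: super-additivity $f(F(\gamma_1)+F(\gamma_2))\geq f(F(\gamma_1))+f(F(\gamma_2))$, and the claim $f(F(\gamma_i))\geq f(I(\gamma_i))$. The problem is with the second claim. Your argument correctly shows, for a broken line $\gamma$ with endpoint $z=\gamma(0)$, that the quantity $g(t):=(df)_{\gamma(t)}(m_{L(t)})$ is non-decreasing, that $g(-\infty)=f(I(\gamma))$ (via the asymptotic slope of $\phi$), and that $g(0)=(df)_z(F(\gamma))$. Hence $(df)_z(F(\gamma))\geq f(I(\gamma))$. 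But this is \emph{not} the inequality $f(F(\gamma))\geq f(I(\gamma))$: for a min-convex (ordinary-sense) piecewise-linear $f$ one only has $(df)_z(F(\gamma))\geq f(F(\gamma))$, and the two quantities coincide only when $z$ and $F(\gamma)$ lie in the same cone of linearity of $f$. The endpoint $z=\gamma(0)$ of the broken line bears no a priori relation to the exponent $F(\gamma)$, so that hypothesis is unavailable. Thus your derivation yields two lower bounds for $(df)_z(F(\gamma))$, not the comparison $f(F(\gamma))\geq f(I(\gamma))$ that you need.

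The paper's proof sidesteps this precisely by never isolating $f(F(\gamma_i))$. Because in Definition-Lemma \ref{structuredef} the common endpoint $z$ is taken sufficiently close to $r=F(\gamma_1)+F(\gamma_2)$, one has $(df)_z(r)=f(r)$. Then linearity of $(df)_z$ on the cone containing $z$ gives $f(r)=(df)_z(F(\gamma_1))+(df)_z(F(\gamma_2))$, and each summand is $\geq f(I(\gamma_i))$ by the broken-line monotonicity — exactly the inequality you did establish. The super-additivity step is unnecessary. So either you need a separate justification (not given) for $f(F(\gamma))\geq f(I(\gamma))$, or — more economically — you should rearrange the argument as the paper does to avoid that claim entirely.
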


\begin{proof}
(1) Let $\gamma_1$, $\gamma_2$ be broken lines. Assume $f$ is min-convex
and that $z$ very close to $r$ is the endpoint of each broken line,
with $F(\gamma_1) + F(\gamma_2) = r$. Then 
\begin{align*}
f(r) = (df)_z(r) = {} & (df)_z(F(\gamma_1)) + (df)_z(F(\gamma_2)) \\
                 \geq {} & (df)_{\gamma_1(t)}(I(\gamma_1)) + 
                 (df)_{\gamma_2(t)}(I(\gamma_2)) \\
                 = {} & f(I(\gamma_1)) + f(I(\gamma_2)),
\end{align*}
where $t\ll 0$.
Thus $f$ is decreasing.

(2) Suppose $f$ is decreasing. For any $a,b \in \bZ_{> 0}$, and the linear structure on
$V(\bR^T) = L_{\bR,\s}$ determined by any choice of seed $\s$, the
contribution of straight lines in Definition-Lemma \ref{structuredef}
(and item (1) of Theorem \ref{mainthax} in the $\cA$ case) shows 
$\alpha(a\cdot p, b \cdot q, a \cdot p + b \cdot q) \neq 0$ for all $p,q \in V(\bZ^T)$. 
Thus $f(a \cdot p + b \cdot q) \geq a f(p) + b f(q)$ for all positive
integers $a$ and $b$. By rescaling, the same is true for all positive
rational numbers $a$ and $b$ and $p,q\in V(\QQ^T)$. Min-convexity
in the sense of Definition \ref{mcdef} then follows by continuity of $f$.
\end{proof} 

We have a closely related concept, capturing the generalization of the
notion of a convex polytope. For
$\Xi\subseteq V(\RR^T)$ a closed subset, define
the \emph{cone} of $\Xi$
\[
{\bf C}(\Xi)=\overline{\{ (p,r) \,|\, p\in r\Xi, r \in\RR_{\ge 0}\}}
\subseteq V(\RR^T)\times\RR_{\ge 0}.
\]
Note the closure is only necessary if $\Xi$ is not compact, in which case
${\bf C}(\Xi)\cap V\times\{0\})$ is an asymptotic
form of $\Xi$. Denote
\[
d\Xi(\ZZ)= {\bf C}(\Xi)\cap (V(\ZZ^T)\times\{d\}),
\]
which we view as a subset of $V(\ZZ^T)$. Note for $d\not=0$,
$d\Xi(\ZZ)$ agrees with the obvious notion $d\Xi \cap V(\ZZ^T)$.

\begin{definition} We will call a closed subset $\Xi \subset V(\RR^T)$
\emph{positive} if for any non-negative integers $d_1, d_2$, any
$p_1\in d_1\Xi(\ZZ)$, $p_2\in d_2\Xi(\ZZ)$,  
and any $r \in V(\ZZ^T)$ with $\alpha(p_1,p_2,r) 
\neq 0$, we have $r \in (d_1 + d_2) \Xi(\ZZ)$.
\end{definition}

Note that if $\Xi$ is a cone, i.e., invariant under rescaling, then
this definition agrees with Definition \ref{icdef}.

For a piecewise linear function $f: V(\RR^T) \to \bR$, let
\begin{equation}
\label{Xifdef}
\Xi_f := \{x \in \cXrt\,|\, f(x) \geq -1 \}.
\end{equation}

By Lemma \ref{diplem1}, if $f$ is min-convex in the sense of Definition 
\ref{imcdef}  (or more generally, decreasing in the
sense of Definition \ref{decdef}), then under any identification
$V(\RR^T) = L_{\bR,\s}$ given by any seed, $\Xi_f \subset L_{\bR,\s}$
is a convex polytope. 

\begin{lemma} \label{sminlem} 
If a piecewise linear function $f:V(\RR^T)\rightarrow\RR$ is decreasing, then
$\Xi_f$ is positive. Furthemore, $\Xi_f$ is compact if and only if
$f:V(\RR^T)\rightarrow\RR$ is strictly negative away from $0$.
\end{lemma}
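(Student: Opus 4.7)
The plan is to reduce both statements to the positive homogeneity $f(\lambda x) = \lambda f(x)$ for $\lambda \geq 0$, which holds because piecewise linear functions in the sense of Definition-Lemma \ref{mcdef} are linear on the cones of a fan emanating from the origin, and this homogeneity is preserved under the piecewise-linear mutations between seeds (which are themselves homogeneous, see \eqref{Tkdefinition}).

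First I would compute $d\Xi_f(\ZZ)$ explicitly for each $d \in \ZZ_{\geq 0}$. For $d > 0$, homogeneity immediately gives $p \in d\Xi_f$ iff $f(p/d) \geq -1$ iff $f(p) \geq -d$. For $d = 0$ one has to unpack the closure in the definition of ${\bf C}(\Xi_f)$: $(p,0)$ lies there iff there exist $(p_n, r_n) \to (p,0)$ with $p_n/r_n \in \Xi_f$, equivalently $f(p_n) \geq -r_n$, and by continuity this forces $f(p) \geq 0$; conversely, if $f(p) \geq 0$ then $(p, r_n) \in {\bf C}(\Xi_f)$ for any $r_n > 0$ and we let $r_n \to 0$. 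Combining these two cases,
\[
d\Xi_f(\ZZ) = \{p \in V(\ZZ^T) \mid f(p) \geq -d\}
\]
for every $d \geq 0$. Positivity of $\Xi_f$ is then an immediate consequence of the decreasing hypothesis: if $p_i \in d_i\Xi_f(\ZZ)$ and $\alpha(p_1, p_2, r) \neq 0$, then
\[
f(r) \geq f(p_1) + f(p_2) \geq -(d_1 + d_2),
\]
so $r \in (d_1+d_2)\Xi_f(\ZZ)$.

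For the compactness equivalence, I would fix any seed to identify $V(\RR^T) = L_{\RR,\s}$ and equip $L_{\RR}$ with an arbitrary Euclidean norm. Since $\Xi_f$ is closed by continuity of $f$, compactness is equivalent to boundedness. If $f(x) < 0$ for all $x \neq 0$, compactness of the unit sphere yields $-c := \sup_{|x|=1} f(x) < 0$, and homogeneity gives $f(x) \leq -c|x|$ for all $x$, so $f(x) \geq -1$ forces $|x| \leq 1/c$ and $\Xi_f$ is bounded. Conversely, if some $x \neq 0$ satisfies $f(x) \geq 0$, then $f(\lambda x) = \lambda f(x) \geq 0 \geq -1$ for every $\lambda \geq 0$, so the entire ray $\RR_{\geq 0} x$ is contained in $\Xi_f$, precluding boundedness.

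There is no serious obstacle here — the proof is essentially a definition chase once one notices that piecewise linearity on a fan forces positive homogeneity. The only point deserving a moment's care is the $d=0$ case of the formula for $d\Xi_f(\ZZ)$, since one must handle the closure in the definition of the cone ${\bf C}(\Xi_f)$ rather than take the naive intersection.
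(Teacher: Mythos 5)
Your proof is correct and follows the same route as the paper's: identify $d\Xi_f(\ZZ)=\{p\in V(\ZZ^T)\mid f(p)\ge -d\}$ and then apply the decreasing hypothesis directly. The paper states this identity without proof and dismisses the compactness assertion as obvious; you simply fill in those details (the $d=0$ closure argument and the homogeneity/unit-sphere argument), both of which are correct.
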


\begin{proof} 
Note that $d\Xi_f(\ZZ)=\{ p \in V(\ZZ^T)\,|\, f(p)\ge -d\}$. Thus
if $f$ is decreasing and $p_i\in d_i\Xi_f\cap V(\ZZ^T)$ with $\alpha(p_1,
p_2,r)\not=0$, then $f(p_i)\ge -d_i$ and thus $f(r)\ge f(p_1)+f(p_2)
\ge -d_1-d_2$, so $r\in (d_1+d_2)\Xi_f$. The second statement is obvious.
\end{proof}

We collect here a couple of results comparing these convexity conditions
on $\cA^{\vee}_{\prin}$ and $\cA^{\vee}$.
Recall from Proposition \ref{ldpprop}, (4), the natural map 
$\rho:\shA^{\vee}_{\prin}\rightarrow\shA^{\vee}$ with $\rho^T:\cApdr\to\cAdr$
being the canonical projection $\tM^{\circ} \to M^\circ$, 
the quotient by the $N$ translation action. 

\begin{lemma} \label{mcxcase} A piecewise linear function $f$ on 
$\cA^{\vee}(\bR^T)$ is min-convex
(resp.\ decreasing) if and only if $f\circ \rho^T$ 
on $\cA_{\prin}^{\vee}(\bR^T)$ 
is min-convex (resp.\ decreasing).
\end{lemma}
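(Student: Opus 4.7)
The plan is to handle the two statements separately, leveraging the explicit relation between broken lines (for the min-convex case) and between structure constants (for the decreasing case) on $\cA^{\vee}_{\prin}(\bR^T)$ and $\cA^{\vee}(\bR^T)$.

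For min-convexity, the essential input is Construction~\ref{blXA}: every broken line $\gamma$ in $\cA^{\vee}(\bR^T)$ is by definition the $\rho^T$-image of some broken line $\tilde\gamma$ in $\cA_{\prin}^{\vee}(\bR^T)$, with decoration exponents $m_L$ of $\tilde\gamma$ mapped to $D\rho^T(m_L)$. The chain rule gives
\[
(df)_{\rho^T(\tilde x)}\bigl(D\rho^T(m_L)\bigr)\;=\;d(f\circ\rho^T)_{\tilde x}(m_L)
\]
for every $\tilde x$ in the (open) domain of linearity $L$. Thus the monotonicity condition of Definition~\ref{imcdef}, (2), holds for $f\circ\rho^T$ along $\tilde\gamma$ if and only if it holds for $f$ along $\gamma=\rho^T(\tilde\gamma)$. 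Since both maps $\tilde\gamma\mapsto\gamma$ (broken lines of $\cA_{\prin}$ to broken lines of $\cA$) are surjective (by definition) and the condition for min-convexity quantifies over all such broken lines, the equivalence of min-convexity follows.

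For the decreasing condition, the key claim is that for any $\tilde p_1,\tilde p_2,\tilde r\in\tM^{\circ}$ with $p_i:=\rho^T(\tilde p_i)$ and $r:=\rho^T(\tilde r)$, one has $\alpha_{\cA_{\prin}}(\tilde p_1,\tilde p_2,\tilde r)\neq 0$ implies $\alpha_{\cA}(p_1,p_2,r)\neq 0$, and conversely $\alpha_{\cA}(p_1,p_2,r)\neq 0$ implies $\alpha_{\cA_{\prin}}(\tilde p_1',\tilde p_2',\tilde r')\neq 0$ for \emph{some} lifts $\tilde p_i',\tilde r'$ with $\rho^T(\tilde p_i')=p_i$, $\rho^T(\tilde r')=r$. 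The forward direction uses that theta functions on $\cA_{\prin}$ satisfy $\vartheta_{(p,n)}=z^{(0,n)}\vartheta_{(p,0)}$ (so if $\tilde p_i=\Sigma(p_i)+(0,n_i)$ then $\alpha_{\cA_{\prin}}(\tilde p_1,\tilde p_2,\tilde r)=\alpha_{\cA_{\prin}}(\Sigma(p_1),\Sigma(p_2),\tilde r-(0,n_1+n_2))$), combined with the formula $\alpha_{\cA}(p_1,p_2,r)=\sum_{n\in N}\alpha_{\cA_{\prin}}(\Sigma(p_1),\Sigma(p_2),\Sigma(r)+n)$ from Theorem~\ref{mainthax}~(1) at $t=e$ and the positivity of each term (Theorem~\ref{mainth}~(1)), which rules out cancellation. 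The reverse direction is immediate from the formula. Granting this claim, both implications are routine: if $f\circ\rho^T$ is decreasing and $\alpha_{\cA}(p_1,p_2,r)\neq 0$, pick lifts with $\alpha_{\cA_{\prin}}\neq 0$; apply the inequality for $f\circ\rho^T$ on these lifts, which reads as $f(r)\geq f(p_1)+f(p_2)$. Conversely, if $f$ is decreasing and $\alpha_{\cA_{\prin}}(\tilde p_1,\tilde p_2,\tilde r)\neq 0$, the forward direction yields $\alpha_{\cA}(p_1,p_2,r)\neq 0$, whence $f(r)\geq f(p_1)+f(p_2)$, and this is precisely $(f\circ\rho^T)(\tilde r)\geq (f\circ\rho^T)(\tilde p_1)+(f\circ\rho^T)(\tilde p_2)$ since $\rho^T$ is the projection.

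The only subtle point is the $N$-equivariance/positivity argument establishing the equivalence of non-vanishings of $\alpha_{\cA}$ and of $\alpha_{\cA_{\prin}}$ on arbitrary lifts. This is mild bookkeeping rather than a real obstacle, and I do not expect any other difficulty.
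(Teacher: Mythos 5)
Your proof is correct and takes essentially the same approach as the paper's, which is stated in two sentences: for min-convexity, use that broken lines in $\cA^{\vee}(\RR^T)$ are by definition the $\rho^T$-images of broken lines in $\cA_{\prin}^{\vee}(\RR^T)$; for the decreasing condition, use the formula for the structure constants $\alpha_{\cA}$ from Theorem~\ref{mainthax}~(1). You have supplied the bookkeeping the paper leaves implicit (the chain-rule identity, the $N$-translation argument from Lemma~\ref{eqslem}, and the use of positivity to rule out cancellation in the sum defining $\alpha_{\cA}$), and it is all sound.
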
 

\begin{proof} Broken lines in $\shA^{\vee}(\RR^T)$ 
are by definition images of broken lines on 
$\cA_{\prin}^{\vee}(\bR^T)$ under $\rho^T$,
which gives the min-convex statement. The decreasing statement follows
from the formula for the structure constants for $\shA$
of Theorem \ref{mainthax}, (1). 
\end{proof} 

To understand the relationship between positive polytopes 
in $\cA^{\vee}_{\prin}(\RR^T)$ and $\cA^{\vee}(\RR^T)$, we need to understand
how broken lines behave under the canonical $N$-translation on
$\cA^{\vee}_{\prin}(\RR^T)$:

\begin{lemma}\label{eqslem} For $Q \in \cApdr$ general and $n \in N$ 
there are natural bijections between the following sets of broken lines
\begin{align*}
&\{\gamma\,|\, b(\gamma) = Q, I(\gamma) = q, F(\gamma) = s\}, \\
&\{\gamma\,|\, b(\gamma) = Q, I(\gamma) = q + n, F(\gamma) = s +n\}, \\
&\{\gamma\,|\, b(\gamma) = Q+n, I(\gamma) = q+n, F(\gamma) = s + n\}.
\end{align*}
If $\alpha(p,q,r) \neq 0$, then $\alpha(p+n,q,r+n) \neq 0$ and 
$\alpha(p,q+n,r+n) \neq 0$.
\end{lemma}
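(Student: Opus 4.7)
The strategy is to exploit the $N$-invariance of $\foD^{\cA_{\prin}}_{\s}$ recorded in Construction \ref{sdprin}: each wall $\fod$ of $\foD^{\cA_{\prin}}_{\s}$ lies in a hyperplane $(n_0,0)^{\perp}\subset\tM^{\circ}_{\RR}$ with $n_0\in N^+$, is invariant under translation by $\{0\}\oplus N_{\RR}\subset (n_0,0)^{\perp}$, and the attached functions $f_{\fod}$ are power series in monomials $z^{p_1^*(\tilde n)}$ for $\tilde n=(n',0)\in \tN^+$, whose exponents are annihilated by $\langle (n_0,0),\cdot\rangle$ on $\{0\}\oplus N_{\RR}$. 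Throughout I identify $n\in N$ with $(0,n)\in\tM^{\circ}=M^{\circ}\oplus N$.

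For the bijection between the second and third sets, send $\gamma$ to $\gamma+n\colon t\mapsto\gamma(t)+n$ with the same decorations $c_Lz^{m_L}$. Velocities and decorations are unchanged, so $I$ and $F$ are preserved; only the basepoint moves from $Q$ to $Q+n$. The walls crossed at each time $t_0$ coincide, since each wall of $\foD^{\cA_{\prin}}_{\s}$ is the preimage under the projection $\tM^{\circ}_{\RR}\to M^{\circ}_{\RR}$ of a subset of $M^{\circ}_{\RR}$. Translation by $-n$ inverts this.

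For the bijection between the first and second sets, set $\tilde\gamma(t):=\gamma(t)-tn$ with decorations $c_Lz^{m_L+n}$. Then $\tilde\gamma(0)=Q$, $\tilde\gamma'(t)=-(m_L+n)$ as required by Definition \ref{blde}(3), and the asymptotic direction and final monomial give $I(\tilde\gamma)=q+n$ and $F(\tilde\gamma)=s+n$. Wall crossings occur at the same times by the same projection argument, and the bending rule is preserved because $\langle (n_0,0),n\rangle=0$ forces $f_{\fod}^{\langle (n_0,0),m_L+n\rangle}=f_{\fod}^{\langle (n_0,0),m_L\rangle}$, so multiplying by $z^n$ bijects the terms of $c_Lz^{m_L}f_{\fod}^{\langle (n_0,0),m_L\rangle}$ with those of $c_Lz^{m_L+n}f_{\fod}^{\langle (n_0,0),m_L+n\rangle}$. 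The inverse is $\tilde\gamma\mapsto(t\mapsto\tilde\gamma(t)+tn)$ with decorations shifted by $-n$. The only obstacle here is essentially bookkeeping: choosing the path shift by $-tn$ and the exponent shift by $+n$ in a coordinated way so that the velocity continues to match the attached monomial.

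For the structure constants, recall from Definition-Lemma \ref{structuredef} and Proposition \ref{thetabasislemma}(3) that $\alpha(p,q,r)=\alpha_z(p,q,r)$ counts pairs $(\gamma_1,\gamma_2)$ with $I(\gamma_i)=p,q$, common basepoint $z$ chosen sufficiently close to $r$, and $F(\gamma_1)+F(\gamma_2)=r$. Applying the first bijection to $\gamma_1$ alone produces a pair with $I_1=p+n$, $I_2=q$, basepoint $z$, and $F$-sum $r+n$; then applying the second bijection to translate both paths by $n$ moves the basepoint to $z+n$, which is close to $r+n$. The resulting pair contributes to $\alpha_{z+n}(p+n,q,r+n)=\alpha(p+n,q,r+n)$, and the construction is reversible, so $\alpha(p,q,r)=\alpha(p+n,q,r+n)$ as nonnegative integers; symmetrically, $\alpha(p,q,r)=\alpha(p,q+n,r+n)$. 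In particular non-vanishing is preserved, as required.
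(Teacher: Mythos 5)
Your proof is correct and takes essentially the same approach as the paper's. The bijection between the second and third sets (pure translation by $n$, decorations fixed) is identical. For the bijection between the first and second sets, the paper constructs the new broken line $\gamma'$ segment by segment, explicitly writing the bend points $x_i'=Q+\sum_{j<i}t_j(m_j+(0,n))$ and the segments $L_i'=x_i'+t(m_i+(0,n))$; your closed formula $\tilde\gamma(t)=\gamma(t)-tn$ with decorations $c_Lz^{m_L+n}$ produces exactly the same broken line, and your observation that $\rho^T(\tilde\gamma(t))=\rho^T(\gamma(t))$ (since $n\in\ker\rho^T$) is a clean substitute for the paper's verification that $x_i'\in(n_i,0)^{\perp}$. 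Your argument that the wall-crossing exponents are unchanged because $\langle(n_0,0),n\rangle=0$ is also what the paper relies on implicitly. For the $\alpha$-statement the paper simply says it ``follows from the equality of the sets using Definition-Lemma \ref{structuredef}''; you flesh this out correctly by composing the two bijections so that the basepoint $z$ near $r$ moves to $z+n$ near $r+n$. In short: same argument, expressed more compactly for the first bijection and more explicitly for the structure-constant consequence.
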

\begin{proof} The implications for $\alpha$ follow from the equality of the 
sets using Definition-Lemma \ref{structuredef}. To get the bijections
between the sets, 
we first recall that every wall of $\foD^{\cA_{\prin}}$ is invariant under 
the canonical $N$-translation and is contained in a hyperplane $(n,0)^{\perp}$
for some $(n,0)\in \tN^{\circ}$. 
Thus $N$ acts on broken lines, by
translation on the underlying path, keeping the monomial decorations the same.
This gives the bijection between the second and third sets.

For bijection between the first and second sets, we need to translate
the decorations on each straight segment of $\gamma$ by $n$. This will
change the slopes of each line segment. To do this 
precisely, take $\gamma$ in the first set, say with straight decorated segments
$L_1,\dots,L_k$ taken in reverse order, with $L_k$ the infinite segment. 
Suppose the monomial attached to $L_i$ is $c_iz^{m_i}$ with $m_i\in\tM^{\circ}$.
Say the bends are at points
$x_i \in L_{i-1} \cap L_{i}$ along a wall contained
the hyperplane $(n_i,0)^{\perp}$
so that $L_i$ is parameterized (in the reverse direction to that of Definition
\ref{blde})
by $x_i + t m_i$, $0\le t \le t_i$. 
Then we define
\[
x_i' = Q + t_1(m_1 + (0,n)) + t_2(m_2 + (0,n)) + \cdots + t_{i-1}(m_{i-1} 
+ (0,n)).
\]
Observe that $x_i' \in (n_i,0)^{\perp}$. 
Let $L_i'$ be the segment $x_i' + t(m_i + (0,n))$,
$0 \le t \le s_i$, with attached monomial $c_iz^{m_i +(0,n)}$. Then
$L_1',\dots,L_k'$ form the straight pieces of a broken line $\gamma'$ 
in the second set. This gives the desired bijection. 
\end{proof} 

\begin{lemma}
\label{rescalelem}
Let $p,q,r\in \shA^{\vee}_{\prin}(\ZZ^T)$, $k$ a positive integer. 
If $\alpha(p,q,r)\not=0$, then $\alpha(kp,kq,kr)\not=0$.
\end{lemma}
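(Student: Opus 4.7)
The idea is to exploit the fact that every wall $(\fod,f_\fod)$ of $\foD_\s^{\cA_{\prin}}$ is a rational polyhedral cone (with apex at the origin), so the entire scattering diagram is invariant under the dilation $x\mapsto kx$ of $\tM^\circ_\RR$. This lets me rescale any broken line by a factor of $k$ and match up the count of broken lines realizing $\alpha(p,q,r)$ with a count realizing $\alpha(kp,kq,kr)$.

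First I would unpack Definition-Lemma \ref{structuredef} and Proposition \ref{thetabasislemma}(3): the hypothesis $\alpha(p,q,r)\neq 0$ means that for every $z$ sufficiently close to $r$ and generic (i.e.\ $z\notin\Supp(\foD_\s^{\cA_{\prin}})$), we have $\alpha_z(p,q,r)\neq 0$, so there exists a pair of broken lines $\gamma_1,\gamma_2$ with $b(\gamma_i)=z$, $I(\gamma_1)=p$, $I(\gamma_2)=q$ and $F(\gamma_1)+F(\gamma_2)=r$. Fix such $z$ and such a pair. Writing the straight pieces of $\gamma_i$ (in order) as $L_1^{(i)},\ldots,L_{s_i}^{(i)}$ with monomial decorations $c_j^{(i)}z^{m_j^{(i)}}$ and bending points $x_j^{(i)}\in\fod_j^{(i)}\subset (n_j^{(i)})^\perp$, I would construct rescaled broken lines $\tilde\gamma_1,\tilde\gamma_2$ supported on the dilates of the images of $\gamma_1,\gamma_2$: the $j$-th segment of $\tilde\gamma_i$ has image $kL_j^{(i)}$, decoration $\tilde c_j^{(i)}z^{km_j^{(i)}}$, and bends at $kx_j^{(i)}\in\fod_j^{(i)}$ (using that each $\fod_j^{(i)}$ is a cone). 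The initial segment has decoration $z^{kp}$ or $z^{kq}$, and the endpoint is $kz$.

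The main step is to verify the bending rule for $\tilde\gamma_i$ and show that each coefficient $\tilde c_j^{(i)}$ can be chosen strictly positive. At the $j$-th bend, the required compatibility is that $\tilde c_{j+1}^{(i)}z^{km_{j+1}^{(i)}}$ appear as a term in
\[
\tilde c_j^{(i)}z^{km_j^{(i)}}\cdot f_{\fod_j^{(i)}}^{\pm\langle n_j^{(i)},km_j^{(i)}\rangle}
=\tilde c_j^{(i)}z^{km_j^{(i)}}\cdot\bigl(f_{\fod_j^{(i)}}^{\pm\langle n_j^{(i)},m_j^{(i)}\rangle}\bigr)^{k},
\]
where the sign depends only on the direction of crossing, which is unchanged under dilation by a positive scalar. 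Since the original bending rule says $(c_{j+1}^{(i)}/c_j^{(i)})z^{m_{j+1}^{(i)}-m_j^{(i)}}$ is a term of $f_{\fod_j^{(i)}}^{\pm\langle n_j^{(i)},m_j^{(i)}\rangle}$, raising to the $k$-th power and using the positivity of all coefficients of $f_\fod$ (Theorem \ref{scatdiagpositive}) produces $z^{k(m_{j+1}^{(i)}-m_j^{(i)})}$ with a strictly positive coefficient (bounded below by $(c_{j+1}^{(i)}/c_j^{(i)})^k$). Thus one can inductively choose $\tilde c_{j+1}^{(i)}>0$, and $\tilde\gamma_i$ is a bona fide broken line with $c(\tilde\gamma_i)>0$.

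Finally, since dilation by $k$ is a homeomorphism fixing $\Supp(\foD_\s^{\cA_{\prin}})$ setwise, by choosing the original $z$ close enough to $r$ we can ensure $kz$ lies in the neighbourhood of $kr$ on which $\alpha_{kz}(kp,kq,kr)=\alpha(kp,kq,kr)$, as provided by Proposition \ref{thetabasislemma}(3). The pair $(\tilde\gamma_1,\tilde\gamma_2)$ contributes $\tilde c^{(1)}_{s_1}\tilde c^{(2)}_{s_2}>0$ to $\alpha_{kz}(kp,kq,kr)$, and all other contributions are non-negative by positivity of the broken line counts. Hence $\alpha(kp,kq,kr)\neq 0$. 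The only real obstacle is the bookkeeping in the bending-rule verification above, which reduces to the observation that positivity of $f_\fod$ ensures nonvanishing of the relevant coefficient of the $k$-th power.
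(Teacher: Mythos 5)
Your proof is correct and takes essentially the same approach as the paper's: the paper also constructs a rescaled broken line with all decoration exponents multiplied by $k$, and the required nonvanishing at each bend follows from positivity of the scattering functions exactly as you argue. The only cosmetic difference is that the paper keeps the underlying image of the broken line (and hence the basepoint $z$) fixed while scaling only the decorations, whereas you dilate the path by $k$ so the basepoint moves to $kz$ — both work because $\Supp(\foD_\s^{\cA_{\prin}})$ is conical, so "$z$ close to $r$'' and "$kz$ close to $kr$'' refer to the same scale-invariant chamber.
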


\begin{proof}
This follows immediately from Definition-Lemma \ref{structuredef} and the
argument given in the proof of saturatedness in Theorem \ref{goodalgebra}.
This latter argument shows that if there is a broken line
$\gamma$ with $I(\gamma)=p$, $F(\gamma)=r$, then there is a broken line
$\gamma'$ with $I(\gamma')=kp$, $F(\gamma')=kr$.
\end{proof}

\begin{proposition} \label{tqprop} 
Suppose $\Xi\subseteq \cA_{\prin}^{\vee}(\RR^T)$ is a positive polytope defined over $\QQ$ (i.e., all faces
span rationally defined affine spaces). Then $\Xi+N_{\RR}$ is positive.
\end{proposition}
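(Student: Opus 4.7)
The plan exploits the fact that $\Xi' := \Xi + N_{\RR}$ is invariant under $N_{\RR}$-translation, combined with the equivariance of structure constants under $N$-translation (Lemma \ref{eqslem}) and the rescaling property from Lemma \ref{rescalelem}. Using the identifications $\tM^{\circ}_{\RR} = M^{\circ}_{\RR} \oplus N_{\RR}$ and $\tM^{\circ} = M^{\circ} \oplus N$, we have $\Xi' = (\rho^T)^{-1}(\rho^T(\Xi))$, so for $d > 0$,
\[
d\Xi'(\ZZ) = \{(m,n) \in M^{\circ}\oplus N \,|\, m \in d\rho^T(\Xi)\},
\]
and for $d = 0$, $d\Xi'(\ZZ)$ is the set of integer points in (asymptotic cone of $\Xi$)${}+ N_{\RR}$.

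Now suppose $p_i = (m_i, n_i) \in d_i\Xi'(\ZZ)$ and $\alpha(p_1, p_2, r) \neq 0$ with $r = (m_r, n_r)$. Applying Lemma \ref{eqslem} once in each slot to shift away the $N$-components yields
\[
\alpha\bigl((m_1,0),\,(m_2,0),\,(m_r, n_r - n_1 - n_2)\bigr) \neq 0.
\]
Since $\Xi$ is rational and each $m_i/d_i$ (or, when $d_i=0$, the direction of $m_i$) is a rational point of $\rho^T(\Xi)$, the nonempty rational polytope $\Xi \cap (\rho^T)^{-1}(m_i/d_i)$ contains a rational point, giving $n_i^{\ast} \in N_{\QQ}$ with $(m_i, n_i^{\ast}) \in d_i\Xi$. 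Choosing $k \in \ZZ_{>0}$ with $kn_i^{\ast} \in N$, Lemma \ref{rescalelem} gives
\[
\alpha\bigl((km_1,0),\,(km_2,0),\,(km_r, k(n_r - n_1 - n_2))\bigr) \neq 0,
\]
and Lemma \ref{eqslem} applied twice more produces
\[
\alpha\bigl((km_1, kn_1^{\ast}),\,(km_2, kn_2^{\ast}),\,(km_r, k(n_r - n_1 - n_2 + n_1^{\ast} + n_2^{\ast}))\bigr) \neq 0.
\]
The first two arguments now lie in $kd_i\Xi(\ZZ)$, so positivity of $\Xi$ places the third argument in $k(d_1 + d_2)\Xi(\ZZ)$. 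Dividing by $k$ and translating by $(0,\, n_1 + n_2 - n_1^{\ast} - n_2^{\ast}) \in N_{\RR}$ yields $(m_r, n_r) \in (d_1+d_2)\Xi + N_{\RR} = (d_1+d_2)\Xi'$, which gives $r \in (d_1+d_2)\Xi'(\ZZ)$ as required.

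The main obstacle is the rationality step: one must lift $m_i$ to a rational point of $d_i\Xi$, which is precisely where the $\QQ$-definedness hypothesis enters, and this lift requires clearing denominators via the rescaling lemma rather than working in $\Xi$ directly. A mild bookkeeping subtlety is the $d_i = 0$ case, where the asymptotic-cone variant of $d_i\Xi(\ZZ)$ must be used throughout; the argument goes through unchanged because both the asymptotic cone of $\Xi$ and $N_{\RR}$ are closed under the relevant rescalings, and positivity of $\Xi$ as defined automatically includes this degenerate case.
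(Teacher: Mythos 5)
Your proof is correct and follows the same essential strategy as the paper's: use rationality of $\Xi$ to find a rational lift of each $p_i$ to $d_i\Xi$, clear denominators via Lemma \ref{rescalelem}, move the lift into place via Lemma \ref{eqslem}, and then invoke positivity of $\Xi$. The paper's version is marginally more compact — it writes $p_i = p_i' + n_i$ directly with $p_i' \in (d_i\Xi)\cap\shA^{\vee}_{\prin}(\QQ^T)$ and $n_i \in N_{\QQ}$, then rescales once and translates twice, whereas you first shift away the integral $N$-component and only then rescale — but the two arguments are the same in substance.
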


\begin{proof} 
Suppose $p_i \in d_i(\Xi + N_{\RR})\cap\cA^{\vee}_{\prin}(\ZZ^T)$, 
and $\alpha(p_1,p_2,r) \neq 0$. 
We can always write $p_i = p_i' + n_i$ with
$p'_i \in (d_i \Xi)\cap \shA^{\vee}_{\prin}(\QQ^T)$ and $n_i \in N_{\QQ}$ by
the rationality assumption. Let $k$ be a positive integer
such that $kp_i'$ and $kn_i$ are
all integral for $i=1,2$. Then because $\alpha(p_1,p_2,r)\not=0$,
$\alpha(kp_1,kp_2,kr)\not=0$ by Lemma \ref{rescalelem}, and thus
$\alpha(kp_1',kp_2',k(r-n_1-n_2)) \neq 0$ by Lemma \ref{eqslem}.
As $kp_i'\in kd_i\Xi$, positivity of $\Xi$ implies
$k(r - n_1 - n_2) \in k(d_1 + d_2) \Xi$ and thus $r \in 
(d_1 + d_2)(\Xi + N_{\RR})$. 
\end{proof}

The chief difficulty now lies in constructing min-convex functions or
positive polytopes. We turn to this next.

\subsection{Convexity criteria}

The following would be a powerful tool for construction min-convex
functions on cluster varieties:

\begin{conjecture} \label{dcon} If $0 \neq f$ is a regular function on a log
Calabi-Yau manifold $V$ with maximal
boundary, then $f^{\trop}: V^{\trop}(\bR) \to \bR$ is min-convex. Here $f^{\trop}(v) = v(f)$ for
the valuation $f$. 
\end{conjecture}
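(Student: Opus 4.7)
The plan is to establish the conjecture in three stages of increasing generality, leveraging the positivity results and the formal Fock--Goncharov theorem already at hand.

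First I would treat the case $W \in \ord(V)$ referenced as Proposition \ref{convcor}. The key fact is that every global monomial $z^m$ admits a positive Laurent expansion in every cluster chart by Theorem \ref{pilpth}, so any positive polynomial combination of global monomials is a positive Laurent polynomial in each chart. By Lemma \ref{lplemma}(2), $v(g) = \min_{c_m \neq 0} \langle m, r(v)\rangle$ for such a $g = \sum c_m z^m$, so $g^{\trop}$ is classically the minimum of linear functions on each chart. To promote this classical min-convexity to min-convexity along broken lines in the sense of Definition \ref{imcdef}, I would check directly that the bending of $g^{\trop}$ across a scattering wall $(\fod, 1+z^{p^*(n)})$ matches the exponent-change rule in Definition \ref{blde}: crossing such a wall shifts $m_L$ to $m_L + k p^*(n)$ for some $k \geq 0$, and the required inequality $(dg^{\trop})(m_L + kp^*(n)) \geq (dg^{\trop})(m_L)$ reduces to $(dg^{\trop})(p^*(n)) \geq 0$, which is exactly the bend of the tropical minimum across the wall.

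Second, I would extend to $f \in \midd(V)$ by writing $f = \sum \alpha_q \vartheta_q$ as a finite linear combination. Since each $\vartheta_q$ is a positive universal Laurent polynomial by Theorem \ref{mainth}(6), the Stage 1 argument gives $\vartheta_q^{\trop}$ min-convex. For sums with positive $\alpha_q$, tropicalization commutes with summation via minimum, so $f^{\trop}$ is the pointwise minimum of the $\vartheta_q^{\trop}$ plus constants, and a minimum of min-convex functions is min-convex. Third, for general $f \in \up(V)$, I would pass to the formal expansion $f = \sum \alpha(f)(q) \vartheta_q$ of Theorem \ref{ffgth}, interpret it in the completion $\widehat{\up(\ocA_{\prin}^{\s})}$, and try to realize $v(f)$ via a finite truncation, using that for any divisorial valuation $v$ only finitely many $\vartheta_q$ in the expansion can achieve the minimum of $v(\vartheta_q)$ modulo a fixed ideal power.

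The hard part is handling cancellations in the third stage (and in the parts of the first stage allowing signed coefficients). When the coefficients $\alpha(f)(q)$ have mixed signs, the cancellation locus $\{v : v(f) > \min_q(v(\vartheta_q) + v(\alpha(f)(q)))\}$ can be a full-dimensional subset of $V^{\trop}(\RR)$, and on this locus $f^{\trop}$ strictly exceeds the formal minimum that would otherwise give min-convexity for free. Controlling the geometry of this locus and showing that min-convexity survives cancellation requires an idea beyond the paper's existing toolkit. A plausible route would be to reformulate the conjecture as an inequality stable under cancellation---say on Newton polytopes in every chart, indexed by the cluster chambers---and deduce it from the chamber structure of the scattering diagram together with the consistency of wall-crossing automorphisms; alternatively, one could hope that the theta functions satisfy enough linear independence after specialization to suitable divisorial valuations that no honest cancellation occurs at the level of $v(f)$.
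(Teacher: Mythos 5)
The statement you are attacking is a \emph{conjecture}, not a theorem. The paper itself does not prove it: the introduction says it is ``unable to prove Conjecture \ref{dcon} except in the monomial case'' and substitutes the EGM condition, and Remark \ref{mandelremark} credits the only full proof (dimension two) to Mandel. So the right question is whether your partial argument and your identification of the obstruction are sound.

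Your Stage 1 is essentially right for positive linear combinations of cluster monomials, and it is the argument the paper makes. But the wall-crossing check as you wrote it is too loose: the inequality $(dg^{\trop})(p^*(n))\geq 0$ is not automatic for an arbitrary positive Laurent polynomial in a chart; it is the substance of Lemma \ref{diplem}. That lemma works for a global monomial precisely because one can choose the seed $\s'$ in which the monomial is a \emph{single} character, and then Lemma \ref{gmlemma} gives non-negativity of the single linear functional on the initial scattering monomials for $\foD_{\s'}$. For a sum of global monomials from different seeds one takes a pointwise minimum of such functions; with a bit of care (handle transitions between active pieces away from walls, and wall crossings within a fixed active piece separately) this minimum is again min-convex.

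The real unacknowledged gap is your Stage 2. You assert that ``the Stage 1 argument gives $\vartheta_q^{\trop}$ min-convex'' because $\vartheta_q$ is a positive universal Laurent polynomial. That only yields classical min-convexity (in the sense of Definition-Lemma \ref{mcdef}) in each chart, which the paper already observes for \emph{every} regular function in Remark \ref{mandelremark} and which is strictly weaker than Definition \ref{imcdef}. For a theta function indexed by $q\in\Theta\setminus\Delta^+(\bZ)$ there is no seed in which $\vartheta_q$ is a character, so you cannot invoke Lemma \ref{gmlemma} to verify the hypothesis of Lemma \ref{diplem}(1), and the positive Laurent property alone does not give the required sign on the scattering monomials. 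Establishing min-convexity of $\vartheta_q^{\trop}$ for all $q\in\Theta$ is essentially the hard content of the conjecture for the middle algebra; presenting it as a corollary of Stage 1 conceals the true difficulty. Your identification of the cancellation problem in Stage 3 (and for mixed-sign coefficients generally) is correct and accords with the paper's reason for falling back on EGM and global monomials.
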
 

\begin{remark}
\label{mandelremark} 
To make sense of the conjecture one needs a good theory of broken lines, currently constructed
in \cite{GHK11} in dimension two, 
and here for cluster varieties of all dimension. 
In dimension two, the conjecture has been proven by Travis Mandel, 
\cite{MandelThesis}. Also,
it is easy to see that in any case, for each seed $\s$ and 
regular function $f$, that 
$f^{T}: L_{\bR,\s} = V(\bR^T) \to \bR$, see \eqref{ftcd}, 
is min-convex in the sense of Definition \ref{mcdef}. 
Indeed this is the standard (min) tropicalisation of a Laurent polynomial. We hope to eventually
give a direct geometric description of broken lines (without reference to a scattering
diagram), for any log CY, as tropicalisations of some algebraic analog of holomorphic disks. We
expect the conjecture to follow easily from such a description. 
\end{remark} 

In fact, we can prove Conjecture \ref{dcon} for global monomials, which
gives our main tool for constructing min-convex functions 
(our inability to prove
the conjecture in general is the main reason we use the condition
EGM rather than the more natural condition of enough global functions): 

\begin{proposition} \label{convcor} For a global monomial $f$ on $V^{\vee}$, 
the tropicalisation
$f^T$ is min-convex, and in particular, by Lemma \ref{diplem1}, decreasing.
\end{proposition}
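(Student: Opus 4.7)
The plan is to verify min-convexity of $f^T$ directly from the defining condition of Definition \ref{imcdef} along broken lines. I reduce to the case $V = \cA_{\prin}$, where the Injectivity Assumption holds, the scattering diagram $\foD_{\s}^{\cA_{\prin}}$ of Construction \ref{sdprin} lives in $V^{\vee}(\bR^T)$, and broken lines are well-defined; global monomials on $V^{\vee}$ for $V = \cA, \cA_t, \cX$ lift canonically, and min-convexity descends. By Definition \ref{globalmonomialdef}, there is a seed $\s'$ of $V^{\vee}$ with $f|_{T_{L^*,\s'}} = z^n$, so in the chart identification $V^{\vee}(\bR^T) = L^*_{\bR,\s'}$ coming from $\s'$, $f^T$ is the linear function $x \mapsto \langle n, -x\rangle$. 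By Theorem \ref{pilpth}, in any other seed chart $\s$ of $V^{\vee}$, $f$ is a positive Laurent polynomial, so Lemma \ref{lplemma}(1) expresses $f^T$ in that chart as $\min_m \langle m, -x\rangle$ over the exponents of its support. This gives chart-wise min-convexity of $f^T$ in the ordinary sense of Definition-Lemma \ref{mcdef}.

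For the broken-line condition, fix a seed $\s$ and work in the chart $L^*_{\bR,\s}$. Consider a broken line $\gamma$ with decorated segments $L_1,\ldots,L_k$ bearing monomials $c_{L_i} z^{m_{L_i}}$ and set $\psi(t) = f^T(\gamma(t))$; the condition of Definition \ref{imcdef} is equivalent to concavity of $\psi$. On each segment $L_i$ the velocity is constant at $-m_{L_i}$, so $\psi$ is the restriction of a min-convex function on $L^*_{\bR,\s}$ to a straight line, hence concave there. At a bending point $x^* \in \fod \subseteq n_0^{\perp}$, the scattering function has the form $f_{\fod} = 1 + \sum_{k\geq 1} c_k z^{k\, p^*(n_0)}$ by Lemma \ref{thetaflemma}; combining item (4) of Definition \ref{blde} with the positivity of the exponent $\langle n_0, m_{L_i}\rangle > 0$, this forces $m_{L_{i+1}} = m_{L_i} + j\, p^*(n_0)$ for some $j \in \bZ_{\geq 0}$. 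The one-sided derivatives of $\psi$ then satisfy
\[
\psi'(t^{*+}) - \psi'(t^{*-}) \;=\; -\bigl[df^T_{L_{i+1}}(m_{L_i}) - df^T_{L_i}(m_{L_i})\bigr] \;-\; j\, df^T_{L_{i+1}}(p^*(n_0)),
\]
so concavity at the bend reduces to the inequality
\[
df^T_{L_{i+1}}(m_{L_i}) - df^T_{L_i}(m_{L_i}) + j\, df^T_{L_{i+1}}(p^*(n_0)) \;\geq\; 0.
\]

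The hard part is establishing this last inequality. The contribution $df^T_{L_{i+1}}(m_{L_i}) - df^T_{L_i}(m_{L_i})$ can be controlled by comparing the piecewise linear structure of $f^T$ on either side of $\fod$ using min-convexity in each chart, and the genuinely new content is the positivity statement $df^T(p^*(n_0)) \geq 0$ at generic points of $\fod$. I would establish this by induction on the order of $\fod$ in the filtration $\fog^{\geq k}$ of $\fog$: for the initial walls $(e_i^{\perp}, 1+z^{v_i})$ it follows directly from the cluster mutation formula, which shifts exponents of the positive Laurent expansion of $f$ in adjacent charts of $V^{\vee}$ by non-negative multiples of $p^*(e_i) = v_i$, so the minimum $f^T$ cannot decrease in the $p^*(n_0)$ direction. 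For higher-order walls, one uses the positivity Theorem \ref{scatdiagpositive} to write $f_{\fod} = (1+z^{p^*(n_0)})^c$ with $c \in \bZ_{\geq 0}$ and uses consistency (Theorem \ref{KSlemmaGS}) to propagate positivity through the wall-crossing composition. The main obstacle is executing this induction uniformly across the whole scattering diagram in dimension $> 2$; the analogous statement was carried out in dimension two by Mandel (Remark \ref{mandelremark}), and a similar local analysis, seed-by-seed, should yield the general case.
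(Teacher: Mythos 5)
Your reduction to $V = \cA_{\prin}$ is fine, and you correctly note that in the seed $\s'$ where $f$ restricts to the character $z^n$, the tropicalisation $f^T$ is a single linear functional on $L^*_{\bR,\s'}$. But you then abandon this: you fix an arbitrary seed $\s$, express $f^T$ there as a minimum of many linear pieces, and try to verify the broken-line condition by estimating the jump in $df^T$ at each bend. This leads you to what you yourself label the ``hard part,'' the inequality $df^T(p^*(n_0)) \ge 0$ at a generic point of each wall, for which you offer only a programmatic induction that you admit is not carried out beyond dimension two. That is a genuine gap, and it is not a small one: in an arbitrary chart there is no a priori control over which linear piece of $f^T$ is active near a given wall or what sign it takes on the scattering monomials.

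The fix you miss is that min-convexity, as defined in Definition \ref{imcdef}, is intrinsic: Proposition \ref{brlineinvmut} makes the condition seed-independent, so you are free to verify it in the one chart $\s'$ where $f^T$ is linear. The paper does exactly this, via the criterion of Lemma \ref{diplem}(1): if for \emph{some} seed $f^T = \min\{\ell_i\}$ with each $\ell_i$ non-negative on all the \emph{initial} scattering monomials, then $f^T$ is min-convex. The proof is a three-line estimate at each bend,
\[
(df)_{L'}(m') - (df)_L(m) = \ell'(m'-m) - (\ell(m)-\ell'(m)) \ge \ell'(m'-m) \ge 0,
\]
using that $\ell \le \ell'$ on the incoming side of the wall and that $m'-m$ is a non-negative multiple of $p^*(n_0)$, hence a non-negative combination of the initial scattering monomials. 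In the distinguished seed $\s'$ the hypothesis involves a \emph{single} linear functional, and Lemma \ref{gmlemma} identifies ``non-negative on initial scattering monomials'' with regularity of $z^n$ on the toric model, which is precisely the definition of a global monomial. So the ``hard part'' is not there at all. Two smaller remarks: the exponent shift at a bend being a non-negative multiple of $p^*(n_0)$ is already part of Definition \ref{blde} and does not require Theorem \ref{scatdiagpositive}; and the citation of Theorem \ref{pilpth} to claim $f$ is a positive Laurent polynomial in every chart of $V^{\vee}$ is not quite right as stated, since $V^{\vee} = \cA^{\vee}_{\prin}$ is of $\cX$-type (it can be rescued through Proposition \ref{ldpprop}, but is unnecessary once you work in $\s'$).
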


\begin{lemma} \label{diplem} 
Let $f:V^{\vee}(\RR^T)\rightarrow \RR$ be a piecewise linear function.
\begin{enumerate}
\item If $V=\shA_{\prin}$ or $\cX$ and for some choice 
of seed $f$ is the minimum of a collection of linear
functionals $\ell_i$, each of which is non-negative on all the 
initial scattering
monomials of $\foD_{\s}^{\cA_{\prin}}$
(resp., for $V = \cA$, the pullbacks $\ell_i\circ \rho^T$ 
are non-negative on the initial
scattering monomials of $\foD_{\s}^{\cA_{\prin}}$) 
then $f$ is min-convex. 
\item For $V=\cA_{\prin}$, if $f$ is linear in a neighborhood of every wall
of $\foD_{\s}^{\cA_{\prin}}$, then
$f$ is min-convex if and only if  each $\ell_i$ is 
non-negative on each of the initial scattering
monomials.
\end{enumerate}
\end{lemma}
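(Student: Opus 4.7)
The plan is to reduce all cases of part~(1) to the $\cA_{\prin}$ case and to combine the positivity of Theorem~\ref{scatdiagpositive} with the classical min-convexity of $f = \min_i \ell_i$ on the ambient vector space.

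For the forward direction in the $\cA_{\prin}$ case, I would verify min-convexity along a broken line $\gamma$ in $\tM^{\circ}_{\RR,\s}$ with decoration $c_L z^{m_L}$ on each linear segment $L$ in two steps. Within a single segment, whose velocity is $-m_L$, the function $t \mapsto (df)_{\gamma(t)}(m_L)$ is non-decreasing by condition~(3) of Definition-Lemma~\ref{mcdef}. At a bend at a wall $(\fod, f_{\fod})$, the new exponent has the form $m_{L'} = m_L + r\cdot e$ with $r \in \ZZ_{>0}$ and $e$ an exponent of $f_{\fod}$; by Theorem~\ref{scatdiagpositive} we may assume $e = p^{*}(n)$ with $n \in N^{+}$, so $e$ is a non-negative integer combination of the initial scattering monomials $(v_k,e_k)$, $k \in I_{\uf}$. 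At a generic bend point $df$ equals a single $\ell_i$, and the hypothesis $\ell_i(v_k,e_k) \geq 0$ for all $k$ gives $\ell_i(e) \geq 0$, hence $(df)(m_{L'}) \geq (df)(m_L)$, as required.

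The $\cX$ case is then immediate, since by Construction~\ref{blXA} the broken lines for $\cX$ are precisely the broken lines of $\foD_\s^{\cA_{\prin}}$ lying in $w^{-1}(0)$, so the same computation applies verbatim. The $\cA$ case reduces to the $\cA_{\prin}$ case via the chain rule: broken lines on $\cA^\vee(\RR^T)$ are defined as images of broken lines on $\cA_{\prin}^\vee(\RR^T)$ under the linear map $\rho^T$, and $(df)_{\rho^T(x)}(D\rho^T(m_L)) = d(f\circ\rho^T)_x(m_L)$, so applying the already-established $\cA_{\prin}$ case to $f\circ\rho^T$ with functionals $\ell_i\circ\rho^T$ yields min-convexity of $f$.

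For the converse in part~(2), I would suppose $\ell_{i_0}(v_{k_0},e_{k_0}) < 0$ for some $i_0$ and some $k_0 \in I_{\uf}$ and construct a broken line violating min-convexity. The hypothesis that $f$ is linear in a neighbourhood of every wall of $\foD_\s^{\cA_{\prin}}$ implies in particular that in some open neighbourhood of the initial wall $((e_{k_0},0)^{\perp}, 1+z^{(v_{k_0},e_{k_0})})$ the function $f$ agrees with a single linear functional $\ell_{i^{*}}$. A broken line with a single bend, approaching this wall from the half-space $\{\langle (e_{k_0},0),\cdot\rangle > 0\}$ with generic initial decoration $z^{m_L}$ (so $\langle (e_{k_0},0), m_L\rangle > 0$) and ending at a basepoint $Q$ on the opposite side, bends to a segment with decoration exponent $m_L + r\cdot (v_{k_0},e_{k_0})$ for some $r \in \ZZ_{>0}$; since $df = \ell_{i^{*}}$ throughout the relevant neighbourhood, min-convexity along this broken line forces $\ell_{i^{*}}(v_{k_0},e_{k_0}) \geq 0$. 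The main obstacle is extending this from the single $\ell_{i^{*}}$ active near the initial wall to every $\ell_i$ in the presentation; I would handle this by first discarding any $\ell_i$ never realised as the minimum, and then using Theorem~\ref{scatdiagpositive} (which supplies walls of $\foD_\s^{\cA_{\prin}}$ with exponents ranging over all non-negative integer combinations of the $(v_k,e_k)$) to locate, for each remaining $\ell_i$, a wall in the cone of activity of $\ell_i$ whose exponent isolates the desired $(v_k,e_k)$, and running the same bending argument there.
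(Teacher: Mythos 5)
Your argument for part (1) follows the same broad strategy as the paper (segment monotonicity plus local analysis at bends), but it has a gap at the bend points. You handle changes of the active linear functional $\ell_i$ entirely inside linear segments, and at bends you assume genericity so that $df$ equals a single $\ell_i$ on both sides of the bend. Nothing, however, forces a bend of a given broken line to avoid a domain-of-linearity boundary of $f$: the broken line is rigid (its bends occur at walls of $\foD_\s^{\cA_{\prin}}$, which are fixed hyperplanes through the origin, just like the walls of the domain decomposition of $f$), so a bend can occur exactly where $\ell$ changes to $\ell'$. In that case you need $\ell'(m_{L'}) \ge \ell(m_L)$, which your two separate steps do not supply. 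The paper handles both phenomena in a single estimate: writing $\ell'(m')-\ell(m) = \ell'(m'-m) - (\ell(m)-\ell'(m))$, the first term is nonnegative by the hypothesis on the initial scattering monomials, and the second is nonnegative because $m$ points into the $\ell$-region of $f=\min\{\ell_i\}$. A further minor point: your appeal to Theorem~\ref{scatdiagpositive} is superfluous here. That theorem controls the \emph{coefficients} of wall functions; you only need that the \emph{exponents} lie in $p^*(N^+)$, which already holds by Definition~\ref{walldef}, so the non-negative-combination claim needs no positivity result.

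The converse in part (2) has a genuine gap. You attempt, for each $\ell_i$ and each $k\in I_{\uf}$, to locate a wall of $\foD_\s^{\cA_{\prin}}$ lying in the activity region of $\ell_i$ whose exponent isolates $(v_k,e_k)$, and you invoke Theorem~\ref{scatdiagpositive} for the existence of such walls. But that theorem asserts only a normal form $(1+z^{p^*(n)})^c$ for the walls that \emph{do} occur; it does not assert that walls with arbitrary exponents $p^*(n)$, $n\in N^+$, exist (in finite type there are only finitely many walls). So the patch you propose for the ``main obstacle'' does not work. The paper's argument is much lighter: since $f$ is linear near every wall, across any bend one has $\ell=\ell'$, so the min-convexity inequality from part (1) degenerates to $\ell(m'-m)\ge 0$, i.e.\ $\ell$ is nonnegative on the scattering monomial; applying this to broken lines bending once across each initial wall gives the conclusion.
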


\begin{remark} \label{evalrem} Recall from 
\S \ref{blgcss} that for any choice of seed, the scattering monomials
in $\foD^{\cA_{\prin}}_{\s}$ lie in $w^{-1}(0) = N^\circ = \shX^\vee(\bZ^T)$. 
So it makes sense to 
evaluate functions defined only on $\shX^{\vee}(\bR^T)$ on scattering monomials for $\foD^{\cA_{\prin}}$.
\end{remark} 

\begin{proof}[Proof of Lemma \ref{diplem}]
Choose a seed and suppose $f$ is the minimum of the $\ell_i$. 
We consider a broken line $\gamma$, with two consecutive monomial decorations 
$cz^m,c'z^{m'}$. Possibly
refining the linear segments, we can assume $f$ is given by $\ell\in\{\ell_i\}$ 
along the 
first segment, and $\ell'\in \{\ell_i\}$ along the second.
Let $t,t'$ be points in the domain of $\gamma$ in the two segments.
Then
\begin{align*}
(df)_{\gamma(t')}(m') - (df)_{\gamma(t)}(m) = {} & \ell'(m') - \ell(m) \\
               = {} & \ell'(m' -m) - (\ell(m) - \ell'(m)) \\
               & {} \geq \ell'(m' -m).
\end{align*}
The last inequality comes from the fact that $f = \min\{\ell_i\}$ and
$m$ lies on the side of the wall crossed by $\gamma$ where $f=\ell$. Now 
$m' - m$ is some positive multiple of the scattering monomial. This gives (1).
If $f$ is linear near any bend of the broken line, then
$\ell = \ell'$, the inequality is an equality, and the right- (and left-) 
hand side is just $\ell(m') - \ell(m)$,
which gives the equivalence of (2).
\end{proof}

\begin{proof}[Proof of Proposition \ref{convcor}] 
First consider the case $V=\cA_{\prin}$. 
Suppose $f$ is a global monomial which is a character on a chart indexed by 
$\s$. Then by Lemma \ref{gmlemma}, this character is regular on 
$\TV(\Sigma_{\s,\shA^{\vee}_{\prin}})$, i.e., it is a character
whose geometric tropicalisation \eqref{pairings}
has non-negative value on each ray in the fan
$\Sigma_{\s,\shA^{\vee}_{\prin}}$. 
These rays are spanned by
$-(v_i,e_i)$, $i \in I_{\uf}$, the negatives of the initial
scattering monomials for $\foD_{\s}^{\cA_{\prin}}$. Thus,
because of the sign change between
geometric and Fock-Goncharov tropicalisation, see \eqref{ftcd},
$f^T$ is non-negative on the initial scattering monomials.
Thus $f^T$ is min-convex by Lemma \ref{diplem}.
The same argument then applies in the $V=\cX$ case, see Remark \ref{evalrem}.
For the $V=\cA$ case, a global monomial on $\cA^{\vee}$ pulls back to
a global monomial on $\cA_{\prin}^{\vee}$ via the map $\rho:\cA_{\prin}^{\vee}
\rightarrow \cA^{\vee}$, 
and then the result follows from the $V=\cA_{\prin}$ case by
Lemma \ref{mcxcase}.
\end{proof} 

We need a slight refinement of Proposition \ref{convcor} for the
proof of finite generation of the canonical algebra in the next subsection.

\begin{lemma} \label{blblem}
Let $V$ be a cluster variety and
let $p$ be an integral point in the interior of a maximal
dimensional cone $\shC^+_{V^{\vee},\s} \subset V(\bR^T)$ (see Definition
\ref{fgccthg1}).
Then $\vartheta_p^T$ evaluated
on monomial decorations strictly increases at any non-trivial bend of a broken line
in $L^*_{\bR,\s}$.
\end{lemma}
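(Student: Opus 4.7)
The strategy is that, because $p$ lies in the interior of the cluster chamber $\shC^+_{V^{\vee},\s}$, the theta function $\vartheta_p$ on $V^\vee$ is globally very simple. Applying Corollary \ref{monocor} to the Fock-Goncharov dual $V^\vee$ (whose scattering diagram lives in $V(\RR^T)$ and for which, by definition, $p$ is a $g$-vector of a character on its $\s$-chart), one has $\vartheta_p|_{T_{L^*,\s}} = z^p$ on the $\s$-chart of $V^\vee$. Since that chart is a dense open torus whose tropical points fill out all of $V^\vee(\RR^T)$, the tropicalisation $\vartheta_p^T$ is in fact globally linear in the $\s$-seed coordinates $L^*_{\RR,\s}$, given by $\vartheta_p^T(x) = -\langle p, x\rangle$. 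In other seed coordinates $\vartheta_p^T$ will appear piecewise linear with bends, but the $\s$-seed flattens it, and this is what makes the strict inequality tractable.

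Next I would analyse the bend. At a non-trivial bend of a broken line in $L^*_{\RR,\s}$ across a wall $(\fod, f_\fod)$ with $\fod \subseteq n_0^\perp$ for some primitive $n_0 \in N^+$, the decorating monomial jumps from $cz^m$ to $c'z^{m'}$ with $m' - m = k \cdot p_1^*(n_0)$ for some positive integer $k$. By Theorem \ref{scatdiagpositive}, every wall of $\foD_\s$ has attached function of the form $(1+z^{p_1^*(n)})^c$ with $n \in N^+$, and $N^+$ is generated as a monoid by $\{e_i : i \in I_{\uf}\}$; so $n_0 = \sum a_i e_i$ with $a_i \geq 0$ and $\sum a_i > 0$, yielding
$$p_1^*(n_0) = \sum_{i \in I_{\uf}} a_i\, p_1^*(e_i),$$
a non-negative combination of the initial scattering monomials with at least one strictly positive coefficient. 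Combined with the global linearity of $\vartheta_p^T$ established in paragraph one, this reduces the desired strict inequality $\vartheta_p^T(m') > \vartheta_p^T(m)$ to strict positivity of $\vartheta_p^T$ on each initial scattering monomial $p_1^*(e_i)$, $i \in I_{\uf}$.

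Finally I would convert the interior condition on $p$ into exactly that strict positivity. By Lemma \ref{gmlemma}, $\vartheta_p$ is a global monomial on $V^\vee$ which is a character on the $\s$-chart iff $z^p$ is regular on the toric partial compactification $\TV(\Sigma_{\s,V^\vee})$; as observed in the proof of Proposition \ref{convcor}, the rays of $\Sigma_{\s,V^\vee}$ are spanned by the negatives of the initial scattering monomials, so $\shC^+_{V^\vee,\s}$ is the cone $\{p : \langle p, p_1^*(e_i)\rangle \leq 0 \ \forall i \in I_{\uf}\}$, and its interior is cut out by strict inequalities. Hence $\vartheta_p^T(p_1^*(e_i)) = -\langle p, p_1^*(e_i)\rangle > 0$ strictly for all $i \in I_{\uf}$, and
$$\vartheta_p^T(m') - \vartheta_p^T(m) = -k \sum_{i \in I_{\uf}} a_i \langle p, p_1^*(e_i)\rangle > 0$$
as desired. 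For $V = \cA, \cA_t, \cX$ the statement will reduce to the $V = \cA_{\prin}$ case via the techniques of \S\ref{blgcss} and Lemma \ref{mcxcase}. The main obstacle is purely notational: keeping straight that $p \in V(\RR^T)$ plays the role of a character on $V^\vee$ (so $\vartheta_p$ is a global monomial on $V^\vee$), while the broken-line decorations $m, m'$ and scattering exponents live on the opposite tropical space $V^\vee(\RR^T) = L^*_{\RR,\s}$; once this duality bookkeeping is straight, the remainder is a direct linear-algebra computation built from Corollary \ref{monocor}, Theorem \ref{scatdiagpositive}, and Lemma \ref{gmlemma}.
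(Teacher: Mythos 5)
Your proof is correct and follows essentially the same route as the paper's: reduce to $\cA_{\prin}$, observe that $\vartheta_p^T$ is linear in the $\s$-seed coordinates (being the tropicalisation of a character on the $\s$-chart of $V^\vee$), and characterize the interior of $\shC^+_{V^\vee,\s}$ by strict positivity of $\vartheta_p^T$ on the initial scattering monomials. The one place you spell out more than the paper does is the final implication — that strict positivity on the $p_1^*(e_i)$ propagates to any non-trivial bend because the wall direction $p_1^*(n_0)$ is a non-negative integer combination of initial scattering monomials with at least one positive coefficient (this follows from $n_0 \in N^+$ and Definition \ref{walldef} directly; Theorem \ref{scatdiagpositive} is not actually needed for this step, since any $g_{\fod}\in G^{\|}_{n_0}$ already forces $m'-m$ to be a positive multiple of $p^*(n_0)$) — whereas the paper compresses this into the single closing sentence.
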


\begin{proof}
It is enough to treat the case 
$V =\cA_{\prin}$, because global monomials on $\cX$ or $\cA$ are given
either by $T_{N^{\circ}}$-invariant global monomials on $\cA_{\prin}$ or by
restriction of global monomials on $\cA_{\prin}$ to $\cA$ respectively.
Furthermore, broken lines in $\cA^{\vee}_{\prin}(\RR^T)$ yield broken lines
in $\cX^{\vee}(\RR^T)$ and $\cA^{\vee}(\RR^T)$.

The integral points of the cluster cone
\[
\shC^+_{\cA_{\prin}^{\vee},\s} \cap \shA_{\prin}(\ZZ^T) \subset 
T_{\tN^\circ,\s}(\bZ^T) = \tN^\circ_{\s}
\]
correspond to characters of $T_{\tM^\circ,\s} \subset \cA_{\prin}^{\mch}$ which extend to
global regular functions on $\cA_{\prin}^{\mch}$.
Just as in the proof of Proposition \ref{convcor},
these are the characters $z^m$ with $m$ non-negative on the rays
spanned by $-(v_i,e_i)$, $i \in I_{\uf}$, the negatives of the
initial scattering monomials for $\foD_{\s}^{\cA_{\prin}}$.
Thus, because of the sign change between
geometric and Fock-Goncharov tropicalisation, see \eqref{ftcd},
\[
\hbox{$p \in \Int(\shC_{\shA_{\prin}^{\vee},\s}^+)\cap
\shA_{\prin}(\bZ)$ if and only if
$\vartheta_p^T((v_i,e_i)) > 0$ for all $i \in I_{\uf}$.}
\]
In this case $\vartheta_p^T$
is strictly increasing on monomial decorations as in the statement.
\end{proof}

We now introduce our key assumption
necessary for proving strong results about theta functions and the algebras
they generate.

\begin{definition}
We say that $V^{\vee}$ has \emph{Enough Global Monomials} if
for any $x \in V^{\vee}(\bZ^T)$, $x\not=0$, there is a global
monomial $\vartheta_p\in H^0(V^{\vee},\cO_{V^{\vee}})$ such that
$\vartheta_p^T(x) < 0$.
\end{definition}

\begin{lemma}\label{egmlem}
Under any of the identifications $V^{\vee}(\bR^T) = L^*_{\bR,\s}$ 
induced by a choice of seed, the set 
\[
\Xi_V:=\bigcap_{p \in \Delta_{V^{\vee}}^+(\bZ) \subset V(\bZ^T)} 
\{x \in V^{\vee}(\bR^T)\,|\, \vartheta_p^T(x) \geq -1 \} 
\]
is a closed convex subset of $V^{\vee}(\RR^T)$.
The following are equivalent:
\begin{enumerate} 
\item $V^{\vee}$ has Enough Global Monomials.
\item $\Xi_V$ is bounded, or equivalently, the intersection
of all sets $\{x\in V^{\vee}(\RR^T)\,|\, \vartheta_p^T(x) \ge 0\}$
for $p\in \Delta_{V^{\vee}}^+(\ZZ)$ equals $\{0\}$.
\item There exists a finite number of points
$p_1,\ldots,p_r \in \Delta^+_{V^{\vee}}(\ZZ)$ such that
\[
\bigcap_{i=1}^r \{x \in V^{\vee}(\RR^T)\,|\, \vartheta_{p_i}^T(x) \ge -1\}
\]
is bounded,
or equivalently, the intersection
of all sets $\{x\in V^{\vee}(\RR^T)\,|\, \vartheta_{p_i}^T(x) \ge 0\}$
for $1\le i \le r$ equals $\{0\}$.
\item There is function $g \in \ord(V^{\vee})$ whose associated polytope 
$\{x \in V^{\vee}(\RR^T)\,|\, g^T(x) \geq -1 \}$ is bounded. 
\end{enumerate}
\end{lemma}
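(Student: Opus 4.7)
The convexity and closedness of $\Xi_V$ follow immediately from Proposition \ref{convcor} and Lemma \ref{diplem1}: each tropicalized global monomial $\vartheta_p^T$ is min-convex on $V^\vee(\RR^T)$, hence, under any seed identification $V^\vee(\RR^T)=L^*_{\RR,\s}$, min-convex in the classical sense of Definition-Lemma \ref{mcdef}; each half-space $\{\vartheta_p^T \ge -1\}$ is therefore closed and convex in $L^*_{\RR,\s}$, and $\Xi_V$ is their intersection. Moreover every $\vartheta_p^T$, and indeed any tropicalization, is positively homogeneous of degree one; so for any collection of such functions the set $\bigcap\{\vartheta_p^T \ge -1\}$ is bounded precisely when $\bigcap\{\vartheta_p^T \ge 0\}=\{0\}$ (continuity on the unit sphere plus homogeneity). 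This yields the ``equivalently'' reformulations stated in (2) and (3).

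The plan is to prove $(1)\Rightarrow(3)\Rightarrow(2)\Rightarrow(1)$ and $(3)\Rightarrow(4)\Rightarrow(2)$. For $(1)\Rightarrow(3)$, fix a seed $\s$, a norm on $L^*_{\RR,\s}$, and the unit sphere $S$; for each $x\in S$, EGM produces $\vartheta_{p(x)}$ with $\vartheta_{p(x)}^T(x)<0$, and continuity provides an open neighborhood of $x$ in $S$ on which this remains negative. Compactness of $S$ extracts finitely many $p_1,\ldots,p_r\in \Delta^+_{V^\vee}(\ZZ)$ whose open sets cover $S$, and homogeneity promotes this to $\bigcap_i\{\vartheta_{p_i}^T\ge0\}=\{0\}$. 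Then $(3)\Rightarrow(2)$ is immediate from $\Xi_V\subseteq\bigcap_{i=1}^r\{\vartheta_{p_i}^T\ge-1\}$, and $(2)\Rightarrow(1)$ is the contrapositive: were there a nonzero $x$ with $\vartheta_p^T(x)\ge 0$ for every $p$, the whole ray $\RR_{\ge 0}x$ would lie in $\Xi_V$.

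For $(3)\Rightarrow(4)$, set $g=\sum_{i=1}^r\vartheta_{p_i}\in\ord(V^\vee)$. Since each summand is a positive function, the valuation of a positive sum equals the minimum of the valuations of its summands, so via the sign-change of diagram \eqref{ftcd} one obtains $g^T=\min_i\vartheta_{p_i}^T$; hence $\{g^T\ge-1\}=\bigcap_i\{\vartheta_{p_i}^T\ge-1\}$ is bounded.

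The most substantive step is $(4)\Rightarrow(2)$, whose technical heart is that elements of $\ord(V^\vee)$ are only $\kk$-linear combinations of products of global monomials, not global monomials themselves. Write such a $g$ as $\sum_\alpha c_\alpha G_\alpha$ with $c_\alpha\in\kk^*$ and $G_\alpha=\vartheta_{p_{\alpha,1}}\cdots\vartheta_{p_{\alpha,k_\alpha}}$. On any seed chart each $G_\alpha$ is a positive Laurent polynomial, so for a valuation $v$ subadditivity on sums with unit coefficients and multiplicativity on positive products give
\[
v(g)\ge\min_\alpha v(G_\alpha)=\min_\alpha\sum_{j=1}^{k_\alpha}v(\vartheta_{p_{\alpha,j}}).
\]
Translating via \eqref{ftcd} yields $g^T(x)\ge\min_\alpha\sum_j\vartheta_{p_{\alpha,j}}^T(x)$. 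For $x\in\Xi_V$ each $\vartheta_{p_{\alpha,j}}^T(x)\ge-1$, whence $g^T(x)\ge-N$ with $N=\max_\alpha k_\alpha$; by positive homogeneity $\{g^T\ge-N\}=N\cdot\{g^T\ge-1\}$ is bounded, hence so is $\Xi_V$. The main obstacle, and the reason for the slack factor $N$, is exactly this passage from an arbitrary element of $\ord(V^\vee)$ back to the global monomials that define $\Xi_V$; once this is handled, the rest is formal consequence of homogeneity and compactness.
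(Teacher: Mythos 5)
Your proof is correct and follows essentially the same route as the paper, but your treatment of the direction $(4)\Rightarrow(2)$ is more careful than the paper's own and effectively repairs a small imprecision there. The paper disposes of $(4)\Rightarrow(3)$ by asserting that ``an element $g$ of $\ord(V^{\vee})$ is a linear combination of some collection of $\vartheta_{p_i}$'' and then observing $\bigcap_i \{\vartheta_{p_i}^T\ge -1\}\subseteq\{g^T\ge -1\}$; but $\ord(V^{\vee})$ is only \emph{generated} by the global monomials, so a general element is a polynomial in them, not a $\kk$-linear combination (products of cluster monomials from different seeds need not be cluster monomials). You explicitly write $g=\sum_\alpha c_\alpha G_\alpha$ with each $G_\alpha$ a product of global monomials, use the ultrametric bound $v(g)\ge\min_\alpha v(G_\alpha)$ together with multiplicativity $v(G_\alpha)=\sum_j v(\vartheta_{p_{\alpha,j}})$, and absorb the resulting slack into a homogeneity rescaling by $N=\max_\alpha k_\alpha$, landing directly on boundedness of $\Xi_V$ (condition (2)) rather than on (3). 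Everything else — closedness and convexity of $\Xi_V$ via min-convexity of tropicalized global monomials, the compactness-of-the-sphere argument for $(1)\Rightarrow(3)$, the trivial $(3)\Rightarrow(2)$ and $(2)\Rightarrow(1)$, and $(3)\Rightarrow(4)$ via $g=\sum_i\vartheta_{p_i}$ with $g^T=\min_i\vartheta_{p_i}^T$ — matches the paper's argument. One small wording nit: the inequality $v(g)\ge\min_\alpha v(G_\alpha)$ is the ultrametric inequality for valuations and holds for arbitrary $c_\alpha\in\kk^\times$; it does not rely on ``unit coefficients'' or positivity, so that phrasing could be dropped.
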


\begin{proof}
By Remark \ref{mandelremark}, $\Xi_V$ is the intersection of 
closed rational convex polygons (with respect to any seed), 
and hence is a closed convex set. 

The equivalence of (1) and (2) is immediate from the definitions, while
(3) clearly implies (2). For the converse,
let $S$ be a sphere in
$V^{\vee}(\RR^T)=L^*_{\RR,\s}$ centered at the origin. For each $x\in S$
there is a global monomial $\vartheta_{p_x}$ such that
$\vartheta_{p_x}^T(x)<0$,
and thus there is an open subset $U_x\subset S$ on which $\vartheta_{p_x}^T$
is negative. The $\{U_x\}$ form a cover of $S$, and hence by compactness
there is a finite subcover $\{U_{x_i}\}$. Taking $p_i=p_{x_i}$ gives the
desired collection of $p_i$. 

Finally we show the equivalence of (3) and (4). 
The $\vartheta_p$, $p \in \Delta^+_{V^{\vee}}(\bZ)$
are exactly the global monomials on $V^{\vee}$, thus generators of $\ord(V^{\vee}$).  
Now for any finite collection of functions $g_i$, $(\sum g_i)^T \geq 
\min_i g_i^T$
and for the $g_i$ positive universal Laurent polynomials, for 
example for global monomials, we have equality. Thus given (3), we take
$g=\sum_i \vartheta_{p_i}$. Conversely, an element $g$ of
$\ord(V^{\vee})$ is a linear combination of some collection of 
$\vartheta_{p_i}$. Then $\bigcap_i\{x\in V^{\vee}(\RR^T)\,|\,\vartheta^T_{p_i}
\ge -1\}$ is contained in $\{x\in V^{\vee}(\RR^T)\,|\,g^T(x) \ge -1\}$, so
if the latter is bounded, so is the former.
\end{proof} 

We note that the property of EGM is preserved by Fock-Goncharov duality:

\begin{proposition} \label{ldegmprop} Let $\Gamma$ be fixed data, and $\Gamma^{\vee}$ the
Langlands dual data. We write e.g. $N^{\vee}$ for the corresponding lattice for
the data $\Gamma^{\vee}$ as in Appendix \ref{LDsec}.
For each seed $\s$, the canonical inclusion
\[
M_{\s} = M \subset M^{\circ} = M^{\vee}_{\s^{\vee}}
\]
commutes with the tropicalization of mutations, and induces an isomorphism
\[
\cX_{\Gamma}(\bR^T) = \cX_{\Gamma^{\vee}}(\bR^T).
\]
For $n \in N_{\s}$, the monomial $z^n$ on $T_{M,\s} \subset \cX_{\Gamma}$ is a global monomial
if and only if $z^{D \cdot n}$ on $T_{M^{\vee},\s^{\vee}} \subset \cX_{\Gamma^{\vee}}$ is a global
monomial. Finally,
$\cA^{\vee}_{\prin}$ has EGM if and only if $\cA_{\prin}$ has EGM. 
\end{proposition}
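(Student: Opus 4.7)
The plan is to establish the three assertions in sequence, with the final EGM claim reduced to the first two by an appropriate rewriting of $\cA_{\prin}$ and $\cA_{\prin}^{\vee}$ as $\cX$-type cluster varieties (which is possible here because the principal-coefficient assumption makes the exchange form on $\tN$ nondegenerate).

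\textbf{Step 1 (compatibility of tropical mutations).} A seed $\s=(e_1,\ldots,e_n)$ for $\Gamma$ determines a Langlands dual seed $\s^{\vee}$ for $\Gamma^{\vee}$ (where $N^{\vee}=N^{\circ}$, $(N^{\vee})^{\circ}=DN$, $d_i^{\vee}=D/d_i$ for $D=\mathrm{lcm}(d_i)$, and the seed basis is rescaled accordingly). The Fock-Goncharov tropicalization of the $\cX$-mutation $\mu_k$ on $T_{M,\s}$ takes the form $m\mapsto m+[\langle e_k,m\rangle]_+ v_k$, by the argument of Proposition \ref{fgtprop} applied to the $\cX$-type mutation formula (\cite{P1}, (2.5)). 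Running the same recipe for $\cX_{\Gamma^{\vee}}$, one obtains a piecewise linear map on $M^{\vee}_{\bR}=M^{\circ}_{\bR}$; the crucial calculation is that under the substitution $v_k\leftrightarrow v_k^{\vee}$ and $d_k\leftrightarrow d_k^{\vee}$ dictated by Langlands duality, the resulting formula restricts to the $\cX_{\Gamma}$-mutation on the sublattice $M\subset M^{\circ}$. This yields the bijection of tropical sets.

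\textbf{Step 2 (correspondence of global monomials).} By Lemma \ref{gmlemma}, a character $z^n$ on $T_{M,\s}\subset\cX_{\Gamma}$ extends to a global monomial iff it is regular on $\TV(\Sigma_{\s,\cX})$, whose rays are $-\bR_{\geq 0}v_i$ for $i\in I_{\uf}$; equivalently, $\langle n,-v_i\rangle\geq 0$, i.e.\ $d_i\{n,e_i\}\geq 0$ for all $i\in I_{\uf}$. The analogous criterion for $z^{n'}$ on $T_{M^{\vee},\s^{\vee}}\subset\cX_{\Gamma^{\vee}}$ involves the rays $-\bR_{\geq 0}v_i^{\vee}$, and under Langlands duality $v_i^{\vee}$ is a rational multiple of $v_i$ with the rescaling compensated by $n\mapsto D n$. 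A direct substitution shows the two systems of inequalities are identical, giving the desired bijection of global monomials.

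\textbf{Step 3 (EGM equivalence for $\cA_{\prin}$).} Here we use the nondegeneracy of $\{\cdot,\cdot\}$ on $\tN$: the map $p_1^{*}:\tN\hookrightarrow\tM^{\circ}$ is injective, so after composing with the natural projection to cocharacter lattices, $\cA_{\prin}$ and $\cA_{\prin}^{\vee}$ are identified (up to a finite cover that is invisible on $(\bR^T)$-points) with $\cX$-type cluster varieties for a pair of fixed data which are Langlands dual to each other. Applying Steps 1 and 2 to this pair produces a correspondence between global monomials on $\cA_{\prin}$ and on $\cA_{\prin}^{\vee}$ whose tropicalizations agree up to a fixed linear isomorphism of $\tN^{\circ}_{\bR}\cong\tM^{\circ}_{\bR}$ combined with the scaling $n\mapsto Dn$. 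By Lemma \ref{egmlem}, EGM for either variety is equivalent to the existence of a finite set of such tropicalized global monomials cutting out a bounded polytope in the tropical space, and boundedness is preserved by any linear isomorphism of real vector spaces; hence the two EGM conditions are equivalent.

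\textbf{Main obstacle.} The only nontrivial step is the bookkeeping in Steps 1 and 3 needed to match the Langlands conventions for $(N^{\vee},(N^{\vee})^{\circ},d_i^{\vee},v_i^{\vee})$ so that the tropical formulas and fans transform exactly as claimed; once these conventions are pinned down via the appendix \ref{LDsec}, the tropical mutation compatibility is a direct substitution, and the regularity criterion of Lemma \ref{gmlemma} does the work in Step 2. The subtlest point is the identification in Step 3: since $\cA_{\prin}^{\vee}$ is \emph{a priori} built as an $\cA$-type dual rather than as $\cX_{\Gamma^{\vee}_{\prin}}$, one has to verify that the nondegeneracy of the doubled exchange form really does allow one to reduce to the $\cX$-case of Steps 1--2 without losing control of the global monomials or the tropical identification.
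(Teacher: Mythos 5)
Your proposal takes the same route as the paper: Steps~1 and~2 reproduce the paper's appeal to the definitions and to Lemma~\ref{gmlemma}, and Step~3 correctly reduces the EGM equivalence for $\cA_{\prin}$ and $\cA_{\prin}^{\vee}$ to the $\cX$-type Langlands duality of Steps~1--2 applied to $\Gamma_{\prin}$. The one place you should tighten is the justification in Step~3: there is no ``finite cover that is invisible on $(\bR^T)$-points,'' and injectivity of $p_1^*$ on $\tN$ is not what identifies $\cA_{\prin}$ and $\cA_{\prin}^{\vee}$ with $\cX$-type varieties. Instead, Proposition~\ref{ldpprop}, (1) gives an honest isomorphism $p\colon\cA_{\prin}\to\cX_{\prin}=\cX_{\Gamma_{\prin}}$, defined on cocharacter lattices by the linear isomorphism $(n,m)\mapsto(m-p^*(n),n)$, which is invertible regardless of any nondegeneracy of the form; and Proposition~\ref{ldpprop}, (3) supplies the equality $\cA^{\vee}_{\prin}=\cX_{\Gamma_{\prin}^{\vee}}$ via the isomorphism of fixed data $(\Gamma_{\prin})^{\vee}\cong(\Gamma^{\vee})_{\prin}$. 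With these two clean identifications, and noting that the correspondence $z^n\leftrightarrow z^{Dn}$ of Step~2 rescales tropicalizations by the positive constant $D$ (hence preserves negativity), the EGM equivalence follows exactly as you intend.
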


\begin{proof} The statement about tropical spaces is immediate from the
definitions. (Note that a similar statement does not hold at the level of
tori, so there is no isomorphism between $\cX_{\Gamma}$ and 
$\cX_{\Gamma^{\vee}}$.)
The statement about global monomials is immediate from Lemma \ref{gmlemma}. Now
the final statement follows from the definition of EGM, the isomorphism 
$\cA_{\prin} \cong \cX_{\prin}$ of Proposition \ref{ldpprop}, (1), 
and the equality $\cA^{\vee}_{\prin} = \cX_{\Gamma_{\prin}^{\vee}}$ of
Proposition \ref{ldpprop}, (3).
\end{proof} 

\subsection{The canonical algebra}

In \eqref{canVdef} we introduced $\can(V)$ as a $\kk$-vector space. 
In the presence of suitable convex objects on $V^{\vee}(\RR^T)$, we can in 
fact put an algebra structure on $\can(V)$ using the structure constants
given by $\alpha$. If further the EGM condition holds, then $\can(V)$
contains $\up(V)$, and is a finitely generated algebra. This often makes
it easier to work with $\can(V)$, as it is a more geometric object. 

Precisely:

\begin{proposition} \label{fmprop} For $V = \cA_{\prin}$ or $\cX$, 
suppose there is a compact positive polytope $\Xi\subseteq V^{\vee}(\RR^T)$.
Assume further that $\Xi$ is top-dimensional, i.e., 
$\dim\Xi=\dim V$.
Then for $p,q \in V^{\vee}(\bZ^T)$, there are at most finitely many $r$ with
$\alpha(p,q,r) \neq 0$. These give structure constants for an 
associative multiplication on
\[
\can(V) := \bigoplus_{r \in V^{\vee}(\bZ^T)} \kk \cdot \vartheta_r.
\]
If there is a compact positive polytope 
$\Xi \subseteq \cA_{\prin}^{\vee}(\bR^T)$
then the same conclusion holds for the structure constants (which
are all finite) and
multiplication rule of $\can(\cA_t)$, for all $t$. 
\end{proposition}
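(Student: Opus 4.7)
The plan is to handle finiteness, the induced algebra structure, and associativity in sequence for $V=\cA_{\prin}$ or $\cX$, and then deduce the $\cA_{t}$ case from the $\cA_{\prin}$ case via the explicit formula in Theorem \ref{mainthax}(1). Throughout I use the natural normalisation that every integer point of $V^{\vee}(\bR^T)$ lies in some dilate $d\Xi(\bZ)$; concretely one arranges $0$ to lie in the interior of $\Xi$, which in the applications one has in mind (cf.\ Lemma \ref{egmlem}) is essentially automatic.

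For finiteness, fix $p,q\in V^{\vee}(\bZ^T)$ and choose $d_1,d_2$ with $p\in d_1\Xi(\bZ)$ and $q\in d_2\Xi(\bZ)$. Positivity of $\Xi$ immediately forces any $r$ with $\alpha(p,q,r)\ne 0$ to lie in $(d_1+d_2)\Xi(\bZ)$, a finite set by compactness of $\Xi$ and discreteness of $V^{\vee}(\bZ^T)$. Associativity then follows from the observation that the constants $\alpha(p,q,r)$ defined combinatorially via broken lines in Definition-Lemma \ref{structuredef} are by construction the same constants appearing in the multiplication of the topological $\kk$-algebra $\widehat{\up(\ocA_{\prin}^{\s})}\otimes_{\kk[N^+_{\s}]}\kk[N]$ of Proposition \ref{thetabasislemma}(3), where associativity is automatic since that algebra is a localisation of a completed ring. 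Since in the present setting each product $\vartheta_p\cdot\vartheta_q$ is a finite sum in $\can(V)$, each triple product $(\vartheta_p\vartheta_q)\vartheta_s$ and $\vartheta_p(\vartheta_q\vartheta_s)$ computed in $\can(V)$ is a finite expansion in the $\vartheta_r$ whose coefficients agree with those of the corresponding (necessarily equal) triple products in the ambient associative algebra. For $V=\cX$ one either descends from $\cA_{\prin}$ by taking $T_{N^{\circ}}$-invariants, or works directly with the broken lines of Construction \ref{blXA}, which are the restrictions to $w^{-1}(0)$ of those for $\cA_{\prin}$; positivity and finiteness are then inherited.

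For $\cA_{t}$, I use the formula
\[
\alpha_{\cA_t}(p,q,r)=\sum_{n\in N}\alpha_{\cA_{\prin}}(\Sigma(p),\Sigma(q),\Sigma(r)+n)\,z^n(t)
\]
from Theorem \ref{mainthax}(1). Since $\Xi\subseteq\cA_{\prin}^{\vee}(\bR^T)$ is compact and top-dimensional, its image $\rho^T(\Xi)\subseteq\cA^{\vee}(\bR^T)$ is again compact and top-dimensional, and I would choose the section $\Sigma\colon\cA^{\vee}(\bZ^T)\to\cA_{\prin}^{\vee}(\bZ^T)$ adapted to $\Xi$, namely with $\Sigma(p)\in d_p\Xi(\bZ)$ for some $d_p$ and every $p$. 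Positivity of $\Xi$ then confines each $\tilde r=\Sigma(r)+n$ contributing a nonzero $\alpha_{\cA_{\prin}}(\Sigma(p),\Sigma(q),\tilde r)$ to the compact set $(d_p+d_q)\Xi$, yielding both finiteness of the sum over $n$ for fixed $r$, and finiteness of the set of $r$ with nonzero contribution (as $r=\rho^T(\tilde r)$ and $\rho^T$ sends compact sets to compact sets). In particular the individual structure constants $\alpha_{\cA_t}(p,q,r)$ are all finite, and associativity descends from the $\cA_{\prin}$ case through the same formula.

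The main obstacle, more a careful bookkeeping point than a genuine difficulty, is ensuring the lattice-covering property used throughout: that every integer point of the relevant tropical space lies in some dilate of $\Xi$ (respectively $\rho^T(\Xi)$). Outside the $\cA_{\prin}$ setting, where one additionally has the $N$-equivariance of Lemma \ref{eqslem} and the invariance of Proposition \ref{tqprop} available, a bare top-dimensional positive polytope need not cover via dilates, and so both the adapted section $\Sigma$ used in the $\cA_t$ step and the lattice-covering assertion more broadly rest on securing the normalisation $0\in\Int(\Xi)$ at the outset.
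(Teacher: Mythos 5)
Your proof is essentially the same as the paper's: deduce finiteness from positivity and the dilate containment $p\in d_1\Xi$, $q\in d_2\Xi$ (which pins any $r$ with $\alpha(p,q,r)\ne 0$ inside the bounded set $(d_1+d_2)\Xi$), cite Proposition \ref{thetabasislemma}(3) for associativity, and pass to $\cA_t$ through the explicit formula of Theorem \ref{mainthax}(1). Your observation that top-dimensionality of $\Xi$ alone does not guarantee $p\in d_1\Xi$ for every lattice point $p$ (one needs $0\in\Int\Xi$, equivalently that $\bigcup_d d\Xi$ covers the lattice) is a legitimate one; the paper glosses over this, but it is harmless because every compact positive polytope used in the paper arises as $\Xi_f=\{f\ge -1\}$ for a piecewise linear $f$ vanishing at the origin (or a finite intersection of such), so $0\in\Int\Xi$ holds automatically in every context where this proposition is invoked, e.g.\ in the proof of Corollary \ref{cafin}.
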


\begin{proof} For $\cA_{\prin}$ or $\cX$, the structure constants are defined
in terms of broken lines. The finiteness is then immediate from Lemma 
\ref{sminlem}. Indeed, given $p,q \in V(\ZZ^T)$, we have
$p\in d_1\Xi$, $q\in d_2\Xi$ for some $d_1, d_2$, by the fact that
$\Xi$ is top-dimensional, and
thus if $\alpha(p,q,r)\not=0$, then 
$r$ lies in the bounded polytope $(d_1+d_2)\Xi$.
The algebra structure is associative
by (3) of Proposition \ref{thetabasislemma}. The $\cA_t$ case follows from
the $\cA_{\prin}$ case and the definitions of the structure constants and multiplication
rule for $\can(\cA_t)$, see 
Theorem \ref{mainthax}.
\end{proof}

\begin{corollary} \label{cafin} For $V = \cA_{\prin}$ or $\cX$ assume $V^{\mch}$ has 
Enough
Global Monomials. For $V = \cA_t$ assume $\cA_{\prin}^{\vee}$ has Enough 
Global Monomials. Then $\alpha$ defines a $\kk$-algebra structure on
$\can(V)$.
\end{corollary}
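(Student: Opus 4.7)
The plan is to deduce the corollary from Proposition \ref{fmprop} by converting the EGM hypothesis into the existence of a top-dimensional compact positive polytope in the relevant tropical space. For $V=\cA_{\prin}$ or $\cX$ we need such a polytope in $V^{\vee}(\bR^T)$, and for $V=\cA_t$ we need one in $\cA_{\prin}^{\vee}(\bR^T)$; in both situations the requisite polytope will be cut out by tropicalisations of finitely many global monomials on the dual variety.

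First, assuming $V^{\vee}$ has EGM, Lemma \ref{egmlem}(3) produces finitely many $p_1,\ldots,p_r \in \Delta_{V^{\vee}}^+(\ZZ)$ such that
\[
\Xi := \bigcap_{i=1}^r \{x \in V^{\vee}(\bR^T) \,|\, \vartheta_{p_i}^T(x) \geq -1 \}
\]
is bounded. Each $\vartheta_{p_i}$ is a global monomial on $V^{\vee}$, so by Proposition \ref{convcor} the tropicalisation $\vartheta_{p_i}^T$ is min-convex, hence decreasing by Lemma \ref{diplem1}(1); by Lemma \ref{sminlem} the corresponding polytope $\Xi_{\vartheta_{p_i}^T}$ is positive. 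A direct check from the definition shows that an intersection of positive polytopes is positive (if $p_1 \in d_1\Xi$, $p_2 \in d_2\Xi$, and $\alpha(p_1,p_2,r)\neq 0$, then $r \in (d_1+d_2)\Xi_{\vartheta_{p_i}^T}$ for every $i$, hence $r \in (d_1+d_2)\Xi$), so $\Xi$ itself is positive.

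Next I need $\Xi$ to be top-dimensional. Under any seed identification $V^{\vee}(\bR^T) = L^*_{\bR,\s}$ the origin $0$ is well-defined, and since each $\vartheta_{p_i}$ restricts to a character on some cluster chart, $\vartheta_{p_i}^T(0) = 0 > -1$; as $\vartheta_{p_i}^T$ is continuous piecewise linear, $\{\vartheta_{p_i}^T \geq -1\}$ contains a neighborhood of $0$, and so does the finite intersection $\Xi$. Thus $\Xi$ is a top-dimensional compact positive polytope in $V^{\vee}(\bR^T)$, and Proposition \ref{fmprop} gives the associative $\kk$-algebra structure on $\can(V)$ for $V=\cA_{\prin}$ or $\cX$. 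For $V=\cA_t$ the same construction applied to the EGM hypothesis on $\cA_{\prin}^{\vee}$ yields a compact positive polytope in $\cA_{\prin}^{\vee}(\bR^T)$, which by the second assertion of Proposition \ref{fmprop} produces finite structure constants and an associative algebra structure on $\can(\cA_t)$ for every $t$.

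There is no serious obstacle here; the whole argument is essentially a packaging of already-established facts. The only mildly delicate point is remembering that positivity of a polytope behaves well under finite intersection and that the tropicalisations of global monomials are genuinely min-convex (not just min-convex in the weak sense of Definition-Lemma \ref{mcdef}), so that Lemma \ref{sminlem} applies to give the intrinsic positivity needed by Proposition \ref{fmprop}.
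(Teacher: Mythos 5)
Your proof is correct and takes essentially the same approach as the paper's: use Lemma \ref{egmlem} to obtain finitely many global monomials whose tropical polytopes have bounded intersection, invoke Proposition \ref{convcor} and Lemma \ref{sminlem} to get positivity, and then apply Proposition \ref{fmprop}. The only differences are that you make explicit two small points the paper leaves implicit — that a finite intersection of positive polytopes is positive, and that the resulting polytope is top-dimensional because each $\vartheta_{p_i}^T$ vanishes at the origin — both of which are correctly handled.
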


\begin{proof} 
The case of $V = \cA_t$ follows from the case of $\cA_{\prin}$ so
we may assume $V$ is either $\cX$ or $\cA_{\prin}$. 
Using the Enough Global Monomials hypothesis and Lemma \ref{egmlem},
we can find a finite collection $p_1,\ldots,p_r \in \Delta^+_{V^{\vee}}(\bZ)$
such that the intersection of the finite collection
of polytopes $\Xi_{\vartheta^T_{p_i}}$ is bounded. But since
$\vartheta^T_{p_i}$ is min-convex by Proposition \ref{convcor}, each
of these polytopes is positive by Lemma \ref{sminlem}. Thus the 
result follows from Proposition \ref{fmprop}.
\end{proof}

Finite generation of $\can(V)$ is a special case of a much more general
result. 

\begin{theorem} \label{fgprop} Let $V=\cA_{\prin}$ or $\cX$,
assume $V^{\vee}$ has Enough
Global Monomials, and
let $\Xi\subseteq V^{\vee}(\RR^T)$ be a positive polytope,
which we assume is rationally defined and not necessarily compact. 
Then
\[
\tilde S_{\Xi} := \bigoplus_{d\ge 0}\bigoplus_{q \in d\Xi(\ZZ)}
\kk \vartheta_q T^d \subset \can(V)[T]
\]
is a finitely generated $\kk$-subalgebra.
\end{theorem}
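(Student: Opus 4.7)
The plan is to reduce finite generation of $\tilde S_\Xi$ to Gordan's lemma via the toric degeneration of $\cA_{\prin}$ that was established in \S\ref{cgvectorsec}. First I would reduce to the case $V = \cA_{\prin}$: for $V = \cX$, the inclusion $\cX^\vee(\RR^T) = w^{-1}(0) \subseteq \cA_{\prin}^\vee(\RR^T)$ together with Proposition \ref{tqprop} shows that $\Xi + N_\RR$ is a rationally defined positive subset of $\cA_{\prin}^\vee(\RR^T)$, and one checks that $\tilde S_\Xi$ is identified with the $T_{N^\circ}$-invariant subalgebra of $\tilde S_{\Xi + N_\RR}$; since $T_{N^\circ}$ is reductive, finite generation descends.

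Now assume $V = \cA_{\prin}$. Positivity of $\Xi$ combined with the $\kk$-algebra structure on $\can(V)$ provided by Corollary \ref{cafin} (which uses EGM) immediately realises $\tilde S_\Xi \subseteq \can(V)[T]$ as a $\kk$-subalgebra: if $\vartheta_{p_i} T^{d_i} \in \tilde S_{\Xi,d_i}$, then $\vartheta_{p_1}\vartheta_{p_2}T^{d_1+d_2} = \sum_r \alpha(p_1,p_2,r)\vartheta_r T^{d_1+d_2}$ is supported on $r \in (d_1+d_2)\Xi(\ZZ)$. Next I would fix a seed $\s$ and invoke the flat degeneration $\pi\colon \ocA_{\prin}^\s \to \AA^n_{X_1,\dots,X_n}$, whose central fibre is the torus $T_{N^\circ}$ by Corollary \ref{mutcor}. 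By Proposition \ref{thetabasislemma}(4) each theta function $\vartheta_q$ restricts to a character on this central fibre, and the multiplication rule $\vartheta_{p_1}\vartheta_{p_2} \equiv \vartheta_{p_1+p_2} \bmod I$ (where $I = (X_1,\dots,X_n)$) degenerates the product on $\tilde S_\Xi$ to toric multiplication.

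Under the seed identification $\cA_{\prin}^\vee(\RR^T) = \tM^\circ_{\RR,\s}$, the rationality and polytope hypothesis on $\Xi$ together with positivity (via Lemma \ref{iclemma} applied to integer points of the cone $\mathbf{C}(\Xi)$) should yield that $\mathbf{C}(\Xi)$ is a convex rational polyhedral cone in $\tM^\circ_{\RR,\s} \oplus \RR$. Gordan's lemma then provides finitely many generators $(q_1,d_1),\dots,(q_N,d_N)$ of the toric monoid $\mathbf{C}(\Xi)(\ZZ)$, and the restrictions of $\vartheta_{q_i} T^{d_i}$ to $T_{N^\circ}$ generate the degenerate toric algebra $\overline{\tilde S_\Xi} := \tilde S_\Xi/(I \cdot \tilde S_\Xi \cap \tilde S_\Xi)$ as a $\kk$-algebra.

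The main obstacle is the lifting step: showing that the same $\vartheta_{q_i} T^{d_i}$ generate $\tilde S_\Xi$ itself. The intended tool is a nilpotent-Nakayama argument with respect to the $I$-adic filtration, applied graded piece by graded piece, using Proposition \ref{thetabasislemma}(4) which says that theta functions give a $\kk[N^+_\s]/I^{k+1}$-basis on each thickening. When $\Xi$ is compact, each graded piece is finite-dimensional and this is straightforward; the genuine difficulty is the unbounded case, where the asymptotic (recession) cone of $\Xi$ is itself a rational polyhedral cone in the seed linearisation and must be absorbed into the generating set. Controlling the formal expansions $\vartheta_{q_i}\vartheta_{q_j} = \sum_r \alpha(q_i,q_j,r)\vartheta_r$ in a neighbourhood of the central fibre, and checking that every $r \in d\Xi(\ZZ)$ which appears can be realised from the Gordan generators through the filtration, is where the full force of Theorem \ref{ffgth} and the seed-independence of the expansion coefficients will be required.
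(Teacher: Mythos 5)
Your broad strategy — realise $\tilde S_\Xi$ as a subalgebra via positivity of $\Xi$, degenerate to a toric monoid ring, apply Gordan's lemma, and lift — is aimed in the right direction, and the easy steps (the product being supported in $(d_1+d_2)\Xi(\ZZ)$, the mod-$I$ multiplication being $\vartheta_{p}\vartheta_{q}\equiv\vartheta_{p+q}$ so the central fibre gives the cone algebra of $\mathbf{C}(\Xi)$) are correct and also appear in the paper. But the proposal has a genuine gap at exactly the point you flag: the lifting step. $\tilde S_\Xi$ is not $I$-adically complete, so knowing that $\tilde S_\Xi/(I\cdot\tilde S_\Xi\cap \tilde S_\Xi)$ is finitely generated does not by itself lift to finite generation of $\tilde S_\Xi$. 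A graded Nakayama argument requires finite-dimensionality of the graded pieces, which as you note fails precisely when $\Xi$ is unbounded — and that is the only interesting case, since in the application ($\Xi=V^\vee(\RR^T)$ itself, see Corollary \ref{fgcor}) $\Xi$ is as noncompact as possible. Saying the recession cone "must be absorbed into the generating set" and that Theorem \ref{ffgth} should control the expansions names the obstacle without crossing it; as written, the argument does not go through.

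The paper handles the noncompact case by a different and sharper device. Using EGM it picks finitely many $p_1,\ldots,p_r$ with each $p_i$ in the interior of a cluster cone $\shC^+_{\s_i}$ such that $\bigcap_i\{\vartheta_{p_i}^T\geq 0\}=\{0\}$, then builds a nested family of subalgebras $\tS_j$ where the first $j$ of these constraints are imposed, together with auxiliary algebras $\oS_j\subset\tS[U]$ in which the $j$-th constraint is relaxed to $\vartheta_{p_j}^T\geq -s$ in $U$-degree $s$. The "toric degeneration" used is not the central fibre of $\pi:\ocA_{\prin}^\s\to\AA^n$ but rather $\oS_j/U\oS_j$: because $\vartheta_{p_j}^T$ is linear in the seed $\s_j$ and strictly increases along bends of broken lines (Lemma \ref{blblem}), only straight broken lines survive mod $U$, so $\oS_j/U\oS_j$ is a toric monoid ring, hence finitely generated by Gordan. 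The finite generation then propagates by a double descending induction: on the $U$-degree within each $\oS_j$ (using finite generation of $\oS_j/U\oS_j$ and of the degree-$0$ part $\tS_j$), and on $j$ (via the surjection $\oS_{j+1}\twoheadrightarrow\tS_j$ sending $U\mapsto 1$), grounded in the base case where $\bigcap_{i\leq j}\{\vartheta_{p_i}^T\geq 0\}=\{0\}$ makes $\tS_j$ trivially finitely generated. This constraint-cutting induction is the missing idea in your proposal: it converts the noncompactness of $\Xi$ into a finite sequence of relaxations each of which is controlled by a single linear functional $\vartheta_{p_j}^T$ that is well-behaved on broken lines, sidestepping any completeness or Nakayama issues entirely. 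Your reduction of the $\cX$ case to $\cA_{\prin}$ via $T_{N^\circ}$-invariants is plausible but also not what the paper does; the paper's proof is phrased directly in terms of broken lines in $\cA_{\prin}^{\vee}(\RR^T)$ and covers both cases uniformly.
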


\begin{proof} Note that $\tilde S:=\tilde S_{\Xi}$ 
is a subalgebra of $\can(V)[T]$ by the definition of positive polytope. 

As in the proof of Corollary \ref{cafin}, we can choose $p_1,
\ldots,p_r$ so that $\bigcap_i \Xi_{\vartheta^T_{p_i}}$ is
a compact positive polytope.
Moreover, because boundedness of the intersection is preserved by 
small perturbation
of the functions, we can assume that each $p_i$
is in the interior of some maximal dimensional cluster cone $\shC^+_{\s_i}$. 
Note the seed $\s_i$
is then uniquely determined  by $p_i$, by Lemma \ref{fgccthg2}.
It follows that $\vartheta_{p_i}^T$ is strictly 
increasing on the monomial decorations at
any non-trivial bend of any broken line in $L^*_{\bR,\s_i}$ 
by Lemma \ref{blblem}. 

For $0 \le j \le r$, we consider
\[
\tilde S_j := \bigoplus_{d\ge 0}
\bigoplus_{\scriptstyle\hbox{$\vartheta_{p_i}^T(q) \geq 0$ 
for $i \leq j$}} \kk \vartheta_qT^d \subset \tilde S.
\]
Here the second sum is over all $q \in d\Xi(\ZZ)$ satisfying the stated
condition.
This is a subalgebra of $\tilde S$ by Proposition \ref{convcor}.
Similarly, we define
\[
\overline{S}_j \subset \tilde S[U]
\]
to be the vector subspace spanned by all 
$\vartheta_q T^d U^s$, $s \geq 0$ where
$\vartheta_{p_i}^T(q) \geq 0$ for $i < j$,
and $\vartheta_{p_j}^T(q) \geq -s$. Then
$\overline{S}_j$ is a graded subalgebra of $\tilde S[U]$ by Proposition 
\ref{convcor}
(graded by $U$-degree) and $\tS_j \subset \oS_j$ is the degree zero part.

The result then follows from the following claim, noting that
$\tS=\tS_0$ and for $j\ge 1$ there is a natural surjection $\oS_j
\twoheadrightarrow \tS_{j-1}$ by sending $U\mapsto 1$, $\vartheta_q\mapsto
\vartheta_q$.

\begin{claim}
$\oS_j$ is a finitely generated $\kk$-algebra.
\end{claim}
\emph{Proof}.
We argue first that $\oS_j/U \cdot \oS_j$ is finitely generated.
We work on $\cA_{\prin}^{\mch}(\bR^T) = L^*_{\bR,\s_j}$, 
so that the multiplication rule is defined using
broken lines for $\foD^{\cA_{\prin}}_{\s_j}$, as described by Proposition
\ref{thetabasislemma}, (3) and Definition-Lemma \ref{structuredef}.
Note $\vartheta_{p_j}^T$ is linear on $L^*_{\bR,\s_j}$. If $\vartheta_q T^dU^s
\in \oS_j$, then
modulo $U$, $\vartheta_q T^d U^s =0$ unless
$s = -\vartheta_{p_j}^T(q)$, for otherwise $\vartheta_q T^dU^{s-1} \in \oS_j$.
By Lemma \ref{blblem}, $\vartheta_{p_j}^T$ is strictly increasing on
monomial decorations at any non-trivial bend
of a broken line and
thus the only broken lines that will contribute (modulo $U$)
to $\vartheta_{q_1}T^{d_1} U^{-\vartheta_{p_j}^T(q_1)} \cdot \vartheta_{q_2} 
T^{d_2} U^{-\vartheta_{p_j}^T(q_2)}$ are straight, thus modulo $U$,
\[
\vartheta_{q_1} T^{d_1} U^{-\vartheta_{p_j}^T(q_1)} \cdot
\vartheta_{q_2} T^{d_2} U^{-\vartheta_{p_j}^T(q_2)} =
\vartheta_{q_1+q_2} T^{d_1+d_2} U^{-\vartheta_{p_j}^T(q_1+q_2)}
\]
(addition here in $L^*_{\s_j}$).
Thus $\oS_j/U \cdot \oS_j$ is the monoid ring associated to the rational convex cone
\[
{\bf C}\left(\Xi\cap \bigcap_{ i < j}\{ q \in \cA_{\prin}^{\vee}(\RR^T)\,|\, 
\vartheta^T_{p_i}(q) \geq  0\}\right) \subseteq L^*_{\RR,\s_j}\oplus\RR,
\]
and is thus finitely generated.
Now by decreasing induction on the $U$-degree, to show $\oS_j$ is 
finitely generated,
it is enough to show that its degree $0$ subalgebra, $\tS_j$, is finitely generated.
This is obvious if the set 
\[
\bigcap_{ i \leq j} \{q \in \cA_{\prin}^{\vee}(\RR^T)\,|\, \vartheta_{p_i}^T(q) \geq 0\}
\]
is $\{0\}$, and by assumption this is true for sufficiently large $j$. By the
above, in any case, to prove $\tS_j$ is finitely generated, it is enough to
show $\oS_{j+1}$ is finitely generated. So we are done by induction.
\end{proof}

\begin{corollary}
\label{fgcor}
For $V=\cA_{\prin}$ or $\cX$, suppose that $V^{\vee}$ 
has Enough
Global Monomials. For $V=\cA_t$, assume $\cA_{\prin}^{\vee}$ has Enough
Global Monomials. Then $\can(V)$ is a finitely generated $\kk$-algebra.
\end{corollary}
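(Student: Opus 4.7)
The plan is to apply Theorem \ref{fgprop} to a suitably chosen compact positive polytope $\Xi \subset V^{\vee}(\RR^T)$ and deduce finite generation of $\can(V)$ by setting the grading variable $T = 1$. First I would reduce the $\cA_t$ case to the $\cA_{\prin}$ case. Under EGM on $\cA_{\prin}^{\vee}$, Proposition \ref{fmprop} ensures all structure constants on $\can(\cA_t)$ are finite, and the formula of Theorem \ref{mainthax}(1), combined with Lemma \ref{eqslem}, lets one verify that for any section $\Sigma$ of $\rho^T$ the assignment $\vartheta_{\Sigma(q) + n} \mapsto z^n(t)\vartheta_q$ defines a surjective $\kk$-algebra homomorphism $\can(\cA_{\prin}) \twoheadrightarrow \can(\cA_t)$ (in parallel with Definition-Lemma \ref{thacasedef} for $\midd$). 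Hence it suffices to treat $V = \cA_{\prin}$ and $V = \cX$, which can be handled uniformly.

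By EGM on $V^{\vee}$ and Lemma \ref{egmlem}, pick $p_1,\ldots,p_r \in \Delta^+_{V^{\vee}}(\ZZ)$ for which
\[
\Xi := \bigcap_{i=1}^r \{x \in V^{\vee}(\RR^T) \mid \vartheta_{p_i}^T(x) \ge -1\}
\]
is compact. Proposition \ref{convcor} shows each $\vartheta_{p_i}^T$ is min-convex, hence decreasing by Lemma \ref{diplem1}, so Lemma \ref{sminlem} gives that each individual half-space polytope is positive. Positivity is preserved under finite intersection by a direct check: if $p_j \in d_j\Xi$ and $\alpha(p_1,p_2,r) \neq 0$, then the decreasing property of each $\vartheta_{p_i}^T$ forces $\vartheta_{p_i}^T(r) \ge \vartheta_{p_i}^T(p_1) + \vartheta_{p_i}^T(p_2) \ge -(d_1 + d_2)$, so $r \in (d_1 + d_2)\Xi$. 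Since $\vartheta_{p_i}^T(0) = 0 > -1$ for every $i$, the origin lies in the interior of $\Xi$; thus $\Xi$ is rationally defined, compact, top-dimensional, and positive, and $\bigcup_{d \ge 0} d\Xi(\ZZ) = V^{\vee}(\ZZ^T)$.

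Now Theorem \ref{fgprop} applied to $\Xi$ yields that
\[
\tilde S_{\Xi} = \bigoplus_{d \ge 0} \bigoplus_{q \in d\Xi(\ZZ)} \kk\,\vartheta_q T^d \subset \can(V)[T]
\]
is a finitely generated $\kk$-algebra. Evaluating $T \mapsto 1$ gives a $\kk$-algebra homomorphism $\tilde S_{\Xi} \to \can(V)$ whose image contains $\vartheta_q$ for every $q \in V^{\vee}(\ZZ^T)$, hence is surjective; thus $\can(V)$ is finitely generated as a quotient of a finitely generated algebra. The main obstacle is the verification that the intersection $\Xi$ is genuinely positive (not merely convex) and has $0$ in a top-dimensional interior -- this is the key algebraic payoff of combining EGM with Proposition \ref{convcor}, and once it is in hand, every remaining step is a formal invocation of Theorem \ref{fgprop}.
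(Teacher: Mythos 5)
Your argument is correct. The paper's own proof of the corollary is shorter but uses the same Theorem \ref{fgprop}; the key difference is the choice of $\Xi$. The paper takes $\Xi = V^{\vee}(\RR^T)$ (the whole tropical space, trivially a positive polytope) and then observes that $\can(V)$ is the degree-zero part of the $T$-graded ring $\tilde{S}_\Xi$, which is finitely generated because the degree-zero part of a finitely generated graded $\kk$-algebra is finitely generated. You instead extract a \emph{compact} positive polytope $\Xi$ from the EGM hypothesis, verify positivity of the finite intersection directly, and then evaluate $T \mapsto 1$ to exhibit $\can(V)$ as a surjective image of $\tilde{S}_\Xi$. Both routes are legitimate; the paper's avoids having to check positivity or top-dimensionality of the intersection (the whole space needs no verification), while yours makes the geometric content of EGM more explicit, essentially re-running part of the argument that already appears inside the proof of Theorem \ref{fgprop}. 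Your treatment of the $\cA_t$ case — realizing $\can(\cA_t)$ as a quotient of $\can(\cA_{\prin})$ via the formula in Theorem \ref{mainthax}(1) — matches the paper exactly. One cosmetic note: you do not actually need $T \in \tilde{S}_\Xi$ for the evaluation step; the homomorphism $T\mapsto 1$ is defined on all of $\can(V)[T]$ and restricts to the subalgebra, so the observation that $0$ lies in the interior of $\Xi$ is only needed to get surjectivity onto $\can(V)$, not well-definedness.
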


\begin{proof} 
In the $V=\cA_{\prin}$ or $\cX$ cases, apply the theorem with 
$\Xi=\cA_{\prin}^{\vee}(\RR^T)$ or $\cX_{\prin}^{\vee}(\RR^T)$, 
which are trivially positive. Then $\can(\cA_{\prin})$ is the
degree $0$ part of the finitely generated ring $\tS$ with respect to the
$T$-grading.

Since $\can(\cA_t)$ is a quotient of $\can(\cA_{\prin})$ by
construction of $\alpha_{\cA_t}$ in Theorem \ref{mainthax}, (1),
$\can(\cA_t)$ is also finitely generated. 
\end{proof}

\begin{proposition} \label{markscor} Assume $\cA^{\mch}_{\prin}$ has EGM. 
Then for each universal
Laurent polynomial $g$ on $\cA_{\prin}$, the 
function $\alpha(g)$ of Theorem \ref{ffgth} has finite support
(i.e., $\alpha(g)(q) = 0$ for all but finitely many $q \in \cXt$), and 
$g \mapsto \sum_{q} \alpha(g)(q) \vartheta_q$ gives inclusions of $\kk$-algebras
\[
\ord(\cA_{\prin}) \subset \midd(\cA_{\prin})  \subset \up(\cA_{\prin})
\subset \can(\cA_{\prin})  
 \subset \widehat{\up(\ocA_{\prin}^{\s})} \otimes_{\kk[N^+_{\s}]} \kk[N].
\]
\end{proposition}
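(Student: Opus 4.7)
The plan is as follows. The inclusions $\ord(\cA_{\prin}) \subset \midd(\cA_{\prin}) \subset \up(\cA_{\prin})$ are already recorded in Theorem \ref{goodalgebra}(4); the algebra structure on $\can(\cA_{\prin})$ is provided by Corollary \ref{cafin}; and the inclusion $\can(\cA_{\prin}) \subset \widehat{\up(\ocA_{\prin}^{\s})} \otimes_{\kk[N^+_{\s}]} \kk[N]$ is the one from Proposition \ref{thetabasislemma}(2). Thus the new content is the middle inclusion $\up(\cA_{\prin}) \subset \can(\cA_{\prin})$, and this reduces to showing that for each $g \in \up(\cA_{\prin})$ the function $\alpha(g)$ of Theorem \ref{ffgth} has finite support $S_g \subset \cA_{\prin}^{\vee}(\ZZ^T) = \tM^{\circ}$. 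Given this finiteness, the formal expansion $g = \sum_{q} \alpha(g)(q)\vartheta_q$ furnished by Theorem \ref{ffgth} becomes a finite linear combination of theta functions, hence an element of $\can(\cA_{\prin})$, and the compatibility of algebra structures is automatic because both sides are controlled by the same constants $\alpha(p,q,r)$.

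To prove $S_g$ finite, it suffices to prove it bounded, since $S_g$ sits inside the lattice $\tM^\circ$. The plan is to exploit two pieces of information. First, by Definition \ref{defS} and the seed-independence of $\alpha(g)$ established in Theorem \ref{ffgth}, for every seed $\s'$ we have the containment $S_g \subseteq \overline{S}_{g,\s'} + P_{\s'}$ under the identification $\cA_{\prin}^{\vee}(\ZZ^T) = \tM^{\circ}_{\s'}$, where $\overline{S}_{g,\s'}$ is the finite exponent set of $g$ as a Laurent polynomial in chart $\s'$. Second, by EGM and Lemma \ref{egmlem}, one may choose finitely many global monomials $\vartheta_{p_1},\ldots,\vartheta_{p_r}$ on $\cA_{\prin}^{\vee}$ such that $\bigcap_{i}\{x : \vartheta_{p_i}^T(x) \geq -1\}$ is bounded in $\cA_{\prin}^{\vee}(\RR^T)$. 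The reduction is then to produce, for each $i$, a finite lower bound $\vartheta_{p_i}^T(q) \geq c_i$ valid on $S_g$; once this is done, $S_g$ lies in the bounded set $\bigcap_i\{\vartheta_{p_i}^T \geq c_i\}$, and the proof is complete.

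For the lower bound, for each $i$ I would choose the seed $\s_i$ in which $\vartheta_{p_i}$ restricts to a single character $z^{n_i}$ on $T_{\tM^{\circ},\s_i} \subset \cA_{\prin}^{\vee}$. In the identification $\cA_{\prin}^{\vee}(\RR^T) = \tM^{\circ}_{\RR,\s_i}$, the piecewise linear function $\vartheta_{p_i}^T$ becomes the globally linear function $\ell_i(q) = -\langle n_i, q\rangle$. By Proposition \ref{convcor}, $\vartheta_{p_i}^T$ is min-convex for broken lines, and since in the $\s_i$-identification it is linear across every wall of $\foD^{\cA_{\prin}}_{\s_i}$, Lemma \ref{diplem}(2) forces $\ell_i \geq 0$ on each initial scattering monomial $(v_j^{\s_i}, e_j^{\s_i}) = p^*(e_j,0)$. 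Linearity of $\ell_i$ then extends this to $\ell_i \geq 0$ on the whole monoid $P_{\s_i}$, so writing any $q \in S_g \subseteq \overline{S}_{g,\s_i} + P_{\s_i}$ as $q = m + \sigma$ with $m \in \overline{S}_{g,\s_i}$ and $\sigma \in P_{\s_i}$ yields
\[
\vartheta_{p_i}^T(q) = \ell_i(m) + \ell_i(\sigma) \geq \ell_i(m) \geq \min_{m' \in \overline{S}_{g,\s_i}} \ell_i(m') =: c_i,
\]
with the minimum over the finite set $\overline{S}_{g,\s_i}$, as desired.

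The hardest step conceptually is the passage from the seed-dependent containments $S_{g,\s'} \subseteq \overline{S}_{g,\s'} + P_{\s'}$, each taking place inside a copy of $\tM^\circ$ indexed by $\s'$, to a single containment taking place in the intrinsic lattice $\cA_{\prin}^{\vee}(\ZZ^T)$ common to all seeds; this is exactly what is supplied by the seed-independence part of Theorem \ref{ffgth}. Once that identification is in hand, the use of a single coordinate chart $\s_i$ for each of the finitely many bounds, together with the observation that $\vartheta_{p_i}$ is a \emph{single} monomial (and so $\ell_i$ globally linear) in that chart, is what allows the clean invocation of Lemma \ref{diplem}(2) to extract positivity on the initial scattering directions, and hence the required bound on all of $P_{\s_i}$.
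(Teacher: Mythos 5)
Your proposal is correct and follows essentially the same route as the paper's own proof: reduce to the finiteness of $S_g$ via Theorem \ref{ffgth} and Proposition \ref{thetabasislemma}; use EGM and Lemma \ref{egmlem} to pick finitely many global monomials whose tropicalizations cut out a bounded set; then for each such $\vartheta_{p_i}$ pass to a seed $\s_i$ where $\vartheta_{p_i}^T$ is linear, use $S_g \subseteq \overline{S}_{g,\s_i}+P_{\s_i}$, and bound $\vartheta_{p_i}^T$ from below by its minimum over the finite set $\overline{S}_{g,\s_i}$. The only superficial difference is that you derive the non-negativity of $\vartheta_{p_i}^T$ on the initial scattering monomials from Proposition \ref{convcor} combined with Lemma \ref{diplem}(2), whereas the paper cites the proof of Proposition \ref{convcor} (really Lemma \ref{gmlemma}) directly; both are the same underlying fact.
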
 

\begin{proof} 
Let $g$ be a universal Laurent polynomial on $\shA_{\prin}$.
By Theorem \ref{ffgth} the sets 
$S_g:=S_{g,\s}$ of Definition \ref{defS} are independent of the seed $\s$. 
We claim that for each global monomial $\vartheta_p$ on $\shA_{\prin}^{\vee}$, 
there is a constant $c_{p}$ such that 
\[
S_g \subset \{x \,|\, \vartheta_p^T(x) \geq c_{p}\} 
\subset \cA^{\mch}_{\prin}(\bR^T).
\]
To see that this is sufficient to prove the proposition, note that by 
Lemma \ref{egmlem}, there
are a finite number
of $p_i$ such that
the intersection of the 
sets where $\vartheta_{p_i}^T(x)\ge 0$
is the origin in $\shA^{\vee}_{\prin}(\RR^T)$. Thus, if the claim is true,
$S_{g}$, the support of $\alpha(g)$, is a finite set. 
The inclusion of algebras follows by Proposition \ref{thetabasislemma}. So 
it's enough to establish the claim.

Let $\vartheta_p$ be a global monomial which is a character on the seed 
torus for $\s$. We follow the notation of Definition \ref{defS}. Thus
$S_g = S_{g,\s} \subset \overline{S}_{g,\s} + P_{\s}$, where 
$\overline{S}_{g,\s}$ itself depends on the seed $\s$ and $g$. 
The tropicalization $\vartheta_p^T$ 
of global monomials $\vartheta_p$ which restrict to a character on
the seed torus $T_{\tM^\circ,\s} \subset \cA^{\mch}_{\prin}$ 
are identified with integer points of the dual cone
$P_{\s}^{\mch}$ (i.e., elements non-negative on each of the initial scattering monomials), see the proof of Proposition \ref{convcor}. 

Note $\vartheta_p^T$ 
is linear on $\tM^\circ_{\RR,\s}$ (though for
our purposes, its min-convexity will be enough).  
Since $\overline{S}_{g,\s}$ is a finite set, for any such $p \in P_{\s}^{\mch}$, there
is constant $c_{p}$ such that 
\[
S_{g} \subset \overline{S}_{g,\s} + P_{\s} \subset \{ x\,|\, \vartheta_p^T(x) \geq c_{p}\}.
\]
This completes the proof. 
\end{proof} 

\subsection{Conditions implying $\cA_{\prin}$ has EGM and the full 
Fock-Goncharov conjecture}

We begin by showing some standard conditions in cluster theory,
namely acyclicity of the quiver or existence of a maximal green
sequence both imply a weaker condition which in turn 
implies both the EGM condition and the full Fock-Goncharov conjecture.
This suggests that this weaker condition is perhaps a more natural one
in cluster theory. This point has been explored in \cite{Muller}.

\begin{definition} 
\label{lccdef}
We say a cluster variety
$\cA$ has \emph{large cluster complex}
if for some seed $\s$,
$\Delta^+(\ZZ) \subset \cA^{\mch}(\bR^T) = M^\circ_{\bR,\s}$ 
is not contained in a half-space.
\end{definition}

\begin{proposition} 
\label{egmscprop} 
If $\cA$ has large cluster complex, then $\cA_{prin}$ has EGM,
$\Theta=\cA_{\prin}^{\vee}(\ZZ^T)$, and the full 
Fock-Goncharov conjecture (see Definition \ref{fgconjdef})
holds for $\cA_{\prin}$, $\cX$, very general $\cA_t$ and, if the convexity 
condition (7) of Theorem \ref{mainth} holds, for $\cA$. 
\end{proposition}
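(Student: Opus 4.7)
The plan is to follow the chain: large cluster complex $\Rightarrow$ EGM for $\cA_{\prin}$ $\Rightarrow$ a compact, positive, top-dimensional polytope in $\cA_{\prin}^{\vee}(\RR^T)$ $\Rightarrow$ $\Theta = \cA_{\prin}^{\vee}(\ZZ^T)$ and $\midd(\cA_{\prin}) = \can(\cA_{\prin})$; combining with Proposition \ref{markscor} and the injectivity of $\nu$ from Theorem \ref{mainth}(7), this yields the full Fock-Goncharov conjecture for $\cA_{\prin}$. The cases of $\cX$, $\cA_t$ for very general $t$, and $\cA$ under the convexity hypothesis (7) of Theorem \ref{mainth} then follow from the $\cA_{\prin}$ case via the quotient and restriction constructions of \S\ref{blgcss} and Theorem \ref{mainthax}.

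The core step is EGM for $\cA_{\prin}$. Fix the seed $\s$ of the hypothesis, identifying $\cA_{\prin}(\RR^T) = (N^\circ \oplus M)_\RR$. The frozen coordinates $X_i = z^{(0,e_i)}$ and their inverses $X_i^{-1}$ are global monomials on $\cA_{\prin}$ by Lemma \ref{gmlemma}, and their tropicalisations together cut out the subspace $\{(v,0) : v \in N^\circ_\RR\}$, so it suffices to find, for every nonzero $v \in N^\circ_\RR$, a global monomial with negative tropicalisation at $(v,0)$. By hypothesis, finitely many $p_1,\ldots,p_r \in \Delta^+_{\cA}(\ZZ) \subset M^\circ_\s$ can be chosen cone-generating $M^\circ_\RR$, so for every nonzero $v$ some $p_k$ satisfies $\langle p_k, v\rangle > 0$. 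Each cluster monomial $\vartheta_{p_k}$ on $\cA$ lifts canonically via $\s$ to a global monomial $\widetilde{\vartheta}_{p_k}$ on $\cA_{\prin}$ (Proposition \ref{cvextrem}). By Proposition \ref{gprop}, this lift is regular on $\overline{\cA}_{\prin}^\s$ and restricts to the character $z^{p_k}$ on the central fibre $T_{N^\circ}$, which forces the exponent $(p_k, 0)$ to appear (with coefficient $1$, as the unique exponent with trivial $X$-degree) in the Laurent expansion on $T_{\widetilde N^\circ, \s}$. Since the Fock-Goncharov tropicalisation of a positive Laurent polynomial is the minimum of $-\langle m,\cdot\rangle$ over exponents $m$ (Lemma \ref{lplemma}), $\widetilde{\vartheta}_{p_k}^T(v,0) \leq -\langle p_k, v\rangle < 0$, establishing EGM.

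With EGM, Lemma \ref{egmlem} produces a bounded polytope $\Xi \subset \cA_{\prin}^{\vee}(\RR^T)$ containing a neighborhood of the origin (so top-dimensional), and Lemma \ref{sminlem} combined with min-convexity (Proposition \ref{convcor}) shows $\Xi$ is positive. Proposition \ref{fmprop} then gives $\can(\cA_{\prin})$ a finitely generated algebra structure with polynomial multiplication. To show $\Theta = \cA_{\prin}^{\vee}(\ZZ^T)$: given $q \in \cA_{\prin}^{\vee}(\ZZ^T)$, choose $d$ with $q \in d\Xi$. For any broken line $\gamma$ with $I(\gamma) = q$ and generic basepoint, min-convexity of each $\vartheta_p^T$ in the sense of Definition \ref{imcdef} gives $\vartheta_p^T(F(\gamma)) \geq \vartheta_p^T(I(\gamma)) \geq -d$ for every $p \in \Delta^+(\ZZ)$, so $F(\gamma) \in d\Xi$, a bounded set. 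The monoid-theoretic finiteness of Definition-Lemma \ref{structuredef} (applied with the trivial second broken line) then bounds the number of such broken lines, placing $q \in \Theta$. Combining with Proposition \ref{markscor} (giving $\up(\cA_{\prin}) \subset \can(\cA_{\prin})$) and Theorem \ref{mainth}(7) yields $\midd(\cA_{\prin}) = \up(\cA_{\prin}) = \can(\cA_{\prin})$.

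The main obstacle is the EGM step: the hypothesis is a cone-spanning statement in $\cA^{\vee}(\RR^T)$ while EGM demands a bounded polytope in $\cA_{\prin}(\RR^T)$ cut out by piecewise linear, not linear, functions. The key observation is that min-convexity of $\widetilde{\vartheta}_{p_k}^T$ (Proposition \ref{convcor}) combined with the presence of $(p_k, 0)$ as an exponent forces $\widetilde{\vartheta}_{p_k}^T$ to be bounded above by the single linear functional $-\langle p_k, \cdot\rangle$, restoring the familiar convex-cone duality and allowing the hypothesis on $\cA^{\vee}$ to bound the polytope on $\cA_{\prin}$.
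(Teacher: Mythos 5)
Your argument splits into two parts: establishing EGM for $\cA_{\prin}$, and then establishing $\Theta = \cA_{\prin}^{\vee}(\ZZ^T)$. The first part is essentially correct and is the direct form of the paper's contrapositive argument: both rest on the same observation that for a global monomial $\vartheta_p$ on $\cA_{\prin}^{\vee}$, the exponent $(p,0)$ appears in the Laurent expansion in the seed $\s$, so $\vartheta_p^T(v,0) \le -\langle p, v\rangle$. Your reduction to the slice $\{(v,0)\}$ via $X_i^{\pm 1}$ is a slightly different bookkeeping device than the paper's appeal to $\Delta^+(\ZZ) = (\rho^T)^{-1}(\Delta^+_{\cA}(\ZZ))$, but both are valid.

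The second part, however, has a genuine gap. You claim that min-convexity of $\vartheta_p^T$ gives $\vartheta_p^T(F(\gamma)) \geq \vartheta_p^T(I(\gamma))$ for every broken line $\gamma$ with $I(\gamma)=q$ and a \emph{fixed} generic basepoint $Q$. But min-convexity (Definition \ref{imcdef}) only gives that $(d\vartheta_p^T)$ evaluated on the decoration exponents is non-decreasing along $\gamma$; the resulting inequality is $(d\vartheta_p^T)_{Q}(F(\gamma)) \ge \vartheta_p^T(I(\gamma))$, where $(d\vartheta_p^T)_Q$ is the single linear piece of the piecewise linear $\vartheta_p^T$ at $Q$. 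Since $(d\vartheta_p^T)_Q \ge \vartheta_p^T$ pointwise, this does not imply $\vartheta_p^T(F(\gamma)) \ge \vartheta_p^T(I(\gamma))$ unless $F(\gamma)$ lies in the same domain of linearity as $Q$ — which holds in the proof of Lemma \ref{diplem1}(1), where the basepoint is chosen close to the target exponent, but fails in your setting where $Q$ is fixed and $F(\gamma)$ is unknown and possibly far away. (Working in the seed where $\vartheta_p^T$ is linear does not repair this, since the decoration exponent $F(\gamma)$ transforms by the derivative of the piecewise linear mutation map at the final segment — near $Q$ — whereas the "point" $F(\gamma) \in \cA_{\prin}^{\vee}(\RR^T)$ transforms by the piecewise linear map at $F(\gamma)$ itself; these need not agree.) There is also a smaller issue: the bounded polytope $\Xi \subset \cA_{\prin}^{\vee}(\RR^T)$ from Lemma \ref{egmlem} requires EGM for $\cA_{\prin}^{\vee}$, not $\cA_{\prin}$, so you need to invoke Proposition \ref{ldegmprop} (Langlands duality preserves EGM), which you never cite.

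The paper bypasses both problems entirely with a much simpler argument for $\Theta = \cA_{\prin}^{\vee}(\ZZ^T)$: by Theorem \ref{goodalgebra}, $\Delta^+(\ZZ) \subset \Theta$, and $\Theta$ is intrinsically closed under addition and saturated. Since $\Delta^+(\ZZ)$ is (in any seed identification) not contained in a half-space, the saturated monoid it generates is the whole lattice, forcing $\Theta = \cA_{\prin}^{\vee}(\ZZ^T)$. No polytope or broken-line bound is needed for this step; the polytope machinery (via Proposition \ref{ldegmprop}, Corollary \ref{cafin}, Proposition \ref{markscor}) is used only afterwards, to conclude $\up(\cA_{\prin}) \subset \can(\cA_{\prin})$ and hence the full Fock--Goncharov conjecture.
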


\begin{proof} 
Assume EGM fails for $\cA_{\prin}$. Then we have some point $0 \neq x 
\in \cA_{\prin}(\bZ^T)$ and
$\vartheta_{p}^T(x) \geq 0$ for all $p \in \Delta^+(\bZ) 
\subset \cA^{\mch}_{\prin}(\bZ^T)$.
Take any seed $\s$. We can compute $\vartheta_p^T(x)$ 
by using the corresponding positive Laurent polynomial $\vartheta_{Q,p} \in 
\kk[\tM^\circ_\s]$, for $Q$ a point in
the distinguished chamber $\shC^+_{\s}$ of $\foD^{\cA_{\prin}}_{\s}$. 
Thus using Lemma \ref{lplemma} (leaving the canonical isomorphism $r$
out of the notation),
\[
0 \leq \vartheta_{Q,p}^T(x) = \min_{\substack{ I(\gamma) = p \\
b(\gamma) = Q}} \langle F(\gamma),-x\rangle  \leq \langle p,-x\rangle.
\]
Here the minimum is over all broken lines $\gamma$ contributing
to $\vartheta_{Q,p}$ and 
the final inequality comes from the fact that one of the broken lines is the obvious straight line. 
Thus $\Delta^+(\ZZ)$ is contained in the half-space $\{\langle\cdot,-x
\rangle \geq 0\} \subset \tM^\circ_{\s,\bR}$. 
Since $\Delta^+(\ZZ) \subset \cA_{\prin}^{\mch}(\ZZ^T)$ is the inverse image of
$\Delta_{\cA}^+(\ZZ) \subset \cA^{\mch}(\ZZ^T)$ under the map
$\rho^T:\shA_{\prin}^{\vee}(\ZZ^T)\rightarrow\shA^{\vee}(\ZZ^T)$, 
the EGM statement follows. Now $\Theta =\cA^{\mch}_{\prin}(\bZ^T)$ 
since $\Delta^{+}(\ZZ) \subset \Theta$ and $\Theta$ is saturated and
intrinsically closed under addition, see Theorem \ref{goodalgebra}. 
Since $\cA_{\prin}$ satisfies EGM, so does $\cA_{\prin}^{\vee}$ by
Proposition \ref{ldegmprop}, and
the full Fock-Goncharov conjecture
for $\cA_{\prin}$ then follows from Corollary \ref{cafin} and 
Proposition \ref{markscor}. The $\cA_t, \cX$ and $\cA$ cases then follow 
as in the proofs of Corollary \ref{Xcorollary} and Theorem \ref{mainthax}. 
\end{proof}

\begin{lemma} \label{gsprop} Let $\s = (e_i)$ be a seed. 
Suppose for some vertex $w$ of $\foT_{\s}$ the cluster chamber 
$\shC^+_{w\in\s}\subset M^\circ_{\bR, \s} = \cA^{\mch}(\bR^T)$
meets the interior of $\shC^-_{\s}$. Then the following hold:
\begin{enumerate}
\item $\shC^+_{w\in\s}= \shC^-_{\s}$. 
\item $\cA$ has large cluster complex.
\end{enumerate}
\end{lemma}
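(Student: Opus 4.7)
The plan is to read both statements off the chamber structure of the scattering diagram $\foD^{\cA_{\prin}}_{\s}$ and then descend along $\rho^T\colon \cA_{\prin}^{\vee}(\bR^T) \to \cA^{\vee}(\bR^T) = M^{\circ}_{\bR,\s}$. By Construction \ref{sdprin}, every wall of $\foD^{\cA_{\prin}}_{\s}$ is invariant under the canonical $N$-translation, so $\Supp(\foD^{\cA_{\prin}}_{\s})$ is the $\rho^T$-preimage of a codimension one subset $S \subset M^{\circ}_{\bR,\s}$, and the connected components of $M^{\circ}_{\bR,\s}\setminus S$ are in bijection with the $N$-invariant connected components of $\widetilde M^{\circ}_{\bR}\setminus \Supp(\foD^{\cA_{\prin}}_{\s})$. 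By Construction \ref{chamberstructureremark} applied to $\cA_{\prin}$, together with Lemma \ref{fgcclem} and Theorem \ref{fgccth}, both $\shC^{-}_{\s}$ and the cluster chamber $\shC^+_{w\in\s}$ arise as closures of such connected components, viewed inside $M^{\circ}_{\bR,\s}$.

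For part (1), the hypothesis that $\shC^+_{w\in\s}$ meets the interior of $\shC^-_{\s}$ then forces these two closures to share an interior point, and hence to be the closure of the same connected component of $M^{\circ}_{\bR,\s}\setminus S$. Therefore $\shC^+_{w\in\s}=\shC^-_{\s}$.

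For part (2), statement (1) yields $\shC^-_{\s}\in\Delta^+_{\s}$, while $\shC^+_{\s}=\shC^+_{v\in\s}$ for the root $v$ of $\foT_{\s}$ is automatically a cluster chamber. Hence
\[
\Delta^+(\ZZ)\supseteq (\shC^+_{\s}\cup\shC^-_{\s})\cap M^{\circ}.
\]
Set $f_i:=d_i^{-1}e_i^*\in M^{\circ}$, so that $\langle e_j,f_i\rangle=d_i^{-1}\delta_{ij}$. For $i\in I_{\uf}$, the defining inequalities of $\shC^{\pm}_{\s}$ give $+f_i\in\shC^+_{\s}$ and $-f_i\in\shC^-_{\s}$; for $i\in I\setminus I_{\uf}$ no constraint is imposed on the $f_i$-coordinate, so $\pm f_i$ lie in both $\shC^{+}_{\s}$ and $\shC^{-}_{\s}$. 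Thus $\{\pm f_i : i\in I\}\subset \Delta^+(\ZZ)$. Any closed half-space $\{\ell\ge 0\}\subset M^{\circ}_{\bR,\s}$ containing $+f_i$ and $-f_i$ for every $i$ would force $\ell(f_i)=0$ for every $i$, hence $\ell=0$. So $\Delta^+(\ZZ)$ is not contained in any half-space, proving (2).

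The only mildly subtle point is the identification of Fock-Goncharov cluster chambers in $\cA^{\vee}(\bR^T)$ with projections under $\rho^T$ of cluster chambers of $\foD^{\cA_{\prin}}_{\s}$; once that is in hand (from Construction \ref{sdprin} and Theorem \ref{fgccth}), both parts are elementary convex-geometric observations, so there is no real obstacle.
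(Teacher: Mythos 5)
Your proposal is correct and follows essentially the same route as the paper. For part (1) the paper's argument is exactly the observation you spell out: both $\shC^-_{\s}$ and $\shC^+_{w\in\s}$ are closures of connected components of the complement of (the $\rho^T$-projection of) $\Supp(\foD^{\cA_{\prin}}_{\s})$ in $M^{\circ}_{\bR,\s}$, so an overlap of interiors forces equality; the paper states this in one line by pointing to Theorem \ref{fgccth} and the fact that $e_i^{\perp}$ is a union of walls. For part (2) the paper simply says ``(1) implies (2)'' is obvious, and your computation with the $\pm f_i$ is exactly the elementary convex-geometric observation being appealed to, so you have just made explicit the step the authors leave to the reader.
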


\begin{proof} Obviously (1) implies (2). (1) follows from the fact that each cluster
chamber is a chamber in $\foD^{\cA_{\prin}}$, and each $e_i^{\perp} \subset 
M^\circ_{\bR}$ is a union of
walls from $\foD^{\cA_{\prin}}_{\s}$, see Theorem \ref{fgccth}. 
\end{proof}

\begin{corollary} \label{mgsrem} 
Consider the following conditions on a skew-symmetric cluster algebra $\cA$:
\begin{enumerate}
\item $\cA$ has an acyclic seed.
\item $\cA$ has a seed with a maximal green sequence. (For the definition,
see \cite{BDP}, Def.\ 1.8.)
\item $\cA$ has large cluster complex.
\end{enumerate}
Then (1) implies (2) implies (3).
\end{corollary}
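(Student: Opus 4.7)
My plan has two parts, corresponding to the two implications.

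For (1) $\Rightarrow$ (2), I would simply invoke the known result from \cite{BDP}: if $\s$ is acyclic, then repeatedly mutating at a source of the (mutated) quiver produces a maximal green sequence. The quiver remains acyclic after mutating at a source, and the procedure terminates after $n$ steps. This is standard and there is nothing for me to add beyond the citation.

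For (2) $\Rightarrow$ (3), my plan is to use Lemma \ref{gsprop}, which reduces the task to producing a vertex $w$ of $\foT_{\s}$ such that $\shC^+_{w\in\s}$ meets the interior of $\shC^-_{\s}$. Let $\s$ be a seed carrying a maximal green sequence, terminating at the seed $\s_w$. By the defining property of a maximal green sequence, combined with the standard theorem (see \cite{BDP}) that the terminal $c$-matrix of such a sequence has the form $-P$ for a permutation matrix $P$ (an input using sign-coherence, already available from Corollary \ref{sccveccor}), the $c$-vectors of $\s_w$ are precisely $-e_{\sigma(1)}, \ldots, -e_{\sigma(n)}$ for some permutation $\sigma$ of $I$.

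Since $\cA$ is skew-symmetric we have $d_i = 1$ for all $i$, hence Langlands duality is the identity and the $c$-vectors of $\s^{\vee}_w$ coincide with those of $\s_w$. Lemma \ref{facelem} then identifies $\shC^+_{w\in\s} \subseteq M^\circ_{\bR,\s}$ as the simplicial cone on which each $c$-vector of $\s^{\vee}_w$ is non-negative, giving
\[
\shC^+_{w\in\s} = \{x \in M^\circ_{\bR,\s} \,|\, e_{\sigma(i)}(x) \le 0 \text{ for all } i\} = \shC^-_{\s}.
\]
Applying Lemma \ref{gsprop} then yields (3).

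The only substantive obstacle lies in the cluster-theoretic input that the terminal $c$-matrix of a maximal green sequence is $-P$ for a permutation $P$; this is outside the scope of the present paper and I would quote it. Granted this, both implications are immediate translations into the scattering-diagram language already developed, via Lemmas \ref{facelem} and \ref{gsprop}.
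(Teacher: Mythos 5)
Your proof is correct and takes essentially the same route as the paper, combining Lemma \ref{facelem} with Lemma \ref{gsprop}. One small remark: you invoke the (stronger) theorem that the terminal $c$-matrix of a maximal green sequence is $-P$ for a permutation $P$, in order to conclude $\shC^+_{w\in\s} = \shC^-_{\s}$ outright. The paper instead uses only the definitional property of a maximal green sequence, namely that the final seed has all $c$-vectors with non-positive entries; that already forces $\shC^-_{\s} \subseteq \shC^+_{w\in\s}$ (so the hypothesis of Lemma \ref{gsprop} that the chamber meets the interior of $\shC^-_{\s}$ holds), and the equality is then a \emph{conclusion} of that lemma rather than a needed input. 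So your external citation is more than is required; otherwise the arguments are the same.
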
 

\begin{proof} (1) implies (2) is \cite{BDP}, Lemma 1.20. 
For (2) implies (3), let $\s$ be an initial seed and
$\s'$ the seed obtained by mutations in a maximal green sequence. By definition, the
$c$-vectors for $\s'$ have non-positive entries. 
By Lemma \ref{facelem}
the $c$-vectors are the equations for the walls of the cluster  chamber $\shC^+_{\s'}$, thus
the hypothesis of Proposition \ref{gsprop} holds. 
\end{proof}

We make an aside here connecting with work of Reineke \cite{R10} in the acyclic skew-symmetric case. 
In this case $\foD_{\s}=\foD_{\s}^{\cA_{\prin}}$ has a natural interpretation in terms of moduli of quiver
representations. Consider skew-symmetric fixed and initial data with no frozen
variables. Set $N^{\oplus}=\{\sum a_ie_i \,|\, a_i\ge 0\}$, and let
$\widehat{\kk[N^{\oplus}]}$ be the completion of the polynomial ring
$\kk[N^{\oplus}]$ with respect to the maximal monomial ideal. Let $P
\subseteq \widetilde M=M\oplus N$ be a monoid as in \S 1 containing
all $(v_i,e_i)$, so that 
$G$, the pro-nilpotent group of \S\ref{defconstsection} (in the $\cA_{\prin}$ 
case) 
acts by automorphisms of $\widehat{\kk[P]}$
as usual. Note there is an embedding $\widehat{\kk[N^{\oplus}]}
\hookrightarrow \widehat{\kk[P]}$ given by $z^n\mapsto z^{(p^*(n),n)}$.
The action of $G$ on $\widehat{\kk[P]}$ then induces an action on
$\widehat{\kk[N^{\oplus}]}$. Indeed, one checks immediately that an
automorphism (for $d\in N^+$)
\[
z^{(m,n)}\mapsto z^{(m,n)} f(z^{(p^*(d),d)})^{\langle (d,0), (m,n)\rangle}
\]
induces the automorphism on $\widehat{\kk[N^{\oplus}]}$ given by
\[
z^n\mapsto z^n f(z^d)^{-\{d,n\}}.
\]

\begin{proposition}
\label{reinekeprop} 
Suppose given fixed skew-symmetric data with no frozen variables
along with an acyclic seed $\s=(e_1,\ldots,e_n)$. Let $Q$
be the associated quiver.\footnote{We remind the reader that because of
the assumption made in Appendix \ref{LDsec} that $v_i\not=0$ for any
$i\in I_{\uf}$, the quiver $Q$ has no isolated vertex.}
Each $x \in M_{\bR}$ gives a {\it stability} in the sense of
\cite{R10}. Assume there is a unique primitive $d \in N^+_{\s}$ with 
$x \in d^{\perp}$. For each $i \in I$ let
\[
Q^i(z^d) = \sum_{k \geq 0} \chi(\shM^x_{kd,i}(Q)) z^{kd}
\]
where $\shM^x_{d,i}(Q)$ is the framed moduli space (framed by
the vector spaces $V_j$ with $\dim V_j=0$ unless $j=i$, in which
case $\dim V_j=1$) of
semi-stable representations of $Q$ with dimension vector $d$ and $x$-slope zero,
(see \cite{R10},\S 5.1) and $\chi$ denotes topological Euler characteristic.
Let $d = \sum d_i e_i$ for some $d_i\in \NN$. Then
\[
f(z^d):= (Q^i)^{\frac{1}{d_i}}  \text{ for } i \in I, d_i \neq 0, 
\]
depends only on $Q$ and $x$ (i.e., is independent of the vertex $i \in I$).
Furthermore, for arbitrary $y\in N_{\RR}$,
$g_{(x,y)}(\foD_{\s})$ (see Lemma \ref{easyequivlemma}) acts 
on $\widehat{\kk[N^{\oplus}]}$ 
by
\[
z^n \to z^n \cdot f^{-\{d,n\}}
\]
and on 
$\widehat{\kk[P]}$ by
$$
z^{(m,n)} \mapsto z^{(m,n)} \cdot f(z^{(p^*(d),d)})^{\langle d,m \rangle}.
$$ 
\end{proposition}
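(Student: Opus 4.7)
The plan is to use the uniqueness of consistent scattering diagrams with prescribed incoming walls (Theorem \ref{KSlemmaGS}) to identify $\foD_{\s}$ with a scattering diagram built from quiver moduli. First, I would reduce the statement to a computation in $\widehat{\kk[N^{\oplus}]}$. Since the embedding $\widehat{\kk[N^{\oplus}]} \hookrightarrow \widehat{\kk[P]}$ given by $z^n \mapsto z^{(p^*(n),n)}$ is $G$-equivariant in the sense explained in the paragraph preceding the proposition, it suffices to prove the formula $z^n \mapsto z^n f(z^d)^{-\{d,n\}}$ in $\widehat{\kk[N^{\oplus}]}$. Under this embedding the incoming walls of $\foD_{\s}$ become $(e_i^{\perp}, 1 + z^{e_i})$, which on the quiver side correspond to the classes of the simple representations $S_i$ with dimension vector $e_i$.

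Next, I would invoke Reineke's construction. For each $x \in M_{\RR}$, viewed as a slope/central charge on $N^{\oplus}$, Reineke assembles in the completed skew-symmetric Hall algebra of $Q$ an identity expressing the product over all slopes of generating series of semistable moduli. Passing to the integration map (motivic/Euler characteristic), this yields a factorization of the ``total'' automorphism into wall-crossing factors, one on each hyperplane $d^{\perp}$ with $d \in N^+$ primitive. Concretely, the wall-crossing factor on $d^{\perp}$ acts by $z^n \mapsto z^n \cdot h_d(z^d)^{-\{d, n\}}$, where $h_d \in 1 + z^d \cdot \kk\lfor z^d\rfor$ is determined by the Euler characteristics of the unframed moduli stacks of semistable representations of dimension vector a multiple of $d$ and $x$-slope zero. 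Acyclicity of $Q$ ensures that no relations are imposed between the $e_i$ and that the resulting collection of walls is a scattering diagram in the sense of Definition \ref{KSscatdiagdef}, with all initial incoming walls exactly $(e_i^{\perp},1+z^{e_i})$ and all other walls outgoing; consistency is precisely Reineke's wall-crossing identity.

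At this point uniqueness in Theorem \ref{KSlemmaGS}, applied to $\widehat{\kk[N^{\oplus}]}$ with initial walls $(e_i^{\perp}, 1+z^{e_i})$, forces Reineke's scattering diagram to agree (up to equivalence) with the image of $\foD_{\s}$ under the embedding. Consequently the wall-crossing on $d^{\perp}$ is of the claimed form with $f = h_d$. It remains to identify $h_d$ with $(Q^i)^{1/d_i}$. Here I would use Reineke's framing formula from \cite{R10}: the framed moduli $\shM^x_{kd,i}(Q)$ parametrize semistable representations of dimension vector $kd$ and slope zero together with a one-dimensional framing at vertex $i$, and a standard stratification/motivic computation yields $\chi(\shM^x_{kd,i}(Q)) = d_i \cdot [\text{coeff.\ of } z^{kd} \text{ in } \log h_d \cdot (\text{explicit weight})]$, giving $Q^i = h_d^{d_i}$. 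In particular the right-hand side $(Q^i)^{1/d_i} = h_d$ is independent of the choice of $i$ with $d_i \neq 0$, which proves both the well-definedness of $f$ and the wall-crossing formula.

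The main obstacle is the last step: carefully matching conventions (stability versus semistability, framing at a single vertex versus at the full dimension vector, motivic versus Euler characteristic integration) so that Reineke's explicit generating series really gives $(Q^i)^{1/d_i}$ rather than a related expression, and verifying that the resulting collection of walls genuinely satisfies Definition \ref{KSscatdiagdef} (finiteness modulo any power of $J$) in the acyclic case. Once these conventions are aligned, the uniqueness argument closes the proof immediately.
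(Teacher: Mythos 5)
Your proposal captures the right high-level strategy — reduce to $\widehat{\kk[N^{\oplus}]}$, invoke Reineke's wall-crossing identity (\cite{R10}, Theorem 2.1), identify the wall functions with Reineke's generating series via his Lemma 3.6 for framing, and close with a scattering-diagram uniqueness argument — and this is indeed the route the paper takes. However, there are two genuine gaps.

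First, the reduction to $\widehat{\kk[N^{\oplus}]}$ is not as innocent as you present it. The group $G$ does act on $\widehat{\kk[N^{\oplus}]}$ via the embedding $z^n \mapsto z^{(p^*(n),n)}$, but this action is \emph{not} faithful (the map $\fog \to \fog''$, $z^{p^*(n)}\partial_{(n,0)} \mapsto -z^n\partial_{p^*(n)}$, can have a nontrivial kernel since $p^*$ need not be injective on $N$). If you only pin down what $g_{(x,y)}(\foD_{\s})$ does on $\widehat{\kk[N^{\oplus}]}$, you have not determined the group element, and hence have not determined its action on $\widehat{\kk[P]}$ — which is the second displayed formula in the proposition. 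The paper resolves this by introducing the subalgebra $\fog' := \bigoplus_{n:\,p^*(n)\ne 0}\fog_n \subset \fog$, showing $[\fog,\fog]\subset\fog'$ so the associated $G'\subset G$ is a pro-nilpotent subgroup, checking that all wall automorphisms of $\foD_{\s}$ lie in $G'$ (because $p^*(e_i)\ne 0$ for $i\in I_{\uf}$), and proving the restriction map $G'\to G''$ is an isomorphism. This lift is essential and your argument does not supply it.

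Second, you treat Reineke's Theorem 2.1 as directly producing a \emph{consistent scattering diagram with the prescribed incoming walls}, so that the uniqueness clause of Theorem \ref{KSlemmaGS} can be invoked immediately. But Reineke's theorem is a single factorization identity for a single stability $x$; it does not a priori assemble into a consistent scattering diagram, nor does it tell you which element of $G$ it factors. The paper instead pins this down explicitly: using the acyclic ordering, it checks that the mutation sequence $\mu_n,\mu_{n-1},\ldots,\mu_1$ produces $c$-vectors $e_1,\ldots,e_{i-1},-e_i,\ldots,-e_n$, and hence a path through the cluster chambers from $\shC^+_{\s}$ to $\shC^-_{\s}$ crossing only the hyperplanes $e_n^\perp,\ldots,e_1^\perp$ in order; this identifies $\theta_{+,-}$ with the left-hand side $T_{i_1}\circ\cdots\circ T_{i_n}$ of Reineke's identity. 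Then the right-hand side is identified with $\theta_{\gamma,\foD_{\s}}$ along the path $\gamma(\mu)=x-\mu\sum e_i^*$ (under the assumption that the proposed formula for $g_{x,y}$ is correct), and the uniqueness of the $\Psi$-factorization from the proof of Theorem \ref{maximscorrespondence} — not Theorem \ref{KSlemmaGS} — together with the faithfulness of $G'$ on $\widehat{\kk[N^{\oplus}]}$ closes the loop. Without the explicit identification of Reineke's left-hand side with $\theta_{+,-}$ via the acyclic mutation sequence, your appeal to uniqueness has nothing concrete to compare against.
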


\begin{proof}
The equality of the $(Q^i)^{\frac{1}{d_i}}$ follows from the argument in the proof of \cite{R10}, Lemma 3.6. 

If $d=e_i$ for some $i$ then one checks easily that $\cM^x_{e_i,i}$ is a 
point and $\cM_{ke_i,j}^x=\emptyset$ for $i\not=j$ or $k>1$. Thus
$f(z^d)=1+z^{e_i}$ and the formula for $g_{x,y}(\foD_\s)$ holds by Remark~\ref{noekwalls}.

Let $G$ be the pro-nilpotent group of \S\ref{defconstsection} (in the $\cA_{\prin}$ case) associated to the completion of the Lie algebra
$$\fog= \bigoplus_{n \in N^+} \fog_n = \bigoplus_{n \in N^+} \kk \cdot z^{(p^*(n),n)}\partial_{(n,0)}.$$
The group $G$ acts faithfully on $\widehat{\kk[P]}$
but need not
act faithfully via restriction on $\widehat{\kk[N^{\oplus}]}$. It turns
out, however, that there is a subgroup $G'\subseteq G$ which does
act faithfully on $\widehat{\kk[N^{\oplus}]}$ and that all automorphisms
attached to walls in $\foD_{\s}$ are elements of $G'$. 
We see this as follows.

Consider the subspace
$$\fog' := \bigoplus_{\substack{n \in N^+ \\ p^*(n) \neq 0}} \fog_n \subset \fog.$$
By the commutator formula (\ref{Liebracketform}) we have $[\fog,\fog] \subset \fog'$, and in particular $\fog'$ is a Lie subalgebra of $\fog$. 
Let $G' \subset G$ denote the associated pro-nilpotent subgroup.
Because of the assumption that $p^*(e_i)\not=0$ for any unfrozen $i$, 
all automorphisms associated to initial walls of $\foD_{\s}$ 
lie in $G'$, and thus by the inductive construction of the
scattering diagram in \S\ref{subsecKSlemalg}, all automorphisms associated
to outgoing walls of $\foD_{\s}$ also lie in $G'$.

Let $G''$ denote the pro-nilpotent group acting faithfully on $\widehat{\kk[N^{\oplus}]}$ associated to the completion of the Lie algebra
$$\fog'' := \bigoplus_{\substack{n \in N^+ \\ p^*(n) \neq 0}} \kk \cdot z^{n}\partial_{p^*(n)}.$$
Then the restriction of the action of $G$ on $\widehat{\kk[P]}$ to $\widehat{\kk[N^{\oplus}]}$ is given by the group homomorphism $G \rightarrow G''$ 
associated to the Lie algebra homomorphism
$$\fog \rightarrow \fog'', \quad z^{(p^*(n),n)}\partial_{(n,0)} \mapsto -z^n\partial_{p^*(n)}.$$
This homomorphism restricts to an isomorphism $G' \rightarrow G''$, and
in particular the restriction of the $G'$ action on $\widehat{\kk[P]}$
to $\widehat{\kk[N^{\oplus}]}$ is faithful.

Assume now that the indices are ordered so that $Q$ has arrows from
the vertex with index $i$ to the vertex with index $j$ only if $i>j$.
We compute $\theta_{+,-}\in G'$, the automorphism associated to a path from
the positive to the negative chamber, in two different ways.

First, there is a sequence of chambers connecting $\shC^+_{\s}$ to
$\shC^-_{\s}$ via the mutations $\mu_n,\mu_{n-1}$, $\ldots,\mu_1$. Indeed,
it is easy to check that the $c$-vectors obtained by mutating 
$\mu_n,\mu_{n-1},\ldots,$ $\mu_i$ are precisely $e_1,\ldots,e_{i-1},-e_i,
\ldots,-e_n$, and the chamber corresponding to this sequence of mutations 
is precisely the dual of the cone generated by the $c$-vectors, see
Lemma \ref{facelem}. Thus in particular, we can find a path $\gamma$
from $\shC^+_{\s}$ to $\shC^-_{\s}$ which only crosses the walls
$e_n^{\perp},\ldots, e_1^{\perp}$ in order. Note that the element of $G'$
attached to the wall $e_j^{\perp}$ acts on $\widehat{\kk[N^{\oplus}]}$
by $z^n\mapsto z^n(1+z^{e_j})^{-\{e_j,n\}}$, which agrees with the automorphism in \cite{R10} written as
$T_{i_j}$ (noting that \cite{R10} uses the opposite sign convention for
the skew form $\{\cdot,\cdot\}$ associated to the quiver). From this
we conclude that $\theta_{+,-}=T_{i_1}\circ\cdots\circ T_{i_n}$, the
left-hand side of the equality of Theorem 2.1 of \cite{R10}. 

On the other hand, choose a stability condition $x$ and
consider the path $\gamma$ from $\shC^+_\s$ to $\shC^-_{\s}$ parameterized
by $\mu$, with $\gamma(\mu)= x-\mu\sum_i e_i^*$, with domain sufficiently
large so the initial and final
endpoints lie in the positive and negative orthants respectively. Then
a dimension vector has $\gamma(\mu)$-slope $0$ if and only if it has $x$-slope
$\mu$. Thus if the description in the statement of the theorem of $g_{x,y}(\foD_{\s})$
is correct, then $\theta_{\gamma,\foD_{\s}}$ coincides with the
right-hand side of the equality of Theorem 2.1 of \cite{R10}. By the uniqueness
of the factorization of $\theta_{+,-}$ from the proof of Theorem 
\ref{maximscorrespondence} and the faithful action of $G'$ on $\widehat{\kk
[N^{\oplus}]}$ shown above, we obtain the result.
\end{proof}

Because non-negativity of Euler characteristics for the quiver moduli
spaces appearing in the above statement is known (\cite{R14}), 
this gives an alternate proof of positivity of the scattering diagram in this 
case.

\begin{remark}
Since the initial version of this paper was released, Bridgeland
\cite{Bridge} developed a Hall algebra version of scattering diagrams
in the context of quiver representation theory, and the above
result follows conceptually from his results.
\end{remark}

\begin{example} \label{finite_type_scattering}
Let $Q$ be a quiver given by an orientation of the Dynkin diagram of a simply-laced finite dimensional simple Lie algebra.
Then the dimension vectors of the indecomposable complex representations of $Q$ are the positive roots of the associated root system $\Delta$ (Gabriel's theorem). Moreover, for each positive root $d$, there is a unique indecomposable representation $V$ with dimension vector $d$, and $\Hom(V,V)=\bC$. See e.g. \cite{BGP73}.

The $\cA$ cluster variety associated to $Q$ is the cluster variety of finite type associated to the root system $\Delta$ \cite{FZ03a}.
Using Proposition~\ref{reinekeprop} we can give an explicit description of the scattering diagram $\foD$ for $\cA_{\prin}$ as follows.

First we observe that a representation of $Q$ that contributes to $\foD$ is a direct sum of copies of an indecomposable representation.
Let $d \in N^+$ be a primitive vector and $x \in M_{\bR}$ be such that $x^{\perp} \cap N = \bZ \cdot d$.
Suppose $W$ is an $x$-semistable representation of $Q$ with dimension vector a multiple of $d$, and consider the decomposition of $W$ into indecomposable representations. 
By $x$-semistability and our assumption $x^{\perp} \cap N = \bZ \cdot d$, each factor must have dimension vector a multiple of $d$.
By Gabriel's theorem, we see that $d$ is a positive root and $W$ is a direct sum of copies of the associated indecomposable representation.
 
We see that the walls of $\foD$ are in bijection with the positive roots of $\Delta$.
Let $d \in \Delta_+$ be a positive root and $V$ the indecomposable representation with dimension vector $d$.
Let $\fod \subset d^{\perp} \subset M_{\bR}$ be the locus of $x \in M_{\bR}$ such that $V$ is  $x$-semistable of $x$-slope zero, that is,
$\langle x,d \rangle = 0$ and $\langle x,d' \rangle \le 0$ for $d'$ the dimension vector of any subrepresentation of $V$.
Then $\fod$ is a rational polyhedral cone in $M_{\bR}$, and is non-empty of real codimension $1$. Indeed, there exists $x \in d^{\perp}$ such that $V$ is $x$-stable by \cite{K94}, Remark~4.5 and \cite{S92}, Theorem~6.1, and this is an open condition on $x \in d^{\perp}$.
Now let $x \in \fod$ be a point such that $x^{\perp} \cap N = \bZ \cdot d$. Then the $x$-semistable representations of $x$-slope zero are the direct sums of copies of $V$.  

Let us now examine the moduli space $\M^x_{kd,i}$. An object in this moduli
space is a direct sum $V^{\oplus k}=\CC^k\otimes V$ of $k$ copies of the 
unique indecomposable
representation of dimension vector $d$, along with the framing, a choice of 
a vector $v=(v_1,\ldots,v_k) \in \CC^k\otimes V_i$. Such an object is stable
if and only if $v$ is not contained in a proper subrepresentation of 
$V^{\oplus k}$ of the form $W\otimes V$ for some subspace $W\subseteq \CC^k$. 
In order for this to be the case, the $v_1,\ldots,v_k$ must be linearly
independent elements of $V_i$, and hence span a $k$-dimensional subspace
of $V_i$. The automorphism group of $V^{\oplus k}$ is $\GL_k$,
which has the effect
of changing the basis of the subspace spanned by $v_1,\ldots,v_k$.
Now it follows easily from the definitions that, for each $k \in \bZ_{\ge 0}$ 
and $i \in I$ such that $d_i \neq 0$, the moduli space $\cM^x_{kd,i}$ of 
$x$-semistable representations with framing at vertex $i$ is isomorphic to the 
Grassmannian $\Gr(k,d_i)$.

So, in the notation of Proposition~\ref{reinekeprop},
$$Q^i(z^d)=\sum_{k \ge 0} \chi(\Gr(k,d_i))z^{kd} = \sum_{k \ge 0} {d_i \choose k} z^{kd} = (1+z^d)^{d_i}$$
and
$$f(z^d)=Q^i(z^d)^{1/d_i}=1+z^d.$$
Thus the wall of $\foD$ associated to $d \in \Delta_+$ is
$$(\fod \times N_{\bR}, 1+z^{(p^*(d),d)}).$$

For example, suppose $Q$ is the quiver with vertices $1,2,3$, and arrows from 1 to 2 and 2 to 3. This is an orientation of the Dynkin diagram $A_3$.
We have the following isomorphism types of indecomposable representations: 
$$1 \rightarrow 0 \rightarrow 0, \quad 0 \rightarrow 1 \rightarrow 0, \quad 0 \rightarrow 0 \rightarrow 1,$$ 
$$1 \stackrel{\sim}{\rightarrow} 1 \rightarrow 0, \quad 0 \rightarrow 1 \stackrel{\sim}{\rightarrow} 1, \quad 1 \stackrel{\sim}{\rightarrow} 1 \stackrel{\sim}{\rightarrow} 1.$$
(Here the numbers denote the dimension of the vector space at the vertex, and the symbol $\sim$ over an arrow indicates that the corresponding linear transformation is an isomorphism.) 
We write $A_i=z^{e_i^*}$ and $X_i=z^{e_i}$.
Then the walls of $\foD$ are
$$(e_1^{\perp}, 1+A_2X_1), \quad (e_2^{\perp},1+A_1^{-1}A_3X_2), \quad (e_3^{\perp},1+A_2^{-1}X_3) $$
$$(\bR e_3^*+\bR_{\ge 0}(e_1^*-e_2^*)+N_{\bR}, 1+A_1^{-1}A_2A_3X_1X_2),$$
$$(\bR e_1^*+\bR_{\ge 0}(e_2^*-e_3^*)+N_{\bR}, 1+A_1^{-1}A_2^{-1}A_3X_2X_3),$$
$$(\bR_{\ge 0}(e_1^*-e_2^*)+\bR_{\ge 0}(e_2^*-e_3^*)+N_{\bR}, 1+A_1^{-1}A_3X_1X_2X_3).$$
For example, the indecomposable representation with dimension vector $(1,1,1)$ has subrepresentations with dimension vectors $(0,1,1)$ and $(0,0,1)$. So the associated wall has support $\fod \subset (e_1+e_2+e_3)^{\perp}$ defined by the inequalities $e_2+e_3 \leq 0$ and $e_3 \leq 0$. This gives the last wall in the list.
\end{example}

\begin{example} \label{acyclic_scattering}
Kac generalized Gabriel's theorem to the case of an arbitrary quiver $Q$ (without edge loops) as follows \cite{K80},\cite{K82}:
Let $\fog$ be the Kac-Moody algebra associated to the underlying graph of $Q$.
Then the dimension vectors of indecomposable complex representations of $Q$ are the positive roots of $\fog$.

The roots $\Delta$ of $\fog$ are divided into real and imaginary roots. 
The real roots are the translates  of the simple roots $e_1,\ldots,e_n$ under the action of the Weyl group.
%Let $I$ be the set of vertices of $Q$ and $A$ the set of arrows, and write $h(a),t(a) \in I$ for the head and tail of an arrow $a$.
Let $\chi \colon N \times N \rightarrow \bZ$ be the asymmetric bilinear form defined by 
$$\chi(d,d')=\sum_{i \in I} d_id'_i - \sum_{a \colon i \rightarrow j} d_{i}d'_{j}.$$
Then for representations $V$ and $V'$ of $Q$ with dimension vectors $d$ and $d'$, 
\[
\chi(d,d')=\chi(V,V'):= \dim \Hom(V,V') - \dim \Ext^1(V,V').
\]

Let $d \in \Delta^+$ be a positive root. 
We have $\chi(d,d)=1$ if $d$ is real and $\chi(d,d)\le 0$ if $d$ is imaginary.
The indecomposable representations of dimension vector $d$ depend on $1-\chi(d,d)$ parameters. 
We say $d$ is a \emph{Schur} root if there exists a representation $V$ of $Q$ with dimension vector $d$ such that $\Hom(V,V)=\bC$. 

Assume that $Q$ is acyclic. Let $\foD$ be the scattering diagram for the associated $\cA_{\prin}$ cluster variety.
We study $\foD$ using Proposition~\ref{reinekeprop}.

We show that each wall of $\foD$ is contained in $d^{\perp}$ for $d$ a primitive Schur root.
First, as in Example~\ref{finite_type_scattering}, each wall is contained in $d^{\perp}$ for $d$ a primitive positive root (note that the set of roots is saturated by \cite{K80}, Proposition~1.2).
It remains to show that $d$ is necessarily a Schur root. Otherwise, a representation $V$ with dimension vector $d$ deforms to a decomposable representation $V'$ \cite{K82}, Proposition~1(b). Then, for $x \in M_{\bR}$ such that $x^{\perp} \cap N = \bZ \cdot d$, $V'$ is $x$-unstable and so $V$ is $x$-unstable (as $x$-semistability is an open condition). A representation with dimension vector a multiple of $d$ is $x$-unstable for the same reason, using \cite{S92}, Theorem~3.8. Now by Proposition~\ref{reinekeprop} we see that there does not exist a wall of $\foD$ contained in $d^{\perp}$.
 
For a real Schur root $d$ there is a unique wall contained in $d^{\perp}$ which can be described explicitly as in Example~\ref{finite_type_scattering}. 
We remark that $d$ is a real Schur root iff there is an indecomposable representation $V$ of $Q$ with dimension vector $d$ such that $\Hom(V,V)=\bC$ and $\Ext^1(V,V)=0$. (Moreover, $V$ is uniquely determined by $d$.) Such a representation $V$ is an exceptional object in the category of representations of $Q$ in the sense of \cite{B90}.  

For an imaginary Schur root the associated walls involve contributions from positive dimensional moduli spaces of semistable representations of $Q$.
The case of the imaginary root $d=(1,1)$ for the quiver $Q$ with vertices $1,2$ and two arrows from 1 to 2 is described in \cite{R10}, \S6.1.
(This is the case $b=c=2$ of Example~\ref{bcexample}.)
\end{example}

We next give another condition for the EGM condition to hold. While
this may appear very technical, it is in fact very important for
group-theoretic examples, see Remark \ref{EGMrem}.

\begin{proposition} \label{mcexprop} 
\begin{enumerate}
\item
Let $U = \Spec(A)$ be an affine variety over a field $\kk$, 
and $f_1,\dots,f_n$ generators of $A$ as a $\kk$-algebra.
For each divisorial discrete valuation $v: Q(U)^* \to \bZ$ (where
$Q(U)$ denotes the function field of $U$) which does not have center
on $U$ (or equivalently, for each boundary divisor $E \subset Y\setminus U$ 
in any partial compactification $U \subset Y$), $v(f_i) < 0$ for some $i$. 
\item
Suppose $V$ is a cluster variety, $U= \Spec(\up(V))$ is a smooth affine
variety, and $V \to U$ is an open immersion. Let $f_1,\dots,f_n$ generate 
$\up(V)$ as a $\kk$-algebra. Then
$f = \min(f_1^T,\dots,f_n^T)$ is strictly negative on $V(\bZ^T) \setminus \{0\}$.
\end{enumerate}
\end{proposition}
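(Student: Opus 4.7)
The argument for (1) is direct: if $v(f_i) \geq 0$ for every $i$ then, since $A$ is generated by the $f_i$ and $v$ is a valuation, $v(g) \geq 0$ for every $g \in A$, so $A \subseteq \cO_v$; the induced morphism $\Spec \cO_v \to \Spec A = U$ then exhibits a center of $v$ on $U$, contradicting the hypothesis. For (2), the plan is to reduce to (1), applied to the divisorial valuation $\nu := i(v)$ on $Q(V) = Q(U)$ corresponding to $v$ under the sign-change identification $i\colon V(\bZ^T) \xrightarrow{\sim} V^{\trop}(\bZ)$. By \eqref{ftcd} one has $f_i^T(v) = \nu(f_i)$, so it suffices to show that $\nu$ has no center on $U$.

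First I would show that $U \setminus V$ has codimension $\geq 2$ in $U$. Because $U$ is smooth (hence normal) and affine and $V \subseteq U$ is open, one has $\cO(U) = \bigcap_{D} \cO_{U,D}$ and $\cO(V) = \bigcap_{D \cap V \neq \emptyset} \cO_{U,D}$, where $D$ ranges over codimension-one points of $U$; the hypothesis $\cO(U) = \up(V) = \cO(V)$ forces every codimension-one $D$ to meet $V$, since otherwise a section of the line bundle $\cO_U(D)$ (available by smoothness of $U$) yields a function regular on $V$ but with a pole along a prime divisor $D \subseteq U \setminus V$, contradicting $\cO(V) = \cO(U)$. Consequently, the nowhere-vanishing regular top form $\Omega$ on $V$ extends by Hartogs to a regular top form on $U$, and since its possible divisor on $U$ is supported in the codimension-$\geq 2$ set $U\setminus V$, $\Omega$ is in fact nowhere-vanishing on $U$ as well, trivializing $K_U$.

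Now suppose for contradiction that $\nu$ has a center $Z \subset U$ of codimension $c \geq 1$. By resolution of singularities one can find an iterated smooth blowup $\pi\colon \widetilde U \to U$ of smooth centers lying over $Z$, realising $\nu = \ord_E$ for some exceptional prime divisor $E$. Then $K_{\widetilde U} = \pi^* K_U + \sum_j a_j E_j$ with all discrepancies $a_j \geq 0$, since each blowup of a smooth codimension-$c'$ center on a smooth variety contributes discrepancy $c'-1 \geq 0$ along its exceptional divisor and these combine non-negatively under iteration. Because $\Omega$ trivializes $K_U$, $\pi^*\Omega$ vanishes along $E$ to order exactly $a_E \geq 0$, so $\nu(\Omega) \geq 0$; but $\nu \in V^{\trop}(\bZ)\setminus\{0\}$ means by definition that $\Omega$ has a pole along $\nu$, i.e.\ $\nu(\Omega) < 0$, a contradiction. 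Hence $\nu$ has no center on $U$, part (1) supplies an $f_i$ with $\nu(f_i) < 0$, and therefore $f_i^T(v) < 0$, yielding $f(v) < 0$. The main obstacles lie in the second paragraph: the codimension-$\geq 2$ step together with the extension/nonvanishing of $\Omega$, both of which are standard but require careful invocation of line-bundle analysis on smooth affine varieties and the fact that smooth varieties have only canonical singularities, so all divisorial discrepancies are $\geq 0$.
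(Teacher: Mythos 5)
Your proof is correct and follows the same overall strategy as the paper: reduce (2) to (1) by showing that any $\nu \in V^{\trop}(\bZ)\setminus\{0\}$ has no center on $U$. For (1), your valuation-theoretic argument ($A\subseteq\cO_v$ forces the height-one prime $A\cap\mathfrak{m}_v$ to be a center) is actually cleaner than the paper's, which argues geometrically via a partial compactification $U\subset Y$ over which the $f_i$ extend, concluding $Y=U$. For (2), the paper establishes $\codim(U\setminus V)\geq 2$ and then cites \cite{P1}, Lemma~1.4 to conclude $U$ is log Calabi-Yau with the same tropical set as $V$; you unpack that citation into a self-contained argument (Hartogs extension of $\Omega$, non-vanishing because the zero divisor would be supported in codimension $\geq 2$, and the discrepancy inequality for iterated smooth blowups showing a smooth variety is canonical). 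That substitution is valid and buys self-containedness. One point to tighten: in the codimension-$\geq 2$ step, the existence of a function regular on $V$ with a pole along a hypothetical prime divisor $D\subseteq U\setminus V$ does not follow merely from $\cO_U(D)$ being a line bundle; the cleanest route uses that $U$ is affine, so $H^1(U,\cO_U)=0$ yields $\Gamma(U,\cO_U(D))/\Gamma(U,\cO_U)\cong\Gamma(D,\cO_U(D)|_D)\neq 0$, and any element of $\Gamma(U,\cO_U(D))$ outside $\Gamma(U,\cO_U)$ is regular on $V$ but has a pole along $D$, contradicting $\cO(V)=\cO(U)$.
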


\begin{proof} 
(1) Let $U \subset V$ be an open immersion with complement an irreducible
divisor $E$. Suppose each $f_i$ is regular along $E$. Then the inclusion 
$H^0(V,\cO_V) \subset H^0(U,\cO_U)$ is an equality. 
Thus the inverse birational map 
$V \dasharrow U$ is regular, which implies $U = V$. Thus (1) follows.

(2) Since the restriction $H^0(U,\cO_U) \to H^0(V,\cO_V)$ to the open
subset $V \subset U$ is an isomorphism, it follows that $U \setminus V \subset U$ has codimension
at least two. Thus $U$ itself is log Calabi-Yau by \cite{P1}, Lemma 1.4, and the
restriction $(\omega_U)|_V$ of the holomorphic volume form is 
a scalar multiple
of $\omega_V$. In addition $V(\bZ^T) = U(\bZ^T)$. Now (2)
follows from (1).
\end{proof}

\begin{proposition} \label{egmprop1} If the canonical map 
\[
p_2^*|_{N^{\circ}}:N^{\circ} \to  N_{\uf}^*,\quad
n \mapsto  \{n, \cdot\}|_{N_{\uf}}
\]
is surjective, then 
\begin{enumerate}
\item $\pi:\cA_{\prin} \to T_M$ is 
isomorphic to $\cA \times T_{M}$.
\item
We can choose $p^*: N \to M^{\circ}$ so that the induced map 
$p^*: N \otimes_{\ZZ} \bQ \to M^{\circ} \otimes_{\ZZ} \bQ$ is 
an isomorphism.
\item The map induced by the choice of $p^*$ in (2),
$p: \cA \to \cX$, is finite. 
\item
If furthermore for each $0 \neq x \in \cA(\bZ^T)$ we can find a cluster 
variable $A$ with $A^T(x) <0$,
then $\cA$ (and $\cA_{\prin}$) has
Enough Global Monomials. This final condition holds if 
$\ord(\cA) = \up(\cA)$ is finitely generated
and $\Spec(\up(\cA))$ is a smooth affine variety. 
\end{enumerate}
\end{proposition}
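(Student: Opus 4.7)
The plan is to prove the four items in sequence, with (1) providing the key structural result on which (4) depends. First I would observe that the hypothesis, surjectivity of $p_2^*|_{N^\circ}$ onto the free lattice $N_{\uf}^*$, is dual to the statement that $p_1^*: N_{\uf} \to M^\circ$ is injective with saturated image (so in particular the Injectivity Assumption of \S\ref{defconstsection} holds). For (1), the saturated splitting lets me find, for each $i \in I_{\uf}$, an element $c_i \in N^\circ$ satisfying $\{c_i, e_j\} = \delta_{ij} d_j$ for all $j \in I_{\uf}$. Using these $c_i$ I would define a monomial change of variables on the initial seed torus $T_{\tN^\circ, \s} = T_{N^\circ} \times T_M$ of $\cA_{\prin}$, replacing each unfrozen cluster variable $A_i$ by $B_i := A_i \cdot M_i$ with $M_i$ a suitable Laurent monomial in the principal coefficient variables $X_1, \ldots, X_n$ (the exponents being determined by the $c_i$) chosen so that the exchange relations of $\cA_{\prin}$ re-express in terms of the $B_i$ with no $X$-dependence. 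Inductively checking that this form persists after every mutation will yield the trivialization $\cA_{\prin} \cong \cA \times T_M$ over $T_M$, with the $\cA$-factor covered by the tori $T_{N^\circ}$ glued by the resulting $X$-free mutations.

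For (2), given injectivity of $p_1^*$ and the rank equality $\rank N = \rank N^\circ = \rank M^\circ$, I would choose a splitting $N \cong N_{\uf} \oplus L$ (possible because $N/N_{\uf}$ is free) and extend $p_1^*$ by defining $p^*|_L$ to map $L$ injectively into any rank-complement of $p_1^*(N_{\uf})_\QQ$ in $M^\circ_\QQ$; the rank bookkeeping forces the extension to be a $\QQ$-isomorphism. For (3), the chosen $p^*$ induces on each cluster torus a map $T_{N^\circ} \to T_M$ whose character map is a $\QQ$-iso, hence a finite isogeny; since the definition of $p$ commutes with mutation (by the naturality of the Fock-Goncharov formulas for $\cA$- and $\cX$-mutations), these finite maps glue to a global morphism $p : \cA \to \cX$, whose finiteness follows from being finite on each cluster chart together with the fact that these charts cover $\cA$ outside codimension at least two.

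For (4), the separation property of cluster variables combined with Lemma~\ref{egmlem} gives EGM for $\cA$ immediately. To upgrade to $\cA_{\prin}$, I use the product decomposition from (1) to write $\cA_{\prin}(\bZ^T) \cong \cA(\bZ^T) \times M$: any nonzero tropical point is detected either by the pullback of a cluster variable on $\cA$ (when the $\cA(\bZ^T)$ component is nonzero) or by one of the frozen monomials $X_i$ (when the $M$ component is nonzero), and all these functions are global monomials on $\cA_{\prin}$. The final claim, that the separation property holds whenever $\ord(\cA) = \up(\cA)$ is finitely generated with $\Spec \up(\cA)$ smooth affine, reduces to Proposition~\ref{mcexprop}(2) with $V = \cA$, using that $\cA \subset \Spec \up(\cA)$ is an open immersion by the Laurent phenomenon and that the algebra generators can be taken to be cluster variables. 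The main obstacle lies in Part (1): the explicit exponent calculation showing that the twisted variables $B_i$ retain the expected form after every mutation. The difficulty is not conceptual but combinatorial, namely tracking uniformly across all mutation classes how the $X$-coefficients appearing in the $\cA_{\prin}$-exchange relations are absorbed into the monomial twists.
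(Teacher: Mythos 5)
Your proof of (1) takes a genuinely different route from the paper's. The paper's argument (via Lemma~\ref{taprinlem}) is a three-line application of the $T_{\tK^\circ}$-equivariance of $\pi: \cA_{\prin}\to T_M$: the hypothesis says exactly that $\tK^\circ\to M$ is a split surjection, and choosing a splitting $M\to\tK^\circ$ gives a $T_M$-action on $\cA_{\prin}$ which, applied to the fibre $\cA=\pi^{-1}(e)$, produces the trivialization. Your explicit rescaling of the unfrozen variables by monomials in the $X_j$ is the same idea unpacked in coordinates, and you yourself flag that the mutation-by-mutation bookkeeping is an unresolved ``main obstacle''; so as written the proposal does not actually establish (1), whereas the equivariance argument avoids that computation entirely.

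There is a genuine gap in your proof of (2). The map $p^*$ is not a free variable: as recalled in Appendix~\ref{LDsec}, any allowable $p^*$ must restrict to $p_1^*$ on $N_{\uf}$ \emph{and} agree with $p_2^*$ after composing with the projection $M^\circ\to M^\circ/N_{\uf}^\perp$; equivalently, two allowable choices differ by a map $N/N_{\uf}\to N_{\uf}^\perp$. Your construction (``define $p^*|_L$ to map $L$ injectively into any rank-complement of $p_1^*(N_{\uf})_{\QQ}$'') treats $p^*|_L$ as completely unconstrained, so the map you produce need not satisfy constraint (b), in which case it is simply not a legal choice of $p^*$ and gives no map $p:\cA\to\cX$ at all. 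The paper's argument addresses exactly this point: it starts from an arbitrary allowable $p^*$, observes that $p^*$ is injective iff $p^*|_K$ is (with $K=\ker p_2^*$), notes that the allowed perturbations restricted to $K$ are exactly maps $K/(K\cap N_{\uf})\to N_{\uf}^\perp$, and then uses a Vandermonde-style determinant argument (viewing $\det(p^*|_K+m\beta)$ as a polynomial in $m$) to achieve injectivity \emph{within} the constrained family. Your ``rank bookkeeping'' produces a $\QQ$-isomorphism out of $N$, but not necessarily one you are allowed to use.

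For (3) and (4) you follow the paper's route. Item (3) is just ``(2) implies a finite isogeny on cocharacter lattices,'' which both you and the paper say. In (4) you use the product decomposition $\cA_{\prin}(\bZ^T)=\cA(\bZ^T)\times M$ from (1), detect nonzero $\cA(\bZ^T)$-components by cluster variables and nonzero $M$-components by characters pulled back from $T_M$, and reduce the final claim to Proposition~\ref{mcexprop}(2); this is exactly what the paper does. So: (3)--(4) are fine, (1) is a legitimate alternative sketch but incomplete by your own admission, and (2) has a real error that you would need to repair along the lines of the paper's perturbation argument.
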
    

\begin{proof} (1) is Lemma \ref{taprinlem}. (3) follows from (2).
So we assume
$p_2^*|_{N^{\circ}}$ is surjective and show we can 
choose $p^*$ to have finite cokernel, or
equivalently, so $p^*$ is injective. We follow the notation of \cite{P1},
\S 2.1. By the assumed
surjectivity, $p^*$ is injective iff the induced map $p^*|_K: K \to N_{\uf}^{\perp} \subset M^\circ$ 
is injective. 
We can replace $p^*$ by $p^* + \alpha$ for any map $\alpha:N \to 
N_{\uf}^{\perp} \subset M^\circ$ which vanishes on $N_{\uf}$, i.e.,
factors through a map $\alpha: N/N_{\uf} \to N_{\uf}^{\perp}$. 
Note by the assumed surjectivity that $K$ and $N_{\uf}^{\perp}$ have the same rank,
and moreover the restriction $p^*|_{N_{\uf}}=p_1^*$ 
(which is unaffected by the addition of $\alpha$) is
injective. In particular $p^*|_{K \cap N_{\uf}}: K \cap N_{\uf} \to N_{\uf}^{\perp}$ is injective. 
Thus we can choose $\beta: K \to N_{\uf}^{\perp}$, vanishing on 
$K \cap N_{\uf}$ (i.e., factoring
through a map $\beta: K/ K \cap N_{\uf} \to N_{\uf}^{\perp}$) so that 
$p^*|_K + \beta: K \to N_{\uf}^{\perp}$ is injective. By viewing the
determinant of $ p^*|_K+m\cdot\beta$ for $m$ an integer as a polynomial
in $m$, we see that $p^*|_K+m\cdot\beta$ is injective for all but a finite
number of $m$. For sufficiently divisible $m$, 
$m \cdot \beta: K/K \cap N_{\uf} \to N_{\uf}^{\perp}$ extends 
to $\alpha: N/N_{\uf} \to N_{\uf}^{\perp}$ under
the natural inclusion $K/K \cap N_{\uf} \subset N/N_{\uf}$. Now 
$p^* + \alpha: N \to M^\circ$ is injective as required. This shows (2).

For (4), when $\cA_{\prin} \to T_M$ is a trivial bundle, it follows that
\[
\cA_{\prin}(\bZ^T) = \cA(\bZ^T) \times M.
\]
So we have Enough Global Monomials so long as we can find cluster variables on 
$\cA$ with the
given condition. The final statement of (4)
follows from Proposition \ref{mcexprop}.
\end{proof} 

\begin{remarks} \label{EGMrem} Every double Bruhat cell is an affine 
variety by \cite{BFZ05}, Prop. 2.8 and smooth by \cite{FZ99}, Theorem 1.1.
The surjectivity condition in the statement
of  Proposition \ref{egmprop1} holds for all double Bruhat cells by 
\cite{BFZ05}, Proposition 2.6
(the Proposition states that the exchange matrix has full rank, but the proof shows
the surjectivity). So by the proposition,
$\cA_{\prin}$ has Enough Global Monomials for double Bruhat cells for which the upper
and ordinary cluster algebras are the same. This holds for the open double
Bruhat cell of $G$ and the $G/N$ ($N \subset G$ maximal unipotent) for $G = \SL_n$ by
\cite{BFZ05}, Remark 2.20, and is announced in \cite{GY13} for all 
double Bruhat cells of all semi-simple $G$.
\end{remarks}

\subsection{Compactifications from positive polytopes}
\label{candsec}

In this subsection, we will use positive polytopes in $\cA_{\prin}^{\vee}
(\RR^T)$ to create partial compactifications of $\Spec(\can(\cA_{\prin}))$
which fibre over an affine space $\AA^n$. The fibre over $0$ will be a toric variety,
and the general fibre is log Calabi-Yau.

Fix seed data for a cluster variety,
$\s = (e_1,\dots,e_n)$ and let $N^{\oplus}_{\s} \subset N$
be the monoid generated by the $e_i$. Similarly, let
$N^{\oplus}_{\s,\RR}\subset N_{\RR}$ be the cone generated by the 
$e_i$.
The choice of seed gives an identification
$\cXt = \tM^\circ_{\s} = M^\circ \oplus N$ and in particular determines a second projection $\pi_N: \cXt \to N$
(which depends on the choice of seed). We have the canonical inclusion
$N \subset \cXt$ given in each seed by $N = 0 \oplus N \subset \tM^\circ_{\s} = \cXt$, and canonical
translation action of $N$ on $\cXt$ making $\can(\cA_{\prin})$ into a $\kk[N]$-module. 

Now assume given a compact, positive, rationally defined top-dimensional 
polytope 
$\Xi\subseteq \cA^{\vee}_{\prin}(\RR^T)$.
We let $S =\can(\cA_{\prin})$. By Proposition \ref{fmprop} 
and the compactness of $\Xi$, $S$ 
is a $\kk[N]$-algebra with $\kk$-algebra structure constants $\alpha(p,q,r)$. 

\begin{lemma} The set $\pi_N^{-1}(N^{\oplus}_{\s,\RR})\subset \cA^{\vee}_{\prin}
(\RR^T)$ is a positive polytope. Denote by $S_{N^{\oplus}_{\s}}$ the 
degree $0$ part of the ring $\tilde S_{\pi_N^{-1}(N^{\oplus}_{\s,\RR})}$ defined in 
Theorem \ref{fgprop}.
Then $S_{N^{\oplus}_{\s}}$ is a finitely generated $\kk[N^{\oplus}_{\s}]$-algebra.
\end{lemma}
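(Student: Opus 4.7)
For the positivity of $\Xi' := \pi_N^{-1}(N^\oplus_{\s,\bR})$, the plan is to exhibit $\Xi'$ as the intersection of half-spaces cut out by min-convex linear functionals and then use the decreasing property. For each $i \in I$, let $L_i : \cA^\vee_{\prin}(\bR^T) \to \bR$ be the linear functional $L_i(m,n) := \langle e_i^*, n\rangle$, where $(m,n) \in M^\circ_\bR \oplus N_\bR$ under the seed-induced identification with $\cA^\vee_{\prin}(\bR^T)$. On the initial scattering monomials of $\foD^{\cA_{\prin}}_\s$ we have $L_i((v_j,e_j)) = \delta_{ij} \geq 0$ for every $j \in I_{\uf}$, so by Lemma \ref{diplem}(1) each $L_i$ is min-convex, and hence decreasing by Lemma \ref{diplem1}(1). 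Since $\Xi' = \bigcap_{i \in I}\{L_i \geq 0\}$ is a cone, $d\Xi'(\bZ)$ is the same set for all $d \geq 0$, so positivity reduces to showing that $p_1, p_2 \in \Xi'(\bZ)$ with $\alpha(p_1, p_2, r) \neq 0$ forces $r \in \Xi'(\bZ)$; this is immediate from $L_i(r) \geq L_i(p_1) + L_i(p_2) \geq 0$ for every $i$.

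For the finite generation statement, first observe that since $\Xi'$ is a cone we have $\tilde S_{\Xi'} = S_{N^\oplus_\s}[T]$ as a graded $\kk[N^\oplus_\s]$-algebra, so it suffices to prove $S_{N^\oplus_\s} = \bigoplus_{n \in N^\oplus_\s,\, m \in M^\circ}\kk\vartheta_{(m,n)}$ is finitely generated as a $\kk[N^\oplus_\s]$-algebra, where $\kk[N^\oplus_\s]$ acts by $z^n \cdot \vartheta_q = \vartheta_{q + (0,n)}$. The key auxiliary function is $L := \sum_{i \in I}L_i$: on every initial scattering monomial we have $L((v_j, e_j)) = 1 > 0$, so $L$ is \emph{strictly} increasing on monomial decorations at every nontrivial bend of a broken line. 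This is precisely the strict-increase property that, in the proof of Theorem \ref{fgprop}, is supplied by Lemma \ref{blblem} for global monomials with $g$-vector in a cluster-cone interior; here our single function $L$ replaces the family $\{\vartheta_{p_j}^T\}_j$. As a consequence, the decreasing filtration $F^d := \bigoplus_{L(q) \geq d}\kk\vartheta_q$, which coincides with the $\mathfrak{m}$-adic filtration for $\mathfrak{m} \subset \kk[N^\oplus_\s]$ the maximal monomial ideal, has associated graded equal to the monoid ring $\kk[\Xi'(\bZ)] \cong \kk[M^\circ] \otimes_\kk \kk[N^\oplus_\s]$, which is manifestly finitely generated over $\kk[N^\oplus_\s]$ by $\pm$ a basis of $M^\circ$.

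To lift finite generation from the associated graded back to $S_{N^\oplus_\s}$, the plan is to adapt the $U$-variable trick from the proof of Theorem \ref{fgprop}. One introduces a graded auxiliary subalgebra $\bar S \subset \can(\cA_{\prin})[T][U]$, spanned by monomials $\vartheta_q T^d U^s$ with $s \geq 0$ subject to a slack condition $L(q) + s \geq 0$; multiplicativity of $\bar S$ follows from the decreasing property of $L$, and modulo $U$ the strict-increase of $L$ at bends eliminates all non-straight contributions, identifying $\bar S/U\bar S$ with a polynomial ring over the monoid ring above, hence finitely generated over $\kk[N^\oplus_\s]$. The graded-Nakayama induction on $U$-degree used in Theorem \ref{fgprop}'s proof then lifts finite generation to $\bar S$, and an appropriate evaluation recovers $S_{N^\oplus_\s}$ as a quotient. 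The main obstacle is the precise setup of $\bar S$: in Theorem \ref{fgprop} the slack variable $s$ is genuinely needed because $\vartheta_{p_j}^T$ can take negative values, whereas here $L \geq 0$ holds automatically on $\Xi'$, so one must take $\bar S$ to live in the larger ambient $\can(\cA_{\prin})[T][U]$ where $L$ attains negative values and then verify that the evaluation produces exactly the desired subalgebra $S_{N^\oplus_\s}$ of $\can(\cA_{\prin})$.
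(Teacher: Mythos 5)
Your positivity argument is correct and is essentially the paper's: the paper simply records that $\pi_N(m)\in N^{\oplus}_{\s}$ for every scattering monomial $m$ of $\foD^{\cA_{\prin}}_{\s}$, which is exactly the content of your computation $L_i\bigl((v_j,e_j)\bigr)=\delta_{ij}\ge 0$ combined with Lemmas~\ref{diplem} and~\ref{diplem1}.

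The finite-generation half is where you part ways with the paper, and there is a real gap. You correctly observe that since $\Xi':=\pi_N^{-1}(N^{\oplus}_{\s,\RR})$ is a cone, $\tilde S_{\Xi'}=S_{N^{\oplus}_{\s}}[T]$. But that observation already finishes the lemma: $\Xi'$ is a positive, rationally defined (non-compact) polytope, so Theorem~\ref{fgprop} applies directly and says $\tilde S_{\Xi'}$ is a finitely generated $\kk$-algebra, hence $S_{N^{\oplus}_{\s}}$ is finitely generated over $\kk$ and a fortiori over $\kk[N^{\oplus}_{\s}]$. This is the paper's proof in its entirety. Instead of stopping there you try to re-derive the relevant case of Theorem~\ref{fgprop} using the single auxiliary function $L=\sum_i L_i$ and a single $U$-variable. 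That cannot work as set up. The condition $L(q)\ge 0$ is strictly weaker than $q\in\Xi'$ (e.g.\ $\pi_N(q)=e_1-e_2$ gives $L(q)=0$ but $q\notin\Xi'$), so the degree-zero part of your $\bar S$, or its image under $U\mapsto 1$, is not $S_{N^{\oplus}_{\s}}$: you would need to impose the $L_i$ one at a time with a separate slack variable for each, which is exactly the iterated construction carried out in the proof of Theorem~\ref{fgprop}. More fundamentally, the induction in that proof terminates only because the EGM hypothesis supplies global monomials $\vartheta_{p_1}^T,\ldots,\vartheta_{p_r}^T$ whose common nonnegativity locus is $\{0\}$, furnishing the base case; your $L_i$ cut out the full-dimensional cone $\Xi'$ itself, so an induction run only on them has no base case. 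Finally, passing from "the associated graded is the monoid ring, hence finitely generated" back to finite generation of $S_{N^{\oplus}_{\s}}$ is precisely the non-trivial content that the $U$-variable construction is designed to deliver, and it cannot be asserted as manifest. The clean route is the paper's: apply Theorem~\ref{fgprop} to $\Xi'$ and use your own observation $\tilde S_{\Xi'}=S_{N^{\oplus}_{\s}}[T]$ to read off the conclusion.
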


\begin{proof} Positivity follows from the fact that 
$\pi_N(m) \in N^{\oplus}_{\s}$ for each
scattering monomial $m$ in $\foD^{\cA_{\prin}}_{\s}$. The finite generation
statement then follows from Theorem \ref{fgprop}.
\end{proof} 

Let $\tilde\Xi:=\Xi+N_{\RR}$, and
\[
\Xi^+ := \tilde\Xi \cap \pi_N^{-1}({N^{\oplus}_{\s,\RR}}).
\]
Then $\tilde\Xi$ is positive by Proposition \ref{tqprop}, and as the
intersection of two positive sets is positive, $\Xi^+$ is positive.
Hence the associated graded rings $\tilde S_{\tilde\Xi}$ and
$\tilde S:=\tilde S_{\Xi^+}$ (graded by $T$) defined via Theorem
\ref{fgprop} are finitely generated. Note that
$S_{N^{\oplus}_{\s}}$ is the set of homogeneous elements of degree $0$ in the
localization $\tilde S_T$. Thus we have an inclusion
$\Spec(S_{N^{\oplus}_{\s}}) \subset \Proj(\tS)$ an open subset, with complement the
zero locus of $T \in H^0(\Proj(\tS),\cO(1))$. The inclusion of
$\kk[N_{\s}^{\oplus}]=\vartheta_0\kk[N_{\s}^{\oplus}]$ in the degree 0 part 
of $\tS$ induces a morphism
$\Proj(\tS) \to \Spec(\kk[N^{\oplus}_{\s}]) = \bA^n_{X_1,\dots,X_n}$. This morphism
is flat, since $\tS$ is a free $\kk[N^{\oplus}_{\s}]$-module.

\begin{theorem} \label{polylem} The central fibre of
\[
(\Spec(S_{N^{\oplus}_{\s}}) \subset \Proj(\tS) ) \to \bA^n
\]
is the polarized toric variety $T_{N^\circ} \subset \bP_{\oXi}$ given
by\footnote{Although $\oXi\subseteq M^{\circ}_{\RR}$ is only a rationally
defined polyhedron rather than a lattice polyhedron,
we can still define $\PP_{\oXi}=\Proj 
\bigoplus_{d=0}^{\infty} \kk^{d\oXi\cap M^{\circ}}$.} the
polyhedron $\oXi=\rho^T(\Xi)$ where $\rho:\cA^{\vee}_{\prin}\rightarrow
\cA^{\vee}$ is the natural map of Proposition \ref{ldpprop}, (4). 
\end{theorem}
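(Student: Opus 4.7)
The plan is to compute the central fibre $\Proj(\tS)\otimes_{\kk[N^{\oplus}_{\s}]}\kk$ directly using the theta function basis, then identify it with the homogeneous coordinate ring of $\PP_{\oXi}$. The key input is that modulo the ideal $(X_1,\ldots,X_n)=(z^{(0,e_i)})$, everything becomes purely toric.

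First I would determine which theta functions survive modulo $(X_i)$. By Corollary \ref{mutcor}, the central fibre of $\ocA_{\prin}^{\s}\to\AA^n$ is the torus $T_{N^{\circ}}$, and by Proposition \ref{thetabasislemma}(4) the theta functions $\vartheta_q$ with $q\in\pi_N^{-1}(0)=M^{\circ}\oplus 0\subset \tM^{\circ}_{\s}$ restrict to the basis of characters there. Next I would identify $\Xi^+\cap\pi_N^{-1}(0)$ with $\oXi$: from $\Xi^+=(\Xi+N_{\RR})\cap\pi_N^{-1}(N^{\oplus}_{\s,\RR})$, any $q$ with $\pi_N(q)=0$ lying in $\Xi^+$ must be of the form $q=\xi+(0,n)$ with $\xi\in\Xi$ and $n=-\pi_N(\xi)$, so $q=\rho^T(\xi)$; thus $\Xi^+\cap\pi_N^{-1}(0)=\rho^T(\Xi)=\oXi$ (and similarly for the integer lattice points at each degree $d$).

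Second, I would verify that the multiplication in $\tS$ becomes monomial modulo $(X_i)$. Every scattering function attached to a wall of $\foD^{\cA_{\prin}}_{\s}$ is congruent to $1$ modulo $(X_i)$: the initial wall functions $1+z^{(v_i,e_i)}$ obviously satisfy this, and by the construction (and Theorem \ref{scatdiagpositive}) every other wall function is a power series in $z^{(p^*(n),n)}$ with $n\in N^+_{\s}$, hence $\pi_N$ of every non-constant monomial lies in $N^+_{\s}\setminus\{0\}$. Consequently no broken line can bend modulo $(X_i)$: only straight lines survive. From Definition-Lemma \ref{structuredef} this forces $\alpha(p,q,r)\equiv\delta_{r,p+q}\pmod{(X_i)}$ whenever $p,q\in\pi_N^{-1}(0)$, so $\vartheta_p\cdot\vartheta_q\equiv\vartheta_{p+q}\pmod{(X_i)}$.

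Combining these two observations, the central fibre of $\Proj(\tS)$ is
\[
\Proj\bigoplus_{d\ge 0}\bigoplus_{q\in d\oXi\cap M^{\circ}}\kk\cdot \vartheta_q T^d,
\]
with the rule $\vartheta_p\vartheta_q=\vartheta_{p+q}$, which is by definition the polarized toric variety $\PP_{\oXi}$ associated to the polyhedron $\oXi\subset M^{\circ}_{\RR}$. The open subset $\Spec(S_{N^{\oplus}_{\s}})$ is the complement of the vanishing of $T=\vartheta_0 T\in H^0(\Proj(\tS),\cO(1))$, and its central fibre is the degree-zero part of the localization, namely $\Spec\kk[M^{\circ}]=T_{N^{\circ}}$, which is the open torus orbit of $\PP_{\oXi}$. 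The only subtle point is the second step—verifying that the multiplication becomes monomial mod $(X_i)$—but this is immediate from the observation that every scattering function is $\equiv 1\pmod{(X_i)}$; the remaining content is bookkeeping about degrees and the projection $\rho^T$.
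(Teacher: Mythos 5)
Your proof is correct and follows the same route as the paper's: both hinge on the observation that every scattering monomial in $\foD_{\s}^{\cA_{\prin}}$ projects under $\pi_N$ into $N^+_{\s}$, so broken lines that bend contribute only to terms lying in $(X_1,\dots,X_n)\tS$, forcing the multiplication on the central fibre to be the monomial rule $\vartheta_p\vartheta_q=\vartheta_{p+q}$. You spell out two steps the paper leaves implicit—the identification of $d\Xi^+(\ZZ)\cap\pi_N^{-1}(0)$ with $d\oXi\cap M^{\circ}$ via $\rho^T$, and the verification that $\Spec(S_{N^{\oplus}_{\s}})$ has central fibre $T_{N^{\circ}}$—but the substance is the same; also note that the fact that wall functions are power series in $z^{(p^*(n),n)}$, $n\in N^+_{\s}$, is built into Definition \ref{walldef} and does not require Theorem \ref{scatdiagpositive}.
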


\begin{proof} 
This follows from the multiplication rule. Indeed, since all the
scattering monomials project under $\pi_N$ into the interior of 
$N^{\oplus}_{\s}$, $z^{F(\gamma)}$ vanishes modulo 
the maximal ideal of $\kk[N^{\oplus}_{\s}]$ for any broken line $\gamma$ that bends, see e.g.\ 
the proof of Corollary \ref{cafin}. Thus 
\[
\tS \otimes_{\kk[N^{\oplus}_{\s}]} (\kk[N^{\oplus}_{\s}]/(X_1,\dots,X_n)) = 
\bigoplus_{d \geq 0} \bigoplus_{q \in d \cdot \oXi} \kk \cdot \vartheta_q 
\cdot T^d
\]
with multiplication induced by $\vartheta_p \cdot \vartheta_q = \vartheta_{p + q}$
(addition in $M^\circ$). This is the coordinate ring of $\bP_{\oXi}.$
\end{proof}

\begin{example}
Consider the fixed data 
and seed data given in Example \ref{basicscatteringexample}.
The scattering diagram for $\cA_{\prin}$ in this case has three walls,
pulled back from the walls of the scattering diagram for $\cA$ as
given in Example \ref{basicscatteringexample}, with attached functions
$1+A_2X_1$, $1+A_1^{-1}X_2$ and $1+A_1^{-1}A_2X_1X_2$. Here, with 
basis $e_1,e_2$ of $N$ and dual basis $f_1,f_2$ of $M$, we have
$A_i=z^{(f_i,0)}$ and $X_i=z^{(0,e_i)}$.

Take $\oXi\subseteq M^{\circ}_{\RR}$ to be the pentagon with vertices
(with respect to the basis $f_1, f_2$) $(1,0)$, $(0,1)$, $(-1,0)$, $(0,-1)$,
and $(1,-1)$, which we write as $w_1,\ldots,w_5$. Then $\oXi$ pulls back to 
$\tM^{\circ}_{\RR}$ to give a polytope
$\Xi$. It is easy to see that $\Xi$ is a positive polytope. Further,
write $\vartheta_i:=\vartheta_{(w_i,0)}$,
$\vartheta_0=\vartheta_{(0,0)}$. Then it is not difficult to describe
the ring $\tilde S$ determined by $\Xi^+$ as the graded ring generated
in degree $1$ by $\vartheta_0,\ldots,\vartheta_5$, with relations
\begin{align*}
\vartheta_1\cdot\vartheta_3= {} & X_1 \vartheta_2\vartheta_0+ \vartheta_0^2,\\
\vartheta_2\cdot\vartheta_4= {} & X_2 \vartheta_3\vartheta_0+ \vartheta_0^2,\\
\vartheta_3\cdot\vartheta_5= {} & \vartheta_4\vartheta_0+ X_1 \vartheta_0^2,\\
\vartheta_4\cdot\vartheta_1= {} & \vartheta_5\vartheta_0+ X_1 X_2 \vartheta_0^2,\\
\vartheta_5\cdot\vartheta_2= {} & \vartheta_1\vartheta_0+ X_2 \vartheta_0^2.
\end{align*}
These equations define a family of projective varieties in $\PP^5$,
parameterized by $(X_1,X_2)\in \AA^2$. For $X_1X_2\not=0$, we obtain a smooth
del Pezzo surface of degree $5$. The boundary (where $\vartheta_0=0$) is
a cycle of five projective lines. When $X_1=X_2=0$, we obtain a toric surface
with two ordinary double points.
\end{example}

\begin{theorem} 
\label{cfcor} 
Assume that $\cA_{\prin}^{\vee}$ has Enough Global
Monomials, $\Xi$ is given as above, and that 
$\kk$ is an algebraically closed field of characteristic
zero. Let $V$ be one of $\cX,\cA,\cA_t$ or $\cA_{\prin}$. We note  $\can(V)$
has a finitely generated $\kk$-algebra structure by Proposition \ref{cafin}. 
Define $U := \Spec(\can(V))$. 

Define $Y := \Proj(\tS_{\tilde\Xi}) \to T_M$ (constructed above) in case $V = \cA_{\prin}$, 
and
for $V:=\cA_t$, take instead its fibre over $t \in T_M$ (we are not defining
$Y$ in the $V =\cX$ case), so by construction 
we have an open immersion $U \subset Y$. Define $B:= Y \setminus U$.
The following hold:
\begin{enumerate}
\item
In all cases $U$ is a Gorenstein scheme with
trivial dualizing sheaf.
\item
For $V = \cA_{\prin}$, $\cX$, or $\cA_t$ for $t$ general, 
$U$ is a $K$-trivial Gorenstein log canonical variety. 
\item
For $V = \cA_{\prin}$ or $\cA_t$ for $t$ general, or all $\cA_t$ assuming
there exists a seed $(e_1,\dots,e_n)$ and
a strictly convex cone containing all of $v_i := \{e_i,\cdot\}$ for $i \in I_{\uf}$, we have
$U \subset Y$ is a minimal model. In other words,
$Y$ is a (in the $\cA_{\prin}$ case relative to $T_M$) projective normal variety,
$B \subset Y$ is a reduced Weil divisor, $K_Y + B$ is trivial, and 
$(Y,B)$ is log canonical.
\end{enumerate}
\end{theorem}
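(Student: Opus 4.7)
The plan is to exploit the flat degeneration $\widetilde Y := \Proj(\widetilde S_{\Xi^+}) \to \AA^n$ from Theorem \ref{polylem}. After rescaling $\Xi$ by a sufficiently divisible positive integer, which does not affect the conclusions, I may assume $\overline{\Xi} := \rho^T(\Xi)$ is a lattice polytope in $M^\circ$, so the central fibre $\PP_{\overline{\Xi}}$ is a projective toric variety with reduced toric boundary $D_0$ satisfying $K_{\PP_{\overline{\Xi}}} + D_0 \sim 0$, and the toric pair $(\PP_{\overline{\Xi}}, D_0)$ is log canonical. Let $\widetilde B \subset \widetilde Y$ be the boundary divisor restricting to $D_0$ on the central fibre and to $B \subset Y$ over $T_M \subset \AA^n$; then $\widetilde Y \setminus \widetilde B$ is fibrewise the spectrum of the relevant $\can$-algebra, and $U = \Spec(\can(\cA_{\prin}))$ is its restriction over $T_M$. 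A crucial extra structure is the $\GG_m^n$-action on $\widetilde Y$ coming from the $N^\oplus_\s$-grading of $\widetilde S_{\Xi^+}$, lifting the standard action on $\AA^n$: every orbit closure meets the central fibre, and $\widetilde B$ is $\GG_m^n$-invariant.

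For (1), I would first argue $\widetilde Y$ is Cohen--Macaulay by flatness over $\AA^n$ together with the Cohen--Macaulay-ness of the toric central fibre. The relative dualizing complex $\omega_{\widetilde Y/\AA^n}$ is then concentrated in one degree, and its restriction to the central fibre is $\cO_{\PP_{\overline{\Xi}}}(-D_0)$; twisting by $\widetilde B$ gives a sheaf whose restriction to the central fibre is trivial. A non-vanishing section on the central fibre extends via Grauert's theorem to a non-vanishing section on a neighbourhood of the central fibre, and the $\GG_m^n$-invariance argument above upgrades this to a non-vanishing global section, so $\omega_{\widetilde Y/\AA^n}(\widetilde B) \cong \cO_{\widetilde Y}$. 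This implies both that $\widetilde Y$ is Gorenstein and that $U$ has trivial dualizing sheaf after restriction to any fibre over $T_M$, giving (1) for $\cA_{\prin}$ and $\cA_t$. For $\cX$, I would use $\can(\cX) = \can(\cA_{\prin})^{T_{N^\circ}}$ together with the fact that $\Spec(\can(\cA_{\prin})) \to \Spec(\can(\cX))$ is a good quotient by a free torus action preserving the nowhere-vanishing top form.

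For (2) and (3), the additional content is log canonicity. Once log canonicity of the total-space pair $(\widetilde Y, \widetilde Y_0 + \widetilde B)$ is established in a neighbourhood of the central fibre, Kawakita's inversion of adjunction---applied after pulling back to a general smooth curve through $0 \in \AA^n$ to make the central fibre Cartier---gives log canonicity of $(\widetilde Y, \widetilde B)$ there, and restriction to fibres over $T_M$ together with $\GG_m^n$-equivariance propagates this to $(Y_t, B_t)$ log canonical for all $t \in T_M$. This gives (2) and (3) for $\cA_{\prin}$ and for general $\cA_t$; the convexity hypothesis in (3) extends the statement to all $\cA_t$ by exactly the argument of Theorem \ref{mainthax}(7). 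Projectivity of $Y \to T_M$ is built into the $\Proj$ construction, normality of $Y$ follows from saturation of $\Theta$ under addition in any seed (Theorem \ref{mainth}(4)), and $K_Y + B \sim 0$ descends from the triviality established in (1).

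The main obstacle is log canonicity of $(\widetilde Y, \widetilde Y_0 + \widetilde B)$ itself, since $\widetilde Y$ is not globally toric. My plan is to combine two ingredients. At a generic point of a codimension-one stratum of $D_0$, the formal completion of $\widetilde Y$ is described by a single exchange relation of the form $A A' = X \cdot f + g$ coming from the scattering diagram, with $f, g$ power series having positive constant terms; by Theorem \ref{scatdiagpositive} this is a mildly-singular hypersurface whose natural pair with $\widetilde Y_0 + \widetilde B$ is log canonical by direct computation. Higher-codimension strata are handled by a stratified argument, reducing to products of such local models. Positivity of the scattering diagram plays the essential geometric role here, ensuring that the local defining equations do not produce singularities worse than those of the toric central fibre.
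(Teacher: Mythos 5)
Your proposal diverges from the paper's argument in the key step, and the divergence introduces a real gap.

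The paper proves (1)–(3) almost entirely by invoking Lemma \ref{kollarslemma} (Koll\'ar's lemma), which packages the observation that on a flat proper family over a regular base, the conjunction ``normal, Cohen--Macaulay, reduced boundary divisor, log canonical pair, $\omega(B)$ trivial, $H^1(\shO)=0$'' is an open condition on the base, once it holds on a fibre. Since the central fibre is the polarized toric pair $(\PP_{\oXi},D_0)$ by Theorem \ref{polylem}, all six conditions hold there, so they hold on fibres over a Zariski open neighbourhood of $0\in\AA^n$. The $T_{\tK^\circ}$-equivariance (not the $T_N$-action you invoke, and only under the convexity hypothesis of part (3)) then pushes the statement from general $t$ to all $t$. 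Applying this to $\Gamma_{\prin}$ (where the convexity hypothesis is automatic) and restricting to fibres gives (1), and (2) follows from (3) by restriction. Your treatment of (1)---Grauert's theorem plus equivariance---is a hands-on reconstruction of one of the openness statements inside Koll\'ar's lemma, so it is compatible in spirit though more laborious.

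For log canonicity your route is genuinely different, and it is where the gap sits. You propose to prove log canonicity of the \emph{total-space} pair $(\tY,\tY_0+\tB)$ by a local analysis at boundary strata and then descend by inversion of adjunction. But the local model you write at a generic point of a codimension-one stratum of $D_0$---a single exchange relation $AA'=Xf+g$ from the scattering diagram---is the description of the \emph{internal} gluing between two adjacent cluster charts of $\cA_{\prin}$ (compare the proof of Theorem \ref{ffgth}), not of the variety near the boundary divisor $\tB$. The divisor $\tB$ arises from the $\Proj$/polytope compactification, where the local structure is governed by the theta-function grading; there is no cluster variable $A$ or $A'$ becoming zero there, and exchange relations are not the relevant equations. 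Moreover, the claim that higher-codimension strata reduce to ``products of such local models'' is not justified and would not follow even in the purely toric case. The paper's deduction avoids all of this: it never needs log canonicity of the total-space pair, only of the special fibre, which is toric and hence immediate; openness does the rest. I would recommend replacing your entire final paragraph with an appeal to Koll\'ar's openness lemma, verified on the toric central fibre of Theorem \ref{polylem}.
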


\begin{proof} First we consider the theorem in the cases $V \neq \cX$.
Note that (3) implies (2) by restriction.

We consider the family $(\Proj(\tS),B) \to \bA^n$ constructed above,
where $B$ is the divisor given by $T=0$ with its reduced structure.
Using Lemma \ref{kollarslemma} below, the condition that on a fibre $Z$,
$U \subset Z \setminus B_Z$ is a minimal model (in the sense
of the statement) is open, and it holds for the central fibre as it is
toric by Theorem \ref{polylem}.
Thus the condition holds for fibres over some non-empty Zariski open subset
$0 \in W \subset \bA^n$. This gives (3) for $\cA_t$ with $t$ general.
The  convexity
condition (on the $v_i$) implies there is a one-parameter subgroup of $T_{N^\circ}$
which pushes a general point of $\bA^n$ to $0$ (see the proof of Theorem
\ref{mainthax}), and now (3) for $\cA_t$ for all $t$
follows by
the $T_{N^\circ}$-equivariance.

Now note given seed data $\Gamma$ the convexity assumption holds for the
seed data 
$\Gamma_{\prin}$. Thus the final paragraph applies with $\cA = \cA_{\Gamma_{\prin}}$
and so in particular $\Spec(\can(\cA_{\prin}))$ is Gorenstein with trivial dualizing
sheaf. The same then holds for the fibres of the flat map
$\Spec(\can(\cA_{\prin})) \to T_M$, which are $U = \Spec(\can(\cA_t))$ (for
arbitrary $t \in T_M$). This gives (1).

Finally we consider the case $V = \cX$. The graded ring construction above
applied with seed data $\Gamma_{\prin}$
gives a degeneration of a compactification of 
$\Spec(\can(\cA_{\prin})) \subset Y$
(which is now a fibre of the family) to a toric compactification of
$T_{\tN^{\circ}}$. The torus
$T_{N^{\circ}}$ acts on the family, trivially on the base,
and the quotient gives an isotrivial degeneration of an analogously defined
compactification of $\Spec(\can(\cX))$ to a toric compactification of $T_M$.
We leave the details of the construction (which is exactly analogous to the 
construction of $\Proj(\tS)$ above)
to the reader. Now exactly the same openness argument applies.
\end{proof}

We learned of the following result, and its proof, from J. Koll\'ar.

\begin{lemma}[Koll\'ar] \label{kollarslemma} Let $\kk$ be an algebraically 
closed field of characteristic zero.
Let $p \colon X \rightarrow S$ be a proper flat morphism of schemes of finite 
type over $\kk$, and $B \subset X$ a closed subscheme which is flat over $S$. 
Let $(X_0,B_0)$ denote the fiber of $(X,B)/S$ over a closed point $0 \in S$.
Assume that $S$ is regular and for $s=0 \in S$ the following hold:
\begin{enumerate}
\item $X_s$ is normal and Cohen--Macaulay.
\item $B_s \subset X_s$ is a reduced divisor.
\item The pair $(X_s,B_s)$ is log canonical.
\item $\omega_{X_s}(B_s) \simeq \cO_{X_s}$.
\item $H^1(X_s,\cO_{X_s})=0.$
\end{enumerate}
Then the natural morphism $\omega_{X/S}(B)|_{X_0} \rightarrow \omega_{X_0}(B_0)$ 
is an isomorphism,
and there exists a Zariski open neighbourhood $0 \in V \subset S$ such that the 
conditions (1-5) hold for all $s \in V$.
In particular, $X_s \setminus B_s$ is a $K$-trivial Gorenstein log canonical variety 
for all $s \in V$.
\end{lemma}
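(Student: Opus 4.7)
The plan is to separate the lemma into three independent assertions: (a) the base change isomorphism $\omega_{X/S}(B)|_{X_0}\to \omega_{X_0}(B_0)$; (b) the openness of the geometric conditions (1)--(3) and (5); and (c) the extension of the triviality (4) from $X_0$ to a neighborhood. The openness of the triviality of $\omega_{X/S}(B)$ will be the main content, and everything else is a matter of assembling standard results.

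First I would handle (a). Because $p$ is flat and proper, the relative dualizing complex $\omega_{X/S}^{\bullet}$ exists; since $X_0$ is Cohen--Macaulay of pure dimension and $S$ is regular, $\omega_{X/S}^{\bullet}$ is a shifted line bundle $\omega_{X/S}$ in a neighborhood of $X_0$ (after shrinking $S$), and $\omega_{X/S}|_{X_0}\simeq \omega_{X_0}$. Flatness of $B$ over $S$ together with the fact that $B_0$ is a Cartier divisor on the Cohen--Macaulay scheme $X_0$ implies that $B$ is a relative Cartier divisor near $X_0$, so $\omega_{X/S}(B)|_{X_0}\simeq \omega_{X_0}(B_0)$. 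This gives (a).

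Next I would deal with the openness statements of (b). Normality and Cohen--Macaulay are open in flat families with geometrically reduced fibers of a fixed pure dimension; reducedness of the fibers of $B/S$ is open by standard arguments; upper semicontinuity of $s\mapsto h^1(X_s,\cO_{X_s})$ combined with the vanishing at $s=0$ yields the vanishing in a neighborhood, and by Grauert's theorem and cohomology and base change, $p_*\cO_X\to \kk(0)$ is surjective, so $p_*\cO_X$ is locally free of rank one near $0$ and $R^1p_*\cO_X$ vanishes there. The openness of the log canonical property for the pair $(X,B)$ in a family is a theorem of Koll\'ar (see e.g.\ \emph{Singularities of the Minimal Model Program}, on inversion of adjunction/openness of log canonicity), which applies here since $X_0$ is normal, $B_0$ is reduced, and $K_{X_0}+B_0$ is Cartier by (4).

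The final and main step (c) is to promote the isomorphism $\omega_{X_0}(B_0)\simeq \cO_{X_0}$ to a neighborhood. The hypothesis $H^1(X_0,\cO_{X_0})=0$ together with cohomology and base change implies that, after shrinking $S$, the sheaf $p_*(\omega_{X/S}(B))$ is locally free and commutes with base change at every point of $S$. By (a) and (4), the fiber of this sheaf at $0$ is $H^0(X_0,\cO_{X_0})=\kk$, so $p_*(\omega_{X/S}(B))$ is invertible near $0$. A nowhere-vanishing local generator of this invertible sheaf on $S$ pulls back to a global section $\sigma$ of $\omega_{X/S}(B)$ whose restriction to $X_0$ is a trivialization; by the Nakayama argument applied fiberwise, the locus in $X$ where $\sigma$ vanishes is closed, disjoint from $X_0$, and its image in $S$ is proper, hence closed. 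Replacing $S$ by the complement of this image produces a neighborhood on which $\sigma$ trivializes $\omega_{X/S}(B)$, and restricting to any fiber $X_s$ (which by (b) is still Cohen--Macaulay with reduced Cartier divisor $B_s$) gives $\omega_{X_s}(B_s)\simeq \cO_{X_s}$. Combining (a), (b), (c) yields the lemma.

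The main obstacle is (c): one must be careful that the base change isomorphism of (a) continues to hold at nearby points (not just at $0$) so that the trivialization of $p_*(\omega_{X/S}(B))$ actually restricts to a trivialization on each nearby fiber. This is where one uses that $H^1(X_0,\cO_{X_0})=0$ and cohomology and base change in an essential way, exactly as in standard deformation-theoretic arguments for $K$-trivial varieties.
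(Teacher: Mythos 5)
Your proposal has the right overall shape (base change, openness, then globalize the triviality), but the key technical step is handled incorrectly, and the error is precisely where the paper's proof does real work.

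In step (a) you assert that because $X_0$ is Cohen--Macaulay and $S$ is regular, $\omega_{X/S}^{\bullet}$ is a shifted \emph{line bundle} near $X_0$. That conclusion requires the fibers to be Gorenstein; Cohen--Macaulay only gives that $\omega_{X/S}^{\bullet}$ is concentrated in one degree, i.e.\ a sheaf, not an invertible one. You also assert that $B_0$ is a \emph{Cartier} divisor on $X_0$. That is not a hypothesis, and it need not hold: the assumptions only give that $B_0$ is a reduced (Weil) divisor and that $K_{X_0}+B_0$ is Cartier; the individual divisor $B_0$ and the canonical $K_{X_0}$ can separately fail to be Cartier or even $\QQ$-Cartier. (Correspondingly, the lemma's conclusion is only that $X_s\setminus B_s$ is Gorenstein, not $X_s$.) Once these two assumptions are removed, your step (a) has no argument left: you cannot invoke relative Cartier divisors, and you cannot conclude that $\omega_{X/S}(B)$ restricts correctly on $X_0$ by pure base-change formalism. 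This also undercuts step (b): the openness-of-lc result you cite (Koll\'ar's book) is applied in the paper only \emph{after} $\omega_{X/S}(B)$ is shown to be invertible, so your use of it before establishing (c) is circular.

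What the paper does instead — and this is the idea your proof is missing — is to bootstrap Cohen--Macaulayness through Grothendieck duality for Weil-divisorial sheaves, without ever assuming anything is Cartier. From $\omega_{X_0}(B_0)\simeq\cO_{X_0}$ and $X_0$ CM it deduces that $\cO_{X_0}(-B_0)=\cHom(\omega_{X_0}(B_0),\omega_{X_0})$ is CM (Koll\'ar, Cor.\ 2.71), hence $B_0$ is CM (Cor.\ 2.63). Since $S$ is regular and $X_0$, $B_0$ are CM, $X$ and $B$ are CM; then $\cO_X(-B)$ and $\omega_X(B)$ (hence $\omega_{X/S}(B)$) are CM by the same duality corollaries. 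Now $\omega_{X/S}(B)|_{X_0}$ is $S_2$, the natural map to $\omega_{X_0}(B_0)$ is an isomorphism in codimension one because $X_0$ is regular in codimension one, and hence it is an isomorphism. Only at this point does one get invertibility of $\omega_{X/S}(B)$ and the trivialization from $H^1(X_0,\cO_{X_0})=0$, after which openness of (1), (2), (5) is routine and openness of (3) follows from Koll\'ar's Cor.\ 4.10. Your step (c) — the $H^1=0$ plus cohomology-and-base-change argument to trivialize the line bundle — is fine and matches the paper, but it only makes sense once invertibility is established by the CM argument above.
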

\begin{proof}
We are free to replace $S$ by an open neighbourhood of $0 \in S$ and will do so 
during the proof without further comment.

By assumption $\omega_{X_0}(B_0) \simeq \cO_{X_0}$ and $X_0$ is Cohen--Macaulay. 
So $$\cO_{X_0}(-B_0)=\cHom_{\cO_{X_0}}(\omega_{X_0}(B_0),\omega_{X_0})$$ is Cohen-Macaulay 
by \cite{K13}, Corollary~2.71, p.~82. It follows that $B_0$ is Cohen--Macaulay by \cite{K13}, Corollary~2.63, p.~80.

The base $S$ is regular by assumption, so $0 \in S$ is cut out by a regular sequence. Since $X_0$ and $B_0$ are Cohen-Macaulay, and $(X,B) \rightarrow S$ is proper and flat, we may assume that $X$ and $B$ are Cohen--Macaulay. Now $\cO_X(-B)$ is Cohen--Macaulay by \cite{K13}, Corollary~2.63, and $\omega_X(B)=\cHom_{\cO_X}(\cO_X(-B),\omega_X)$ is Cohen--Macaulay by \cite{K13}, Corollary~2.71.
The relative dualizing sheaf $\omega_{X/S}$ is identified with $\omega_X \otimes (p^*\omega_S)^{\vee}$, so $\omega_{X/S}(B)$ is also Cohen--Macaulay.
It follows that $\omega_{X/S}(B)|_{X_0}$ is Cohen--Macaulay, and so in particular satisfies Serre's condition $S_2$.
The natural map $\omega_{X/S}(B)|_{X_0} \rightarrow \omega_{X_0}(B_0)$ is an isomorphism in codimension $1$ (because $X_0$ is smooth in codimension $1$)
and both sheaves are $S_2$, hence the map is an isomorphism. Now $\omega_{X_0}(B_0) \simeq \cO_{X_0}$ implies that we may assume $\omega_{X/S}(B) \simeq \cO_X$ using $H^1(X_0,\cO_{X_0})=0$.

The conditions (1),(2), and (5) are open conditions on $s \in S$ because $(X,B) \rightarrow S$ is proper and flat.
So we may assume they hold for all $s \in S$.
We established above that $\omega_{X/S}(B)$ is invertible.
It follows that condition (3) is also open on $s \in S$ by \cite{K13}, Corollary~4.10, p.~159,
and that condition (4) is open on $S$ (using (5)).
\end{proof}

\begin{remarks} \label{crems} Note that directly from its definition, with
the multiplication rule counting broken lines, it is
difficult to prove anything about $\can(V)$, e.g., that it is an integral domain, or determine
its dimension. But the convexity, i.e., existence of a convex polytope 
in the intrinsic sense, gives
this very simple degeneration from which we get many properties, 
at least for very general $\cA_t$,
for free. 

There have been many constructions of degenerations of flag varieties and the like to toric varieties,
see \cite{AB} and references therein. We expect these are all instances of Theorem \ref{polylem}. 

Many authors have looked for a nice compactification of the moduli space $\shM$
of (say) rank two vector bundles
with algebraic connection on an algebraic curve $X$. We know of no satisfactory solution. For example,
in \cite{IIS} the case of $X$ the complement of $4$ points in $\bP^1$ is considered, a compactification
is constructed, but the boundary is rather nasty (it lies in $|-K|$, but this anti-canonical divisor
is not reduced). This can be
explained as follows: $\shM$ has a different algebraic structure, the $\SL_2(\bC)$ character
variety, $V$ (as complex manifolds they are the same). Note $\cM$ is covered by affine lines 
(the space of connections on a fixed bundle is an affine space), thus it is not log Calabi-Yau. Rather, it is 
the log version of uniruled, and there is no Mori theoretic reason to expect a natural compactification.
$V$ however is log Calabi-Yau, and then by Mori theory one expects (infinitely many) nice compactifications, the
minimal models, see \cite{P1}, \S 1, for an introduction to these ideas. 
When $X$ has punctures, $V$ is a cluster variety, see \cite{FST} 
and \cite{FG06}. In the case
of $S^2$ with $4$ punctures, $V$ is the universal family of affine cubic surfaces  (the complement
of a triangle of lines on a cubic surface in $\bP^3$). See \cite{GHK11}, 
Example 6.12.
Each affine cubic has an obvious normal crossing minimal
model, the cubic surface. This compactification is an instance of the above, for a natural choice
of polygon $\Xi$. The same procedure will give a minimal model compactification for any
$\SL_2$ character variety (of a punctured Riemann surface) 
by the above simple procedure that has nothing to do with Teichm\"uller theory. 
\end{remarks}

For the remainder of this section we will assume that $\cA^{\vee}_{\prin}$ 
has Enough Global Monomials. By Lemma \ref{egmlem},
there are global monomials
$\vartheta_{p_1},\ldots,\vartheta_{p_n}$ with $p_1,\ldots,p_r\in \cA_{\prin}
(\ZZ^T)$ such that $w:=\min \{\vartheta^T_{p_i}\}$ is min-convex with
\[
\Xi:=\{ x\in \cA^{\vee}_{\prin}(\RR^T)\,|\,w(x)\ge -1\}
\]
being compact. Thus we have seeds $\s_1,\dots,\s_r$ (possibly
repeated) such that $\vartheta_{p_i}$ is a character on $T_{\tM^{\circ},\s_i}$,
so that $\vartheta_{p_i}^T$ is linear after making the identification
$\cA_{\prin}^{\vee}(\RR^T)\cong \tM^{\circ}_{\s_i}$.
Furthermore, as in the proof of Theorem \ref{fgprop},
we can assume $p_i$ is in the interior of the cone $\shC^+_{\s_i}$.
We will now observe
that with these assumptions the irreducible components of the
boundary in the compactification of $\shA_{\prin}$ induced by $\Xi$ are toric.

Note for each $p_i$ there is at least one
seed where $\vartheta_{p_i}^T$ is linear. 
We assume the collection of $p_i$ is minimal
for defining $\Xi$, and thus $\{\vartheta_{p_i}^T = -1\} \cap \Xi$ is a 
union of maximal faces of $\Xi$, a non-empty closed subset of
codimension $1$.

Writing $S=\can(\shA_{\prin})$, let $\tilde S_{\Xi}$ be the graded 
algebra of Theorem \ref{fgprop}, again a finitely generated algebra.
Then $Y = \Proj(\tS_{\Xi}) \supset \Spec(S)$ is a projective variety and
$T=0$ gives a Cartier (but not necessarily reduced) boundary $D \subset Y$.

\begin{theorem} \label{toricbth} In the above situation,
the irreducible components of $D$ are projective toric 
varieties. More
precisely, for each $p_i$ we have a seed $\s_i$ such that $\vartheta_{p_i}$
is a character on $T_{\tN^{\circ},\s_i}$. Then
\[
\{\vartheta^T_{p_i}=-1\} \cap \Xi \subset \tM^{\circ}_{\RR,\s_i}
\]
is a bounded polytope. The associated projective toric variety 
is an irreducible component
of $D$, and all irreducible components of $D$ occur in this way.
\end{theorem}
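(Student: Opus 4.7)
The plan is to identify the irreducible components of $D = V(T)$ with the maximal faces of $\partial\Xi$, and to realize each such component as the projective toric variety associated to the corresponding face. By the minimality assumption on the collection $\{p_i\}$, the boundary $\partial\Xi$ decomposes as $\bigcup_i F_i$ with $F_i := \{\vartheta_{p_i}^T = -1\}\cap\Xi$ a codimension one face. I would work in the seed $\s_i$ where $\vartheta_{p_i}^T$ is by construction a linear function on $\tM^\circ_{\RR,\s_i} \cong \cA_{\prin}^\vee(\RR^T)$; in this linear structure $F_i$ is an ordinary bounded rational polytope, and $\bP_{F_i}$ denotes the associated projective toric variety.

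The main step is to define, for each $i$, the graded $\kk$-subspace
\[
I_i \;:=\; \bigoplus_{d\ge 0}\,\bigoplus_{\substack{q\in d\Xi(\ZZ)\\ q\notin dF_i}} \kk\,\vartheta_q T^d \;\subset\; \tS_\Xi
\]
and to prove (a) $I_i$ is an ideal, and (b) $\tS_\Xi/I_i$ is the homogeneous coordinate ring of $\bP_{F_i}$. For (a), if $\vartheta_{q_1}T^{d_1}\in I_i$, so $\vartheta_{p_i}^T(q_1)>-d_1$, and $q_2\in d_2\Xi(\ZZ)$, then for any $r$ with $\alpha(q_1,q_2,r)\ne 0$, min-convexity of $\vartheta_{p_i}^T$ (Proposition \ref{convcor}, combined with Lemma \ref{diplem1}) gives $\vartheta_{p_i}^T(r)\ge \vartheta_{p_i}^T(q_1)+\vartheta_{p_i}^T(q_2) > -(d_1+d_2)$, so $r\notin (d_1+d_2)F_i$. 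For (b), the quotient has degree $d$ part spanned by $\vartheta_q T^d$ with $q\in dF_i(\ZZ)$. Given $q_j\in d_j F_i(\ZZ)$, I would expand $\vartheta_{q_1}\vartheta_{q_2}$ via Definition-Lemma \ref{structuredef} as a sum over pairs of broken lines in $\tM^\circ_{\RR,\s_i}$; the unique pair of straight broken lines contributes $\vartheta_{q_1+q_2}$ with coefficient one, while Lemma \ref{blblem} (applied in the seed $\s_i$, using that $p_i$ is interior to $\shC^+_{\s_i}$) forces $\vartheta_{p_i}^T$ to strictly increase across every non-trivial bend, so any bent contribution produces $r$ with $\vartheta_{p_i}^T(r)>-(d_1+d_2)$, landing in $I_i$. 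Modulo $I_i$ the multiplication is therefore purely toric: $\vartheta_{q_1}\cdot\vartheta_{q_2}\equiv \vartheta_{q_1+q_2}$, identifying $\tS_\Xi/I_i$ with the semigroup ring of the integral cone over $F_i\subset \tM^\circ_{\RR,\s_i}$, i.e.\ the homogeneous coordinate ring of $\bP_{F_i}$.

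Finally, I would show the $V(I_i)$ exhaust the irreducible components of $D$. Each $V(I_i)$ is a reduced irreducible subscheme of $Y$ since $\tS_\Xi/I_i$ is a toric integral domain, and it is contained in $V(T)$ because $\vartheta_0 T = T$ lies in $I_i$ in every positive degree (its image is $\vartheta_0 T^1$ with $0\notin F_i$). Conversely, $\bigcap_i I_i$ consists of $\vartheta_q T^d$ with $\vartheta_{p_i}^T(q)>-d$ for all $i$, i.e.\ $w(q)>-d$, hence $q\in (d\Xi)^\circ$; iterating, such $\vartheta_q T^d$ lies in $(T)^k$ for some $k\ge 1$, giving $\sqrt{(T)} = \bigcap_i I_i$ on the relevant graded pieces. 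The expected main obstacle is the multiplication computation in the quotient: one must verify carefully that working in seed $\s_i$ the linear-structure sum $q_1+q_2$ coincides with the straight-line broken line contribution, and that the strict increase of $\vartheta_{p_i}^T$ across bends from Lemma \ref{blblem} applies uniformly across all broken line pairs (not merely those defining a single theta function). Once this is in place, the identification of $V(I_i)$ with $\bP_{F_i}$ and of $D_{\red}$ with $\bigcup_i V(I_i)$ proceeds as above.
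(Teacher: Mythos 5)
Your proposal is correct and follows essentially the same route as the paper's proof: the same ideals $I_i\subset\tS_\Xi$ (your condition $q\notin dF_i$ is equivalent to the paper's $\vartheta_{p_i}^T(q)>-d$, since the remaining inequalities are automatic for elements of $\tS_\Xi$), the same identification of $\tS_\Xi/I_i$ with the semigroup ring over $F_i$ via Lemma \ref{blblem} in the seed $\s_i$, and the same exhaustion of components. The two minor differences are both fine: you establish the ideal property directly from min-convexity (Proposition \ref{convcor} together with Lemma \ref{diplem1}) rather than invoking the strict-increase statement of Lemma \ref{blblem} at that step, which is a small simplification; and your final step argues $\sqrt{(T)}=\bigcap_i I_i$ by an iteration, whereas the paper observes the exact equality $\bigcap_i I_i=(T)$ directly (for integral $q$, the strict inequalities $\vartheta_{p_i}^T(q)>-d$ are equivalent to $\vartheta_{p_i}^T(q)\ge -(d-1)$, i.e. $q\in(d-1)\Xi(\ZZ)$, so the element is divisible by $T$), which is cleaner than the radical argument but reaches the same conclusion. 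The ``obstacles'' you flag in the last paragraph are all handled exactly by Definition-Lemma \ref{structuredef} (which defines the structure constants by pairs of broken lines with a common endpoint, so the straight pair contributes $\vartheta_{q_1+q_2}$ with coefficient one in the linear structure of $\s_i$) and by Lemma \ref{blblem} (which applies to every broken line in $L^*_{\RR,\s_i}$, not only those arising in a single theta function), so there is no genuine gap.
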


\begin{proof} 
For each $i$ consider the vector subspace $I_i \subset \tS:=\tS_{\Xi}$ 
with basis $\vartheta_q \cdot T^s$
with $\vartheta^T_{p_i}(q) > -s$ and $\vartheta^T_{p_j}(q) \geq -s$ 
for $j \neq i$. 

Note that $I_i$ is an ideal of $\tS$. Indeed,
the fact that $p_i$ lies
in the interior of its cone of the cluster complex for $\shA_{\prin}^{\vee}$
implies by Lemma \ref{blblem} that $\vartheta^T_{p_i}$ is strictly increasing
at bends on monomial decorations of broken lines.
Now if $\vartheta_p T^s \in I_i$, $\vartheta_q T^w \in \tS$, 
and $\vartheta_r$ appears in $\vartheta_p \cdot \vartheta_q$, then
$\vartheta^T_{p_i}(r) > -s - w$,
and thus $\vartheta_p T^s \cdot \vartheta_q T^w \in I_i$. 

Now the definitions imply $\bigcap_{i} I_i = (T)$.
So it is enough to show that $\Proj(\tS/I_i)$ is the projective toric variety given by
the polytope $\Xi_i:=\{\vartheta^T_{p_i} =-1\} \cap \Xi \subset 
\tM^{\circ}_{\bR,\s_i}$. Now
$\tS/I_i$ has basis $\vartheta_q T^s$, $q \in s \Xi_i$. 
By the multiplication rule, and
the fact again that $\vartheta^T_{p_i}$ is strictly increasing at bends
on monomial decorations of broken lines,
the only broken line that contributes to 
$\vartheta_q T^s \cdot \vartheta_p T^w$ is the straight
broken line, and the multiplication rule on 
\[
\tS/I_i = \bigoplus_{s \geq 0} \kk \cdot (s \Xi_i \cap \tM^{\circ}_{\s_i})
\]
is given by lattice addition, i.e., 
$\Proj(\tS/I_i)$ is the projective toric variety
given by the polytope $\Xi_i$.
\end{proof}

\begin{remark} The result is (at least to us) surprising in that many cluster varieties come
with a natural compactification, where the boundary is not at all toric. For
example, order the columns
of a $k \times n$ matrix and consider the open subset
$\Gr^o(k,n) \subset \Gr(k,n)$ where the $n$ consecutive Pl\"ucker coordinates (the determinant of the
first $k$ columns, columns $2,\dots,k+1$, etc.) are non-zero. This is a cluster variety. Its
boundary in the given compactification $\Gr(k,n)$ is a union of Schubert cells (which are not toric). 
This has EGM by Proposition \ref{egmprop1}. Then generic compactifications given by bounded polytopes $\Xi$ gives an alternative
compactification in which we replace all these Schubert cells by toric varieties. We do not know, e.g.,
how to produce such a compactification by birational geometric operations beginning with $\Gr(k,n)$.
\end{remark} 

\section{Partial compactications and represention-theoretic
results} 
\label{pcrtsec}

\subsection{Partial minimal models} \label{pcsec}

As discussed in the introduction, many basic objects in representation theory, 
e.g., a semi-simple group $G$, are not log Calabi-Yau, 
and we cannot expect that they have a 
canonical basis of regular functions. 
However, in many cases the basic object is a \emph{partial minimal model} of a 
log Calabi-Yau variety, i.e., contains a Zariski open log Calabi-Yau subset whose 
volume form has a 
pole along all components of the complement. For example, the group $G$
will be a partial compactification of an open double Bruhat cell,
and this is a partial minimal model.
We have a canonical basis of functions
on the cluster variety, and from this, we conjecture one can get a
canonical basis on the partial
compactification (the thing we really care about) in the most naive possible 
way, by taking
those elements in the basis of functions for the open set which extend to regular functions
on the compactification. We are only able to prove the conjecture under rather strong
assumptions, see Corollary \ref{spcor}. Happily these conditions hold in many important
examples.

Note that a frozen variable for $\cA$ (or $\cA_{\prin}$) canonically determines
a valuation, a point of $\cA^{\trop}(\bZ)$, namely the boundary divisor where
that variable
becomes zero. See Construction \ref{fvss}.

While we have myriad (and near optimal) sufficient conditions guaranteeing
a canonical basis $\Theta$ for  $\up(\cA)$, we can only prove our conjecture
that $\Theta \cap \up(\oA) \subset \up(\cA)$ is a basis of $\up(\oA)$ under
a much stronger condition (which happily holds in the most important
representation theoretic examples):

\begin{definition} \label{osdef} We say a seed $\s = (e_i)_{i\in I}$ is 
\emph{optimized}
for $n \in \cA(\bZ^T)$
if 
\[
\hbox{$\{e_k,(r \circ i)(n)\} \geq 0$ for all $k\in I_{\uf}$,}
\]
where
\[
r\circ i: \cA(\bZ^T) \mapright{i} \cA(\bZ^t)=\cA^{\trop}(\ZZ) \mapright{r}
N^{\circ}
\]
is the composition of canonical identifications defined in 
\S\ref{tropsec}. If instead $n\in \cA(\bZ^t)=\cA^{\trop}(\ZZ)$, we say
$\s$ is \emph{optimized} for $n$ if $\{e_k, r(n)\}\ge 0$ for all $k\in I_{\uf}$.

We say $\s$ is optimized for a frozen index if it is optimized for the
corresponding point of $\cA^{\trop}(\ZZ)$.
\end{definition}

For the connection between optimal seed and our conjecture on
$\Theta \cap \up(\ocA) \subset \up(\cA)$ see Proposition
\ref{boundaryprop} and Conjecture \ref{nosq}.

\begin{lemma} 
\label{quiveroptimized}
In the skew-symmetric case, a seed is optimized for a frozen
index if and only if
in the quiver for this seed all arrows between unfrozen vertices and the given
frozen vertex point towards the given frozen vertex.
\end{lemma}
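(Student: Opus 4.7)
The plan is to unwind the definitions and reduce the statement to a direct computation in a single fixed seed. Fix a seed $\s = (e_1,\ldots,e_n)$ and let $j \in I \setminus I_{\uf}$ be a frozen index. By Construction \ref{fvss}, the frozen variable $A_j = z^{f_j}$ (where $f_j = d_j^{-1} e_j^*$) vanishes along a boundary divisor of $\ocA$, and the associated point $n_j \in \cA^{\trop}(\ZZ) = \cA(\bZ^t)$ is the primitive divisorial valuation with $n_j(A_j) = 1$ and $n_j(A_i) = 0$ for $i \neq j$.

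First I would identify $r(n_j) \in N^\circ$ explicitly. By the pairing formula \eqref{pairings}, a point $a \in T_{N^\circ}(\bZ^t)$ acts on a character $z^m$ with $m \in M^\circ$ by $(z^m)^t(a) = \langle m, r(a) \rangle$. The conditions $n_j(A_i) = \delta_{ij}$ thus become $\langle f_i, r(n_j) \rangle = \delta_{ij}$, and since $f_i = d_i^{-1} e_i^*$ this forces $r(n_j) = d_j e_j$. Note that $d_j e_j \in N^\circ$ by the definition of $N^\circ$ recalled in Appendix \ref{LDsec}, and this element is primitive in $N^\circ$ (this is a property of the fixed data).

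Next I would substitute into the optimization condition. By Definition \ref{osdef}, $\s$ is optimized for $n_j$ if and only if $\{e_k, r(n_j)\} \geq 0$ for all $k \in I_{\uf}$, which by the previous step becomes $d_j \{e_k, e_j\} \geq 0$, i.e., $\{e_k, e_j\} \geq 0$ for all $k \in I_{\uf}$ (since $d_j > 0$).

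Finally I would translate this into the quiver statement. In the skew-symmetric case, $\{e_k, e_j\} = \epsilon_{kj}$ is the $(k,j)$-entry of the exchange matrix, and by convention the quiver of $\s$ has $[\epsilon_{ij}]_+$ arrows from vertex $i$ to vertex $j$. Using skew-symmetry $\epsilon_{kj} = -\epsilon_{jk}$, the condition $\epsilon_{kj} \geq 0$ for all $k \in I_{\uf}$ is equivalent to saying that between any unfrozen vertex $k$ and the frozen vertex $j$ there are no arrows $j \to k$, i.e., all arrows point from $k$ into $j$. This gives the desired equivalence. There is no real obstacle here beyond correctly pinning down the point $r(n_j) = d_j e_j$; once that identification is made, the rest is a direct unwinding.
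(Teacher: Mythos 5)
Your proof is correct and follows essentially the same approach as the paper's: identify the valuation attached to the frozen index, substitute into the optimization condition $\{e_k,\cdot\}\ge 0$ from Definition \ref{osdef}, and read off the quiver statement. The paper's proof simply states that the valuation ``is simply $e_i$'' without the verification via the pairing \eqref{pairings}, so you are supplying a short, correct justification for that identification (and your more careful form $d_j e_j$ collapses to $e_j$ in the skew-symmetric case, where all $d_j=1$).
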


\begin{proof} Under the identification $r:\shA^{\trop}(\ZZ)\rightarrow 
N^{\circ}$ (which is just $N$ in the skew-symmetric case), 
the valuation corresponding to the divisor given by the
frozen variable indexed by $i\in I\setminus I_{\uf}$ is simply $e_i$.
Thus the seed is optimized for this frozen variable if $\{e_k,e_i\}\ge 0$
for all $k\in I_{\uf}$; this is the number of arrows from $k$ to $i$ in the
quiver, with sign telling us that they are incoming arrows.
\end{proof}

\begin{lemma} \label{opslem}
\begin{enumerate}
\item
The seed $\s$ is optimized for $n \in  \cA(\bZ^T)$
if and only if the monomial $z^{r(n)}$ on $T_{M^{\circ},\s} 
\subset \cA^{\mch}$
is a global monomial. In this case
\[
n \in \shC^+_{\s}(\ZZ) \subset \Delta_{\cA^{\vee}}^+(\ZZ)  \subset \Theta(\cA^{\mch})
\]
and the global monomial $z^{r(n)}$ is the restriction to 
$T_{M^{\circ},\s} \subset \cA^{\mch}$ of 
$\vartheta_n$. In the $\cA_{\prin}$ case, for $n\in \cA_{\prin}(\ZZ^T)$
primitive,
this holds if and only if each of the
initial scattering monomials $z^{(v_i,e_i)}$ in 
$\foD^{\cA_{\prin}}_{\s}$ is regular along the
boundary divisor of $\cA_{\prin}$ corresponding to $n$ under the identification
$i:\cA_{\prin}(\bZ^T) \to \cA_{\prin}^{\trop}(\bZ)$.
\item
$n \in \cA(\bZ^T)$ has an optimized seed if and only if $n$ 
lies in $\Delta^+_{\cA^{\vee}}(\ZZ)$.
\end{enumerate}
\end{lemma}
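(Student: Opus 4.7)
The plan is to prove (1) by matching the optimality condition with the condition that the character $z^{r(n)}$ on the seed chart $T_{M^{\circ},\s}\subset\shA^{\vee}$ extends to a regular function on the toric model $\TV(\Sigma_{\s,\shA^{\vee}})$, and then invoke Lemma~\ref{gmlemma} to recognize this as the global monomial property. Using the seed, we have the identifications $\shA(\bZ^T)=N^{\circ}$ and $\shA^{\vee}(\bZ^T)=M^{\circ}$, and the rays of $\Sigma_{\s,\shA^{\vee}}$ in $M^{\circ}_{\bR}$ are spanned by $-v_k$, $k\in I_{\uf}$, where $v_k=\{e_k,\cdot\}\in M\subseteq M^{\circ}$. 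Since a character $z^{r(n)}$ on $T_{M^{\circ},\s}$ has $r(n)\in (M^{\circ})^{*}=N^{\circ}$, Lemma~\ref{gmlemma} asserts it is a global monomial iff $\langle r(n),-v_k\rangle\ge 0$, i.e., $\{e_k,r(n)\}\le 0$, for all $k\in I_{\uf}$. Using the sign-flip identity $r\circ i=-r$ (an immediate consequence of \eqref{ftcd} and \eqref{pairings}), this is exactly the optimality condition $\{e_k,(r\circ i)(n)\}\ge 0$.

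For the further claims of (1), assume $\s$ is optimized for $n$. Then $n\in\shC^+_{\s}(\bZ)$ by Definition~\ref{fgccthg1}, so $n\in\Delta^+_{\shA^{\vee}}(\bZ)\subseteq\Theta(\shA^{\vee})$ by Theorem~\ref{mainth}(3) (applied to the $\cX$-type cluster variety $\shA^{\vee}$). To identify $z^{r(n)}$ with the restriction of $\vartheta_n$, choose a generic basepoint $Q$ in the interior of the cluster chamber $\shC^+_{\s}\subset\shA^{\vee}(\bR^T)$. Since $n$ lies in this chamber, Proposition~\ref{oneblprop} (equivalently Corollary~\ref{monocor}) gives $\vartheta_{Q,r(n)}=z^{r(n)}$, i.e., the only broken line with asymptotic direction $r(n)$ and endpoint $Q$ is the straight one, so the theta function $\vartheta_n$ restricts on $T_{M^{\circ},\s}$ to the character $z^{r(n)}$ as claimed.

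For the $\shA_{\prin}$ reformulation, a primitive $n\in\shA_{\prin}(\bZ^T)$ corresponds under $i$ to a divisorial valuation $r(i(n))\in\tN^{\circ}$ on $\shA_{\prin}$, which cuts out the indicated boundary divisor. By definition of the geometric tropicalization (Lemma~\ref{lplemma}), $z^{(v_i,e_i)}$ is regular along this divisor iff $\langle (v_i,e_i),r(i(n))\rangle\ge 0$. Since the $\shA_{\prin}$ skew-form satisfies $\{(e_i,0),\cdot\}=(v_i,e_i)$ as an element of $\tM^{\circ}=(\tN^{\circ})^{*}$, this reads $\{(e_i,0),r(i(n))\}\ge 0$, which is exactly the optimality condition for the seed $\tilde{\s}$ at the index $i$. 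Statement (2) follows from (1): the ``if'' direction is immediate, and for ``only if'' we note that if $n\in\Delta^+_{\shA^{\vee}}(\bZ)$ then $n\in\shC^+_{\s'}(\bZ)$ for some seed $\s'$ by Definition~\ref{fgccthg1}, so there is a global monomial on $\shA^{\vee}$ restricting to a character on $T_{M^{\circ},\s'}$ corresponding to $n$; by (1), $\s'$ is optimized for $n$.

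The main obstacle is not conceptual but bookkeeping: one must carefully track the sign conventions between the $\bZ^T$ and $\bZ^t$ tropicalizations (the map $i$), the canonical identifications of tropical points with cocharacter lattices via the seed (the map $r$), and the placement of the rays of $\Sigma_{\s,\shA^{\vee}}$ in the correct dual lattice. Once these are pinned down, the substantive content reduces to a direct application of Lemma~\ref{gmlemma} to characterize global monomials via the toric model, together with Proposition~\ref{oneblprop} to identify theta functions with monomials in their cluster chamber.
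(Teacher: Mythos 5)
Your proof takes essentially the same route as the paper's (terse) proof, unpacking the references to Lemma~\ref{gmlemma}, Lemma~\ref{fgccthg2}, and the sign identity $r(i(n))=-r(n)$ into a detailed argument; the core computation — matching $\{e_k,r(i(n))\}\ge 0$ with non-negativity of $r(n)$ on the rays $-\RR_{\ge 0}v_k$ of $\Sigma_{\s,\shA^{\vee}}$ — is exactly the one the paper uses. Two small quibbles: the claim $v_k\in M$ is not quite right in general (the fixed data only guarantee $\{N_{\uf},N^{\circ}\}\subseteq\ZZ$, so $v_k=\{e_k,\cdot\}\in M^{\circ}$, not necessarily $M$; this is harmless since only the ray $-\RR_{\ge 0}v_k$ matters), and the appeal to Proposition~\ref{oneblprop}/Corollary~\ref{monocor} to identify $\vartheta_n$ with $z^{r(n)}$ should be routed through Construction~\ref{blXA}, since $\shA^{\vee}$ is an $\cX$-type cluster variety whose theta functions are defined via the associated $\cA_{\prin}$ of the dual data — though the conclusion is the same.
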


\begin{proof} 
For (1), the rays for the fan $\Sigma_{\s}$ giving the
toric model for $\cA^{\vee}$ are $-\bR_{\geq 0} v_k$ for $k\in I_{\uf}$. 
Note that $r(i(n))=-r(n)$, see \eqref{pairings}.
Now the statement concerning $\cA$ follows
from Lemma \ref{gmlemma}\ and Lemma \ref{fgccthg2}.
The additional statement in the $\cA_{\prin}$ case is clear from the
definitions. For (2), one notes that the forward implication is given
by (1), while for the converse, if $n\in \Delta^+_{\cA^{\vee}}(\ZZ)$, then
$n\in \shC^+_{\s}(\ZZ)$ for some seed $\s$, and then $n$ is optimized for
that seed.
\end{proof}

%[SEAN: Do something about the following statement.]

\begin{proposition} \label{osprop} For the standard cluster algebra structure on $\CG(k,n)$
(the affine cone over $\Gr(k,n)$ in its Pl\"ucker embedding) 
every frozen variable has an optimized seed.
\end{proposition}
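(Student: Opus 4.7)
The plan is to reduce the statement to a single explicit quiver check via symmetry. The frozen variables of the Scott cluster structure on $\CG(k,n)$ are the $n$ cyclically consecutive Pl\"ucker coordinates $P_j = \Delta_{j, j+1, \ldots, j+k-1}$ (indices mod $n$). Since the cluster structure is skew-symmetric, by Lemma \ref{quiveroptimized} ``optimized for $P_j$'' means simply that in the seed's quiver, every arrow incident to the vertex $P_j$ whose other endpoint is unfrozen is oriented towards $P_j$. So I need, for each $j$, to exhibit one seed in which $P_j$ is such a sink.

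First I would invoke the cyclic $\ZZ/n\ZZ$-action on $\Gr(k,n)$ coming from rotation of the column labels $i \mapsto i+1$. As shown in Scott's work on Grassmannian cluster algebras, this is a cluster automorphism of $\CG(k,n)$ that cyclically permutes the frozen Pl\"uckers $P_j$; since the property of possessing an optimized seed is clearly preserved under cluster automorphisms, it suffices to produce an optimized seed for a single one of the $P_j$'s, say $P_1 = \Delta_{1,2,\ldots,k}$.

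Next I would take as initial seed Scott's standard ``rectangles'' seed, in which the cluster variables are the Pl\"uckers $\Delta_I$ indexed by Young diagrams fitting in a $k \times (n-k)$ rectangle; the frozen variables are precisely the Pl\"uckers on the south and east boundary of the rectangle (together with $P_1$ at the northwest corner and the full rectangle $\Delta_{n-k+1,\ldots,n}$ at the southeast corner). The quiver has the standard ``grid plus diagonals'' shape. The key observation is that the vertex for $P_1$ sits at an extreme corner, and inspection of Scott's quiver shows that its only unfrozen neighbor is the one-box Young diagram $\Delta_{2,3,\ldots,k,k+1}$, with a single arrow $\Delta_{2,\ldots,k+1} \to P_1$ pointing into $P_1$. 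Thus $P_1$ is a sink in the sense required, and by Lemma \ref{quiveroptimized} this seed is optimized for $P_1$. Applying powers of the cyclic automorphism then yields an optimized seed for every other $P_j$.

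The main obstacle is simply the explicit verification that in Scott's quiver the corner frozen vertex $P_1$ has only incoming arrows from unfrozen vertices; this is a finite combinatorial check, but requires unpacking Scott's conventions (grid orientation, diagonal arrows, which boundary Pl\"uckers are frozen) carefully enough to be certain no outgoing arrow to an unfrozen vertex has been overlooked. Everything else in the argument is formal, via Lemma \ref{quiveroptimized} and the cyclic symmetry.
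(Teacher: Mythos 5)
Your proposal is correct and takes essentially the same approach as the paper: exhibit a single explicit initial seed (you use Scott's rectangles seed; the paper cites the equivalent quiver in \cite{GSV}, Figure~4.4) in which one corner frozen Pl\"ucker is a sink for arrows to unfrozen vertices, so optimized by Lemma~\ref{quiveroptimized}, and then invoke the cyclic $\ZZ/n\ZZ$-symmetry of the Grassmannian cluster structure to cover the remaining frozen variables.
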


\begin{proof} As was pointed out to us by Lauren Willams, for
$\Gr(k,n)$, the initial seed in \cite{GSV}, Figure 4.4, is optimized
for one frozen variable (the {\it special} upper right hand vertex for the
initial quiver).
The result follows from the cyclic symmetry of this
cluster structure.\end{proof}
\begin{remark} \label{osrem} B. LeClerc, and independently L. Shen, 
gave us an explicit sequence of mutations that shows
the proposition holds as well for the cluster structure of \cite{BFZ05}, \cite{GLS}  on
the maximal unipotent subgroup $N \subset \SL_{r+1}$, and the same argument applies to 
the Fock-Goncharov cluster structure on $(G/N \times G/N \times G/N)^G$, $G = \SL_{r+1}$. The
argument appears in \cite{TimThesis}.
\end{remark}

\begin{lemma}
\label{linindlemma}
Let $L$ be a lattice and $P \subset L$ a submonoid with
$P^{\times}=0$.
For any subset $S\subseteq L$ and collection of elements $\{Z_q \,|\, q\in S\}$
such that $Z_q\in \kk[q + (P\setminus \{0\})]$, the
subset $\{z^q + Z_q\,|\,q\in S\} \subset \kk[L]$ is linearly independent
over $\kk$.
\end{lemma}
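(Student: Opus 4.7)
The plan is to exploit the partial order on $L$ induced by $P$. Because $P^{\times}=\{0\}$, the relation $q \le q'$ defined by $q'-q \in P$ is a genuine partial order on $L$: reflexivity and transitivity follow from $P$ being a monoid, while antisymmetry is precisely the condition $P \cap (-P)=\{0\}$. This is the only structural ingredient we need.

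Suppose towards a contradiction that there exists a finite subset $T \subseteq S$ and non-zero scalars $\{c_q\}_{q\in T}$ with
\[
\sum_{q \in T} c_q (z^q + Z_q) = 0 \quad \text{in } \kk[L].
\]
First I would choose $q_0 \in T$ to be a minimal element of $T$ with respect to the partial order induced by $P$ (such a minimum exists since $T$ is finite). Then I would compute the coefficient of the monomial $z^{q_0}$ on the left-hand side. The term $c_{q_0} z^{q_0}$ contributes $c_{q_0}$. By hypothesis $Z_{q_0} \in \kk[q_0 + (P\setminus\{0\})]$, so $z^{q_0}$ does not appear in $Z_{q_0}$. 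For any other $q \in T$ with $q \neq q_0$, the term $c_q z^q$ contributes nothing, and $c_q Z_q$ can contribute to $z^{q_0}$ only if $q_0 \in q + (P\setminus \{0\})$, i.e., $q < q_0$ in our partial order; but this is impossible by the minimality of $q_0$.

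Hence the coefficient of $z^{q_0}$ in the linear combination equals $c_{q_0}$, which must therefore vanish, contradicting the assumption $c_{q_0} \neq 0$. This completes the argument.

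The proof is essentially routine once the partial order is identified; the only subtlety to check carefully is the role of the hypothesis $P^{\times}=\{0\}$, which is exactly what is needed to make the relation on $L$ antisymmetric and thereby guarantee that a finite set $T$ has a minimal element (and that this minimal element actually rules out lower-order contributions from every other $q \in T$). No further ingredients, and in particular no scattering-diagram or broken-line machinery, are required.
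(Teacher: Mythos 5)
Your proof is correct and is essentially the same as the paper's: both pass to the partial order on $L$ induced by $P$ (using $P^{\times}=\{0\}$ for antisymmetry), pick a minimal element of the finite index set in a hypothetical dependency relation, and extract the coefficient of the corresponding monomial to force a vanishing scalar. The only difference is that you spell out a couple of routine checks (why $z^q$ for $q\neq q_0$ and $Z_q$ for $q\neq q_0$ cannot contribute) that the paper leaves implicit.
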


\begin{proof} Suppose
\[
\sum_{q\in S'} \alpha_q(z^q + Z_q) = 0
\]
for $\alpha_q$ all non-zero and $S'\subseteq S$ a finite set.
Let $q'\in S'$ be minimal with respect to the partial ordering on $L$ 
given by $P$ (where $n_1 \le n_2$ means $n_2 = n_1 + p$ for some
$p \in P$). The
coefficient of $z^{q'}$ in the sum, expressed in the basis of monomials, 
must be zero. But the minimality of $q'$ implies
the monomial $z^{q'}$ does not appear in any of the $Z_q$, $q\in S'$.
Thus the coefficient of $z^{q'}$ is just $\alpha_{q'}$, a contradiction.
\end{proof} 

\begin{proposition} \label{boundaryprop}  Suppose a valuation $v \in 
\cA^{\trop}_{\prin}(\bZ)$ has an optimized seed.
If $v(\sum_{q \in \Theta} \alpha_q \vartheta_q) \geq 0$, then $v(\vartheta_q) 
\geq 0$ for all $q$ with $\alpha_q \neq 0$.
\end{proposition}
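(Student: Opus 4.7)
The plan is to work in the chart $T_{\tN^{\circ},\s}\subset \cA_{\prin}$ corresponding to the optimized seed $\s$ and exploit the specific form that each theta function takes there. Throughout, identify $\cA_{\prin}^{\vee}(\ZZ^T)$ with $\tilde M^{\circ}$ via $\s$, and for $q\in\Theta$ write $q_\s$ for its image.

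First, the optimized seed hypothesis together with Lemma \ref{opslem} gives $v(z^{(v_i,e_i)})\geq 0$ for every $i\in I_{\uf}$. Since $P_\s\subset \tilde M^{\circ}$ is the monoid generated by these initial scattering monomials, $v(z^p)\geq 0$ for every $p\in P_\s$. Second, expanding $\vartheta_q$ on this chart with basepoint in $\shC^+_\s$, using the bending rule of Definition \ref{blde}, the straight broken line contributes $z^{q_\s}$ while any broken line with at least one bend contributes a monomial $z^m$ with $m-q_\s$ equal to a non-zero sum of scattering exponents, hence $m-q_\s\in P_\s\setminus\{0\}$. Thus
\[
\vartheta_q = z^{q_\s}+Z_q,
\]
where $Z_q$ is a finite sum of monomials supported on $q_\s+(P_\s\setminus\{0\})$. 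Applying Lemma \ref{lplemma}\,(2) to this Laurent polynomial and using $v\geq 0$ on $P_\s$ gives $v(\vartheta_q)=v(z^{q_\s})$.

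The main step is to establish the corresponding identity for the sum. Set $g=\sum_q \alpha_q\vartheta_q$ and $v^{*}=\min\{v(\vartheta_q)\,|\,\alpha_q\neq 0\}$; the inequality $v(g)\geq v^{*}$ is immediate. Among the finitely many $q$ with $\alpha_q\neq 0$ and $v(\vartheta_q)=v^{*}$, choose $q^{*}$ minimal with respect to the partial order on $\tilde M^{\circ}$ defined by $P_\s$ (i.e.\ $m\leq m'$ iff $m'-m\in P_\s$). A monomial $z^{q^{*}_\s}$ in $g$ can only arise from $\vartheta_{q'}$ with $q^{*}_\s-q'_\s\in P_\s$: for $q'=q^{*}$ the contribution is $\alpha_{q^{*}}$, while for $q'\neq q^{*}$ with $\alpha_{q'}\neq 0$ and $q^{*}_\s-q'_\s\in P_\s\setminus\{0\}$ the nonnegativity of $v$ on $P_\s$ forces $v(\vartheta_{q'})\leq v^{*}$, hence by definition of $v^{*}$ equality and $v(z^{q^{*}_\s-q'_\s})=0$, contradicting the $P_\s$-minimality of $q^{*}$. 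Thus the coefficient of $z^{q^{*}_\s}$ in $g$ equals $\alpha_{q^{*}}\neq 0$, so $v(g)\leq v(z^{q^{*}_\s})=v^{*}$. Combined with the other inequality, $v(g)=v^{*}$, and the hypothesis $v(g)\geq 0$ forces $v(\vartheta_q)\geq v^{*}\geq 0$ for every $q$ with $\alpha_q\neq 0$.

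The delicate point, and the reason this is not simply a statement about leading terms of each $\vartheta_q$, is that $v$ need only be nonnegative on $P_\s$, not strictly positive away from zero. Many monomials in different $Z_q$'s can therefore share $v$-valuation $v^{*}$, so one must rule out their simultaneous cancellation with a would-be leading $z^{q^{*}_\s}$ term. Choosing $q^{*}$ minimal in the $P_\s$-order is precisely what isolates a single leading monomial that cannot be contaminated by any $Z_{q'}$ with $\alpha_{q'}\neq 0$, making the argument go through.
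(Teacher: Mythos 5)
Your proof is correct and follows essentially the same route as the paper's: identify the chart of the optimized seed, observe that $\vartheta_q = z^{q_\s} + Z_q$ with $Z_q$ supported on $q_\s + (P_\s\setminus\{0\})$ and that $v$ is nonnegative on $P_\s$, hence $v(\vartheta_q)=v(z^{q_\s})$, and then use $P_\s$-minimality to exhibit an uncancelled leading monomial. The only cosmetic difference is that the paper phrases the final step as a contradiction and delegates the $P_\s$-minimality argument to Lemma \ref{linindlemma}, whereas you inline that argument directly and derive the identity $v(g)=\min_q v(\vartheta_q)$; the mathematical content is the same.
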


\begin{proof}
Let $\s = (e_1,\dots,e_n)$ be optimized for $v$.
Let $C$ be the strictly convex cone spanned by the exponents of the initial
scattering monomials $(v_i,e_i) \in \tM^\circ$.
Let $P=C\cap \tM^{\circ}$.
Take $Q$ a basepoint in the positive chamber of $\foD_{\s}$.
%By (1) of Lemma \ref{opslem}
%$i(v) \in \Theta$, and $z^{i(v)} = (\vartheta_{i(v)})|_{T_{\tM^\circ,\s} \subset \cA^{\mch}_{\prin}}$. 
By definition
$\vartheta_{Q,q} = z^q + Z_q$ where $Z_q = \sum_{m \in q + P\setminus\{0\}} 
\beta_{m,q} z^m$ is a finite sum of monomials. By (1) of Lemma \ref{opslem} we have
$v(z^m) \geq v(z^q)$, and thus by (2) of Lemma \ref{lplemma}, $v(\vartheta_q) = v(z^q)$.

Let $r$ be the minimum of $v(\vartheta_q)$ over all $q$ with $\alpha_q \neq 0$, and
suppose $r <  0$.
Since $v(\sum \alpha_q \vartheta_q) \geq 0$, necessarily
\[
\sum_{v(z^q)=r} \alpha_q( z^q + \sum_{m: v(z^m)=r} \beta_{m,q} z^m) = 0
\in \kk[\tM^\circ].
\]
Note this is the sum of all the monomial terms in $\sum \alpha_q \vartheta_q$
which have the maximal order of pole, $|r|$, along $v$.
This contradicts Lemma \ref{linindlemma}.
\end{proof}

We believe the assumption of an optimized seed is not necessary:

\begin{conjecture} 
\label{nosq} 
The proposition holds for any $v \in \cA^{\trop}_{\prin}(\bZ)$.
\end{conjecture}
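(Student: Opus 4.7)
The plan is to extend the argument of Proposition~\ref{boundaryprop} by replacing its optimized-seed hypothesis with a more flexible form of Lemma~\ref{linindlemma}. Fix a finite relation $F = \sum_{q \in S} \alpha_q \vartheta_q \in \midd(\cA_{\prin})$ with $\alpha_q \ne 0$ and $v(F) \ge 0$; set $r := \min_{q \in S} v(\vartheta_q)$ and suppose for contradiction $r < 0$. Choose any seed $\s$ with basepoint $Q$ in the positive chamber, let $P_\s \subset \tM^\circ$ be the strictly convex monoid generated by the initial scattering exponents, and expand each $\vartheta_q$ (finite since $q\in\Theta$) as
\[
\vartheta_{Q,q} \;=\; z^q + \sum_{m \in q + P_\s\setminus\{0\}} \beta_{m,q}\, z^m, \quad \beta_{m,q} \in \ZZ_{\ge 0},
\]
with non-negativity from Theorem~\ref{scatdiagpositive}. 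Setting $S_r := \{q\in S:v(\vartheta_q)=r\}$ and $M_q := \{m\in\supp(\vartheta_{Q,q}): v(z^m)=r\}$, the cancellation of $v$-value-$r$ terms in $F_\s$ forces
\[
\sum_{q\in S_r}\alpha_q \sum_{m\in M_q}\beta_{m,q}\,z^m \;=\; 0 \quad\text{in}\quad \kk[\tM^\circ_\s].
\]
When $\s$ is optimized for $v$ one has $M_q=\{q\}$ and Lemma~\ref{linindlemma} finishes the job; the goal is to rule out this cancellation in general.

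I would then attempt a two-step reduction. First, Proposition~\ref{thetaeigenfunctions} makes each $\vartheta_q$ a $T_{\tK^\circ}$-eigenfunction of weight $w(q)$, so $M_q \subset w^{-1}(w(q))$ and monomials of distinct weights cannot cancel; this reduces to the case of a common weight $\lambda$. Second, within the affine slice $w^{-1}(\lambda)\subset \tM^\circ$, one must prove the refined independence: the weighted families $\{(M_q, \beta_{\cdot,q})\}_{q\in S_r}$ admit no non-trivial $\kk$-linear relation. My plan here would be to identify each $m\in M_q$ with $F(\gamma)$ for a broken line $\gamma$ from asymptotic direction $q$ to $Q$, and to argue that the combinatorial rigidity of broken lines --- together with positivity of wall functions --- forces any two broken lines with distinct asymptotic directions $q_1 \ne q_2$ ending at a common $m \in M_{q_1} \cap M_{q_2}$ to diverge at an earlier bend, producing uncancelled ``companion'' monomials that block the proposed relation.

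The hard part will be this second step. For general $v$ the sets $M_q$ can be large and substantially overlapping, and no purely monomial-based argument as in Lemma~\ref{linindlemma} suffices. As a fallback I would consider two indirect routes: (i) a \emph{mutation} argument, searching for a mutation-equivalent seed in which $\sum_{q\in S_r}|M_q|$ strictly decreases, iterating to reduce to the optimized case; the chief obstruction is that by Lemma~\ref{opslem}(2) only $v\in\Delta^+(\ZZ)$ admit an optimized seed, so exotic valuations outside the cluster complex may simply be unreachable by mutation. (ii) A \emph{geometric} route, realising $v$ as a toroidal boundary divisor on a partial compactification $Y \to \cA_{\prin}$ and analysing $H^0(Y, \cO_Y) \cap \midd(\cA_{\prin})$ via a degeneration to a toric model in the spirit of Theorem~\ref{cfcor}; here the difficulty is to construct $Y$ compatibly with the theta-function basis when $v$ is not visible in any cluster chart. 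Either route will require genuinely new ideas, since the existence of divisorial valuations outside the cluster-theoretic data is precisely what makes the conjecture subtle.
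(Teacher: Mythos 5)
The statement you are attempting to prove is labelled Conjecture~\ref{nosq} in the paper precisely because the authors do not prove it: the paper only establishes the special case with an optimized seed (Proposition~\ref{boundaryprop}), and explicitly says ``We believe the assumption of an optimized seed is not necessary'' before stating the general claim as a conjecture. So there is no proof in the paper to compare against, and your inability to close the argument is not a failure of your approach but a reflection of the actual state of the art.

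That said, your analysis is sound as far as it goes, and you have put your finger on the genuine obstruction. Your observation that Lemma~\ref{opslem}(2) restricts the existence of optimized seeds to $v \in \Delta^+_{\cA^{\vee}}(\ZZ)$ is exactly why route~(i) (mutating to an optimized seed) is unavailable for general divisorial valuations, and your diagnosis that ``the existence of divisorial valuations outside the cluster-theoretic data is precisely what makes the conjecture subtle'' is correct. The $T_{\tK^\circ}$-weight decomposition is also a legitimate first reduction, though as you note it only isolates a fixed weight-slice rather than isolating individual theta functions. The substantive gap in your second step --- the claim that two broken lines with distinct asymptotic directions ending at a shared low-order monomial must produce uncancelled ``companion'' monomials --- is unargued and, in this generality, not at all clear: positivity of coefficients prevents internal cancellation within a single $\vartheta_q$, but it does not by itself forbid cancellation across several $\alpha_q \vartheta_q$ with signed coefficients $\alpha_q$. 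If you wish to pursue this, you would at minimum need a convexity or extremality argument picking out a monomial $z^{m^*}$ (analogous to the minimal element in Lemma~\ref{linindlemma}) that can appear in $M_q$ for only one $q \in S_r$; without an optimized seed there is no obvious partial order on $\tM^{\circ}$ making $q$ the unique minimum of $\supp(\vartheta_{Q,q})$ among the terms of $v$-value $r$.
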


Any finite set $S \subset \cA^{\trop}_{\prin}(\bZ) \setminus \{0\}$ of primitive elements
gives a partial compactification
(defined canonically up to codimension two) $\cA_{\prin} \subset 
\ocA_{\prin}^S$,
with the boundary divisors of this partial compactification in one-to-one
correspondence with the elements of $S$ (this is true for any finite collection, $S$, of divisorial discrete
valuations on the function field of a normal variety $\cA$: there is always an open immersion
$\cA \subset \ocA^S$, with divisorial boundary $\ocA^S \setminus \cA$ corresponding to $S$, and 
$\cA \subset \ocA^S$ is unique up to changes in codimension greater than or equal to two). 

%todo{SEAN:
%This should be explained better.}

We then define
\[
\Theta(\ocA_{\prin}^S) := \{q \in \Theta(\cA_{\prin})\,|\, 
v(\vartheta_q) \geq 0 \text{ for all } v \in S \} 
\]
and $\midd(\ocA_{\prin}^S) \subset \midd(\cA_{\prin})$ the vector subspace with
basis $\Theta(\ocA_{\prin}^S)$. Similarly we define
$\ord(\ocA_{\prin}^S)$ to be the subalgebra of $\up(\ocA_{\prin}^S)$
generated by those cluster variables that are regular (generically) along all 
$v \in S$.

\begin{definition} \label{spdef} Each choice of seed $\s$ gives a pairing
\[
\langle\cdot,\cdot\rangle_{\s}: \cA_{\prin}(\bZ^T) \times \cA^{\mch}_{\prin}(\bZ^T) \to \bZ
\]
which is just the dual pairing composed with the identifications
\begin{align*}
\cA_{\prin}(\bZ^T) = {} & T_{\tN^\circ,\s}(\bZ^T) \overset r = \tN^\circ_{\s},\\
\cA^{\mch}_{\prin}(\bZ^T) = {} &
T_{\tM^\circ,\s}(\bZ^T) \overset r = \tM^\circ_{\s}.
\end{align*}
\end{definition}

\begin{lemma} \label{midocalem}
\begin{enumerate}
\item
$\midd(\ocA_{\prin}^S) \subset \midd(\cA_{\prin})$ is a subalgebra
containing $\ord(\ocA_{\prin}^S)$. If $\ord(\ocA_{\prin}^S) = \up(\ocA_{\prin}^S)$
then
\[
\ord(\ocA_{\prin}^S) =\midd(\ocA_{\prin}^S) = \up(\ocA_{\prin}^S).
\]
\item
Assume each $v\in S$ has an optimized seed. Then
\[
\midd(\ocA_{\prin}^S) = \midd(\cA_{\prin}) \cap \up(\ocA_{\prin}^S) \subset 
\up(\cA_{\prin}).
\]
If $\midd(\cA_{\prin}) = \up(\cA_{\prin})$ then
$\midd(\ocA_{\prin}^S) = \up(\ocA_{\prin}^S)$.
\item
If each $v\in S$ has an optimized seed and $\s$ is optimized for $v \in S$, 
the piecewise linear function
\[
\vartheta_{i(v)}^T = \langle\cdot,r(v)\rangle_{\s} : (\cXrt = \tM^\circ_{\bR,\s}) \to \bR
\]
is min-convex, and for all $q \in \Theta(\cA_{\prin}) \subset \cA^{\mch}_{\prin}(\bZ^T)$,
\[
\vartheta_q^T(v) = \langle r(q),r(v)\rangle_{\s} = \vartheta_{i(v)}^T(q)
\]
where $\vartheta_{i(v)}$ is the global monomial on $\cA^{\mch}_{\prin}$ corresponding to $i(v)$ (which exists by Lemma \ref{opslem}).
\end{enumerate}
\end{lemma}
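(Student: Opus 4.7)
The plan is to prove parts (1), (2), (3) in order, with (1) and (2) both resting on Proposition \ref{boundaryprop} (whose optimized-seed hypothesis is needed in (1) as well, though implicit in its statement), and (3) following from Lemma \ref{opslem}, Proposition \ref{convcor}, and a direct broken-line computation. First I would handle the subalgebra statement in (1). Given $p,q \in \Theta(\ocA_{\prin}^S)$, Theorem \ref{mainth} expresses $\vartheta_p \cdot \vartheta_q = \sum_r \alpha(p,q,r) \vartheta_r$ as a finite sum in $\midd(\cA_{\prin})$. Since $\vartheta_p,\vartheta_q \in \up(\ocA_{\prin}^S)$, their product lies in $\up(\ocA_{\prin}^S)$, and Proposition \ref{boundaryprop} applied to each $v \in S$ yields $v(\vartheta_r) \ge 0$ whenever $\alpha(p,q,r)\neq 0$. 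Hence each such $r$ lies in $\Theta(\ocA_{\prin}^S)$, proving closure under multiplication. The inclusion $\ord(\ocA_{\prin}^S)\subseteq\midd(\ocA_{\prin}^S)$ follows because every cluster monomial generating $\ord(\ocA_{\prin}^S)$ is a theta function $\vartheta_q$ with $g$-vector $q$ by Theorem \ref{thetaequalsmonomial}, and regularity on $\ocA_{\prin}^S$ forces $q\in\Theta(\ocA_{\prin}^S)$. The final assertion of (1) is then immediate from the chain $\ord(\ocA_{\prin}^S)\subseteq\midd(\ocA_{\prin}^S)\subseteq \up(\ocA_{\prin}^S)$.

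For (2), the inclusion $\midd(\ocA_{\prin}^S) \subseteq \midd(\cA_{\prin}) \cap \up(\ocA_{\prin}^S)$ is tautological from the definitions. Conversely, any $f \in \midd(\cA_{\prin})$ is a finite sum $f = \sum \alpha_q \vartheta_q$; the condition $f \in \up(\ocA_{\prin}^S)$ says $v(f) \ge 0$ for all $v \in S$, and Proposition \ref{boundaryprop} (using the optimized-seed hypothesis) forces $v(\vartheta_q) \ge 0$ for each $q$ with $\alpha_q \neq 0$. Hence every such $q$ lies in $\Theta(\ocA_{\prin}^S)$, so $f \in \midd(\ocA_{\prin}^S)$. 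The equality $\midd(\ocA_{\prin}^S) = \up(\ocA_{\prin}^S)$ under the additional assumption $\midd(\cA_{\prin}) = \up(\cA_{\prin})$ is then purely formal: $\midd(\cA_{\prin})\cap\up(\ocA_{\prin}^S) = \up(\cA_{\prin})\cap\up(\ocA_{\prin}^S) = \up(\ocA_{\prin}^S)$.

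For (3), I would first identify $\vartheta_{i(v)}$ explicitly. Because $\s$ is optimized for $v$, Lemma \ref{opslem}(1) identifies $\vartheta_{i(v)}$ with a global monomial on $\cA^{\mch}_{\prin}$ restricting to a specific character of $T_{\tM^\circ,\s}$; unpacking the tropicalization formulas \eqref{pairings} together with Definition \ref{spdef} shows $\vartheta_{i(v)}^T$ is the linear function $q\mapsto \langle r(q),r(v)\rangle_{\s}$ on $\tM^\circ_{\bR,\s}$, and min-convexity is then immediate from Proposition \ref{convcor}. For the equality $\vartheta_q^T(v) = \langle r(q),r(v)\rangle_{\s}$ for general $q \in \Theta(\cA_{\prin})$, I would use that the restriction of $\vartheta_q$ to the seed torus $T_{\tN^\circ,\s}$ is a positive Laurent polynomial whose support lies in $r(q) + P_{\s}$, where $P_{\s}\subseteq\tM^\circ_{\s}$ is the monoid generated by the initial scattering monomials $(v_i,e_i)$: this follows from the broken-line construction of theta functions, since each bend multiplies the attached monomial by a non-negative power of a scattering monomial, all of which lie in $P_{\s}$. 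The optimized-seed condition translates into $\langle p,r(v)\rangle \ge 0$ for all $p \in P_{\s}$, so by Lemma \ref{lplemma} the extremum defining $\vartheta_q^T(v)$ is attained at the leading term $z^{r(q)}$, yielding the formula; the two expressions for the pairing are identified under the natural symmetry of the dual pairing $\tN^\circ \times \tM^\circ \to \bZ$.

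The main obstacle will be navigating the sign conventions between the $\bZ^T$ and $\bZ^t$ tropicalizations and the various identifications of cocharacter lattices with tropical sets; in particular, care is needed to verify that the pairing of Definition \ref{spdef} aligns with the Fock-Goncharov tropicalizations of characters from \eqref{pairings} so that the formula in (3) is recovered without sign errors, and that the subtle distinction between a valuation $v$, its sign-change image $i(v)$, and the corresponding cocharacter $r(v)$ is handled consistently. Apart from this bookkeeping, the argument is a fairly direct combination of Proposition \ref{boundaryprop}, Lemma \ref{opslem}, Proposition \ref{convcor}, and Lemma \ref{lplemma}.
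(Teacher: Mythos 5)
Your treatment of parts (2) and (3) is essentially identical to the paper's: (2) follows from Proposition \ref{boundaryprop} plus a formal intersection argument, and (3) follows from Lemma \ref{opslem}, Proposition \ref{convcor}, and Lemma \ref{lplemma} exactly as you describe.

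However, there is a genuine gap in your handling of (1). You assert that the optimized-seed hypothesis ``is needed in (1) as well, though implicit in its statement,'' and you then deduce closure under multiplication by appealing to Proposition \ref{boundaryprop}. That is \emph{not} implicit in the statement, and in fact the paper's argument for (1) does not use Proposition \ref{boundaryprop} at all. The paper instead appeals to the double positivity of the theory (positive structure constants \emph{and} positive Laurent polynomials $\vartheta_{Q,r}$) exactly as in the proof of Theorem \ref{goodalgebra}, and this avoids any optimized-seed assumption. Concretely: fix a seed torus $T_{\tN^\circ,\s}$ and basepoint $Q$. Since $v\in\cA^{\trop}_{\prin}(\bZ)$ is a monomial valuation on this torus, Lemma \ref{lplemma}(2) gives $v(g)=\min_{m:c_m\neq 0}v(z^m)$ for any Laurent polynomial $g=\sum c_m z^m$ in this chart. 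For $p,q\in\Theta(\ocA_{\prin}^S)$ one has $v(\vartheta_{Q,p}\vartheta_{Q,q})=v(\vartheta_{Q,p})+v(\vartheta_{Q,q})\geq 0$. On the other hand $\vartheta_{Q,p}\vartheta_{Q,q}=\sum_r\alpha(p,q,r)\vartheta_{Q,r}$, and since all the $\alpha(p,q,r)$ and all the coefficients of the $\vartheta_{Q,r}$ are non-negative integers, there is no cancellation: the set of monomials of the product is exactly the union of the monomial supports of the terms $\alpha(p,q,r)\vartheta_{Q,r}$. Hence $v(\vartheta_{Q,p}\vartheta_{Q,q})=\min_r v(\vartheta_{Q,r})\geq 0$, so $v(\vartheta_r)\geq 0$ for every $r$ appearing, with no optimized-seed hypothesis required. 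So your proof of (1), while correct in the presence of optimized seeds, proves a strictly weaker statement than (1) actually asserts; the point of the paper's phrasing is precisely that (1) holds unconditionally while (2) genuinely needs the extra hypothesis (since it must control sums like $\sum\alpha_q\vartheta_q$ where the $\alpha_q$ can be of mixed sign, so the positivity argument above does not apply).
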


\begin{remark} There are pairings 
\[
\Theta(V) \times \Theta(V^{\vee}) \to \bZ
\]
which are much more natural then Definition \ref{spdef}. Indeed,
$v \in \Theta(V)$ gives a canonical function $\vartheta_v \in \up(V)$ and,
since $\Theta(V) \subset V^{\vee}(\bZ^t)$, a valuation on $\up(V^{\vee})$. The 
analogous statements apply to $w \in \Theta(V^{\vee})$. So we 
could define a pairing by either
\[
\hbox{$\langle v,w\rangle  \mapsto w(\vartheta_v)$, or $\langle v,w \rangle \mapsto
v(\vartheta_w)$.}
\]
We conjecture these two pairings are equal. 
Lemma \ref{midocalem}, (3),  gives the result when one of $v,w$ lies in the
cluster complex. 
One can pose the same symmetry conjecture for mirror pairs  of affine log CYs (with maximal
boundary) in general, the two-dimensional case having been shown 
in \cite{MandelThesis}. 
Suppose the symmetry conjecture holds, and furthermore 
$\Theta(\cA_{\prin}) = \cA_{\prin}^{\vee}(\bZ^T)$. Then (see the  
proof of Lemma \ref{midocalem} below) 
the cone of \eqref{spcone} cut out by the tropicalisation of the potential
function is 
\[
\Xi := \{x \in \cA^{\mch}_{\prin}(\bZ^T)\,|\, W^T(x) \geq 0\} = \Theta(\cA_{\prin,S}).
\]
If furthermore Conjecture \ref{nosq} holds and $\midd(\cA_{\prin}) = \up(\cA_{\prin})$,
then $\Xi$ gives a basis of $\up(\ocA_{\prin}^S)$, canonically determined by the open
set $\cA_{\prin} \subset \ocA_{\prin}^S$ (together with its cluster structure, though we
conjecture the basis is independent of the cluster structure). See Corollary \ref{spcor}.
\end{remark}

\begin{proof}[Proof of Lemma \ref{midocalem}]
The subalgebra statement of (1) 
follows from the positivity (both of structure constants and the
Laurent polynomials $\vartheta_{Q,q}$) just as in the proof of
Theorem \ref{goodalgebra}. Every cluster variable is a theta function,
so the inclusion $\ord \subset \midd$ is clear. Now obviously if 
$\ord(\ocA_{\prin}^S) = \up(\ocA_{\prin}^S)$
then both are equal to $\midd$.

The intersection expression of (2) for the middle algebra
follows from Proposition \ref{boundaryprop}. Now obviously if $\midd(\cA_{\prin}) = \up(\cA_{\prin})$
then $\midd(\ocA_{\prin}^S) = \up(\ocA_{\prin}^S)$.

For (3), we work with the scattering diagram 
$\foD_{\s}$. Then $i(v)$ is the $g$-vector of the global
monomial $\vartheta_{i(v)}$, with $(\vartheta_{i(v)})|_{T_{\tM^\circ,\s} \subset \cA^{\mch}_{\prin}} = z^{r(i(v))}$,
by Lemma \ref{opslem}.
Using $r(v)=-r(i(v))$, one sees that 
$\vartheta_{i(v)}^T = \langle\cdot,r(v)\rangle$
is linear on $\tM^\circ$, so obviously min-convex in the sense
of Definition-Lemma \ref{mcdef}. 
Since it is the tropicalisation of a global monomial it is
also min-convex in the sense of Definition \ref{imcdef}, by Proposition
\ref{convcor}.

Now fix a base point $Q \in \shC^+_{\s} \subset \cXrt$,
and consider $\vartheta_{Q,q}$, $q \in \Theta$.  By Lemma
\ref{opslem}, (1), each scattering function is regular along
the boundary divisor corresponding to $v \in \cA_{\prin}^{\trop}(\bZ)$. 
By definition
$\vartheta_{Q,q} = z^{r(q)} + Z_{r(q)}$, where $Z_{r(q)}$ 
is a linear combination of monomials  $z^{r(q)+q'}$
with $z^{q'}$ regular along the boundary divisor corresponding to $v$. Thus
\[
\vartheta_q^T(i(v)) = v(\vartheta_{Q,q}) = \langle r(q),r(v)\rangle
\]
by Lemma \ref{lplemma}. Since $\vartheta_{i(v)}$ is the monomial 
$z^{r(i(v))}$ on
$T_{\tM^\circ,\s}$,
\[
\vartheta_{i(v)}^T(q) 
= -\langle r(q),r(i(v)) \rangle = \langle r(q), r(v) \rangle.
\]
This completes the proof of (3).
\end{proof}

\subsection{Cones cut out by the tropicalized potential}
\label{fullFGsection}

Recall a choice of seed gives a partial compactification $\cA_{\prin} \subset 
\ocA_{\prin}^{\s}$
and a map $\pi:\ocA_{\prin}^{\s} \to \bA^n_{X_1,\ldots,X_n}$. 
The boundary $\ocA_{\prin}^{\s} \setminus \cA_{\prin}$ has $n$
irreducible components, primitive elements of $\cA_{\prin}^{\trop}(\bZ)$, 
the vanishing loci of the $X_i$.

\begin{lemma} 
\label{gsoplem} 
The seed $\s$ is optimized for each of the boundary divisors of 
$\cA_{\prin} \subset \ocA_{\prin}^{\s}$. 
\end{lemma}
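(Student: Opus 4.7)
The plan is to unravel the definitions and reduce the statement to a one-line computation of the skew form on $\tN$. Nothing substantial is needed beyond careful bookkeeping of the two lattices $\tN$ and $\tN^{\circ}$.

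First, I would recall from Construction \ref{sdprin} that the seed $\tilde\s$ attached to $\s=(e_1,\ldots,e_n)$ in the principal fixed data $\Gamma_{\prin}$ has unfrozen basis vectors $\tilde e_i=(e_i,0)\in\tN=N\oplus M^{\circ}$ for $i\in I_{\uf}$, and that the form on $\tN$ is completely determined by the identity $\{(e_i,0),\cdot\}_{\tN}=(v_i,e_i)\in\tM^{\circ}=M^{\circ}\oplus N$ established in Construction \ref{sdprin}.

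Second, I would identify the boundary divisors of $\cA_{\prin}\subset\ocA_{\prin}^{\s}$ as tropical points of $\tN^{\circ}=N^{\circ}\oplus M$. By Lemma \ref{clem}, the toric chart in the atlas of $\ocA_{\prin}^{\s}$ is, up to the action of the mutations (which are isomorphisms near the zero fibre by Corollary \ref{mutcor}), the toric variety of the fan spanned by the rays $\bR_{\geq 0}(0,e_i^*)$, and the toric divisor corresponding to the ray $\bR_{\geq 0}(0,e_i^*)$ is precisely the vanishing locus of $X_i=z^{(0,e_i)}$. Under the canonical identification $r\colon\cA_{\prin}^{\trop}(\bZ)\to\tN^{\circ}$, this boundary divisor therefore corresponds to the primitive element $(0,e_i^*)\in\tN^{\circ}$ (note that $e_i^*\in M\subseteq M^{\circ}$, so $(0,e_i^*)$ indeed lies in $\tN^{\circ}=N^{\circ}\oplus M$).

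Third, I would check the optimization inequality directly. For every unfrozen index $k\in I_{\uf}$ of $\tilde\s$ and every boundary index $i\in\{1,\ldots,n\}$,
\[
\{(e_k,0),(0,e_i^*)\}_{\tN}
=\langle (v_k,e_k),(0,e_i^*)\rangle
=\langle v_k,0\rangle+\langle e_k,e_i^*\rangle
=\delta_{ki}\ge 0,
\]
where the middle equality uses the decomposition of the pairing along the direct sum summands of $\tM^{\circ}=M^{\circ}\oplus N$ and $\tN^{\circ}=N^{\circ}\oplus M$, and the last equality is the standard pairing $N\times M\to\bZ$. This is precisely the positivity condition of Definition \ref{osdef} for the seed $\tilde\s$ at the valuation $(0,e_i^*)$, so $\tilde\s$ is optimized for each boundary divisor.

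The only potential pitfall is conceptual rather than technical: one must keep straight which of $\tN$, $\tN^{\circ}$ houses each vector (the unfrozen basis vectors live in $\tN$, while the valuations/boundary divisors live in $\tN^{\circ}$), and must invoke Lemma \ref{clem} to say that the partial compactification $\ocA_{\prin}^{\s}$ really is described, chart-by-chart, by this particular cone. Once these two points are in place, the lemma is an immediate calculation, and there is no deeper obstacle to overcome.
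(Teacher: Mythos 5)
Your proof is correct and follows essentially the same route as the paper's: identify the boundary divisors with the $\tN^\circ$-points $(0,e_i^*)$ (the paper writes $(0,f_i)$, which lies on the same ray but is not in general primitive in $\tN^\circ$; this is immaterial since only the sign of the skew form matters), and then observe $\{(e_k,0),(0,e_i^*)\}=\langle e_k,e_i^*\rangle=\delta_{ki}\ge 0$. You take a slight detour through Lemma \ref{clem} and Corollary \ref{mutcor} to pin down the fan; the paper simply reads off the cone $\Sigma^{\s}$ from Construction \ref{fvss} and Remark \ref{Aprinremark}, but the end computation is identical.
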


\begin{proof} 
If $\s=(e_1,\ldots,e_n)$,
the corresponding seed for $\cA_{\prin}$ is 
\[
\tilde\s=\big((e_1,0),\dots,(e_n,0),(0,f_1),\dots,(0,f_n)\big),
\]
and the boundary divisors correspond to the $(0,f_i)$. But
$
\{(e_i,0),(0,f_j)\} = \langle e_i,f_j\rangle = \delta_{ij}\ge 0,
$
hence the claim.
\end{proof}

We adjust slightly the notation $\ocA_{\prin}^S$ of the previous 
subsection to this case: 
\begin{definition} 
Let
\[
\Theta(\cAg) \subset \Theta \subset \cXt
\]
be the subset of points $q$ such that $\vartheta_q$ remains 
regular on the partial compactification $\ocA_{\prin}^{\s} \supset \cA_{\prin}$, 
i.e., such that 
\[
\vartheta_q \in \up(\cAg) \subset \up(\cAgs).
\]
\end{definition}

\begin{lemma} 
\label{productlem} 
Under the 
identification $\cXt = M^\circ \oplus N$, we have
$\Theta = \Theta(\cA^\mch) \times N$
and $\Theta(\cAg) = \Theta(\cA^{\mch}) \times N^+_{\s}$. 
\end{lemma}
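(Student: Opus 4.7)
The plan is to prove the two statements separately, the first by invoking the already-established $N$-invariance of $\Theta$ and the definition of $\Theta(\cA^\vee)$, and the second by a direct computation of $v_i(\vartheta_q)$ along each boundary divisor, exploiting the fact (Lemma \ref{gsoplem}) that the seed $\s$ is optimized for every boundary divisor of $\cA_{\prin}\subset \ocA_{\prin}^{\s}$.

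For the first statement, recall from Definition-Lemma \ref{thacasedef} that $\Theta(\cA^\vee) := \rho^T(\Theta(\cA_{\prin}))$ and that $\Theta(\cA_{\prin}) = (\rho^T)^{-1}(\Theta(\cA^\vee))$; this rests on the $N$-translation invariance of $\Theta(\cA_{\prin})$ (Lemma \ref{eqslem}). Under the identification $\cXt = \widetilde M^\circ = M^\circ \oplus N$ coming from the seed $\s$, the map $\rho^T$ is exactly the projection onto the first factor (see Construction \ref{sdprin}). Thus $(\rho^T)^{-1}(\Theta(\cA^\vee)) = \Theta(\cA^\vee) \times N$, which gives the first equality.

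For the second statement, granted the first, it suffices to identify, inside $\Theta(\cA^\vee)\times N$, those $q=(m,n)$ for which $\vartheta_q$ extends regularly over every boundary divisor of $\ocA_{\prin}^{\s}$. By Lemma \ref{clem} these $n$ boundary divisors correspond to the rays $\RR_{\ge 0}(0,e_i^*)\subset \widetilde N^\circ$, i.e., to valuations $v_1,\dots,v_n\in \cA_{\prin}^{\trop}(\ZZ)$ with $r(v_i)=(0,e_i^*)$; concretely, $X_j = z^{(0,e_j)}$ has order $\delta_{ij}$ along $v_i$. By Lemma \ref{gsoplem} the seed $\s$ is optimized for each $v_i$, so by Lemma \ref{opslem}(1) every initial scattering monomial $z^{(v_i,e_i)}$ of $\foD^{\cA_{\prin}}_{\s}$ is regular along each $v_j$; the same then holds for every monomial $z^{p^*(n')}$ with $n'\in N^+_{\s}$ that can appear as a wall monomial. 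Hence for $Q$ in the positive chamber, writing $\vartheta_{Q,q}=z^{r(q)}+Z_q$ with $Z_q\in z^{r(q)}\cdot (P_{\s}\setminus\{0\})$, all terms in $Z_q$ are regular along $v_j$, so Lemma \ref{lplemma} gives
\[
v_j(\vartheta_q) \;=\; v_j(z^{r(q)}) \;=\; \langle r(q), r(v_j)\rangle \;=\; \langle (m,n),(0,e_j^*)\rangle \;=\; \langle n, e_j^*\rangle,
\]
which is the coefficient of $e_j$ when $n$ is expanded in the basis $(e_1,\dots,e_n)$ of $N$.

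Therefore $v_j(\vartheta_q)\ge 0$ for all $j=1,\dots,n$ if and only if $n\in N^+_{\s}$, and combining with the first part yields $\Theta(\cAg)=\Theta(\cA^\vee)\times N^+_{\s}$. The one step I would double-check carefully in writing out the proof is the bookkeeping of lattices and pairings in the computation $v_j(z^{r(q)})=\langle n,e_j^*\rangle$---the identifications $\widetilde M^\circ = M^\circ \oplus N$ and $\widetilde N^\circ = N^\circ \oplus M$, together with the sign convention \eqref{pairings} relating $(z^m)^t$ to the dual pairing, must be applied consistently; this is the only place where a misplaced sign or swap of factors could quietly derail the argument.
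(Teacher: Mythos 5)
Your proof is correct, and it reaches the same conclusion via a route that is close in spirit to the paper's but genuinely streamlined. The paper proves the second equality by two separate inclusions: for $\Theta(\cAg) \subset \Theta(\cA^\vee)\times N^+_\s$ it appeals to the \emph{positivity} of $\vartheta_{Q,q}$ as a sum of monomials (so the sum is regular iff each summand is, and one summand is $z^q$), while for the reverse inclusion it observes that the scattering monomials $z^{(v_i,e_i)}$ are regular so bends preserve regularity. You instead route through Lemma \ref{gsoplem} and Lemma \ref{opslem}(1), compute $v_j(\vartheta_q)=v_j(z^{r(q)})=\langle n,e_j^*\rangle$ on the nose using the valuation formula of Lemma \ref{lplemma}(2), and read the biconditional off directly. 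This buys you a single unified computation and avoids any appeal to positivity of the coefficients (the valuation-of-a-minimum formula holds for arbitrary Laurent polynomials), at the cost of invoking the optimized-seed machinery of \S\ref{pcsec} rather than arguing from bare hands. One phrase worth tightening in a final write-up: ``all terms in $Z_q$ are regular along $v_j$'' should read ``each exponent $p'\in P_\s$ contributing to $Z_q$ has $v_j(z^{p'})\ge 0$, so $v_j(z^{r(q)+p'})\ge v_j(z^{r(q)})$'' --- the conclusion $v_j(\vartheta_q)=v_j(z^{r(q)})$ requires comparison with the leading term's order, not just non-negativity of the correction terms. Your sign bookkeeping via \eqref{pairings} and Lemma \ref{lplemma}(2) is handled correctly.
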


\begin{proof} 
$\Theta$ is invariant under translation by $0 \oplus N$,
and thus $\Theta = \Theta(\cA^\mch) \times N$. 

By Lemma \ref{clem} we construct $\cAgs \subset \cAg$ from
the atlas of toric compactifications 
\[
T_{N^\circ} \times T_M \subset T_{N^\circ} \times \bA^n_{X_i}
\]
parameterized by the cluster chambers in $\Delta^+_{\s}$. 
Now take $q \in \Theta$,
and consider $\vartheta_{Q,q}$ for some basepoint in the cluster complex. This
is a positive sum of monomials, so it will be regular on the boundary of
$\cAgs \subset \cAg$ iff each summand is. 
One summand is $z^q$, so if $\vartheta_q$ is regular on $\cAg$ then 
$\pi_N(q) \in N^+_{\s}$. Thus $\Theta(\cAg)\subset \Theta(\cA^{\vee})
\times N^+_{\s}$.
But now suppose $q = (m,n)$, some $m \in \Theta(\cA^{\vee})$ and $n \in N^+_{\s}$. Then
$z^q$ is regular on the boundary. Since the initial scattering monomials
are $(v_i,e_i)$, any bend in a broken line multiplies the decorating monomial 
by a monomial regular on the boundary.
Thus $q \in \Theta(\cAg)$. This completes the proof.
\end{proof} 

We define
\[
\midd(\cAg) := \bigoplus_{q \in \Theta(\cAg)} \kk 
\vartheta_q \subset \midd(\cAgs).
\]

Recall
$\low(\cAgs) \subset \up(\cAgs)$ are the cluster and upper cluster algebras 
with principal coefficients respectively, 
with the frozen variables inverted. On the other
hand,
$\low(\cAg) \subset \up(\cAg)$ are the cluster and upper cluster algebras
with principal coefficients respectively, with the frozen variables not
inverted. 
By Lemma \ref{midocalem}, $\midd(\cAg) \subset \midd(\cA_{\prin})$ 
is a subalgebra, and
$\low(\cAg) \subset \midd(\cAg) \subset \up(\cAg)$. 

By Lemma \ref{midocalem} and Lemma \ref{gsoplem}, we have

\begin{corollary} \label{improp} If $\midd(\cA_{\prin}) = \up(\cA_{\prin})$ 
then $\midd(\ocA_{\prin}^{\s}) = \up(\ocA_{\prin}^{\s})$. 
\end{corollary}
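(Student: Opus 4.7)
The plan is to deduce this directly by combining Lemma \ref{gsoplem} with part (2) of Lemma \ref{midocalem}. First I would identify the set of boundary valuations: the partial compactification $\cA_{\prin} \subset \ocA_{\prin}^{\s}$ has boundary consisting of the $n$ irreducible divisors cut out by $X_1 = 0, \ldots, X_n = 0$, which correspond to a finite set $S \subset \cA_{\prin}^{\trop}(\bZ)$ of primitive divisorial valuations. Thus $\ocA_{\prin}^{\s}$ coincides (up to codimension two) with $\ocA_{\prin}^{S}$ in the notation of \S\ref{pcsec}, so $\midd(\ocA_{\prin}^{\s})$ and $\up(\ocA_{\prin}^{\s})$ agree with their counterparts defined via $S$.

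Next I would verify the hypothesis of part (2) of Lemma \ref{midocalem}: namely that each $v \in S$ admits an optimized seed. This is exactly the content of Lemma \ref{gsoplem}, which shows that the single seed $\s$ used to build the partial compactification is simultaneously optimized for every boundary divisor (because the pairings $\{(e_i,0),(0,f_j)\} = \delta_{ij} \geq 0$ are non-negative).

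With the optimization hypothesis in place, part (2) of Lemma \ref{midocalem} gives the identification
\[
\midd(\ocA_{\prin}^{\s}) = \midd(\cA_{\prin}) \cap \up(\ocA_{\prin}^{\s}) \subset \up(\cA_{\prin}).
\]
Using the standing assumption $\midd(\cA_{\prin}) = \up(\cA_{\prin})$, the right-hand side becomes simply $\up(\cA_{\prin}) \cap \up(\ocA_{\prin}^{\s}) = \up(\ocA_{\prin}^{\s})$ (the last equality holding because $\up(\ocA_{\prin}^{\s}) \subset \up(\cA_{\prin})$ via restriction to the open subset). This yields the desired equality $\midd(\ocA_{\prin}^{\s}) = \up(\ocA_{\prin}^{\s})$.

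There is no real obstacle here — the corollary is a clean specialization of Lemma \ref{midocalem} once one observes via Lemma \ref{gsoplem} that the optimization hypothesis is automatic for the canonical partial compactification attached to the initial seed. The only minor bookkeeping point is to make sure that working with the particular partial compactification $\ocA_{\prin}^{\s}$ (defined via the toric atlas in Construction \ref{atlascon}) agrees, at the level of $\up$ and $\midd$, with the codimension-two-defined model $\ocA_{\prin}^{S}$; this is immediate since both $\up$ and $\midd$ are unaffected by codimension-two modifications of a normal variety.
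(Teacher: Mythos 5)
Your proof is correct and takes the same route as the paper: the paper states Corollary \ref{improp} with the one-line justification ``By Lemma \ref{midocalem} and Lemma \ref{gsoplem}, we have,'' and you have simply spelled out that combination, checking the optimization hypothesis via Lemma \ref{gsoplem} and then invoking the final sentence of Lemma \ref{midocalem}(2). The small bookkeeping point you raise about identifying $\ocA_{\prin}^{\s}$ with $\ocA_{\prin}^{S}$ up to codimension two is also how the paper handles the shift in notation.
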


Here is another
sufficient condition for the full Fock-Goncharov conjecture to hold, which
will prove immediately useful below:

\begin{proposition} \label{scprop} Suppose there is a min-convex function 
$w:\cXrt \to \bR$, such that $w(p) > 0$ implies 
$p \in \Theta$, and such that $w(p) > 0$ for some $p$. 
Suppose also that
there is a bounded positive polytope in $\cXt$ 
(which holds for example if $\cA^{\mch}_{\prin}$ has Enough Global Monomials). 
Then $\Theta = \cXt$. 
\end{proposition}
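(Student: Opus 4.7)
The plan is to use the min-convex function $w$ and the bounded positive polytope to show, for arbitrary $q \in \cXt$, that some product $\vartheta_{Q,kp}\cdot\vartheta_{Q,q}$ is a finite positive Laurent polynomial, and then to exploit positivity to descend to $\vartheta_{Q,q}$ itself.

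First I would fix $p \in \cXt$ with $w(p) > 0$. By hypothesis $p \in \Theta$, and by Theorem \ref{mainth}(4) (closure under addition) also $kp \in \Theta$ for every positive integer $k$. Fixing a basepoint $Q$ in the distinguished cluster chamber of $\foD_\s^{\cA_{\prin}}$, Proposition \ref{posprop} gives that each $\vartheta_{Q,kp}$ is a finite positive Laurent polynomial in $\kk[\tM^\circ]$.

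Now take arbitrary $q \in \cXt$. By Lemma \ref{diplem1}, min-convexity of $w$ implies $w$ is decreasing, while min-convexity also gives $w(kp) \geq k\,w(p)$. Hence whenever $\alpha(kp,q,r)\neq 0$ we have $w(r) \geq w(kp) + w(q) \geq k\,w(p) + w(q)$, so choosing $k$ large enough that this quantity is positive forces $r \in \Theta$ by the second hypothesis on $w$. The bounded positive polytope hypothesis, which under EGM is automatically top-dimensional by Lemma \ref{egmlem}, gives via Proposition \ref{fmprop} that only finitely many such $r$ occur. By Proposition \ref{thetabasislemma}(3),
\[
\vartheta_{Q,kp}\cdot\vartheta_{Q,q} \;=\; \sum_r \alpha(kp,q,r)\,\vartheta_{Q,r}
\]
is then a finite sum of finite positive Laurent polynomials, hence itself a finite positive Laurent polynomial.

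To finish I would invoke positivity directly: every coefficient in $\vartheta_{Q,kp}$ and (a priori formally) in $\vartheta_{Q,q}$ is non-negative by Theorem \ref{scatdiagpositive}, so no cancellation occurs in their product and the support of the product equals the Minkowski sum of the two supports. Since the support $A$ of $\vartheta_{Q,kp}$ is a nonempty finite subset of the torsion-free lattice $\tM^\circ$, picking any $a \in A$ shows the support $B$ of $\vartheta_{Q,q}$ satisfies $a + B \subseteq A + B$, which is finite; by injectivity of translation $B$ is therefore finite. Hence $\vartheta_{Q,q}$ is itself a Laurent polynomial and $q \in \Theta$. The only delicate point is the interpretation of ``bounded positive polytope'': for Proposition \ref{fmprop} to apply to the specific pair $(kp,q)$ we need top-dimensionality of the polytope, which is supplied by the EGM case via Lemma \ref{egmlem}.
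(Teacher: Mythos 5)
Your proposal is correct and follows essentially the same line of argument as the paper's own proof: fix one element where $w$ is positive, multiply its large multiple against an arbitrary theta function, use min-convexity to force all $r$ with nonzero structure constant into $\Theta$, use the bounded positive polytope for finiteness, and then use positivity of coefficients to rule out an infinite support for the arbitrary factor. The only surface difference is that you swap the roles of $p$ and $q$ relative to the paper's notation and spell out the no-cancellation/Minkowski-sum step that the paper leaves as a one-line remark; your observation about top-dimensionality is a fair reading of a point the paper leaves implicit.
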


\begin{proof} Take any $p \in \cXt$ and $q \in \cXt$ with $w(q) > 0$. 
Then consider any $\vartheta_r$ appearing in $\vartheta_p \cdot \vartheta_{mq}$, for $m \geq 1$. By Lemma \ref{diplem1},
\[
w(r) \geq w(p) + w(mq) = w(p) + m w(q) > 0
\]
for $m$ sufficiently large. In particular $r\in \Theta$,
so for each $\vartheta_r$ that appears, 
$\vartheta_{Q,r}$ is a universal positive Laurent polynomial, for
any basepoint $Q$ in the cluster complex. 
The existence of the bounded positive polytope
implies $\vartheta_p \cdot \vartheta_{mq}$ is 
a finite sum of $\vartheta_r$. Thus the product $\vartheta_p\cdot\vartheta_{mq}$
is also a universal positive Laurent polynomial,
and thus by the positivity of the scattering diagram,
$\vartheta_{Q,p}$ must be a finite positive Laurent polynomial. Thus $p \in \Theta$.
\end{proof} 

If there are frozen variables, there is a canonical candidate for $w$ in
the proposition. The cluster algebras related to double Bruhat cells are of this
sort, and we hope that this will give a way of completing the proof of
the full Fock-Goncharov conjecture in these cases.

When we have frozen variables, this gives a partial compactification $\cA \subset \ocA$.
In this case, let us change notation slightly and write a seed $\s$ as 
\[
\s=(e_1,\dots,e_{n_u},h_1,\dots,h_{n_f}),
\]
with $n_u=\# I_{\uf}$ and $n_f=\#(I\setminus I_{\uf})$, 
and the $h_i$ are frozen. 
In this case the elements $d_i h_i \in N^\circ_{\s} = \cA^{\trop}(\bZ)$
give $n_f$ canonical boundary divisors for a partial compactification 
$\cA \subset \ocA$, and an analogous $\cA_{\prin} \subset \ocA_{\prin}$. 
An atlas for $\cA_{\prin} \subset \ocA_{\prin}$ is given by gluing the partial
compactification $T_{\tN^\circ} \subset \TV(\Sigma_{\s})$, where $\Sigma_{\s}$ 
is the fan consisting of the rays $\bR_{\geq 0} (d_ih_i,0)$. 

\begin{corollary} 
\label{spcor} 
Assume that for each $1\le j\le n_f$,
$(d_jh_j,0)\in \cA^{\trop}_{\prin}(\bZ)$ has an optimized seed,
$\s_j$. Let $W := \sum \vartheta_{i(d_jh_j,0)}$ be the \emph{(Landau-Ginzburg)
potential}, the sum of the corresponding 
global monomials on $\cA^{\mch}_{\prin}$
given by Lemma \ref{opslem}. 
Then:
\begin{enumerate}
\item
The piecewise linear function
\[
W^T : \cXrt \to \bR
\]
is min-convex and
\[
\Xi:= \{x \in \cXrt\,|\, W^T(x) \geq 0 \}
\]
is a positive polytope. 
\item
$\Xi$ has the alternative description:
\[
\Xi = \{x \in \cXrt\,|\, \langle x,(d_jh_j,0)\rangle_{\s_j} \geq 0 \text{ for all } j\}. 
\]
\item
The set 
\[
\Xi \cap \Theta(\cA_{\prin}) = \{p \in \Theta(\cA_{\prin})\,|\,
\vartheta_p \in \up(\ocA_{\prin})
\subset \up(\cA_{\prin})\}
\]
parameterizes a canonical basis of 
\[
\midd(\ocA_{\prin}) = \up(\ocA_{\prin}) \cap \midd(\cA_{\prin}) \subset 
\up(\cA_{\prin}).
\]
\end{enumerate}
\end{corollary}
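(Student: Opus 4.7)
The plan is to prove (2) first, then use (2) to deduce (1), and finally use both to establish (3). Each step is a fairly direct application of results already proven in the paper, with one modest technical lemma about minima of min-convex functions required along the way.

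First, for (2): Since each $\s_j$ is optimized for $v_j := (d_jh_j,0) \in \cA_{\prin}^{\trop}(\bZ)$, Lemma \ref{opslem} produces the global monomial $\vartheta_{i(v_j)}$ on $\cA^{\vee}_{\prin}$, and Lemma \ref{midocalem}, (3) gives the identity
\[
\vartheta_{i(v_j)}^T(q) \;=\; \langle q,(d_jh_j,0)\rangle_{\s_j}
\qquad \text{for all } q \in \cXrt.
\]
Because $W$ is a finite sum of positive universal Laurent polynomials, its tropicalization is the pointwise minimum of the summand tropicalizations, so $W^T(x) = \min_j \vartheta_{i(v_j)}^T(x) = \min_j \langle x,(d_jh_j,0)\rangle_{\s_j}$. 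The description of $\Xi$ in (2) follows immediately.

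Second, for (1): By Proposition \ref{convcor}, each $\vartheta_{i(v_j)}^T$ is min-convex. I would then verify directly from Definition \ref{imcdef} that the minimum of finitely many min-convex functions is again min-convex. Concretely, for consecutive linear segments $L,L'$ of a broken line on which the minimum is attained by $\vartheta_{i(v_j)}^T$ and $\vartheta_{i(v_{j'})}^T$ respectively, one uses that the two functions agree at the bend point and that each is min-convex to chain the inequalities $d\vartheta_{i(v_j)}^T(m_L) \le d\vartheta_{i(v_j)}^T(m_{L'}) \le d\vartheta_{i(v_{j'})}^T(m_{L'})$, the last inequality coming from the fact that moving into $L'$ one enters the region where $\vartheta_{i(v_{j'})}^T$ is smaller. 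Thus $W^T$ is min-convex, hence decreasing by Lemma \ref{diplem1}, (1), and then $\Xi = \Xi_{W^T}$ is a positive polytope by Lemma \ref{sminlem}.

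Third, for (3): Using (2) and the identity from Lemma \ref{midocalem}, (3), each inequality $\langle p,(d_jh_j,0)\rangle_{\s_j}\ge 0$ cutting out $\Xi$ translates into $v_j(\vartheta_p) \ge 0$, where $v_j \in \cA_{\prin}^{\trop}(\bZ)$ is the valuation along the boundary divisor corresponding to $(d_jh_j,0)$. Consequently
\[
\Xi\cap\Theta(\cA_{\prin}) \;=\; \{p\in\Theta(\cA_{\prin}) \,:\, v_j(\vartheta_p)\ge 0 \text{ for all } j\} \;=\; \Theta(\ocA_{\prin})
\]
in the notation of \S\ref{pcsec}. Since every $v_j$ has an optimized seed, Lemma \ref{midocalem}, (2) applied with $S=\{v_1,\dots,v_{n_f}\}$ identifies $\midd(\ocA_{\prin}) = \midd(\cA_{\prin})\cap \up(\ocA_{\prin})$ and shows that $\Xi\cap\Theta(\cA_{\prin})$ parameterizes a basis, completing the proof.

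The main obstacle is the lemma asserted in Step 2, that a finite minimum of min-convex functions is min-convex in the broken-line sense of Definition \ref{imcdef}; while the argument outlined above is short and the weaker statement of being \emph{decreasing} is essentially immediate from $\min_j(a_j+b_j) \ge \min_j a_j + \min_j b_j$, getting the full min-convexity requires the case analysis at transition points between segments. Everything else is a bookkeeping exercise with Lemma \ref{midocalem}.
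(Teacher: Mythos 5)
Your proposal is correct and follows the same route as the paper, which disposes of the corollary in a single sentence as ``immediate from Lemma \ref{midocalem}.'' You helpfully make explicit what that lemma actually delivers: part (3) of Lemma \ref{midocalem} gives both the pairing identity (your step for part (2)) and the min-convexity of each $\vartheta_{i(v_j)}^T$, while part (2) of Lemma \ref{midocalem} is exactly what identifies $\midd(\ocA_{\prin})$ with $\up(\ocA_{\prin})\cap\midd(\cA_{\prin})$ and converts the $v_j$-inequalities into membership in $\Theta(\ocA_{\prin})$.

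The one ingredient the paper leaves entirely implicit, and which you correctly isolate as the only genuinely new work, is that the pointwise minimum of finitely many broken-line min-convex functions is again min-convex. Your sketch of the chain $d\vartheta_{i(v_j)}^T(m_L)\le d\vartheta_{i(v_j)}^T(m_{L'})\le d\vartheta_{i(v_{j'})}^T(m_{L'})$ is sound provided you are a bit more careful than the notation suggests: one should first refine the subdivision of the broken line so that on each sub-segment both the decoration monomial and the index $j$ achieving the minimum are constant. At a transition between sub-segments, either the broken line bends (handled by the first inequality via min-convexity of $\vartheta_{i(v_j)}^T$, since that notion already tolerates bends), or the minimizing index changes (handled by the second inequality, where the key point is that both functions agree at the transition point and $-m_{L'}$ points into the region where $\vartheta_{i(v_{j'})}^T$ is smaller), or both simultaneously, in which case the two steps compose. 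The inequalities should of course be read as being evaluated at the appropriate nearby points rather than as formal expressions. With that caveat the argument is correct, and the organization into (2), then (1), then (3) is a clean way to present what the paper compresses into one line.
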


\begin{proof} 
This is immediate from Lemma \ref{midocalem}. 
\end{proof} 

\begin{corollary} \label{ffgcor} Assume we have Enough Global Monomials on 
$\cA_{\prin}^{\vee}$, and every 
frozen variable has an optimized seed. Let $W$ and $\Xi$ be as in 
Corollary \ref{spcor}. If for some seed $\s$, $\Xi$ 
is contained in the convex hull $\Conv(\Theta)$
of $\Theta$ (which itself contains 
the integral points of the  cluster complex $\Delta^+(\ZZ)$) then 
$\Theta = \cA^{\mch}_{\prin}(\bZ^T)$,
$\midd(\ocA_{\prin}) = \up(\ocA_{\prin})$ is finitely generated, and the
integer points $\Xi \cap \cA^{\mch}_{\prin}(\bZ^T)$ 
parameterize a canonical basis of $\up(\ocA_{\prin})$. 
\end{corollary}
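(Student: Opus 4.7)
Proof proposal. The plan is to reduce the statement, via Proposition~\ref{scprop} applied with $w = W^T$, to the set-theoretic inclusion $\Xi \cap \cA_{\prin}^{\mch}(\bZ^T) \subseteq \Theta$, and then to combine $\Theta = \cA_{\prin}^{\mch}(\bZ^T)$ with Proposition~\ref{markscor} and Lemma~\ref{midocalem}(2). First I fix the seed $\s$ from the hypothesis and work under the identification $\cA_{\prin}^{\mch}(\bZ^T) = \tM^{\circ}_{\s}$. For an integer point $p \in \Xi$, the hypothesis $\Xi \subseteq \Conv(\Theta)$ yields an expression $p = \sum_i \lambda_i q_i$ with $q_i \in \Theta$ and $\lambda_i \in \RR_{\geq 0}$; since $p$ and the $q_i$ are all rational, the parameter space of such expressions is a non-empty rationally defined polytope, so the $\lambda_i$ may be chosen in $\QQ_{\geq 0}$. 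Clearing denominators produces $kp = \sum_i m_i q_i$ with positive integers $k, m_i$, and additive closure of $\Theta$ (Theorem~\ref{goodalgebra}) gives $kp \in \Theta$; saturation of $\Theta$ (same theorem) then gives $p \in \Theta$.

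Next I verify the three hypotheses of Proposition~\ref{scprop} for $w = W^T$. Min-convexity of $W^T$ is Corollary~\ref{spcor}(1), and the implication $W^T(p) > 0 \Rightarrow p \in \Theta$ is immediate from the first paragraph since $W^T(p) > 0$ forces $p \in \Xi$. A bounded positive polytope exists by the EGM assumption on $\cA_{\prin}^{\mch}$ via Lemma~\ref{egmlem}, Proposition~\ref{convcor} and Lemma~\ref{sminlem}. Finally, there is some integer $p$ with $W^T(p) > 0$: by Lemma~\ref{midocalem}(3) the value $W^T(q)$ equals the minimum over frozen boundary divisors $v_j$ of $v_j(\vartheta_q)$, so the $g$-vector of the cluster monomial $\prod_j A_{h_j}$ (the product of all frozen variables in the initial seed, with exponent $1$ on each) satisfies $W^T(p) \geq 1 > 0$. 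Proposition~\ref{scprop} then delivers $\Theta = \cA_{\prin}^{\mch}(\bZ^T)$.

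With $\Theta = \cA_{\prin}^{\mch}(\bZ^T)$ in hand, $\midd(\cA_{\prin})$ and $\can(\cA_{\prin})$ coincide as $\kk$-algebras, and Proposition~\ref{markscor} sandwiches $\up(\cA_{\prin})$ between them, forcing $\up(\cA_{\prin}) = \midd(\cA_{\prin})$. The optimized-seed hypothesis on each frozen divisor now lets me apply Lemma~\ref{midocalem}(2) with $S$ the set of frozen boundary divisors of $\cA_{\prin} \subset \ocA_{\prin}$, producing $\up(\ocA_{\prin}) = \midd(\ocA_{\prin})$, with canonical basis $\Xi \cap \Theta = \Xi \cap \cA_{\prin}^{\mch}(\bZ^T)$ by Corollary~\ref{spcor}. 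Finite generation follows from Theorem~\ref{fgprop}: $\Xi$ is a positive polytope, hence the Rees-type algebra $\tilde S_{\Xi} \subset \can(\cA_{\prin})[T]$ is a finitely generated graded $\kk$-algebra, and specialising the Rees parameter $T \mapsto 1$ exhibits $\midd(\ocA_{\prin}) = \up(\ocA_{\prin})$ as a finitely generated $\kk$-algebra.

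The main obstacle is the first-paragraph implication: converting the convex-hull hypothesis $\Xi \subseteq \Conv(\Theta)$ into the lattice-theoretic statement $\Xi \cap \cA_{\prin}^{\mch}(\bZ^T) \subseteq \Theta$. This relies essentially on both additive closure and saturation of $\Theta$, combined with the rationality principle that lattice points in the convex hull of a finite set of lattice points admit rational convex combinations. Either property failing would break the argument, which is why the full structural force of Theorem~\ref{goodalgebra} is invoked rather than merely the definition of $\Theta$. Once this inclusion is secured, everything else follows by assembling previously established results.
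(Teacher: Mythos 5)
Your proof is correct and follows the same route as the paper: establish $\Theta = \cA_{\prin}^{\mch}(\bZ^T)$ via Proposition~\ref{scprop} applied with $w = W^T$, then assemble the conclusion from Proposition~\ref{markscor}, Lemma~\ref{midocalem} and Corollary~\ref{spcor}. The published proof is considerably terser—it cites Proposition~\ref{scprop} without spelling out why $W^T(p) > 0$ forces $p \in \Theta$—and your first paragraph supplies exactly the missing argument (rational convex combinations of lattice points, clearing denominators, then additive closure and saturation of $\Theta$ from Theorem~\ref{goodalgebra}); you also make explicit the existence of some $p$ with $W^T(p) > 0$ and trace finite generation to Theorem~\ref{fgprop}, points the paper leaves implicit.
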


\begin{proof} 
By definition 
$\Xi := \{W^T \geq 0\}$, and $W^T$ is min-convex by Lemma \ref{opslem}. Thus 
$\Theta = \cA^{\vee}_{\prin}(\ZZ^T)$ 
by Proposition \ref{scprop}. Now the result follows from 
the inclusions 
\[
\midd(\cA_{\prin}) \subset \up(\cA_{\prin}) \subset \can(\cA_{\prin})
\]
of Corollary \ref{markscor} 
\end{proof} 

The corollary applies in important representation theoretic examples:

\begin{proof}[Proof of Corollary \ref{slrcor}] The hyptheses of Theorem \ref{spth} are proven
in \cite{Magee}, using Proposition \ref{scprop} applied to the tropicalisation of our
potential $W$. The agreement of $W$ with the Berenstein-Kazhdan potential is given in
\cite{TimThesis}. Theorem \ref{spth} is stated for $\cA_{\prin}$. But in this case it is shown
in \cite{Magee} that the exchange matrix has full rank, i.e. the equivalent conditions
of Lemma \ref{taprinlem} hold. Now the results for $\cA_{\prin}$ imply the analogous result
for $\cA$ using $T_{\tK^{\circ}}$ equivariance, as in the proof of (7) of Theorem \ref{mainthax}. 
The $H$ action is identified with the action of $T_{N_{\uf}^{\perp}}$ on $\cA^{\vee}$,  
the various statements about
$H$-weights now follow immediately from the equivariance, Proposition \ref{thetaeigenfunctions}. 
\end{proof} 

\begin{proof}[Proof of Corollary \ref{conf3cor}] We check the conditions of Theorem \ref{spth}:
The existence of an optimized seed is proven in \cite{Magee}, following suggestions of L. Shen and
B. LeClerc. The cluster variety has large cluster complex
(Definition \ref{lccdef}) by \cite{GS16},
Theorems 1.12 and 1.17. This
gives the hypotheses of Theorem \ref{spth}, (3). The equality of our 
$W$ with the Goncharov-Shen potential is given 
in \cite{TimThesis}. It is shown in \cite{TimThesis} that the exchange matrix has full rank, in
the sense of Lemma \ref{taprinlem}. Now the $\cA_{\prin}$ results imply the analogous statements for $\cA_{\prin}$
as in the proof directly above. The $H^{\times 3}$ action is identified with the $T_{N_{\uf}^{\perp}}$ action,
which gives the weight statements as above. 
\end{proof} 

\appendix
\section{Review of notation and Langlands duality} \label{LDsec} 
We first review basic cluster variety notation as adopted in \cite{P1}.
None of this is original to \cite{P1}, but we follow that source for
consistency of notation.

As in \cite{P1}, \S 2, {\it fixed data} $\Gamma$ means 

\begin{itemize} \label{seeddata}
\item  A lattice $N$ with a skew-symmetric bilinear form
\[
\{\cdot,\cdot\}:N\times N\rightarrow \QQ.
\]
\item 
An \emph{unfrozen sublattice} $N_{\uf}\subseteq N$, a saturated
sublattice of $N$. If $N_{\uf}=N$, we say the fixed data has no frozen
variables.
\item An index set $I$ with $|I|=\rank N$ and a subset $I_{\uf}\subseteq I$
with $|I_{\uf}|=\rank N_{\uf}$.
\item Positive integers $d_i$ for $i\in I$ with greatest common divisor $1$.
\item A sublattice $N^{\circ}\subseteq N$ of finite index such that
$\{N_{\uf}, N^{\circ}\}\subseteq \ZZ$, $\{N,N_{\uf}
\cap N^{\circ}\}\subseteq\ZZ$.
\item $M=\Hom(N,\ZZ)$, $M^{\circ}=\Hom(N^{\circ},\ZZ)$.
\end{itemize}

Here we modify the definition slightly, and include in the {\it fixed data}
$[\s]$ a mutation class of seed. Recall a seed $\s= (e_1,\dots,e_n)$ is a
basis of $N$ satisfying certain properties, see \cite{P1}, \S 2, for 
the precise definitions, including that of mutation. In particular,
we write $e_1^*,\ldots,e_n^*$ for the dual basis and $f_i=d_i^{-1}e_i^*$.
We write
\begin{equation}
\label{epsilondef}
\epsilon_{ij}:= \{e_i,e_j\}d_j.
\end{equation}
We shall also assume that if $i\in I_{\uf}$, then the linear functional
$\{e_i,\cdot\}$ is non-zero. (If this happens, then one can view $e_i$
as frozen.)

We have two natural maps defined by $\{\cdot,\cdot\}$:
\begin{align*}
p_1^*:N_{\uf}\rightarrow M^{\circ}\quad\quad &\quad\quad\quad\quad p_2^*:N\rightarrow
M^{\circ}/N_{\uf}^{\perp}\\
N_{\uf}\ni n\mapsto (N^{\circ}\ni n'\mapsto \{n,n'\})&\quad\quad N\ni n
\mapsto (N_{\uf}\cap N^\circ\ni n'\mapsto \{n,n'\})
\end{align*}
We also choose a map
\begin{equation}
\label{pstardef}
p^*:N\rightarrow M^{\circ}
\end{equation}
such that (a) $p^*|_{N_{\uf}}=p_1^*$ and
(b) the composed map $N\rightarrow M^{\circ}/N_{\uf}^{\perp}$ agrees with
$p_2^*$. Different choices of $p^*$
differ by a choice of map $N/N_{\uf}\rightarrow N_{\uf}^{\perp}$.
Further, if there are no frozen variables, i.e., $I_{\uf}=I$,
then $p^*=p_1^*=p_2^*$ is canonically
defined.

We also define
\[
K=\ker p_2^*,\quad K^{\circ}=K\cap N^{\circ}.
\]

Following our conventions in \cite{P1}, let $\foT$ be the infinite oriented
rooted tree with $|I_{\uf}|$ outgoing edges from each vertex, labelled by
the elements of $I_{\uf}$. Let $v$ be the root of the tree. Attach some
choice of initial seed
$\s\in [\s]$ to the vertex $v$. (We write $\foT_{\s}$ if we want to record
this choice of initial seed.)
Now each simple path starting at $v$ determines
a sequence of mutations, just mutating at the label attached to the edge.
In this way we attach a seed to each vertex of $\foT$. We write the seed
attached to a vertex $w$ as $\s_w$, and write $T_{N^{\circ},\s_w}, 
T_{M,\s_w}$
etc.\ for the corresponding tori. Mutations define birational maps between
these tori, and the associated Fock-Goncharov $\shA$, $\shX$
cluster varieties are defined by
\begin{equation} \label{aandx}
\cA_{\Gamma} = \bigcup_{w \in \foT} T_{N^{\circ},{\s_w}}, 
\quad\quad\cX_{\Gamma} =
\bigcup_{w \in \foT} T_{M,\s_w}.
\end{equation}
This parameterization of torus charts is very redundant, with 
infinitely many copies
of the same chart appearing. In particular, given a vertex $w$ of $\foT$,
one can consider the subtree $\foT_w$ rooted at $w$, with initial seed $\s_w$. 
This tree can similarly be used to define $\cA_{\Gamma}$, and the obvious
inclusion between these two versions of $\cA_{\Gamma}$ is in fact an
isomorphism, as can be easily checked.

As one expects the mirror of a variety obtained by gluing charts of the
form $T_{M^{\circ}}$ to be obtained by gluing charts of the form 
$T_{N^{\circ}}$, the mirror of $\cA$ is not $\cX$, as the latter is
obtained by gluing charts of the form $T_N$. To get the correct mirrors
of $\cA$ and $\cX$, one 
follows \cite{FG09} in defining the {\it Langlands dual} cluster varieties.
This is done by, given fixed data $\Gamma$, defining 
fixed data $\Gamma^{\mch}$ to be the fixed data:
\[
I^{\vee}:=I, \quad I_{\uf}^{\vee}:=I_{\uf}, \quad d_i^{\vee}:=d_i^{-1}D
\]
where
\[
D:=\lcm(d_1,\ldots,d_n).
\]
The lattice, with its finite index sublattice, is 
\[
 D \cdot N =: (N^{\mch})^{\circ} \subset N^{\mch} :=  N^{\circ}
\]
and the $\bQ$-valued skew-symmetric form on $N^{\mch} = N^{\circ}$ is 
\[
\{\cdot,\cdot\}^{\mch} := D^{-1} \{\cdot,\cdot\}.
\]

For each $\s=(e_1,\ldots,e_n) \in [\s]$, we define
\[
\s^\mch :=(d_1 e_1,\dots,d_ne_n).
\]
One checks easily that $\s \mapsto \s^{\mch}$ gives a bijection between 
$[\s]$ and $[\s^{\mch}]$.

Note that for skew-symmetric cluster algebras, i.e.,
when all the multipliers $d_i =1$, Langlands duality
is the identity, $\Gamma^{\mch} = \Gamma$. 

\begin{definition}[Fock-Goncharov dual] \label{fgddef}
We write $\cA_{\Gamma}^{\mch} := \cX_{\Gamma^{\mch}}$ and
$\cX_{\Gamma}^{\mch}:=\cA_{\Gamma^{\mch}}$.
\end{definition}

Note in the skew-symmetric case, that $\cA^{\mch} = \cX$. 

One observes the elementary 

\begin{proposition} 
Given fixed data $\Gamma$, the double Langlands dual data $\Gamma^{\vee\vee}$
is canonically isomorphic to the data $\Gamma$ via the map $D\cdot N\rightarrow
N$ given by $n\mapsto D^{-1}n$.
\end{proposition}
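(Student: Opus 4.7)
The plan is to verify, piece by piece, that each ingredient of the fixed data $\Gamma^{\vee\vee}$ is carried to the corresponding ingredient of $\Gamma$ by the rescaling $\phi: D\cdot N \to N$, $n \mapsto D^{-1}n$. Since $\phi$ is visibly a $\ZZ$-module isomorphism (multiplication by $D$ being its inverse), the content lies entirely in checking that the numerical invariants and additional structures (multipliers, sublattices, skew-form, seed) match up.

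First I would compute the multipliers. By definition $d_i^{\vee} = d_i^{-1} D$, and each $d_i^{\vee}$ is a positive integer because $d_i \mid D$. The key identity is $D^{\vee} := \lcm_i(d_i^{\vee}) = D$: checking prime-by-prime, for each prime $p$ with $a_i := v_p(d_i)$ we have $v_p(D/d_i) = \max_j a_j - a_i$, so $v_p(\lcm_i(D/d_i)) = \max_j a_j - \min_i a_i$, which equals $v_p(D)$ since $\gcd_i(d_i)=1$ forces $\min_i a_i = 0$. Moreover this also shows $\gcd_i(d_i^{\vee}) = 1$, so the data $\Gamma^{\vee}$ indeed satisfies the gcd axiom and the second dual $\Gamma^{\vee\vee}$ is defined. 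Then $d_i^{\vee\vee} = (d_i^{\vee})^{-1} D^{\vee} = (D/d_i)^{-1} D = d_i$, matching $\Gamma$.

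Next I would verify the lattice and skew-form. By construction, $N^{\vee} = N^{\circ}$ with $(N^{\vee})^{\circ} = D\cdot N$, so $N^{\vee\vee} = (N^{\vee})^{\circ} = D\cdot N$ and $(N^{\vee\vee})^{\circ} = D \cdot N^{\vee} = D\cdot N^{\circ}$. Under $\phi$ these go to $N$ and $N^{\circ}$ respectively, as required. For the skew form, $\{\cdot,\cdot\}^{\vee\vee} = (D^{\vee})^{-1}\{\cdot,\cdot\}^{\vee} = D^{-2}\{\cdot,\cdot\}$ on $N^{\vee\vee} = D\cdot N$; on the other hand for $n_1,n_2 \in D\cdot N$ we have $\{\phi(n_1),\phi(n_2)\} = \{D^{-1}n_1, D^{-1}n_2\} = D^{-2}\{n_1,n_2\}$, so $\phi$ intertwines $\{\cdot,\cdot\}^{\vee\vee}$ with $\{\cdot,\cdot\}$. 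The unfrozen sublattice is transported correctly since $I$ and $I_{\uf}$ are preserved by each application of Langlands duality and $\phi$ carries the basis indexed by $i \in I_{\uf}$ to itself up to the rescaling handled next.

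Finally the seed. If $\s = (e_1,\ldots,e_n)$ then $\s^{\vee} = (d_1e_1,\ldots,d_ne_n)$ and, applying duality once more, $\s^{\vee\vee} = (d_1^{\vee}\cdot d_1 e_1, \ldots, d_n^{\vee}\cdot d_n e_n) = (De_1,\ldots,De_n)$, which $\phi$ sends back to $(e_1,\ldots,e_n) = \s$; this also confirms that $\phi$ sends $\s^{\vee\vee}$ to a basis (in particular a legitimate seed). The compatibility of $\phi$ with mutation is automatic, since mutation is determined by the seed and the skew form, both of which $\phi$ respects by the preceding paragraphs. There is no real obstacle — the whole proof is bookkeeping — but the one place one must be slightly careful is the verification $D^{\vee} = D$, which is precisely where the normalization $\gcd_i(d_i)=1$ enters; without it the rescaling factor in $\phi$ would not be $D^{-1}$ but rather $D^{-1}\gcd_i(d_i)$.
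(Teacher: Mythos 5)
Your proof is correct; since the paper omits the argument entirely (calling the proposition ``elementary''), there is nothing in the source to compare against, and yours is the natural direct verification. The computations are right: $d_i\mid D$ so $d_i^\vee\in\ZZ_{>0}$, the prime-by-prime check that $D^\vee=D$ and $\gcd_i(d_i^\vee)=1$ uses $\gcd_i(d_i)=1$ exactly as you say, $d_i^{\vee\vee}=d_i$, the lattice pair $(N^{\vee\vee},(N^{\vee\vee})^\circ)=(D\cdot N,\;D\cdot N^\circ)$ is carried by $\phi$ to $(N,N^\circ)$, $\phi$ intertwines $D^{-2}\{\cdot,\cdot\}$ with the original form, and $\phi(\s^{\vee\vee})=\phi(De_1,\ldots,De_n)=\s$.

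One small inaccuracy in your closing aside: if one dropped $\gcd_i(d_i)=1$ (say $g:=\gcd_i(d_i)>1$), it is not that the ``correct'' rescaling becomes $D^{-1}g$; rather, the double dual would genuinely fail to be isomorphic to $\Gamma$ by any rescaling, for two independent reasons. First, the double-dual multipliers would become $d_i^{\vee\vee}=d_i/g\ne d_i$, and these scalars are unaffected by a lattice rescaling. Second, $(N^{\vee\vee})^\circ = D^\vee\cdot N^\circ = (D/g)\cdot N^\circ$, so matching the big lattices forces $\phi$ to scale by $D^{-1}$ while matching the small lattices forces scaling by $g/D$, and these are incompatible when $g>1$. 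Stating it this way makes the necessity of the normalization cleaner than proposing an alternative rescaling factor. This does not affect the validity of your main argument.
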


\section{The $\cA$ and $\cX$-varieties with principal coefficients} 
\label{rdsec} 

We recall briefly the construction of principal fixed data from
\cite{P1}, Construction 2.11. For fixed data $\Gamma$, the data for the cluster
variety with principal coefficients $\Gamma_{\prin}$ is defined by:
\begin{itemize}
\item  $\tN := N \oplus M^{\circ}$ with the skew-symmetric bilinear form
\[
\{(n_1,m_1),(n_2,m_2)\}=\{n_1,n_2\}+\langle n_1,m_2\rangle-\langle n_2,m_1
\rangle.
\]
\item $\tN_{\uf} := N_{\uf} \oplus 0 \subset \tN.$
\item The  sublattice $\tN^{\circ}$ is $N^\circ \oplus M$.
\item The index set $I$ is now the disjoint union of two copies of $I$,
with the $d_i$ taken to be as in $\Gamma$. The set of unfrozen indices
$I_{\uf}$ is just the original $I_{\uf}$ thought of as a subset of the
first copy of $I$.
\item Given an initial seed $\s=(e_1,\ldots,e_n)$, we define 
\begin{equation}
\label{tildeseed}
\tilde\s=\big((e_1,0),\ldots,(e_n,0),(0,f_1),\ldots,(0,f_n)\big).
\end{equation}
We then take the mutation class $[\tilde\s]$.
\end{itemize}

Note that $[\tilde\s]$ depends on the choice of $\s$: it is not true
that if $\s'$ is obtained by mutation from $\s$ then $\tilde\s'$ is
obtained from the same set of mutations applied to $\tilde\s$.
Nevertheless, the cluster varieties 
\[
\cX_{\prin}:=\cX_{\Gamma_{\prin}}, \quad \cA_{\prin}:=\cA_{\Gamma_{\prin}}
\]
are defined independently of the seed $\s$.
This is a very important point, which we shall revisit in Remark 
\ref{initialseedindremark}.

The following summarizes all of the important relationships between
the various varieties which will be made use of in this paper.

\begin{proposition}
\label{ldpprop}
Giving fixed data $\Gamma$, we have:
\begin{enumerate}
\item
There is a commutative diagram where the dotted arrows are only present
if there are no frozen variables (i.e., $N_{\uf}=N$): 
\[
\xymatrix@C=30pt
{
\cA_t\ar[r]\ar[d]&\cA_{\prin}\ar@/^24pt/[rr]^p\ar[r]^{\tilde p}\ar[d]_{\pi}
&\cX\ar[d]_{\lambda}&\cX_{\prin}\ar[l]_{\rho}\ar[d]_w&\cA\ar[l]_{\xi}\ar[d]\\
t\ar[r]&T_M\ar@{-->}[r]&T_{K^*}&\ar@{-->}[l]T_M&\ar[l] e
}
\]
with $t\in T_M$ any point, $e\in T_M$ the identity, and with the left- and
right-hand squares cartesian and $p$ an isomorphism, canonical if there
are no frozen variables.
\item There are torus actions
\[
\hbox{$T_{N^{\circ}}$ on $\cA_{\prin}$;
$T_{K^{\circ}}$ on $\cA$;
$T_{N_{\uf}^{\perp}}$ on $\cX$;
$T_{\widetilde K^{\circ}}$ on $\cA_{\prin}$.}
\]
Here $\widetilde K^{\circ}$ is the kernel of the map 
\begin{align*}
N^{\circ}\oplus M&\rightarrow N^*_{\uf}\\
(n,m)&\mapsto p_2^*(n)-m.
\end{align*}
Furthermore $T_{N^{\circ}}$ and $T_{\widetilde K^{\circ}}$ act on $T_M$
so that the map $\pi:\cA_{\prin}\rightarrow T_M$ is $T_{N^{\circ}}$- and
$T_{\widetilde K^{\circ}}$-equivariant.
The map $\tilde p:\cA_{\prin}\rightarrow \cX=\cA_{\prin}/T_{N^{\circ}}$ 
is a $T_{N^{\circ}}$-torsor. There is a map $T_{\widetilde K^{\circ}}\rightarrow
T_{N_{\uf}^{\perp}}$ such that the map $\tilde p$ is also compatible
with the actions of these two tori on $\cA_{\prin}$
and $\cX$ respectively, so that 
\[
\tau:\cA_{\prin}\rightarrow \cX/T_{N_{\uf}^{\perp}}
\]
is a $T_{\widetilde K^{\circ}}$-torsor.
\item
$(\Gamma_{\prin})^{\vee}$ and $(\Gamma^{\vee})_{\prin}$ are
isomorphic data, so we can define 
\[
\shA^{\vee}_{\prin}:=\shX_{(\Gamma^{\vee})_{\prin}}, \quad
\shX^{\vee}_{\prin}:=\shA_{(\Gamma^{\vee})_{\prin}}
\]
\item
There is a commutative diagram
\[
\xymatrix@C=30pt
{
\cX^{\vee}\ar[r]\ar[d]&\cX_{\prin}^{\vee}\ar@/^24pt/[rr]^p\ar[r]^{\tilde p}
\ar[d]_{\pi}
&\cA^{\vee}\ar[d]_{\lambda}&\cA^{\vee}_{\prin}
\ar[l]_{\rho}\ar[d]_w&\cX^{\vee}\ar[l]_{\xi}\ar[d]\\
e\ar[r]&T_{M^{\circ}}\ar@{-->}[r]&T_{(K^{\circ})^*}&\ar@{-->}[l]T_{M^{\circ}}&\ar[l] e
}
\]
\end{enumerate}
\end{proposition}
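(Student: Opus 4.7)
The plan is to deduce (4) as a formal consequence of (1), (3), and the Fock--Goncharov duality conventions of Definition \ref{fgddef}. Specifically, I will apply the diagram of (1) to the Langlands dual fixed data $\Gamma^{\vee}$ and then translate each of the resulting cluster varieties into its ``dual'' incarnation over $\Gamma$.

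First I would unpack the identifications. By Definition \ref{fgddef}, $\shA^{\vee} = \shX_{\Gamma^{\vee}}$ and $\shX^{\vee} = \shA_{\Gamma^{\vee}}$. For the principal coefficient versions, we have $\shA_{\prin}^{\vee} := \shX_{(\Gamma^{\vee})_{\prin}}$ and $\shX_{\prin}^{\vee} := \shA_{(\Gamma^{\vee})_{\prin}}$ by (3). Using (3) again to identify $(\Gamma^{\vee})_{\prin}$ with $(\Gamma_{\prin})^{\vee}$, these are the same as $\shX_{(\Gamma_{\prin})^{\vee}}$ and $\shA_{(\Gamma_{\prin})^{\vee}}$; under Definition \ref{fgddef} these agree with $(\shA_{\prin})^{\vee}$ and $(\shX_{\prin})^{\vee}$. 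Thus applying (1) to the fixed data $\Gamma^{\vee}$ produces a diagram whose top row reads (with $t$ chosen to be the identity $e$)
\[
\shX^{\vee} \to \shX_{\prin}^{\vee} \to \shA^{\vee} \leftarrow \shA_{\prin}^{\vee} \leftarrow \shX^{\vee},
\]
matching the top row of the diagram in (4).

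Next I would identify the bottom row. Since $N^{\vee} = N^{\circ}$ by definition of Langlands dual data, we have $M(\Gamma^{\vee}) = \Hom(N^{\vee},\ZZ) = \Hom(N^{\circ},\ZZ) = M^{\circ}$, giving the required tori $T_{M^{\circ}}$ on the bottom row. For the central torus I need the identification $K(\Gamma^{\vee})^{*} \cong (K^{\circ})^{*}$. By definition $K(\Gamma^{\vee}) = \ker p_2^{*,\vee}$, where $p_2^{*,\vee} \colon N^{\vee} \to M(\Gamma^{\vee})^{\circ}/(N^{\vee}_{\uf})^{\perp}$ sends $n$ to $\{n,\cdot\}^{\vee}|_{N^{\vee}_{\uf} \cap (N^{\vee})^{\circ}} = D^{-1}\{n,\cdot\}|_{N^{\circ}_{\uf} \cap DN}$. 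Verifying that under $N^{\vee} = N^{\circ}$ this kernel is precisely $K \cap N^{\circ} = K^{\circ}$ is the one genuinely combinatorial step; it follows by chasing the definition of $K = \ker p_2^*$ through the isomorphism $\Gamma^{\vee\vee} \cong \Gamma$, using that rescaling the bilinear form by $D^{-1}$ does not change its vanishing locus, and that the saturation hypotheses on $N^{\circ} \subseteq N$ and on $N_{\uf}$ guarantee the kernel is unchanged when we pass to finite-index sublattices.

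The hardest part, and where I would spend the most care, is this last combinatorial identification of kernels: the completed-lattice gymnastics involved in comparing $\{n,\cdot\}|_{N_{\uf}\cap N^{\circ}}$ (valued in $M^{\circ}/N_{\uf}^{\perp}$) with $D^{-1}\{n,\cdot\}|_{N^{\circ}_{\uf}\cap DN}$ (valued in $D^{-1}M/(N^{\circ}_{\uf})^{\perp}$), and ensuring the corresponding dual lattices match as asserted. Once this is done, the commutativity of the diagram and the cartesian property of the outer squares are inherited verbatim from (1), the dotted arrows survive because the hypothesis $N_{\uf} = N$ is preserved under Langlands duality, and the map labeled $p$ is just the $p$ from (1) for $\Gamma^{\vee}$. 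This completes (4).
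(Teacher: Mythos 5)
Your proposal addresses only part (4), and it takes precisely the same route the paper does: the paper's entire proof of (4) is the one-line remark that ``(4) is the same as (1), but for the Langlands dual data $\Gamma^{\vee}$,'' followed by a list of the explicit maps on cocharacter lattices. Your proposal is correct and just spells out the bookkeeping (the identifications of $\shA^{\vee}_{\prin}$, $\shX^{\vee}_{\prin}$ via (3), the identification $M(\Gamma^{\vee})=M^{\circ}$, and the kernel matching $K(\Gamma^{\vee})\cong K^{\circ}$) that the paper leaves implicit.
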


\begin{proof}
We consider the diagram of (1). The maps with names
are given as follows on cocharacter
lattices: 
\begin{align}
\label{basicmaps}
\begin{split}
\pi:N^{\circ}\oplus M\rightarrow M,\quad & (n,m)\mapsto m\\
\tilde p:N^{\circ}\oplus M\rightarrow  M, \quad &
(n,m)\mapsto m-p^*(n)\\
\rho: M\oplus N^{\circ}\rightarrow M,\quad & (m,n)\mapsto m\\
\lambda: M\rightarrow K^*,\quad & m\mapsto m|_K\\
w: M\oplus N^{\circ}\rightarrow M,\quad & (m,n)\mapsto m-p^*(n)\\
\xi: N^{\circ} \rightarrow M\oplus N^{\circ},\quad & n\mapsto (-p^*(n),-n)\\
p: N^{\circ}\oplus M\rightarrow M\oplus N^{\circ},  \quad &
(n,m) \mapsto (m-p^*(n), n)
\end{split}
\end{align}
Note $\lambda$ is the transpose of the inclusion $K\rightarrow N$.
In the case there are no frozen variables, the two dotted horizontal
lines are just given on cocharacter lattices by $\lambda$ again.
One checks commutativity from these formulas at the level of individual
tori, and one checks the maps are compatible with mutations.
Note the left-hand diagram defines $\cA_t$, see \cite{P1}, Definition 2.12.
The statements that $\tilde p$, $\pi$ and $\lambda$ are compatible with
mutations are in \S2 of \cite{P1}, as well as the commutativity of
the second square in case of no frozen variables. It is clear that $p$
induces an isomorphism of lattices, hence an isomorphism of the relevant
tori. This isomorphism is canonical in the no frozen variable case because
$p^*$ is well-defined in this case. The fact the right-hand square is
cartesian follows from the fact that $\Im \xi=\ker w$. Note the signs
in the definition of $\xi$ are necessary to be compatible with mutations.
This gives (1).

For (2), the first action is specified on the level of cocharacter
lattices by 
\[
N^{\circ} \rightarrow N^{\circ}\oplus M, \quad n \mapsto (n, p^*(n))\\
\]
while the last three are given by the inclusions 
\[
K^{\circ}\subset N^{\circ},\quad N_{\uf}^{\perp} \subset M, \quad
\widetilde K^{\circ} \subset N^{\circ}\oplus M.
\]
One checks easily that the induced actions are compatible with mutations.
The action of $T_{N^{\circ}}$ and $T_{\widetilde K^{\circ}}$ on
$T_M$ are induced by the maps $n\mapsto p^*(n)$ and $(n,m)\mapsto m$
respectively, in order to achieve the desired equivariance.
The map $T_{\widetilde K^{\circ}}\rightarrow T_{N_{\uf}^{\perp}}$ is given by
\[
\widetilde K^{\circ}\ni (m,n)\mapsto m-p^*(n)\in N_{\uf}^{\perp}.
\]
The other statements are easily checked.

For (3), from the definitions, the lattices playing 
the role of $N^{\circ}\subseteq N$ are:
\begin{align*}
(\Gamma_{\prin})^{\vee}:& \quad
D\cdot \widetilde N=D\cdot N\oplus D\cdot M^{\circ}\subseteq 
\widetilde N^{\circ}=N^{\circ}\oplus M\\
(\Gamma^{\mch})_{\prin}:& \quad 
D\cdot N\oplus M^{\circ}\subseteq N^{\circ}\oplus D^{-1}\cdot M.
\end{align*}
These are isomorphic under the map $(n,m)\mapsto (n, D^{-1}m)$. Furthermore,
the pairings in the two cases are given by 
\[
\{(n_1,m_1),(n_2,m_2)\}
=\begin{cases}
D^{-1}(\{n_1,n_2\}+\langle n_1,m_2\rangle -\langle n_2, m_1\rangle)
&\hbox{in the $(\Gamma_{\prin})^{\vee}$ case}\\
D^{-1}\{n_1,n_2\}+\langle n_1,m_2\rangle -\langle n_2, m_1\rangle
&\hbox{in the $(\Gamma^{\vee})_{\prin}$ case}
\end{cases}
\]
respectively. The isomorphism given preserves the pairings, hence the 
isomorphism.

(4) is the same as (1), but for the 
Langlands dual data $\Gamma^{\vee}$. For reference, the maps
are given as follows: 
\begin{align}
\label{basicmapsdual}
\begin{split}
\pi:D\cdot N\oplus M^{\circ}\rightarrow M^{\circ},\quad & (n,m)\mapsto m\\
\tilde p:D \cdot N\oplus M^{\circ}\rightarrow  M^{\circ}, \quad &
(Dn,m)\mapsto m-p^*(n)\\
\rho: M^{\circ}\oplus D\cdot N\rightarrow M^{\circ},\quad & (m,Dn)\mapsto m\\
\lambda: M^{\circ}\rightarrow (K^{\circ})^*,\quad & m\mapsto m|_K\\
w: M^{\circ} \oplus D\cdot N\rightarrow M^{\circ},\quad & (m,Dn)
\mapsto m-p^*(n)\\
\xi: D\cdot N \rightarrow M^{\circ} \oplus D\cdot N,\quad & Dn\mapsto (-p^*(n),-Dn)\\
p: D\cdot N\oplus M^{\circ}\rightarrow M^{\circ}\oplus D\cdot N, \quad &
(Dn,m) \mapsto (m-p^*(n), Dn)
\end{split}
\end{align}
\end{proof}

\begin{remark}
Whenever the lattice $D\cdot N$ appears in dealing with the Langlands
dual data, we will always identify this with $N$ in the obvious way.
\end{remark}

Simple linear algebra gives:

\begin{lemma} \label{eglem}
The choice of the map $p^*$ gives an inclusion $N^{\circ}\subset \widetilde
K^{\circ}$ (see Proposition \ref{ldpprop}, (2)) given by $n\mapsto (n,p^*(n))$.
We also have $N_{\uf}^{\perp}$ (a sublattice of $M$) included in 
$\widetilde K^{\circ}$ via $m\mapsto (0,m)$. These inclusions
induce an isomorphism
$N^\circ \oplus N_{\uf}^{\perp} \to \tK^{\circ}$.
\end{lemma}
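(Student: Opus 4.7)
The plan is to verify the lemma in three steps: well-definedness of each inclusion, explicit construction of an inverse map, and checking bijectivity. All three amount to direct unwinding of the definitions, so the lemma is essentially a formal consequence of the definitions of $\widetilde{K}^\circ$ and of $p^*$ together with the splitting characteristic property that $p^*$ lifts the composite $N \to M^\circ / N_{\uf}^\perp$ agreeing with $p_2^*$.

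First I would check that the two proposed maps actually land in $\widetilde{K}^\circ$. For $n \in N^\circ$, the element $(n, p^*(n))$ is sent by $(n',m') \mapsto p_2^*(n') - m'$ to $p_2^*(n) - p^*(n)|_{N_{\uf}}$. By property (a) defining $p^*$ (namely $p^*|_{N_{\uf}} = p_1^*$) together with property (b) (that $N \to M^\circ / N_{\uf}^\perp$ agrees with $p_2^*$), this difference vanishes. For $m \in N_{\uf}^\perp \subset M$, the element $(0, m)$ is sent to $-m|_{N_{\uf}} = 0$ directly by the definition of $N_{\uf}^\perp$. Thus both inclusions land inside $\widetilde{K}^\circ$.

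Next I would construct the inverse. Given $(n, m) \in \widetilde{K}^\circ$, write
\[
(n, m) = (n, p^*(n)) + (0, m - p^*(n)).
\]
The membership $(n,m) \in \widetilde{K}^\circ$ says exactly $p_2^*(n) = m|_{N_{\uf}}$, while property (b) of $p^*$ gives $p_2^*(n) = p^*(n)|_{N_{\uf}}$; subtracting yields $(m - p^*(n))|_{N_{\uf}} = 0$, i.e. $m - p^*(n) \in N_{\uf}^\perp$. Hence the assignment $(n, m) \mapsto (n,\, m - p^*(n))$ produces an element of $N^\circ \oplus N_{\uf}^\perp$, and by construction it inverts the map $(n, m') \mapsto (n, p^*(n) + m')$. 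Injectivity is automatic from the first coordinate $n$, and surjectivity is exactly what we just established.

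The main substantive point, and really the only content, is the compatibility $p^*(n)|_{N_{\uf}} = p_2^*(n)$; once this identity is in hand, the splitting is formal. The mildly technical point to watch is that the direct sum takes place inside $N^\circ \oplus M$, which requires that the chosen lift $p^* \colon N \to M^\circ$ restrict to a map $N^\circ \to M$ so that $p^*(n) + m'$ really is an element of $M$ when $n \in N^\circ$ and $m' \in N_{\uf}^\perp \subset M$; this is the role played by the phrase "the choice of the map $p^*$" in the lemma statement, and can be arranged within the indeterminacy $\Hom(N/N_{\uf}, N_{\uf}^\perp)$ in the choice of $p^*$. With this compatibility secured, the proof is just three lines of algebra.
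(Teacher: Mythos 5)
Your proof is correct and is exactly the direct linear-algebra verification the paper intends; it offers no proof of its own beyond the remark ``Simple linear algebra gives,'' so there is no divergence of approach to report. Two small observations. First, only property (b) in the definition of $p^*$ is actually used: (b) says $p^*(n)\equiv p_2^*(n)\pmod{N_{\uf}^\perp}$ for all $n\in N$, which upon restriction to $N_{\uf}$ gives $p^*(n)|_{N_{\uf}}=p_2^*(n)$, and this single identity drives both the well-definedness and the membership $m-p^*(n)\in N_{\uf}^\perp$; your invocation of (a) (which concerns only $n\in N_{\uf}$, whereas you need $n\in N^\circ$) is superfluous. Second, you rightly flag that the argument needs $p^*(N^\circ)\subseteq M$; this is indeed a standing implicit requirement in the paper --- the same containment is already used in the proof of Proposition \ref{ldpprop}~(2), where $n\mapsto(n,p^*(n))$ is declared to be a map $N^\circ\to N^\circ\oplus M$, and in the formula for $\tilde p$ in \eqref{basicmaps} --- so it is a global convention on the choice of $p^*$ rather than something to be secured within this lemma.
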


\begin{lemma} \label{taprinlem} The map $T_{\tK^\circ} \to T_M$ 
induced by the composition of the inclusion and projection $\tK^{\circ}
\subset \tN^{\circ}\rightarrow M$
is a split surjection if and only if the 
map 
\[
p_2^*|_{N^{\circ}}:N^\circ \to N_{\uf}^*, \quad n \mapsto \{n, \cdot\}|_{N_{\uf}} 
\]
is surjective. This
holds if and only if in some seed $\s=(e_i)_{i\in I}$,
the $\# I_{\uf}  \times \# I$ matrix with entries for $i\in I_{\uf}$, 
$j\in I$, 
$\epsilon_{ij} = \{e_i,d_j e_j\}$ gives a surjective map
$\bZ^{\#I} \to \bZ^{\#I_{\uf}}$. 
In this case $\pi: \cA_{\prin} \to T_M$ is isomorphic to the trivial
bundle $\cA \times T_M \to T_M$. 
\end{lemma}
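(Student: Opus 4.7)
The plan is to reduce everything to a simple linear algebra statement via the decomposition $\tK^{\circ} \cong N^{\circ} \oplus N_{\uf}^{\perp}$ supplied by Lemma \ref{eglem}, and then construct the trivialization by exploiting the $T_{\tK^{\circ}}$-action on $\cA_{\prin}$ provided by Proposition \ref{ldpprop}, (2).

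First I would compute the composite $\tK^{\circ} \hookrightarrow \tN^{\circ} = N^{\circ} \oplus M \twoheadrightarrow M$. By Lemma \ref{eglem}, under the isomorphism $N^{\circ}\oplus N_{\uf}^{\perp} \xrightarrow{\sim} \tK^{\circ}$, the summand $N^{\circ}$ is included by $n \mapsto (n, p^*(n))$ and $N_{\uf}^{\perp}$ by $m \mapsto (0,m)$. Hence the composite to $M$ sends $(n,m)\mapsto p^*(n)+m$, so its image is $p^*(N^{\circ})+N_{\uf}^{\perp}\subseteq M$. Since $M$ is a free abelian group, any surjection onto $M$ admits a splitting; thus $T_{\tK^{\circ}}\to T_M$ is a split surjection iff $p^*(N^{\circ})+N_{\uf}^{\perp}=M$. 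Modding out by $N_{\uf}^{\perp}$ and using the defining property of $p^*$ (that its composite with $M\twoheadrightarrow M/N_{\uf}^{\perp}=N_{\uf}^*$ equals $p_2^*$), this happens iff $p_2^*|_{N^{\circ}}\colon N^{\circ}\to N_{\uf}^*$ is surjective.

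Next I would translate this into the matrix condition in a seed $\s=(e_1,\ldots,e_n)$. The dual basis $\{e_i^*|_{N_{\uf}}\}_{i\in I_{\uf}}$ of $N_{\uf}^*$ is in place, and for each $j$ the element $d_je_j$ lies in $N^{\circ}$ (this is part of the cluster compatibility in the definition of a seed), with $\{d_je_j\}$ generating $N^{\circ}$. The value of $p_2^*(d_je_j)$ on $e_i$ equals $\{d_je_j,e_i\}=-\{e_i,d_je_j\}=-\epsilon_{ij}$, so the matrix of $p_2^*|_{N^{\circ}}$ in these bases is $(-\epsilon_{ij})_{i\in I_{\uf},\,j\in I}$. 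Surjectivity of this integer matrix $\ZZ^{\#I}\to\ZZ^{\#I_{\uf}}$ is therefore equivalent to surjectivity of $p_2^*|_{N^{\circ}}$. Since the property is invariant under a change of seed (mutations act by integer-invertible matrices on the exchange matrix rows indexed by $I_{\uf}$), it is equivalent to hold in some seed iff it holds in any seed.

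For the final assertion, assume a splitting $\sigma\colon T_M\to T_{\tK^{\circ}}$ of the map $T_{\tK^{\circ}}\to T_M$. By Proposition \ref{ldpprop}, (2), the action of $T_{\tK^{\circ}}$ on $\cA_{\prin}$ covers the natural action on $T_M$, and $\cA \hookrightarrow \cA_{\prin}$ is identified with the fibre $\pi^{-1}(e)$. Define
\[
\Phi\colon \cA\times T_M \longrightarrow \cA_{\prin},\qquad (x,t)\mapsto \sigma(t)\cdot x.
\]
Since $\sigma(t)$ acts on the base $T_M$ as multiplication by $t$, the map $\Phi$ is a morphism over $T_M$. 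An inverse is given by $y\mapsto (\sigma(\pi(y))^{-1}\cdot y,\, \pi(y))$, which is well defined because $\sigma(\pi(y))^{-1}\cdot y$ lies in $\pi^{-1}(e)=\cA$. Thus $\Phi$ is an isomorphism, giving $\cA_{\prin}\cong \cA\times T_M$ as a trivial bundle over $T_M$.

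The main step that is not purely formal is pinning down that $\{d_je_j\}$ generates $N^{\circ}$ (so that the matrix description exactly captures the map $p_2^*|_{N^{\circ}}$); everything else is a direct unwinding of definitions together with the equivariance in Proposition \ref{ldpprop}.
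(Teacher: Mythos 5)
Your proof is correct and follows essentially the same route as the paper's: both reduce the split-surjection question to surjectivity of $p_2^*|_{N^{\circ}}$ via the decomposition of $\tK^{\circ}$ in Lemma \ref{eglem}, identify the resulting matrix with $(\pm\epsilon_{ij})$, and obtain the trivialization from the $T_{\tK^{\circ}}$-equivariance of $\pi$ by picking a splitting. You spell out a few steps the paper leaves implicit (the explicit formula for the composite, the sign, and the map $\Phi$), but there is no difference in substance.
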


\begin{proof} For the first statement, note using Lemma \ref{eglem}
that the map $\tK^{\circ}\rightarrow M$ is surjective if and only if
the map $N^{\circ}\oplus N_{\uf}^{\perp}\rightarrow M$ given by
$(n,m)\mapsto m+p^*(n)$ is surjective, and this is the case if and only
if the induced map $N^{\circ}\rightarrow M/N_{\uf}^{\perp}=N_{\uf}^*$ 
is surjective. The
given matrix is the matrix for $N^{\circ} \to N_{\uf}^*$ in the given bases, so the second
equivalence is clear. The final statement follows from the 
$T_{\tK^\circ}$-equivariance of
$\pi$ (the trivialization then comes by choosing a splitting of 
$\tK^\circ \twoheadrightarrow M$).
\end{proof}

\begin{remark}
\label{initialseedindremark}
In general, a seed is defined to be a basis of the lattice $N$ (or
$\tN$), but to define the seed mutations
\cite{P1}, (2.2) and the union of tori \eqref{aandx}, all one needs are elements
$e_i \in N$, $i \in I_{\uf}$ (the definitions as given make sense even if the
$e_i$ are dependent, or
fail to span).
If one makes the construction in this greater generality,
the characters $X_i := z^{e_i}$ on $T_{M,\s} \subset \cX$ will not be independent (if
the $e_i$ are not) and unless we take a full basis, we cannot define the
cluster variables $A_i := z^{f_i}$ on $T_{N^\circ,\s}$,
as the $f_i$ are defined as the
dual basis to the basis $(d_1e_1,\dots,d_ne_n)$ for $N^\circ$.

In the case of the principal data, given a seed $\s=(e_1,\ldots,e_n)$ for
$\Gamma$, we get a seed $\big((e_1,0),\ldots,(e_n,0)\big)$ in this modified
sense for the data $\Gamma_{\prin}$. We also write this seed as $\s$.
On the other hand, in \cite{P1}, the seed
$\tilde\s$ for $\Gamma_{\prin}$ is defined in the more traditional sense
to be the basis $\big((e_1,0),\ldots,(e_n,0),(0,f_1),\ldots,(0,f_n)\big)$.
It is not the case that if $\s'$ is obtained from $\s$ via a sequence of
mutations, then $\tilde\s'$ is obtained from $\tilde\s$ by the same
sequence of mutations. In particular, the set $[\tilde\s]$ of seeds mutation
equivalent to $\tilde\s$ depends not just on the mutation equivalence class
of $\s$, but on the original seed $\s$. However, using the seed $\s$
as a seed for $\Gamma_{\prin}$ in this modified sense, we can build
$\shA_{\prin}$, and this depends only on the mutation class of $\s$.
Thus $\cA_{\prin}$ does not depend on the initial choice of seed, but only
on its mutation equivalence class.

However, as we shall now see, the choice of initial seed does give a partial
compactification. This is a more general phenomenon when there are frozen
variables.
\end{remark}

\begin{construction}[Partial compactifications from frozen variables] 
\label{fvss} When the cluster data $\Gamma$ includes frozen
variables, $\cA$ comes with a canonical partial compactification $\cA \subset \ocA$,
given by partially compactifying each torus chart via
$T_{N^\circ,\s} \subset \TV(\Sigma^{\s})$, where for
$\s = (e_i)$,
$\Sigma^{\s} =\sum_{i\not\in I_{\uf}} \RR_{\ge 0}e_i \subset N^\circ_{\bR,\s}$. 
Thus the dual cone
$(\Sigma^{\s})^{\mch} \subset M^\circ_{\bR,\s}$ is cut out by the half-spaces 
$e_i \geq 0$, $i \not \in I_{\uf}$. 
Note that the monomials 
$A_i := z^{f_i}$, $i \not \in I_{\uf}$ are invariant under mutation. These give a canonical
map $\ocA \to \bA^{\rank N-u}$, where $u$ is the number of unfrozen variables. 
Note that the basis elements $e_i$ for $i\not\in I_{\uf}$, though they have
frozen indices, can change under mutation. What is invariant is the associated boundary divisor with valuation given by
$e_i \in N^\circ_{\bf s} = \cA^{\trop}(\bZ)$. These are the boundary 
divisors of $\cA \subset \ocA$. We remark that like $\cA$, $\ocA$ is
also separated, with the argument given in \cite{P1}, Theorem 3.14 working
equally well for $\ocA$.

Here is another way of seeing the same thing. Given any cluster variety
$V = \bigcup_{\s \in S} T_{L,\s}$ 
and a single fan $\Sigma \subset L_{\bR}$ for a toric partial compactification $T_{L,\s'} \subset \TV(\Sigma)$ for some $\s'\in S$,
there is a canonical way to build a partial compactification 
\[
V \subset \overline{V} = \bigcup_{\s \in S} \TV(\Sigma^{\s}).
\]
We let $\Sigma^{\s'} := \Sigma$ and $\Sigma^{\s} := 
(\mu_{\s,\s'}^t)^{-1}(\Sigma^{\s'})$, where 
$\mu_{\s,\s'}$ is the birational map given by the composition
\[
\mu_{\s,\s'}: T_{L,\s} \subset V \supset T_{L,\s'}
\]
and $\mu_{\s,\s'}^t$ is the geometric tropicalisation, see 
\S\ref{tropsec}.
\end{construction}

\begin{remark}
\label{Aprinremark}
We now return to the discussion of $\cA_{\prin}$. Note that the frozen
variables for $\cA_{\prin}$ are indexed by $I\setminus I_{\uf}$ in the first
copy of $I$, along with all indices in the second copy of $I$. However, we can
apply Construction \ref{fvss} taking only the second copy of $I$ as the
set of frozen indices, with the initial choice of seed $\s$
determining a partial compactification of $\cA_{\prin}$. 
In this case,
we indicate the partial compactification by 
$\cA_{\prin} \subset \ocA_{\prin}^{\s}$. It is important to keep in mind the
dependence on $\s$. Fixing $\s$ fixes $\tilde\s$, and hence cluster variables
$A_i=z^{(f_i,0)}$, $X_i=z^{(0,e_i)}$. The variables $X_i$ can then take
the value $0$ in the compactification. In particular, we obtain
an extension of $\pi: \cA_{\prin} \to T_M$ to 
$\pi: \ocA_{\prin}^{\s} \to \bA^n_{X_1,\dots,X_n}$,  $X_i := z^{e_i}$ 
pulling back to $X_i=z^{(0,e_i)}$.

Note that the seeds in $[\s]$ and $[\mts]$ are in one-to-one correspondence.
Given any seed $\s'=(e_i')_{i\in I}\in[\s]$, and seed $\tilde\s'\in[\mts]$ 
obtained via the same sequence of mutations we have $\tilde\s'=
\big((e_i',0)_{i\in I}, (g_i)_{i\in I}\big)$ for some $g_i\in \tN$. These two
seeds give rise to coordinates $A_i'$ on the chart of $\cA$ indexed by
$\s'$ and coordinates $A_i', X_i$ on the chart of $\cA_{\prin}$ indexed
by $\tilde\s'$. As $\cA$ is the fibre of $\pi$ over the point of $\AA^n$
with all coordinates $1$, the coordinate $A_i'$ on the chart of $\cA_{\prin}$
restricts to the coordinate $A_i'$ on the chart of $\cA$. 
This gives a one-to-one correspondence between cluster variables on 
$\cA$ and $A$-type cluster variables on $\cA_{\prin}$. To summarize:

\begin{proposition} \label{cvextrem} The cluster variety $\cA_{\prin} := 
\bigcup_{w\in \foT_{\s}} T_{\tN^{\circ},\s_w}$
depends only on the mutation class $[\s]$. But the choice of a seed $\s$ 
determines: 
\begin{enumerate}
\item A partial compactification $\shA_{\prin}\subset \ocA_{\prin}^{\s}$;
\item The canonical
extension of each cluster variable on any chart of $\cA$
to a cluster variable on the corresponding chart of $\cA_{\prin} \supset \cA$.
\end{enumerate}
\end{proposition}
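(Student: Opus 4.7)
The plan is to assemble the proposition from three essentially formal observations about how cluster tori and mutations interact with the principal fixed data, combined with Construction \ref{fvss}.

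First, for the statement that $\cA_{\prin}$ depends only on $[\s]$: the variety is defined as the union of torus charts $T_{\tN^{\circ},\s_w}$ indexed by vertices $w$ of the infinite tree $\foT_{\s}$, glued by birational mutations. Using only the modified seed $\big((e_1,0),\ldots,(e_n,0)\big)$ as in Remark \ref{initialseedindremark} (i.e.\ treating the $(0,f_i)$ as inert data which does not enter the unfrozen mutation combinatorics), the mutations at indices $k\in I_{\uf}$ are entirely determined by the pairings $\{(e_i,0),(e_j,0)\}=\{e_i,e_j\}$. Thus, if $\s'\in[\s]$ is obtained from $\s$ by a sequence of mutations $\mu_{k_1}\circ\cdots\circ\mu_{k_r}$, the subtree $\foT_{\s'}\subset\foT_{\s}$ rooted at the corresponding vertex parameterizes exactly the same collection of torus charts (with the same gluings), giving a canonical isomorphism of the two resulting cluster varieties. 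This is exactly the same argument as for \eqref{changeofseedisomorphisms}.

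Second, for (1), I will apply Construction \ref{fvss} to $\cA_{\prin}$ using the single fan $\Sigma^{\s}\subset\tN^{\circ}_{\RR,\s}$ generated by the rays $\RR_{\ge 0}(0,e_i^*)$ for $i$ in the second copy of $I$ (whose associated cluster variables are precisely the principal coefficients $X_i=z^{(0,e_i)}$). The construction propagates this fan to every chart via geometric tropicalizations of mutations, producing a separated partial compactification $\cA_{\prin}\subset\ocA_{\prin}^{\s}$. The dependence on $\s$ (rather than on $[\s]$) is built into the notation: the fan $\Sigma^{\s}$ is defined in terms of the specific basis attached to $\s$, and a different choice of initial seed in $[\s]$ yields a genuinely different (though birational) partial compactification, since the tropicalizations of the mutations linking $\s$ to $\s'$ in $\cA_{\prin}$ need not carry $\Sigma^{\s}$ to $\Sigma^{\s'}$.

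Third, for (2), I will track cluster variables through mutations. Given a cluster monomial on a chart $T_{N^{\circ},\s_w}\subset\cA$, by definition it is a non-negative integer combination $z^m$ with $m=\sum a_if_i'$, $a_i\ge 0$ for $i\in I_{\uf}$, where $(f_i')$ is the dual basis to $(d_ie_i')$ for the seed $\s_w=(e_1',\ldots,e_n')$. Performing the same sequence of mutations $\mu_{k_1},\ldots,\mu_{k_r}$ on $\tilde\s$ yields a seed $\tilde\s_w=\big((e_1',0),\ldots,(e_n',0),g_1,\ldots,g_n\big)$ of $\tN$; the first $n$ entries are determined by the fact that mutation at $k\in I_{\uf}$ modifies only the basis vectors in the unfrozen sublattice, and $\tN_{\uf}=N_{\uf}\oplus 0$. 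The dual basis element to $(d_ie_i',0)$ in $\tN^{\circ}_{\tilde\s_w}$ then restricts to $f_i'$ under the projection $\tM^{\circ}\to M^{\circ}$ pulled back along the inclusion $\cA\hookrightarrow\cA_{\prin}$ as the fibre over $(1,\ldots,1)\in T_M$. Consequently $z^{(m,0)}$ on $T_{\tN^{\circ},\tilde\s_w}$ is a cluster monomial on $\cA_{\prin}$ restricting to $z^m$ on $T_{N^{\circ},\s_w}$, and this lift is canonical once $\s$ is fixed.

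There is no serious obstacle here; the proposition is essentially a bookkeeping statement packaging the content of Remark \ref{initialseedindremark} and Remark \ref{Aprinremark}. The only mildly delicate point is the first paragraph, where one must be careful that the ``modified seed'' construction agrees with the usual construction using $\tilde\s$ as a full basis of $\tN$, so that all four a priori different objects (the cluster variety built from the modified $\s$, built from $\tilde\s$, built from a mutated $\s'$, and built from $\tilde{\s'}$) are canonically identified.
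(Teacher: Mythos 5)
Your overall approach matches the paper's: the proposition is stated at the end of Remark \ref{Aprinremark} precisely as a summary of the discussion there and in Remark \ref{initialseedindremark}, and you correctly route the three claims through the "modified seed'' observation, Construction \ref{fvss} applied to $\Sigma^{\s}=\sum_i\RR_{\ge 0}(0,e_i^*)$, and the compatibility of the first $n$ coordinates of $\tilde\s'$ with the mutation pattern of $\s'$.

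There is, however, a small but substantive slip in your third paragraph. You correctly argue that for $\tilde\s_w=\big((e_1',0),\dots,(e_n',0),g_1,\dots,g_n\big)$ the dual basis vector $\tilde f_i'\in\tM^{\circ}=M^{\circ}\oplus N$ projects to $f_i'\in M^{\circ}$. But from this alone you cannot conclude that the lift of $z^m$, $m=\sum a_if_i'$, is $z^{(m,0)}$ in the decomposition $\tM^{\circ}=M^{\circ}\oplus N$. The lifted cluster monomial is $\prod(A_i')^{a_i}=z^{\sum a_i\tilde f_i'}$, and since $\tilde f_i'=(f_i',n_i')$ with $n_i'\in N$ determined by the second-half basis vectors $g_j$ of $\tilde\s_w$, this equals $z^{(m,\,\sum a_in_i')}$, which is $z^{(m,0)}$ only when all the $n_i'$ vanish (e.g.\ at the root seed). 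Writing $z^{(m,0)}$ in general conflates "exponent vector $(a_1,\dots,a_n,0,\dots,0)$ in the seed basis $\tilde f_1',\dots,\tilde f_{2n}'$'' with "exponent $(m,0)$ in the canonical splitting $M^{\circ}\oplus N$.'' The paper is careful about exactly this: in the proof of Corollary \ref{gvecthm} the lift is recorded as $z^{(m,n'')}$ for some possibly nonzero $n''\in N$, and in Proposition \ref{gprop} the lift is described as a monomial in $z^{\tilde f_1},\dots,z^{\tilde f_n}$. Your conclusion — that the lift is canonical once $\s$ is fixed and restricts to $z^m$ on the fibre over $e$ — is still correct, but the explicit formula should be $z^{\sum a_i\tilde f_i'}$, not $z^{(m,0)}$.
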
 
\end{remark}

\section{Construction of scattering diagrams}
\label{scatappendix}

This appendix is devoted to giving proofs of Theorems \ref{KSlemma2}
and \ref{scatdiagpositive}. The proof of \ref{KSlemma2} is essentially
given in \cite{GSAnnals}, but the special case here is
considerably simpler than the general case covered there, and
it is likely to be very difficult for the reader to extract the needed
results from \cite{GSAnnals}. In addition, the
details of the proof of \ref{KSlemma2} will be helpful in proving 
Theorem \ref{scatdiagpositive}.

\subsection{An algorithmic construction of scattering diagrams}\label{subsecKSlemalg}

\begin{construction} \label{KSlemalg} 
There is a simple order by order algorithm, introduced in \cite{KS06} in 
the two-dimensional case and in \cite{GSAnnals} in the higher dimensional case, 
for producing the diagram $\foD \supset \foD_{\inc}$ of Theorem \ref{KSlemma},
which we will describe shortly after a bit of preparation. This is useful
both from a computational point of view and because a more complicated version of
this will be necessary in the remainder of this Appendix.

We continue with fixed data $\Gamma$, yielding the Lie algebra $\fog$
in \S\ref{defconstsection}.

We first introduce some additional terminology.
For any scattering diagram $\foD$ for $N^+,\fog$, and
any $k > 0$ we let $\foD_k \subset \foD$ be the (by definition, finite) 
set of $(\fod,g_{\fod})$ with $g_{\fod}$ non-trivial in $G^{\le k}$.
A scattering diagram for $N^+$, $\fog$ induces a scattering diagram for
$N^+$, $\fog^{\le k}$ in the obvious way, viewing $g_{\fod} \in G^{\le k}$
for a wall $(\fod,g_{\fod})$. We say two scattering diagrams $\foD$, $\foD'$
are equivalent to order $k$ if they are equivalent as scattering diagrams
for $\fog^{\le k}$.

\begin{definition-lemma} \label{pjdef} Let $\foj$ be a joint of the scattering diagram $\foD_k$. 
Either every wall containing $\foj$ has direction tangent to $\foj$ (where the
direction of a wall contained in $n^{\perp}$ is 
$-p^*(n) =-\{n,\cdot\}$), or every wall
containing $\foj$ has direction \emph{not} tangent to $\foj$. In the first case
we call the joint {\it parallel} and in the second case {\it perpendicular}.
\end{definition-lemma} 

\begin{proof} Suppose $\foj$ spans the subspace
$n_1^{\perp}\cap n_2^{\perp}$. Then the direction of any wall containing
$\foj$ is of the form $-p^*(a_1n_1+a_2n_2)$ for some $a_1,a_2\in\QQ$. 
If this is tangent to $\foj$, then $\langle p^*(a_1n_1+a_2n_2),n_i\rangle =0$
for $i=1,2$, and hence $0=\langle p^*(n_1),n_2\rangle=\{n_1,n_2\}$. 
 From this it follows that $\langle p^*(a_1'n_1+a_2'n_2),n_i\rangle=0$
for all $a_1',a_2'$, and hence the direction of any wall containing $\foj$
is tangent to $\foj$. \end{proof}

A joint $\foj$ is a codimension two convex rational
polyhedral cone. Let $\Lambda_{\foj}\subseteq M^{\circ}$ be
the set of integral tangent vectors to $\foj$. This is a saturated sublattice
of $M^{\circ}$. Then we set
\begin{equation}
\label{fogfojdef}
\fog_{\foj} := \bigoplus_{n\in N^{+} \cap
\Lambda_{\foj}^{\perp}} \fog_n.
\end{equation}
This is closed under Lie bracket.
If $\foj$ is a parallel joint, then 
$\fog_{\foj}$ is abelian, since if $n_1,n_2\in \Lambda_{\foj}^{\perp}$
with $p^*(n_1), p^*(n_2)\in\Lambda_{\foj}$, $\{n_1,n_2\}=\langle p^*(n_1),n_2
\rangle=0$,
so $[\fog_{n_1},\fog_{n_2}]=0$.
We
denote by ${G}_\foj$ the corresponding group.

We will build a sequence of finite scattering diagrams
$\tilde \foD_1\subset \tilde \foD_2\subset\cdots$, with the property that
$\tilde \foD_k$ is equivalent to $\foD$ to order $k$. Taking $\tilde\foD=
\bigcup_{k=1}^{\infty}\tilde\foD_k$, 
we obtain $\tilde\foD$ equivalent to
$\foD$. Let $(\foD_{\inc})_k$ denote the subset of $\foD_{\inc}$ consisting
of walls which are non-trivial in $G^{\le k}$. We start with
\[
\tilde\foD_1=(\foD_{\inc})_1.
\]
If $\foj$ is a joint of a finite scattering diagram, we write
$\gamma_{\foj}$ for a simple loop around $\foj$ small enough so that 
it only intersects walls containing $\foj$. In particular, for each joint
$\foj$ of $\tilde\foD_1$, $\theta_{\gamma_{\foj},\tilde\foD_1}=
\id\in G^{\le 1}$. Indeed, $G^{\le 1}$ is abelian and by the form given
for $\foD_{\inc}$ in the statement of Theorem \ref{KSlemma}, all walls 
containing $\foj$ 
are hyperplanes. Thus the automorphism associated to crossing each wall and
its inverse occurs once in $\theta_{\gamma_{\foj},\tilde\foD_1}$, and hence
cancel.

Now suppose we have constructed $\tilde\foD_k$. For every perpendicular
joint $\foj$ of $\tilde\foD_k$, we can write uniquely in $G_{\foj}^{\le k+1}$
\begin{equation}
\label{basicKSeq2} 
\theta_{\gamma_{\foj},\tilde\foD_{k}}=\exp \left(\sum_{\alpha \in S} g_{\alpha} \right)
\end{equation}
where $S \subseteq \{\alpha \in N^{+} \cap \Lambda_{\foj}^{\perp} \,|\,
d(\alpha) = k+1\}$ and $g_{\alpha}\in \fog_{\alpha}$. Such an expression
holds because all wall-crossing automorphisms for walls containing
$\foj$ lie in $G_{\foj}$, so that $\theta_{\gamma_{\foj},\tilde\foD_k}$ can
be viewed as an element of $G_{\foj}^{\le k+1}$. Furthermore, by the
inductive hypothesis, this element is trivial in $G_{\foj}^{\le k}$.
Because $\foj$ is
perpendicular, we never have $p^*(\alpha)\in\Lambda_{\foj}$.
Now define 
\[
\foD[\foj]:=\{\big(\foj-\RR_{\ge 0}p^*(\alpha),\exp(\pm g_{\alpha})\big)
\,|\, \alpha \in S\},
\]
where the sign is chosen so that the contribution to crossing the wall
indexed by $\alpha$ in $\theta_{\gamma_{\foj},\foD[\foj]}$ is 
$\exp(-g_{\alpha})$. Note the latter element is central in
$G^{\le k+1}$. Thus $\theta_{\gamma_{\foj},\foD[\foj]}=\theta_{\gamma_{\foj},
\tilde\foD_k}^{-1}$ and 
\begin{equation}
\label{fodfodj}
\theta_{\gamma_{\foj},\tilde\foD_k\cup\foD[\foj]}
= \theta_{\gamma_{\foj},\tilde\foD_k}\circ\theta_{\gamma_{\foj},\foD[\foj]}
=\id
\end{equation}
in $G^{\le k+1}$.

We define
\[
\tilde\foD_{k+1}=\tilde \foD_k \cup ((\foD_{\inc})_{k+1}\setminus 
(\foD_{\inc})_k)
\cup \bigcup_{\foj} \foD[\foj]
\]
where the union is over all perpendicular joints of $\tilde\foD_k$.

\begin{lemma}
\label{KSalglemma}
$\tilde\foD_{k+1}$ is equivalent to
$\foD$ to order $k+1$.
\end{lemma}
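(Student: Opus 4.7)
The plan is to show that $\tilde\foD_{k+1}$ is consistent to order $k+1$ and contains $(\foD_{\inc})_{k+1}$ with all other walls outgoing, and then appeal to the uniqueness in Theorem \ref{KSlemma}. Since by construction $\tilde\foD_{k+1} \supseteq (\foD_{\inc})_{k+1}$ and every wall in $\tilde\foD_{k+1}\setminus\tilde\foD_{\inc,k+1}$ is either outgoing by the induction hypothesis or of the form $\foj - \RR_{\geq 0}p^*(\alpha)$ emanating from a perpendicular joint (hence outgoing, as $p^*(\alpha)\notin \Lambda_\foj$), the structural condition is immediate. What is needed is consistency to order $k+1$.

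First I would observe that the newly added wall-crossing automorphisms $\exp(\pm g_\alpha)$ with $d(\alpha)=k+1$ lie in $\exp(\fog^{\geq k+1})$ and are therefore central in $G^{\leq k+1}$, since $[\fog^{\geq k+1}, \fog^{\geq 1}]\subseteq \fog^{\geq k+2}=0$ in this quotient. This centrality is the crucial technical point: it ensures that inserting the walls of $\foD[\foj]$ along rays emerging from $\foj$ cannot disturb the path-ordered product around any other joint of $\tilde\foD_k$ at order $k+1$ beyond the additive contribution of the newly traversed walls.

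Next I would verify joint by joint that $\theta_{\gamma_\foj', \tilde\foD_{k+1}} = \id$ in $G^{\leq k+1}$ for each joint $\foj'$ of $\tilde\foD_{k+1}$. There are several cases. If $\foj'$ is a perpendicular joint of $\tilde\foD_k$, then by the induction hypothesis $\theta_{\gamma_{\foj'}, \tilde\foD_k}$ is trivial to order $k$, and $\foD[\foj']$ was designed precisely so that, using \eqref{fodfodj}, its contribution cancels the order $k+1$ monodromy. If $\foj'$ is a parallel joint of $\tilde\foD_k$, then $\fog_{\foj'}$ is abelian, so $\theta_{\gamma_{\foj'}, \tilde\foD_k}$ lies in the abelian group $G_{\foj'}$; combined with the induction hypothesis giving triviality to order $k$, plus the observation that each hyperplane containing $\foj'$ is crossed once with each sign in a small loop, one obtains that all order $k+1$ contributions cancel. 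New joints created by intersections of walls in $\foD[\foj]$ with pre-existing walls, or by $(\foD_{\inc})_{k+1}\setminus(\foD_{\inc})_k$, must involve at least one wall whose attached group element is of order $\geq k+1$; again by centrality, the relevant loop automorphism lies in an abelian subgroup and vanishes by the same cancellation argument.

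The main obstacle I expect is bookkeeping for the new joints produced by the walls of $\foD[\foj]$ themselves, together with consistency when a wall in $\foD[\foj]$ meets another wall of $\tilde\foD_k$ transversally in codimension two. The clean way to dispatch this is to note that any such joint $\foj'$ contains a wall carrying an element $\exp(g_\alpha)$ with $d(\alpha)=k+1$, and any other wall through $\foj'$ contributes to $\theta_{\gamma_{\foj'},\tilde\foD_{k+1}}$ by conjugation, which is trivial modulo $\fog^{>k+1}$. Thus the order $k+1$ component of $\theta_{\gamma_{\foj'},\tilde\foD_{k+1}}$ reduces to the sum, over the walls through $\foj'$ carrying order $k+1$ data, of the corresponding $\pm g_\alpha$; and inspection shows each such $g_\alpha$ is crossed once with each sign by the loop, so the sum vanishes. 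Having established consistency at order $k+1$, the equivalence of $\tilde\foD_{k+1}$ with $\foD$ to that order follows from Theorem \ref{KSlemma}.
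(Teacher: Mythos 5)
Your proposal departs from the paper's route in a way that creates a genuine gap. The paper's proof only verifies $\theta_{\gamma_{\foj},\tilde\foD_{k+1}}=\id$ to order $k+1$ for \emph{perpendicular} joints, and then invokes Lemma \ref{equivalencetonextorder}, which upgrades that partial consistency to full equivalence with $\foD$ by a global argument (any order-$k+1$ discrepancy would have to be carried by walls all of whose facets are parallel joints, hence incoming walls, which are pinned down). You instead try to verify consistency at \emph{every} joint, including parallel ones and the new joints created on the boundaries of the added rays, and then appeal to the uniqueness in Theorem \ref{KSlemma}. That uniqueness needs full consistency, so you cannot skip the parallel joints.

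The step that fails is the final assertion that around any new joint $\foj'$, ``each such $g_\alpha$ is crossed once with each sign by the loop.'' This is the content of the paper's Case (3) analysis, and that analysis only works for perpendicular joints: if $\fod=\foj''-\RR_{\ge 0}m$ is an added ray and $\foj'\subseteq\partial\fod$ is a side facet of $\fod$ (i.e.\ $\foj'$ is not $\foj''$ itself), then $-m$ is tangent to $\foj'$, which forces $\foj'$ to be a \emph{parallel} joint; for such $\foj'$ the small loop $\gamma_{\foj'}$ crosses $\fod$ only once, and the corresponding $g_\alpha$ does \emph{not} cancel against itself. So for perpendicular $\foj'$ your double-crossing claim is valid (this is exactly why the paper restricts to perpendicular joints), but for parallel $\foj'$ it is false. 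Your treatment of parallel joints of $\tilde\foD_k$ (``each hyperplane is crossed once with each sign'') has the same problem: a newly added ray can terminate at such a joint. The missing ingredient is precisely Lemma \ref{equivalencetonextorder}, whose proof shows the residual order-$k+1$ discrepancy at parallel joints must vanish by the incoming-wall constraint, not by a local cancellation around each such joint.
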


\begin{proof} 
Consider a perpendicular joint $\foj$ of $\tilde\foD_{k+1}$.
If $\foj$ is contained in a joint $\foj'$ of $\tilde\foD_k$, $\foj'$ is the
unique such joint, and we constructed $\foD[\foj']$
above. If $\foj$ is not contained in a joint of $\tilde\foD_k$, we define
$\foD[\foj']$ to be the empty set.
There are three types of walls $\fod$ in
$\tilde\foD_{k+1}$ containing $\foj$:
\begin{enumerate}
\item $\fod\in\tilde\foD_k\cup\foD[\foj']$.
\item $\fod\in\tilde\foD_{k+1}\setminus(\tilde\foD_{k}\cup\foD[\foj'])$,
but $\foj\not\subseteq
\partial\fod$. This type of wall does not contribute to
$\theta_{\gamma_{\foj},\tilde\foD_{k+1}}\in G^{\le k+1}$,
as the associated automorphism
is central in $G^{\le k+1}$,
and in addition this wall contributes twice to $\theta_{\gamma_{\foj},
\tilde\foD_{k+1}}$,
with the two contributions inverse to each other.
\item $\fod\in\tilde\foD_{k+1}\setminus(\tilde\foD_{k}\cup \foD[\foj'])$ and
$\foj\subseteq\partial\fod$. Since each added wall is of the form
$\foj''-\RR_{\ge 0}m$ for some joint $\foj''$ of $\tilde\foD_k$,
where $-m$ is the direction of the wall,
the direction of the wall is parallel to $\foj$, contradicting $\foj$
being a perpendicular joint. Thus this does not occur.
\end{enumerate}
 From this, it is clear that $\theta_{\gamma_{\foj},\tilde\foD_{k+1}}=
\theta_{\gamma_{\foj},\tilde\foD_k\cup \foD[\foj']}$, which is the
identity in $G^{\le k+1}$ by \eqref{fodfodj}. This holds
for every perpendicular joint of $\tilde\foD_{k+1}$.

The result then follows from Lemma \ref{equivalencetonextorder}.
\end{proof}

\begin{lemma}
\label{equivalencetonextorder}
Let $\foD$ and $\tilde\foD$ be two scattering diagrams for $N^+, \fog$
such that 
\begin{enumerate}
\item $\foD$ and $\tilde\foD$ are equivalent to order $k$.
\item $\foD$ is consistent to order $k+1$.
\item $\theta_{\gamma_{\foj},\tilde\foD}$ is the identity for every
perpendicular joint $\foj$ of $\tilde\foD$ to order $k+1$.
\item $\foD$ and $\tilde\foD$ have the same set of incoming walls.
\end{enumerate}
Then $\foD$ and $\tilde\foD$ are equivalent to order $k+1$, and in particular
$\tilde\foD$ is consistent to order $k+1$.
\end{lemma}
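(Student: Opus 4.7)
The plan is to work modulo $\fog^{>k+1}$, which reduces both diagrams to finite ones and places the order-$(k+1)$ comparison in the abelian quotient $\fog^{\le k+1}/\fog^{\le k}$. The strategy has two parts: first establish that $\tilde\foD$ is consistent to order $k+1$; once this is in hand, equivalence of $\foD$ and $\tilde\foD$ to order $k+1$ is formal. Indeed, by Theorem \ref{maximscorrespondence} a consistent scattering diagram is determined up to equivalence by $\theta_{+,-}\in G$, and by Proposition \ref{Gparam} this element is pinned down by its components $\Psi(\theta_{+,-})_n\in G_n^{\|}$. By Theorem \ref{KSlemma}(3) these components coincide with the group elements attached to the incoming walls, which agree for $\foD$ and $\tilde\foD$ by hypothesis (4). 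So it remains only to prove consistency of $\tilde\foD$ to order $k+1$.

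Consistency amounts to showing $\theta_{\gamma_{\foj},\tilde\foD}$ is trivial in $G^{\le k+1}$ for every joint $\foj$ of $\tilde\foD$. By Definition-Lemma \ref{pjdef} each joint is either perpendicular or parallel. The perpendicular case is exactly hypothesis (3). For a parallel joint $\foj$, the associated group $G_{\foj}$ defined by \eqref{fogfojdef} is abelian, since $\{n_1,n_2\}=\langle p^*(n_1),n_2\rangle=0$ whenever $p^*(n_i)\in\Lambda_{\foj}$ and $n_i\in\Lambda_{\foj}^{\perp}$. By hypothesis (1) combined with consistency of $\foD$ (hypothesis (2)), the loop monodromy $\theta_{\gamma_{\foj},\tilde\foD}$ is trivial modulo $\fog^{>k}$, so the obstruction lives in the abelian piece $\fog_{\foj}\cap \bigoplus_{d(n)=k+1}\fog_n$.

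The main obstacle, then, is to kill this order-$(k+1)$ obstruction at parallel joints. My approach is to deform $\gamma_{\foj}$ through $M_{\RR}\setminus\Sing(\tilde\foD)$ into a composition of loops that can be evaluated using the hypotheses. Concretely, I would push $\gamma_{\foj}$ outward until it is homotopic (in the complement of the singular locus of $\tilde\foD$) to a product of small loops around nearby joints of $\tilde\foD$ plus small loops around nearby joints of $\foD$, together with trivial loops passing through chambers. The small loops around perpendicular joints of $\tilde\foD$ contribute trivially by hypothesis~(3); the loops around joints of $\foD$ contribute trivially by hypothesis~(2); and the discrepancy between traversing walls of $\tilde\foD$ versus $\foD$ is concentrated in order-$k+1$ contributions that, being in the abelian group $G_{\foj}$, can be commuted past one another and matched up to cancellation using hypothesis (1) and Lemma \ref{easyequivlemma} applied at order $k$. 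Making this homotopy-and-cancellation argument rigorous---handling the fact that the joint sets of $\foD$ and $\tilde\foD$ differ precisely in the order-$(k+1)$ walls---is the hard part; once accomplished, the lemma follows by the formal argument of the first paragraph.
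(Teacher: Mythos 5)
Your reduction in the first paragraph is sound: once consistency of $\tilde\foD$ to order $k+1$ is in hand, the uniqueness statement of Theorem \ref{KSlemma}(4) (which, since the argument is carried out order by order, applies in $G^{\le k+1}$) together with hypothesis~(4) gives the equivalence. But the substantive step — consistency at parallel joints — is exactly where you flag a gap, and the proposed homotopy mechanism cannot close it.

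Two problems. First, the assertion ``the loops around joints of $\foD$ contribute trivially by hypothesis~(2)'' conflates $\theta_{\gamma,\foD}$ with $\theta_{\gamma,\tilde\foD}$: hypothesis~(2) makes the former trivial, but it is the latter you must compute, and even for $\gamma$ avoiding $\Sing(\foD)\cup\Sing(\tilde\foD)$ these can differ at order $k+1$ — which is precisely the order under consideration. Second, the claim that the order-$(k+1)$ residue ``can be matched up to cancellation using hypothesis~(1) and Lemma \ref{easyequivlemma} applied at order $k$'' does not produce cancellation: Lemma \ref{easyequivlemma} at order $k$ only repackages hypothesis~(1), giving no control one order higher. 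Critically, your parallel-joint argument nowhere invokes hypothesis~(4), yet that hypothesis is indispensable there: the a priori order-$(k+1)$ discrepancy is an outgoing wall, and the only way to exclude it is the geometric fact that an outgoing wall all of whose facets are parallel joints (i.e., all tangent to $-p^*(n)$) must contain $p^*(n)$ and hence be incoming — which (4) forbids.

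The paper's proof avoids parallel joints of $\tilde\foD$ entirely by passing to the difference diagram. It produces a scattering diagram $\foD'$, consisting only of walls trivial to order $k$ (so with central attached automorphisms), with $\tilde\foD\cup\foD'$ equivalent to $\foD$; hypotheses (2) and (3) plus centrality force $\theta_{\gamma_{\foj},\foD'}=\id$ at every \emph{perpendicular} joint of $\tilde\foD\cup\foD'$, from which, after minimizing support, every wall of $\foD'$ has only parallel facets and must therefore be incoming, contradicting (4) unless $\foD'$ is empty. Your observation that the parallel-joint obstruction lies in an abelian piece is correct and corresponds to the centrality of $\foD'$, but the proof has to be reorganized around the difference diagram and the incoming-wall dichotomy, not around a homotopy of $\gamma_{\foj}$.
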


\begin{proof}
We work with scattering diagrams in the group $G^{\le k+1}$.
There is a finite scattering diagram $\foD'$ with the following 
properties:
(1) $\tilde\foD\cup \foD'$ is equivalent to $\foD$;
(2) $\foD'$ consists only of walls trivial to order $k$ but non-trivial
to order $k+1$. Indeed, $\foD'$ can be chosen so that $g_x(\foD')=
g_x(\tilde\foD)^{-1} g_x(\foD)$ for any general point $x$ in any $n^{\perp}$,
$n\in N^+$. Note that $\foD'$ is finite because the same is true of $\foD$
and $\tilde\foD$.

Thus to show $\foD$ and $\tilde\foD$ are
equivalent, it is sufficient to show that $\foD'$ is 
equivalent to the empty scattering diagram.
To do so, replace $\foD'$ with
an equivalent scattering diagram with minimal support. Let $\foj$ be a
perpendicular joint of $\tilde \foD\cup \foD'$. Then in
$G^{\le k+1}$, $\id=\theta_{\foD,\gamma_{\foj}}=
\theta_{\foD',\gamma_{\foj}}$, since $\theta_{\tilde\foD,\gamma_{\foj}}
=\id$ and automorphisms in $\foD'$ are central in $G^{\le k+1}$. 
However, this implies that for each $n_0 \in N^+$ with 
$\foj\subseteq n_0^{\perp}$ and $x, x'$ two points in $n_0^{\perp}$
on either side of $\foj$, the automorphisms associated with crossing
$n_0^{\perp}$ in $\foD'$ through either $x$ or $x'$  must be the same
in order for these two automorphisms to cancel in
$\theta_{\foD',\gamma_{\foj}}$.
 From this it is easy to see that $\foD'$ is equivalent to a scattering
diagram such that for every wall $\fod\in\foD'$, each facet of $\fod$
is a parallel joint of $\foD'$, i.e., the direction $-p^*(n)$ is
tangent to every facet of $\fod$. However, such a wall must be
incoming, contradicting, if $\foD'$ is non-empty, the fact that
$\tilde\foD$ and $\foD$ have the same set of incoming walls
by assumption.
\end{proof}
\end{construction} 

\subsection{The proof of Theorem \ref{KSlemma2}}
We fix the notation of Theorem \ref{KSlemma2}, and in addition make use
of the notation $\shH_{k,\pm}$ of Definition \ref{halfspacedef} and
$\theta_{\fod_k}$ as in \eqref{thetafodkdef} the map
associated to crossing the slab $\fod_k=(e_k^{\perp},1+z^{v_k})$ from
$\shH_{k,-}$ to $\shH_{k,+}$.

We define the Lie algebra
\[
\bar\fog:= \bigoplus_{n\in N^{+,k}}  \kk z^{p^*(n)} 
\partial_{n},
\]
and set $\bar G^{\le j}:=\exp(\bar\fog/\bar\fog^{>j})$,
$\bar G=\liminv \bar G^{\le j}$ as usual, with the degree function
$\bar d:N^{+,k}\rightarrow\NN$ given by $\bar d(\sum_i a_ie_i)=
\sum_{i\not =k} a_i$. We note that $\bar G$ acts on $\widehat{\kk[\oP]}$
as usual, and if $\foD$ is a scattering diagram in the sense of
Definition \ref{escdef}, then all automorphisms associated to crossing
walls (rather than slabs) lie in $\bar G$.

Besides the Lie algebra $\bar \fog$ just defined, recall we also have
$\fog=\bigoplus_{n\in N^+} \kk z^{p^*(n)}\partial_n$ as usual.
We have the degree map $d:N^+\rightarrow\NN$ given by
$d(\sum_i a_i e_i)=\sum a_i$, but we also have $\bar d:N^+\rightarrow
\NN$ given by the restriction of $\bar d:N^{+,k}\rightarrow\NN$.
We use the notation $\fog^{d>l}$ and $\fog^{\bar d>l}$ to distinguish
between the two possibilities for $\fog^{>l}$ determined by the two choices
of degree map. Then
$G=\invlim \exp(\fog/\fog^{d>j})$ 
and we define
$\tilde G=\invlim \exp(\fog^{\bar d>0}/\fog^{\bar d>j})$. Note that $G, \tilde G$
both act faithfully on $\widehat{\kk[P]}$, where the completion is respect to
the maximal monomial ideal $P\setminus \{0\}$, and $\tilde G, \bar G$
act faithfully on $\widehat{\kk[\oP]}$. There are inclusions
$\tilde G\subset G$ and $\tilde G\subset \bar G$. Only the second inclusion
holds at finite order, i.e., $\tilde G^{\le j}\subset \bar G^{\le j}$.

For each of the above Lie algebras $\fog'$, we can now also talk about scattering
diagrams for $\fog'$ using Definitions \ref{walldef} and \ref{KSscatdiagdef},
replacing $\fog$ with $\fog'$ in those definitions.

For a joint $\foj$, we define $\bar G_{\foj}$, $\tilde G_{\foj}$ as
subgroups of $\bar G$, $\tilde G$ defined analogously to \eqref{fogfojdef}.

Finally, we will need one other group. We define, for a fixed $j$,
\[
\hat G^{\le j}:=\invlim_{j'}\exp(\fog/ (\fog^{d > j'}+\fog^{\bar d> j})).
\]
There is an inclusion $\tilde G^{\le j}=
\exp(\fog^{\bar d>0}/\fog^{\bar d> j}) \subset \hat G^{\le j}$, and surjection $G\rightarrow \hat G^{\le j}$. 

We need to understand the interaction between elements of $G$ and
the automorphism associated to crossing the slab 
(see Lemma 2.15 of \cite{GSAnnals}). Recall the notation $G_{\foj}$
from Construction \ref{KSlemalg}; this is applied also to the various
assorted groups above. 

\begin{lemma}
\label{conjugationlemma}
Let $n\in N^{+,k}$ (resp.\ $N^+$)
and let $\theta\in\bar G$ (resp.\ $\theta\in \tilde G$)
be an automorphism of the form
$\exp(f\partial_n)$ for $f=1+\sum_{\ell\ge 1} c_\ell z^{\ell p^*(n)}$. 
Let $\foj=n^{\perp}\cap e_k^{\perp}$. If
$\{n,e_k\}>0$, then
\[
\hbox{$\theta_{\fod_k}^{-1}\circ\theta\circ\theta_{\fod_k}\in \bar G_{\foj}$
(resp.\ $\tilde G_{\foj}$)}
\]
while if $\{n,e_k\}<0$, then
\[
\hbox{$\theta_{\fod_k}\circ\theta\circ\theta_{\fod_k}^{-1}\in\bar G_{\foj}$,
(resp.\ $\tilde G_{\foj}$).}
\]
Here, we view $\theta_{\fod_k}^{-1}\circ\theta\circ\theta_{\fod_k}$
or $\theta_{\fod_k}\circ\theta\circ\theta_{\fod_k}^{-1}$ as automorphisms
of $\widehat{\kk[\oP]}_{1+z^{v_k}}$, and $\bar G$ or $\tilde G$ as
subgroups of the group of automorphisms of this ring.
\end{lemma}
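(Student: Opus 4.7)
The strategy is a direct computation of the conjugation, using the explicit action of both $\theta$ and $\theta_{\fod_k}^{\pm 1}$ on characters $z^m$, followed by a factorization argument that eliminates the denominators arising from the localization.

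First, using Lemma~\ref{thetaflemma}, I would rewrite $\theta = \exp(f\partial_n)$ in its multiplicative form as $\theta = \theta_g$, where $g = 1+\sum_{\ell\geq 1}c_\ell z^{\ell p^*(n)}$ and $\theta_g(z^m) = z^m\,g^{\langle n_0,m\rangle}$ for $n_0$ the primitive generator of $\RR_{\ge 0}n\cap N^\circ$; to lighten notation, assume $n$ is primitive. Using $\theta_{\fod_k}^{\pm 1}(z^{p^*(n)}) = z^{p^*(n)}(1+z^{v_k})^{\mp d_k\{n,e_k\}}$ and $\theta_{\fod_k}^{\pm 1}(z^{v_k})=z^{v_k}$ (since $\{e_k,e_k\}=0$), I would then compute directly that in the case $\{n,e_k\}>0$
\[
\theta_{\fod_k}^{-1}\circ\theta_g\circ\theta_{\fod_k}(z^m)
= z^m\cdot\tilde g^{\langle n,m\rangle}\cdot\Bigl(\frac{1+z^{v_k}}{1+z^{v_k}\tilde g^{\{n,e_k\}}}\Bigr)^{\langle d_ke_k,m\rangle},
\]
where $\tilde g := \theta_{\fod_k}^{-1}(g) = 1+\sum c_\ell z^{\ell p^*(n)}(1+z^{v_k})^{\ell d_k\{n,e_k\}}$, using multiplicativity of $\theta_g$ and $\theta_g(z^{v_k}) = z^{v_k}g^{\{n,e_k\}}$.

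Second, and this is the heart of the argument, I would show that when $\{n,e_k\}>0$ the rational expression $(1+z^{v_k})/(1+z^{v_k}\tilde g^{\{n,e_k\}})$ actually lies in $\widehat{\kk[\oP]}$, not merely in the localization. The key factorization is $1+z^{v_k}\tilde g^{\{n,e_k\}} = (1+z^{v_k})\cdot h$ for some unit $h\equiv 1\pmod{z^{p^*(n)}}$ in $\widehat{\kk[\oP]}$. To prove it, observe that $\tilde g-1$ has only positive powers of $z^{p^*(n)}$, and each term carries a factor $(1+z^{v_k})^{\ell d_k\{n,e_k\}}$ with non-negative exponent because $\{n,e_k\}>0$; hence $z^{v_k}(\tilde g^{\{n,e_k\}}-1)/(1+z^{v_k})$ is a power series in $z^{p^*(n)}$ whose coefficients lie in the polynomial ring $\kk[z^{v_k}]\subset\widehat{\kk[\oP]}$, and setting $h$ equal to $1$ plus this quantity yields the factorization. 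The analogous computation for $\{n,e_k\}<0$ with $\theta_{\fod_k}$ and $\theta_{\fod_k}^{-1}$ interchanged proceeds identically, with $\theta_{\fod_k}(g)$ now involving $(1+z^{v_k})^{-\ell d_k\{n,e_k\}}$ whose exponents are again positive.

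Third, once the cancellation is established, the conjugate acts as $z^m\mapsto z^m\,F(m)$ where $F(m) = \tilde g^{\langle n,m\rangle}h^{-\langle d_ke_k,m\rangle}$ depends on $m$ linearly through $\langle n,m\rangle$ and $\langle e_k,m\rangle$, and every monomial in $F(m)$ has exponent of the form $ap^*(n)+bv_k = p^*(an+be_k)$ with $a\ge 1$. In the $\bar G$ case $b$ is an arbitrary integer, automatic since $\Lambda_\foj^\perp\cap N^{+,k} = \{an+be_k\,|\,a\ge 1,\,b\in\ZZ\}$; in the $\tilde G$ case the sign condition forces $b\ge 0$, and since $n\in N^+$ has $e_k$-coefficient $\ge 0$, $an+be_k$ lies in $N^+\cap\Lambda_\foj^\perp$. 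Finally, the conjugate is congruent to the identity modulo the relevant maximal ideal, so its formal logarithm produces an element of $\bar\fog_\foj$ (resp.\ $\tilde\fog_\foj$) whose exponential recovers it.

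The main obstacle is the second step: verifying the factorization that cancels the $(1+z^{v_k})^{-1}$ denominator. The sign condition on $\{n,e_k\}$ is essential here, because $(1+z^{v_k})^{-1}$ is \emph{not} an element of $\widehat{\kk[\oP]}$ --- one has $jv_k\notin J$ for $j\ge 0$, so $\sum(-1)^jz^{jv_k}$ fails to converge in the $J$-adic topology. The sign hypothesis ensures that every $(1+z^{v_k})$-power introduced by the inner transformation $\tilde g$ has non-negative exponent, so that when combined with the outer $(1+z^{v_k})$ factor from $\theta_{\fod_k}^{-1}(z^m)$, all negative powers cancel and the result genuinely extends from the localized ring back to $\widehat{\kk[\oP]}$; with the other sign choice, uncancelled inverses of $1+z^{v_k}$ would remain and the conjugate would fail to preserve the subring.
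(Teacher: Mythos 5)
Your approach differs genuinely from the paper's: the paper reduces to conjugating a single Lie-algebra generator $z^{p^*(n)}\partial_n$ by $\theta_{\fod_k}$ and evaluates the conjugate in closed form, while you conjugate at the group level and then try to pass to the Lie algebra at the end by taking a formal logarithm. Your first two steps are essentially sound, modulo a sign: since $v_k = p^*(e_k)$ and the pairing convention is $\langle p^*(a),b\rangle = \{a,b\}$, one has $\langle n, v_k\rangle = \{e_k,n\} = -\{n,e_k\}$, so $\theta_g(z^{v_k}) = z^{v_k}g^{-\{n,e_k\}}$, which flips the exponent on $\tilde{g}$ in your denominator. The factorization survives the correction because $\tilde{g}^{\pm\{n,e_k\}}-1$ is divisible by $\tilde{g}-1$, each of whose terms carries a factor $(1+z^{v_k})^{\ell d_k\{n,e_k\}}$ with positive exponent.

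The genuine gap is in the third step. You observe that $F(m)$ is multiplicative in $m$, depends on $m$ only through $\langle n,m\rangle$ and $\langle e_k,m\rangle$, and that the exponents in $F(m)-1$ are of the form $p^*(an+be_k)$ with $a\geq 1$; from this you conclude that ``its formal logarithm produces an element of $\bar\fog_\foj$.'' That implication is false. An element of $\bar\fog_\foj$ is a sum of terms $z^{p^*(n')}\partial_{n'}$ in which the exponent and the derivation direction are \emph{locked together}; your hypotheses constrain only the exponents. For instance the automorphism $z^m\mapsto z^m(1+z^{p^*(n)})^{\langle e_k,m\rangle}$ satisfies every condition you list, but its logarithm is $\log(1+z^{p^*(n)})\,\partial_{e_k}$, whose terms $z^{ap^*(n)}\partial_{e_k}$ are not in $\bar\fog_\foj$ since $e_k$ is not proportional to $n$. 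What you actually need, writing $\log\sigma = \log\tilde{g}\cdot\partial_n + d_k\log h\cdot\partial_{e_k}$ and expanding both series in monomials $z^{p^*(an+be_k)}$, is a numerical identity relating the coefficients of $\log\tilde{g}$ and $\log h$ so that each graded piece is proportional to $a\partial_n + b\partial_{e_k} = \partial_{an+be_k}$. This is precisely the binomial identity $\beta\binom{\alpha}{\beta} = \alpha\binom{\alpha-1}{\beta-1}$ that the paper's derivation-level computation \eqref{conjugationeq} verifies when it regroups $\binom{\alpha}{\beta}\partial_n + \alpha\binom{\alpha-1}{\beta-1}\partial_{e_k}$ into $\binom{\alpha}{\beta}\partial_{n+\beta e_k}$. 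That regrouping is the mathematical content of the lemma, and your argument as written does not supply it.
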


\begin{proof}
Let us prove the first statement, the second being similar. It is
enough to check that 
\[
\hbox{$\theta_{\fod_k}^{-1}\circ (z^{p^*(n)}\partial_n)\circ
\theta_{\fod_k}\in \bar\fog_{\foj}$ (resp.\ $\tilde\fog_{\foj}$).}
\]
But, with $h=1+z^{v_k}$,
\begin{align*}
&(\theta_{\fod_k}^{-1}\circ (z^{p^*(n)}\partial_n)\circ
\theta_{\fod_k})(z^m)\\
= {} &
(\theta_{\fod_k}^{-1}\circ (z^{p^*(n)}\partial_n))
(z^mh^{-\langle d_ke_k,m\rangle})\\
= {} & \theta^{-1}_{\fod_k}\big(\langle n,m\rangle z^{m+p^*(n)}
h^{-\langle d_ke_k,m\rangle}\big)
-\theta_{\fod_k}^{-1}
\big(\langle d_ke_k,m\rangle\langle v_k,n\rangle
z^{m+p^*(n)+v_k}h^{-\langle d_ke_k,m\rangle-1}\big)\\
={} & z^m\big(\langle n,m\rangle z^{p^*(n)}h^{\langle d_ke_k,
p^*(n)\rangle}-\langle d_ke_k,m\rangle\langle v_k,n\rangle
z^{p^*(n)+v_k}h^{\langle d_ke_k,p^*(n)+v_k\rangle-1}\big).
\end{align*}
Noting that $\langle v_k,n\rangle=\{e_k,n\}=-\{n,e_k\}=
-\langle e_k,p^*(n)\rangle=-d_k^{-1}\langle d_ke_k,p^*(n)\rangle$, and
in addition $\langle d_ke_k,v_k\rangle=0$, we see that as a derivation,
writing $\alpha=\langle d_ke_k, p^*(n)\rangle >0$,
\begin{align}
\label{conjugationeq}
\begin{split}
&\theta_{\fod_k}^{-1}\circ(z^{p^*(n)}\partial_n)\circ \theta_{\fod_k}\\
= {} & z^{p^*(n)}h^{\langle d_ke_k,p^*(n)\rangle} \partial_n
+z^{p^*(n)+v_k}\langle d_ke_k,p^*(n)\rangle h^{\langle d_ke_k,p^*(n)\rangle -1}\partial_{e_k}\\
= {} &\sum_{\beta=0}^{\alpha} z^{p^*(n)+\beta v_k} \begin{pmatrix} \alpha\\
\beta\end{pmatrix} \partial_n
+\alpha\sum_{\beta=1}^\alpha z^{p^*(n)+\beta v_k}\begin{pmatrix}
\alpha-1\\ \beta-1\end{pmatrix} \partial_{e_k}\\
={} & \sum_{\beta=0}^{\alpha}
z^{p^*(n+\beta e_k)} \begin{pmatrix} \alpha\\ \beta\end{pmatrix}
\partial_{n+\beta e_k}.
\end{split}
\end{align}
Of course $n+\beta e_k\in\Lambda_{\foj}^{\perp}$ by definition of $\foj$,
so the derivation $z^{p^*(n+\beta e_k)}\partial_{n+\beta e_k}$
lives in $\bar\fog_{\foj}$ (resp.\ $\tilde\fog_{\foj}$).
\end{proof}

We now proceed with the proof of Theorem \ref{KSlemma2}. 

\emph{Step I. Strategy of the proof.} 
We will first construct $\overline\foD_{\s}$
using essentially the same algorithm as the one given in Construction
\ref{KSlemalg}, but working with the group $\tilde G$. The algorithm
is slightly more complex because of the slab, and needs to be carried
out in two steps. To show that the diagram constructed is consistent
at each step, we compare it with the scattering diagram $\foD_{\s}$ for
the group $G$
which we know exists, using $\tilde G^{\le j}$ as an intermediary group.
Because $\tilde G\subset \bar G, G$, we obtain a consistent scattering diagram
for $\bar G$ and $G$. While
$\overline\foD_{\s}$ is equivalent to $\foD_{\s}$ as a scattering diagram
for $G$ by construction, this does not show uniqueness of $\overline\foD_{\s}$,
as there may be a different choice with wall crossing automorphisms in $\bar
G$ but not in $\tilde G$, so it cannot be compared with $\foD_{\s}$.
Thus, the final step involves showing uniqueness directly for the group
$\bar G$, again as part of the inductive proof.

We will proceed by induction on $j$,
constructing for each $j$ a finite scattering diagram $\ofoD_j$ for
$\tilde G$
containing $\foD_{\inc,\s}$ such that the following
induction hypotheses hold:
\begin{enumerate}
\item 
For every joint $\foj$ of $\ofoD_j$, there is a simple loop $\gamma_{\foj}$
around $\foj$ small enough so that it only intersects walls and slabs
containing $\foj$ and such that 
$\theta_{\gamma_{\foj},\ofoD_j}$, as an automorphism of 
$\widehat{\kk[\oP]}_{1+z^{v_k}}$, lies in $\tilde G$ and is trivial in
$\tilde G^{\le j}$, or equivalently, by the inclusion $\tilde G^{\le j}
\subset \bar G^{\le j}$, trivial in $\bar G^{\le j}$.
\item If $\ofoD'_j$ is a scattering diagram for $\bar G$ which has the same incoming walls as $\ofoD_j$ and satisfies
(1) (with $\tilde G$ replaced by $\bar G$ everywhere), 
then $\ofoD'_j$ is equivalent to $\ofoD_j$ in $\bar G^{\le j}$.
\end{enumerate}

Recall that joints of $\ofoD_j$ are either 
parallel or perpendicular, Definition-Lemma \ref{pjdef}.

\medskip

\emph{Step II. The base case.}
For $j=0$, $\ofoD_0=\foD_{\inc,\s}$ does the job. Indeed, all walls are trivial
in $\tilde G^{\le 0}=\{\id\}$, leaving just the single initial slab, and
thus there are no joints.

\medskip

\emph{Step III. From $\ofoD_j$ to $\ofoD_{j+1}$: adding walls associated to
joints not contained in $e_k^{\perp}$.}
Now assume we have found
$\ofoD_{j}$ satisfying the induction hypotheses. We need to add a finite number 
of walls to get $\ofoD_{j+1}$.
We will carry out the construction of $\ofoD_{j+1}$ in two steps, following
Construction \ref{KSlemalg}.

First, let $\foj$ be a perpendicular joint of $\ofoD_j$
with $\foj\not\subseteq e_k^{\perp}$.
Let $\Lambda_{\foj}\subseteq M^{\circ}$ 
be the set of integral tangent vectors to $\foj$. If $\gamma_{\foj}$
is a simple loop around $\foj$ small enough so that it only intersects
walls containing $\foj$, we note that every wall-crossing automorphism
$\theta_{\gamma_{\foj},\fod}$ contributing to
$\theta_{\gamma_{\foj},\ofoD_{j}}$ lies in $\tilde G_{\foj}$. Thus
as in \eqref{basicKSeq2}, in
$\tilde G^{\le j+1}$ we can write
\begin{equation}
\label{basicKSeq}
\theta_{\gamma_{\foj},\ofoD_{j}}=\exp\left(\sum_{i=1}^s c_iz^{p^*(n_i)}
\partial_{n_i}\right)
\end{equation}
with $c_i\in\kk$, 
and $n_i\in
\Lambda_{\foj}^{\perp}$ with $\bar d(n_i)=j+1$ as
$\theta_{\gamma_{\foj},\ofoD_j}$ is the identity in
$\tilde G^{\le j}$ by the induction hypothesis. Finally,
$p^*(n_i)\not\in\Lambda_{\foj}$ because the joint is perpendicular. Let
\[
\foD[\foj]:=\{(\foj-\RR_{\ge 0}p^*(n_i),(1+z^{p^*(n_i)})^{\pm c_i})
\,|\,i=1,\ldots,s \}.
\]
Here $(1+z^{p^*(n_i)})^{\pm c_i}=\exp(\pm c_i\log(1+z^{p^*(n_i)}))$ makes sense
as a power series.
The sign is chosen in each wall so that its contribution to
$\theta_{\gamma_{\foj},\foD[\foj]}$ is $\exp(-c_iz^{p^*(n_i)}\partial_{n_i})$
to $\bar d$-order $j+1$.

We now take
\[
\ofoD'_{j}:=\ofoD_{j}\cup\bigcup_{\foj}
\foD[\foj],
\]
where the union is over all perpendicular joints not contained in $e_k^{\perp}$.
We have only added a finite number of walls.

\medskip

\emph{Step IV. From $\ofoD_j$ to $\ofoD_{j+1}$: adding walls associated to
joints contained in $e_k^{\perp}$.}
If we didn't have a slab, $\ofoD'_j$ constructed above would now do the
job as in the proof of Lemma \ref{KSalglemma}. However, the
elements of $\tilde G$ trivial in $\tilde G^{\le j}$
do not commute with $\theta_{\fod_k}$ to order
$j+1$ as automorphisms of $\widehat{\kk[\oP]}_{1+z^{v_k}}$ in any reasonable
sense. As a consequence, we will need to add some additional walls
coming from joints in $e_k^{\perp}$, some of which have arisen as
the intersection of $e_k^{\perp}$ with walls added in Step III.

Consider a perpendicular joint $\foj\subseteq e_k^{\perp}$ of $\ofoD_j'$.
Necessarily the linear span of $\foj$ is $e_k^{\perp} \cap n^{\perp}$
for some $n\in N^+$.
Furthermore, we can choose $n$ so that
any wall containing $\foj$ then has linear
span $(ae_k+bn)^{\perp}$ for some $a,b$ non-negative rational
numbers. The direction of such a wall is positively proportional to
$-p^*(ae_k+bn)$. We now distinguish between two cases. Note that
$\langle e_k,p^*(n)\rangle\not=0$ as the joint is not parallel, so
we call the joint $\foj$ \emph{positive} or \emph{negative} depending
on the sign of $\langle e_k,p^*(n)\rangle=\{n,e_k\}$. Note that
if the joint is positive (negative) then $\langle e_k, p^*(ae_k+b n)\rangle$
is positive (negative) for all $b>0$.

If the joint is positive, then
choose $\gamma_{\foj}$ so that the first wall crossed is $\fod_k$, 
passing from $\shH_{k,-}$ to $\shH_{k,+}$. We can write
\begin{equation}
\label{jointexpression}
\theta_{\gamma_{\foj},\ofoD'_j}=\theta_2
\circ\theta_{\fod_k}^{-1}\circ\theta_1\circ
\theta_{\fod_k},
\end{equation}
where $\theta_i\in \tilde G_{\foj}$ are compositions of wall-crossing
automorphisms. It then follows from $\langle e_k,p^*(ae_k+bn)
\rangle>0$ for all $a\ge 0$, $b>0$ and Lemma 
\ref{conjugationlemma} that $\theta_{\fod_k}^{-1}\circ \theta_1
\circ \theta_{\fod_k}\in \tilde G_{\foj}$, 
hence $\theta_{\gamma_{\foj},\ofoD_j'}
\in \tilde G_{\foj}$. If the joint is negative, then we use a slightly
different loop:
without changing the orientation of the loop $\gamma_{\foj}$,
change the endpoints so that $\gamma_{\foj}$ now starts and ends in
$\shH_{k,+}$, crossing $\fod_k$ just before its endpoint. Then
\[
\theta_{\gamma_{\foj},\ofoD_j'}=\theta_{\fod_k}\circ \theta_2\circ
\theta_{\fod_k}^{-1}\circ\theta_1,
\]
and again by Lemma \ref{conjugationlemma}, $\theta_{\gamma_{\foj},\ofoD_j'}
\in \tilde G_{\foj}$.

Thus in both cases, $\theta_{\gamma_{\foj},\ofoD_j'}\in \tilde G_{\foj}$ 
and is
the identity in $\tilde G^{\le j}$. Thus we still have \eqref{basicKSeq} and
we can produce a scattering diagram $\foD[\foj]$ in the same way
as for the joints
$\foj$ not contained in $e_k^{\perp}$. We then set
\[
\ofoD_{j+1}=\foD_j'\cup \bigcup_{\foj}
\foD[\foj],
\]
where the union is over perpendicular joints of $\ofoD_j'$ contained in
$e_k^{\perp}$.
\medskip

\emph{Step V. (1) of the induction hypothesis is satisfied.}
Consider a perpendicular joint $\foj$ of $\ofoD_{j+1}$.
First suppose $\foj\not\subseteq e_k^{\perp}$. We proceed as in the proof
of Lemma \ref{KSalglemma}.
If $\foj$ is contained in a joint of $\ofoD_j$, there is a unique
such joint, say $\foj'$,
and we constructed $\foD[\foj']$
above. If $\foj$ is not contained in a joint of $\ofoD_j$, we define
$\foD[\foj']$ to be the empty set.
There are three types of walls $\fod$ in 
$\ofoD_{j+1}$ containing $\foj$:
\begin{enumerate}
\item $\fod\in\ofoD_j\cup\foD[\foj']$.
\item $\fod\in\ofoD_{j+1}\setminus(\ofoD_{j}\cup\foD[\foj'])$,
but $\foj\not\subseteq
\partial\fod$. This type of wall does not contribute to
$\theta_{\gamma_{\foj},\ofoD_{j+1}}$ in $\tilde G^{\le j+1}$. Indeed,
the associated automorphism is in the center of $\tilde G^{\le j+1}$
and this wall contributes twice to $\theta_{\gamma_{\foj},
\ofoD_{j+1}}$,
with the two contributions inverse to each other, so the contribution cancels.
\item $\fod\in\ofoD_{j+1}\setminus(\ofoD_{j}\cup \foD[\foj'])$ and
$\foj\subseteq\partial\fod$. Since each added wall is of the form
$\foj''-\RR_{\ge 0}m$ for some joint $\foj''$ of $\ofoD_j$,
where $-m$ is the direction of the wall,
the direction of the wall is parallel to $\foj$, contradicting $\foj$
being a perpendicular joint. Thus this does not occur.
\end{enumerate}
 From this we see by construction of $\foD[\foj']$ that 
$\theta_{\gamma_{\foj},\ofoD_{j+1}}$ is the identity in $\tilde G^{\le j+1}$.

On the other hand, suppose $\foj$ is a perpendicular
joint of $\ofoD_{j+1}$
contained in $e_k^{\perp}$. Then since no wall of $\ofoD_{j+1}\setminus
\ofoD_j'$ is contained in $e_k^{\perp}$, by definition of $N^{+,k}$,
in fact $\foj$ is a joint of $\ofoD_j'$.
Thus we see again by construction of $\foD[\foj]$ that 
to order $j+1$, $\theta_{\gamma_{\foj},\ofoD_{j+1}}$ is the identity for
$\gamma_{\foj}$ the loop around $\foj$ described in Step IV. Recall
the choice of loop depends on whether the joint is positive or negative.

Now we show that $\ofoD_{j+1}$ satisfies the induction hypothesis (1),
using the above existence of a $\gamma_{\foj}$ such that 
$\theta_{\gamma_{\foj},\overline{\foD}_j+1}=\id$ for each
perpendicular joint $\foj$.
Note that there is a map $\tilde G^{\le j+1}\rightarrow
\exp(\fog/(\fog^{d > j'} +\fog^{\bar d>j+1}))=:\hat G_{j'}$ for any $j'$.
The slab automorphism $\theta_{\fod_k}$ can be viewed as an element of
$\hat G_{j'}$ for any $j'$, and hence $\ofoD_{j+1}$ can be viewed as a
scattering diagram for $\hat G_{j'}$ in the sense of Definition 
\ref{KSscatdiagdef}.
We will first
show that $\ofoD_{j+1}$ is consistent as a diagram for $\hat G_{j'}$ 
inductively on $j'$. 

The base case is $j'=j$. All walls of $\ofoD_{j+1}\setminus \ofoD_j$
are trivial to $\bar d$-order $j$ and hence to $d$-order $j$. Now 
$\ofoD_j$ satisfies the main induction
hypothesis (1) at order $j$, which implies via the natural map 
$\tilde G^{\le j}\rightarrow
\hat G_{j}=G^{\le j}$ that $\ofoD_{j+1}$ is consistent as a diagram for
$\hat G_j$. Indeed, as $\ofoD_{j+1}$ is a finite scattering diagram, it is
enough to check that $\theta_{\gamma_{\foj},\ofoD_{j+1}}$ is the
identity in $G^{\le j}$
for any small loop $\gamma_{\foj}$ around any joint $\foj$. By
the hypothesis (1), this is the case for some loop $\gamma_{\foj}$, and
hence for all loops. Note that by uniqueness of consistent scattering diagrams
with the same incoming walls, we also record for future use:
\begin{equation}
\label{equivtoDs}
\hbox{$\ofoD_{j+1}$ is equivalent to $\foD_{\s}$ as diagrams for $G^{\le j}$.}
\end{equation}

The induction step follows from Lemma
\ref{equivalencetonextorder}, applied to $\tilde\foD=\ofoD_{j+1}$, $\foD=
\foD_{\s}$, and the group being $\hat G_{j'}$ (a quotient of
$G$, so the argument of Lemma \ref{equivalencetonextorder}
still applies). Indeed, if we assume
$\ofoD_{j+1}$ is consistent in $\hat G_{j'}$, then it is equivalent to
$\foD_{\s}$ as a scattering diagram in $\hat G_{j'}$. Furthermore,
$\foD_{\s}$ is consistent to all orders by Theorem \ref{KSlemmaGS},
and has the
same set of incoming walls as $\ofoD_{j+1}$ by construction. Finally,
$\theta_{\gamma_{\foj},\ofoD_{j+1}}$ is the identity in $\hat G_{j'+1}$
for any perpendicular joint $\foj$,
as shown above. Thus $\ofoD_{j+1}$ and $\foD_{\s}$ are equivalent
in $\hat G_{j'+1}$, and in particular $\ofoD_{j+1}$ is consistent in
$\hat G_{j'+1}$.

Thus taking the inverse limit, we see that $\ofoD_{j+1}$ is consistent
as a scattering diagram for $\hat G^{\le j+1}$. This almost completes
the proof of the induction hypothesis (1) in degree $j+1$. Indeed,
as $\tilde G^{\le j+1}$ is a subgroup of $\hat G^{\le j+1}$, certainly
$\theta_{\gamma_{\foj},\ofoD_{j+1}}$ is the identity for any joint not
contained in $e_k^{\perp}$, including the parallel joints. For
a perpendicular joint contained in $e_k^{\perp}$, if we choose $\gamma_{\foj}$
as given in Step IV, $\theta_{\gamma_{\foj},\ofoD_{j+1}}$ lies in
$\tilde G$ and is the identity in $\tilde G^{\le j+1}$ by the construction
of $\foD[\foj]$ in Step IV.
Finally, for a parallel joint $\foj$
contained in $e_k^{\perp}$,
note that all wall and slab-crossing automorphisms associated to walls 
containing
$\foj$ commute, and in particular the contribution of $\theta_{\fod_k}$
and $\theta_{\fod_k}^{-1}$ in $\theta_{\gamma_{\foj},\ofoD_{j+1}}$
as an automorphism of $\widehat{\kk[\oP]}_{1+z^{v_k}}$
cancel, so that the latter automorphism lies in $\tilde G$. 
Hence the image of this automorphism in $\tilde G^{\le j+1}\subset
\hat G^{\le j+1}$ must also be trivial. This gives the induction hypothesis
(1).

\medskip

\emph{Step VI. Uniqueness.}
Suppose we have constructed
two scattering diagrams $\ofoD_{j+1}, \ofoD_{j+1}'$ for $\bar G$
from $\ofoD_j$
which satisfy the inductive hypothesis (1) to $\bar d$-order $j+1$,
but with the group $\tilde G$ replaced with $\bar G$.
By the induction
hypothesis (2), these two scattering diagrams are equivalent to 
$\bar d$-order $j$,
and we wish to show they are equivalent to $\bar d$-order $j+1$. 
One first constructs a finite scattering diagram $\foD$ consisting only
of outgoing walls whose attached functions are of the form $1+cz^{p^*(n)}$
with $c\in\kk$ and $\bar d(n)=j+1$, with the property that
$\ofoD_{j+1}\cup \foD$ is equivalent to $\ofoD_{j+1}'$ to $\bar d$-order
$j+1$. This is done precisely as in the proof of Lemma 
\ref{equivalencetonextorder}.
We need to show $\foD$ is
equivalent to the empty scattering diagram to $\bar d$-order $j+1$.

To show this, first note that for any loop $\gamma$ which does not cross
the slab $\fod_k$,
$\theta_{\gamma,\ofoD_{j+1}}=\theta_{\gamma,\ofoD_{j+1}'}=\id$ to
$\bar d$-order $j+1$ implies that
$\theta_{\gamma,\foD}=\id$ to $\bar d$-order $j+1$. Indeed, all wall-crossing
automorphisms of $\foD$ are central in $\bar G^{\le j+1}$.
Now if $n\in N^{+,k}$ with $\bar d(n)=j+1$,
let $\foD_n\subseteq \foD$ be the set of walls in $\foD$
with attached functions
of the form $1+cz^{p^*(n)}$. Note all wall-crossing automorphisms of $\foD$,
viewed as elements of $\bar G^{\le j+1}$,
lie in $\exp(\bar\fog^{> j}/\bar\fog^{> j+1})$, which as a group
coincides with the additive group structure on 
$\bar\fog^{> j}/\bar\fog^{> j+1}$. Thus for any path $\gamma$ not
crossing $\fod_k$, we obtain a unique decomposition $\theta_{\gamma,\foD}=
\prod_n \theta_{\gamma,\foD_n}$ from the $N^{+,k}$-grading on 
$\bar\fog^{>j}/\bar\fog^{>j+1}$, 
and if $\theta_{\gamma,\foD}$ is the identity, 
so is each $\theta_{\gamma,\foD_n}$.

Fixing $n$ as above, replace $\foD_n$ with an equivalent scattering
diagram with smallest possible support, and let $C_n=\Supp(\foD_n)$.
So if $x\in n^{\perp}$ is a general point, $x\in C_n$ if and only if
$g_x(\foD_n)$ is not the identity.
Assume first that $\langle e_k,p^*(n)\rangle\ge 0$. We shall show 
$C_n\subseteq
\shH_{k,-}$. Assume not.
Taking a general point $x\in C_n\setminus\shH_{k,-}$, it is not
possible for the ray $L:=x+\RR_{\ge 0}p^*(n)$ to be contained in $C_n$. This is
because $\foD$ consists of only a finite number of walls, none of which
are incoming. Let $\lambda = \max\{ t\in \RR_{\ge 0}\,|\, x+ tp^*(n)\in C_n\}$,
and $y=x+\lambda p^*(n)$. This makes sense as $t=0$ is in the set over which
we are taking the maximum, as we are assuming $x\in C_n$. 
Then necessarily $y$ is in a joint $\foj$ of
$\foD_n$, and every wall of $\foD_n$ containing $\foj$ is contained
in $\RR\foj-\RR_{\ge 0}p^*(n)$. Furthermore, since $\langle e_k, x\rangle>0$,
$\langle e_k, p^*(n)\rangle\ge 0$, it follows that
$y\not\in e_k^{\perp}$ and $\foj$ is not contained in $e_k^{\perp}$.
Thus given a loop $\gamma_{\foj}$ around $\foj$, $\theta_{\gamma_{\foj},\foD_n}$
is the identity. This implies that in fact to $\bar d$-order $j+1$,
\[
\prod_{\fod\in \foD_n\atop \foj\subseteq\fod} \theta_{\gamma_{\foj},\fod}
=\id.
\]
In particular, a point $z=y-\epsilon p^*(n)$ for small $\epsilon$ is
contained in precisely those walls of $\foD_n$ containing $\foj$.
But then $g_z(\foD_n)=\id$, contradicting minimality of $C_n$.
Thus one finds that $C_n\subseteq \shH_{k,-}$. Similarly,
if $\langle e_k,p^*(n)\rangle\le 0$, then $C_n\subseteq \shH_{k,+}$. 
In particular,
if $\langle e_k,p^*(n)\rangle=0$, $C_n\subseteq e_k^{\perp}$, but there
are no walls contained in $e_k^{\perp}$, so in this case $\foD_n=\emptyset$.

Now consider a joint $\foj$ of $\ofoD_{j+1}'$ contained in $e_k^{\perp}$. 
There are three cases: either $\foj$ is perpendicular and positive, 
perpendicular and negative, or parallel. Consider the first case.
Take a loop $\gamma_{\foj}$ around $\foj$ as in the positive case in
Step IV. Because of positivity, if a wall $\fod$ of $\foD$ contains
$\foj$, then with $n$ chosen so that $\fod\in \foD_n$,
we must have $\langle e_k, p^*(n)\rangle >0$ and hence $\fod$
is contained in $\shH_{k,-}$. Thus we have that to $\bar d$-order $j+1$,
\[
\id=\theta_{\gamma_{\foj},\ofoD'_{j+1}}=\theta_{\gamma_{\foj},\ofoD_{j+1}\cup\foD}
=\theta_{2,\foD}\circ\theta_{2,\ofoD_{j+1}}\circ\theta_{\fod_k}^{-1}
\circ\theta_{1,\ofoD_{j+1}}\circ\theta_{\fod_k}=\theta_{2,\foD}
=\theta_{\gamma_{\foj}, \foD}
\]
as in \eqref{jointexpression}, where $\theta_{i,\foD}$ and 
$\theta_{i,\ofoD_{j+1}}$
denote the contributions coming from the scattering diagrams
$\foD$ and $\ofoD_{j+1}$ and the pieces of $\gamma_{\foj}$
not crossing $e_k^{\perp}$. The same argument works for negative
joints, while a parallel joint cannot contain any wall of $\foD$,
(as we showed above that $\foD_n=\emptyset$ if $\langle e_k,p^*(n)\rangle=0$),
so that $\theta_{\gamma_{\foj},\foD}=\id$ trivially.
We can now repeat the argument of the previous paragraph, taking for
any $n$ a general point $x\in C_n$ rather than $x\in C_n\setminus \shH_{k,-}$.
This allows us to conclude that $\foD_n=\emptyset$ for all $n$, proving
uniqueness.

\medskip

\emph{Step VII. Finishing the proof of Theorem \ref{KSlemma2}.}
Having completed the induction step, we take $\ofoD_{\s}=
\bigcup_{j=0}^\infty \ofoD_j$.  
We need to check it satisfies the stated conditions in Theorem
\ref{KSlemma2}. Certainly conditions (1) and (2) hold by construction.

For (3), first recall that because by construction $\ofoD_{\s}$ can be viewed
as a scattering diagram for $\tilde G$, it can also be viewed as a 
scattering diagram for $G$ via the inclusion $\tilde G\subset G$,
and in addition $\theta_{\fod_k}\in G$, so that $\ofoD_{\s}$ is
viewed as a scattering diagram for $G$ in the sense of Definition 
\ref{KSscatdiagdef}, i.e., with no slab. Now as a scattering diagram for $G$,
$\ofoD_{\s}$ is equivalent to $\foD_{\s}$ by \eqref{equivtoDs}. 
By consistency of $\foD_{\s}$, $\theta_{\gamma,
\ofoD_{\s}}$ is independent of the endpoints of $\gamma$ as an element
of $G$. Now suppose $g_1, g_2$ are two automorphisms of 
$\widehat{\kk[\oP]}_{1+z^{v_k}}$ which induce automorphisms of
$\widehat{\kk[P]}$, (i.e., for $p\in P\subset \oP$,
 $g_i(z^p)\in \widehat{\kk[P]}$, giving a map $g_i:\widehat{\kk[P]}\rightarrow
\widehat{\kk[P]}$ which is an automorphism)
and agree as automorphisms of the latter ring. Then $g_1, g_2$ agree
as automorphisms of $\widehat{\kk[\oP]}_{1+z^{v_k}}$. Thus in particular,
$\theta_{\gamma,\ofoD_{\s}}$ is independent of the endpoints of 
$\gamma$ as an automorphism of $\widehat{\kk[\oP]}_{1+z^{v_k}}$. This
gives condition (3).

The uniqueness of $\ofoD_{\s}$ with these properties then follows from
the induction hypothesis (2). Indeed, if $\ofoD_{\s}'$ satisfies conditions
(1)-(3) of Theorem \ref{KSlemma2}, then working by induction
on the order $j$, the
induction hypothesis (1) holds for $\ofoD_{\s}'$ (the existence of
$\gamma_{\foj}$ with $\theta_{\gamma_{\foj},\ofoD'_{\s}}\in \tilde{G}$
only being an issue for joints contained in $e_k^{\perp}$, and Step IV
explains how to choose the loop $\gamma_{\foj}$). Thus by induction
hypothesis (2), $\ofoD_{\s}$ and $\ofoD_{s}'$ are equivalent to order $j$.

This completes the proof of Theorem \ref{KSlemma2}. \qed

\subsection{The proof of Theorem \ref{scatdiagpositive}}

The key point of the proof
is just the positivity of the simplest scattering diagram
as described in Example \ref{basicscatteringexample}, which we use to
analyze general two-dimensional scattering diagrams. We will consider
a somewhat more general setup, but only in two dimensions, than considered in 
the rest of this paper. In particular, we will follow the notation of 
\cite{G11}, \S 6.3.1, taking $M=\ZZ^2$, $N=\Hom(M,\ZZ)$, and assume given
a monoid $P$ with a map $r:P\rightarrow M$, $\fom=P\setminus P^{\times}$.
We will consider scattering diagrams $\foD$ for this data as in
\cite{G11}, Def.\ 6.37, consisting of rays and lines which do not necessarily
pass through the origin. Given any scattering diagram $\foD_{\inc}$, 
the argument of Kontsevich and Soibelman from \cite{KS06} (see \cite{G11},
Theorem 6.38 for an exposition of this particular case) adds rays to
$\foD_{\inc}$ to obtain a
scattering diagram $\Scatter(\foD_{\inc})$ 
such that $\theta_{\gamma,\Scatter(\foD_{\inc})}$ is the identity for every
loop $\gamma$. This diagram is unique up to equivalence.

The fundamental observation involves a kind of universal scattering diagram:

\begin{proposition}
\label{basicpositivity}
In the above setup, suppose given $p_i\in \fom\subseteq P$, $1\le i \le s$,
with $r(p_i)\not=0$, and positive integers $d_1,\ldots,d_s$.
Consider the scattering diagram 
\[
\foD_{\inc}:=\{(\RR r(p_i), (1+z^{p_i})^{d_i})\,|\, 1\le i\le s\},
\]
$\foD:=\Scatter(\foD_{\inc})$.
We can choose $\foD$ within its equivalence class 
so that for any given ray $(\fod,f_{\fod})\in \foD\setminus
\foD_{\inc}$,
we have
\[
f_{\fod}=(1+z^{\sum_{i=1}^s n_ip_i})^c
\]
for $c$ a positive integer and the $n_i$ non-negative integers with
at least two of them non-zero.
\end{proposition}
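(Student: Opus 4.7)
The plan is to reduce the general case to the basic two-wall scattering of Example \ref{basicscatteringexample}, via a change-of-lattice trick to eliminate multiplicities and a generic perturbation of wall positions to localize each scattering event to a single two-wall interaction. The positivity input, which is not formal, will come from the quiver-theoretic interpretation of Proposition \ref{reinekeprop}.

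I would first apply the change-of-lattice trick alluded to in the discussion of Example \ref{bcexample}: replace each incoming wall $(\RR r(p_i), (1+z^{p_i})^{d_i})$ by $d_i$ separate walls of the form $(\RR r(p_i), 1+z^{p_i})$, each with slightly perturbed support, so that all perturbed walls are distinct. The desired $\Scatter(\foD_{\inc})$ is recovered as the limit of the perturbed scattering, and any walls that collapse to a common support in the limit multiply their scattering functions together. This reduces the problem to the multiplicity-free case. After a further generic perturbation of the now-multiplicity-one walls, we may assume that at each stage of the Kontsevich-Soibelman algorithm, at most two walls meet at any point. At such a meeting, with incoming data $(\fod_1, 1+z^{p_1})$, $(\fod_2, 1+z^{p_2})$ and $n_i \in N^+$ spanning $\fod_i^{\perp}$, the local scattering is governed by $\epsilon := \{n_1,n_2\}$. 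When $|\epsilon| = 1$ the interaction is exactly that of Example \ref{basicscatteringexample}, producing a single new ray with function $1+z^{p_1+p_2}$, already of the required form with $c=1$. When $|\epsilon| \geq 2$ the scattering is much more intricate, but the output is still controlled.

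To handle the $|\epsilon| \geq 2$ case, I would invoke Proposition \ref{reinekeprop}, which identifies the scattering function on each new ray with a rational power $Q^i(z^d)^{1/d_i}$ of the generating series $Q^i(z^d) = \sum_{k \geq 0} \chi(\M^x_{kd,i}) z^{kd}$ of Euler characteristics of moduli spaces of $x$-semistable framed quiver representations. For each primitive Schur root $d$ of the associated quiver, the product decomposition $Q^i(z^d)^{1/d_i} = \prod_{\ell \geq 1} (1 + z^{\ell d})^{c_\ell}$, with each $c_\ell$ a non-negative integer, follows from the non-negativity of the relevant quiver Euler characteristics together with a standard combinatorial identity; the cases worked out in Examples \ref{finite_type_scattering} and \ref{acyclic_scattering} (Dynkin and acyclic quivers, respectively) make the form of these factors explicit, and for imaginary Schur roots one still reads off non-negative integer exponents from the geometry of the moduli. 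Each factor $(1+z^{\ell d})^{c_\ell}$ is then installed as a separate wall of $\foD$ with the common support $\fod$; these walls together are equivalent to a single wall carrying the full scattering function, so the recipe lies within the equivalence class.

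The main obstacle is the positivity of the exponents $c_\ell$ in the product decomposition of $Q^i(z^d)^{1/d_i}$, especially for imaginary Schur roots where $Q^i(z^d)$ is genuinely an infinite series. This cannot be extracted from the formal Kontsevich-Soibelman procedure of Construction \ref{KSlemalg} and requires substantive algebro-geometric input, namely non-negativity of Euler characteristics of quiver moduli and compatibility with the Möbius-type product expansion. A secondary technical point is to carry out the change-of-lattice and perturbation arguments compatibly at every finite order, so that the limiting object is a genuine scattering diagram in the sense of Definition \ref{KSscatdiagdef} and the per-ray product decomposition is unambiguous.
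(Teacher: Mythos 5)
Your decomposition into a change-of-lattice step and a perturbation step is the right instinct, and the reduction to pairwise two-wall collisions is exactly what the paper does. But there is a genuine gap in your handling of the case $|\epsilon|\ge 2$, where you fall back on the quiver-moduli interpretation of Proposition \ref{reinekeprop}. The paper never does this: its Step IV shows that the change-of-lattice trick, carried out correctly, reduces \emph{every} two-wall interaction to repeated instances of the basic $\epsilon=\pm1$ scattering of Example \ref{basicscatteringexample}, so the $|\epsilon|\ge 2$ difficulty simply does not arise.

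The point you are missing is that the non-triviality of $\epsilon=\langle n_1,r(p_2)\rangle$ (with $n_i$ the primitive annihilating covector) is entirely an artifact of $r(p_1),r(p_2)$ failing to form a basis of $M$. Your version of the change-of-lattice trick (splitting $(1+z^{p_i})^{d_i}$ into $d_i$ translated walls with function $1+z^{p_i}$) only addresses the multiplicity $d_i$, not the failure of primitivity. The paper's Step IV instead replaces $M$ by the sublattice $M^\circ$ generated by $v_1=r(e_1),v_2=r(e_2)$, so that $N^\circ=\Hom(M^\circ,\ZZ)$ has basis $v_1^*,v_2^*$, the annihilating covectors become primitive, and their pairings with the directions are $\pm1$. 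The price is that the multiplicities become $d_i\, e(v_j^*)$, where $e(\cdot)$ tracks how far from primitive the old covectors were. Only \emph{after} this change of lattice does one perturb, and then every pairwise collision of order-$1$ lines is literally Example \ref{basicscatteringexample}, producing a single ray with function $1+z^{e_1+e_2}$. Higher-order collisions are then handled by the order induction of Step III, since after the first stage every newly produced ray has order $\ge 2$ and one can re-localize. No input from Euler characteristics, quiver moduli, or Reineke's theorem is needed.

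Beyond being unnecessarily heavy, the quiver route has two further problems. First, Proposition \ref{reinekeprop} is stated only for acyclic skew-symmetric data, whereas Proposition \ref{basicpositivity} must hold for arbitrary two-dimensional data since it is the universal local model fed into the induction behind Theorem \ref{scatdiagpositive}; the paper explicitly describes the quiver argument (via \cite{R14}) as an \emph{alternate} proof of positivity ``in this case,'' not a substitute for the general one. Second, even granting non-negativity of the Euler characteristics, extracting a factorization $Q^i(z^d)^{1/d_i}=\prod_{\ell\ge1}(1+z^{\ell d})^{c_\ell}$ with $c_\ell\in\ZZ_{\ge 0}$ is not a ``standard combinatorial identity''; it is essentially the content of the proposition you are trying to prove, so invoking it is close to circular. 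The elementary perturbation-and-change-of-lattice argument is what supplies exactly this factorization.
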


\begin{proof}

\emph{Step I. The change of monoid trick}. 
Note that if the $r(p_i)$ generate a rank one sublattice of $M$, then
all the wall-crossing automorphisms of $\foD_{\inc}$ commute and $\foD=
\foD_{\inc}$, so we are done. So assume from now on that the $r(p_i)$
generate a rank two sublattice of $M$.

Let $P'=\NN^s$, generated by
$e_1,\ldots,e_s$, define a map $u:P'\rightarrow P$ by $u(e_i)=p_i$, and
a map $r':P'\rightarrow M$ by $r'(e_i)=r(p_i)$.
We extend $u$ to a map $u:\widehat{\kk[P']}\rightarrow\widehat{\kk[P]}$,
and define, for a scattering diagram $\foD$ for the monoid $P'$,
$u(\foD):=\{(\fod,u(f_{\fod}))\,|\,(\fod,f_{\fod})\in \foD\}$.
Clearly if $\theta_{\gamma,\foD}=\id$, then $\theta_{\gamma,u(\foD)}=\id$.
Thus if $\foD'=\Scatter(\{(\RR r(p_i), (1+z^{e_i})^{d_i})\,|\,1\le i\le s\})$,
then $u(\foD')$ is equivalent to $\Scatter(\foD_{\inc})$, 
by uniqueness of $\Scatter$ up to
equivalence. So it is sufficient to show the result with $P=\NN^s$,
$p_i=e_i$.

\medskip

\emph{Step II. Everything but the positivity of the exponents}.
We can construct $\foD$ specifically using the original method of
\cite{KS06}, already explained here in Steps III and IV of the proof of
Theorem \ref{KSlemma2}: we construct $\foD$ order by order, constructing
$\foD_d$ so that $\theta_{\gamma,\foD_d}$ is the identity modulo 
$\fom^d$ for $\gamma$ a loop around the origin. Given a description
\begin{equation}
\label{thetaeq2}
\theta_{\gamma,\foD_d}=\exp\left(\sum c_iz^{m_i}\partial_{n_i}\right)\mod
\fom^{d+1},
\end{equation}
with the $n_i$ primitive and the $m_i$ all distinct,
we add a collection of rays 
\[
\{(-\RR_{\ge 0}r(m_i), (1+z^{m_i})^{\pm c_i})\}
\]
for some $c_i\in\kk$. However, inductively, we can
show the $c_i$ can be taken to be integers. Indeed, if all rays in
$\foD_d$ have this property, then $\theta_{\gamma,\foD_d}$ is in fact
an automorphism of $\widehat{\ZZ[P]}$, 
and thus the $c_i$ appearing in \eqref{thetaeq2}
of $\theta_{\gamma,\foD_d}$ are also integers.

Next let us show that any exponent $m_i$ is of the form $\sum n_je_j\in P$
with at least two of the $n_j$ non-zero.
The pro-nilpotent group $\VV$ in which all automorphisms live is
given by the Lie algebra
\[
\fov=\bigoplus_{m\in\fom\atop r(m)\not=0} z^m\kk\otimes r(m)^{\perp}
\subseteq \Theta(\kk[P]),
\]
following the notation of \cite{G11}, pp.\ 290-291. This contains a
subalgebra $\fov'$ where the sum is taken over all $m\in\fom$ not
proportional to one of the $e_i$. Then clearly $[\fov,\fov']\subseteq
\fov'$, so the corresponding pro-nilpotent group $\VV'$ is normal in $\VV$. 
Furthermore,
$\fov/\fov'$ is abelian, hence so is $\VV/\VV'$. For any loop $\gamma$,
the image of $\theta_{\gamma,
\foD_{\inc}}$ is thus the identity in $\VV/\VV'$, as
every wall in $\foD_{\inc}$ contributes twice to $\theta_{\gamma,\foD_{\inc}}$,
but with inverse automorphisms. Assume inductively that
$\foD_d\setminus\foD_{\inc}$ only contains rays whose attached functions
$(1+z^{m_i})^{c_i}$ have $m_i$ not proportional to any $e_j$. Then the
wall-crossing automorphisms associated to these rays lie in $\VV'$, so
$\theta_{\gamma,\foD_d}$ is the identity in $\VV/\VV'$, i.e., lies in
$\VV'$. Thus the expression $\sum c_iz^{m_i}\partial_{n_i}$ of \eqref{thetaeq2}
lies in $\fov'$, hence the inductive step follows.

It remains to show that each wall added is of the form 
$(\fod,(1+z^m)^{c})$ with $c$ positive.

\medskip

\emph{Step III. The perturbation trick}. 
We will now show the result for all monoids $P=\NN^\alpha$ for all
$\alpha$, all choices of $r:P\rightarrow M$,
all choices of $p_i\in P\setminus\{0\}$ with $r(p_i)\not=0$, and all
positive choices of $d_i$. (Note by Step I this is a bit more than we need, as
we don't take the $p_i$ to necessarily be generators of $P$). 
All cases are dealt with simultaneously by induction.

We define for
$p\in P$ the \emph{order} $\ord(p)$, which is the unique $n \in \bZ_{\ge 0}$ such that
$p\in \fom^n\setminus \fom^{n+1}$. For a ray $(\fod,(1+z^p)^c)$, we write
$\ord(\fod):=\ord(p)$, and say $\fod$ is a ray of order $\ord(\fod)$.
We will go by induction on the order,
showing that a ray $(\fod,(1+z^p)^c)$ in $\foD$ of order $\le k$
for any choice of data
has $c$ positive. This is obviously the case for $k=1$, as all elements of
$\foD\setminus\foD_{\inc}$ have order at least $2$. So assume the induction
hypothesis is true for all orders $<k$, and we need to show rays added
of order $k$ have positive exponent.

We will
use the \emph{perturbation trick} repeatedly. Given a scattering diagram
$\foD_{\inc}$ for which we would like to compute $\foD=\Scatter(\foD_{\inc})$,
choose general $v_{\fod}\in M_{\RR}$ for each $\fod\in \foD_{\inc}$.
Define $\foD_{\inc}':=\{(\fod+v_{\fod},f_{\fod})\,|\, \fod\in\foD_{\inc}\}$;
this is the perturbed diagram.
We can then run the Kontsevich-Soibelman algorithm for $\foD'_{\inc}$,
for example as described in \cite{G11}, Theorem 6.38. This gives
a scattering diagram $\foD'=\Scatter(\foD'_{\inc})$ with the property that
$\theta_{\gamma,\foD'}$ is the identity for every loop $\gamma$. This is
the case in particular for $\gamma$ a very large loop
around the origin which contains all singular points of $\foD'$. We can
assume as usual that $\foD'$ has been constructed only by adding rays
of the form $(1+z^m)^c$.

Then up to equivalence,
$\foD$ can be obtained from $\foD'$ by taking the \emph{asymptotic scattering
diagram of $\foD'$}, i.e., just translate each line of $\foD'$ so it passes
through the origin and each ray of $\foD'$ so its endpoint is the origin.
See \S1.4 of \cite{GPS} for more details. If after performing this translation,
we obtain a number of rays with the same support of the form
$(\fod, (1+z^m)^{c_i})$, $i$ in some index set, we can replace all these
rays with a single ray $(\fod, (1+z^m)^{\sum c_i})$ without affecting the
equivalence class.
Thus if we want to show positivity of the exponents for $\foD$, it is
enough to show the desired positivity for $\foD'$. 

We will typically use an induction hypothesis to show positivity for
$\foD'$. Indeed, for each order, we will run the Kontsevich-Soibelman
algorithm at each singular point, and the behaviour at each singular point
is equivalent to a scattering diagram of the general type being considered.
Indeed, if $p$ is a singular point of some $\foD'_d$ constructed to order
$d$, we obtain a local version $\foD_{p}^{\loc}$ of the scattering diagram 
at $p$ by
replacing each $\fod$ with $p\in\fod$ with $\fod-p$, and replacing such
translated rays with the line spanned by the ray if the translated ray does 
not have the origin as its endpoint. As long as all attached functions
of rays and lines passing through $p$ are of the form $(1+z^m)^c$ with
$c$ a positive integer, we are back in the original situation of the
proposition. We shall write $\foD_{p,\inc}^{\loc}$ for the
set of lines in $\foD_{p}^{\loc}$.

We first observe that using the perturbation trick it is enough to show the
induction hypothesis for order $k$ 
when at most two of the $p_i$ have $\ord(p_i)=1$. Indeed,
after perturbing, the lines of $\foD'_{\inc}$ only intersect
pairwise, but as more
rays are added as the Kontsevich-Soibelman algorithm is run, one might have
more complicated behaviour at singular points. However, any ray added
has order $>1$. Thus we only have to analyze 
initial scattering diagrams $\foD^{\loc}_{p,\inc}$
with at most two lines of order $1$.
  
Next we observe the induction hypothesis allows us to show the result
only for $s=2$, with both lines having order $1$. Indeed,
write $\foD^{\loc}_{p,\inc}$ as $(\fod_i, (1+z^{p_i})^{c_i})$ and order 
the $p_i$
so that $\ord(p_1)\le\ord(p_2)\le\cdots$. Apply Step I, getting a map
$u:P'\rightarrow P$ with $u(e_i)=p_i$. 
We are trying to prove that rays with $P$-order $k$ have positive
exponent. But consider a ray 
$(\fod,(1+z^{\sum n_ip_i})^c)$ which is the image under $u$ of a ray 
$(\fod, (1+z^{\sum n_i e_i})^c)$ appearing in 
$\Scatter(\{(1+z^{e_i})^{c_i}\,|\,1\le i \le s \})$ 
with $\ord_P(\sum n_ip_i)=k$
and at least one of $n_j, j\ge 3$ non-zero. Then $\ord_{P'} \sum n_ie_i<k$,
so by the induction hypothesis, we can assume $c$ is positive. On the
other hand, rays of the form $(\fod, (1+z^{\sum n_i e_i})^c)$ with $n_j=0$
for $j\ge 3$
appearing in $\Scatter(\{(1+z^{e_i})^{c_i}\,|\,1\le i \le s \})$ already
appear in $\Scatter(\{(1+z^{e_i})^{c_i}\,|\,1\le i \le 2 \})$, as follows
easily by working modulo the ideal in $P'$ generated by the $e_j, j\ge 3$.
Thus we are only concerned about rays which arise from scattering
the two order $1$ lines.
Thus it is sufficient to show the result when $s=2$.

\medskip

\emph{Step IV. The change of lattice trick}.
To deal with the case where $\foD_{\inc}$ consists of two lines,
we use the \emph{change of lattice} trick to reduce to a simpler expression
for the scattering diagram. By Step I, we can take $P=\NN^2$, $p_i=e_i$.
Let $M^{\circ}\subseteq M$ be the sublattice
generated by $v_1=r(e_1), v_2=r(e_2)$. 
Note as in Step I we can assume that this is a rank $2$
sublattice, as otherwise the automorphisms associated to the two lines
commute.
Then $N^{\circ}:=\Hom(M^{\circ},\ZZ)$ is a superlattice
of $N$, with dual basis $v_1^*,v_2^*$. In what follows, we will talk
about scattering diagrams defined using both the lattice $M$ and $M^{\circ}$.
Bear in mind that a wall $(\fod,f_{\fod})$ could be interpreted using
either lattice, and the automorphism induced by crossing such a wall depends
on which lattice we are using, as primitive vectors in $N$ differ
from primitive vectors in $N^{\circ}$.

To see the relationship between these automorphisms,
for $w\in N^{\circ}\setminus\{0\}$, let 
\[
e(w)=\min\{e>0\,|\, ew\in N\}.
\]
Then a wall $(\fod,f_{\fod})$ for $M$ 
induces a wall-crossing automorphism of $\widehat{\kk[P]}$
which is the same as the automorphism induced by the 
wall $(\fod, f_{\fod}^{e(n_{\fod})})$ for $M^{\circ}$, where 
$n_{\fod}\in N^{\circ}$ is primitive and annihilates $\fod$.

Consider
\[
\foD_{\inc}^{\circ}:=\{(\RR v_1, (1+z^{e_1})^{d_1e(v_2^*)}),
(\RR v_2, (1+z^{e_2})^{d_2e(v_1^*)})\}
\]
as a scattering diagram for the lattice $M^{\circ}$.
Let $\foD^{\circ}=\Scatter(\foD^{\circ}_{\inc})$. Let $\foD'$ be the
scattering diagram for $M$ obtained by replacing every wall
$(\fod,(1+z^p)^c)\in\foD^{\circ}$ with $(\fod,(1+z^p)^{c/e(n_{\fod})})$.
Thus the wall-crossing automorphism for each wall in $\foD'$ as a scattering
diagram for the lattice $M$
is the same automorphism for the corresponding wall in $\foD^{\circ}$.
Then $\theta_{\gamma,\foD'}$ is the identity.
Thus by uniqueness of the scattering process up to equivalence, 
$\foD'$ is equivalent to $\Scatter(\foD_{\inc})$. (Note this implies that
$c/e(n_{\fod})\in\ZZ$ also, as $\Scatter(\foD_{\inc})$ only involves integer
exponents.)

Thus it is enough to prove the desired positivity for the scattering diagram
$\foD^{\circ}$. To do so, we use a variant of the perturbation trick,
factoring the two lines in $\foD_{\inc}^{\circ}$. We choose
general $v^1_{j_1},v^2_{j_2}\in M_{\RR}$, with $1\le j_1\le 
d_1e(v_2^*)$, $1\le j_2\le d_2e(v_1^*)$. Define
\[
\tilde\foD_{\inc}^{\circ} :=
\{(v^1_j+\RR v_1, 1+z^{e_1})\,|\,
1\le j\le d_1e(v_2^*)\}
\cup
\{(v^2_j+\RR v_2, 1+z^{e_2})\,|\,
1\le j\le d_2e(v_1^*)\}.
\] 
Again, we initially only have pair-wise intersections. The first
stage of this algorithm will then only involve points where two lines of
the form $(v^1_j+\RR v_1, 1+z^{e_1})$ and 
$(v^2_{j'}+\RR v_2, 1+z^{e_2})$ intersect. The algorithm only adds
one ray in the direction $-v_1-v_2$ with endpoint the intersection point
and attached function $1+z^{e_1+e_2}$, as follows from Example
\ref{basicscatteringexample}. This now accounts for all new rays of order
$2$. We continue to higher degree, but now we can use the induction hypothesis
at every singular point $p$ as we did in Step III, because every
line in $\foD_{p,\inc}^{\loc}$ has order $\ge 2$ except for possibly
one or two of the given lines of order $1$, and we have already accounted
for all rays produced by collisions of two lines of order $1$.
\end{proof}

\begin{corollary}
\label{positivitywithcoefs}
In the situation of Proposition \ref{basicpositivity},
suppose instead that 
\[
\foD_{\inc}:=\{(\RR r(p_i), (1+\alpha_i z^{p_i})^{d_i})\,|\, 1\le i \le s\},
\]
where now $\alpha_i\in\kk$, the ground field. Choosing 
$\foD=\Scatter(\foD_{\inc})$ up to equivalence, we can assume
that each ray $(\fod,f_{\fod})\in\foD\setminus\foD_{\inc}$ satisfies
\[
f_{\fod}= (1+\prod_i (\alpha_i z^{p_i})^{a_i})^c
\]
for some choice of non-negative integers $a_i$ and
where $c$ is a positive integer.
\end{corollary}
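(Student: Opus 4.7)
The plan is to reduce to Proposition \ref{basicpositivity} via a generalization of the ``change of monoid'' trick used in Step I of its proof, now with a purely ring-theoretic substitution that absorbs the coefficients $\alpha_i$. Set $P' := \NN^s$ with standard generators $e_1, \ldots, e_s$, and let $r': P' \to M$ be defined by $r'(e_i) = r(p_i)$. Define the continuous $\kk$-algebra homomorphism
\[
u: \widehat{\kk[P']} \longrightarrow \widehat{\kk[P]}, \qquad z^{e_i} \longmapsto \alpha_i z^{p_i}.
\]
Unlike the map in Step I of Proposition \ref{basicpositivity}, $u$ is not induced from a monoid map (the $\alpha_i$ may be zero or non-units); nonetheless it is a well-defined ring homomorphism sending $z^{\sum a_i e_i}$ to $\prod_i (\alpha_i z^{p_i})^{a_i}$.

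Apply Proposition \ref{basicpositivity} in the $P'$-setting to the initial diagram $\foD'_{\inc} := \{(\RR r(p_i), (1+z^{e_i})^{d_i})\}$, obtaining a consistent scattering diagram $\foD'$ whose non-initial rays all have attached functions of the form $(1+z^{\sum a_i e_i})^c$ with $c$ a positive integer and $a_i \in \NN$, at least two non-zero. Set $u(\foD') := \{(\fod, u(f_{\fod})) : (\fod, f_{\fod}) \in \foD'\}$. Then $u$ transports the initial rays of $\foD'$ exactly to $\foD_{\inc}$, and each non-initial ray to $(\fod, (1+\prod_i (\alpha_i z^{p_i})^{a_i})^c)$, precisely the form claimed. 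By uniqueness of $\Scatter(\foD_{\inc})$ up to equivalence, the corollary will follow once $u(\foD')$ is known to be consistent.

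To verify consistency, take a loop $\gamma$ and let $(\fod_j, f_{\fod_j})$, with signs $\epsilon_j$ and primitive normals $n_{0,j} \in N$, be the walls of $\foD'$ successively crossed. Consistency of $\foD'$ gives, for every $p' \in P'$,
\[
\prod_j f_{\fod_j}^{\epsilon_j \langle r'(p'), n_{0,j}\rangle} = 1 \quad \text{in } \widehat{\kk[P']}.
\]
Taking $p' = e_i$ and applying $u$ yields $\prod_j u(f_{\fod_j})^{\epsilon_j \langle r(p_i), n_{0,j}\rangle} = 1$ in $\widehat{\kk[P]}$ for each $i$. As in Step II of Proposition \ref{basicpositivity}, we may assume the $r(p_i)$ span a rank-two sublattice of $M$ (otherwise all wall-crossings commute and $\foD = \foD_{\inc}$). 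Then for arbitrary $p \in P$ there exists $N \in \ZZ_{>0}$ with $Nr(p) = \sum_i c_i r(p_i)$, $c_i \in \ZZ$, and $\ZZ$-multiplicativity of the product in $1+\fom \subseteq \widehat{\kk[P]}^\times$ gives
\[
\Bigl(\prod_j u(f_{\fod_j})^{\epsilon_j \langle r(p), n_{0,j}\rangle}\Bigr)^{N} = \prod_i \Bigl(\prod_j u(f_{\fod_j})^{\epsilon_j \langle r(p_i), n_{0,j}\rangle}\Bigr)^{c_i} = 1.
\]
In characteristic zero, $\log: 1+\fom \to \fom$ is a bijection onto a torsion-free $\QQ$-vector space, so the product itself is $1$. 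Hence $\theta_{\gamma, u(\foD')}(z^p) = z^p$ for all $p \in P$, establishing consistency.

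The main obstacle is precisely this last transport of consistency along the non-monoidal ring map $u$, which requires both the torsion-freeness of $1+\fom$ and the $\QQ$-generation of $M$ by the $r(p_i)$ granted by the Step II reduction. Once this is in hand, uniqueness of the scattering algorithm identifies $u(\foD')$ with $\Scatter(\foD_{\inc})$ up to equivalence, and the form of the attached functions is read off directly from the $P'$-side via Proposition \ref{basicpositivity}.
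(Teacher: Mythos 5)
Your strategy — absorb both the monoid change and the coefficients into a single map $u(z^{e_i}) = \alpha_i z^{p_i}$ and transport consistency directly — differs from the paper's, which first invokes the change-of-monoid trick (Step I of Proposition \ref{basicpositivity}) to reduce to $P = \NN^s$, $p_i = e_i$, and only then introduces the coefficient substitution $\nu\colon z^{e_i}\mapsto\alpha_i z^{e_i}$. The payoff of that two-step order is that $\nu$ is then an honest automorphism of $\widehat{\kk[\NN^s]}$, so consistency of $\nu(\foD')$ is immediate from conjugation: $\theta_{\gamma,\nu(\foD')} = \nu\circ\theta_{\gamma,\foD'}\circ\nu^{-1} = \id$. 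No hands-on verification of consistency is ever needed.

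Your verification step contains a genuine error. You assert that consistency of $\foD'$ yields, for each $p'\in P'$,
\[
\prod_j f_{\fod_j}^{\epsilon_j\langle r'(p'),\,n_{0,j}\rangle}=1.
\]
This is false in general: $\theta_{\gamma,\foD'}$ is a composition of non-commuting ring automorphisms, and already for two walls
\[
\theta_{\fod_2}\bigl(\theta_{\fod_1}(z^{p'})\bigr)
= z^{p'}\,f_{\fod_2}^{\epsilon_2\langle n_{0,2},\,r'(p')\rangle}\,\theta_{\fod_2}(f_{\fod_1})^{\epsilon_1\langle n_{0,1},\,r'(p')\rangle},
\]
with $\theta_{\fod_2}(f_{\fod_1})$ in place of $f_{\fod_1}$. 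The second crossing modifies the function accumulated from the first, so the path-ordered product does not collapse to a scalar product of the $f_{\fod_j}^{\epsilon_j\langle\cdot\rangle}$. Only if all wall-crossings commuted would your formula hold, and that commutativity is exactly what scattering diagrams lack. Everything downstream — applying $u$, the $\ZZ$-multiplicativity step, the use of $\log$ — rests on this false identity.

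The underlying idea can be salvaged: the correct intertwining is $\theta_{\fod,u(\foD')}\circ u = u\circ\theta_{\fod,\foD'}$ for each wall, whence $\theta_{\gamma,u(\foD')}$ fixes the image of $u$; one then argues order by order in the pro-nilpotent Lie algebra, using that the $r(p_i)$ span a rank-two sublattice of $M$, that an element of $\VV$ fixing all $z^{\sum a_i p_i}$ is the identity. This is precisely the content hidden in the paper's ``Clearly'' in Step I of Proposition \ref{basicpositivity}. Rather than re-proving that delicate fact, the cleaner route is the paper's: do the monoid change once, then conjugate by the honest automorphism $\nu$.
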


\begin{proof}
This follows easily from from Proposition \ref{basicpositivity}. 
First, using the change of monoid trick (Step I of the proof of
Proposition \ref{basicpositivity}), we may assume $P=\NN^s$ and $p_i=e_i$.
Consider the automorphism $\nu:\widehat{\kk[P]}\rightarrow
\widehat{\kk[P]}$ defined by $\nu(z^{e_i})=\alpha_iz^{e_i}$. Applying
$\nu$ to the function attached to each wall of $\Scatter(\{(\RR r(e_i),
(1+z^{e_i})^{d_i})\})$ gives a scattering diagram $\foD'$ whose incoming
walls are precisely those of $\foD_{\inc}$, and $\theta_{\gamma,\foD'}=\id$
for $\gamma$ a loop around the origin. Thus we can take $\foD=\foD'$ and
the result follows from Proposition \ref{basicpositivity}.
\end{proof}

\emph{Proof of Theorem \ref{scatdiagpositive}.}
In fact one can use the $\foD_{\s}$ as constructed explicitly in the
algorithm of the proof of Theorem \ref{KSlemma2}. The only issue is that
we need to 
know that the walls added at each joint have the desired positivity property. 
Note that the statement of
Theorem \ref{scatdiagpositive} involves scattering diagrams without slabs,
while the proof of Theorem \ref{KSlemma2} given involves a slab. So for the
purpose of this discussion, we can ignore all issues concerning the slab
in the proof of Theorem \ref{KSlemma2}, and the only thing we need to do is look
at the procedure for producing $\foD[\foj]$ in Step II of the proof of
Theorem \ref{KSlemma2}.

For a perpendicular joint $\foj$ of $\foD_d$, we can split 
$M=\Lambda_{\foj}\oplus M'$, where $M'$ is a rank two lattice. 
For each wall $\fod\in\foD_d$ containing $\foj$, we can inductively assume
that $f_{\fod}=(1+z^m)^c$ for some positive integer $c$, and split
$z^m=z^{m_{\foj}}z^{m'}$, with $m_{\foj}\in \Lambda_{\foj}$ and
$m'\in M'$. Because $\foj$ is perpendicular, we have $m'\not=0$.
We will apply Corollary \ref{positivitywithcoefs} to the 
case where the monoid $P$ is the one being used in Theorem \ref{KSlemma},
and $r:P\rightarrow M'$ is the projection. 
We can then view the computation at the joint as
a two-dimensional scattering situation in the lattice $M'$ over the
ground field $\kk(\Lambda_{\foj})$, the quotient field of $\kk[\Lambda_{\foj}]$.
To obtain the relevant two-dimensional scattering diagram we replace each
wall $(\fod,f_{\fod})$ with $\foj\subseteq\fod$ with $\big((\fod+\Lambda_{\foj}
\otimes\RR)/(\Lambda_{\foj}\otimes\RR),f_{\fod}\big)$ in $M'_{\RR}
=M_{\RR}/\Lambda_{\foj}\otimes\RR$. 
We are then in the situation of Corollary \ref{positivitywithcoefs}, and
the result follows. 
\qed


\begin{thebibliography}{99}

\bibitem[A02]{A02} V.~Alexeev, \emph{Complete moduli in the presence of semiabelian group action}, Ann. of Math.~(2)~{\bf 155} (2002), no.~3, 611--708. 

\bibitem[AB]{AB} V. ~Alexeev, and M. ~Brion, \emph{Toric degenerations of spherical varieties}, 
Selecta Math. (N.S.) {\bf 10} (2004), 453--478

%\bibitem[Ar08]{Ar08} I.~Arzhantsev, \emph{On factoriality of Cox rings},
%(Russian) Mat. Zametki {\bf 85} (2009),  643--651; translation in Math.\ 
%Notes {\bf 85} (2009), 623--629

\bibitem[A07]{A07} D.~Auroux, Mirror symmetry and T-duality in the complement of an anticanonical divisor, J. G\"okova Geom. Topol.~{\bf 1} 
(2007), 51--91.

%\bibitem[BH03]{BH03} F.~Berchtold, J.~Hausen, \emph{Homogeneous coordinates 
%for algebraic varieties,}  J.\ Algebra {\bf 266} (2003), 636--670. 

\bibitem[BK00]{BK00} A.~Berenstein and D.~Kazhdan, 
\emph{Geometric and unipotent crystals}, GAFA 2000 (Tel Aviv,
1999). Geom. Funct. Anal. 2000, Special Volume, Part I, 188--236

\bibitem[BK07]{BK07} A.~Berenstein and D.~Kazhdan, 
\emph{Geometric and unipotent crystals II: From unipotent bicrystals
to crystal bases,} Quantum groups, 13--88, Contemp. Math., {\bf 433}, 
Amer.\ Math.\ Soc., Providence, RI, 2007.

\bibitem[BZ01]{BZ01} A. ~Berenstein and A. ~Zelevinsky, 
\emph{Tensor product multiplicities, canonical bases and totally positive varieties}. 
Invent.\ Math.\ {\bf 143} (2001), 77--128.

\bibitem[BZ05]{BZ05} A. ~Berenstein and A. ~Zelevinsky, 
\emph{Quantum cluster algebras}, Adv.\ Math.\ {\bf 195} (2005),
405--455.

\bibitem[BFZ05]{BFZ05} A.~Berenstein, S.~Fomin and A.~Zelevinsky,
\emph{Cluster algebras. III. Upper bounds and double Bruhat cells,}
Duke Math.\ J.\  {\bf 126}  (2005), 1--52. 

\bibitem[BGP73]{BGP73} I.~Bernstein, I.~Gelʹfand, V.~Ponomarev, \emph{Coxeter functors, and Gabriel's theorem}, Russian Math. Surveys~28 (1973), no.~2, 17--32.

%\bibitem[BCHM]{BCHM} C. ~Birkar, P. ~Cascini, C. Hacon, and J. McKernan, 
%\emph{Existence of minimal models for varieties of log general type},  
%J. Amer. Math. Soc. {\bf 23} (2010),  405--468. 

\bibitem[B90]{B90} A.~Bondal, \emph{Helices, representations of quivers, and Koszul algebras}, in Helices and vector bundles, 75--95, 
London Math. Soc. Lecture Note Ser.~148, C.U.P., 1990.

\bibitem[Bri]{Bridge} T.~Bridgeland, \emph{Scattering diagrams, Hall algebras 
and stability conditions}, preprint, 2016.
 
\bibitem[BDP]{BDP} T. ~Br\"ustle, G. ~Dupont, and M. ~P\'erotin, 
\emph{On Maximal Green Sequences}, preprint (2012).

\bibitem[C02]{C02} P. ~Caldero, \emph{Toric degenerations of Schubert 
varieties}, Transform. Groups {\bf 7} (2002), 51--60. 

\bibitem[CLS]{CLS} I.~Canakci, K.~Lee, and R.~Schiffler,
\emph{On cluster algebras from unpunctured surfaces with one marked point},
Proc.\ Amer.\ Math.\ Soc.\ Ser.\ B 2 (2015), 35--49.

\bibitem[CPS]{CPS} M.~Carl, M.~Pumperla, and B.~Siebert,
\emph{A tropical view of Landau-Ginzburg models}, available at 
http://www.math.uni-hamburg.de/home/siebert/preprints/LGtrop.pdf

\bibitem[CL12]{CL12} G.~Cerulli Irelli, D.~Labardini-Fragoso, 
\emph{Quivers with potentials associated to triangulated surfaces, Part III: 
tagged triangulations and cluster monomials.} Compos.\ Math.\ {\bf 148} (2012),
1833--1866.

\bibitem[CKLP]{CKLP}  G. ~Cerulli Irelli, B. ~Keller, D. ~Labardini-Fragoso, and P. ~Plamondon,
\emph{Linear independence of cluster monomials for skew-symmetric cluster algebras}, 
Compos.\ Math.\ {\bf149} (2013), 1753--1764. 
\bibitem[CGMMRSW]{snowbird} M.-W.~Cheung, M.~Gross, G.~Muller, G.~Musiker,
D.~Rupel, S.~Stella, H.~Williams, \emph{The greedy basis equals the
theta basis}, 
Jour.\ of Combinatorial Theory, Series A, (2017), 150--171.

\bibitem[CO06]{CO06} C.-H.~Cho, Y.-G.~Oh, \emph{Floer cohomology and disc instantons
of Lagrangian torus fibers in Fano toric manifolds}, Asian J. Math. {\bf 10},
(2006), 773-814.

%\bibitem[DWZ10]{DWZ10} H. ~Derksen, J. ~Weyman, and A. ~Zelevinsky, 
%\emph{Quivers with potentials and their representations II: applications to cluster algebras}. 
%J. Amer. Math. Soc. {\bf 23} (2010), 749--790.

%\bibitem[D52]{D52} E.~Dynkin, \emph{Semisimple subalgebras of semisimple Lie
%algebras,} Mat.\ Sb.\ Nov.\ Ser.\ {\bf 3}, (1952) 349--462.

\bibitem[FG06]{FG06} V.~Fock and A.~Goncharov, \emph{Moduli spaces
of local systems and higher Teichm\"uller theory,} Publ.\ Math.\ Inst.\
Hautes \'Etudes Sci., {\bf 103} (2006) 1--211.

\bibitem[FG09]{FG09} V.~Fock and A.~Goncharov, \emph{Cluster 
ensembles, quantization
and the dilogarithm}, Ann. Sci.\'Ec. Norm. Sup\'er. (4) {\bf 42} (2009), 
865--930.

\bibitem[FG11]{FG11} V.~Fock and A.~Goncharov, \emph{Cluster $X$-varieties
at infinity}, preprint, 2011.

\bibitem[FST]{FST} S.~Fomin, M.~Shapiro, and D.~Thurston, 
\emph{Cluster algebras and triangulated surfaces. I. Cluster complexes}, Acta Math. {\bf 201} (2008), 83--146

\bibitem[FZ99]{FZ99} S.~Fomin and A.~Zelevinsky, \emph{Double Bruhat cells and
total positivity},  J.\ Amer.\ Math.\ Soc.\ {\bf 12} (1999), 335--380.

\bibitem[FZ02a]{FZ02a} S.~Fomin, A.~Zelevinsky, \emph{Cluster algebras. I. Foundations,}
J.\ Amer.\ Math.\ Soc., {\bf 15} (2002) 497--529.

\bibitem[FZ02b]{FZ02b} S.~Fomin, A.~Zelevinsky, \emph{The Laurent phenomenon},
Adv.\ in Appl.\ Math., {\bf 28} (2002), 119--144.

\bibitem[FZ03a]{FZ03a} S.~Fomin, A.~Zelevinsky, \emph{Cluster algebras. II. Finite type classification}, Invent. Math.~154 (2003), no.~1, 63--121.

\bibitem[FZ03b]{FZ03b} S.~Fomin and A.~Zelevinsky, \emph{Cluster algebras: notes
for the CDM-03 conference}. 
Current developments in mathematics, 2003, 1--34, Int. Press, Somerville, MA, 
2003

\bibitem[FZ07]{FZ07} S.~Fomin and A.~Zelevinsky, \emph{Cluster algebras. IV. 
Coefficients,} Compos.\ Math.\  {\bf 143}  (2007),  112--164. 
preprint (2012),  arXiv:1208.5749.

\bibitem[GLS]{GLS} C. ~Geiss, B. ~Leclerc, and J. Schr\"oer, 
\emph{Partial flag varieties and
preprojective algebras,} Ann. Inst. Fourier (Grenoble) {\bf 58} (2008), 
825--876.

\bibitem[GSV07]{GSV07} M. ~Gekhtman, M ~Shapiro, and A. ~Vainshtein, 
\emph{On the properties of the exchange graph of a cluster algebra}, Math. Res. Lett. 15 (2008), 321--330.

\bibitem[GSV]{GSV}  M. ~Gekhtman, M. ~Shapiro, and A. ~Vainshtein, \emph{Cluster algebras and Poisson geometry}, 
Mathematical Surveys and Monographs, 167. American Mathematical Society, Providence, RI, 2010. xvi+246 pp.

\bibitem[GS13]{GS13} A. ~Goncharov and L. ~Shen, \emph{Geometry of canonical bases and mirror symmetry}, Invent.\ Math.\ {\bf 202} (2015), 487--633. 

\bibitem[GS16]{GS16} A. ~Goncharov and L. ~Shen, \emph{Donaldson-Thomas transformations of moduli spaces of $G$-local 
systems}, preprint 2016, arXiv:1602.06479

\bibitem[GY13]{GY13} K.~Goodearl, M.~Yakimov, \emph{Quantum
cluster algebra structures on quantum nilpotent algebras}, 
Proc.\ Natl.\ Acad.\ Sci.\ USA {\bf 111} (2014), 9696--9703.
  
\bibitem[G09]{G09} M.~Gross,  \emph{Mirror symmetry for $\bP^2$ and tropical
geometry}, Adv.\ Math., {\bf 224} (2010), 169--245.

\bibitem[G11]{G11} M.~Gross, \emph{Tropical geometry and mirror symmetry}, CBMS Regional Conf. Ser. in Math.~{\bf 114}, A.M.S., 2011.

\bibitem[GHK11]{GHK11} M.~Gross, P.~Hacking, and S.~Keel, \emph{Mirror symmetry for log Calabi-Yau surfaces I}, Publ. Math. Inst. Hautes \'Etudes Sci.~122 (2015), 65--168.

\bibitem[GHK12]{GHK12} M.~Gross, P.~Hacking, and S.~Keel, \emph{Moduli of surfaces with an anti-canonical cycle}, Compos. Math. 151 (2015), no.~2, 265--291.


\bibitem[GHK13]{P1} M.~Gross, P.~Hacking, and S.~Keel, \emph{Birational geometry of cluster algebras}, Algebr. Geom.~2 (2015), no.~2, 137--175. 

\bibitem[GHKII]{GHKII} M.~Gross, P.~Hacking and S.~Keel, \emph{Mirror
symmetry for log Calabi-Yau surfaces II}, in preparation. 

\bibitem[GHKS]{GHKS} M.~Gross, P.~Hacking, S.~Keel, and B.~Siebert,
\emph{Theta functions on varieties with effective anti-canonical class},
preprint, 2016.

\bibitem[GP10]{GP10} M. ~Gross and R. ~Pandharipande, 
\emph{Quivers, curves, and the tropical vertex},
Port.\ Math.\ {\bf 67} (2010), 211--259. 

\bibitem[GPS]{GPS} M. ~Gross, R. ~Pandharipande, and B. ~Siebert, 
\emph{The tropical vertex}. 
Duke Math. J. {\bf 153} (2010), 297--362

\bibitem[GS11]{GSAnnals} M.~Gross and B.~Siebert, \emph{From affine
geometry to complex geometry}, Annals of Mathematics,
{\bf 174}, (2011), 1301-1428.

\bibitem[GS12]{GS12} M.~Gross and B.~Siebert: \emph{Theta functions
and mirror symmetry},
Surv.\ Differ.\ Geom., {\bf 21}, Int.\ Press, Somerville, MA, 2016, 95--138.

%\bibitem[Gr60]{Gr60} A.~Grothendieck, \emph{\'El\'ements de g\'eom\'etrie
%alg\'ebrique. I. Le langage des sch\'emas}, Inst. Hautes
%\'Etudes Sci.\ Publ.\ Math.~{\bf 4}, 1960.

%\bibitem[HKT09]{HKT09} P.~Hacking, S. ~Keel, and J. ~Tevelev, 
%\emph{Stable pair, tropical, and log canonical compactifications of moduli spaces of del Pezzo surfaces.}
% Invent. Math. {\bf 178} (2009), 173--227. 

%\bibitem[H77]{H77} R.~Hartshorne, Algebraic Geometry, \emph{Grad. Texts
%in Math.} {\bf 52}, Springer, 1977.

%\bibitem[HK00]{HK00} Y.~Hu, and S.~Keel, \emph{Mori dream spaces and GIT. 
%Dedicated to William Fulton on the occasion of his 60th birthday,}
%Michigan Math.\ J.\ {\bf 48} (2000), 331--348.

%\bibitem[I77]{I77} S. Iitaka, On logarithmic Kodaira dimension of algebraic
%varieties, in \emph{Complex analysis and algebraic geometry}, 175--189.
%Iwanami Shoten, Tokyo, 1977.

%\bibitem[K12]{K12} B. Keller, \emph{Cluster algebras and derived categories}, preprint, 2012,
%arXiv:1202.4161 

\bibitem[IIS]{IIS} M. Inaba, K. Iwasaki, and M. Saito, 
\emph{Moduli of stable parabolic connections, Riemann-Hilbert correspondence and geometry of 
Painlev\'e equation of type VI. I.}  Publ. Res. Inst. Math. Sci. {\bf 42}
(2006), 987--1089. 

\bibitem[K80]{K80} V.~Kac, \emph{Infinite root systems, representations of graphs, and invariant theory, I}, Invent. Math.~56 (1980), no.~1, 57--92.

\bibitem[K82]{K82} V.~Kac, \emph{Infinite root systems, representations of graphs and invariant theory, II}, J. Algebra 78 (1982), no.~1, 141--162. 

\bibitem[KM99]{KM99} S. ~Keel and J. ~McKernan, \emph{Rational curves on quasi-projective surfaces}, 
Mem. Amer. Math. Soc. {\bf 140} (1999), no.\ 669.

\bibitem[K94]{K94} A.~King, \emph{Moduli of representations of finite-dimensional algebras}, Quart. J. Math. Oxford Ser.~(2)~45 (1994), no.~180, 515--530.

\bibitem[KT99]{KT99} A.~Knutson and T.~Tao, 
\emph{The honeycomb model of $\GL_n(\CC)$ tensor products. I. 
Proof of the saturation conjecture,} 
J.\ Amer.\ Math.\ Soc.\ {\bf 12} (1999), 1055--1090. 

\bibitem[KM05]{KM05} M. ~Kogan and E. ~Miller, \emph{Toric degeneration of Schubert varieties and 
Gelfand-Tsetlin polytopes}, 
Adv. Math. 193 (2005), 1--17. 

\bibitem[Ko96]{Ko96}  J.~Koll\'ar, 
\emph{Rational curves on algebraic varieties},
Ergebnisse der Mathematik und ihrer Grenzgebiete. 3 Folge. Band 32. Springer-Verlag, Berlin, 1996

\bibitem[K13]{K13} J.~Koll\'ar, \emph{Singularities of the minimal model
program}, Cambridge Tracts in Math. ~200. C.U.P., 2013.

%\bibitem[KM98]{KM98} J.~Koll\'ar and S.~Mori, \emph{Birational geometry
%of algebraic varieties}, Cambridge Tracts in Math.~134. C.U.P., 1998.

\bibitem[KS06]{KS06} M.\ Kontsevich and Y.\ Soibelman,
        \emph{Affine structures and non-Archimedean analytic spaces}, in:
        \textsl{The unity of mathematics} (P.~Etingof, V.~Retakh,
        I.M.~Singer, eds.),  321--385, Progr.\ Math.~{\bf 244},
        Birkh\"auser~2006.

\bibitem[KS13]{KS13} M.\ Kontsevich and Y.\ Soibelman,
\emph{Wall-crossing structures in Donaldson-Thomas invariants, integrable
systems and mirror symmetry}, 
in \emph{Homological mirror symmetry and tropical geometry}, 197--308, 
Lect.\ Notes Unione Mat. Ital., {\bf 15}, Springer, Cham, 2014.

%\bibitem[LP12]{LP12} T.~Lam and P.~Pylyavskyy, \emph{Laurent phenomenon
%algebras}, preprint, 2012.

\bibitem[LS13]{LS13} K. ~Lee and R. ~Schiffler, \emph{Positivity for cluster algebras},
Ann.\ of Math.\ (2) {\bf 182} (2015), 73--125. 

\bibitem[LLZ13]{LLZ13} K.~Lee, L.~Li, and A.~Zelevinsky, \emph{Greedy
elements in rank $2$ cluster algebras}, Selecta Math., {\bf 20} (2014)
57--82.

\bibitem[LLRZ14]{LLRZ14} K.~Lee, L.~Li, D.~Rupel and A.~Zelevinsky, 
\emph{The existence of greedy bases in rank $2$ quantum cluster algebras},
Adv.\ Math.\ {\bf 300} (2016), 360--389. 

%\bibitem[L81]{L81} E. Looijenga,
%Rational surfaces with an anticanonical cycle.  Ann. of Math.~(2)~{\bf 114}
%(1981), 267--322.

%\bibitem[L90]{L90} G. Lusztig, \emph{Canonical bases arising from quantized enveloping algebras},
%J. Amer. Math. Soc. {\bf 3} (1990), 447--498.

\bibitem[M14]{MandelThesis} T. Mandel, Ph.D. thesis, 2014.

\bibitem[Ma15]{Magee} T.~Magee, \emph{Fock-Goncharov conjecture and polyhedral 
cones for $U\subset SL_n$ and base affine space $SL_n/U$}, preprint, 2015.

\bibitem[Ma16]{TimThesis} T. Magee, Ph. D. thesis, in preparation.

\bibitem[M13]{M13} J.~Matherne, and G.~Muller, 
\emph{Computing upper cluster algebras}, 
Int.\ Math.\ Res.\ Not.\ IMRN 2015, no.\ 11, 3121–3149.

\bibitem[Ma89]{Ma89} H.~Matsumura, \emph{Commutative ring theory}, C.U.P., 1989.

\bibitem[Mu15]{Muller} G.~Muller, \emph{The existence of a maximal green 
sequence is not invariant under quiver mutation}, 
Electron.\ J.\ Combin.\ {\bf 23} (2016), no. 2, Paper 2.47, 23 pp.

\bibitem[NZ]{NZ} T.~Nakanishi, and A. ~Zelevinsky, 
\emph{On tropical dualities in cluster algebras}, Algebraic groups and quantum groups, 
217--226, Contemp. Math., {\bf 565}, Amer. Math. Soc., Providence, RI, 2012.

\bibitem[R10]{R10} M.~Reineke, \emph{Poisson automorphisms and quiver moduli}.
J. Inst. Math. Jussieu {\bf 9} (2010), 653--667.

\bibitem[R12]{R12} M.~Reineke, \emph{Cohomology of quiver moduli, functional 
equations, and integrality of Donaldson-Thomas type invariants,}
Compos.\ Math.\ {\bf 147} (2011),  943--964.

\bibitem[R14]{R14} M.~Reineke, \emph{Personal communcation}, 2014.

%\bibitem[Sp13]{Sp13} D.~Speyer, \emph{An infinitely generated upper cluster
%algebra,} preprint, May, 2013.

%\bibitem[P90]{P90} U.~Persson, \emph{Configurations of Kodaira fibers
%on rational elliptic surfaces}, Math.\ Z., {\bf 205}, (1990) 1--47.

\bibitem[S92]{S92} A.~Schofield, \emph{General representations of quivers}, Proc. London Math. Soc.~(3)~65 (1992), no.~1, 46--64.

\end{thebibliography}
\end{document}